\newtheorem{remark}{Remark}
\newtheorem{proposition}{Proposition}
\newtheorem{lemma}{Lemma}
\newtheorem{corollary}{Corollary}
\newtheorem{theorem}{Theorem}
\def\rest{\hskip 1pt{\hbox to 10.8pt{\hfill
\vrule height 7pt width 0.4pt depth 0pt\hbox{\vrule height 0.4pt width 7.6pt depth 0pt}\hfill}}}
\newcommand{\eps}{\varepsilon}
\def \rd {{\rm d}}
\def\B{{\mathbb B}}
\def\E{{\rm E}}
\def\Eps {{\rm E}_\varepsilon}
\def\Veps {{\mathbb V}_\varepsilon}
\def\C{{\mathbb C}}
\def\D{{\mathbb D}}
\def\R{{\mathbb R}}
\def\N{{\mathbb N}}
\def\S{{\mathbb S}}
\def \Cnrg{{\rm C}_{\rm nrg}}
\def \rL{{\rm L}}
\def \be{\vec {\bf e}}
\def \betheta {\vec {\bf  e}^{\,  \theta}}
\def \upsigmam{{\upsigma}_{\rm main}}
\def \upsigman{{\upsigma}_{\rm bis}}
\def \Cdec {{\rm C}_{\rm dec}}
\DeclareMathAlphabet{\mathpzc}{OT1}{pzc}{m}{it}
\def \bJ{ {\mathbbmss J }}
\def \bL{ {\mathbbmss L }}
\def \bN{ {\mathbbmss N }}
\def \bm{ {\mathbbm m }}
\def \bN{{ \mathbbmss N}}
\def\tbe{{\large \mathbbm e}}
\def \bJ{{ \mathbbmss J}}
\def \bL{{ \mathbbmss L}}
\def \bP {{\mathbbmtt P}}
\def \bH {{\mathbbmtt H}}
\def\QED{\hbox{${\vcenter{\vbox{
   \hrule height 0.4pt\hbox{\vrule width 0.4pt height 6pt
   \kern5pt\vrule width 0.4pt}\hrule height 0.4pt}}}$}\vspace{7pt}}
\begin{document}

\author{Fabrice BETHUEL\thanks{Sorbonne Université, CNRS, Université de Paris, Laboratoire Jacques-Louis Lions (LJLL), F-75005 Paris.  
} }
\title{Asymptotics for  two-dimensional vectorial Allen-Cahn systems}
\date{}
\maketitle

\begin{abstract}
The formation of codimension-one interfaces for multi-well gradient-driven problems  is well-known and established in the scalar case,   where the equation  is often referred 
to as the Allen-Cahn equation. The proofs rely for a large on a monotonicity formula for the energy density,  which is itself related to the vanishing of the so-called  discrepancy function. The vectorial case  in contrast is quite open.  This lack of results and insight  is to a large extend  related  to the absence  of known appropriate  monotonicity formula. In this paper, we focus on the  \emph{elliptic case in two dimensions},  and introduce  methods, relying on the analysis of the partial differential equation,  which allow  to circumvent    the lack of monotonicity formula for the energy density.   
In the  last part of the paper, we recover a \emph{new monotonicity formula} which relies on  a \emph{new discrepancy relation}. These tools allow to extend to the vectorial case in two dimensions  most of the results obtained for the scalar case. We emphasize also some \emph{specific features} of the vectorial case.
\end{abstract}
\bigskip
\noindent
\section{Introduction}
\subsection{Statement of the  main result}
\label{statem}
 Let $\Omega$ be a   smooth bouned  domain in $\R^2$. In the present paper we investigate  asymptotic properties of  families of solutions $(u_\eps)_{\eps>0}$ of the systems of  equations having the general form  
 \begin{equation}
 \label{elipes}
  -\Delta u_\eps=-\eps^{-2}\nabla_u V(u_\eps)  {\rm \ in \ } \Omega \subset \R^2,  
  \end{equation}
as the parameter $\eps >0$ tends to zero.  
 The function $V$, usually termed the \emph{potential},  denotes   a smooth scalar   function on   $\R^k$, where $k \in \N$ is a given integer.  Given $\eps>0$, the function  $v_\eps$  represents    a function defined on  the domain $\Omega$ with values into the \emph{euclidian space $\R^k$}, so that equation \eqref{elipes} is  a \emph{system of $k$ scalar partial differential equations}  for each of the components of the map $u_\eps$. The equation \eqref{elipes} and its parabolic version have been have been introduced  as models in the physics and material literature (see e.g. \cite{bronret} and the references therein, in particular \cite{bech}).
 
    Equation \eqref{elipes}   corresponds to the  Euler-Lagrange equation   of the  energy functional  ${\E_\eps}$ which is defined for a function $ u:\Omega \mapsto \R^k$ by the formula
    \begin{equation}
 \label{glfunctional}
 \E_\eps(u)= \int_\Omega e_\eps(u)=\int_{\Omega} \eps \frac{\vert \nabla u \vert ^2}{2}+\frac{1}{\eps} V(u).
 \end{equation}
   We  assume that the potential $V$ is bounded below, so that we may impose, without loss of generality and changing possibly $V$ by a suitable  additive  constant,  that
   \begin{equation}
   \label{infimitude}
   \inf V=0.
   \end{equation}
 We introduce the  set $\Sigma$ of minimizers of $V$,  sometimes called the vacuum manifold, that is the  subset of $\R^k$ defined 
     $$\Sigma\equiv \{ y \in \R^k, V(y)=0 \}.$$
   Properties of solutions to \eqref{elipes} crucially depend on the nature of $\Sigma$. In this paper, we  will assume that the 
    vacuum manifold is finite, with at least two distinct elements, so that   \\
    
    \noindent
   $\displaystyle{ (\text{H}_1) \ \ \ \ \  
 \Sigma=\{\upsigma_1, ..., \upsigma_q\},\ q\geq 2, \ \upsigma_i \in \R^k, \ \forall i=1,...,q.
 }$ \\
 
 \noindent
  We impose furthermore a condition on the behavior of $V$ near its zeroes, namely:
    
    \smallskip
    \noindent
${(\text{H}_2)}$   {\it  The matrix $\nabla^2V(\upsigma_i)$ is positive definite at each point $\upsigma_i$ of $\Sigma$, in other words, if $\lambda_i^-$ denotes its smallest eigenvalue, then  $\lambda_i^->0$. We denote by $\lambda_i^+$ its largest eigenvalue. }\\

\noindent
Finally, we also impose a growth conditions at infinity: 

\smallskip
\noindent
  ${(\text{H}_3)}$ {\it There exists  constants  $\upalpha_\infty >0$ and  $R_\infty >0$  such that  
  \begin{equation}
  \label{condinfty}
  \left\{
   \begin{aligned}
   y\cdot\nabla V( y )&\geq \upalpha_\infty \vert y \vert ^2, \ \hbox {if }  \vert y \vert >R_\infty {\rm \  and \ }  \\
 V(x) \to &+\infty {\rm \ as  \  }  \vert x \vert \to + \infty. 
 \end{aligned}
 \right.
 \end{equation}   
 }
 \smallskip
\noindent
A potential $V$ which fullfils  conditions ${(\text{H}_1)}$, ${(\text{H}_2)}$ and ${(\text{H}_3)}$   is  termed  throughout  the paper  a potential with multiple  equal depth wells (see Figure \ref{potent}). 
\smallskip

 A  typical  example is provided  in the scalar case $k=1$ 
 by the potential, often termed \emph{Allen-Cahn} or \emph{Ginzburg-Landau} potential,
 \begin{equation}
 \label{glexemple}
  V(u)=\frac{(1-u^2)^2}{4}, 
  \end{equation}  
  whose  infimum equals $0$ and whose minimizers are $+1$ and $-1$, 
  so that   $\Sigma=\{+1,-1\}.$  It is used as  an elementary model for \emph{phase transitions} for  materials with  two equally  preferred states,   the   minimizers $+1$ and $-1$ of the potential $V$.  
  
  \smallskip
  
  \begin{figure}[h]
\centering
\includegraphics[height=7cm]{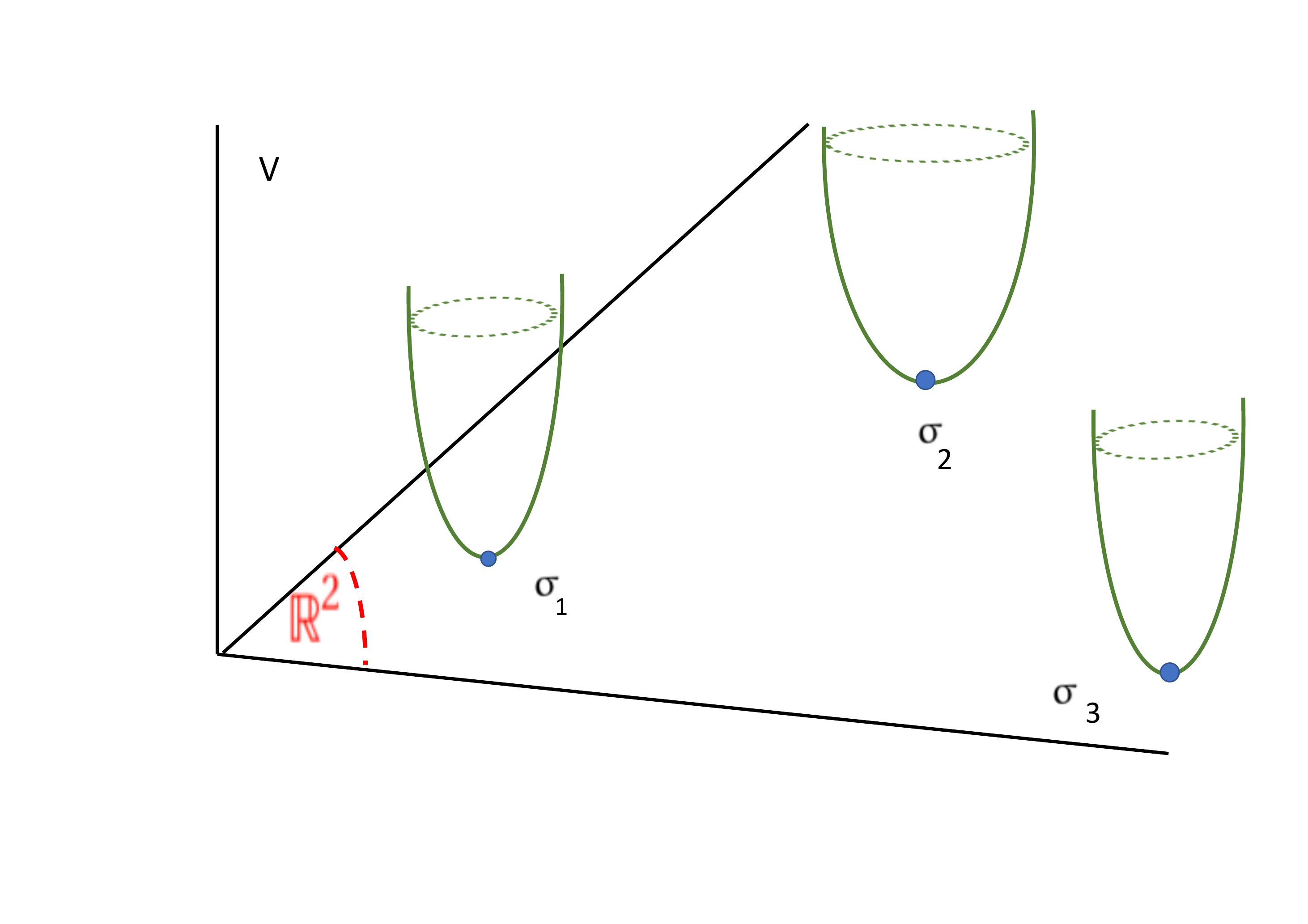}
\caption{  {\it Graph of a potential with several minimizers.}}
\label{potent}
\end{figure}
Important efforts have been devoted so far to the study  of  solutions of the stationary \emph{Allen-Cahn} equations, i.e. solutions to \eqref{elipes} for the  potentials similar to \eqref{glexemple},   or to  the corresponding parabolic evolution equations,  in the asymptotic limit $\eps \to 0$, in arbitrary dimension $N$ of the domain $\Omega$. The  mathematical  theory for this question is now well advanced and may be considered as satisfactory. The results found there    provide a sound mathematical foundation  to the intuitive idea that      the domain $\Omega$ decomposes  into regions  where the solution takes  values  either close to  $+1$ or  close to  $-1$, the   regions being separated by interfaces  of width of order $\eps$. These interfaces,    are expected to converge  to hypersurfaces of codimension 1, which are a shown to be   \emph{generalized minimal surfaces}  in the stationary case, or \emph{moved by mean curvature} for the parabolic evolution equations. 

  Several of the arguments rely on   \emph{integral methods} and \emph{energy estimates}. For instance in \cite{ilmanen},  T.Ilmanen proved  convergence   to   motion  by mean curvature  in the \emph{weak sense of Brakke},   a notion  relying on  the language,  concepts  and methods of\emph{ geometric measure theory}. In the elliptic case considered in this paper, convergence to minimal surfaces was established by Modica and Mortola in their celebrated paper \cite{mortadela},  F. Hutchinson and Y. Tonegawa in \cite{hutchtone} established related results for non-minimizing solutions in \cite{hutchtone}. More refrences will be provided in Subsection \ref{comparse}.

  \begin{remark}
 \label{mini}
 {\rm  The case of \emph{minimizing solutions} was treated  in the vectorial case  in   by Baldo, on one hand (see \cite{baldo}),  and Fonseca and Tartar on the other (see \cite{fontar}),  whre  they   obtained quite similar results to \cite{mortadela}. The approaches rely on ideas from Gamma convergence, and \emph{do not rely on monotonicity formulas},  as for general stationary solutions or solutions of the corresponding evolution equations in the scalar case.
 }
 \end{remark}
 
\medskip     
  The purpose of  the present  paper is to show that,  to  a  some  extend,   the results obtained in the scalar case, can be transposed to the vectorial case for  potentials $V$ which fulfill  conditions ${(\text{H}_1)}$, ${(\text{H}_2)}$ and ${(\text{H}_3)}$, that is potentials with multiple  equal depth wells, if we restrict ourselves to \emph{two dimensional domains}. Let us emphasize that, prior to the present paper,   \emph{no  monotonicity  formula  similar to \eqref{monotonie} was   known in the vectorial case}, so that different arguments have to be worked out. Several of them  rely strongly on some specificities of dimension two.
    
  \smallskip
  We assume that we are given a constant ${\rm M}_0>0$ and a family $(u_\eps)_{0<\eps\leq 1}$ of solutions to the equation \eqref{elipes} for the corresponding value of the parameter $\eps$, satisfying the natural energy bound 
   \begin{equation}
  \label{naturalbound}
  \Eps(u_\eps) \leq{\rm  M}_0,   \    \forall \eps >0.
      \end{equation}
      Assumption \eqref{naturalbound}  is rather  standard in the field, since it corresponds to the energy magnitude required for the creation of $(N-1)$-dimensional interfaces.  Our main result is the following:
    
  \begin{theorem}
 \label{maintheo}  
 Let $(u_{\eps_n})_{n \in \N}$  be  a sequence of solutions to \eqref{elipes}  satisfying \eqref{naturalbound}. 
 There exist a subset $\mathfrak S_\star$ of  $\Omega$ and a subsequence of $(\eps_n)_{n \in \N} $, still denoted $(\eps_n)_{n \in \N}$ for sake of simplicity,
such that the following properties hold: 
\begin{enumerate}
\item[i)] $\mathfrak S_\star$  is  a closed  1 dimensional \emph{rectifiable}  subset of $\Omega$ 
 such that 
 \begin{equation}
 \label{herbert}
 \mathcal H^1(\mathfrak S_\star)\leq \rm C_{\rm H}\, {\rm M}_0,
 \end{equation}
  where ${\rm C}_{\rm H}$ is a constant depending only on the potential $V$.
\item[ii)]   Set $\mathfrak U_\star=\Omega\setminus  \mathfrak S_\star$, and   let $(\mathfrak U_\star^i)_{i \in I}$ be  the connected components of $\mathfrak U_\star$.   For each $i\in I$ there exists an element $\upsigma_i \in \Sigma$ such that 
\begin{equation}
\label{convers}
u_ {\eps_n}  \to \upsigma_i  {\rm \ uniformly  \ on  \ every  \ compact \ subset \  of \ } \mathfrak U_\star  {\rm \ as \ } n \to +\infty.
\end{equation}
\end{enumerate}
  \end{theorem}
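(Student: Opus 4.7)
The plan is to construct $\mathfrak{S}_\star$ as the concentration set of the limiting energy measure, derive uniform convergence on its complement from an $\eps$-regularity (``clearing-out'') statement, and finally establish rectifiability of $\mathfrak{S}_\star$ together with the length bound \eqref{herbert}. The absence of a monotonicity formula makes every step delicate; one must rely instead on tools specific to dimension two (circular averages, Pohozaev-type identities on discs and annuli) and eventually on the new discrepancy relation announced in the abstract. Concretely, set $\mu_n := e_{\eps_n}(u_{\eps_n})\,dx$: by \eqref{naturalbound} the total masses are bounded by $\mathrm{M}_0$, so after further extraction $\mu_n \rightharpoonup \mu_\star$ weakly as Radon measures on $\Omega$. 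Define
\[
\mathfrak{S}_\star := \bigl\{\, x \in \Omega : \Theta^{*1}(\mu_\star,x) > 0 \,\bigr\},
\]
the set of points with positive upper one-dimensional density for $\mu_\star$.

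The central PDE step is a clearing-out lemma: there should exist constants $\eta_0, C_0 > 0$, depending only on $V$, such that whenever $B_r(x_0) \subset \Omega$ with $\eps_n \leq C_0^{-1} r$ and $\mu_n(B_r(x_0)) \leq \eta_0 r$, there is some $\upsigma_i \in \Sigma$ with $u_{\eps_n} \to \upsigma_i$ uniformly on $B_{r/2}(x_0)$ and $\mu_n(B_{r/2}(x_0)) \to 0$. In the scalar case, this is a corollary of Modica's inequality together with the density-monotonicity; here neither is available, and since we do not assume minimality the $\Gamma$-convergence tools of \cite{baldo,fontar} cited in Remark \ref{mini} do not apply. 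The replacement I would look for exploits dimension two via the Pohozaev identity obtained by multiplying \eqref{elipes} by $(x-x_0)\cdot \nabla u_{\eps_n}$, averaged over radii so as to extract at least one ``good circle'' $\partial B_\rho$ with $\rho \in (r/2,r)$ on which the energy trace is controlled. On such a circle, the one-dimensional profile must stay close to some well $\upsigma_i$ (otherwise a lower energy bound of order $\rho$, coming from $(\text{H}_2)$ and the cost of any transition between two elements of $\Sigma$, would contradict the smallness assumption), and smallness then propagates inward by the maximum principle applied to $V(u_{\eps_n})$, using $(\text{H}_2)$ and $(\text{H}_3)$ to control the potential. This clearing-out is the real obstacle and is where the novelty of the paper lies.

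Once the clearing-out is in hand the remaining steps become essentially measure-theoretic bookkeeping. The lower bound $\mu_\star(B_r(x_0)) \geq \eta_0 r$ for every $x_0 \in \mathfrak{S}_\star$ and every sufficiently small $r$, combined with $\mu_\star(\Omega) \leq \mathrm{M}_0$, gives $\mathcal{H}^1(\mathfrak{S}_\star) \leq 2\eta_0^{-1}\mathrm{M}_0$ by a standard Vitali covering, so \eqref{herbert} holds with $\mathrm{C}_{\mathrm{H}} = 2/\eta_0$; closedness of $\mathfrak{S}_\star$ follows from the upper semi-continuity of the density. Rectifiability of $\mathfrak{S}_\star$ is then deduced via an Allard/Preiss-type criterion once the clearing-out is upgraded to a quantitative flatness statement, or more directly by invoking the new monotonicity formula and discrepancy relation announced for the last part of the paper. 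For assertion (ii), pick $x_0 \in \mathfrak{U}_\star$ and a ball $B_r(x_0) \subset \mathfrak{U}_\star$ with $\mu_\star(\overline{B_r(x_0)}) = 0$; applying the clearing-out produces $\upsigma_{i(x_0)} \in \Sigma$ with $u_{\eps_n} \to \upsigma_{i(x_0)}$ uniformly on $B_{r/2}(x_0)$. The map $x_0 \mapsto i(x_0)$ is locally constant, and since $\Sigma$ is finite by $(\text{H}_1)$ and each $\mathfrak{U}_\star^i$ is connected, a single $\upsigma_i$ is selected on each component, which gives \eqref{convers}.
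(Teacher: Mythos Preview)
Your overall architecture is correct and matches the paper: extract a limiting energy measure $\nu_\star$, define $\mathfrak S_\star$ as its concentration set, prove an $\eps$-level clearing-out, and deduce closedness, the $\mathcal H^1$ bound, and convergence to wells on the complement. Your treatment of part (ii) is essentially what the paper does.

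There are, however, two genuine gaps.

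First, your clearing-out sketch misses the actual mechanism. The paper's key innovation (Proposition~\ref{sindec}) is the nonlinear decay estimate
\[
\int_{\D^2(9/16)} e_\eps(u_\eps)\,\leq\, C_{\rm dec}\Bigl[\Bigl(\int_{\D^2} e_\eps(u_\eps)\Bigr)^{3/2} + \eps\int_{\D^2} e_\eps(u_\eps)\Bigr],
\]
which is then \emph{iterated} on dyadic discs to force the energy down to scale $\eps$, where the trivial clearing-out (Proposition~\ref{brioche}) applies. Your ``smallness propagates inward by the maximum principle on $V(u_{\eps_n})$'' does not produce this: near a well the equation is only approximately linear, and you need the superlinear $3/2$-power gain, obtained by combining (a) energy estimates on level sets close to $\Sigma$ with a carefully chosen threshold $\kappa\sim\sqrt{E_\eps}$ (Proposition~\ref{kappacity}), (b) Pohozaev on a good circle selected \emph{inside} that level set (Lemma~\ref{remoyen}), and (c) a bound of the energy by the potential (Proposition~\ref{borneo}).

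Second, and more seriously, your rectifiability argument does not work as stated. An Allard/Preiss-type criterion needs an \emph{upper} bound on the density ratio $\nu_\star(\D^2(x,r))/r$, which is exactly what monotonicity would give and which is unavailable here. Invoking ``the new monotonicity formula announced for the last part of the paper'' is circular: that formula (Proposition~\ref{psycho}) is developed \emph{after} Theorem~\ref{maintheo} and its proof uses rectifiability (tangent lines enter the analysis of $\mathscr N_\star^{ac}$). The paper obtains rectifiability by a completely different route: a second clearing-out result (Theorem~\ref{bordurer}), proved via Pohozaev on general subdomains, shows that if $\nu_\star$ vanishes on a collar $\mathcal V_\delta$ around $\partial\mathcal U$ then it vanishes on $\mathcal U$. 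This rules out isolated islands of concentration and forces $\mathfrak S_\star\cup\partial\D^2(x_0,r)$ to be a \emph{continuum} (Proposition~\ref{lesconti}). Rectifiability then follows from the classical Wa\.zewski--Besicovitch theorem that any continuum of finite $\mathcal H^1$ measure is rectifiable. This connectedness step is the idea you are missing.
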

  
 Similar to the results obtained for \emph{the scalar case},  Theorem 1 expresses, \emph{for the vectorial case in dimension two},  the fact that the domain can be decomposed into subdomains,  where, for $n$ large,  the maps $u_{\eps_n}$ takes values close to an element of the vacuum set $\Sigma$ (see Figure \ref{partition}). This subdomains  which are separated by a closed one-dimensional subdomain,  the set $\mathfrak S_\star$,  on which the map  $u_{\eps_n}$ might possibly undergo a transition from one element of $\Sigma$ to  another.  Notice that Theorem \ref{maintheo}  result extends also to \emph{non-minimizing} solutions the results\footnote{This result  hold however in arbitrary dimension and yield stronger, in particular minimizing,  properties for $\mathfrak S_\star$.} of \cite{baldo,fontar} (see Remark \ref{mini}).

 \smallskip
 An important property of the set $\mathfrak S_\star$ stated in Theorem \ref{maintheo} is its rectifiability. Recall that  a Borel set $\mathcal S\subset \R^2$, and
is rectifiable of dimension 1  if its one-dimensional Hausdorff dimension is locally finite, and if there there is a countable family of $C^1$ one-dimensional submanifolds of $\R^2$ which cover $\mathcal H^1$ almost all of $\mathcal S$. Rectifiability of $\mathcal S$
  implies in particular, that  the set $\mathcal  S$ has \emph{an approximate tangent line} at $\mathcal H^1$-almost every point $x_0 \in \mathcal S$. More precisely, there exists a set $\mathfrak  A_\star $ with $\mathcal H^1(\mathfrak A_\star)=0$  such that, if $x_0 \in \mathfrak S_\star\setminus  \mathfrak A_\star$, then  we have
  \begin{equation}
  \label{densitusone}
 {\underset {r \to 0}\lim} \frac{\mathcal H^1 (\mathfrak S_\star(\D^2(x_0, r))}{2r}=1, 
  \end{equation}
   and   there exists  a unit vector  $\vec e_{x_0}$ (depending on  the point $x_0$)  with the following property:  For \emph{any  number}  $\uptheta >0$ we have
 \begin{equation}
 \label{tangent}
 {\underset {r\to 0}\lim} 
 \frac{ 
 \mathcal H^1 \left( \mathcal S_\star \cap\left( \D^2\left(x_0, r\right) \setminus   \mathcal C_{\rm one}\left(x_0, \vec e_{x_0}, \uptheta \right) \right) \right)
 }
{r}=0, 
 \end{equation}
 where, for a unit vector $\vec e$ and $\uptheta>0$, the set  $\mathcal C_{\rm one}\left(x_0, \vec e, \uptheta  \right)$ is the cone given by
 \begin{equation}
\label{conalpha}
\mathcal C_{\rm one}\left(x_0, \vec e, \uptheta  \right)=
\left \{ y \in \R^2,   \vert  \vec e^\perp \cdot  (y-x_0)  \vert \leq   \tan\uptheta \vert  \vec e \cdot  (y-x_0) \vert\right\},
 \end{equation}
 $\vec e^\perp$ being a unit vector  orthonormal to $\vec e$ (see e.g. \cite{simon}). A point $x_0\in \mathfrak S_\star \setminus \mathfrak A_\star$  is termed a \emph{regular point} of $\mathfrak  S_\star$.
 
 \smallskip
 In the minimizing case, it is established in \cite{baldo,fontar} that the interface $\mathfrak S_\star$ is a  co-dimension one  minimal surface, which hence reduces, in dimension two, to \emph{the union of segments}.  Our next result  shows that, in dimension two, the same kind of result holds for \emph{non-minimizing solutions}.   
 
 To order to state the result, and since the \emph{notion of minimality}  is also  related in our context to the presence of \emph{densities} of measures, we specify first which measures  we have in mind.  To that aim,  we    introduce a limiting measure for the potential term:  Consider the positive measure $\upzeta_\eps$  defined on 
 $\Omega$ by
 \begin{equation}
 \label{boulga00} 	
 \upzeta_\eps \equiv \frac{  V(u_{\eps})}{\eps}  {\rm d}\, x,  {\rm \  so \  that  \ } \,  \upzeta_\eps (\Omega) \leq M_0.
 \end{equation}
 Since the family $(\upzeta_\eps)_{\eps>0}$ is uniformly  bounded,   passing possibly   to a further subsequence, we have the convergence  
   \begin{equation}
   \label{boulga}
 \upzeta_{\eps_n} \equiv \frac{  V(u_{\eps_n})}{\eps_n}  {\rm d}\, x \rightharpoonup  \upzeta_\star,  {\rm \ in \ the \ sense \ of \  measures \ on  \  }\Omega,  {\rm \ as \ } n \to + \infty, 
 \end{equation}
  
\begin{theorem}
 \label{segmentus}  There exists a set $\mathfrak E_\star \subset \mathfrak S_\star$ such that $\mathcal H^1(\mathfrak E_\star)=0$, such that $\mathfrak A_\star\subset \mathfrak E_\star$ and such that, for $x_0 \in \mathfrak S_\star \setminus \mathfrak E_\star$, the set   $\mathfrak S_\star$ is  locally near $x_0$  a segment. More precisely,  there exists a unit vector $\vec e_{x_0}$ and a radius $r_0>0$, depending on $x_0$,  such that
 \begin{equation}
 \label{segmentus0}
  \mathfrak S_\star \cap  \D^2(x_0, r_0) =\left (x_0-r_0\vec e_{x_0},  x_0+r_0\vec e_{x_0}\right).
  \end{equation}
 Moreover the restriction of the measure $\upzeta_\star$ to $\D^2(x_0, r_0)$ is proportional to the $\mathcal H^1$ measure of  $\displaystyle{\left (x_0-r_0\vec e_{x_0},  x_0+r_0\vec e_{x_0}\right)}$, that is is there exists  a number $c_{x_0}>0$,  depending on $x_0$, such  that
\begin{equation} 
\label{constantitude}
 \upzeta_\star\rest\D^2(x_0, r_0) =c_{x_0}\left( \mathcal H^1\rest \left (x_0-r_0\vec e_{x_0},  x_0+r_0\vec e_{x_0}\right) \right).
 \end{equation}
 The number $c_{x_0}$ are bounded below, that is, there exists a constant $\upeta_0 ({\rm d}(x))>0$, depending only on $V$, $M_0$ and $\rd(x_0)\equiv {\rm dist} (x,  \partial \Omega)$ such that such that
 \begin{equation}
 \label{belowitude}
 c_{x_0} \geq \upeta_0({\rm d}(x) )   {\rm \  for \ any \ } x_0 \in \mathfrak S_{\star} \setminus \mathfrak E_\star.
 \end{equation}
 \end{theorem}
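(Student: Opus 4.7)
The plan is to combine Theorem~\ref{maintheo} with the new monotonicity formula promised in the abstract and developed in the last part of the paper. I would first enlarge $\mathfrak A_\star$ to a set $\mathfrak E_\star$ containing also the $\mathcal H^1$-negligible collection of points of $\mathfrak S_\star$ at which either the one-dimensional density of $\upzeta_\star$ fails to exist, or the monotone quantity built from the new discrepancy relation fails to have a limit as the radius shrinks, or the $\mathcal H^1$-density of $\mathfrak S_\star$ differs from~$1$. Standard differentiation theorems for Radon measures in the plane then ensure $\mathcal H^1(\mathfrak E_\star)=0$, and at every $x_0 \in \mathfrak S_\star \setminus \mathfrak E_\star$ one disposes of both a tangent direction $\vec e_{x_0}$ and a well-defined density $c_{x_0}$ for $\upzeta_\star$.

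The core step is a blow-up analysis at such a regular point. Rescaling via $u_{\eps_n}^{(r)}(y) := u_{\eps_n}(x_0 + r y)$ and letting $r \downarrow 0$ along a diagonal subsequence with $\eps_n / r \to 0$, the approximate tangent property \eqref{tangent} forces the rescaled interface to concentrate on the line $\R \vec e_{x_0}$, while the new monotonicity formula forces the rescaled limit of $\upzeta_{\eps_n}$ to be invariant under dilations centred at the origin. Combining these two invariances with \eqref{densitusone} pins the blow-up limit down to $c_{x_0}\,\mathcal H^1 \rest \R \vec e_{x_0}$ with a single constant. This rigidity step is where I expect the main obstacle to lie: it is precisely the place where the new discrepancy relation has to play the role that the classical scalar discrepancy plays in the Modica--Mortola / Hutchinson--Tonegawa theory, ruling out atomic pieces and transverse components of the tangent measure.

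Once the flat blow-up limit is secured, the segment structure \eqref{segmentus0} should follow by an improvement-of-flatness / clearing-out argument of the type already used in the proof of Theorem~\ref{maintheo}. On every annulus $\D^2(x_0, 2r) \setminus \D^2(x_0, r/2)$ the maps $u_{\eps_n}$ lie, up to $o(1)$, in a narrow tube around a one-dimensional heteroclinic profile joining two wells $\upsigma_i, \upsigma_j \in \Sigma$, and these two wells cannot reconnect, branch or bend within $\D^2(x_0, r_0)$ without producing density anomalies that would in turn force $x_0$ back into $\mathfrak E_\star$. The proportionality \eqref{constantitude} is then simply the translation invariance of the blow-up limit along $\vec e_{x_0}$.

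For the lower bound \eqref{belowitude}, the blow-up identifies the limiting profile as a one-dimensional heteroclinic solution of $-u'' = -\nabla V(u)$ on $\R$ joining two \emph{distinct} elements of $\Sigma$ (they must differ, otherwise $x_0$ would be an interior point of some $\mathfrak U_\star^i$). Its energy equals the degenerate geodesic distance $\inf_\gamma \int \sqrt{2V(\gamma)}\,|\dot\gamma|$ between the wells, which is strictly positive by $(\text{H}_1)$--$(\text{H}_2)$ and bounded below by a constant depending only on $V$; the $\rd(x_0)$-dependence in $\upeta_0$ enters through the interior estimates needed to perform the blow-up at distance $\rd(x_0)$ from $\partial\Omega$ without boundary interference, and would in principle be removable for points well inside $\Omega$.
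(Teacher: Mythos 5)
Your blow-up strategy corresponds to what the paper explicitly declines to do: it points out that Theorem~\ref{segmentus} could be deduced from Theorem~\ref{varifoltitude} together with the Allard--Almgren structure theorem for one-dimensional stationary varifolds \cite{allardalm}, but instead gives a self-contained proof of a quite different flavor. The paper's route (Section~\ref{segmentino}) works with the Hopf differential: it pushes forward the localized measures onto the tangent line, obtaining the moment measures $\bJ_{k,\rho_0}$, $\bL_{k,\rho_0}$, $\bN_{k,\rho_0}$, establishes via Lemma~\ref{bidoche} that $J_{1,\rho_0}$ is monotone (using $\bN_{\rho_0}\geq 0$, which is the content of Lemma~\ref{pudding} and depends on Lemma~\ref{starac}), and then shows via the density bound of Lemma~\ref{densitudine} that a set of radii of almost full measure has $\mathcal Z(s)=1$. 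Combined with Lemma~\ref{unicitude} this kills $J_{1,\rho_0}$ on a full interval, hence $\bN_{\rho_0}=4\sin^2\upgamma_\star\,\upzeta_\star$ vanishes there, and an ODE-type bootstrap on the higher moments gives both the segment structure and the constancy of the density. This avoids blow-ups entirely; your proposal is a legitimate alternative framing but essentially re-derives Allard--Almgren, and the hardest parts of that argument (ruling out branching/infinite-type tangent measures and upgrading flatness at $r\to 0$ to a segment at a fixed positive $r_0$) are only gestured at, not supplied.

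The one place your sketch is actually \emph{wrong}, not merely incomplete, is the lower bound \eqref{belowitude}. You argue that the blow-up identifies a one-dimensional heteroclinic profile whose energy is controlled by the geodesic distance between the two wells. But the paper explicitly shows this identification fails in the vectorial case: the pseudo-periodic solutions of \cite{bethol} scaled as in \eqref{glasnost} have $\upmu_{\star,\parallel,\parallel}\neq 0$, i.e.\ genuine tangential gradient contributions, so the correspondence \eqref{sim} between limiting profiles and one-dimensional heteroclinics does not hold and the discrepancy relation \eqref{vanishdisc} is replaced by \eqref{discrepvec11}. In the paper the lower bound is obtained via a different mechanism: the clearing-out Theorem~\ref{claire} yields $\tbe_\star\geq\upeta_1$ on $\mathfrak S_\star$, and the PDE comparison of Lemma~\ref{cornifle} / Proposition~\ref{lheure} gives $\tbe_\star\leq K_{\rm dens}(\rd(x))\,\Uptheta_\star$ (cf.~\eqref{ondanse} in Theorem~\ref{absolute}), whence $c_{x_0}=\Uptheta_\star(x_0)\geq\upeta_1/K_{\rm dens}(\rd(x_0))$. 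You would need to replace your heteroclinic argument by this two-step chain, since the direct geodesic-distance estimate applies to $\tbe_\star$, not to $\Uptheta_\star$, and the trivial inequality $\upzeta_\star\leq\upnu_\star$ points in the wrong direction.
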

 
 Notice that, as a consequence of \eqref{belowitude},  for any $x_0 \in  \mathfrak S_{\star} \setminus \mathfrak E_\star$ , the  one dimensional  density 
 $\Uptheta_\star$ defined by
 \begin{equation}
{\tiny \Uptheta}_\star (x) =\underset{r \to 0} \liminf \frac{\upzeta_\star\left(\D^2(x, r)\right)}{2r} 
 \end{equation}
 is bounded below by $\upeta_0(\rd (x))$, hence away from zero,  and is locally  constant, equal to $c_{x_0}= \Uptheta_\star (x_0)$. \\

   Theorem  \ref{segmentus} expresses \emph{local stationarity  properties} of the set $\mathfrak S_\star$ and the measure $\upzeta_\star$.  As we will discuss later, the set $\mathfrak S_\star$ may have singularities, and hence $\mathfrak E_\star$ is not empty. However, more global stationary properties are also available. In order to state these properties,  the abstract language of \emph{varifolds} is the most appropriate. An important  preliminary step is to establish that the measure $\upzeta_\star$ concentrate on the set $\mathfrak S_\star$, i.e. its restriction the $\Omega\setminus \mathfrak S_\star$ vanishes (see Theorem \ref{maintheo}), and that it  is \emph{absolutely continuous}  with respect to the $\mathcal H^1$-measure on $\mathfrak S_\star$  (see Theorem \ref{absolute}).  In particular, this property implies that the measure $\upzeta_\star$ is completely detrmined by the set $\mathfrak S_\star$ and the density ${ \Uptheta_\star}$, and we have
   \begin{equation}
   \label{densitos}
   \upzeta_\star=\Uptheta_\star(\mathcal H^1\rest \mathfrak S_\star)
   = \Uptheta_\star \rd \lambda, {\rm \  where \ }  \rd  \lambda=\mathcal H^1\rest \mathfrak S_\star.
   \end{equation}
   We have:
    \begin{theorem}
   \label{varifoltitude} The rectifiable one-varifold ${\rm \bf V}(\mathfrak S_\star, \Uptheta_\star)$  corresponding to the measure $\upzeta_\star$ is  stationary.
   \end{theorem}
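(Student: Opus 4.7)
The plan is to pass to the limit $\eps_n\to 0$ in the inner-variation (stress--energy) identity associated to \eqref{elipes}. Multiplying the equation scalarly by $\nabla u_\eps\cdot X$, for an arbitrary $X\in C_c^1(\Omega,\R^2)$, and integrating by parts yields
\begin{equation}
\label{pohoid}
\int_\Omega \left[\, e_\eps(u_\eps)\,\mathrm{div}\, X \;-\; \eps\,(\partial_i u_\eps\cdot \partial_j u_\eps)\,\partial_j X_i \,\right]\rd x \;=\; 0,
\end{equation}
which is the integral form of the conservation law $\partial_j T^\eps_{ij}=0$ for the stress tensor $T^\eps_{ij}:= e_\eps(u_\eps)\,\delta_{ij}- \eps\,(\partial_i u_\eps\cdot \partial_j u_\eps)$. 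After extracting a further subsequence, the energy measure $\mu_\eps:=e_\eps(u_\eps)\rd x$ converges weakly-$\ast$ to a Radon measure $\mu_\star$ on $\Omega$, and the symmetric nonnegative matrix-valued measure $\tau^\eps_{ij}:=\eps\,(\partial_i u_\eps\cdot\partial_j u_\eps)\rd x$ converges to a symmetric nonnegative matrix-valued Radon measure $\tau_{ij}$, controlled componentwise by $2\mu_\star$. Passing to the limit in \eqref{pohoid} yields
\begin{equation}
\label{limpohoid}
\int_\Omega \mathrm{div}\, X\;\rd \mu_\star \;-\; \int_\Omega \partial_j X_i\;\rd \tau_{ij}\;=\;0 \qquad \forall\, X\in C_c^1(\Omega,\R^2).
\end{equation}

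The crux is to identify $\mu_\star$ and $\tau$ on $\mathfrak{S}_\star$. By Theorem \ref{maintheo}, both measures are supported on $\mathfrak{S}_\star$, and by the absolute continuity property announced before the statement, both are absolutely continuous with respect to $\mathcal{H}^1\rest\mathfrak{S}_\star$; in particular the $\mathcal{H}^1$-negligible singular set $\mathfrak{E}_\star$ carries no mass. At an arbitrary regular point $x_0\in \mathfrak{S}_\star\setminus\mathfrak{E}_\star$, Theorem \ref{segmentus} provides the geometric rigidity: $\mathfrak{S}_\star$ coincides, on $\D^2(x_0,r_0)$, with the segment of direction $\vec{e}_{x_0}$, and $\upzeta_\star\rest\D^2(x_0,r_0)= c_{x_0}\,\mathcal{H}^1\rest(\cdot)$. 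Combining this with the \emph{new discrepancy relation} established in the last part of the paper -- which yields asymptotic equipartition of the kinetic and potential parts of $e_\eps(u_\eps)$, so that $\mu_\star = 2\upzeta_\star = 2\Uptheta_\star\,\mathcal{H}^1\rest\mathfrak{S}_\star$ near $x_0$ -- and with a blow-up argument at $x_0$ whose limit, by the flatness of $\mathfrak{S}_\star$, is a one-dimensional heteroclinic connection in the normal direction $\vec{e}_{x_0}^\perp$, one infers that locally near $x_0$
\begin{equation}
\tau \;=\; 2\Uptheta_\star\,\bigl(\vec{e}_{x_0}^\perp\otimes\vec{e}_{x_0}^\perp\bigr)\, \mathcal{H}^1\rest\mathfrak{S}_\star.
\end{equation}

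Inserting both identifications into \eqref{limpohoid} and using the two-dimensional identity $\delta_{ij}-(\vec{e}_{x_0}^\perp)_i(\vec{e}_{x_0}^\perp)_j=(\vec{e}_{x_0})_i(\vec{e}_{x_0})_j$ produces
\begin{equation}
\int_{\mathfrak{S}_\star}\! 2\Uptheta_\star\,(\vec{e}_{x_0})_i(\vec{e}_{x_0})_j\,\partial_j X_i\,\rd\mathcal{H}^1 \;=\; 0 \qquad \forall\, X\in C_c^1(\Omega,\R^2),
\end{equation}
i.e. the vanishing of the tangential divergence of $X$ integrated against $2\Uptheta_\star\,\mathcal{H}^1\rest \mathfrak{S}_\star$, which is exactly the first variation of $\mathbf{V}(\mathfrak{S}_\star,\Uptheta_\star)$ (up to the harmless factor $2$). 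The main obstacle, and the real source of difficulty in the vectorial setting, is the identification of $\tau$ at a regular point: in the scalar case both the alignment of $\tau$ with $\vec{e}_{x_0}^\perp\otimes\vec{e}_{x_0}^\perp$ and the equipartition identity $\mu_\star=2\upzeta_\star$ stem from the classical discrepancy vanishing coming from the Modica monotonicity formula, a tool previously unavailable in the vectorial case. The use of the new discrepancy relation promised in the abstract is exactly what closes the circle here, combined with the purely two-dimensional rigidity furnished by Theorem \ref{segmentus}.
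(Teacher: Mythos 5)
Your starting point---passing to the limit in the stress-energy identity $\int(\upnu_\star\delta_{ij}-\upmu_{\star,i,j})\partial_j X_i=0$---is essentially equivalent to what the paper does; the paper works with the complex Hopf-differential relation $\partial_{\bar z}\omega_\star=2\partial_z\upzeta_\star$ (Lemma~\ref{holomorphitude}), which is just the complex form of your \eqref{limpohoid}, substitutes the identity $\omega_\star=-2e^{-2i\upgamma_\star}\upzeta_\star$ from Lemma~\ref{starac}, and unfolds the resulting first-order system into the stationarity condition \eqref{nary}. In that sense, the proposal and the paper run on parallel tracks.

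However, the two intermediate identifications on which your argument leans are both false in the vectorial setting, and the paper explicitly says so and constructs counterexamples. You claim $\mu_\star=2\upzeta_\star$ (scalar equipartition) and $\tau=2\Uptheta_\star\,\vec e_{x_0}^\perp\otimes\vec e_{x_0}^\perp\,\rd\lambda$ (no tangential stress), invoking a blow-up to a one-dimensional heteroclinic. None of this survives in the vectorial case: the pseudo-profiles of \eqref{glasnost}--\eqref{bla} give $\upmu_{\star,\parallel,\parallel}\neq 0$, so $\tau$ has a genuine tangential component; and in \eqref{asympdiscr} ff.\ the paper states plainly that $\upnu_\star=2\upzeta_\star$ ``does not extend to the vectorial case.'' Solutions need not reduce to one-dimensional profiles in the transverse direction, so a blow-up argument to a heteroclinic is not available. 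The correct statement replacing your identifications is the new discrepancy relation of Theorem~\ref{discrepvec}: $2\upzeta_\star=\upmu_{\star,\perp,\perp}-\upmu_{\star,\parallel,\parallel}$ together with $\upmu_{\star,\perp,\parallel}=0$. What saves your final conclusion is an algebraic coincidence: writing $\upnu_\star=\upzeta_\star+\frac12(\upmu_{\star,\perp,\perp}+\upmu_{\star,\parallel,\parallel})$ and using the correct discrepancy relation, one gets in the moving frame
\begin{equation*}
(\upnu_\star\delta-\tau)_{\parallel\parallel}=\upzeta_\star+\tfrac12(\upmu_{\star,\perp,\perp}-\upmu_{\star,\parallel,\parallel})=2\upzeta_\star, \qquad (\upnu_\star\delta-\tau)_{\perp\perp}=\upzeta_\star-\tfrac12(\upmu_{\star,\perp,\perp}-\upmu_{\star,\parallel,\parallel})=0,
\end{equation*}
and the off-diagonal terms vanish, so the limiting stress is indeed $2\upzeta_\star\,\vec e_{x}\otimes\vec e_{x}$ --- the same formula you reach, but for the right reasons and without $\upmu_{\star,\parallel,\parallel}=0$. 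You should replace the scalar-case identifications by this computation; as written, your intermediate claims contradict explicitly stated results of the paper and would also ``prove'' the false statement $\upmu_{\star,\parallel,\parallel}=0$.
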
 
 
 \begin{figure}[h]
\centering
\includegraphics[height=7.5cm]{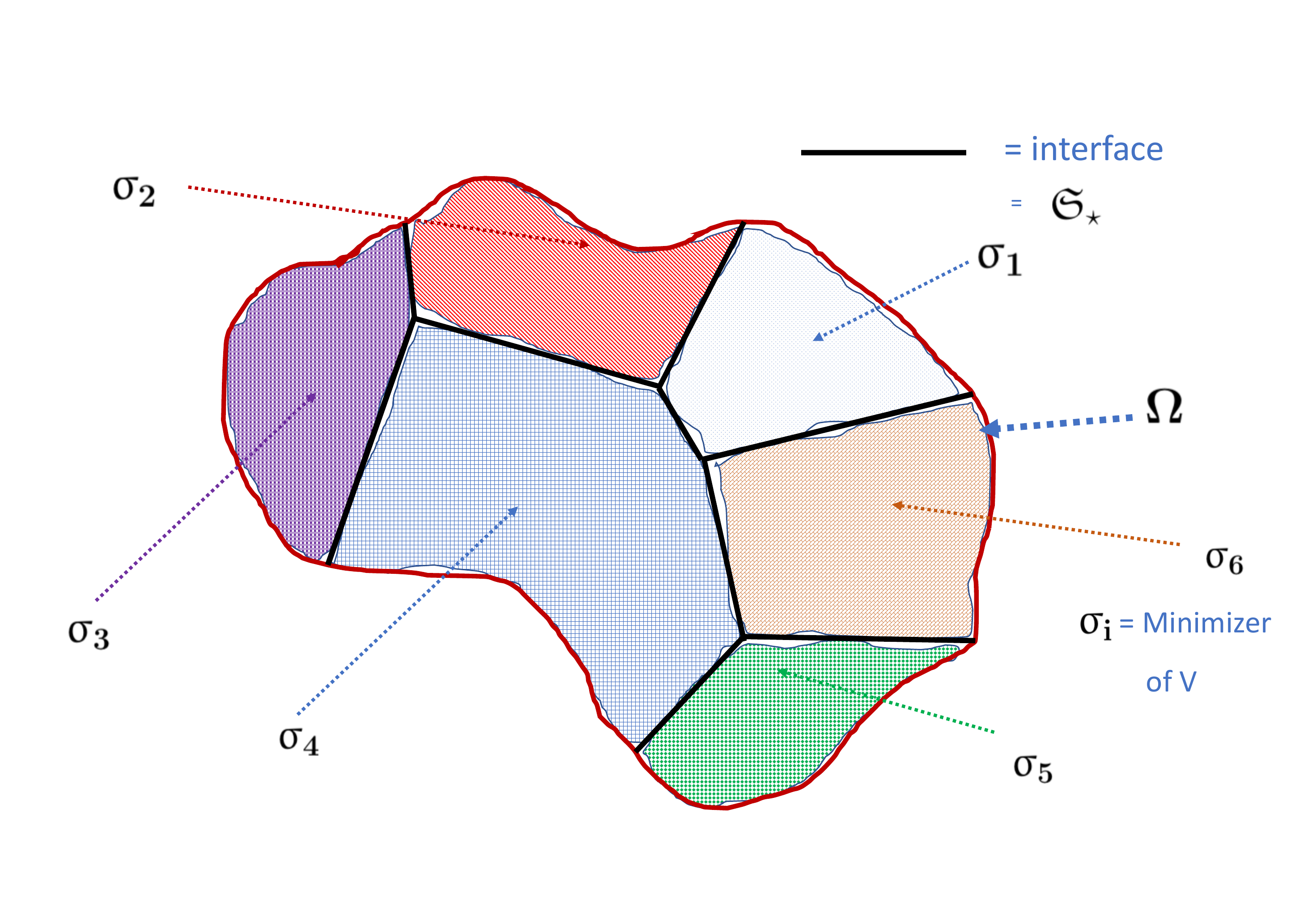}
\caption{  {\it The domain $\Omega$ is divided in subdomains where  $u_\eps$ is nearly constant. The  interfaces are  union of segments.}}
\label{partition}
\end{figure}
 
 
 The theory of varifolds has been developed in the context of minimal surfaces, but it turns out to be also an important tool  in the study of singular limits (see e.g. \cite{simon} for a general presentation of the theory of varifolds). 
 The fact  that ${\rm \bf V}(\mathfrak S_\star, \Uptheta_\star)$  is a stationary varifold  is equivalent to the following statement: Given  any smooth vector field  $\vec X \in C_c (\Omega, \R^2)$ on $\Omega$ with compact support, the following identity holds
 \begin{equation}
 \label{nary}
 \int_{\Omega} {\rm div}_{_{T_x\mathfrak S_\star} }  \vec X \rd \upzeta_\star=0. 
 \end{equation} 
 Here, for $x \in \mathfrak S_\star \setminus \mathfrak E_\star$, the number  $\displaystyle{{\rm div}_{_{T_x\mathfrak S_\star}}\vec X(x) }$  is defined by
 \begin{equation}
 {\rm div}_{_{T_x\mathfrak S_\star}}\vec X(x)=
\left (\vec e_x\cdot \vec {\nabla}\vec X(x)\right) \cdot\vec e_x, {\rm \ for \ } x \in \mathfrak S_\star.  
\end{equation}

 The structure on one-dimensional varifolds with  densities bounded away from zero was thoroughly investigated by Allard and Almgren in \cite{allardalm}.  They showed that such varifolds have a graph structure and are the sum of segments with densities. Theorem \ref{segmentus} \emph{may therefore be deduced from Theorem \ref{varifoltitude} } invoking the results of Section 3 in \cite{allardalm}. In the present paper, we provide however a simple self-contained proof, based on several results which are worked out independently. 
 
  One-dimensional varifolds may have singularities,  which are characterized by the fact that the density is not constant in their neighborhood.  The simplest example of such a singular varifold in the whole plane with  a singularity at $0$ is provided by the union of a finite numbers of distinct half-lines, intersecting at the origin, with appropriate  constant densities.   More precisely,  consider an integer $d >2$, and let $\vec e_1,  \vec e_2, \ldots, \vec e_d$ be $d$  distinct unit vectors in $\R^2$. Set 
  \begin{equation}
  \label{junctions} 
  \mathcal S_\star=\underset{i=1} {\overset d \cup}
  \mathbbm H_i,{\rm \ where \  for \ } i=1, \ldots, d,  {\rm \ we \ set \ } \mathbbm H_i=  \left \{ t\vec e_i, t \geq 0 \right \},  
  \end{equation}
  and let $\theta_1, \ldots, \theta_d$  be $d$ positive numbers.  If $\theta_i$ represents the density $\Theta$ of  $\mathcal S_\star$ on $ \mathbbm H_i$ (which is hence constant there), then ${\bold V} (\mathcal S_\star, \Theta)$  is a stationary one-dimensional rectifiable varifold if and only if
  \begin{equation}
  \label{stationaritat} 
 \underset{i=1} {\overset d \sum }\theta_i \vec e_i=0.
  \end{equation}
Singularities $x_0$ which behave \emph{locally} as \eqref{junctions}-\eqref{stationaritat} are termed of \emph{finite type}.
It turns out that singularities of finite type   appear in the asymptotics of the vectorial Allen Cahn equation, \emph{even in the minimizing case}, and are actually an intrinsic  part in the problem. A first trivial example is provided by an uncoupled system of two scalar Allen-Cahn equation, taking for instance as a potential $V: \R^2 \to \R$ the potential 
$\displaystyle{V(u_1, u_2)=\frac{1}{4} \left[(1-u_1)^2+(1-u^2)^2  \right] }$. For this potential,  the map  $u_\eps$ defined on $\R^2$ by 
$$u_\eps(x_1, x_2)=(\tanh  \left(\frac{x_1}{\sqrt{2\eps}}\right) , \tanh  \left(\frac{x_2}{\sqrt{2\eps}}\right), {\rm \    for  \ }  (x_1, x_2) \in \R^2, $$
is a solution to \eqref{elipes} on the whole plane. The limiting  interface $\mathfrak S_\star$  for $\eps \to 0$ is  then given as the union   of the lines $x_1=0$ and $x_2=0$, so that $0$ is a singularity where these lines cross with right angles.   One may  actually  construct similar examples where the angle between the two lines  is arbitrary.

\smallskip
A more involved example  is constructed in \cite{brongui}, where   a sequence of minimizing solutions is constructed on the entire plane, for a potential with three minimizers and equilateral symmetry. The set $\mathfrak S_\star$  then consists  of three half lines with equal angles and equal densities, yield a singularity at zero with \emph{triple junction}(see Figure \ref{karmelit}).  The appearance of  triple junctions in general minimizing problems is discussed in \cite{sternziem} and analyzed through Gamma-convergence results.

\begin{remark} {\rm   Singularities of finite type have also be constructed as limits of  scalar Allen-Cahn problem (see \cite{delpinopac, guiliu}). In these constructions, the number $d$ of half-lines in \eqref{junctions} is even.

}
\end{remark}

 Besides singularities with a locally finite sum of segments as in \eqref{junctions}, an example of  a singularity  of a  stationary varifold  with an  \emph{infinite  complexity} is produced in \cite{allardalm}. It is however shown in \cite{allardalm} that the occurence of such singularities is ruled out if the set of densities is discrete. As we will see later,  there are examples of potential such that the possible set of densities is infinite, so that  of singularities of \emph{infinite  type} cannot be excluded a priori in the limits of solutions to \eqref{elipes}. 

\begin{figure}[h]
\centering
\includegraphics[height=7cm]{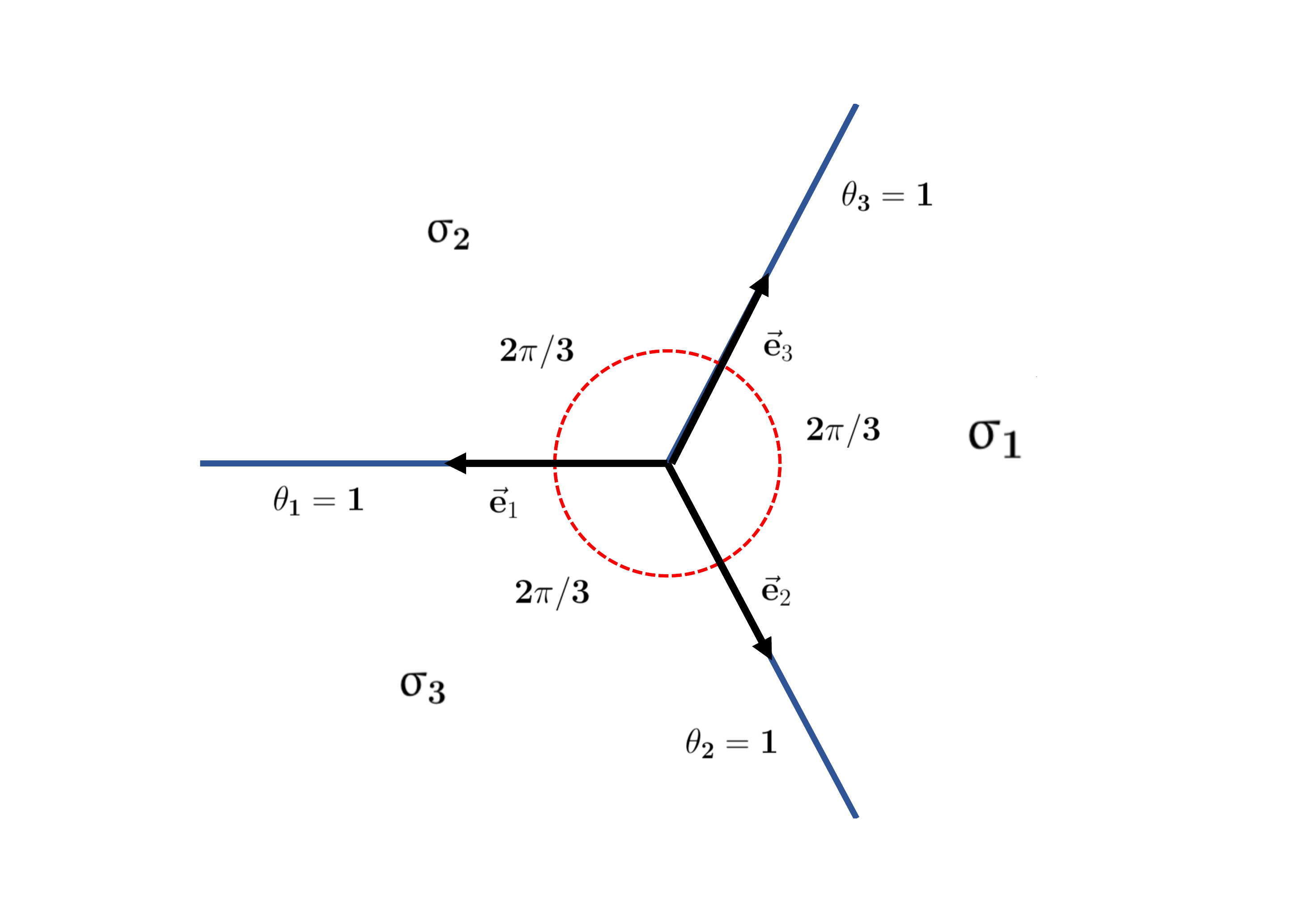}
\caption{  {\it Example of a triple junction, as in \cite{brongui}.}}
\label{karmelit}
\end{figure}
 
  \subsection{Comparing results  in the scalar and vectorial cases} 
  
  Although the results stated in Theorems \ref{maintheo}, \ref{segmentus} and \ref{varifoltitude}   for the vectorial Allen-Cahn equation  are somewhat parallel with the results obtained  so far  in the literature for  the scalar case, it is worthwhile to stress  some major differences between the scalar and the vectorial case. \\
  
\noindent  
  {\bf The one-dimensional case}
  
  \noindent
  Distinct behaviors are already observed for the one-dimensional case.  
  Indeed, for   $\Omega=\R$,    equation \eqref{elipes} reduces   to the ordinary differential equation
\begin{equation}
	\label{heterocline}
	-\frac{{\rm d}^2 w_\eps}{{\rm d} s
		^2}=\eps^{-2}\nabla_w V(w_\eps) {\rm \ on \ } \R.
\end{equation} 
Finite energy solutions necessarily connect at $\pm \infty$ two  minimizers {$\upsigma^-$} and {$\upsigma^+$}: They are called 
{\emph{profiles}} or {\emph{heteroclinic connections}}, if {$\upsigma^-\not = \upsigma^+$}. Multiplying \eqref{heterocline} by $w_\eps$, we are led to the conservation law 
 \begin{equation}
 \label{conserve}
 \frac{d}{dx} \left(\frac{1}{\eps} V(w_\eps)-\eps\frac{\vert \dot w_\eps\vert^2}{2}\right)=0, 
 \end{equation}
 so that for \emph{profiles}  one derives  the identity
 \begin{equation}
 \label{first-order}
 \eps  \vert \dot w_\eps\vert=\sqrt{V(w_\eps)} {\rm \ on \ } \R.
 \end{equation}
   In the  \emph{scalar} case, the first order equation \eqref{first-order} is easily integrated by separation of variables, so  that profiles  connect only \emph{nearby minimizers} $\upsigma^-$ and $\upsigma^+$ of the potential, and  are essentially unique,  up to translations and symmetries. For instance, in the case of the   \emph{Allen-Cahn potential} \eqref{glexemple}, the solution  is given up to translation and symmetry, by  
$\displaystyle{
w_\eps(s)= \tanh \left(\frac{s} {\sqrt{2}\, \eps} \right)}$, for $s \in \R$.

The situation is very different in the \emph{vectorial case}, since relation \eqref{first-order} is less constraining: Under additional assumptions on the potential $V$, one may find several profiles connecting two minimizers of the potential (see e.g \cite{Alifusco} and references therein). The search for such solution is still an active field of research (see for instance \cite{Alibetu,  Zuster, monsant}).  
As we will see next, the genuine non-uniqueness of one-dimensional profiles is a first source of  important difference also in the higher dimensional case, in particular concerning the conservation law \eqref{first-order}. 

\medskip  
\noindent
{\bf The higher dimensional case}

\noindent
The higher-dimensional theory in the scalar case is rather advanced  and  a very satisfactory theory has been set up in any dimension $N\geq 2$. As mentioned,  the existence of a $(N-1)$-dimensional  set $\mathfrak S_\star$ is established in \cite{ilmanen, hutchtone}. Moreover,  it is shown  there that the  $(N-1)$-rectifiable  set $\mathfrak S_\star$,  equipped with the energy density corresponding to the measure $\upnu_\star$ defined in
\eqref{choux}   is a \emph{stationary rectifiable  varifold}.
 The results in  \cite{hutchtone}  embody  the intuitive idea that locally, the equation reduces to a one-dimensional problem. More precisely,  
 typically, in dimension two, the expected situation reduces,  \emph{locally near some point}  $x_0$,  to the case
\begin{equation}
\label{quoique}
u_\eps(x)\underset{x \to x_0} \simeq w_\eps (x_2),  {\rm \ with \ }  x=(x_1, x_2) \in \R^2, 
\end{equation} 
where the coordinates are chosen so that the tangent to $\mathfrak S_\star$ at $x_0$ has equation $x_2=0$, and where $w_\eps$ stands for a solution to the one-dimension problem \eqref{heterocline}   (see Figure \ref{fonction}). Notice that the possibility of gluing of several such solutions is not excluded, but we will not discuss this here.   Ultimately, the results in  \cite{ilmanen, hutchtone} provide a rather simple picture of the solutions. They involve a minimal surface, the solution may be represented  as a one-dimensional profiles glued to the surface in the transversal direction, so that one is tempting to write the correspondance
\begin{equation}
\label{sim}
 {\rm \ solutions \ to \   \eqref{elipes} }  \sim {\rm \ minimal \ surface} + {\rm glued \ profiles}.
 \end{equation}
 The general structure of solution is hence fairly well understood (see Figure \ref{laglue}). 
As a matter of fact, the correspondance goes to some extend in either way, since, conversely, given a minimal surface, one may construct  solutions  to the scalar Allen-Cahn equation having the previous behavior (see \cite {pacarwei}).  This should be also connected with the famous De Giorgi conjecture (\cite{degiorg}) (see \cite{savin}, and references therein).
 \begin{figure}[h]
\centering
\includegraphics[height=7cm]{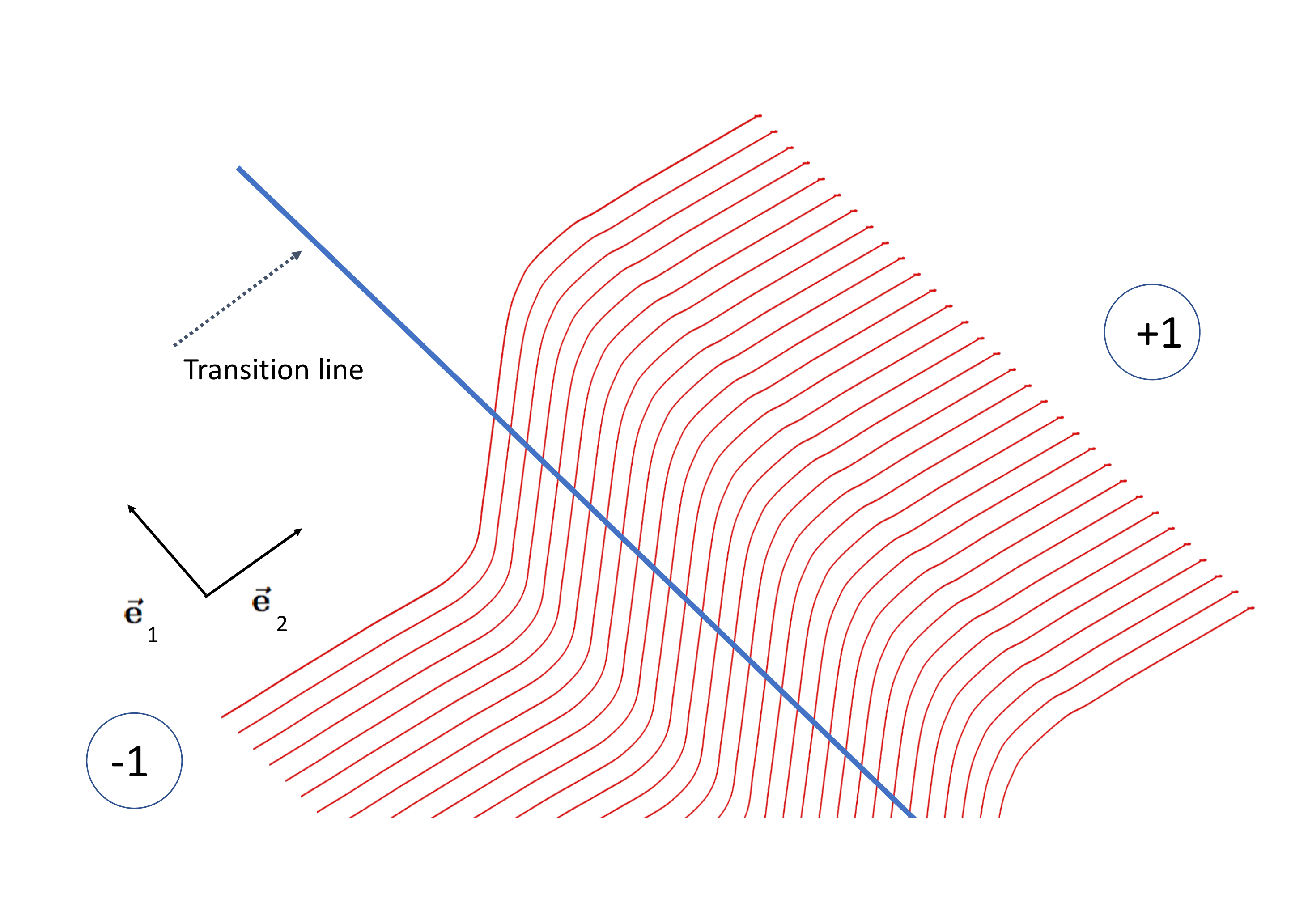}
\caption{  {\it Interface near a regular point $x_0$ in the scalar case, with an Allen-Cahn type potential.}}
\label{laglue}
\end{figure}

The picture  in the   vectorial case is more complex. Firstly, as we have already seen, the set of one dimensional profiles is much larger, it could perhaps be even infinite.  Besides this, there are solutions which  \emph{cannot be reduced to one dimensional profiles},   in view of results  in \cite{alabrogui} and \cite{bethol}, and are hence genuinely multi-dimensional, so that a property similar the \eqref{quoique}  or \eqref{sim} \emph{cannot not be expected in full generality}. 

In \cite{bethol}, it is shown that, under specific conditions on the potential $V$, one may \emph{mountain-pass  solutions} to $\displaystyle{-\Delta u=\nabla_u V(u)}$  on the cylinder 
 $\Lambda_\rL=[-\rL, \rL] \times \R$ provided $L>0$ is sufficiently large, 
 with periodic boundary conditions in the $x_1$ direction, namely such that 
 \begin{equation}
 \label{grouicper}
 u(-\rL, x_2)=u(\rL, x_2) {\rm \ and \ } \frac{\partial u}{\partial x_1}(-\rL, x_2)= \frac{\partial u}{\partial x_1 }(\rL, x_2), {\rm \ for \ any \ }  x_2 \in \R.
 \end{equation}
The solution  obtained in \cite{bethol}  is \emph{not a one-dimensional profile}, since one may show that there are   also \emph{tangential  contributions}: Indeed, we have 
 \begin{equation}
  \label{bla}
 \frac{\partial u}{\partial x_1}\not  =  0  {\rm \ on \ } \Lambda_\rL.
  \end{equation}
One then considers the   scaled map on $\R^2$ defined for $x=(x_1, x_2) $ by 
\begin{equation}
\label{glasnost}  
 u_\eps(x)=u\left(\frac{x-N\eps\be_1}{\eps}\right),  {\rm  \ if  \ }   x_1 \in [N\eps, (N+1)\eps]
 \end{equation}
 which solves \eqref{elipes} on $\R^2$ (see Figure \ref{pseudoP}).  Moreover, it follows from \eqref{bla}, that for {the transversal derivative}, we have 
 \begin{equation}
  \eps  \left  \vert  \frac{\partial u_\eps}{\partial x_1}  \right  \vert^2  \rightharpoonup \upmu_{\star, 1,1} \not =0,
 {\rm \ where \ }  \upmu_{\star, 1, 1}=c \mathcal H^1(D) {\rm \ with \ } D=\left\{(x_1, 0), x_1\in \R \right\}, 
 \end{equation}
  for some constant $c>0$.  Finally it can be shown that the set of densities obtained for such solution is infinite, choosing various values for the constant $\rL>0$.

\begin{figure}
 \centering
 \includegraphics[height=6cm]{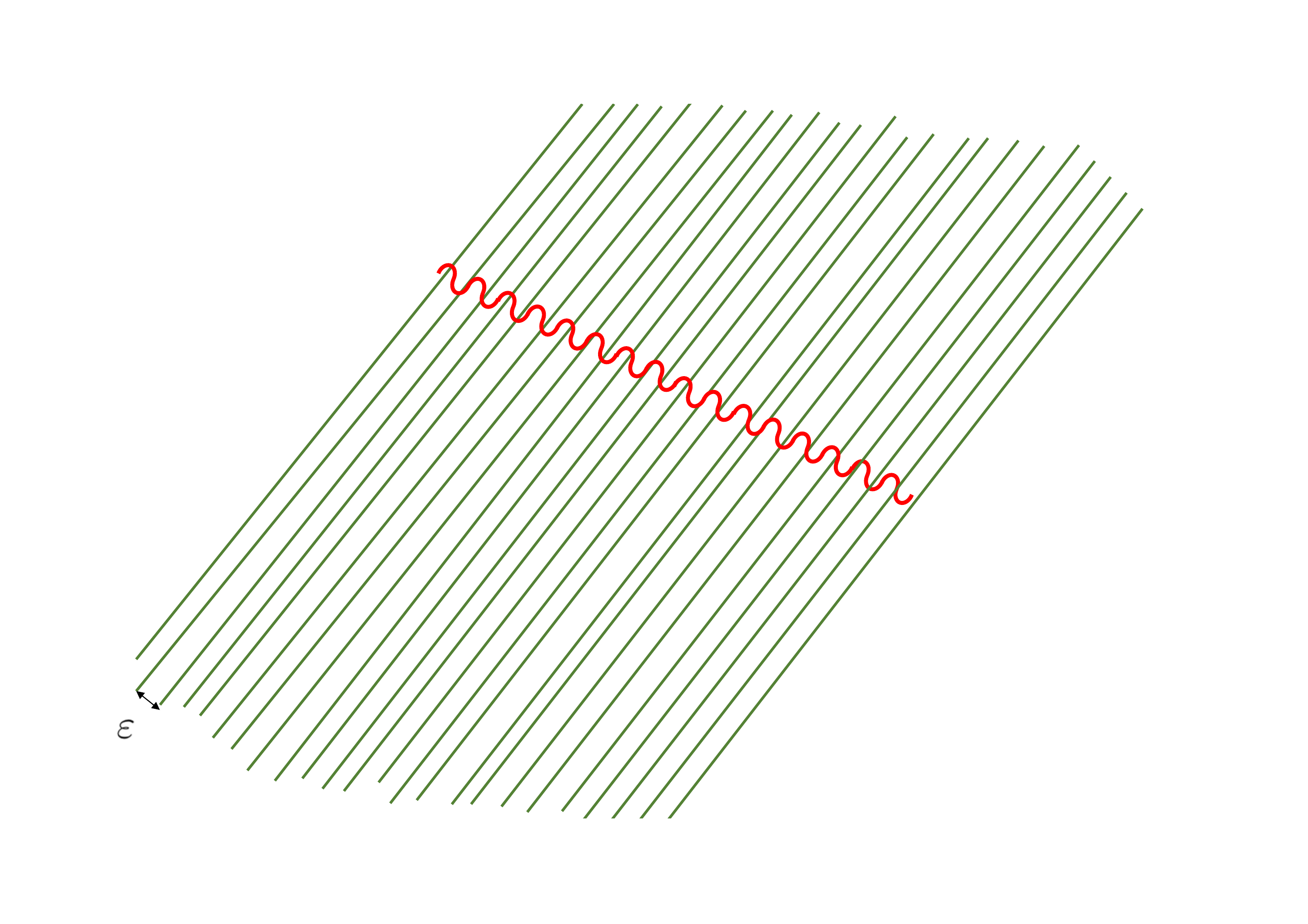}
	\caption{\it Interface with a periodic pseudo-profile}
	\label{pseudoP} 
	\end{figure}


 \subsection{Comparing   the methods in the scalar  and vectorial cases} 
 \label{comparse}
 \noindent
{\bf Monotonicity for the energy in the scalar case} 

\noindent
A large part  of  the arguments developed for    the scalar theory, as well as actually in the present paper, rely on integral estimates, starting with the energy, but also the integral of the potential. In the present context, we set  for an arbitrary subdomain $G \in \Omega$, 
  \begin{equation}
  \label{energyu}
  \Eps\left(u_\eps, G\right)=\int_{\mathcal U} e_\eps(u)  {\rm d} x  {\rm \ and \ } \Veps(u, G)=\frac{1}{\eps}\int_{\mathcal U} V(u)  {\rm d} x.
  \end{equation}
  Monotonicity formula play a distinguished role in the field. We recall that the monotonicity formula
   \begin{equation*}
  \label{monotoniee}
  \frac{d}{dr} \left(\frac{1}{r^{N-2} }  \Eps\left(u_\eps, \B^N(x_0, r)\right) \right) \geq 0,  {\rm \ for \ any \ } x_0 \in \Omega,  
  \end{equation*} 
   holds for arbitrary potentials, and is relevant if one wants to establish  concentration on $N-2$ dimensional sets, as it occurs in Ginzburg-Landau theory. If one wants instead to establish concentration on $N-1$ dimensional sets, then the stronger monotonicity formula 
    \begin{equation}
  \label{monotonie}
  \frac{d}{dr} \left(\frac{1}{r^{N-1} }  \Eps\left(u_\eps, \B^N(x_0, r)\right) \right) \geq 0,  {\rm \ for \ any \ } x_0 \in \Omega,  
  \end{equation} 
  seems more appropriate. As a matter of fact,  we have, in dimension $N=2$,   the identity (see Remark \ref{monotoinou})
  \begin{equation}
   \label{monantoine0}
   \frac{d}{dr}\left( \frac{\Eps\left(u_\eps, \D^2(r)\right)}{r}\right)=\frac{1}{r^2}\int_{\D^2(r)} \xi_\eps(u_\eps){\rm d} x  +
   \frac{1}{r} \int_{\S^1(r)}\vert \frac{\partial u_\eps}{\partial r} \vert ^2 {\rm d} \ell, 
   \end{equation}
   where $\xi_\eps(u_\eps)$  denotes the discrepancy function given  by
    \begin{equation}
 \label{discretpanse}
 \xi_\eps (u_\eps)= \frac{1}{\eps} V(u_\eps)-\eps \frac{\vert \nabla  u \vert^2}{2}.
 \end{equation}
   Notice that, in view of \eqref{first-order}, the discrepancy function vanishes for one-dimensional profiles, a property which allows to compute solution in the scalar case as seen before.
   
   Formula \eqref{monotonie} has been established in  \cite{ilmanen}  in the \emph{scalar case}. The proof provided in \cite{ilmanen} relies strongly on  \emph{the positivity} of the  \emph{discrepancy}  function $\xi_\eps$, 
a property obtained  thanks to the maximum principle. The fact that $\xi_\eps$ is positive for \emph{scalar solutions} of \eqref{elipes} was observed first by L. Modica in \cite{modica} for entire solutions.  It is actually  proved in \cite{hutchtone} that the discrepancy $\xi_\eps$ vanishes asymptotically as $\eps \to 0$.

Inequality \eqref{monotonie} is the cornerstone of the  scalar theory,  as developed in \cite{ilmanen, hutchtone}. It yields both upper and lower bounds for the concentration of the energy. A large part of the arguments deal with properties of  limiting measures, obtained as   $\eps \to 0$.  Instead of the measure  $\upzeta_\star$ which appears both in Theorem \ref{maintheo} and Theorem \ref{segmentus}, obtained as a limit of the potential (see \eqref{boulga00} and \eqref{boulga}), the central tool in the scalar case is the corresponding measure for the full energy.  More precisely,  consider  the family $(\upnu_\eps)_{0<\eps\leq 1} $ of measures defined on $\Omega$  by
  \begin{equation}
  \label{mesure}
  \upnu_\eps \equiv e_\eps(u_\eps) \, {\rm d}\, x    {\rm  \ on  \ } \Omega, 
    \end{equation}
so that,    in view of \eqref{naturalbound}, the total mass of the measures is bounded by ${\rm M}_0$, that is 
   $\upnu_\eps (\Omega) \leq {\rm M}_0.$
 By compactness, there exists    a decreasing subsequence $(\eps_n)_{n\in \N}$  tending to $0$ and a limiting measure $\upnu_\star$    on $\Omega$ with $\upnu_\star (\Omega) \leq {\rm M}_0$, such that 
  \begin{equation}
  \label{choux}
  \upnu_{\eps_n}\rightharpoonup \upnu_\star  {\rm \ in \ the \ sense \ of \ measures  \ on \ }
  \Omega  {\rm \ as \ } n\to +\infty.
  \end{equation}  
  A first straightforward consequence of the  monotonicity formula \eqref{monotonie} for the energy   is  that   the one-dimensional density of the measure $\upnu_\star$ is bounded from above. This property then implies  that the the concentration set $\mathfrak S_\star$ of $\upnu_\star$ has \emph{at least dimension one}.
  Combining the monotonicity \eqref{monotonie} with a weak form of the clearing-out property, similar to Proposition \ref{brioche} in the present paper, the monotonicity formula yields also a \emph{lower bound} on the density of $\upnu_\star$ 
which is hence  bounded away from zero. This property  implies  that  the concentration set $\mathfrak S_\star$ of $\upnu_\star$ has \emph{at most dimension one}, hence its dimension is \emph{exactly one}. The previous discussion therefore  the concentration property  of $\upnu_\star$ is a direct consequence of \eqref{monotonie}. 

Notice also that  the previous arguments show that the  measure $\upnu_\star$ is \emph{absolutely continuous with respect to $\rd \lambda$}, the $\mathcal H^{N-1}$ measure on $\mathfrak S_\star$, so that one may write $\upnu_\star=\tbe\,  \rd \lambda$, where $\tbe\, $ is a integrable function on $\mathfrak S_\star$. Going to the limit $\eps\to 0$ in \eqref{discretpanse}, we obtain, since $\xi_\eps \to 0$ as
 $\eps \to 0$, 
\begin{equation}
\label{asympdiscr}
2\upzeta_\star=\upnu_\star, 
\end{equation}
 a relation which in some sense extends \eqref{first-order} to the high-dimensional setting. We will see, in contrast,  that  relation \eqref{asympdiscr} \emph{does not extend} to the vectorial case.

 \begin{remark}{\rm   As already mentioned, it has been proven in \cite{ilmanen, hutchtone} that, in the sclar case, the  rectifiable  varifolds 
 ${\bf V} (\mathfrak S_\star, \mathbbm e)$  corresponding to the measure $\upnu_\star$ is stationary. In view of relation \eqref{asympdiscr}  this implies that the rectifiable varifold  ${\bf V} (\mathfrak S_\star, \upzeta_\star)$  corresponding to the measure $\upzeta_\star$ is also stationary, which is hence consistent with Theorem \ref{varifoltitude} of the present paper.}
 \end{remark} 
 
 \bigskip
 \noindent
{\bf Circumventing  lack of  monotonicity  for the energy in the two-dimensional vectorial case}. 

\noindent
Concerning the vectorial case,  non-negativity  of the discrepancy as well
 as the monotonicity formula are known to fail for some solutions of the \emph{Ginzburg-Landau system}, so that the question whether they might still hold  under some possible additional conditions on the potential  or the solution itself is widely open to our knowledge (see \cite{Ali1} for a discussion  of these issues and for additional references). 
 
 \smallskip
In  order to circumvent the  lack of monotonicity formula for the energy,   we have to work out new results on the level of solutions to  PDE (termed in the paper the \emph{$\eps$-level}), which will be present in Subsection \ref{newpde}. The clearing-out result given in Theorem \ref{clearingoutth}  is central in our analysis: It  implies, as in the scalar case, that the set $\mathfrak S_\star$ has dimension at most one. Combining with several other results for the PDE, we are able to deduce most of the properties developed in Theorem \ref{maintheo}.

 For the proofs of Theorems \ref{segmentus} and \ref{varifoltitude}, the fact that the measures $\upzeta_\star$ and $\upnu_\star$ are absolutely continuous with respect to the $\mathcal H^1$ measure of $\mathfrak S_\star$ is \emph{crucial}.  We will show, in the last part of this paper:
 
 \begin{theorem}
  \label{absolute}
   The measures $\upnu_\star$ and  $\upzeta_\star$   have support on the set $\mathfrak S_\star$ defined in Theorem \ref{maintheo}, and   are  absolutely continuous with respect to $\rd\lambda=\mathcal H^1 \rest \mathfrak S_\star$, the one-dimensional Hausdorff measure on $\mathfrak S_\star$. 
       Let $\mathbbm e_\star$ and $\Uptheta_\star$ denote the densities of $\upnu_\star$ and $\upzeta_\star$ with respect to $\rd \lambda$ respectively, so that
    $\upnu_\star=\mathbbm e_\star \rd \lambda$ and $\upzeta_\star=\Uptheta_\star \rd \lambda$.  We have the inequalities, for $x \in \mathfrak S_\star$, 
    \begin{equation}
    \label{ondanse}
    \left\{
    \begin{aligned}
   &\upeta_1 \leq \mathbbm e_\star(x)\leq K_{\rm dens} \left({\rm d}(x)\right) \Uptheta(x) ,  
      {\rm \ and } \\
   & \Uptheta_\star(x)\leq \frac{M_0}{{\rm d}(x)}, 
    \end{aligned}
    \right.
    \end{equation}
 where $\upeta_1>0$ is some constant depending only on $V$,  ${\rm d}(x)={\rm dist}(x, \partial \Omega)$ and 
$K_{\rm dens} \left({\rm d}(x)\right)$ denotes a constant depending only on $V$, $M_0$ and ${\rm d}(x)$.
\end{theorem}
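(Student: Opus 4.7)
\medskip
\noindent
\textbf{Proof strategy for Theorem \ref{absolute}.}
The plan splits into four steps: concentration of both measures on $\mathfrak S_\star$, the upper bound on $\Uptheta_\star$ via the new monotonicity formula, the lower bound on $\mathbbm e_\star$ via clearing-out, and finally the comparison $\mathbbm e_\star \le K_{\rm dens}\Uptheta_\star$.

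\medskip
\noindent
\emph{Step 1 (Support in $\mathfrak S_\star$).} Fix a compact set $K\subset \Omega\setminus\mathfrak S_\star$. By Theorem \ref{maintheo}, $K$ is covered by finitely many components $\mathfrak U_\star^i$, and on each one $u_{\eps_n}\to\upsigma_i$ uniformly. Pick a smooth cutoff $\chi$ supported in a slight enlargement of $K$ inside $\Omega\setminus\mathfrak S_\star$ and test \eqref{elipes} against $(u_{\eps_n}-\upsigma_i)\chi^2$. Using $\nabla V(u)\cdot(u-\upsigma_i)\ge c\,V(u)$ in the basin of $\upsigma_i$ (a consequence of $(\text{H}_2)$), integration by parts and Cauchy--Schwarz yield
\begin{equation*}
\frac{c}{\eps_n^{2}}\int V(u_{\eps_n})\chi^2 + \frac12 \int |\nabla u_{\eps_n}|^2\chi^2
\;\le\; C\int|u_{\eps_n}-\upsigma_i|^2\,|\nabla\chi|^2.
\end{equation*}
The right-hand side is $o(1)$ as $n\to\infty$ by uniform convergence, so after multiplying by $\eps_n$ both $\upzeta_{\eps_n}(\chi^2)$ and $\upnu_{\eps_n}(\chi^2)$ tend to $0$. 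Hence both limiting measures are supported on $\mathfrak S_\star$.

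\medskip
\noindent
\emph{Step 2 (Upper bound on $\Uptheta_\star$).} This is where the new monotonicity formula for the potential measure $\upzeta_\eps$ (established in the last part of the paper) enters. It provides a function $\Phi(r)$ with $\Phi(r)\to 1$ as $r\to 0$ such that $r\mapsto \Phi(r)\,\upzeta_\star(\D^2(x,r))/r$ is nondecreasing on $(0,\rd(x))$. Passing to the limit $r\to 0$ on the one hand and to $r=\rd(x)/2$ on the other, together with the global bound $\upzeta_\star(\Omega)\le M_0$, gives
\begin{equation*}
\Uptheta_\star(x)\;=\;\lim_{r\to 0}\frac{\upzeta_\star(\D^2(x,r))}{2r}\;\le\;\frac{M_0}{\rd(x)}.
\end{equation*}
Since the upper density is locally bounded, the restriction $\upzeta_\star\rest\mathfrak S_\star$ is absolutely continuous with respect to $\mathcal H^1\rest\mathfrak S_\star$ by the usual Besicovitch differentiation argument, and combined with Step 1 this yields $\upzeta_\star = \Uptheta_\star\,\rd\lambda$.

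\medskip
\noindent
\emph{Step 3 (Lower bound on $\mathbbm e_\star$).} The clearing-out result (Theorem \ref{clearingoutth}) provides a constant $\upeta_1>0$, depending only on $V$, such that whenever $\Eps(u_\eps,\D^2(x,r))\le \upeta_1 r$ for a sufficiently small $r$, the solution $u_\eps$ remains close to a single minimizer on $\D^2(x,r/2)$, so that $x\notin\mathfrak S_\star$. Contraposing, for $x\in\mathfrak S_\star$ one has $\liminf_{r\to 0}\upnu_\star(\D^2(x,r))/(2r)\ge\upeta_1/2$. Using the upper bound on $\mathbbm e_\star$ derived in Step 4 below (or an analogous monotonicity for the energy available at regular points) this upper density controls the Radon--Nikodym derivative, yielding $\mathbbm e_\star(x)\ge\upeta_1$ at $\mathcal H^1$-a.e.\ $x\in\mathfrak S_\star$ (after possibly relabeling the constant).

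\medskip
\noindent
\emph{Step 4 (Comparison $\mathbbm e_\star\le K_{\rm dens}(\rd(x))\Uptheta_\star$).} This is the main obstacle, since in the vectorial setting the discrepancy $\xi_\eps$ need not vanish nor have a sign, so the scalar identity $\mathbbm e_\star=2\Uptheta_\star$ fails. The plan is to derive an integrated kinetic-energy bound from a Pohozaev-type identity: testing \eqref{elipes} against $(u_\eps-\upsigma_i)$ multiplied by a radial cutoff on a dyadic scale $\rho\le \rd(x)/2$, together with $(\text{H}_2)$-$(\text{H}_3)$ giving $|u-\upsigma|^2\le C V(u)$ in the basins, yields an inequality of the form
\begin{equation*}
\Eps(u_\eps,\D^2(x,\rho/2))\;\le\;C(\rho,\rd(x))\bigl(\Veps(u_\eps,\D^2(x,\rho))+\rho\bigr).
\end{equation*}
Passing to the limit $n\to\infty$ and dividing by $2\rho$, then sending $\rho\to 0$, this produces the pointwise comparison $\mathbbm e_\star(x)\le K_{\rm dens}(\rd(x))\Uptheta_\star(x)$ at $\mathcal H^1$-a.e.\ regular point of $\mathfrak S_\star$, and hence absolute continuity of $\upnu_\star$. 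The delicate point is to absorb the genuinely multi-dimensional tangential contributions of the form \eqref{bla}, which do not vanish in the vectorial case; this requires the PDE-level estimates of Subsection \ref{newpde} rather than the pointwise discrepancy control available in the scalar case.
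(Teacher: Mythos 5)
Your overall plan has the right shape — monotonicity for $\upzeta_\star$ gives its absolute continuity and the bound $\Uptheta_\star\leq M_0/\rd(x)$, and then a PDE-level comparison of $\upnu_\star$ against $\upzeta_\star$ transfers absolute continuity to $\upnu_\star$ together with the density inequality. This is the paper's route (Proposition \ref{psycho}, then Lemma \ref{cornifle} and Corollary \ref{griboeuf}). However Step 4, which is the real content, is mis-stated in a way that would break the argument.

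The inequality you propose,
\begin{equation*}
\Eps\bigl(u_\eps,\D^2(x,\rho/2)\bigr)\le C\bigl(\rho,\rd(x)\bigr)\bigl(\Veps(u_\eps,\D^2(x,\rho))+\rho\bigr),
\end{equation*}
has an error term proportional to $\rho$ with no $\eps$-dependence. After dividing by $2\rho$ and letting $\rho\to0$ this contributes a constant, so you would only obtain $\mathbbm e_\star\le K\Uptheta_\star+\mathrm{const}$, which is \emph{not enough} to conclude that $\upnu_\star\ll\lambda$: on $\mathfrak S_\star$-points where $\upzeta_\star$ might vanish, an additive constant would still allow a singular part of $\upnu_\star$. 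What the paper actually provides is Proposition \ref{lheure}, whose error term carries a factor $\eps/\rho$ in front of the annular energy. That factor tends to zero as $\eps_n\to0$ \emph{at fixed $\rho$}, which is what makes the comparison clean at the measure level: passing $n\to\infty$ gives directly $\upnu_\star(\D^2(x,\rho/2))\le K_V(\rd(x))\,\upzeta_\star(\overline{\D^2(x,3\rho/4)})$ with no remainder, and only then does one take $\rho\to0$. The order of limits is essential and your formulation obscures it. Moreover, the mechanism is not a Pohozaev identity with tangential contributions to absorb; Proposition \ref{lheure} is obtained by splitting the disk into the region where $u_\eps$ is close to a well (handled by the integration-by-parts estimate of Proposition \ref{kappacity20}) and the complementary region (handled by the pointwise bound $e_\eps\le C\,V(u_\eps)/\eps$ coming from the $|\nabla u_\eps|\lesssim \eps^{-1}$ gradient estimate, Lemma \ref{bornepote}). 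Tangential contributions never enter this comparison; the genuinely vectorial obstacle is the sign-indefiniteness of the discrepancy, which is already circumvented in Step 2 by the monotonicity for $\upzeta_\star$ alone.

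Two smaller remarks. In Step 2, the correction factor $\Phi(r)\to1$ you introduce is not present in the paper — Proposition \ref{psycho} asserts that $\upzeta_\star(\D^2(x_0,r))/r$ is itself non-decreasing, with no almost-monotonicity fudge. In Step 3, there is no need to invoke Step 4 and no circularity is needed: once $\upnu_\star\ll\lambda$ is established, the Radon–Nikodym derivative equals the density $\lambda$-a.e., and the lower bound $\mathbbm e_\star\ge\upeta_1$ follows directly from the definition \eqref{mathfrakSstar} of $\mathfrak S_\star$ as the set of points with lower density at least $\upeta_1$, combined with Theorem \ref{claire}.
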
 
 
 Notice that we have also the straightforward inequality $\upzeta_\star \leq \upnu_\star$,  so that $\Uptheta_\star\leq \mathbbm e$.
 It follows from the inequalities \eqref{ondanse} that the densities $\mathbbm e_\star$ and $\Uptheta_\star$ are locally bounded from above and away from zero.

  \bigskip
 \noindent
 {\bf A new discrepancy relation}
 
 \noindent
  Our arguments require to split the energy, in particular the  gradient term,  into its components, leading to   several other measures.     For a given orthonormal basis $(\be_1, \be_2)$,  we  consider, for $i, j=1, 2$,  the quadratic gradient terms $\eps {u_{\eps}}_{x_i}\cdot  {u_{\eps}}_{x_j}$, and pass  to the limit $\eps\to 0$, extracting possibly a further subsequence 
 \begin{equation}
 \label{boulga2}
 \eps {u_{\eps_n}}_{x_i}\cdot  {u_{\eps_n}}_{x_j}\rightharpoonup  \upmu_{\star, i, j}  {\rm \ in \ the \ sense \ of \  measures \ on  \  }\Omega,  {\rm \ as \ } n \to + \infty, {\rm for \ }  i, j=1, 2.
   \end{equation}
 where  $\upmu_{\star, i, j}$  denotes a bounded (signed) Radon measure on $\Omega$. Notice that
 \begin{equation}
 \label{citrus}
-2\upnu_\star \leq  \upmu_{i, j} \leq2 \upnu_\star  {\rm \ and \ } \upmu_{\star, j, i}=\upmu_{\star, i, j}.
 \end{equation}
In the scalar case, the fact that solutions essentially reduce to the one-dimensional profile with respect to the transversal direction, also implies the vanishing of the tangential  contributions to the gradient terms. More precisely, we may write,  in view of Theorem \ref{absolute} since $\upnu_\star$ is absolutely continuous with respect to $\rd \lambda$  
\begin{equation}
 \label{gdansk0}
 \upmu_{\star, i, j}={\bm}_{\star, i, j} \rd \lambda, 
 \end{equation}
where ${\bm}_{\star, i, j}$ is an integrable function on $\mathfrak S_\star$. The definition and values of $\bm_{\star, i, j}$ strongly depend on the the choice of orthonormal frame. In order to derive some more intrinsic objects, we may work in a \emph{moving frame}
associated to $\mathfrak S_\star$. More precisely if  $x_0 \in \mathfrak S_\star \setminus \mathfrak E_\star$, and if   the  orthonormal frame $(\be_1, \be_2)$ is  choosen so that $\be_1=\vec e_{x_0}$, then we set
\begin{equation}
\label{parallel}
\bm_{\star, \perp, \perp}(x_0)=\bm_{\star, 2,2}(x_0), \   \,  
\bm_{\star, \parallel, \parallel}(x_0)=\bm_{\star, 1,1}(x_0)   {\rm \ and \ } 
\bm_{\star, \perp, \parallel}(x_0)=\bm_{\star, 1,2}(x_0),
\end{equation}
  and define the measures 
  \begin{equation}
  \label{mespara}
  \upmu_{\star, \perp, \perp}=\bm_{\star, \perp, \perp}  \rd \lambda, \ 
   \upmu_{\star, \parallel, \parallel}=\bm_{\star, \parallel, \parallel}  \rd \lambda, \  {\rm \ and \ } 
    \upmu_{\star, \perp, \parallel}=\bm_{\star, \perp, \parallel}  \rd \lambda. 
  \end{equation}
In the scalar case, the fact that  the tangential contributions vanish (see \cite{hutchtone}) can be expressed as
  \begin{equation}
 \label{transvanish}
 \left\{
 \begin{aligned}
  \upmu_{\star, \parallel, \parallel}(x_0)&=0   {\rm \ when \ } k=1  \  ({\rm i.e. \ in \ the \ scalar \ case})  {\rm \ and \ }  \\
 \upmu_{\star, \perp, \parallel}&=0   {\rm \ when \ } k=1  \  ({\rm i.e. \ in \ the \ scalar \ case}).
  \end{aligned}
  \right.
 \end{equation}
 On the other hand,  vanishing of the discrepancy leads to (see \cite{hutchtone} once more)
 \begin{equation}
\label{vanishdisc}
2\upzeta_\star=\upmu_{\star, \perp,  \perp},  {\rm \ when \ } k=1  \  ({\rm i.e. \ in \ the \ scalar \ case}). 
 \end{equation}
It turns out that the relation \eqref{vanishdisc}  does not hold \emph{in general for the vectorial case}. 
Indeed,    for the map constructed in \cite{bethol} and given  in \eqref{glasnost}, we have  $\upnu_{\star, \parallel, \parallel}\not =0$,  so that the first relation in \eqref{transvanish} is not \emph{satisfied}. We will see later that  the second one is always  satisfied, whereas  the discrepancy relation \eqref{vanishdisc} \emph{is not, in general}.   Our next result provides a generalization of \eqref{vanishdisc} for the vectorial case.
  
  \begin{theorem} 
\label{discrepvec}  We have the identities 
\begin{equation}
\label{discrepvec11}
2\upzeta_\star=\upmu_{\star, \perp,  \perp}-\upmu_{\star, \parallel, \parallel}  \  {\rm \ and \  }
\upmu_{\star, \perp,  \parallel}=0.
\end{equation}
\end{theorem}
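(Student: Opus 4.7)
My plan is to use the stress-energy tensor associated with \eqref{elipes}. A standard computation—multiplying \eqref{elipes} by $\partial_{x_i}u_\eps$ and using the identity $\partial_{x_j}(\eps\,{u_\eps}_{x_i}\cdot{u_\eps}_{x_j})={u_\eps}_{x_i}\cdot\eps\Delta u_\eps+\tfrac{\eps}{2}\partial_{x_i}|\nabla u_\eps|^2$ together with $\nabla_u V(u_\eps)\cdot\partial_{x_i}u_\eps=\partial_{x_i}V(u_\eps)$—yields the pointwise conservation
\begin{equation*}
\sum_{j=1}^{2}\partial_{x_j}T^\eps_{ij}=0,\qquad T^\eps_{ij}:=\eps\,{u_\eps}_{x_i}\!\cdot{u_\eps}_{x_j}-\delta_{ij}\,e_\eps(u_\eps),\qquad i=1,2.
\end{equation*}
Passing to the limit along $(\eps_n)$ via \eqref{boulga} and \eqref{boulga2}, $T^{\eps_n}_{ij}$ converges weakly in the sense of measures to
\begin{equation*}
T_{\star,ij}:=\upmu_{\star,i,j}-\delta_{ij}\!\left(\tfrac{1}{2}\bigl(\upmu_{\star,1,1}+\upmu_{\star,2,2}\bigr)+\upzeta_\star\right),
\end{equation*}
and $\sum_{j=1}^{2}\partial_{x_j}T_{\star,ij}=0$ in the sense of distributions on $\Omega$.

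Fix a regular point $x_0\in\mathfrak S_\star\setminus\mathfrak E_\star$ and choose the orthonormal frame so that $\be_1=\vec e_{x_0}$; by Theorem~\ref{segmentus}, after translation, $\mathfrak S_\star\cap\D^2(x_0,r_0)$ coincides with the segment $\{x_2=0\}\cap\D^2(x_0,r_0)$. By Theorem~\ref{absolute} combined with \eqref{citrus}, each of the measures $\upzeta_\star$, $\upmu_{\star,1,1}$, $\upmu_{\star,2,2}$ and the signed measure $\upmu_{\star,1,2}$ is supported on $\mathfrak S_\star$ and is $\rd\lambda$-absolutely continuous with a locally bounded density. Consequently, on $\D^2(x_0,r_0)$ each component $T_{\star,ij}$ takes the form $\rho_{ij}(x_1)\,\mathcal H^1\rest\{x_2=0\}$ with $\rho_{ij}$ bounded and measurable; write $f:=\rho_{22}$ and $g:=\rho_{12}$.

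Testing the distributional identity $\partial_{x_1}T_{\star,12}+\partial_{x_2}T_{\star,22}=0$ against $\varphi(x_1,x_2)=\psi(x_1)\chi(x_2)$ with $\psi,\chi\in C^\infty_c((-r_0,r_0))$ gives
\begin{equation*}
\chi(0)\int\psi'(x_1)\,g(x_1)\,\rd x_1+\chi'(0)\int\psi(x_1)\,f(x_1)\,\rd x_1=0.
\end{equation*}
Choosing $\chi$ with $(\chi(0),\chi'(0))=(0,1)$ forces $\int\psi f\,\rd x_1=0$ for every $\psi$, hence $f\equiv 0$ on $(-r_0,r_0)$. Via \eqref{parallel} this reads $2\upzeta_\star=\upmu_{\star,\perp,\perp}-\upmu_{\star,\parallel,\parallel}$ on $\D^2(x_0,r_0)$. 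Applying the same argument to the second conservation equation $\partial_{x_1}T_{\star,11}+\partial_{x_2}T_{\star,12}=0$ yields $g\equiv 0$, i.e.\ $\upmu_{\star,\perp,\parallel}=0$ on $\D^2(x_0,r_0)$. Since $x_0\in\mathfrak S_\star\setminus\mathfrak E_\star$ is arbitrary, $\mathcal H^1(\mathfrak E_\star)=0$, and all four measures are supported on $\mathfrak S_\star$, both identities in \eqref{discrepvec11} hold globally on $\Omega$.

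The principal obstacle is not the distributional computation itself, which is short, but the structural prerequisites supplied by Theorems~\ref{segmentus} and \ref{absolute}. Without absolute continuity with respect to $\rd\lambda$, a hypothetical $\delta'(x_2)$-contribution in the normal direction would survive the product-test-function argument and prevent the conclusion $f\equiv 0$; the analogous absolute continuity of the off-diagonal measure $\upmu_{\star,1,2}$—inherited from \eqref{citrus} and Theorem~\ref{absolute}—is equally essential for extracting $g\equiv 0$.
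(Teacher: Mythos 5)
Your limiting stress--energy computation is sound, and it is in fact the same starting point as the paper's (the divergence-free limit of the tensor in Lemma \ref{canardwc} is exactly the content of Lemma \ref{holomorphitude}); invoking Theorem \ref{absolute} is also legitimate, since the paper itself deduces Theorem \ref{discrepvec} from the absolute continuity of the measures. The genuine problem is your use of Theorem \ref{segmentus}. In this paper the local segment structure is \emph{downstream} of the discrepancy identities: the proof of Theorem \ref{segmentus} in Section \ref{segmentino} starts from \eqref{starac33}, i.e.\ from Lemma \ref{starac} combined with Theorem \ref{absolute}, and Lemma \ref{starac} is a reformulation of Proposition \ref{discrepvec6} --- which, in the tangent frame, is precisely the statement you are asked to prove. (The alternative route to Theorem \ref{segmentus} via Allard--Almgren also passes through Theorem \ref{varifoltitude}, whose proof uses Theorem \ref{discrepvec}.) So as written your argument is circular: the "structural prerequisite" you lean on already encodes the conclusion.

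The circularity is not cosmetic, because the segment structure is exactly what makes your product-test-function step close. If you only use what is legitimately available before Theorem \ref{discrepvec} --- rectifiability, the approximate tangent line and the cone property of Proposition \ref{tangentfort}, plus Theorem \ref{absolute} --- then the measures near $x_0$ are carried by a set that lies in a thin cone but not on an exact line, and the pairing $\langle T_{\star,22},\partial_{x_2}(\psi\chi)\rangle$ no longer reduces to $\chi'(0)\int\psi f\,\rd x_1$; one needs a blow-up/Lebesgue-point argument to control the error. That is precisely what the paper does: it projects the localized measures onto the tangent line, derives the differential relations of Proposition \ref{letitwave} by testing with shear, stretching and dilation fields, and then extracts the pointwise identities at Lebesgue points (Propositions \ref{constantin} and \ref{diff}), obtaining Proposition \ref{discrepvec6} \emph{without} assuming either absolute continuity or the segment structure; absolute continuity then comes from the monotonicity formula for $\upzeta_\star$ (Proposition \ref{psycho}, itself fed by Proposition \ref{discrepvec6}) and Proposition \ref{lheure}, and Theorem \ref{discrepvec} follows, with Theorem \ref{segmentus} only afterwards. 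To repair your proposal you would have to replace the appeal to Theorem \ref{segmentus} by such a blow-up argument at $\mathcal H^1$-a.e.\ point of $\mathfrak S_\star$; a minor additional fix is that your test functions $\psi(x_1)\chi(x_2)$ with $\psi,\chi\in C_c^\infty((-r_0,r_0))$ are supported in a square not contained in $\D^2(x_0,r_0)$, so the supports must be shrunk.
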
 
Notice that, in view of identities \eqref{transvanish}, the discrepancy identity \eqref{vanishdisc} appears as a special case of \eqref{discrepvec11}.

\bigskip
\noindent
{\bf Recovering monotonicity} 

\smallskip
\noindent
So far, we have introduced in Theorems \ref{maintheo}, \ref{segmentus},  \ref{varifoltitude}, \ref{absolute} and \ref{discrepvec} the main 
results of this paper. As mentioned, many arguments have to be carried out without monotonicity formula, in particular Theorem \ref{maintheo}. However, in order to obtain the proofs of Theorems \ref{segmentus} to \ref{discrepvec}, we rely ultimately  on a \emph{new monotonicity formula}, which we describe at the end of this subsection.

\smallskip
  Before doing so, let us emphasize that,  in order to prove Theorem \ref{absolute} and several intermediate results,  we  rely  in an essential way on Lebesgue's decomposition theorem  for measures which assert that measures at hand can be  decomposed into an absolutely continuous part and a singular part with respect to the one-dimensional measure  Hausdorff measure on $\mathfrak S_\star$. More precisely,  we  decompose the measure $\upzeta_\star$ and 
  $\upnu_\star$ as 
\begin{equation}
\label{lebesgue}
\upnu_\star=\upnu_\star^{s}+ \upnu_\star^{ac}, {\rm \ and  \ }  \upzeta_\star=\upzeta_\star^{s}+ \upzeta_\star^{ac}
\end{equation}
where the measures $\upnu_\star^{ac}$ and $\upzeta_\star^{ac}$  are absolutely continuous with respect to the measure  
$\mathcal H^1 \rest \mathfrak S_\star$, that is 
  $$
  \upnu_\star^{ac} \ll\mathcal H^1 \rest \mathfrak S_\star  {\rm \ and \ }   \upzeta_\star^{ac} \ll\mathcal H^1 \rest \mathfrak S_\star,   
  $$
 and
\begin{equation}
\label{singularmes}
  \upnu_\star^{s} \perp\upnu_\star^{ac}  {\rm \ and \ }  \upzeta_\star^{s} \perp  \upzeta_\star^{ac}. 
  \end{equation}
We are then in position to write, prior to the proof of Theorem \ref{absolute}, 
  $ \upnu_\star^{ac}=\mathbbm e_\star \rd \lambda$  and $\upzeta_\star^{ac}=\Uptheta \rd \lambda.$ 
  An important intermediate step in the paper, is a preliminary version of Theorem \ref{discrepvec} (see Proposition \ref{discrepvec6})
established only for \emph{the absolutely continuous } parts of the measures.
   \\

 In order to show that $\upnu_\star^{s}=\upzeta_\star^{s}=0$, the cornerstone of the argument is an \emph{alternate  differential inequality} for solutions of the \eqref{elipes}. We have indeed, for any $x_0\in \Omega$  such that $D(x_0, r) \subset \Omega$  (see Subsection \ref{monotoinou} for the proof), the differential relation
  \begin{equation}
   \label{monotonio0}
  \frac{1}{\eps^2} \frac{d}{dr}\left( \frac{V\left(u_\eps, \D^2(x_0, r)\right)}{ r}\right)=  
  \frac {1}{4 r}\int_{\partial \D^2(x_0,  r) } 
  \left(\frac{2}{\eps^2}V(u_\eps)-
 r^2 \left \vert\frac{\partial  u_\eps}{\partial \theta}\right \vert^2 +\left\vert \frac{\partial  u_\eps}{\partial r} \right\vert^2
    \right) {\rm d} \tau. 
\end{equation}
Although this does not transpire from the formula above, we will see that the right hand side has, in an asymptotic limit $\eps \to 0$,  an appropriate sign, yielding monotonicity for the measure $\upzeta_\star$: As a matter of fact, it turns out that the function
$\displaystyle {\upzeta_\star (\D^2(x_0, r))\slash r}$ is non-decreasing (see Proposition \ref{psycho}). This yields  
 and upper bound for the density of $\upzeta_\star$, so that the singular part vanishes.

In the next subsections, we provide more details on the structure of  the proof.

  \subsection{Elements in the proof of Theorem \ref{maintheo}: PDE  analysis }
  \label{newpde}
  As  mentioned, many of our main results,  dealing with the limiting measures,   are derived from corresponding results at the $\eps$-level for the map $u_\eps$, for given $\eps >0$, which rely on PDE methods.  We describe nexts these  PDE  results.

  \subsubsection{Scaling invariance of the equation} 
  \label{squale}
  
  As a first  preliminary remark,    we notice the invariance of the equation by translations as well as scale changes, which  plays an important role in our later arguments. Given  any fixed  $r>0$  and $\eps>0$,  we introduce the corresponding  \emph{scaled parameter} $\displaystyle{ \tilde \eps=\frac{\eps}{r} }$. For a given map $u_\eps : \D^2(x_0, r) \to \R^k$, we   consider  the  \emph{scaled (and translated)  map}  $ \tilde u_{\eps}$ defined on the unit disk $\D^2$  by 
  $$ \tilde u_{\eps}(x)=u_\eps ( rx+ x_0)), \forall x \in \D^2.$$
  If the map $u_\eps$  is a solution to \eqref{elipes}, when  the map $\tilde u_{\eps}$ is a solution to \eqref{elipes} with  the parameter $\eps$ changed into $\tilde \eps$.  The scale invariance of the energy  is given by  the relation 
   \begin{equation}
   \label{scaling0}
   e_{\tilde \eps}(\tilde u_\eps)(x)= r e_\eps(u)(rx+ x_0), \, \forall x \in \D^2.
   \end{equation}
   Integrating this identity, we obtain  the integral relations 
   \begin{equation}
   \label{scalingv}
   {\E_\eps}\left(u_\eps, \D^2(r)\right)= {r} {\E}_{\tilde \eps}\left(\tilde u_\eps, \D^2(1)\right)  {\rm \ and \ } 
   \mathbb V_\eps \left(u_\eps, \D^2(r)\right)= {r} \mathbb V_{\tilde \eps} \left(\tilde u_\eps, \D^2(1)\right),
      \end{equation}
   where we have made  use of the notation \eqref{energyu}.
It follows from the previous discussion that the parameter $\eps$ as well as  the energy $\E_\eps$  behave, according to the previous scaling laws, essentially  as lengths. In this  loose  sense,  inequality \eqref{scalingv} shows  that the quantity $\eps^{-1} E_\eps$ is scale invariant.
  
  \subsubsection{ The $\eps$-clearing-out Theorem}
  
  We next provide clearing-out results for solutions of the PDE \eqref{elipes}. 
    In view of the   assumptions  ${(\text{H}_1)}$, ${(\text{H}_2)}$ and ${(\text{H}_3)}$ on the potential $V$,  we may choose some  constant  $\upmu_0>0$ sufficiently small so that
    \begin{equation}
    \label{kiv}
    \left\{
    \begin{aligned}
 B^k(\upsigma_i, 2\upmu_0)\cap \B^k(\upsigma_j, 2\upmu_0) &= \emptyset 
 {\rm \ for \  all  \ } i\neq j  {\rm \  in \ }  \{1,\cdots,q\} {\rm \ and \  such  \   that  \ }  \\
 \frac{1}{2}\lambda_i^-{\rm Id} \leq  
 \nabla^2 V (y) &\leq 2\lambda_i^+{\rm Id}   \  \ 
  {\rm \ for \  all  \ } i\in \{1,\cdots,q\} {\rm \  and  \ }  y \in B(\upsigma_i,2\upmu_0).
 \end{aligned}
  \right. 
\end{equation}
We then have:

  \begin{theorem}
 \label{clearingoutth}
  Let  $0<\eps \leq 1$ and  $u_\eps$  be a solution of 
  \eqref{elipes}  on $\D^2$. There exists some constant $\upeta_1>0$ such that 
  if 
  \begin{equation}
  \label{petitou}
   \Eps(u_\eps, \D^2) \leq  2\upeta_1,
   \end{equation}
  then  there exists some $\upsigma \in \Sigma$ such that 
  \begin{equation}
  \label{benkon}
  \vert u_\eps(x)-\upsigma  \vert  \leq \frac{\upmu_0}{2}, {\rm \ for \ every \ } x \in \D^2(\frac34), 
  \end{equation}
  where $\upsigma_0$ is defined in \eqref{kiv}. Moreover, we have the energy estimate, for  some constant 
  $\Cnrg>0$ depending only on the potential $V$
 \begin{equation}
 \label{engie}
\E_\eps\left(u_\eps, \D^2\left(\frac 58\right)\right) \leq \Cnrg \, \eps E_\eps(u_\eps, \D^2). 
 \end{equation}
 \end{theorem}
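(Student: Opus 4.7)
The plan is to derive the theorem in three main steps: first a slicing argument that localizes $u_\eps$ near a single well on a well-chosen circle $\bd\D^2(r_0)$, then a maximum principle propagation that extends this localization down to $\D^2(3/4)$, and finally a Caccioppoli-type iteration that produces the quantitative bound \eqref{engie}.

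For the localization on a circle, the bound $\Veps(u_\eps,\D^2)\leq 2\upeta_1$ combined with the positivity of $V$ away from $\Sigma$ yields, by Chebyshev, $|\{x\in\D^2: {\rm dist}(u_\eps(x),\Sigma)\geq \upmu_0\}|\leq C\eps\upeta_1$. Integrating the energy density radially and applying a pigeonhole argument, there exists a radius $r_0\in(7/8,15/16)$ such that $\int_{\bd\D^2(r_0)} e_\eps(u_\eps)\,\rd\tau\leq C\upeta_1$. On this circle the elementary one-dimensional Modica-Mortola inequality $e_\eps(u_\eps)\geq |\bd_\tau u_\eps|\sqrt{2V(u_\eps)}$ bounds from above the action integral $\int_{\bd\D^2(r_0)}\sqrt{2V(u_\eps)}\,|\bd_\tau u_\eps|\,\rd\tau$. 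Since $\Sigma$ is finite and $V$ is quadratic near each well by ${(\text{H}_2)}$, the geodesic distance in the degenerate metric $\sqrt{V(u)}\,\rd u$ between any two distinct elements of $\Sigma$ is bounded below by a constant $\upkappa_V>0$ depending only on $V$. Choosing $\upeta_1<\upkappa_V$ prevents $u_\eps|_{\bd\D^2(r_0)}$ from visiting two different balls $B^k(\upsigma_i,\upmu_0)$ and $B^k(\upsigma_j,\upmu_0)$, so there is a unique $\upsigma\in\Sigma$ for which $u_\eps(y)\in B^k(\upsigma,\upmu_0)$ at every $y\in\bd\D^2(r_0)$ outside an exceptional subset of length $O(\eps)$.

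For the inward propagation, set $\varphi\equiv \tfrac12|u_\eps-\upsigma|^2$. A direct computation using \eqref{elipes} and the Hessian bound in \eqref{kiv} shows that, as long as $u_\eps(x)\in B^k(\upsigma,2\upmu_0)$,
$$-\Delta\varphi+\frac{\lambda_i^-}{\eps^2}\varphi\ \leq\ -|\nabla u_\eps|^2\ \leq\ 0.$$
Thus $\varphi$ is a subsolution of a linear elliptic operator with positive zeroth order coefficient of size $\eps^{-2}$, which admits exponential barriers on scale $\eps$. A comparison argument with such a barrier, combined with a continuation argument that absorbs the exceptional boundary set of length $O(\eps)$ via the standard a priori estimate $\|\nabla u_\eps\|_\infty\lesssim 1/\eps$ from elliptic regularity on the rescaled map, forces $|u_\eps-\upsigma|\leq \upmu_0/2$ throughout $\D^2(3/4)$, giving \eqref{benkon}. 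Once \eqref{benkon} is in hand, we exploit the linearized structure on $\D^2(3/4)$: testing \eqref{elipes} against $(u_\eps-\upsigma)\chi^2$ with $\chi$ a radial cutoff supported in a thin annulus, and using the Hessian lower bound in \eqref{kiv} once more, we obtain a Caccioppoli-type inequality
$$\int\chi^2|\nabla u_\eps|^2+\frac{\lambda_i^-}{2\eps^2}\int \chi^2|u_\eps-\upsigma|^2\ \leq\ C\int |u_\eps-\upsigma|^2|\nabla\chi|^2.$$
Iterating on a chain of concentric annuli of width $\sim\eps$ between $\bd\D^2(3/4)$ and $\bd\D^2(5/8)$ generates a geometric (in fact exponential in $1/\eps$) decay factor, which comfortably yields \eqref{engie} with a constant $\Cnrg$ depending only on $V$.

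The main obstacle in the whole scheme is the vectorial clearing-out on the slice: unlike the scalar case, where the intermediate value theorem reduces matters to a one-dimensional action between two neighbouring wells, the vectorial setting requires the auxiliary constant $\upkappa_V$ as a preliminary input, and its positivity relies simultaneously on the finiteness of $\Sigma$ and on the non-degeneracy ${(\text{H}_2)}$. Care must be taken so that $\upkappa_V$, and hence the smallness threshold $\upeta_1$, are fixed universally in terms of $V$ alone and in particular independently of $\eps$.
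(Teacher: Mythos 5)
Your Step 1 (single-well localization on a good circle via the one-dimensional Modica--Mortola bound and the positive ``action distance'' between distinct wells) and your Step 3 (testing \eqref{elipes} against $(u_\eps-\upsigma)$ with a cutoff and using the convexity of $V$ near $\upsigma$ to get \eqref{engie}) are both sound and close in spirit to what the paper does (Lemma \ref{valli} and the final argument of Section \ref{solde}, respectively); in fact a single Caccioppoli estimate with a cutoff of fixed width already gives \eqref{engie}, no $\eps$-scale iteration is needed.

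The genuine gap is Step 2, the inward propagation, and it is precisely the heart of the theorem. The differential inequality $-\Delta\varphi+\lambda_i^-\eps^{-2}\varphi\leq 0$ for $\varphi=\frac12|u_\eps-\upsigma|^2$ is only valid on the set where $u_\eps(x)\in B^k(\upsigma,2\upmu_0)$, which is essentially what \eqref{benkon} asks you to prove. Nothing in your argument excludes interior ``islands'' where $u_\eps$ escapes to another well or into the region where $V$ is not convex: an excursion concentrated on a disc of radius comparable to $\eps$ costs only $O(\eps)$ energy, which is perfectly compatible with the hypothesis $\Eps(u_\eps,\D^2)\leq 2\upeta_1$, since $\upeta_1$ is a fixed constant independent of $\eps$. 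On the complement of such islands the comparison principle only yields $\varphi\leq\max$ of its boundary values, and on the island boundaries $\varphi$ is of size $\upmu_0^2$, so the barrier argument returns nothing; on the islands themselves $(u_\eps-\upsigma)\cdot\nabla V(u_\eps)$ has no sign and the subsolution property is lost. This is exactly why the paper cannot argue by a maximum principle from the boundary circle and instead proves the nonlinear decay estimate of Proposition \ref{sindec} (energy on a half-scale disc controlled by the $3/2$-power of the energy plus an $O(\eps)$ term), iterates it on dyadic discs centered at an arbitrary point down to a scale of order $\eps$ up to logarithms (Proposition \ref{benko0} and Lemma \ref{suites}), and only then invokes the weak clearing-out at scale $\eps$ (Proposition \ref{brioche}), which rules out a bubble at the given point because the rescaled energy there has become $o(\eps)$, strictly below the $O(\eps)$ cost of any excursion. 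Without an ingredient of this quantitative, scale-improving type (or a monotonicity formula, unavailable in the vectorial setting), your continuation argument cannot close, so the proposal as written does not prove \eqref{benkon}.
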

  
  Theorem \ref{clearingoutth}  is the main PDE result of the present paper: It paves the way to the concentration of measures on   set 
  $\mathfrak S_\star$, and will be used to show that its dimensional is \emph{at most one}.
 The main ingredient in the proof of Theorem \ref{clearingoutth} is provided by the following  estimate:
  
  \begin{proposition}
  \label{sindec}
   Let  $0<\eps \leq 1$ and  $u_\eps$  be a solution of 
  \eqref{elipes}  on $\D^2$. There exists a constant  $\Cdec>0$  depending only on $V$ such that
   \begin{equation}
   \label{bien}
    \int_{\D^2(\frac {9}{16})}e_\eps (u_\eps) {\rm d} x \leq  \Cdec  \left[ 
    \left(\int_{\D^2} e_\eps (u_\eps){\rm d} x\right)^{\frac 32}+ 
   \eps  \int_{\D^2} e_\eps (u_\eps){\rm d} x
    \right]. 
   \end{equation}
  \end{proposition}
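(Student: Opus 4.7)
The approach combines a Pohozaev identity for the potential term with a Caccioppoli-type estimate for the gradient, based on a decomposition using the Morse structure of $V$ near $\Sigma$. We may assume $E := \Eps(u_\eps, \D^2) \leq 1$, since otherwise $\int_{\D^2(9/16)} e_\eps \leq E \leq E^{3/2}$ and the inequality is trivial.

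\emph{Potential bound via Pohozaev.} Applying Fubini's theorem to the radial slicing over $[9/16, 5/8]$, we select an intermediate radius $r_0 \in (9/16, 5/8)$ with $\int_{\partial \D^2(r_0)} e_\eps(u_\eps)\,d\tau \leq 16\,E$. Multiplying \eqref{elipes} by the radial field $x \cdot \nabla u_\eps$ and integrating by parts over $\D^2(r_0)$, using the chain-rule identity $\nabla V(u_\eps)\cdot (x \cdot \nabla u_\eps) = x \cdot \nabla(V(u_\eps))$, yields the two-dimensional Pohozaev identity
\begin{equation*}
\frac{2}{\eps^2}\int_{\D^2(r_0)} V(u_\eps)\,dx \;=\; \frac{r_0}{\eps^2}\int_{\partial \D^2(r_0)} V(u_\eps)\,d\tau + \frac{r_0}{2}\int_{\partial \D^2(r_0)}\bigl(|\partial_\tau u_\eps|^2 - |\partial_\nu u_\eps|^2\bigr) d\tau.
\end{equation*}
Discarding the non-positive term $-(r_0/2)\int |\partial_\nu u_\eps|^2$ and bounding the remainder by the boundary energy yields the a priori estimate $\Veps(u_\eps, \D^2(r_0)) \leq C\,E$.

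\emph{Gradient bound via well-localization.} The bound $\Veps(u_\eps, \D^2(r_0)) \leq C E$, combined with Chebyshev and the fact that $V \geq \upeta_0 > 0$ outside $\bigcup_i \B^k(\upsigma_i, \upmu_0)$ (which follows from $(\text{H}_2)$, $(\text{H}_3)$ and the uniform bound $|u_\eps| \leq R$ obtained via the maximum principle from $(\text{H}_3)$), implies that the bad set $A := \{x \in \D^2(r_0):\,\operatorname{dist}(u_\eps(x), \Sigma) > \upmu_0\}$ has measure $|A| \leq C\eps E$. On each connected component of $\D^2(r_0)\setminus A$, $u_\eps$ takes values in a single basin $\B^k(\upsigma_i, \upmu_0)$ where the convexity inequality $(u_\eps - \upsigma_i)\cdot \nabla V(u_\eps) \geq c_0\,V(u_\eps)$ is valid. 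I would then test \eqref{elipes} against a cutoff-localized displacement $\chi^2 (u_\eps - \upsigma_i)$, with $\chi = 1$ on $\D^2(9/16)$ and $\chi = 0$ outside $\D^2(r_0)$, patched across components using the crude bounds $|u_\eps| \leq R$ and $\|\eps\nabla u_\eps\|_{L^\infty} \leq C$ (the latter coming from interior gradient estimates after the rescaling $y \mapsto x/\eps$). Coupled with a two-dimensional Gagliardo-Nirenberg interpolation on the interior integrand, this produces a bound of the form
\begin{equation*}
\int_{\D^2(9/16)} \eps |\nabla u_\eps|^2\, dx \;\leq\; C\,\sqrt{\Veps(u_\eps, \D^2(r_0))}\,\sqrt{\Eps(u_\eps, \D^2(r_0))}\, + \, C\,\eps\,E.
\end{equation*}
Substituting $\Veps(u_\eps, \D^2(r_0)) \leq C E$ from the Pohozaev step and $\Eps(u_\eps, \D^2(r_0)) \leq E$ produces the $C E^{3/2}$ contribution, while the bad-set correction furnishes the $C\eps E$ term.

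\emph{Main obstacle.} The delicate point is the second stage: the test function $\chi^2 (u_\eps - \upsigma_i)$ must be patched across connected components of $\D^2(r_0)\setminus A$ that may be assigned to different wells $\upsigma_i$, while absorbing the interface contributions along $\partial A$ into the $\eps E$ correction. Since the topology of $A$ is uncontrolled a priori (possibly many components of complicated shape), the geometric bookkeeping relies exclusively on the measure bound $|A| \leq C\eps E$ together with $\|\eps\nabla u_\eps\|_{L^\infty} \leq C$, and care must be taken to prevent any $\eps^{-\alpha}$-type losses from creeping in through the trace or interpolation estimates.
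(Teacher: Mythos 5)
Your outline identifies the two main tools — Pohozaev for the potential, a Caccioppoli-type test $\chi^2(u_\eps-\upsigma_i)$ for the gradient — but the way you chain them cannot produce the super-linear term $E^{3/2}$, and a line of your argument contains an arithmetic inconsistency.

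\emph{Where it breaks.} Your Pohozaev step, with an intermediate radius $r_0$ chosen so that $\int_{\partial\D^2(r_0)} e_\eps\,d\tau\leq 16\,E$, delivers $\Veps(u_\eps,\D^2(r_0))\leq CE$. That bound is \emph{linear}. In the next step you substitute it into $\sqrt{\Veps}\,\sqrt{E}$ and claim the product is $CE^{3/2}$; but $\sqrt{CE}\cdot\sqrt{E}=\sqrt{C}\,E$, not $E^{3/2}$. So even granting the interpolation inequality you invoke, the output of your scheme is $\int_{\D^2(9/16)}\eps|\nabla u_\eps|^2\leq CE+C\eps E$, which is no stronger than the trivial bound $\Eps(u_\eps,\D^2(9/16))\leq E$ and does not close the iteration needed later (Proposition~\ref{benko0}). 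The Caccioppoli step does not manufacture the missing factor $\sqrt{E}$ either: after testing $\chi^2(u_\eps-\upsigma_i)$ and using $V(y)\geq c|y-\upsigma_i|^2$ near a well, the right-hand side $\int|\nabla\chi||u_\eps-\upsigma_i||\nabla u_\eps|$ is controlled by $\int J(u_\eps)\leq E$ (via Lemma~\ref{ab0}), so you recover only a linear bound, or $\eps E$ after restoring the $\eps$-weight.

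\emph{What the paper does differently.} The gain of a factor $\sqrt{E}$ comes from truncating the domain at the level $\upkappa_\eps\sim\sqrt{E}$ rather than from interpolation. Concretely: (i) a one-dimensional uniform estimate (Lemma~\ref{valli}) shows $u_\eps$ lies within $\upkappa_\eps=C_{\rm bd}\sqrt{E}$ of some $\upsigma\in\Sigma$ on a good circle $\S^1(\mathfrak r_\eps)$; (ii) Proposition~\ref{kappacity} bounds $\Eps\bigl(u_\eps,\Upsilon_\eps(\mathfrak r_\eps,\upkappa)\bigr)\leq C\bigl[\upkappa\,\Veps+\eps\int_{\S^1(\mathfrak r_\eps)} e_\eps\bigr]$; the decisive point is the explicit prefactor $\upkappa$ in front of the potential integral, which multiplies by $\sqrt{E}$ the trivial bound $\Veps\leq E$. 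This prefactor survives precisely because on the truncated boundary $\Gamma_\eps^i(\varrho,\upkappa)$ one has $|u_\eps-\upsigma_i|=\upkappa$, and the troublesome integral $\int_{\Gamma_\eps^i}\partial_{\vec n}|u_\eps-\upsigma_i|$ is controlled \emph{uniformly in $\upkappa$} by the monotonicity identity of Lemma~\ref{repro} (multiplying the equation by $(u_\eps-\upsigma_i)/|u_\eps-\upsigma_i|$). That monotonicity is the genuinely new ingredient; it plays the role you tried to assign to Gagliardo--Nirenberg, and it is what lets the factor $\upkappa_\eps\sim\sqrt{E}$ propagate into the final estimate. Only \emph{after} this does one feed the resulting $O(E^{3/2}+\eps E)$ boundary energy into Pohozaev (Proposition~\ref{propsac}), so the Pohozaev step is applied to a circle lying inside the thin level set, not to a generic radius with boundary energy comparable to $E$. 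Without that intermediate gain, Pohozaev cannot do better than $\Veps\leq CE$, and your decomposition stalls at a linear estimate.
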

  
 From a technical point of view,  Proposition \ref{sindec} is perhaps \emph{the main new ingredient} provided by the present paper: When both $\E_\eps(u_\eps)$ and $\eps$ are small, it provides a fast decay of  the energy on smaller balls.

 \smallskip
  
  Combining the result \eqref{bien} of proposition \ref{sindec}  with the scale invariance properties of the equation given in  subsection \ref{squale},  we obtain corresponding results for arbitrary discs $\D^2(x_0, r)$.  Indeed, applying  Proposition \ref{sindec}  to the map $\tilde u_{\eps}$ with parameter $\tilde \eps=\eps\slash r$ and  expressing the corresponding inequality \eqref{bien} we obtain,  provided $\tilde \eps \leq 1$, i.e. $\eps\leq  r$,
  $$
  \E_{\tilde \eps}\left(\tilde u_\eps, \D^2\left(x_0, \frac{9}{16}\right)\right)\leq \Cdec \left[  \E_{\tilde \eps}\left(\tilde u_\eps\right)^{\frac32}+ \tilde   \eps \E_{\tilde \eps}\left( \tilde u_\eps\right) \right].
  $$
  Since ${\E}_{\tilde \eps}\left(\tilde u_\eps\right)=r^{-1}{\E_\eps}\left(u_\eps, \D^2(r)\right)$ and  
  $\E_{\tilde \eps}\left(\tilde u_\eps, \D^2(x_0, {9}\slash{16})\right)=r^{-1}{\E_\eps}\left(u_\eps, \D^2(9r\slash 16)\right)$
   we are led, provided $\eps \leq  r$, to the inequality
  \begin{equation}
  \label{bienscale}
 \E_\eps\left(u_\eps, \D^2\left(x_0, \frac{9r}{16}\right)\right)  \leq  \Cdec
  \left[\frac{1} {\sqrt r}\, 
    {\left(\E_\eps \left(u_\eps, \D^2(x_0, r)\right) \right)}^{\frac 32}+ 
  \frac{ \eps}{r} \E_\eps \left(u_\eps, \D^2(x_0, r)\right)
    \right]. 
    \end{equation}

 Iterating this decay estimate  on concentric discs centered at $x_0$, and combining with elementary properties of the solution $u_\eps$, we eventually obtain  the proof of Theorem \ref{clearingoutth}.

 \smallskip
 Invoking once more  the scale invariance properties of the equation given in  subsection \ref{squale}, the scaled version of Theorem \ref{clearingoutth} writes then  as follows: 
 
 \begin{proposition} 
 \label{cestclair}
 Let $x_0 \in \Omega$ and $0<\eps \leq r$ be given,  assume that $\D^2(x_0, r) \subset \Omega$ and let $u_\eps$ be a solution of \eqref{elipes} on $\Omega$. If
 \begin{equation}
 \label{donnez}
 \frac{\E_\eps\left (u_\eps, \D^2\left(x_0, r\right)\right)}{r} \leq  \upeta_1, 
 \end{equation}
 then there exist some $\upsigma \in \Sigma$ such that 
  \begin{equation}
  \label{benkono}
  \left\{
  \begin{aligned}
  \vert u_\eps(x)-\upsigma  \vert & \leq \frac{\upmu_0}{2}, {\rm \ for \ }  x \in \D^2(x_0, \frac{3r}{4}) {\rm \ and }\\\
 \E_\eps\left(u_\eps, \D^2\left(x_0, \frac{ 5r}{8}\right)\right)& \leq \Cnrg \, \frac{\eps}{r} E_\eps\left(u_\eps, \D^2\left(x_0, r\right)\right).
  \end{aligned}
  \right.
  \end{equation}
\end{proposition}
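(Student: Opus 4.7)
The plan is to deduce Proposition \ref{cestclair} as a direct corollary of Theorem \ref{clearingoutth}, exploiting the scaling/translation invariance of the equation \eqref{elipes} described in Subsection \ref{squale}. Concretely, I would introduce the rescaled map
\[
\tilde u_\eps(y) = u_\eps(ry + x_0), \qquad y \in \D^2,
\]
with scaled parameter $\tilde\eps = \eps/r$. Since by hypothesis $\eps \leq r$, we have $\tilde\eps \leq 1$, and by the scale-covariance of \eqref{elipes} the map $\tilde u_\eps$ solves the Allen-Cahn system on $\D^2$ with parameter $\tilde\eps$, so Theorem \ref{clearingoutth} is applicable to $\tilde u_\eps$ as soon as its energy on $\D^2$ is small enough.

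Next, I would verify that the smallness condition \eqref{petitou} of Theorem \ref{clearingoutth} follows from \eqref{donnez}. By the integral scaling relation in \eqref{scalingv},
\[
\E_{\tilde\eps}\bigl(\tilde u_\eps, \D^2\bigr) \;=\; \frac{1}{r}\,\E_\eps\bigl(u_\eps, \D^2(x_0, r)\bigr) \;\leq\; \upeta_1 \;\leq\; 2\upeta_1,
\]
so that \eqref{petitou} holds for $\tilde u_\eps$. Theorem \ref{clearingoutth} then produces some $\upsigma \in \Sigma$ with $\vert \tilde u_\eps(y) - \upsigma\vert \leq \upmu_0/2$ for every $y \in \D^2(3/4)$, together with the energy decay
\[
\E_{\tilde\eps}\Bigl(\tilde u_\eps, \D^2\!\left(\tfrac58\right)\Bigr) \;\leq\; \Cnrg\,\tilde\eps\, \E_{\tilde\eps}\bigl(\tilde u_\eps, \D^2\bigr).
\]

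Finally, I would translate these two conclusions back in terms of $u_\eps$. For the pointwise bound, any $x \in \D^2(x_0, 3r/4)$ corresponds to $y = (x - x_0)/r \in \D^2(3/4)$, and $\tilde u_\eps(y) = u_\eps(x)$, so $\vert u_\eps(x) - \upsigma\vert \leq \upmu_0/2$, which is the first line of \eqref{benkono}. For the energy, using \eqref{scalingv} again on both sides,
\[
\E_{\tilde\eps}\Bigl(\tilde u_\eps, \D^2\!\left(\tfrac58\right)\Bigr) = \frac{1}{r}\,\E_\eps\Bigl(u_\eps, \D^2\!\left(x_0, \tfrac{5r}{8}\right)\Bigr), \qquad \tilde\eps\,\E_{\tilde\eps}\bigl(\tilde u_\eps, \D^2\bigr) = \frac{\eps}{r^2}\,\E_\eps\bigl(u_\eps, \D^2(x_0, r)\bigr),
\]
and multiplying the resulting inequality by $r$ yields the second line of \eqref{benkono}.

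There is essentially no obstacle here: once Theorem \ref{clearingoutth} is granted, the proposition follows by bookkeeping of the $r$-factors in the scaling law \eqref{scalingv}. The only mild point to check is the compatibility of the smallness assumption, namely that $\upeta_1 \leq 2\upeta_1$, which is automatic since $\upeta_1 > 0$; this is why the statement of Theorem \ref{clearingoutth} was phrased with the threshold $2\upeta_1$.
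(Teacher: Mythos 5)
Your proof is correct and coincides with the paper's own argument, which states that Proposition \ref{cestclair} follows directly from Theorem \ref{clearingoutth} together with the scaling identities \eqref{scalingv}. The bookkeeping of the $r$-factors and the check that \eqref{donnez} implies the threshold \eqref{petitou} are exactly as the paper intends.
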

 The proof of Proposition \ref{cestclair} is  a straightforward consequence of Theorem \ref{clearingoutth} and the scaling  properties  given in  subsection \ref{squale}, in particular identities \eqref{scalingv}. 

\subsubsection{Other results at the $\eps$-level}
\smallskip
The analysis of  the limiting measures require some other ingredients, in particular concerning the interplay between the measure $\upzeta_\eps$ and $\upnu_\eps$, leading to the relations \eqref{ondanse} on the limiting densities. 

The connectedness of $\mathfrak S_\star$ also requires results at the $\eps$-level, in particular we will rely on  Proposition \ref{pave}. 

\smallskip
We next present the main tools for handling the measures and the concentration set $\mathfrak S_\star$.

\subsection{Elements in the proof of Theorem \ref{maintheo}: construction of  $\mathfrak S_\star$ and clearing out for the  measure  $\upnu_\star$}  
\label{addi}

  The set $\mathfrak S_\star$  in Theorem \ref{maintheo} and Proposition \ref{tangentfort} is obtained as a concentration set of the energy.  The properties stated in Theorem \ref{maintheo} are, for a large part,  consequences of the two results  we present next.  These results are deduced from  corresponding properties of solutions to \eqref{elipes}, and presented in the previous subsection.
  
     \smallskip 
  The first result we present nexxt represents  a classical form of a  clearing-out result for the measure $\upnu_\star$ and leads directly to the fact that energy concentrates on sets which are \emph{at most} one-dimensional.  
  \begin{theorem} 
\label{claire}
Let $x_0 \in \Omega$ and $r>0$ be given such that $\D^2(x_0, r) \subset \Omega$.  There exists a constant $\upeta_1>0$ such that, if we have 
\begin{equation}
\frac{\upnu_\star \left(\overline{\D^2(x_0, r)} \right)}{r} < \upeta_1, {\rm \ then   \  it  \  holds \  }
\displaystyle{\upnu_\star \left(\overline{\D^2(x_0,\frac{r}{2}}) \right)=0.}
\end{equation}
\end{theorem}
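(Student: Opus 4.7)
\medskip
\noindent
\textbf{Proof plan for Theorem \ref{claire}.} The strategy is to transfer the $\eps$-level clearing-out statement (Proposition \ref{cestclair}) to the limiting measure $\upnu_\star$ via weak convergence $\upnu_{\eps_n} \rightharpoonup \upnu_\star$. The key observation is that $\upnu_\eps(G) = \E_\eps(u_\eps, G)$ for every Borel set $G$, so the hypothesis on $\upnu_\star$ produces, after a small choice of intermediate radius, the hypothesis \eqref{donnez} of Proposition \ref{cestclair} at the $\eps_n$-level for $n$ large. The conclusion \eqref{benkono} then forces the $\eps_n$-energy on a slightly smaller disc to vanish in the limit, and weak lower semicontinuity for open sets gives the conclusion.

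First I would fix a radius $\rho' \in (4r/5, r)$ chosen so that
\begin{equation*}
\upnu_\star\!\left(\partial \D^2(x_0,\rho')\right)=0 \quad \text{and} \quad \upnu_\star\!\left(\overline{\D^2(x_0,\rho')}\right) < \upeta_1 \rho'.
\end{equation*}
Such a $\rho'$ exists: the second inequality holds for every $\rho'$ sufficiently close to $r$ from below by continuity of measure from below (combined with the strict inequality $\upnu_\star(\overline{\D^2(x_0,r)}) < \upeta_1 r$), and the first holds for almost every $\rho'$ since the spheres are pairwise disjoint and $\upnu_\star$ is finite. The upper bound $4r/5<\rho'$ is chosen precisely so that $r/2 < 5\rho'/8$, which I will need below.

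Because $\rho'$ is a continuity radius of $\upnu_\star$, the portmanteau theorem yields
\begin{equation*}
\upnu_{\eps_n}\!\left(\D^2(x_0,\rho')\right) \ \longrightarrow\ \upnu_\star\!\left(\D^2(x_0,\rho')\right) \leq \upnu_\star\!\left(\overline{\D^2(x_0,\rho')}\right) < \upeta_1 \rho'.
\end{equation*}
For $n$ large enough we also have $\eps_n \leq \rho'$, and the above inequality becomes $\E_{\eps_n}(u_{\eps_n}, \D^2(x_0,\rho')) < \upeta_1 \rho'$. Proposition \ref{cestclair} then applies to $u_{\eps_n}$ on the disc $\D^2(x_0,\rho')$ and yields
\begin{equation*}
\E_{\eps_n}\!\left(u_{\eps_n},\D^2(x_0, 5\rho'/8)\right)\ \leq\ \Cnrg\, \frac{\eps_n}{\rho'}\, \E_{\eps_n}\!\left(u_{\eps_n}, \D^2(x_0,\rho')\right)\ \leq\ \Cnrg\, \upeta_1\, \eps_n\ \longrightarrow\ 0.
\end{equation*}
Since $r/2 < 5\rho'/8$, the closed disc $\overline{\D^2(x_0,r/2)}$ sits inside the open disc $\D^2(x_0,5\rho'/8)$, and the weak lower semicontinuity of $\upnu_{\eps_n}$ on open sets gives
\begin{equation*}
\upnu_\star\!\left(\overline{\D^2(x_0,r/2)}\right) \leq \upnu_\star\!\left(\D^2(x_0,5\rho'/8)\right) \leq \liminf_{n\to\infty} \upnu_{\eps_n}\!\left(\D^2(x_0,5\rho'/8)\right) = 0,
\end{equation*}
which is the desired conclusion.

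The main subtlety (rather than obstacle) is arranging the three scales $r/2 < 5\rho'/8 < \rho' < r$ so that: (i) $\rho'$ is close enough to $r$ that the continuity point $\rho'$ still satisfies $\upnu_\star(\overline{\D^2(x_0,\rho')}) < \upeta_1 \rho'$, despite the normalizing factor $1/\rho'$ being slightly larger than $1/r$; (ii) $5\rho'/8$ remains strictly larger than $r/2$ so there is genuine room to apply weak convergence on an open set strictly containing $\overline{\D^2(x_0,r/2)}$. The rest is an essentially mechanical pass to the limit, and in particular the constant $\upeta_1$ appearing in the statement is taken to be the same as the one in Proposition \ref{cestclair}.
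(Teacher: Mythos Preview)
Your proof is correct and follows essentially the same route as the paper: pass from the limiting measure to the $\eps_n$-level via weak convergence on a slightly smaller disc, apply the $\eps$-clearing-out (Proposition \ref{cestclair}), and send $n\to\infty$. The only cosmetic differences are that the paper fixes the explicit radius $\rho=8r/9$ and uses the upper-semicontinuity inequality on closed sets with a $5/4$-slack rather than selecting a continuity radius via portmanteau as you do.
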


  The  previous  statement leads   to consider the one-dimensional  lower density   of the measure $\upnu_\star$ defined, for $x_0 \in \Omega$,  by 
$$\tbe_\star (x_0)={\underset{r \to 0} \liminf } \frac{\upnu_\star \left(\overline{\D^2(x_0, r)} \right)}{r},   $$
 so that $\tbe_\star(x_0) \in [0, +\infty]$. We define the set $\mathfrak S_\star$ as the \emph{ concentration set}  for  the measure $\upnu_\star$. More precisely,  we set 
\begin{equation}
\label{mathfrakSstar}
\mathfrak S_\star=\{ x \in \Omega, \tbe_\star (x_0)\geq\upeta_1\}, 
\end{equation}
where $\upeta_1>0$ is the constant provided by Theorem \ref{claire}. 
The fact that $\mathfrak S_\star$ is   closed of finite one-dimensional Hausdorff measure is then a rather direct consequence of the clearing-out property for the measure $\upnu_\star$ stated in Theorem \ref{claire}.

\begin{remark}
{\rm   Let us emphasize  once more  that the previous definition of $\mathfrak S_\star$ directly leads, by construction and in view of Theorem \ref{claire} to concentration of the measure $\upnu_\star$ and $\upzeta_\star$  on the set $\mathfrak S_\star$ and also a \emph{lower bound} on the density of $\upnu_\star$.   The \emph{upper bound} requires different arguments, in particular a monotonicity formula.
}
\end{remark}
\smallskip
The connectedness properties of $\mathfrak S_\star$ stated in Theorem \ref{maintheo}, part ii) require  a different type of clearing-out result.   Its statement involves general  regular subdomains $\mathcal U \subset \Omega$,  and,  for $\updelta>0$,  the related sets  (see Figure \ref{bordurier})
\begin{equation}
\label{Udelta}
\left\{
\begin{aligned}
 \mathcal U_\updelta&=\left \{  x \in \Omega,  {\rm dist}(x, \mathcal U)\leq \updelta\right\} \supset
  \mathcal U {\rm \ and \ }  \\
 \mathcal V_\updelta&=\mathcal U_\delta \setminus \mathcal U
 =\left \{  x \in \Omega, 0\leq  {\rm dist}(x, \mathcal U)\leq \updelta\right\}. 
 \end{aligned}
\right.
\end{equation}

\begin{theorem}
\label{bordurer} 
 Let $\mathcal U\subset \Omega$ be a open  subset of $\Omega$ and $\updelta>0$ be given. If we have  
\begin{equation}
\label{clearstream}
 \upnu_\star  (\mathcal V_\delta)=0  , {\rm \ then   \  it  \  holds \  }
\upnu_\star \left(\overline{\mathcal U} \right)=0.
\end{equation}
\end{theorem}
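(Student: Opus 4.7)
The plan is to combine the hypothesis with the clearing-out results and an integration by parts against the PDE to propagate the vanishing of the energy from the shell $\mathcal V_\updelta$ across $\partial \mathcal U$ into $\mathcal U$. First, the hypothesis $\upnu_\star(\mathcal V_\updelta)=0$ together with the definition \eqref{mathfrakSstar} force $\mathfrak S_\star \cap \mathcal V_\updelta = \emptyset$, so $\mathcal V_\updelta \subset \mathfrak U_\star$; Theorem \ref{maintheo} ii) then implies $u_{\eps_n}$ converges uniformly on compact subsets of $\mathcal V_\updelta$ to a locally constant $\Sigma$-valued map. Arguing componentwise in $\mathcal U$, I may assume a single value $\upsigma \in \Sigma$ governs this convergence on a neighborhood of $\partial \mathcal U$.

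The next step is to upgrade this qualitative convergence into a quantitative energy bound. Cover an inner shell $\mathcal V' \Subset \mathcal V_\updelta$ containing $\partial \mathcal U$ by finitely many discs $\D^2(x_i, r)$ of radius $r \sim \updelta/8$. Because $\upnu_\star(\mathcal V_\updelta)=0$, for $n$ large the ratio $\Eps(u_{\eps_n}, \D^2(x_i, r))/r$ falls below $\upeta_1$ on each disc, and the scaled clearing-out Proposition \ref{cestclair} applies, giving $\Eps(u_{\eps_n}, \D^2(x_i, 5r/8)) \le \Cnrg\,(\eps_n/r)\,M_0$. Summing over the finite cover yields $\Eps(u_{\eps_n}, \mathcal V') = O(\eps_n)$, and hence $\|\nabla u_{\eps_n}\|_{L^2(\mathcal V')} = o(1)$ as $n \to \infty$.

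To reach the interior of $\mathcal U$, I test \eqref{elipes} against $\chi^2 (u_{\eps_n} - \upsigma)$, where $\chi \in C_c^\infty$ satisfies $\chi \equiv 1$ on $\mathcal U$ with $\mathrm{supp}\,\nabla \chi \subset \mathcal V'$ and $|\nabla \chi|$ bounded independently of $n$. Integration by parts gives
\begin{equation*}
\int \chi^2 |\nabla u_{\eps_n}|^2 + \eps_n^{-2}\int \chi^2 \nabla V(u_{\eps_n}) \cdot (u_{\eps_n} - \upsigma)\,{\rm d} x = -2\int \chi (u_{\eps_n} - \upsigma) \nabla \chi \cdot \nabla u_{\eps_n}\,{\rm d} x.
\end{equation*}
The right-hand side is bounded by $C\|u_{\eps_n} - \upsigma\|_{L^\infty(\mathcal V')} \|\nabla u_{\eps_n}\|_{L^2(\mathcal V')} \to 0$. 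For the left-hand side, $(\text{H}_2)$ provides the coercive estimate $\nabla V(u) \cdot (u - \upsigma) \ge cV(u)$ near $\upsigma$, while $(\text{H}_3)$ controls the regime where $|u|$ is large; partitioning $\mathcal U$ into a region where $u_{\eps_n}$ is close to $\upsigma$ and its complement, one recovers a lower bound of the form $\eps_n^{-2}\int \chi^2 \nabla V\cdot(u_{\eps_n}-\upsigma)\ge c\, \mathbb V_{\eps_n}(u_{\eps_n},\mathcal U)$ modulo terms that vanish, and concludes $\Eps(u_{\eps_n}, \mathcal U) \to 0$, hence $\upnu_\star(\overline{\mathcal U}) = 0$.

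The main obstacle lies in justifying the pointwise coercivity $\nabla V(u)\cdot(u-\upsigma)\ge cV(u)$ in the intermediate regime where $u_{\eps_n}$ is neither close to $\upsigma$ nor large, a range where neither $(\text{H}_2)$ nor $(\text{H}_3)$ applies directly. The intended resolution combines a bootstrap—propagating the $\upmu_0$-closeness from $\partial \mathcal U$ inwards disc by disc via successive applications of Proposition \ref{cestclair} to enlarge the "good" subset of $\mathcal U$—with a Modica--Mortola-type lower bound forcing any persistent transition layer in $\mathcal U$ to contribute positive energy to the already quantified $\Eps(u_{\eps_n},\mathcal V')=O(\eps_n)$, yielding the required contradiction.
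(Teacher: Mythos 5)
Your proposal contains a genuine gap, and it is exactly the one you flag at the end. Testing \eqref{elipes} against $\chi^2(u_{\eps_n}-\upsigma)$ requires the coercivity $\nabla V(u_{\eps_n})\cdot(u_{\eps_n}-\upsigma)\gtrsim V(u_{\eps_n})$ \emph{throughout} $\mathcal U$, and this fails precisely in the situation the theorem must rule out: $u_{\eps_n}$ sitting near a different well $\upsigma'\neq\upsigma$ (or undergoing transitions) on a region compactly contained in $\mathcal U$, where $\nabla V(u_{\eps_n})\cdot(u_{\eps_n}-\upsigma)$ can be negative. Your two proposed repairs do not close this. The ``disc by disc'' bootstrap is circular: to apply Proposition \ref{cestclair} on a disc meeting the interior of $\mathcal U$ you need the smallness of $\Eps(u_{\eps_n},\D^2(x,r))/r$ on that disc, which is not known there — the hypothesis $\upnu_\star(\mathcal V_\updelta)=0$ only concerns the exterior shell, and a priori the energy could concentrate on an interface contained well inside $\mathcal U$. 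For the same reason the Modica--Mortola argument gives no contradiction: such an interior layer contributes nothing to $\Eps(u_{\eps_n},\mathcal V')$, which is the only quantity you have quantified. There are also two smaller flaws: the choice of a single $\upsigma$ ``componentwise'' is not justified (a neighborhood of $\partial\mathcal U$ may have several components converging to different elements of $\Sigma$, which ruins both the boundary term and the interior coercivity), and $\Eps(u_{\eps_n},\mathcal V')=O(\eps_n)$ gives only $\|\nabla u_{\eps_n}\|_{L^2(\mathcal V')}=O(1)$, not $o(1)$ (this last point is repairable via $|u_{\eps_n}-\upsigma|\,|\nabla u_{\eps_n}|\lesssim e_{\eps_n}(u_{\eps_n})$, but as written it is an error).

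The paper avoids all of this by never using information on the values of $u_{\eps_n}$ inside $\mathcal U$. Its proof is a direct consequence of Proposition \ref{pave}: the stress--energy (Pohozaev-type) identity \eqref{canardwc2} with the vector field $X(z)=z\,\chi_\delta(z)$, $\chi_\delta\equiv 1$ on $\mathcal U_{\updelta/2}$ and supported in $\mathcal U_\updelta$, yields Proposition \ref{pascapit}, i.e. $\eps^{-1}\int_{\mathcal U_{\updelta/2}}V(u_\eps)\leq C(\mathcal U,\updelta)\int_{\mathcal V_\updelta}e_\eps(u_\eps)$, so the smallness of the shell energy controls the potential on all of $\mathcal U_{\updelta/2}$ regardless of which wells $u_\eps$ visits inside; a covering of $\mathcal U_{\updelta/4}$ by discs of radius $\sim\updelta$ together with Proposition \ref{borneo} then upgrades this to smallness of the full energy on $\mathcal U_{\updelta/4}$, up to an $O(\eps_n)$ term, and letting $n\to\infty$ gives $\upnu_\star(\overline{\mathcal U})=0$. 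If you want to keep your scheme, you would need a substitute for the interior coercivity step, and the natural substitute is precisely this Pohozaev-type control of the potential from the annulus.
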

In other terms, if the  measure $\upnu_\star$ vanishes in some neighborhood of the boundary $\partial \mathcal U$, then it vanishes on 
$\overline {\mathcal U}$. This result will allow us to establish  connectedness properties of $\mathfrak S_\star$.   For instance, we will prove the following \emph{local connectedness property}:

\begin{proposition}
\label{connective}
Let $x_0\in \Omega$, $r>0$ such that $\D^2(x_0, 2r) \subset \Omega$. There exists a radius $\rho_0\in (r, 2r)$ such that $\mathfrak S_\star \cap \D^2(x_0, \rho_0)$ contains a finite union of path-connected components. 
\end{proposition}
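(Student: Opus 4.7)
The plan is to apply a coarea slicing to pick a radius $\rho_0\in(r,2r)$ at which the bounding circle meets $\mathfrak S_\star$ in a finite set $F$, and then to use the boundary clearing-out of Theorem \ref{bordurer} to exclude any connected component of $\mathfrak S_\star\cap\overline{\D^2(x_0,\rho_0)}$ disjoint from $F$. The remaining components will then be at most $\#F$ in number, and each will be a continuum of finite $\mathcal H^1$-mass, hence path-connected.

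First I would pick $\rho_0$ using Eilenberg's coarea inequality applied to the $1$-Lipschitz function $x\mapsto |x-x_0|$: since $\mathcal H^1(\mathfrak S_\star)<+\infty$,
\begin{equation*}
\int_0^{+\infty}\mathcal H^0\!\left(\mathfrak S_\star\cap\S^1(x_0,\rho)\right)\rd\rho\ \leq\ \mathcal H^1(\mathfrak S_\star)\ <\ +\infty,
\end{equation*}
so $F:=\mathfrak S_\star\cap\S^1(x_0,\rho_0)$ is finite for a.e.\ $\rho_0\in(r,2r)$. Set $S:=\mathfrak S_\star\cap\overline{\D^2(x_0,\rho_0)}$ and decompose $S=S^{\rm bd}\sqcup S^{\rm int}$ according to whether the connected component of $S$ containing a given point meets $F$ or not. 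A preliminary step is to verify that $S^{\rm bd}$ is closed in $S$: if $x_n\in S^{\rm bd}$ with $x_n\to x$, pick $y_n\in C_n\cap F$ where $C_n$ is the component of $x_n$ in $S$; after subsequences $y_n\to y\in F$ and $C_n\to C_\infty$ in the Hausdorff metric. Since the Hausdorff limit of continua in a compact metric space is again a continuum, $C_\infty\subset S$ is closed and connected and contains both $x$ and $y$, which places $x$ in the same component of $S$ as $y\in F$ and yields $x\in S^{\rm bd}$. Consequently $S^{\rm int}$ is clopen in $S$, and in particular compact.

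The main step is to derive a contradiction from $S^{\rm int}\neq\emptyset$. Since $S^{\rm int}$, $S^{\rm bd}$ and $\S^1(x_0,\rho_0)$ are three pairwise disjoint compact sets,
\begin{equation*}
d_0:=\min\bigl({\rm dist}(S^{\rm int},\S^1(x_0,\rho_0)),\ {\rm dist}(S^{\rm int},S^{\rm bd})\bigr)>0
\end{equation*}
(with ${\rm dist}(S^{\rm int},S^{\rm bd})=+\infty$ if $S^{\rm bd}=\emptyset$). Fix $\delta\in(0,d_0/4)$ and set $\mathcal U:=\{x\in\Omega:{\rm dist}(x,S^{\rm int})<\delta\}\subset\D^2(x_0,\rho_0)$. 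The collar $\mathcal V_\delta=\mathcal U_\delta\setminus\mathcal U$ consists of points whose distance to $S^{\rm int}$ lies in $[\delta,2\delta]\subset[\delta,d_0/2]$, and this forces $\mathcal V_\delta$ to avoid $S^{\rm bd}$, $\S^1(x_0,\rho_0)$, and $\mathfrak S_\star\setminus\overline{\D^2(x_0,\rho_0)}$; hence $\mathcal V_\delta\cap\mathfrak S_\star=\emptyset$. Because $\upnu_\star$ is supported on $\mathfrak S_\star$ (immediate from Theorem \ref{claire} and the definition \eqref{mathfrakSstar}), this gives $\upnu_\star(\mathcal V_\delta)=0$, and Theorem \ref{bordurer} yields $\upnu_\star(\overline{\mathcal U})=0$. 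Now any $x\in S^{\rm int}\subset\mathfrak S_\star$ satisfies $\tbe_\star(x)\geq\upeta_1>0$, so $\upnu_\star(\overline{\D^2(x,s)})\geq(\upeta_1/2)s>0$ for every sufficiently small $s>0$; taking $s<\delta$ forces $\overline{\D^2(x,s)}\subset\mathcal U$, contradicting $\upnu_\star(\overline{\mathcal U})=0$.

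Therefore $S^{\rm int}=\emptyset$, and $S$ has at most $\#F<+\infty$ connected components. Each component is a continuum with $\mathcal H^1<+\infty$, and by a classical theorem (see e.g.\ Eilenberg--Harrold) any such continuum is locally, and in particular arcwise, connected. Removing the finite set $F$ from $S$ to obtain $\mathfrak S_\star\cap\D^2(x_0,\rho_0)$ creates at most finitely many extra path-components, and the conclusion follows. I expect the two delicate points of the plan to be the Hausdorff-limit-of-continua argument used for the closedness of $S^{\rm bd}$, and the careful calibration of $\delta$, which must simultaneously be small enough for Theorem \ref{bordurer} to apply and large enough that the lower density bound at a point of $S^{\rm int}$ already produces positive $\upnu_\star$-mass inside $\mathcal U$.
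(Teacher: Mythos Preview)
Your strategy---slice by coarea to get a finite boundary set $F$, then use Theorem~\ref{bordurer} to rule out components of $S=\mathfrak S_\star\cap\overline{\D^2(x_0,\rho_0)}$ that do not reach $F$---uses the same two tools as the paper, but organised differently. The paper first proves (Proposition~\ref{lesconti}) that $\mathfrak S_{\star,r}\cup\S^1(x_0,r)$ is a single continuum, via a Besicovitch $\updelta$-disk approximation, and only afterwards slices and connects points to $F$ by arcs inside that continuum; you slice first and attack the component decomposition of $S$ directly.

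There is, however, a genuine gap at ``consequently $S^{\rm int}$ is clopen in $S$''. Your Hausdorff-limit argument correctly shows $S^{\rm bd}$ is closed, hence $S^{\rm int}$ is open in $S$; but nothing you wrote forces $S^{\rm int}$ to be \emph{closed}. A compact set of finite $\mathcal H^1$-measure may consist of a diameter $D$ (meeting $F$ at its endpoints, so $D\subset S^{\rm bd}$) together with a sequence of pairwise disjoint loops $C_n\subset S^{\rm int}$, each avoiding $D$, with $\sum_n\mathcal H^1(C_n)<\infty$ and $C_n\to\{x_0\}\subset D$ in Hausdorff distance. Then $\overline{S^{\rm int}}$ meets $S^{\rm bd}$, $d_0=0$, and no collar $\mathcal V_\delta$ disjoint from $\mathfrak S_\star$ exists. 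This is precisely the obstacle the paper's approximation step is designed to remove: thickening $\mathfrak S_{\star,r}$ to a \emph{finite} union of $\updelta$-disks forces finitely many components, hence a positive separation to which Theorem~\ref{bordurer} applies, and connectedness passes to the Hausdorff limit $\updelta\to 0$. Your route can be repaired without the approximation by invoking that in a compact metric space connected components coincide with quasi-components: for a single component $C\subset S^{\rm int}$ there is a clopen $V\subset S$ with $C\subset V\subset S\setminus F$, and then $V$ (compact, disjoint from $\S^1(x_0,\rho_0)$ and from its clopen complement $S\setminus V$) plays the role you assigned to $S^{\rm int}$. A smaller second issue: removing a finite set from an arcwise-connected continuum of finite length can create infinitely many path-components (countably many arcs meeting only at one point of $F$), so your final sentence also needs care; note that the paper itself only concludes for the closed intersection $\mathfrak S_{\star,r_0}$.
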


\medskip
  \begin{figure}
 \centering
 \includegraphics[height=6cm]{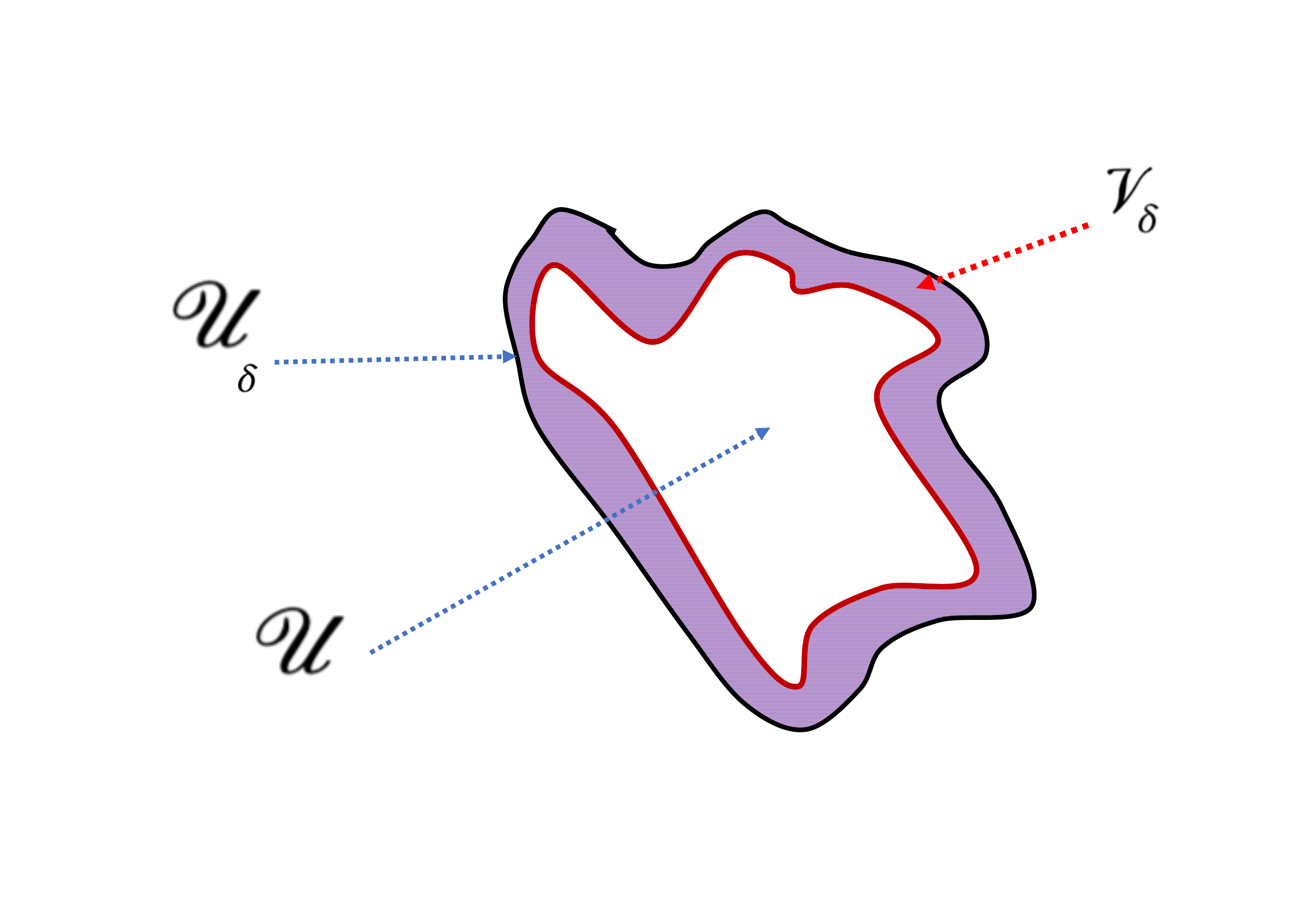}
	\caption{\it The sets $\mathcal U_\delta$ and $\mathcal V_\delta$.}
	\label{bordurier} 
	\end{figure}

 This connectedness property in Proposition \ref{connective}   implies   the  rectifiability  of $\mathfrak S_\star$, invoking classical  results on continua  of bounded one-dimensional Hausdorff measure (see e.g \cite{falconer}).   The proof of Theorem \ref{maintheo} is  then a combination of the results in Theorem \ref{claire}  and Proposition \ref{claire}. 
 
 \smallskip
     For  the  set  $\mathfrak S_\star$ given by Theorem \ref{maintheo}, the approximate tangent line  property \eqref{tangent}  can actually be strengthened as follows (see Figure \ref{comique}):
 
 \begin{proposition}
 \label{tangentfort} 
 Let $x_0$ be a regular point of $\mathfrak S_\star$. Given any $\Uptheta>0$ there exists  a radius $R_{\rm cone}(\uptheta, x_0)$ such that 
 \begin{equation}
 \label{radius}
\mathfrak S_\star \cap  \D^2\left(x_0, r\right) \subset  \mathcal C_{\rm one}\left(x_0, \vec e_{x_0}, \uptheta  \right), 
 {\rm  \  for \   any \  }  0<r\leq R_{\rm cone}(\uptheta, x_0).
 \end{equation}
 \end{proposition}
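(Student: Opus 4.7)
I argue by contradiction, leveraging the uniform lower density of $\upnu_\star$ on $\mathfrak S_\star$ provided by Theorem \ref{claire} against the approximate tangent line property \eqref{tangent}. Suppose the conclusion fails at the regular point $x_0$ for some $\uptheta>0$; then there exist $r_n \to 0$ and points $y_n \in \mathfrak S_\star \cap \D^2(x_0, r_n)$ with $y_n \notin \mathcal C_{\rm one}(x_0, \vec e_{x_0}, \uptheta)$. Writing $d_n = \vert y_n - x_0 \vert \leq r_n$, we have $d_n \to 0$.

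A planar geometry computation, using that $y_n$ lies outside the cone of half-angle $\uptheta$ while $\mathcal C_{\rm one}(x_0, \vec e_{x_0}, \uptheta/2)$ is a strictly smaller cone around the same axis, produces a constant $c(\uptheta)>0$ (essentially of order $\sin(\uptheta/2)$) such that
\[
{\rm dist}\bigl(y_n, \, \mathcal C_{\rm one}(x_0, \vec e_{x_0}, \uptheta/2)\bigr) \geq c(\uptheta)\, d_n.
\]
Setting $\rho_n = c(\uptheta)\, d_n / 2$, the disc $\D^2(y_n, \rho_n)$ is therefore disjoint from $\mathcal C_{\rm one}(x_0, \vec e_{x_0}, \uptheta/2)$ while being contained in $\D^2(x_0, 2 d_n) \subset \Omega$ for $n$ large enough. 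This is the geometric input.

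Next I invoke Theorem \ref{claire} in its contrapositive form. Since $y_n \in \mathfrak S_\star$, the liminf density $\tbe_\star(y_n)$ is at least $\upeta_1$, in particular $y_n$ belongs to the support of $\upnu_\star$, so every open disc about $y_n$ has positive $\upnu_\star$-measure. Theorem \ref{claire} then forces
\[
\upnu_\star\bigl(\overline{\D^2(y_n, \rho_n)}\bigr) \geq \upeta_1\, \rho_n
\]
for all $n$ large enough. Combining with the disjointness and containment above, and with the local absolute continuity of $\upnu_\star$ with respect to $\rd\lambda = \mathcal H^1 \rest \mathfrak S_\star$ supplied by Theorem \ref{absolute} (which yields a local upper bound $K = K_{\rm dens}(\rd(x_0)/2) \cdot M_0/\rd(x_0)$ on the density $\mathbbm e_\star$ in a neighborhood of $x_0$), we obtain
\[
\upeta_1\, \rho_n \leq \upnu_\star\bigl(\D^2(x_0, 2 d_n) \setminus \mathcal C_{\rm one}(x_0, \vec e_{x_0}, \uptheta/2)\bigr) \leq K\, \mathcal H^1\bigl(\mathfrak S_\star \cap \D^2(x_0, 2 d_n) \setminus \mathcal C_{\rm one}(x_0, \vec e_{x_0}, \uptheta/2)\bigr).
\]
By \eqref{tangent} applied with angle $\uptheta/2$ and radius $2 d_n$, the right-hand side is $o(d_n)$. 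Since $\rho_n = c(\uptheta)\, d_n / 2$, dividing by $d_n$ gives $\upeta_1\, c(\uptheta)/(2 K) \leq o(1)$ as $n \to \infty$, a contradiction.

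The main obstacle is really just the correct choice of the intermediate cone $\mathcal C_{\rm one}(x_0, \vec e_{x_0}, \uptheta/2)$ and of the scale $\rho_n \sim d_n$, so that the mass forced on $\D^2(y_n, \rho_n)$ by the clearing-out theorem is entirely charged to the region outside the smaller cone and hence controlled by the $o(d_n)$ coming from the approximate tangent line. The remainder is a direct quantitative comparison; no further PDE input beyond Theorems \ref{claire} and \ref{absolute} is needed.
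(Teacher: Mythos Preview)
Your argument is circular in the paper's logical order. You invoke Theorem~\ref{absolute} to obtain a local upper bound on the density $\mathbbm e_\star$ of $\upnu_\star$ with respect to $\rd\lambda$, which is what allows you to pass from $\upnu_\star$-mass outside the half-cone to $\mathcal H^1$-length via the inequality
\[
\upnu_\star\bigl(\D^2(x_0,2d_n)\setminus \mathcal C_{\rm one}(x_0,\vec e_{x_0},\uptheta/2)\bigr)\;\le\; K\,\mathcal H^1\bigl(\mathfrak S_\star\cap \D^2(x_0,2d_n)\setminus \mathcal C_{\rm one}(x_0,\vec e_{x_0},\uptheta/2)\bigr).
\]
But in the paper, Theorem~\ref{absolute} is established only after Proposition~\ref{psycho}, whose proof rests on the analysis of Section~\ref{goodpoints}; and that analysis (see the proof of Proposition~\ref{assos}, and the use of \eqref{radius} throughout) explicitly relies on Proposition~\ref{tangentfort}. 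At the stage where Proposition~\ref{tangentfort} is proved, one only knows that $\upnu_\star$ has \emph{lower} density bounded below on $\mathfrak S_\star$; no upper density bound, and hence no absolute continuity of $\upnu_\star$ with respect to $\mathcal H^1\rest\mathfrak S_\star$, is yet available. Without that, your final comparison step fails, and I do not see how to repair it with the tools in hand at this point.

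The paper's proof avoids this by working purely at the level of $\mathcal H^1$, never touching $\upnu_\star$. The key input is not Theorem~\ref{claire} but Theorem~\ref{bordurer} via Proposition~\ref{lesconti} and Corollary~\ref{lesconti2}: the set $\mathfrak S_{\star,2\uptau}(x_0)\cup\partial\D^2(x_0,2\uptau)$ is a path-connected continuum. If there were a point $x_1$ of $\mathfrak S_\star$ in the annulus $\D^2(x_0,\uptau)\setminus\D^2(x_0,\uptau/2)$ outside the cone of half-angle $\uptheta$, then a path in $\mathfrak S_\star$ from $x_1$ to $\partial\D^2(x_0,2\uptau)$ would contribute $\mathcal H^1$-length of order $\uptau\sin(\uptheta/2)$ outside the cone of half-angle $\uptheta/2$, contradicting~\eqref{tangent} directly (Lemma~\ref{ratus}). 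A dyadic iteration over annuli then gives the full disk. So the paper trades your density argument for a connectedness argument, which is available at this stage and breaks the circularity.
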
 
 
  \begin{figure}
 \centering
 \includegraphics[height=6cm]{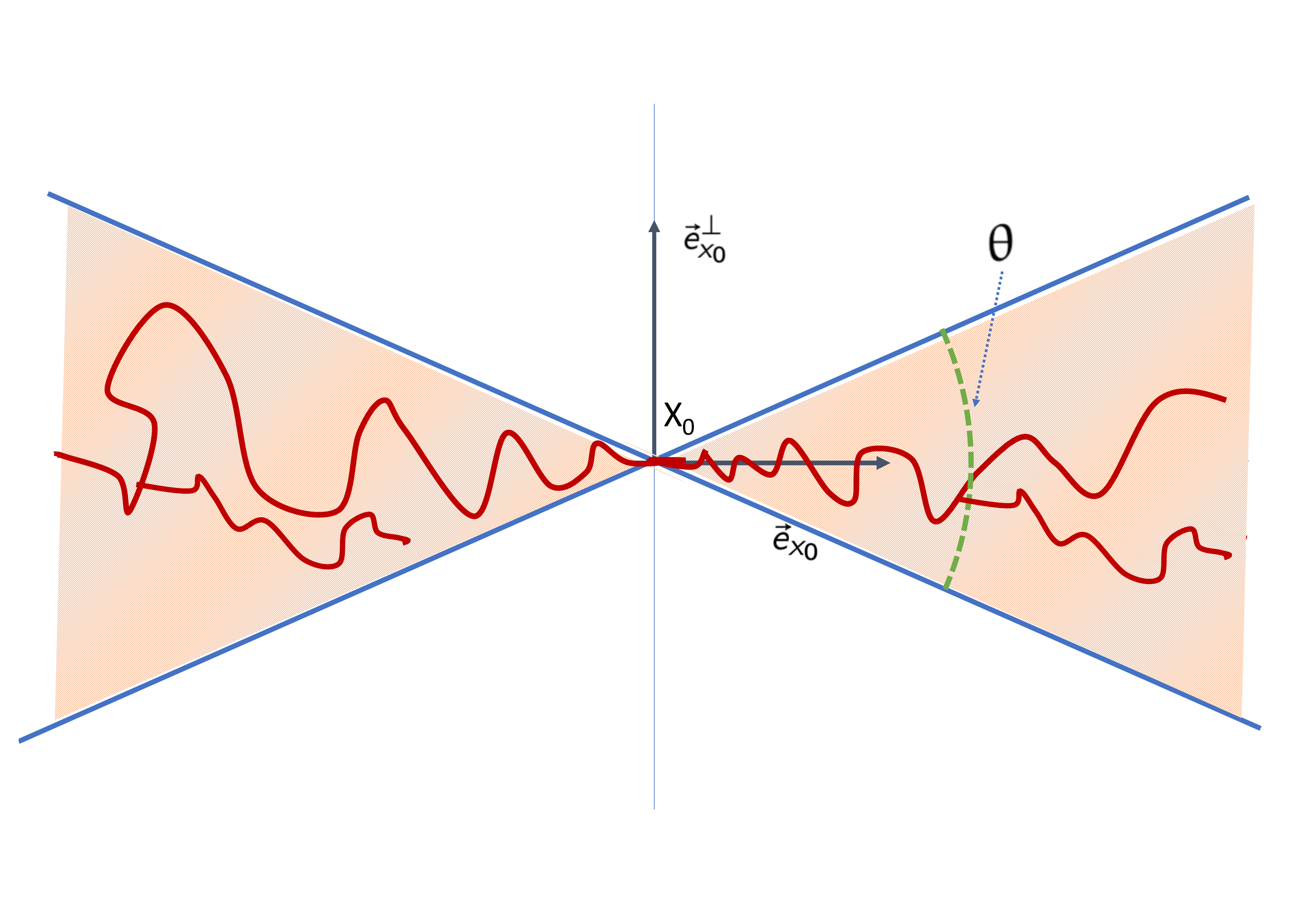}
	\caption{\it  The tangent cone property, as given in Proposition \ref{tangentfort}.}
	\label{comique} 
	\end{figure}

\subsection{A useful tool: The limiting Hopf differential $\omega_\star$}
\label{hopfite} 
   We introduce the complex-valued measure referred to as the \emph{limiting Hopf differential} 
   \begin{equation}
\label{schmil0}
\omega_\star=\left( \upmu_{\star, 1, 1} -\upmu_{\star, 2, 2}\right) -2i\upmu_{\star, 1, 2}, 
\end{equation}
where  the measures $\upmu_{i, j}$  have been defined in \eqref{boulga2}. Since the measures $\upnu_{\star, i, j}$ depend on the choice of orthonormal basis,  the expression of the Hopf differential also strongly depends on this choice.
The measures $\upzeta_\star$ and $\omega_\star$  are  strongly  related in view of our   next result.

\begin{lemma}
\label{holomorphitude}
We have, in the sense of distributions
\begin{equation}
\label{distributude}
\frac{\partial \omega_\star}{\partial \bar  {z} }=2\frac{\partial \upzeta_\star}{\partial{z}} \ {\rm \ in  \ }  \mathcal D'(\Omega).
\end{equation}
\end{lemma}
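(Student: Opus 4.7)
The plan is to establish the identity \eqref{distributude} first at the $\eps$-level by a direct pointwise computation using equation \eqref{elipes}, and then to pass to the distributional limit $\eps \to 0$.

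First I would introduce the natural $\eps$-level precursor of $\omega_\star$: define the smooth complex-valued function
\begin{equation*}
\omega_\eps := \eps\bigl(|{u_\eps}_{x_1}|^2 - |{u_\eps}_{x_2}|^2\bigr) - 2i\,\eps\, {u_\eps}_{x_1}\cdot {u_\eps}_{x_2} = 4\eps\, \partial_z u_\eps \cdot \partial_z u_\eps
\end{equation*}
on $\Omega$. Directly from the definitions \eqref{boulga2} of the measures $\upmu_{\star, i, j}$, we have $\omega_\eps\,\rd x \rightharpoonup \omega_\star$ in the sense of distributions on $\Omega$ along the chosen subsequence $(\eps_n)$, and similarly $\upzeta_{\eps_n} \rightharpoonup \upzeta_\star$ by \eqref{boulga}.

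Next, splitting $\omega_\eps = A_\eps + i B_\eps$ with $A_\eps = \eps(|{u_\eps}_{x_1}|^2 - |{u_\eps}_{x_2}|^2)$ and $B_\eps = -2\eps\, {u_\eps}_{x_1}\cdot {u_\eps}_{x_2}$, the key pointwise computation is
\begin{align*}
\partial_{x_1}A_\eps - \partial_{x_2}B_\eps &= 2\eps\, {u_\eps}_{x_1}\cdot\Delta u_\eps = 2\eps^{-1}\, {u_\eps}_{x_1}\cdot \nabla_u V(u_\eps) = 2\,\partial_{x_1}\upzeta_\eps, \\
\partial_{x_1}B_\eps + \partial_{x_2}A_\eps &= -2\eps\, {u_\eps}_{x_2}\cdot \Delta u_\eps = -2\,\partial_{x_2}\upzeta_\eps,
\end{align*}
the mixed second-order terms $\eps\,{u_\eps}_{x_i}\cdot {u_\eps}_{x_j x_k}$ cancelling in each line and the PDE \eqref{elipes} being used to rewrite $\eps\Delta u_\eps = \eps^{-1}\nabla_u V(u_\eps)$. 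Since $2\partial_{\bar z} = \partial_{x_1} + i\partial_{x_2}$ and $2\partial_z = \partial_{x_1} - i\partial_{x_2}$, combining the two displays yields
\begin{equation*}
2\partial_{\bar z}\omega_\eps \;=\; (\partial_{x_1}A_\eps - \partial_{x_2}B_\eps) + i(\partial_{x_1}B_\eps + \partial_{x_2}A_\eps) \;=\; 2(\partial_{x_1} - i\partial_{x_2})\upzeta_\eps \;=\; 4\,\partial_z \upzeta_\eps,
\end{equation*}
so that $\partial_{\bar z}\omega_\eps = 2\,\partial_z \upzeta_\eps$ holds pointwise on $\Omega$ for each $\eps>0$.

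The passage to the limit is then immediate: since $\omega_{\eps_n}\,\rd x \rightharpoonup \omega_\star$ and $\upzeta_{\eps_n} \rightharpoonup \upzeta_\star$ in $\mathcal D'(\Omega)$, and since the Wirtinger operators $\partial_z$ and $\partial_{\bar z}$ are continuous on $\mathcal D'(\Omega)$, the $\eps$-level identity passes to the distributional limit, yielding \eqref{distributude}. The only substantive step is the algebraic verification of the pointwise identity, which is essentially the divergence-free property of the stress-energy tensor $T^\eps_{ij} = \eps\,{u_\eps}_{x_i}\cdot {u_\eps}_{x_j} - \delta_{ij}e_\eps(u_\eps)$ attached to $\E_\eps$, recast in complex coordinates; the weak-limit step comes for free from the continuity of distributional differentiation, so no real obstacle arises.
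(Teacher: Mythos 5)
Your proof is correct, and it takes a genuinely different route from the one in the paper. The paper starts from the already-established \emph{integrated} stress–energy identity \eqref{canardwc2}, passes to the limit $n\to\infty$ in that weak formulation to obtain \eqref{canardwcstar}, and then recovers the full complex identity \eqref{canardwcstar3} by testing additionally with $iX$ and combining real and imaginary parts. You instead verify \emph{pointwise} that the smooth function $\omega_\eps$ and the density $\eps^{-1}V(u_\eps)$ satisfy $\partial_{\bar z}\omega_\eps=2\partial_z\bigl(\eps^{-1}V(u_\eps)\bigr)$ classically — which, as you note, is exactly the pointwise form of ${\rm div}\, A_\eps(u_\eps)=0$ with $A_\eps$ the stress–energy tensor of \eqref{canardwc1}, written in Wirtinger coordinates — and then pass to the limit in $\mathcal D'(\Omega)$ using sequential continuity of $\partial_z,\partial_{\bar z}$. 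Your computation is right: the mixed second-order terms cancel in each line, and \eqref{elipes} converts $\eps\,\Delta u_\eps$ to $\eps^{-1}\nabla_u V(u_\eps)$, which is precisely $\nabla\bigl(\eps^{-1}V(u_\eps)\bigr)$ by the chain rule. The upshot is that your argument proves a slightly stronger $\eps$-level statement (a classical pointwise identity rather than only the weak one) and dispenses with the $X$-versus-$iX$ bookkeeping; the paper's route is the natural one given that the weak identity \eqref{canardwc2} was already in hand. Both are perfectly valid, and the limit step — weak-$*$ convergence of measures implying convergence in $\mathcal D'$, so that distributional differentiation commutes with the limit — is handled correctly in your version. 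One minor notational caveat: in your pointwise display you are using $\upzeta_\eps$ to denote the scalar density $\eps^{-1}V(u_\eps)$, whereas the paper reserves $\upzeta_\eps$ for the measure $\eps^{-1}V(u_\eps)\,\rd x$; the meaning is clear from context, but it is worth flagging.
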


Relation \eqref{distributude} is the two-dimensional analog of the conservation law \eqref{first-order}. It  expresses the fact that the   energy of the solution $u_\eps$  is stationary  with respect to variations of the domain.  Since the measures $\upnu_\star$ and $\upzeta_\star$ are supported by $\mathfrak S_\star$,  identity \eqref{distributude} also expresses a stationary condition, when integrated against a test function,  for the set $\mathfrak S_\star$ and the measure 
$\upnu_\star$.   As  a matter of fact, identity \eqref{distributude}, is \emph{the starting point of the proofs of Theorems \ref{segmentus}, 
\ref{varifoltitude}, \ref{absolute} and \ref{discrepvec}.}

\medskip
Taking the real and imaginary parts of this relation, we  obtain, in the sense of distributions, the \emph{modified Cauchy-Riemann relations} 
\begin{equation}
\label{cauchise}
\left\{
\begin{aligned}
\frac{\partial}{\partial x_2} (2\upmu_{\star, 1, 2})&=\frac{\partial}{\partial x_1} \left(2\upzeta_\star -\upmu_{\star, 1, 1}+ \upmu_{\star, 2, 2} \right)  {\rm \ and \  } \\
\frac{\partial}{\partial x_1} \left( 2\upmu_{\star, 1, 2} \right) &=\frac{\partial}{\partial x_2}
\left(2\upzeta_\star+ \upmu_{\star, 1, 1}- \upmu_{\star, 2, 2}\right), 
 \end{aligned}
\right.
\end{equation}
the second relation being in some sense  the  closest  to \eqref{first-order}.

\smallskip
Our next result involve the decomposition of the measures into absolutely continuous parts with respect to $\rd \lambda=\mathcal H^1\rest \mathfrak S_\star$ and singular part and describe properties of the absolutely continuous part.
Besides \eqref{lebesgue}, we may also decompose the measures $\upmu_{\star, i, j}$, writing
\begin{equation}
\label{lebesguebis}
\upmu_{\star, i, j}=\upmu_{\star, i, j} ^{s}+ \upmu_{\star, i, j} ^{ac} {\rm \ with \ }  \upmu_{\star, i, j}^{s} \perp\upmu_{\star, i, j} ^{ac}, 
\end{equation}
where the measures $\upmu_{\star, i, j}^{ac}$   is absolutely continuous with respect to the measure  
$\rd \lambda=\mathcal H^1 \rest \mathfrak S_\star$.
 Relations \eqref{lebesgue} and \eqref{lebesguebis}  imply that there exists a set $\mathfrak B_\star\subset \mathfrak S_\star$ such that 
$\displaystyle{\mathcal H^1 \rest \mathfrak S_\star (\mathfrak B_\star)=0}$ and
\begin{equation}
\label{perpendicularite}
\upnu_\star^{s}(\mathfrak S_\star\setminus\mathfrak B_\star)=0, \,  \upzeta_\star^{s}(\mathfrak S_\star\setminus\mathfrak B_\star)=0, {\rm \  and  \ }  
\upmu_{\star, i, j}^{s}(\mathfrak S_\star\setminus\mathfrak B_\star)=0, {\rm \ for \ } i, j=1, 2. 
\end{equation}
Since the measures $\upzeta_\star$, $\upnu_\star$ and $\upmu_{\star, i, j}$ are absolutely continuous with respect to $\rd \lambda$ there exists functions ${\Uptheta}_\star$, $\tbe_\star$ and ${\bm}_{\star, i, j}$ defined on $\mathfrak S_\star$, such that we have
 \begin{equation}
 \label{gdansk00}
 \upzeta_\star^{ac}={\Uptheta}_\star\,\rd \lambda, \upnu_\star^{ac}={\tbe}_\star\,\rd \lambda, {\rm \ and \ }
  \upmu_{\star, i, j}^{ac}={\bm}_{\star, i, j} \rd \lambda, 
 \end{equation}

Besides $\mathfrak A_\star$ and $\mathfrak B_\star$, we introduce a third class of exceptional points, the set $\mathfrak C_\star$,  defined  the complementary of the set of Lebesgue points for the densities of the measures $\upmu_\star^{ac}, \upzeta_\star^{ac}, \upmu_{\star, i, j}^{ac}$ with respect to $\rd\lambda=\mathcal H^1 \rest \mathfrak S_\star$.
The set $\mathfrak S_\star\setminus \mathfrak C_\star$,  then corresponds to the intersection of the set of Lebesgue's points of the functions ${\Uptheta}_\star, {\tbe}$ and ${\bm}_{\star, i, j}$.
  We consider the union of all exceptional points 
 \begin{equation}
 \label{bordeaux}
 \mathfrak E_\star =\mathfrak A_\star \cup \mathfrak B_\star \cup \mathfrak C_\star, 
\end{equation} 
which is precisely the set appearing in Theorems \ref{maintheo} and \ref{segmentus}.

\begin{proposition} 
\label{discrepvec6} 
Let $x_0 \in \mathfrak S_\star \setminus \mathfrak E_\star$. Assume that  the  orthonormal frame $(\be_1, \be_2)$ is  choosen so that $\be_1=\vec e_{x_0}$.
We have the identity, for the functions $\Uptheta$, $\bm_{i, j} $ defined in \eqref{gdansk00}
\begin{equation}
\label{discrepvec66}
2{\Uptheta}_\star(x_0)={\bm}_{\star, 2, 2}(x_0)-{\bm} _{\star, 1, 1}(x_0)  {\rm \ and \ } {\bm}_{\star, 1, 2}(x_0)=0.
\end{equation} 
\end{proposition}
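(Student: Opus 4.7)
The plan is to extract the identities at the point $x_0$ from the modified Cauchy-Riemann relations \eqref{cauchise} via a blow-up argument, exploiting exactly the three exceptional sets whose removal defines $\mathfrak E_\star$. Introduce for $r>0$ the rescaled measures on $\D^2$
\begin{equation*}
\upzeta_{\star,r}(A):=\frac{1}{r}\upzeta_\star(x_0+rA),\qquad \upmu_{\star,i,j,r}(A):=\frac{1}{r}\upmu_{\star,i,j}(x_0+rA),
\end{equation*}
and let $L=\{x_2=0\}$ denote the tangent line to $\mathfrak S_\star$ at $x_0$ (which exists because $x_0\notin\mathfrak A_\star$ and $\be_1=\vec e_{x_0}$ by choice of frame). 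First I would observe that the relations \eqref{cauchise} scale homogeneously: under $x\mapsto x_0+rx$ both $\partial_{x_i}$ and the renormalization $1/r$ of the measures produce the same factor $1/r$, so $\upzeta_{\star,r}$, $\upmu_{\star,i,j,r}$ satisfy the identical modified Cauchy-Riemann relations on the rescaled domain.

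Next I would justify that the rescaled measures converge. Since $x_0\notin\mathfrak B_\star$, the singular parts $\upnu_\star^s,\upzeta_\star^s,\upmu_{\star,i,j}^s$ all have zero density at $x_0$ with respect to $\rd\lambda=\mathcal H^1\rest\mathfrak S_\star$ (they are concentrated on $\mathfrak B_\star$ modulo $\mathcal H^1$-null sets), hence their blow-ups vanish in the limit $r\to0$. Since $x_0\notin\mathfrak C_\star$, it is a Lebesgue point for each density $\Uptheta_\star,\bm_{\star,i,j}$ with respect to $\rd\lambda$. Combining with the approximate tangent property \eqref{tangent} of $\mathfrak S_\star$ at $x_0$, a standard rectifiability argument (cf.\ \cite{simon}) yields, weakly as measures on $\D^2$,
\begin{equation*}
\upzeta_{\star,r}\rightharpoonup \Uptheta_\star(x_0)\,\mathcal H^1\rest L,\qquad \upmu_{\star,i,j,r}\rightharpoonup \bm_{\star,i,j}(x_0)\,\mathcal H^1\rest L.
\end{equation*}

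Passing to the limit in the blown-up versions of \eqref{cauchise} and denoting $a=\Uptheta_\star(x_0)$, $b=\bm_{\star,1,1}(x_0)$, $c=\bm_{\star,2,2}(x_0)$, $d=\bm_{\star,1,2}(x_0)$, I would obtain
\begin{equation*}
\partial_{x_2}\bigl(2d\,\mathcal H^1\rest L\bigr)=\partial_{x_1}\bigl((2a-b+c)\,\mathcal H^1\rest L\bigr),
\end{equation*}
\begin{equation*}
\partial_{x_1}\bigl(2d\,\mathcal H^1\rest L\bigr)=\partial_{x_2}\bigl((2a+b-c)\,\mathcal H^1\rest L\bigr),
\end{equation*}
in $\mathcal D'(\D^2)$. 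Now $\mathcal H^1\rest L$ factors as the product $\delta_{x_2=0}\otimes \rd x_1$, so for any constant $K$ one has $\partial_{x_1}(K\,\mathcal H^1\rest L)=0$ while $\partial_{x_2}(K\,\mathcal H^1\rest L)=K\,\delta'_{x_2=0}\otimes\rd x_1$, a distribution of order one which vanishes only for $K=0$. The first identity therefore forces $d=0$, and the second forces $2a+b-c=0$, which are exactly the two equalities in \eqref{discrepvec66}.

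The main obstacle I anticipate is the rigorous justification of the joint blow-up convergence of all of the measures $\upzeta_\star$ and $\upmu_{\star,i,j}$, and in particular of the fact that their singular parts disappear in the limit at every point of $\mathfrak S_\star\setminus\mathfrak E_\star$: the Lebesgue decomposition \eqref{lebesgue}--\eqref{lebesguebis} guarantees that this holds outside a $\mathcal H^1$-null set, and $\mathfrak B_\star$ was precisely defined to absorb this null set, but one must verify that taking the intersection with the Lebesgue-point set $\mathfrak S_\star\setminus\mathfrak C_\star$ and with $\mathfrak S_\star\setminus\mathfrak A_\star$ still leaves a set of full $\mathcal H^1$-measure where the limits of $\upzeta_{\star,r}$ and $\upmu_{\star,i,j,r}$ are indeed \emph{constants times} $\mathcal H^1\rest L$ rather than some more general tangent measure. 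Once this blow-up compactness is in place, the rest is a purely distributional computation on $L$.
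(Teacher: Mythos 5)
Your argument is correct in outline, but it takes a genuinely different route from the paper. You blow up the measures $\upzeta_\star$, $\upmu_{\star,i,j}$ at $x_0$, use the scale invariance of the modified Cauchy--Riemann relations \eqref{cauchise} (equivalently \eqref{distributude}), and conclude by the rigidity of first-order distributions supported on the tangent line (the $\delta'_{x_2=0}$ computation is right, and so is the resulting algebra $d=0$, $2a+b-c=0$). The paper never blows up: it localizes to a square around $x_0$, pushes the measures forward onto the tangent line by the orthogonal projection, and tests the limiting stress-energy identity \eqref{canardwcstar} with shear and dilation vector fields of product form $f_1(x_1)f_2(x_2)$; this yields that the projected measures $\bJ_\rho$, $\bL_\rho$ are constant and satisfy the differential relations \eqref{zoran} (Propositions \ref{scratchiness}, \ref{scratchinou}, \ref{assos}), and the values at $x_0$ are then identified through the cone property of Proposition \ref{tangentfort} and the Lebesgue-point analysis of Propositions \ref{constantin} and \ref{diff}. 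What each approach buys: yours is shorter and more conceptual, but it needs the full blow-up statement $\upzeta_{\star,r}\rightharpoonup\Uptheta_\star(x_0)\,\mathcal H^1\rest L$, $\upmu_{\star,i,j,r}\rightharpoonup\bm_{\star,i,j}(x_0)\,\mathcal H^1\rest L$, which does \emph{not} follow from the literal defining conditions of $\mathfrak E_\star$ at the single point $x_0$ (density one \eqref{densitusone} plus the cone condition \eqref{tangent} at $x_0$ do not exclude, say, a limit measure distributed unevenly along $L$); it holds $\mathcal H^1$-a.e.\ by the approximate-tangent-plane theorem for weighted rectifiable measures (cf.\ \cite{simon}), so the failure set must be absorbed into $\mathfrak E_\star$ --- the caveat you yourself flag, and one of the same nature as the paper's own use of ``by definition of $\mathfrak B_\star$'' for the vanishing of $D_\lambda(\upnu_\star^s)(x_0)$; since the proposition is only ever applied up to $\mathcal H^1$-null sets, this is acceptable. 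The paper's projection route, by contrast, only requires one-dimensional densities and Lebesgue points of the projected measures at $x_0$, and the machinery it builds ($\bJ_k$, $\bL_k$, $\bN_k$ and their ODEs) is reused afterwards in the proof of Theorem \ref{segmentus}, which your blow-up argument would not provide.
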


Let $\omega_\star^{ac}=\left( \upmu_{\star, 1, 1}^{ac} -\upmu_{\star, 2, 2}^{ac}\right) -2i\upmu_{\star, 1, 2}^{ac} $ denote the absolutely continuous part of of $\omega_\star$ with respect to $\rd \lambda$.  The previous result yields, after change of orthonormal basis: 

\begin{lemma}
\label{starac}
For a given orthonormal basis $(\be_1, \be_2)$,  we  have the identity
\begin{equation}
\label{starac2}
\omega_\star^{ac}=-2\exp (-2 i \upgamma_{\star})\upzeta_\star^{ac}=-2 (\cos 2\upgamma_\star-i \sin 2\upgamma_\star) \upzeta_\star^{ac}, 
\end{equation}
where, $\upgamma_\star(x)\in [-\frac{\pi}{2}, \frac{\pi}{2}]$ denotes  for $x  \in \mathfrak S_\star\setminus \mathfrak E_\star$, the 
angle between $\be_1$ and $\vec e_{x_0}.$
\end{lemma}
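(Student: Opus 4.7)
The proof proceeds in two clean steps: first, establish the transformation law of the Hopf differential under rotation of the orthonormal basis; second, combine this with Proposition \ref{discrepvec6}, which is already stated in the ``canonical'' moving frame in which $\be_1 = \vec e_{x_0}$.

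For the first step, I would work at the $\eps$-level with the identity
\begin{equation*}
\left(|u_{\eps,x_1}|^2 - |u_{\eps,x_2}|^2\right) - 2i\, u_{\eps,x_1}\cdot u_{\eps,x_2} = 4\, u_{\eps,z}\cdot u_{\eps,z},
\end{equation*}
where $u_{\eps,z} = \tfrac12(u_{\eps,x_1} - i u_{\eps,x_2})$ and the dot is the real euclidean inner product on $\R^k$. If $(\be_1',\be_2')$ is obtained from $(\be_1,\be_2)$ by a rotation of angle $\theta$, i.e. $\be_1' = \cos\theta\, \be_1 + \sin\theta\, \be_2$, then in the associated complex coordinate $z' = e^{-i\theta} z$ one has $u_{\eps,z'} = e^{i\theta} u_{\eps,z}$, hence $4 u_{\eps,z'}\cdot u_{\eps,z'} = e^{2i\theta}\,(4 u_{\eps,z}\cdot u_{\eps,z})$. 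Multiplying by $\eps$ and passing to the limit along the subsequence provided by \eqref{boulga2}, the same transformation is inherited by the limiting measure: in the primed basis, the Hopf differential $\omega'_\star$ satisfies $\omega'_\star = e^{2i\theta}\omega_\star$. Since the measure $\upzeta_\star$ and $\rd\lambda = \mathcal H^1\rest \mathfrak S_\star$ are intrinsic, their absolutely continuous parts transform trivially, so the relation $\omega'^{ac}_\star = e^{2i\theta}\omega_\star^{ac}$ holds at the level of the absolutely continuous components.

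For the second step, fix $x_0 \in \mathfrak S_\star \setminus \mathfrak E_\star$ and apply the above with $\theta = \upgamma_\star(x_0)$, so that $\be_1' = \vec e_{x_0}$. In this primed frame, Proposition \ref{discrepvec6} gives at $x_0$ the pointwise density identities
\begin{equation*}
\bm'_{\star,1,1}(x_0) - \bm'_{\star,2,2}(x_0) = -2\Uptheta_\star(x_0), \qquad \bm'_{\star,1,2}(x_0) = 0,
\end{equation*}
so the density of $\omega'^{ac}_\star$ with respect to $\rd\lambda$ at $x_0$ equals $-2\Uptheta_\star(x_0)$, i.e. $\omega'^{ac}_\star = -2\,\upzeta_\star^{ac}$ as measures (since $\mathfrak E_\star$ is $\mathcal H^1$-negligible and both measures are supported on $\mathfrak S_\star$ and absolutely continuous with respect to $\rd\lambda$).

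Combining the two steps, for $\mathcal H^1$-a.e. $x_0 \in \mathfrak S_\star \setminus \mathfrak E_\star$,
\begin{equation*}
\omega_\star^{ac}(x_0) = e^{-2i\upgamma_\star(x_0)}\,\omega'^{ac}_\star(x_0) = -2\, e^{-2i\upgamma_\star(x_0)}\, \upzeta_\star^{ac},
\end{equation*}
which expands into the claimed identity \eqref{starac2}. The only point requiring a minimum of care is that the rotation angle $\upgamma_\star$ varies from point to point, so one must interpret the pointwise identity as an identity of Radon-Nikodym densities with respect to $\rd\lambda$; this is routine since $\upgamma_\star$ is a measurable function of $x$ by the definition of the approximate tangent line $\vec e_{x_0}$, and both sides of \eqref{starac2} are absolutely continuous with respect to $\rd\lambda$. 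I expect no serious obstacle: all the genuine analytic work is already contained in Proposition \ref{discrepvec6}, and the present lemma is essentially a rotation of frame, encoding the tensorial (quadratic-differential) nature of $\omega_\star$.
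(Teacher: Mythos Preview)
Your proof is correct and follows essentially the same approach as the paper: both derive the rotation law $\omega_{\star,\theta}=e^{2i\theta}\omega_\star$ (the paper via direct expansion of the real partial derivatives in \eqref{transformer}--\eqref{transformer2}, you via the $4\,u_z\cdot u_z$ identity, which is of course equivalent), and then apply Proposition~\ref{discrepvec6} at $x_0$ in the frame with $\be_1'=\vec e_{x_0}$ to read off the density $-2\Uptheta_\star(x_0)$ and rotate back. Your final remark about interpreting the identity pointwise at the level of Radon--Nikodym densities with respect to $\rd\lambda$ (since $\upgamma_\star$ varies with $x$) is exactly the care the paper takes implicitly.
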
 

\begin{remark}{\rm  Changing possibly $\vec e_{x_0}$ into $-\vec e_{x_0}$ we may indeed always choose $\upgamma_\star(x_0)$ in an interval of length $\pi$, here  $[-\frac{\pi}{2}, \frac{\pi}{2}]$.
}
\end{remark}

\smallskip
We present next some arguments involved  in the proof of Proposition  \ref{discrepvec6}.
We  work near a \emph{regular} point $x_0=(x_{0, 1}, x_{0, 2}) \in \mathfrak S_\star \setminus \mathfrak E_\star$, where $\mathfrak E_\star$ is defined in \eqref{bordeaux}, and choose the orthonormal basis so that $\be_1=\vec e_{x_0}$. In the neighborhood of the point  $x_0$, the measure $\upnu_\star$ hence concentrates near the line $x_2=x_{0, 2}$, and we may follow the approach of     \cite{ambroson},  eliminating  the derivatives according to the transversal direction, that is eliminating the $x_2$-variable,  in order to  obtain a one-dimensional problem: For that purpose, we integrate  along  "vertical" lines. The general idea would be to consider integrals of the form 
$$
I_{i, j}(s)=\int_{(x_{0, 2}-3\slash 4r)}^{(x_{0, 2}+3\slash 4r)} \upmu_{\star, i, j}(s, x_{0,2}) \,  \rd x_2 {\rm \ or \ } W(s)=\int_{(x_{0, 2}-3\slash 4r)}^{(x_{0, 2}+3\slash 4r)} \upzeta_{\star, i, j}(s, x_{0,2}) \rd x_2.
$$
However, since at this stage of our argument we don't know that the measures are absolutely continuous with respect to the Lebesgue measure, one has to be a little more careful in order to define the previous integrals. 
To that aim, we introduce  for $s>0$,  the segments   $\mathcal I_r(s)=[s-r, s+r]=\B^1(s, r)$ and  the square   $\displaystyle{Q_r(x_0)   = \mathcal I_r(x_{0, 1}) \times  \mathcal I_r(x_{0, 2})}$ and consider the \emph{localized} measures 
$$ 
\tilde \upmu_{\star, i, j}={\bf 1}_{Q_r(x_0)}\upmu_{\star, i, j} {\rm \ and \ }  \tilde \upzeta_{\star}={\bf 1}_{Q_r(x_0)}\upzeta_{\star, i, j}.
$$
We introduce also   the orthogonal projection $\mathbbmtt P$ onto the tangent line ${\rm D}_{x_0}^1=\{ x_0+s\be_1, s\in \R \}$, and the \emph{pushforward measures}   on ${\rm D}_{x_0}^1$ of the localized  measures we have introduced so far, namely the measures on ${\rm D}_{x_0}^1$
\begin{equation}
\label{mesproj}
\tilde \upmu_{\star, i, j}^{x_1}=\mathbbmtt P_{\sharp} \left( \tilde \upmu_{\star, i, j}\right) {\rm \ and \ } 
\tilde \upzeta_{\star, i, j}^{x_1}=\mathbbmtt P_{\sharp} \left( \tilde \upzeta_{\star}\right), 
\end{equation}
defined for every Borel set $A$ of ${\rm D}_{x_0}^1$ by
\begin{equation*}
\left\{
\begin{aligned}
&\tilde \upmu_{\star, i, j}^{x_1}(A)=\upmu_{\star, i, j} \left (\bP^{-1}(A)\cap Q_r(x_0)\right)=\upmu_{\star, i, j} \left ((A \times \R)\cap Q_r(x_0)\right) {\rm \ and \ } \\
&\tilde \upzeta_{\star}^{x_1}(A)=\upzeta_{\star} \left ((A \times \R)\cap Q_r(x_0)\right).
\end{aligned} 
\right.
\end{equation*} 
We  then consider the  measure $\mathbbmss L_{x_0, r}$ defined  on $\mathcal I_r(x_{0, 1})$ by 
\begin{equation}
\label{caminare}
\left\{
\begin{aligned}
\bL_{x_0, r} &=\bP_{\sharp}  \left(2\tilde \upzeta_\star  + \tilde \upmu_{\star, 1, 1} -\tilde \upmu_{\star, 2, 2}\right)
=2\tilde \upzeta_{\star, i, j}^{x_1} -\tilde \upmu_{\star, 1, 1}^{x_1} +\tilde \upmu_{\star, 2, 2}^{x_2} {\rm \ and \ } \\
\bN_{x_0, r}&=
\mathbbmtt P_{\sharp}  \left(2\tilde \upzeta_\star  + \tilde \upmu_{\star, 1, 1} -\tilde \upmu_{\star, 2, 2}\right)=
2\tilde \upzeta_\star^{x_1}  + \tilde \upmu_{\star, 1, 1}^{x_1} -\tilde \upmu_{\star, 2, 2}^{x_1}.
\end{aligned}
\right.
\end{equation}

  Mutiplying \eqref{holomorphitude} by appropriate test functions and integrating, we are led to the  somewhat remarkable properties of these measures, expressed   in Propositions \ref{letitwave}, \ref{assos}, \ref{constantin} and \ref{diff}, leadig to the completion of the proof of Proposition \ref{discrepvec6}.
 


\subsection{Monotonicity for $\upzeta_\star$ and its consequences} 
The next important step in the proofs of Theorem \ref{segmentus}, \ref{varifoltitude}, \ref{absolute} and \ref{discrepvec} is to show that the singular part of all measures introduced so far vanish. We first establish this statement for the measure $\upzeta_\star$.  Our argument involves a new ingredient, the monotonicity formula for $\upzeta_\star$, which actually directly yields the absolute continuity of $\upzeta_\star$ with respect to $\mathcal H^1\rest \mathfrak S_\star$.  

\begin{proposition} 
\label{psycho}
Let $x_0 \in \Omega$, let $\rho >0$  be such  that $\D^2(x_0, \rho)\subset \Omega$.  If    $0< 0<r_0\leq r_1 \leq \rho$, then we have 
  \begin{equation}
\label{monomaniaque}
\frac{\upzeta_\star (\D^2(x_0, r_1))}{r_1}\geq\frac{\upzeta_\star (\D^2(x_0, r_0))}{\rho_0}.
 \end{equation}
For every $x_0 \in \Omega$  the  quantity $\displaystyle{\frac{\upzeta_\star (\D^2(x_0, r))}{r}}$ has a limit when  $r \to 0$ and we have the estimate
\begin{equation}
\label{dansitus}
\Theta_\star (x_0)={\underset {r \to 0} \lim} \frac{\upzeta_\star (\D^2(x_0, r))}{r}\leq \frac{\upzeta_\star (\D^2(x_0, \rho))}{\rho} \leq 
\frac{M_0} {d(x, \partial \Omega)}. 
\end{equation}
The measure $\upzeta_\star$ is hence absolutely continuous with respect to the $\mathcal H^1$-measure on $\mathfrak S_\star$.
\end{proposition}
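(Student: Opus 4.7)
The cornerstone is the exact Pohozaev-type identity~\eqref{monotonio0}, valid at the $\eps$-level for every solution of~\eqref{elipes}. Integrating over $r\in(r_0,r_1)$ and applying Fubini on the annulus $\mathcal A=\D^2(x_0,r_1)\setminus\overline{\D^2(x_0,r_0)}$ yields the $\eps$-level identity
\begin{equation*}
\frac{\upzeta_\eps(\D^2(x_0,r_1))}{r_1}-\frac{\upzeta_\eps(\D^2(x_0,r_0))}{r_0}=\int_{\mathcal A}\frac{1}{4|x-x_0|}\Big(\frac{2V(u_\eps)}{\eps}-\eps|\partial_\tau u_\eps|^2+\eps|\partial_r u_\eps|^2\Big)\rd x,
\end{equation*}
with $\partial_r,\partial_\tau$ the radial and unit-tangential derivatives relative to $x_0$. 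This identity is the target of the limit passage $\eps_n\to 0$.

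Choosing $r_0,r_1$ outside the at-most-countable set of radii on which the limiting measures charge $\partial\D^2(x_0,r)$, the boundedness and continuity of $1/|x-x_0|$ and of the moving frame $(\vec e_r,\vec e_\theta)$ on $\overline{\mathcal A}$, together with the weak convergences $\upzeta_{\eps_n}\rightharpoonup\upzeta_\star$ and $\eps_n(u_{\eps_n})_{x_i}\cdot(u_{\eps_n})_{x_j}\rd x\rightharpoonup\upmu_{\star,i,j}$, yield in the limit
\begin{equation*}
\frac{\upzeta_\star(\D^2(x_0,r_1))}{r_1}-\frac{\upzeta_\star(\D^2(x_0,r_0))}{r_0}=\int_{\mathcal A}\frac{d\Phi_\star}{4|x-x_0|},\qquad \Phi_\star:=2\upzeta_\star+\upmu^{(rr)}_\star-\upmu^{(\theta\theta)}_\star,
\end{equation*}
where $\upmu^{(rr)}_\star=(\vec e_r)_i(\vec e_r)_j\upmu_{\star,i,j}$ and analogously for $\upmu^{(\theta\theta)}_\star$.

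The heart of the proof is the nonnegativity of $\Phi_\star$. A direct change of frame yields $\upmu^{(rr)}_\star-\upmu^{(\theta\theta)}_\star=\mathrm{Re}\,(e^{2i\theta}\omega_\star)$, where $\theta$ is the polar angle around $x_0$ and $\omega_\star$ is the Hopf differential. At a regular point $x\in\mathfrak S_\star\setminus\mathfrak E_\star$, with $\alpha(x)$ the angle between the tangent $\vec e_x$ and the radial direction $\vec e_r(x)$, Lemma~\ref{starac} gives $\omega^{ac}_\star=-2e^{-2i\upgamma_\star}\upzeta^{ac}_\star$, whence
\begin{equation*}
\Phi^{ac}_\star=2(1-\cos 2\alpha)\,\upzeta^{ac}_\star=4\sin^2\alpha\,\upzeta^{ac}_\star\geq 0.
\end{equation*}
The singular part is eliminated by a bootstrap: the a.c.\ inequality already yields an a priori upper bound on the upper 1-density of $\upzeta_\star$, which via the standard Radon-measure comparison lemma forces $\upzeta^s_\star=0$; the Cauchy--Riemann identities~\eqref{cauchise} then propagate absolute continuity to the tensor measures $\upmu_{\star,i,j}$, so that $\Phi^s_\star=0$ a posteriori and the full inequality $\Phi_\star\geq 0$ holds.

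With $\Phi_\star\geq 0$, the monotonicity~\eqref{monomaniaque} is immediate, the limit $\Theta_\star(x_0)$ exists, and~\eqref{dansitus} follows upon taking $\rho=d(x_0,\partial\Omega)$ and invoking $\upzeta_\star(\Omega)\leq M_0$. Absolute continuity $\upzeta_\star\ll\mathcal H^1\rest\mathfrak S_\star$ is the standard consequence of a uniform upper 1-density bound on compact subsets. The main obstacle is the sign analysis of $\Phi_\star$: while its absolutely continuous part is pinned down by the new discrepancy relation (Proposition~\ref{discrepvec6}), ruling out the singular part requires the bootstrap above, which leans on the Hopf-differential identity~\eqref{distributude} and the rectifiability of $\mathfrak S_\star$.
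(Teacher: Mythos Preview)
Your identification of the $\eps$-level identity, the limit passage, and the computation $\Phi_\star^{ac}=4\sin^2\alpha\,\upzeta_\star^{ac}\ge 0$ all match the paper (the paper's Lemma~\ref{passing} and Lemma~\ref{dingo}). The gap is in your treatment of the singular part.

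Your bootstrap is circular. You assert that ``the a.c.\ inequality already yields an a priori upper bound on the upper 1-density of $\upzeta_\star$'', but $\Phi_\star^{ac}\ge 0$ alone does \emph{not} give monotonicity of $r\mapsto \upzeta_\star(\D^2(x_0,r))/r$: the singular part $\Phi_\star^s$, which involves $\upmu_{\star,i,j}^s$, could be negative and destroy the inequality. So you cannot extract a density bound on $\upzeta_\star$ before controlling $\Phi_\star^s$, and you cannot control $\Phi_\star^s$ (via $\upzeta_\star^s=0$) without that density bound. Your second step, ``the Cauchy--Riemann identities propagate absolute continuity to $\upmu_{\star,i,j}$'', is also not how the paper proceeds and is not clearly justified: in the paper, absolute continuity of $\upnu_\star$ (hence of $\upmu_{\star,i,j}$) is obtained only \emph{after} Proposition~\ref{psycho}, via the PDE comparison Lemma~\ref{cornifle}/Corollary~\ref{griboeuf}, not from~\eqref{cauchise}.

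The paper breaks the circularity by a different, direct device. Writing $F(r)=\upzeta_\star(\D^2(x_0,r))/r$ and pushing forward by the radius map $\Uppi$, Lemma~\ref{doppio} records \emph{two} expressions for $F'$ in $\mathcal D'(\delta,\rho)$:
\[
F'=\frac{1}{r}\,\check{\mathscr N}_\star \quad\text{and}\quad F'=\frac{1}{r}\,\check\upzeta_\star-\frac{V}{r^2},\qquad V(r)=\upzeta_\star(\D^2(x_0,r)).
\]
On the complement of the null set $\mathbbmtt B_\rho=\Uppi(\mathfrak E_\star\cap\D^2(x_0,\rho))$ one uses the first expression and $\mathscr N_\star^{ac}\ge 0$ (Lemma~\ref{chiche}). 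On $\mathbbmtt B_\rho$ one uses the \emph{second} expression: since $V$ is a bounded function and $|\mathbbmtt B_\rho|=0$, the term $V/r^2$ restricted to $\mathbbmtt B_\rho$ vanishes, whence $\check{\mathscr N}_\star\rest\mathbbmtt B_\rho=\check\upzeta_\star\rest\mathbbmtt B_\rho\ge 0$ (Lemma~\ref{chiche2}). This yields $F'\ge 0$ without any prior knowledge of absolute continuity, and the rest of the proposition follows. The missing idea in your proposal is precisely this second representation of $F'$.
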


    The starting point of the  proof of Proposition \ref{psycho} is  the monotonicity  formula \eqref{monotonio0} for the potential term $V$, which may be written, after  integration, for  a solution $u_\eps$ of \eqref{elipes} on $\Omega$  and $0<r_0<r_1\leq \rho$ such that $D(x_0, \rho)\subset \Omega$ 
\begin{equation}
\label{degrace}
\frac{\upzeta_\eps(\D^2(x_0, r_1))}{r_1}-\frac{\upzeta_\eps( \D^2(x_0, r_0))}{r_0}=\int_{\D^2(x_0, r)\setminus \D^2(x_0, \rho_0)}\frac{1}{4r}
\rd \mathscr N_\eps, 
\end{equation}
where we have set 
$$\displaystyle{\mathscr N_\eps=\left (\frac{2}{\eps}V(u_\eps)-\eps r^2\left\vert\frac{\partial  u_\eps}{\partial \theta}\right \vert^2 +\eps \left\vert \frac{\partial  u_\eps}{\partial r} \right\vert^2\right) \rd x},
$$
and 
where $(r, \theta)$ denote  radial coordinates so that $x_1-x_{0, 1}=r\cos \theta$ and $x_2-x_{0, 2}=r \sin \theta$.
Passing to the limit $\eps\to 0$ in identity \eqref{degrace}, we are led to :

\begin{lemma}
\label{passing}
Let $x_0 \in \Omega$, let $\rho >0$ and assume that $\D^2(x_0, \rho)\subset \Omega$. For almost every radii $0<r_0<r_1\leq \rho$, we have the identity
\begin{equation}
\label{monomaniaque}
\frac{\upzeta_\star (\D^2(x_0, r_1))}{r_1}-\frac{\upzeta_\star (\D^2(x_0, r_0))}{\rho_0}=
\int_{\D^2(r_1)\setminus \D^2(\rho_0)}\frac{1}{4r} \rd \mathscr N_\star
 \end{equation}
where $\mathscr N_\star=2 \upzeta_\star-r^2  \upmu_{\star, \theta, \theta}+ \upmu_{\star, r, r}$, with 
\begin{equation}
\label{radicaux}
\left\{
\begin{aligned}
\upmu_{\star, r, r}&=\cos^2 \theta   \upmu_{\star, 1, 1} + \sin^2 \theta \upmu_{\star, 2, 2}+2\sin\theta \cos \theta \upmu_{\star, 1, 2} 
{\rm \ and \ } \\
r^{-2} \upmu_{\star, \theta, \theta}&=\sin^2 \theta   \upmu_{\star, 1, 1} +
 \cos^2 \theta \upmu_{\star, 2, 2}-2\sin \theta  \cos \theta\upmu_{1, 2}.
 \end{aligned}
\right.
 \end{equation}
\end{lemma}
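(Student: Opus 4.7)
The strategy is to start from the $\eps$-level identity \eqref{degrace} and pass to the limit $\eps_n \to 0$ term by term. First, I rewrite the integrand $\mathscr N_\eps$ in Cartesian coordinates: using $\partial_r u_\eps = \cos\theta\, \partial_{x_1} u_\eps + \sin\theta\, \partial_{x_2} u_\eps$ and the analogous expression for $r^{-1}\partial_\theta u_\eps$, the terms $\eps |\partial_r u_\eps|^2$ and $\eps\, r^{-2}|\partial_\theta u_\eps|^2$ expand as trigonometric combinations of the bilinear products $\eps\, \partial_{x_i} u_\eps \cdot \partial_{x_j} u_\eps$ with coefficients that are bounded and continuous on $\Omega \setminus \{x_0\}$. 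By \eqref{boulga2}, these bilinear products converge weakly, as Radon measures on $\Omega$, to $\upmu_{\star, i, j}$; combined with $\upzeta_{\eps_n} \rightharpoonup \upzeta_\star$, this yields the weak-$\star$ convergence $\mathscr N_{\eps_n} \rightharpoonup \mathscr N_\star$ as signed Radon measures on every compact subset of $\Omega \setminus \{x_0\}$, with $\mathscr N_\star = 2\upzeta_\star - r^2 \upmu_{\star, \theta, \theta} + \upmu_{\star, r, r}$ matching \eqref{radicaux}.

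Next, I restrict to generic radii. Since $\upzeta_\star$ and each total variation $|\upmu_{\star, i, j}|$ are finite on $\D^2(x_0, \rho)$ (using the bound $|\upmu_{\star, i, j}| \leq 2\upnu_\star$ from \eqref{citrus} together with $\upnu_\star(\Omega) \leq M_0$), the set of radii $r \in (0, \rho]$ for which any of them charges the circle $\partial \D^2(x_0, r)$ is at most countable. Outside this exceptional countable set, the Portmanteau theorem gives $\upzeta_{\eps_n}(\D^2(x_0, r_k)) \to \upzeta_\star(\D^2(x_0, r_k))$ for $k = 0, 1$, handling the left-hand side of \eqref{degrace}. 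For the right-hand side, I integrate $\mathscr N_{\eps_n}$ against the test function $\phi(x) = (4r)^{-1} \mathbf{1}_{\D^2(x_0, r_1) \setminus \D^2(x_0, r_0)}(x)$, which is bounded and continuous on the closed annulus (since $r \geq r_0 > 0$ there). The non-charging of the two boundary circles and the weak convergence of $\mathscr N_{\eps_n}$ then produce $\int \phi\, \rd \mathscr N_{\eps_n} \to \int \phi\, \rd \mathscr N_\star$, establishing \eqref{monomaniaque}.

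The main technical care lies in handling several signed measures $\upmu_{\star, i, j}$ simultaneously with a test function supported on a closed annulus: one must verify that the boundary circles are negligible for each total variation $|\upmu_{\star, i, j}|$, not merely for the positive measure $\upzeta_\star$. The domination \eqref{citrus} precisely ensures that a single countable exceptional set of radii works for all the measures involved. A secondary, milder issue is the singularity of the polar-coordinate coefficients at $x_0$; this is avoided by restricting from the outset to annuli bounded away from $x_0$, where the change of frame in Step~1 is smooth.
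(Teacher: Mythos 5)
Your argument is correct and follows the same route as the paper's (terse) proof: pass to the limit in \eqref{degrace} using weak convergence of the Radon measures together with the fact that, for almost every radius, the circles $\partial\D^2(x_0,r_0)$ and $\partial\D^2(x_0,r_1)$ are not charged by the limiting measure. Your version is more careful in spelling out why a single exceptional set of radii works for all the signed measures $\upmu_{\star,i,j}$ simultaneously (via the domination $|\upmu_{\star,i,j}|\le 2\upnu_\star$ from \eqref{citrus}) and in noting that the polar-frame coefficients are only continuous away from $x_0$, but these are details the paper leaves implicit rather than a different method.
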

Notice that we may  verify that 
\begin{equation*}
\left\vert \frac{\partial  u_{\eps_n}}{\partial r} \right\vert^2  \underset{n \to +\infty} \to \upmu_{\star, r, r}  {\rm \ and \ }
\left\vert \frac{\partial  u_{\eps_n}}{\partial \theta} \right\vert^2  \underset{n \to +\infty} \to \upmu_{\star, \theta , \theta}  {\rm \ as \ measures}.
\end{equation*}

The next step in the proof of Proposition \ref{psycho} is the fact that, as a  consequence of Proposition \ref{discrepvec6}, the absolutely continuous part of $\mathcal N_\star$ is non-negative.  We  them perform a  few manipulation which  allow to get rid of the singular part in \eqref{monomaniaque},  and lead to the completion of the proof of Proposition \ref{psycho}.

\smallskip
In order to proof that  $\upnu_\star$ is also  absolutely continuous  with respect to $\rd \lambda$, we will invoke the fact that $\upnu_\star$ is "dominated"  by the measure $\upzeta_\star$. Whereas the reverse statement is straighforward, since we have the inequality $\upzeta_\star \leq \upnu_\star$,  the fact that $\upnu_\star$ is "dominated"  by the measure $\upzeta_\star$ is a consequence of several  estimates at the $\eps$-level, requiring some PDE analysis  (in particular Proposition \ref{lheure}). Theorem \ref{discrepvec}  is an  then an  direct  consequence of Theorem \ref{absolute} and Proposition \ref{discrepvec6}. 
\subsection{On the proof on Theorem \ref{segmentus} and \ref{varifoltitude}}
\label{ontheproof}
 The proof of Theorem \ref{varifoltitude} is a direct consequence of Lemma \ref{holomorphitude} combined with Theorem \ref{discrepvec}. 
Theorem \ref{segmentus} could be deduced from Theorem \ref{varifoltitude} combined with the result of \cite{allardalm}, but we provide here a self contained proof.

\subsection{Open questions and conclusion}
\label{openconc} 
As already mentioned, one of the main unsolved open problems in the present paper, i.e. in the two dimensional elliptic context,  is the existence or not of singularities of \emph{"infinite type"}  in the limiting varifold.  If such singularities  do exist, their  actual construction may   turn out to  be extremely difficult.

\smallskip 

Although the paper focuses exclusively on the two-dimension case,  it is quite tempting to believe that a large part of the results might extend to higher dimensions.  However,  it is not clear how the arguments presented in this paper, in paper  the PDE part,   can be adapted in higher dimensions. 
Indeed, as the previous presentation hopefully shows, many of our arguments rely on the fact that we work in two dimensions., and do not seem to have natural extensions in higher dimensions.

\smallskip
Another challenging problem is the related  parabolic two-dimensional equation, which has already attracted attention  (see e.g.\cite{bronret} or more recently \cite{lauxsim}). One might express the hope that some of the methods introduced in this paper extend also to this case.

\subsection{Plan of the paper}
  The outline of the paper merely follows the description given in Subsections \ref{newpde}  to \ref{ontheproof}.  As a matter of fact, the presentation of arguments is divided into three parts. Part I  is a preliminary part which presents various properties of the energy functional  and consists of a single section,  Section \ref{engie2}. It  presents some consequences of the energy bound, starting with  estimates  on one-dimensional sets, as well as consequences of the co-area formula.  Part II, which runs from Section \ref{pde} to Section \ref{solde},   gathers all properties of solutions to the PDE  \eqref{elipes},   including standard one. For a large part, in both parts,  special  emphasis is put on energy estimates on  \emph{level sets of some appropriate simple scalar functions} (see \eqref{doublevi}).  Section \ref{challengedata} presents the proof of Proposition \ref{sindec}.  
  The last part, Part III, describes the properties of the limiting set $\mathfrak S_\star$  and the limiting measures measures, and contains therefore the proofs to the main results of the paper.

 \tableofcontents
 
   \numberwithin{theorem}{section} \numberwithin{lemma}{section}
\numberwithin{proposition}{section} \numberwithin{remark}{section}
\numberwithin{corollary}{section}
\numberwithin{equation}{section}

 \begin{center}{ \Large \bf
Part I : Properties of the functional} 
\end{center}
\addcontentsline{toc}{section}{Part I : Properties of the functional}
 \section{First consequences of the  energy bounds}   
 \label{engie2} 
    The results in this section  are  based on  variants of an idea of Modica and Mortola (see \cite{mortadela}),  adapted to the vectorial case  as in \cite{baldo, fontar}. we also present some applications of the co-area formula in connection with the functional.  The results in this section  apply to maps having a suitable bound on their   energy $\Eps$, of the type of the bound \eqref{naturalbound}. We stress in particular $BV$ type bounds obtained under these energy bound.  
 None of the results in this section \emph{involves the PDE}. 
 We start with simple consequences of assumptions ${(\text{H}_1)}$, ${(\text{H}_2)}$ and ${(\text{H}_3)}$  for the potential  with multiple  equal depth wells (see Figure \ref{potent}). 

\subsection {Properties of the potential}
\label{potentiel}
 It follows  from the definition of $\upmu_0$ and  property \eqref{kiv}  that we have the following behavior near the points of the vacuum manifold  $\Sigma$:
 
\begin{proposition} 
\label{potto}
 For any $i=1, \ldots, q$  and any   $y \in \B^k(\upsigma_i, 2\upmu_0)$, we have the local bound
\begin{equation}
\label{glutz}
\left\{
\begin{aligned}
\frac{1}{4}\lambda_i^-\vert y-\upsigma_i \vert^2 & \leq V(y) \leq  \lambda_i^+\vert y-\upsigma_i \vert^2
  \\
  \frac{1}{2}\lambda_i^-\vert y-\upsigma_i \vert^2 & \leq  \nabla V(y) \cdot (y-\upsigma_i) \leq  2\lambda_i^+\vert y-\upsigma_i \vert^2, 
    \end{aligned}
  \right.
 \end{equation}   
Choosing possibly an even smaller constant $\upmu_0$, we  have
   \begin{equation}
   \label{extrut}
   V(y) \geq  \upalpha_0\equiv\frac{1}{2} \lambda_0 \upmu_0^2 {\rm \ on \ } 
   \R^k  \setminus  \underset{i=1}{\overset {q} \cup} \B^k(\upsigma_i, \upmu_0), 
  \end{equation}
  where we have set  $\lambda_0=\inf\{\lambda_i^-, i=1, \ldots, q\}$
  \end{proposition}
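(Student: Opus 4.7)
The plan is to combine a second order Taylor expansion at each minimizer, controlled by the Hessian bounds from \eqref{kiv}, with a compactness argument based on the growth condition $(H_3)$ to handle points far from the vacuum manifold.

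First, I would establish \eqref{glutz}. Since $\upsigma_i \in \Sigma$ is an interior minimizer of $V$ with $V(\upsigma_i)=0$ and $\nabla V(\upsigma_i)=0$, Taylor's formula with integral remainder gives, for any $y \in \B^k(\upsigma_i, 2\upmu_0)$,
\begin{equation*}
V(y) = \int_0^1 (1-t)\,\bigl\langle \nabla^2 V\bigl(\upsigma_i+t(y-\upsigma_i)\bigr)(y-\upsigma_i),\, y-\upsigma_i \bigr\rangle\, {\rm d}t.
\end{equation*}
The entire segment from $\upsigma_i$ to $y$ remains in $\B^k(\upsigma_i, 2\upmu_0)$, so the second inclusion in \eqref{kiv} applies pointwise inside the integral; since $\int_0^1 (1-t)\,{\rm d}t = \tfrac12$, the quadratic two-sided bound follows.

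Next, for the gradient-dot-product bound, I would use the fundamental theorem of calculus on $t \mapsto \nabla V(\upsigma_i + t(y-\upsigma_i))$, which (using $\nabla V(\upsigma_i)=0$) gives
\begin{equation*}
\nabla V(y) = \int_0^1 \nabla^2 V\bigl(\upsigma_i+t(y-\upsigma_i)\bigr)(y-\upsigma_i)\,{\rm d}t.
\end{equation*}
Taking the inner product with $(y-\upsigma_i)$ and invoking \eqref{kiv} under the integral yields the desired inequalities, since now $\int_0^1 1\,{\rm d}t = 1$ produces the factors $\tfrac12\lambda_i^-$ and $2\lambda_i^+$ directly.

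Finally, for \eqref{extrut}, I would decompose the complement $\R^k \setminus \bigcup_i \B^k(\upsigma_i, \upmu_0)$ into two regions. On the annuli $\B^k(\upsigma_i,2\upmu_0) \setminus \B^k(\upsigma_i,\upmu_0)$, the already-proven lower bound in \eqref{glutz} immediately gives $V(y) \geq \tfrac14 \lambda_i^- \upmu_0^2 \geq \tfrac14 \lambda_0 \upmu_0^2$. On the exterior region $K = \R^k \setminus \bigcup_i \B^k(\upsigma_i, 2\upmu_0)$, the function $V$ is continuous and strictly positive, since $\Sigma$ is its only zero set; combined with the coercivity $V(x) \to +\infty$ from $(H_3)$, this forces $\inf_K V = \upbeta > 0$ for some $\upbeta$ depending only on $V$ and $\upmu_0$. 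The only delicate point—and the main (mild) obstacle—is matching the numerical constant $\tfrac12$ in $\upalpha_0 = \tfrac12 \lambda_0 \upmu_0^2$ rather than the $\tfrac14$ produced by the Taylor argument. This is handled by further shrinking $\upmu_0$: as $\upmu_0 \to 0$ the Hessian along any segment inside $\B^k(\upsigma_i,2\upmu_0)$ can be taken arbitrarily close to $\nabla^2 V(\upsigma_i)$, improving the coefficient in \eqref{glutz} from $\tfrac14\lambda_i^-$ toward $\tfrac12\lambda_i^-$; independently, $\tfrac12 \lambda_0 \upmu_0^2 \to 0$, so it can be made smaller than the constant $\upbeta$ on $K$. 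Thus a final choice of $\upmu_0$ small enough secures \eqref{extrut} on both regions simultaneously.
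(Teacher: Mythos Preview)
Your argument is correct and matches the paper's approach exactly: the paper omits the proof, stating only that it follows from ``a straightforward integration of \eqref{kiv}'', which is precisely your Taylor-with-integral-remainder computation for \eqref{glutz}. Your compactness-plus-coercivity treatment of \eqref{extrut} is the natural complement; the factor-of-two discrepancy you flag in the constant $\upalpha_0$ is immaterial for the rest of the paper (where $\upalpha_0$ serves only as a positive threshold) and likely reflects a minor imprecision in the statement rather than a gap in your reasoning.
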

  The proof relies on a straightforward integration of \eqref{kiv} and we therefore omit it.
 Proposition \ref{potto}  shows that  the potential   $V$  essentially behaves  as  a positive definite quadratic  function near points  of  the vacuum manifolds $\Sigma$.   This will be used throughout as a guiding thread. Proposition \ref{potto}  leads to a first  elementary observation: 
  
  \begin{lemma}
  \label{watson} Let $ y \in \R^k$ be such that $V(y)<\upalpha_0$. Then there exists some point $\upsigma\in \Sigma$ such that 
  $$\displaystyle{ \vert y-\upsigma  \vert \leq \upmu_0.}$$
  Moreover, we have the upper bound
  $$ \vert y-\upsigma  \vert  \leq \sqrt{ 4\lambda_0^{-1} V(y)}.$$
  \end{lemma}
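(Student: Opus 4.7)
The statement is an immediate consequence of the two ingredients provided by Proposition \ref{potto}, so I would attack it as follows.

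First, I would prove the existence of some nearby $\upsigma \in \Sigma$ by contrapositive of the uniform lower bound \eqref{extrut}. Indeed, \eqref{extrut} asserts that $V \geq \upalpha_0$ on the complement of $\bigcup_{i=1}^q \B^k(\upsigma_i, \upmu_0)$. Therefore, the assumption $V(y) < \upalpha_0$ forces $y$ to lie in at least one of the balls $\B^k(\upsigma_i, \upmu_0)$. Choosing $\upsigma$ to be the corresponding $\upsigma_i$ yields the first conclusion $|y-\upsigma| \leq \upmu_0$.

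Second, for the quantitative estimate, I would use the lower quadratic bound from \eqref{glutz}. Since $|y-\upsigma| \leq \upmu_0 \leq 2\upmu_0$, we may apply the left inequality of the first line of \eqref{glutz} at the point $\upsigma=\upsigma_i$, which gives
\begin{equation*}
V(y) \;\geq\; \tfrac{1}{4}\lambda_i^{-}\,|y-\upsigma|^2 \;\geq\; \tfrac{1}{4}\lambda_0\,|y-\upsigma|^2,
\end{equation*}
where we have used the definition $\lambda_0 = \inf\{\lambda_i^-, i=1,\ldots,q\}$. Rearranging yields $|y-\upsigma|^2 \leq 4\lambda_0^{-1} V(y)$, which is the announced bound.

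There is essentially no obstacle here: the lemma is just a reformulation of the two estimates in Proposition \ref{potto} read in the "reverse" direction (from the value of $V$ back to the location of the point). The only small verification is the consistency of the two conclusions, i.e. that $\sqrt{4\lambda_0^{-1}V(y)} \leq \upmu_0$ whenever $V(y) < \upalpha_0$; this is immediate from the definition $\upalpha_0 = \tfrac12\lambda_0\upmu_0^2$, since then $4\lambda_0^{-1}V(y) < 4\lambda_0^{-1}\upalpha_0 = 2\upmu_0^2$, which is compatible (up to the harmless factor $\sqrt 2$) with the first conclusion $|y-\upsigma|\leq \upmu_0$ obtained via \eqref{extrut}.
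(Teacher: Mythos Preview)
Your proof is correct and is exactly the intended argument: the paper does not write out a proof of this lemma at all, presenting it simply as ``a first elementary observation'' following Proposition~\ref{potto}, and your two steps (contrapositive of \eqref{extrut}, then the lower quadratic bound in \eqref{glutz}) are precisely what is being left to the reader. The final paragraph on consistency is unnecessary---the two conclusions are independent and need not imply one another---but it does no harm.
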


  We next turn to the behavior at infinity. For that purpose, we introduce the radius 
\begin{equation}
\label{grandr0}
{\rm R}_0=\sup\{\vert \upsigma  \vert  , \upsigma \in \Sigma\}
\end{equation}
and study the properties of $V$ on the  set  $\R^k\setminus \B^k(2{\rm R}_0)$.

\begin{proposition} 
\label{barre}
 There exists a constant $\upbeta_\infty >0$ such that 
\begin{equation}
\label{upbetainfty}
V(y) \geq \upbeta_\infty \vert y \vert^2  {\rm \ for \ any \ } y {\rm \ such \ that \ }  \vert y \vert  \geq  2 {\rm R}_0.  
\end{equation}
\end{proposition}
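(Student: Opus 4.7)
The plan is to derive \eqref{upbetainfty} from the radial estimate in hypothesis $(\text{H}_3)$ by integrating along rays, and then patch this large-$|y|$ estimate with a compactness argument on the intermediate annulus $\{2{\rm R}_0\leq |y|\leq M\}$ where $V$ is known to be strictly positive.

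\textbf{Step 1: Radial integration for large $|y|$.} Fix $y\in\R^k$ with $|y|>R_\infty$ and set $\omega=y/|y|\in\S^{k-1}$. Consider the scalar function $h(t)=V(t\omega)$ for $t\in[R_\infty,|y|]$. Then $h'(t)=\omega\cdot\nabla V(t\omega)=t^{-1}\bigl((t\omega)\cdot\nabla V(t\omega)\bigr)$, so assumption $(\text{H}_3)$ applied at the point $t\omega$ (which satisfies $|t\omega|=t>R_\infty$) yields
\[
h'(t)\ \geq\ \upalpha_\infty\, t,\qquad\text{for every }t\in(R_\infty,|y|].
\]
Integrating from $R_\infty$ to $|y|$ and using $h(R_\infty)=V(R_\infty\omega)\geq 0$, one obtains
\[
V(y)\ \geq\ \tfrac{\upalpha_\infty}{2}\bigl(|y|^2-R_\infty^2\bigr).
\]
Choose $M_\infty\geq\max(R_\infty,2{\rm R}_0)$ large enough so that $\tfrac{\upalpha_\infty}{2}(1-R_\infty^2/M_\infty^2)\geq \tfrac{\upalpha_\infty}{4}$; then for every $|y|\geq M_\infty$ we deduce the clean bound $V(y)\geq \tfrac{\upalpha_\infty}{4}|y|^2$.

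\textbf{Step 2: Compactness on the intermediate annulus.} On the closed annulus $A=\{y\in\R^k:\ 2{\rm R}_0\leq|y|\leq M_\infty\}$, the function $V$ is continuous. Moreover, by $(\text{H}_1)$ and the definition \eqref{grandr0}, every element of the vacuum manifold $\Sigma$ lies in $\B^k({\rm R}_0)$, so $\Sigma\cap A=\emptyset$, and in particular $V>0$ everywhere on $A$. By compactness there is a constant $c_0>0$ with $V(y)\geq c_0$ on $A$, whence
\[
V(y)\ \geq\ c_0\ \geq\ \frac{c_0}{M_\infty^2}\,|y|^2\qquad\text{for every }y\in A.
\]

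\textbf{Step 3: Conclusion.} Setting $\upbeta_\infty=\min\!\left(\tfrac{\upalpha_\infty}{4},\,\tfrac{c_0}{M_\infty^2}\right)>0$ and combining Steps 1 and 2 gives $V(y)\geq \upbeta_\infty|y|^2$ for all $|y|\geq 2{\rm R}_0$, which is the required inequality.

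No single step looks like a serious obstacle: the only potential pitfall is to forget that $(\text{H}_3)$ only provides a radial bound valid for $|y|>R_\infty$, and that $R_\infty$ need not be comparable to ${\rm R}_0$; this is precisely what forces the intermediate compactness argument of Step 2 rather than a direct integration starting from $t=2{\rm R}_0$.
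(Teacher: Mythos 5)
Your proof is correct and follows essentially the same route as the paper: integrate the radial inequality of $(\text{H}_3)$ to get a quadratic lower bound for $V$ at large $\vert y \vert$, then use compactness and strict positivity of $V$ on the intermediate annulus between $2{\rm R}_0$ and the large radius, and take the minimum of the two constants. The only cosmetic difference is that you integrate along rays starting at $R_\infty$ and use $V\geq 0$ there, whereas the paper records the integrated bound as $V(y)\geq \tfrac{\upalpha_\infty}{2}\vert y\vert^2-C_\infty$ before restricting to large $\vert y\vert$; the substance is identical.
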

\begin{proof}
Integrating assumption ${\rm H}_3$ we obtain that, for some constant $C_\infty>0$, we have
   \begin{equation}
   \label{cahors}
   V(y) \geq \frac{\upalpha_\infty  \vert y \vert^2}{2}-C_{\infty}, {\rm \ for \ any \ } y \in \R^k. 
   \end{equation}
It follows that
\begin{equation}
\label{tramoo}
 V(y) \geq  \frac{\upalpha_\infty  \vert y \vert^2}{4}, {\rm \ provided   \  } \vert y \vert  \geq  {\rm  R'}_0 \equiv \sup \left\{ 2\sqrt{\frac{C_\infty}{\upalpha_\infty}},\,  4R_0\right\}.
 \end{equation}
On the other hand, by assumption, we have 
$$\frac{V(y)}{\vert y \vert^2} >0 {\rm \ for \ } y \in \overline{\B^k({\rm R'}_0)\setminus \B^k(2{\rm R}_0)}, $$
so that, by compactness, we deduce that there exist some constant $\upalpha'_\infty>0$, such that
$$V(y)  \geq  \upalpha'_\infty \vert y \vert^2   {\rm \ for \ } y \in \overline{\B^k(2{\rm R'}_0)\setminus \B^k(2{\rm R}_0)}. $$
Combining the last inequality with \eqref{tramoo},  the conclusion follows, choosing 
$\displaystyle{\upbeta=\inf \{ \frac{\upalpha_\infty}{4}, \upbeta'_\infty\}  }$.
\end{proof}
\subsection{Modica-Mortola type inequalities}  
\label{momo} 
   
  Let $\upsigma_i$ be an arbitratry  element in $\Sigma$.   We consider  the function  $\chi_i: \R^k \to \R^+$  defined by
  $$ \chi_{_i}(y)=\varphi (\vert y-\upsigma_i  \vert) {\rm \ for \ } y \in \R^k, $$
where $\varphi$ denotes a function  $\varphi : [0, +\infty [ \to \R^+ $ such that $0\leq \varphi'  \leq 1 $  and 
    $$  \varphi(t)=t {\rm \ if \  } 0\leq t \leq \frac{\upmu_0}{2}  {\rm \ and \ }  \varphi (t)=\frac{3\upmu_0 }{4}
     {\rm \ if \  }  t\geq \upmu_0.$$
  Given a function $u: \Omega \to \R^k$ we finally define the \emph{scalar} function $w_i$  on $\Omega$  as
  \begin{equation}
  \label{doublevi}
    w_i(x)=\chi_i ( u(x)), \forall x  \in \Omega. 
 \end{equation}
 First properties of the map $w_i$ are summarized in the next Lemma.

   \begin{lemma} Let $w_i$ be as above. We have    
   \begin{equation}
      \label{monster}
      \left\{
      \begin{aligned}
   w_i (x)&=\vert u(x)-\upsigma_i  \vert,   {\rm \ if \ }\vert u(x)-\upsigma_i \vert \leq  \frac{\upmu_0}{2},\\
    w_i(x)&= \frac{3\upmu_0 }{4},   {\rm \ hence  \  }  \nabla w_i= 0 \ {\rm \ if \ }\vert u(x)-\upsigma_i \vert  \geq \upmu_0, \\
    \vert \nabla w^i \vert &\leq \vert \nabla u \vert {\rm \  on  \ }  \Omega, 
    \end{aligned}
    \right.
    \end{equation}
and 
 \begin{equation}
      \label{bvbound}
    \vert   \nabla (w_i)^2\vert
     \leq 4\sqrt{\lambda_0}^{-1} J (u) (x), 
           \end{equation}
            where we have set 
           \begin{equation}
           \label{jetski}
           J(u)= \vert \nabla u \vert \sqrt{V(u)}.
           \end{equation}
\end{lemma}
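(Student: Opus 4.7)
\smallskip
\noindent
\textbf{Plan of proof.} The statements in \eqref{monster} are essentially tautological from the definition of $\varphi$, and the bound \eqref{bvbound} follows by combining them with the quadratic lower bound \eqref{glutz} for the potential $V$ near $\upsigma_i$. The whole argument is a short chain-rule computation; the only \emph{subtle} point is checking that the inequality $w_i \le \tfrac{2}{\sqrt{\lambda_0}}\sqrt{V(u)}$ holds on the entire region $\{|u-\upsigma_i|\le \upmu_0\}$, i.e.\ in particular on the \emph{transition zone} $\tfrac{\upmu_0}{2} \le |u-\upsigma_i| \le \upmu_0$ where $\varphi$ is not explicitly given.

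\smallskip
\noindent
\textbf{Step 1: Pointwise properties \eqref{monster}.} The first two lines are immediate from the formulas $\varphi(t)=t$ on $[0,\upmu_0/2]$ and $\varphi(t)=\tfrac{3\upmu_0}{4}$ on $[\upmu_0,+\infty)$ combined with $w_i=\varphi(|u-\upsigma_i|)$. For the gradient estimate, at any point $x$ with $u(x)\neq \upsigma_i$ we use the chain rule
\begin{equation*}
\nabla w_i(x)=\varphi'\bigl(|u(x)-\upsigma_i|\bigr)\,\frac{(u(x)-\upsigma_i)\cdot\nabla u(x)}{|u(x)-\upsigma_i|},
\end{equation*}
and apply $|\varphi'|\le 1$ together with Cauchy--Schwarz to obtain $|\nabla w_i|\le|\nabla u|$. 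At points where $u(x)=\upsigma_i$ one has $w_i(x)=0$ and $w_i\ge 0$ nearby, hence $\nabla w_i(x)=0$; likewise on the open set $\{|u-\upsigma_i|>\upmu_0\}$, $w_i$ is locally constant so $\nabla w_i=0$, and the inequality is trivially true.

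\smallskip
\noindent
\textbf{Step 2: The BV-type bound \eqref{bvbound}.} Write $|\nabla(w_i)^2|=2 w_i\,|\nabla w_i|\le 2 w_i\,|\nabla u|$ by Step 1. It therefore suffices to show
\begin{equation*}
w_i(x)\le \frac{2}{\sqrt{\lambda_0}}\,\sqrt{V(u(x))},
\end{equation*}
at every $x\in\Omega$, after which multiplication by $2|\nabla u|$ yields precisely the claimed inequality with constant $4/\sqrt{\lambda_0}$ and with $J(u)=|\nabla u|\sqrt{V(u)}$. To establish this pointwise inequality I distinguish three regions. On $\{|u-\upsigma_i|\ge\upmu_0\}$ one has $\nabla w_i=0$, and the inequality contributes nothing to \eqref{bvbound} (one may check it trivially from $w_i=3\upmu_0/4$ and $V(u)\ge \upalpha_0$ given by \eqref{extrut}, but it is not even needed). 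On $\{|u-\upsigma_i|\le\upmu_0/2\}\subset\B^k(\upsigma_i,2\upmu_0)$ one has $w_i=|u-\upsigma_i|$ and the lower bound \eqref{glutz} gives $V(u)\ge \tfrac{\lambda_0}{4}\,|u-\upsigma_i|^{2}=\tfrac{\lambda_0}{4}\,w_i^{2}$, i.e.\ $w_i\le \tfrac{2}{\sqrt{\lambda_0}}\sqrt{V(u)}$. On the transition region $\tfrac{\upmu_0}{2}\le|u-\upsigma_i|\le\upmu_0$ the conditions $\varphi'\le 1$ and $\varphi(\upmu_0/2)=\upmu_0/2$ imply $w_i=\varphi(|u-\upsigma_i|)\le|u-\upsigma_i|$, and since $u$ still lies in $\B^k(\upsigma_i,2\upmu_0)$ the same bound \eqref{glutz} gives $|u-\upsigma_i|\le \tfrac{2}{\sqrt{\lambda_0}}\sqrt{V(u)}$, hence the required inequality.

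\smallskip
\noindent
\textbf{Expected difficulty.} There is essentially no obstacle: the argument is purely algebraic once one has the quadratic lower bound \eqref{glutz} for $V$ in a neighbourhood of $\upsigma_i$. The only mild care needed is the uniform treatment of the transition zone, which is handled by the elementary observation $\varphi(t)\le t$ for all $t\ge 0$ (itself a consequence of $\varphi'\le 1$ and $\varphi(0)=0$).
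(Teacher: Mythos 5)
Your proposal is correct and follows essentially the same route as the paper's own proof: restrict to the region $\{|u-\upsigma_i|\le\upmu_0\}$ (elsewhere $\nabla w_i=0$), use $w_i\le|u-\upsigma_i|$ (i.e.\ $\varphi(t)\le t$), and invoke the quadratic lower bound on $V$ near $\upsigma_i$ to get $|u-\upsigma_i|\le 2\lambda_0^{-1/2}\sqrt{V(u)}$. The only cosmetic difference is that you quote Proposition \ref{potto} (inequality \eqref{glutz}) directly, while the paper routes the same bound through Lemma \ref{watson}; your explicit treatment of the transition zone $\upmu_0/2\le|u-\upsigma_i|\le\upmu_0$ spells out what the paper leaves implicit.
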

\begin{proof}
Properties \eqref{monster} is a straightforward consequence of the definition \eqref{doublevi}. For \eqref{bvbound}, we  notice that, in view of \eqref{monster}, we may restrict ourselves to the case $u(x) \in \B^k(\upsigma_i, \upmu_0)$, since otherwise $\nabla w_i=0$, and inequality \eqref{bvbound} is hence straightforwardly satisfied. In that case, it follows from \eqref{watson}, we have 
$$
 \vert  w_i (x) \vert \leq  \vert u(x)-\upsigma_i\vert \leq  \sqrt{ 4\lambda_0^{-1} V(u(x))},  {\rm \ for \ all \ } x   {\rm \ such \ that \ }  u(x) \in \B^k(\upsigma_i, \upmu_0), 
$$
 so that 
 \begin{equation}
      \label{bvbounda}
    \vert   \nabla (w_i)^2(x)\vert=2\left\vert w_i  (x) \right\vert.
    \left\vert \nabla \left \vert  w_i (x) \right\vert \right \vert \,  
    \leq  2 \vert \nabla u \vert \sqrt{ 4\lambda_0^{-1} V(u(x))} 
     \leq 4\sqrt{\lambda_0}^{-1} J (u) (x), 
           \end{equation}
 and the proof  is complete.
  \end{proof}

\begin{lemma} 
\label{ab0}
 We have, for any $x \in \Omega$, the inequality
   \begin{equation}
          \label{ab}
         J(u(x)) 
        \leq 
         e_\eps(u(x)). 
\end{equation}
\end{lemma}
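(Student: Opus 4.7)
The inequality is a pointwise statement comparing $J(u(x)) = |\nabla u(x)|\sqrt{V(u(x))}$ with the energy density $e_\eps(u(x)) = \tfrac{\eps}{2}|\nabla u(x)|^2 + \tfrac{1}{\eps}V(u(x))$, and it does not depend on the PDE at all, only on the nonnegativity of $V$ (assumption \eqref{infimitude}).

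The plan is simply to apply Young's inequality $ab \leq \tfrac{1}{2}a^2 + \tfrac{1}{2}b^2$ with the natural choice $a = \sqrt{\eps}\,|\nabla u(x)|$ and $b = \sqrt{V(u(x))/\eps}$. This gives
\begin{equation*}
J(u(x)) = |\nabla u(x)|\,\sqrt{V(u(x))} = ab \leq \frac{\eps}{2}|\nabla u(x)|^2 + \frac{1}{2\eps}V(u(x)).
\end{equation*}
Since $V \geq 0$ by \eqref{infimitude}, the right-hand side is bounded above by $\tfrac{\eps}{2}|\nabla u(x)|^2 + \tfrac{1}{\eps}V(u(x)) = e_\eps(u(x))$, which yields the claimed inequality. (In fact the sharper bound $J(u) \leq \tfrac{1}{\sqrt 2}\,e_\eps(u)$ is obtained by optimizing the scaling in Young, but the weaker form stated in the lemma is what is used later.)

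There is no obstacle: the argument is one line of AM-GM together with $V \geq 0$, and the parameter $\eps$ cancels out by design. The role of this elementary observation is to convert gradient bounds into control of $J(u)$, which is the quantity that together with \eqref{bvbound} produces $BV$-type bounds on the scalar functions $w_i$ introduced in \eqref{doublevi}, thereby implementing the Modica--Mortola mechanism in the vectorial setting.
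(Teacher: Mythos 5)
Your proof is correct and follows essentially the same route as the paper: both write $J(u)=(\sqrt{\eps}\,|\nabla u|)\cdot\sqrt{\eps^{-1}V(u)}$ and apply $ab\leq\tfrac12(a^2+b^2)$, then use $V\geq 0$ to absorb the factor $\tfrac12$ on the potential term. Your parenthetical sharper constant $1/\sqrt{2}$ is also correct, though, as you note, not needed.
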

\begin{proof}
We have, by definition of the energy $e_\eps(u)$,
 \begin{equation}
          \label{ab}
         J(u(x)) =(\sqrt{\eps} \vert \nabla u(x) \vert ). \sqrt{\eps^{-1}V(u(x)) } 
          \end{equation}
          We invoke next the inequality $\displaystyle{ab\leq \frac{1}{2}(a^2+b^2)}$ to obtain
          $$J(u(x) \leq \frac12 \left (  \eps\vert \nabla u (x)\vert^2+ \eps^{-1} V(u(x))\right), $$ 
          which yields the desired result. 
      \end{proof}
    \subsection{The one-dimensional case}

     In dimension $1$  estimate \eqref{bvbound} directly leads to uniform bound on $w_i$, as expressed in our next result. 
 For that purpose, we consider, for $r>0$, the  circle $\S^1(r)=\{x\in \R^2, \vert x \vert =r\}$ and maps $u: \S^1(r) \to \R^k$.

     \begin{lemma} 
     \label{valli}
     Let $0< \eps\leq 1$ and $\eps<r\leq1$ be given. There exists a constant ${\rm C}_{\rm unf}>0$, depending only on $V$,  such that, for any   given  map    $u:\mathbb S^1(r) \to \R^k$,  there exists an element $\upsigmam \in \Sigma$ such that 
      \begin{equation}
      \label{bornuni}
            \vert u(\ell)-\upsigmam \vert \leq  {\rm C}_{\rm unf}\sqrt{\int_{\S^1(r)} \frac 12 (J(u(\ell))+r^{-1} V(u(\ell))){\rm d} \ell}, \\
\  \   {\rm \ for \ all \ } \ell \in \S^1(r),
      \end{equation}
    and hence 
      \begin{equation}
      \label{bornunif}
     \vert u(\ell)-\upsigmam \vert    \leq  {\rm C}_{\rm unf}\sqrt{\int_{\S^1 (r)} e_\eps(u){\rm d}\ell}.  
      \end{equation}
       \end{lemma}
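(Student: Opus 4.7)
My plan is to locate a ``good point'' $\ell_\ast\in\S^1(r)$ at which $u(\ell_\ast)$ is forced to be close to some $\upsigmam\in\Sigma$, and then propagate this proximity to the entire circle via the BV-type gradient bound \eqref{bvbound}. Write $M^2 = \tfrac{1}{2}\int_{\S^1(r)}(J(u)+r^{-1}V(u))\,{\rm d}\ell$, so that the sought inequality reads $|u(\ell)-\upsigmam|\le {\rm C}_{\rm unf}\,M$. Since $\S^1(r)$ has length $2\pi r$ and $\int_{\S^1(r)} V(u)\,{\rm d}\ell\le 2rM^2$, pigeonhole produces some $\ell_\ast$ with $V(u(\ell_\ast))\le M^2/\pi$.

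\emph{Case A ($M$ small).} Suppose $M\le M_0$, where $M_0(V)>0$ is a threshold chosen below. Then $V(u(\ell_\ast))\le \upalpha_0$ and Lemma \ref{watson} produces $\upsigmam\in\Sigma$ with $|u(\ell_\ast)-\upsigmam|\le 2\lambda_0^{-1/2}\sqrt{V(u(\ell_\ast))}\le 2(\pi\lambda_0)^{-1/2}M$. Applying the fundamental theorem of calculus to the scalar function $(w_{\rm main})^2=\chi_{\rm main}(u)^2$ along $\S^1(r)$ and invoking the pointwise bound \eqref{bvbound} together with $\int J(u)\,{\rm d}\ell'\le 2M^2$ yields
\[
(w_{\rm main}(\ell))^2 \le |u(\ell_\ast)-\upsigmam|^2 + 4\lambda_0^{-1/2}\int_{\S^1(r)} J(u)\,{\rm d}\ell' \le C_A^2 M^2,
\]
for an explicit $C_A=C_A(V)$. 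Choosing $M_0 = \upmu_0/(2 C_A)$ (enlarging $C_A$ if needed so that also $M_0\le\sqrt{\pi\upalpha_0}$) guarantees $C_A M\le \upmu_0/2$; in that range the monotone cut-off $\varphi$ defining $\chi_{\rm main}$ acts as the identity, so $w_{\rm main}(\ell)=|u(\ell)-\upsigmam|$ and the bound with constant $C_A$ follows.

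\emph{Case B ($M>M_0$).} Now $M$ is bounded below by a constant depending only on $V$, but $V(u(\ell_\ast))$ and $|u|$ may be arbitrarily large, and $(w_{\rm main})^2$ saturates at $(3\upmu_0/4)^2$. The remedy is to introduce a ``tail'' function $W(y)=\bigl(\max(|y|-2R_0,0)\bigr)^2$ and exploit Proposition \ref{barre}: on $\{|u|\ge 2R_0\}$ the growth bound $|u|\le \upbeta_\infty^{-1/2}\sqrt{V(u)}$ yields $|\nabla(W\circ u)|\le 2\upbeta_\infty^{-1/2}J(u)$ globally. The same estimate at $\ell_\ast$ gives $W(u(\ell_\ast))\le V(u(\ell_\ast))/\upbeta_\infty\le M^2/(\pi\upbeta_\infty)$, and propagation along $\S^1(r)$ yields $|u(\ell)|\le 2R_0+C_W M$ uniformly. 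Picking any fixed $\upsigmam\in\Sigma$ then produces $|u(\ell)-\upsigmam|\le 3R_0+C_W M$, and the lower bound $M\ge M_0$ absorbs the additive $3R_0$ into $C_B M$. Setting ${\rm C}_{\rm unf}=\max(C_A,C_B)$ completes the proof of \eqref{bornuni}; the stronger form \eqref{bornunif} then follows because Lemma \ref{ab0} gives $J(u)\le e_\eps(u)$ and the hypothesis $\eps\le r$ gives $r^{-1}V(u)\le \eps^{-1}V(u)\le e_\eps(u)$, so $\tfrac12(J(u)+r^{-1}V(u))\le e_\eps(u)$.

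The main obstacle is Case B: the functions $w_i$ defined in \eqref{doublevi} saturate outside the $\upmu_0$-neighborhoods of $\Sigma$ and carry no information about $|u|$ when it leaves those neighborhoods. The fix is the auxiliary tail function $W$, whose gradient remains controlled by $J(u)$ thanks to the quadratic growth of $V$ at infinity. Everything else is standard pigeonhole plus the one-dimensional fundamental theorem of calculus on the closed curve $\S^1(r)$.
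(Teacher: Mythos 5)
Your argument is correct, and it reorganizes the proof in a genuinely different (and arguably cleaner) way than the paper, while using the same basic toolkit: a mean-value point on the circle, Lemma \ref{watson} near the wells, the quadratic growth of Proposition \ref{barre} at infinity, a one-dimensional integration in which $J(u)$ controls the derivative of a squared distance, and Lemma \ref{ab0} plus $r\ge\eps$ to pass from \eqref{bornuni} to \eqref{bornunif}. The paper splits according to whether the \emph{average of $V$} on $\S^1(r)$ is below $\upalpha_0$, and in the opposite case it distinguishes three subcases (the image of $u$ stays in $\B^k(2{\rm R}_0)$, crosses $\partial\B^k(2{\rm R}_0)$, or stays outside), handling the crossing case by an intermediate-value argument and an integration of $|u|^2$ along an arc. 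You instead split on the size of the full right-hand side $M$, and in the large-$M$ regime you collapse all of the paper's far-field subcases into a single propagation estimate for the Lipschitz tail function $W(y)=\bigl(\max(|y|-2R_0,0)\bigr)^2$, whose tangential derivative is bounded by $2\upbeta_\infty^{-1/2}J(u)$ thanks to \eqref{upbetainfty}; this buys a shorter, more unified treatment of the region far from $\Sigma$. Your small-$M$ case also buys something: since the threshold is on $M$ (hence on $\int J$ as well, not only on the average of $V$), the propagated bound stays below $\upmu_0/2$, so the truncation in \eqref{doublevi} is never active and the estimate \eqref{bvbound} on $(w_{\rm main})^2$ really yields a bound on $|u-\upsigmam|^2$ --- a point the paper's Case~1 treats rather loosely.

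One small point to tighten: since $\varphi$ in \eqref{doublevi} is only assumed non-decreasing with $0\le\varphi'\le1$, the step ``$w_{\rm main}(\ell)\le\upmu_0/2$ implies $w_{\rm main}(\ell)=|u(\ell)-\upsigmam|$'' is not literally immediate ($\varphi$ could be flat just beyond $\upmu_0/2$). It is repaired in one line: $\varphi(t)\ge\min(t,\upmu_0/2)$ for all $t\ge0$, so $\min\bigl(|u(\ell)-\upsigmam|,\upmu_0/2\bigr)\le C_AM$, and choosing the threshold so that $C_AM<\upmu_0/2$ forces the minimum to be $|u(\ell)-\upsigmam|$; alternatively a standard continuity argument starting from $\ell_\ast$ gives the same conclusion. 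With that remark your proof is complete.
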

      
      \begin{proof} 
      By the mean-value formula, there exists some point $\ell_0 \in \S^1(r)$ such that 
      \begin{equation}
      \label{buena}
      V(u(\ell_0)) = \frac{1}{2\pi r} \int_{\S^1(r) } V(u(\ell)) {\rm d} \ell.
      \end{equation}
   We distinguish two cases.  
   
   \smallskip
   \noindent
   {\bf  Case 1.} {\it The function $u$ satisfies additionnally the estimate }
   \begin{equation}
    \label{val}
   \frac{1}{2\pi r} \int_{\S^1(r)} V(u(\ell))  \, {\rm d} \ell <  \upalpha_0, 
     \end{equation}
{\it  where $\upalpha_0$ is the constant introduced in Lemma \ref{watson}}.
    Then,  we deduce from inequality \eqref{val}  that
   \begin{equation*}
      \label{buena}
      V(u(\ell_0)) \leq \frac{1}{2\pi r}  \int_{\S^1(r)} V(u(\ell))  \, {\rm d} \ell < \upalpha_0.
            \end{equation*}
      It follows from Lemma \ref{watson} that there exists some $\upsigmam \in \Sigma$ such that 
     $$ \vert u(\ell_0))-\upsigmam  \vert^2  \leq 4 \lambda_0^{-1} V(u(\ell_0) )\leq 
      \frac {2\lambda_0^{-1}}{\pi r} \int_{\S^1(r)} V(u)  {\rm d} \ell. $$
    On the other hand,  we deduce,  integrating  the bound \eqref{bvbound},  that, for any $\ell \in \S^1(r)$, we have 
    $$ \vert \left\vert u-\upsigmam\right\vert^2(\ell)-\left \vert u-\upsigmam\right\vert ^2(\ell_0))\vert \, {\rm d} \ell
     \leq
      4\sqrt{\lambda_0^{-1}} \int_{\S^1(r)} J(u).$$
    Combining the two previous estimates,  we obtain the desired result  in case 1, using the fact that $\eps \leq 1$ and provided the constant $ {\rm C}_{\rm unf}$ satisfies the bound
    $$ {\rm C}_{\rm unf}^2 \geq  4\sqrt{\lambda_0^{-1}}+2\lambda_0^{-1}.$$
   
   \bigskip
   \noindent
   {\bf  Case 2}. {\it  Inequality \eqref{val} does not hold}. 
   In that case,  we have hence 
  \begin{equation}
  \label{trouville}
\frac{1}{2\pi r}  \int_{\S^1(r)} V(u(\ell))  {\rm   d} \ell \geq \upalpha_0.
   \end{equation}
 We consider  the number $\rm R_0=\sup\{\vert \upsigma  \vert  , \upsigma \in \Sigma\}$,   introduced in definition \eqref{cahors} of the proof of Proposition  \ref{barre},   and discuss next three subcases.
 
 \smallskip
 \noindent
 {\it Subcase 2a : For any $\ell \in \S^1(r)$,   we have
 $$ u(\ell) \in \B^k(2 \rm R_0).$$}
Then, in this case,  for any  $\upsigma \in \Sigma$, we have
\begin{equation}
\label{welldone}
\vert u(\ell)- \upsigma\vert^2 \leq 9{\rm R}_0^2=\left( \frac{9{\rm R}_0^2}{\upalpha_0}\right)\upalpha_0 \leq   
\left( \frac{9{\rm R}_0^2}{\upalpha_0}\right)\frac{1}{2\pi r}  \int_{\S^1(r)} V(u(\ell))  {\rm   d} \ell.
\end{equation}
 Hence,  inequality \eqref{bornuni} is immediately satisfied, whatever the choice of $\upsigma_{\rm main}$,  provided we impose the additional condition
 \begin{equation}
 \label{fayat}
  {\rm C}_{\rm unf}^2 \geq \frac{9{\rm R}_0^2}{2\upalpha_0}.
\end{equation}

\smallskip
 \noindent
 {\it Subcase 2b : There exists some  $\ell_1 \in \S^1(r)$, and some $\ell_2\in \S^1(r)$ such that,  we have}
 $$ u(\ell_1) \in \B^k(2 {\rm R}_0)  {\rm \ and \ }  u(\ell_2) \not  \in \B^k(2 \rm R_0).$$
Let $\ell \in \S^1(r)$. If $u(\ell) \in \B^k(2\rm R_0)$, then we argue as in subcase 2a, so that we obtain  inequality  \eqref{welldone} as before,  hence \eqref{bornuni}, and we are done. Otherwise, by continuity, there exists some $\ell' \in \S^1(r)$ such that $u(\ell') \in \partial \B^k(2{\rm R}_0)$ and such for  any point $a\in \mathcal C(\ell, \ell')$ we have   $u(a) \not \in \B^k(2 \rm R_0)$,   where $\mathcal C(\ell, \ell')$ denotes the arc on $\S^1(r)$ joining counterclockwise the points  $\ell$ and $\ell'$.   We have, by integration, using  the fact that $\vert u(a)\vert \geq 2{\rm R}_0$ for $a\in \mathcal C(\ell, \ell')$ together with inequality \eqref{upbetainfty}, 
\begin{equation*}
\begin{aligned}
\vert u(\ell)\vert^2-\vert u(\ell')\vert ^2 &\leq 2\int_\ell^{\ell'} \vert u(a) \vert \cdot \vert \nabla u(a) \vert  \,  {\rm d} a  \\
&\leq  {\rm R}_0^{-1}\int_\ell^{\ell'} \vert u(a) \vert^2 \cdot \vert \nabla u(a) \vert  \,\rd a  \\
 &\leq   \frac{1}{{\upbeta_\infty} {\rm R_0}  }\int_\ell^{\ell'} V(u(a)) \vert \nabla u(a) \vert  \,  {\rm d} a
 \leq  \frac{1}{{\upbeta_\infty} {\rm R}_0 } \int_{\S^1(r)} J(u(a))   {\rm d} a.
 \end{aligned}
\end{equation*}
Since $\vert u (\ell') \vert=2{\rm R}_0$, we obtain, for any $\upsigma \in \Sigma$, 
\begin{equation*}
\begin{aligned}
  \vert u(\ell)- \upsigma\vert^2  &\leq 2 \left(\vert u(\ell)\vert^2 + \vert \upsigma \vert^2 \right)\leq 2 \left(\vert u(\ell)\vert^2 + {\rm R}_0^2 \right) \\
   &\leq 2 \left( \frac{1}{{\upbeta_\infty}  {\rm R}_0 } \int_{\S^1(r)} J(u(a))   {\rm d} a +{\rm R}_0^2 + \vert u(\ell') \vert^2  \right)  \\
   &\leq  \left( \frac{2}{{\upbeta_\infty}  {\rm R}_0} \int_{\S^1(r)} J(u(a))   {\rm d} a + 10{\rm R}_0^2   \right) \\
   &\leq   \left( \frac{2}{{\upbeta_\infty} {\rm R}_0 } \int_{\S^1(r)} J(u(a))   {\rm d} a + 10\frac{{\rm R}_0^2 }{\upalpha_\infty}\upalpha_\infty   \right) \\
  & \leq   \left( \frac{2}{{\upbeta_\infty} {\rm R}_0 } \int_{\S^1(r)} J(u(a))   {\rm d} a +
   \frac{10{\rm R}_0^2 }{2\pi\upalpha_0 r}  \int_{\S^1(r)} V(u(\ell))  {\rm   d} \ell, 
    \right) \\
   \end{aligned}
\end{equation*}
so that the conclusion \eqref{bornuni} follows for any choice of $\upsigmam \in \Sigma$ , imposing  again  an appropriate lower bound on   the constant ${\rm C}_{\rm unf}$.

  \medskip
 \noindent
 {\it Subcase 2c :  For any $\ell \in \S^1(r)$,   we have
 $$  \vert  u(\ell) \vert \geq 2{ \rm R}_0 .$$}
  Let $\ell_0$ satisfy \eqref{buena}, so that, in view of Proposition \ref{barre}
  \begin{equation*}
  \vert u(\ell_0)\vert^2 \leq  \frac{1}{\upbeta_\infty}  V(u(\ell_0))=  
  \frac{1}{\upbeta_\infty } \left( \frac{1}{2\pi r}\int_{\S^1(r)} V(u(\ell)\right).
  \end{equation*}
  We obtain hence, for any arbitrary $\upsigma \in \Sigma$ 
 \begin{equation}
 \label{monge}
 \begin{aligned}
  \vert u(\ell_0)- \upsigma\vert^2 &\leq 2 \left(\vert u(\ell_0)\vert^2 + \vert \upsigma \vert^2 \right)\leq 
   \frac{2}{\upbeta_\infty} \left( \frac{1}{2\pi r} \int_{\S^1(r)} V(u(\ell)) { \rm d} \ell +  {\rm R}_0^2 \upbeta_\infty\right ) \\
   &\leq \frac{2}{\upbeta_\infty} \left( \frac{1}{2\pi r} \int_{\S^1(r)} V(u(\ell) ){ \rm d} \ell + 
   \upalpha_0\left( \frac{  {\rm R}_0^2 \upbeta_\infty}{\upalpha_0}\right)\right ) \\
   &\leq  \  \frac{1}{\pi \upbeta_\infty  } \left(  1+ \left(\frac{2{\rm R}_0^2 \upbeta_\infty}{\upalpha_0}\right) \right)
   \left( r^{-1} \int_{\S^1(r)} V(u(\ell)){ \rm d} \ell\right). 
    \end{aligned}
     \end{equation}
This yields again \eqref{bornuni} for an arbitrary choice of $\upsigmam \in \Sigma$ and imposing an additional  suitable lower bound on 
${\rm C}_{\rm unf}$.     

We have hence established  for upper bound \eqref{bornuni}  in all three possible  cases $2a, 2b$ and $2c$, for a suitable an arbitrary choice of $\upsigmam \in \Sigma$ and imposing an additional  suitable lower bound on 
${\rm C}_{\rm unf}$.      It is hence established in case $2$. Since we alreday establishes it in Case 1, the proof of \eqref{bornuni} is complete. 

\medskip 
Turning to inequality \eqref{bornunif}, we first observe that, since  by assumption $r\geq \eps$, we have 
\begin{equation}
\label{lepape}
r^{-1} \int_{\S^1(r)} V(u(\ell)){ \rm d} \ell \leq \int_{\S^1(r)} \eps^{-1}  V(u(\ell)){ \rm d} \ell
      \leq     \int_{\S^1(r)} e_\eps(u(\ell)){ \rm d} \ell.  
      \end{equation}
      Combining \eqref{bornuni} with \eqref{ab} and \eqref{lepape}, we obtain the desired result \eqref{bornunif}. 
     \end{proof}
    
   \subsection{Controlling the energy on  circles}
     \label{radamel}
    When working on two-dimensional disks, the tools  developed in the previous section allow to choose  radii with appropriate control on the energy, invoking a standard  mean-value argument. More precisely, we have: 
     
     \begin{lemma}  
     \label{moyenne}
     Let   $\eps \leq r_0< r_1\leq 1$  and $u: \D^2 \to \R^k$ be given. There exists a radius $\mathfrak r_\eps \in [r_0, r_1]$ such that     
     $$ \int_{\S^1(\mathfrak r_\eps)}e_\eps(u){\rm  d } \ell  \leq \frac{1}{r_1-r_0} \, \E_\eps(u, \D^2(r_1)). $$
     \end{lemma}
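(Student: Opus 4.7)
The plan is to apply the coarea formula (or equivalently, Fubini in polar coordinates) to the function $r \mapsto \int_{\S^1(r)} e_\eps(u) \rd\ell$ on the annulus $\{r_0 \leq |x| \leq r_1\} \subset \D^2(r_1)$, and then use a straightforward averaging/pigeonhole argument to extract the desired radius.

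More precisely, first I would write the energy integrated over the annulus as
\begin{equation*}
\int_{r_0}^{r_1} \left(\int_{\S^1(r)} e_\eps(u)(\ell) \rd\ell \right) \rd r = \int_{\{r_0 \leq |x| \leq r_1\}} e_\eps(u)(x) \rd x \leq \E_\eps(u, \D^2(r_1)),
\end{equation*}
where the equality comes from writing $\rd x = \rd\ell\, \rd r$ in polar coordinates and the inequality from non-negativity of $e_\eps(u)$ together with the inclusion $\{r_0 \leq |x| \leq r_1\} \subset \D^2(r_1)$.

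Second, I would invoke the mean-value principle for the non-negative measurable function $g(r) = \int_{\S^1(r)} e_\eps(u) \rd\ell$ on $[r_0, r_1]$: since the average of $g$ over $[r_0, r_1]$ is bounded by $(r_1-r_0)^{-1} \E_\eps(u, \D^2(r_1))$, there must exist at least one radius $\mathfrak r_\eps \in [r_0, r_1]$ at which $g(\mathfrak r_\eps)$ does not exceed this average. This yields the desired bound. There is no genuine obstacle here: the statement is a routine application of Fubini plus the pigeonhole principle, and neither the hypothesis $\eps \leq r_0$ nor any property of the potential $V$ is needed for the conclusion (the assumption $\eps \leq r_0 \leq r_1 \leq 1$ is stated only because the radius $\mathfrak r_\eps$ so obtained will be used downstream, in conjunction with Lemma \ref{valli}, which itself requires $\eps < r$).
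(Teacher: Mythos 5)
Your proof is correct and is precisely the classical mean-value (Fubini plus averaging) argument the paper has in mind; the paper simply omits it, stating "The proof is based on a classical mean-value argument." Your observation that the hypothesis $\eps \leq r_0$ plays no role in the proof itself and is only relevant for the downstream use with Lemma \ref{valli} is also accurate.
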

  The proof is based on a classical mean-value argument, therefore we omit it. 
  
  \smallskip
  In the sequel, we will often make use of Lemma \ref{moyenne} combined with  the uniform bounds obtained in dimension one. For instance, it  follows    from Lemma \ref{valli} that there exists 
  some point  $\upsigma_{\mathfrak r_\eps} \in \Sigma$, \emph{depending on $\mathfrak r_\eps$},  such that 
   \begin{equation}
      \label{bornuni2}
     \vert u(\ell)-\upsigma_{\mathfrak r_\eps}  \vert \leq \frac{ {\rm C}_{\rm unf}}{\sqrt{r_1-r_0}} \sqrt{{\E}_\eps(u, \D^2(r_1)}),  \  \   {\rm \ for \ all \ } \ell \in 
     \S^1(\mathfrak r_\eps).
      \end{equation}
 Moreover, it follows from \eqref{bvbounda} that 
 \begin{equation}
 \label{bvbound2}
 \int_{\S^1(\mathfrak r_\eps)} \vert J(u) \vert \leq \frac{1}{r_1-r_0} \int_{\D^2(r_1)} e_\eps(u_\eps) {\rm d}x. 
 \end{equation}
 
     \subsection {BV estimates and the coarea formula}
     \label{detroit}
The right-hand side  of estimate \eqref{bornunif}, in particular the term  involving  $J(u)$, may be interpreted  as a  $BV$ estimate (as in \cite{mortadela}). In dimension $1$, as expected, it yields  used a uniform estimates.  In higher dimensions of course, this is no longer true. Nevertheless our $BV$-estimates lead to useful estimates for  the measure of specific  level sets. In order to state the kind of results we have in mind,  we consider more generally an arbitrary    smooth function $\varphi:  \Omega\to \R$,  where $\Omega \subset \R^N$ is a arbitrary $N$-dimensional domain, and introduce, for a given  number $s \in \R$,  the  level set
$$\varphi^{-1} (s)=\{s \in \Omega, {\rm \ such \ that \ } \varphi(x)=s\}.$$
If $w$ is assumed to be sufficiently smooth, then Sard's theorem asserts that $w^{-1}(s)$ is a  regular submanifold of dimension $(N-1)$, for almost every $s\in \R$, and the coarea formula relates the integral of the total  measures of these level sets  to the $BV$-norm  through the formula     
     \begin{equation}
     \label{coarea}
     \int_{\R} {\mathcal H}^{N-1}\left(\varphi^{-1} (s)\right){\rm d}s=\int_\Omega  \vert  \nabla  \varphi  (x)\vert {\rm d}x. 
     \end{equation}
We specify this formula to  our needs in  the specific case $N=2$, $\Omega=\D^2(r)$, for some $r>\eps$,   and $\varphi=(w_i)^2: \Omega \to \R$,   where $i \in \{1, \ldots, q\}$ and   where $w_i: \Omega \to \R$ is the map constructed   in \eqref{doublevi} for a given $u: \Omega\to \R^k$. Combining \eqref{coarea} with \eqref{bvbound} and \eqref{ab}, we are led to the inequality, for the level sets $(w_i^2)^{-1} (s)\subset \Omega=\D^2(r)$, 
\begin{equation}
\label{coaforme}
\begin{aligned}
\int_{\R^+} {\mathcal L}\left((w_i^2)^{-1} (s)\right){\rm d}s&\leq 4 \sqrt{\lambda_0}^{-1}\int_{\D^2(r)}  J(u(x)){\rm d}x \\
&\leq  4 \sqrt{\lambda_0}^{-1}\int_{\D^2(r)}  e_\eps(u){\rm d}x=  4 \sqrt{\lambda_0}^{-1} \E_\eps\left(u, \D^2(r)\right), 
\end{aligned}
\end{equation}
where 
$\mathcal  L=\mathcal H^{1}$ denotes length. In most places, we will invoke this inequality jointly with a mean value argument.  This yields:

\begin{lemma} 
\label{claudio}
Let $u$, $w_i$ and $r>\eps$  be as above. Given  any  number $A>0$,     there exists some 
$\displaystyle{A_0  \in [ \frac{A}{2}, A]}$ such that $w_i^{-1} (s_0)$ is a regular curve in $\D^2(r)$ and such that 
\begin{equation}
{\mathcal L}\left(w_i^{-1} (A_0)\right)  \leq\frac{8}{\sqrt{\lambda_0}A^2}\int_{\D^2(r)}  e_\eps(u){\rm d}x
\leq \frac{8 \,  \E_\eps\left(u, \D^2(r) \right)} {\sqrt{\lambda_0}A^2}.
\end{equation}
\end{lemma}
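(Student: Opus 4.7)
The plan is to combine the coarea inequality \eqref{coaforme} applied to $w_i^2$ with a standard pigeonhole/mean value argument on the restricted range of level values $[A^2/4,A^2]$. This range has length $3A^2/4$ and corresponds, via $A_0=\sqrt{s_0}$, precisely to $A_0\in[A/2,A]$, which is the range in which we need to find the desired level.

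Concretely, starting from inequality \eqref{coaforme}, I would restrict the integration to the subinterval $[A^2/4,A^2]\subset\R^+$ and write
\[
\int_{A^2/4}^{A^2}\mathcal L\bigl((w_i^2)^{-1}(s)\bigr)\,\rd s
\;\leq\;\int_{\R^+}\mathcal L\bigl((w_i^2)^{-1}(s)\bigr)\,\rd s
\;\leq\;\frac{4}{\sqrt{\lambda_0}}\,\E_\eps\bigl(u,\D^2(r)\bigr).
\]
A mean value argument then produces a set of positive Lebesgue measure of values $s_0\in[A^2/4,A^2]$ for which
\[
\mathcal L\bigl((w_i^2)^{-1}(s_0)\bigr)\;\leq\;\frac{1}{3A^2/4}\cdot\frac{4}{\sqrt{\lambda_0}}\,\E_\eps(u,\D^2(r))
\;=\;\frac{16}{3\sqrt{\lambda_0}\,A^2}\,\E_\eps(u,\D^2(r)),
\]
which is in particular bounded by $\frac{8}{\sqrt{\lambda_0}A^2}\E_\eps(u,\D^2(r))$, matching the constant in the statement.

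To finish, I would set $A_0=\sqrt{s_0}$, noting that $A_0\in[A/2,A]$ and that, since $w_i\ge 0$, one has the set identity $w_i^{-1}(A_0)=(w_i^2)^{-1}(s_0)$, so the length bound transfers directly. For the regularity assertion, since $u$ is a smooth solution of \eqref{elipes}, the map $w_i$ is smooth wherever $|u-\upsigma_i|\in(0,\upmu_0/2)\cup(\upmu_0/2,\upmu_0)$, so Sard's theorem applied to the smooth functions $|u-\upsigma_i|^2$ (in the regime $A_0<\upmu_0/2$) and to $\varphi(|u-\upsigma_i|)^2$ on the corresponding regions ensures that the positive-measure set of admissible $s_0$ produced above contains values that are regular, guaranteeing that $w_i^{-1}(A_0)$ is a regular one-dimensional submanifold of $\D^2(r)$. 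The only mild subtlety—hence the main thing to watch—is that $\varphi$ is only Lipschitz at the breakpoints $t=\upmu_0/2$ and $t=\upmu_0$, but since these form a zero-measure set of values for $w_i$, a generic choice of $A_0$ avoids them and yields a genuine smooth level curve.
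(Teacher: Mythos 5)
Your proof is correct and follows essentially the same route as the paper (coarea inequality \eqref{coaforme} plus a mean-value argument over the interval $[A^2/4, A^2]$, with the identification $A_0=\sqrt{s_0}$); the only cosmetic difference is that the paper first restricts the coarea integral to the annular region $\Omega_{i,A}=\{A/2\leq|u-\upsigma_i|\leq A\}$, which is not needed for the stated constant — your direct bound via the full integral already yields $16/(3\sqrt{\lambda_0}A^2)\leq 8/(\sqrt{\lambda_0}A^2)$. Your explicit handling of regularity via Sard's theorem and the identity $w_i^{-1}(A_0)=(w_i^2)^{-1}(s_0)$ is also correct and fills in a step the paper leaves implicit.
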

\begin{proof}  In view of  Definition \ref{doublevi}, the map $w_i$ takes values in the interval $\displaystyle{[0, \frac{3\upmu_0}{4}]}$, so that 
$\displaystyle{w_i^{-1}(s) =\emptyset}$, if $\displaystyle{s>\frac{3\upmu_0}{4}}$. Hence, it remains only to consider the case $\displaystyle{A \leq  \frac{3\upmu_0}{4}}$.  We introduce to that aim  the domain $\displaystyle{\Omega_{i, A}=\{x\in \D^2(r),\frac{A}{2} \leq  \vert u(x)-\upsigma_i \vert \leq A\}}$. Using formula \eqref{coaforme} on this domain, we are led to the inequality 
\begin{equation*}
\int_{\frac {A^2} {4}} ^{A^2} {\mathcal L}((w_i^2)^{-1} (s) ){\rm d} s
\leq 4 \sqrt{\lambda_0}^{-1}\int_{\Omega_{i, A}}  e_\eps(u){\rm d}x
  \leq 4 \sqrt{\lambda_0}^{-1} \E_\eps\left(u, \D^2(r)\right).
\end{equation*}
The conclusion that follows by a mean-value argument. 
\end{proof}


  \subsection{Controlling uniform bounds on good circles  }
Whereas in subsection  \ref{radamel} we have selected radii with controlled energy for the map $u$,   in this subsection, we select radii with appropriate uniform bounds on $u$. 
 We assume throughout this subsection that we  are given a radius $\varrho \in [\frac 12, 1]$, a number 
  $\displaystyle{0<\upkappa <\frac{\upmu_0}{2}}$, a smooth map $u:\overline{ \D^2(\varrho)} \to \R^k$   and an element $\upsigma \in \Sigma$ such that 
   \begin{equation}
  \label{kappacite0}
   \vert u-\upsigma \vert  < \upkappa  {\rm \ on \ } \partial \D^2(\varrho).
  \end{equation}
  We introduce  the subset  $\mathcal I (u, \upkappa)$  of radii $\displaystyle{r \in  [\frac 12, \varrho]}$  such that 
\begin{equation}
\label{sunyu} 
\mathcal I (u, \upkappa) =\left \{ r \in [\frac{1}{2}, \varrho ] {\rm \ such \ that \ }
 \vert u  (\ell) -\upsigma \vert \leq  \upkappa, \, \forall \ell \in \S^1(r) \right \}. 
 \end{equation}
   We have: 

  \begin{proposition}   
  \label{jarre}
Assume that  \eqref{kappacite0} holds. Then, we have  the lower bound
 \begin{equation}
    \label{clamart}
    \vert \mathcal I (u, \upkappa) \vert \geq \varrho-\frac{9}{16}, 
    \end{equation}
    provided we have the lower bound on $\upkappa$
    \begin{equation} 
    \label{camembert}
 \upkappa^2  \geq  \frac{1}{32\sqrt{\lambda_0}} \E_\eps(u, \D^2(\varrho)). 
\end{equation}

    \end{proposition}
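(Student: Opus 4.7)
The plan is to establish the conclusion by bounding the complementary set $B := [1/2, \varrho] \setminus \mathcal{I}(u, \upkappa)$ and showing $|B| \leq 1/16$, which is equivalent to $|\mathcal{I}(u, \upkappa)| \geq \varrho - 9/16$. Write $\upsigma = \upsigma_i$ and let $w_i$ be the scalar function from \eqref{doublevi}. The hypothesis $\upkappa < \upmu_0/2$ together with the fact that $\varphi(t) = t$ on $[0, \upmu_0/2]$ shows that $w_i(x) = |u(x) - \upsigma|$ whenever $|u(x)-\upsigma| \leq \upmu_0/2$, and hence $B = \{r \in [1/2, \varrho] : \max_{\S^1(r)} w_i^2 > \upkappa^2\}$. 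The strategy is to bound $B$ via a superlevel-set/radial-projection argument on $w_i^2$, combined with the coarea-BV inequality \eqref{coaforme}.

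\medskip

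By \eqref{coaforme} applied to $w_i^2$ on $\D^2(\varrho)$,
\begin{equation*}
\int_0^\infty \mathcal{L}\bigl((w_i^2)^{-1}(s)\bigr)\, ds \leq \frac{4}{\sqrt{\lambda_0}}\, \E_\eps(u, \D^2(\varrho)).
\end{equation*}
The boundary assumption \eqref{kappacite0} together with compactness of $\partial \D^2(\varrho)$ yields $M := \sup_{\partial \D^2(\varrho)} w_i^2 < \upkappa^2$. A mean-value argument on a suitable subinterval of $(M, \upkappa^2)$ combined with Sard's theorem selects a regular value $s_0 \in (M, \upkappa^2)$ for which $(w_i^2)^{-1}(s_0)$ is a smooth $1$-submanifold of $\D^2(\varrho)$ of controlled length. (If $M$ is too close to $\upkappa^2$ to admit a useful mean-value estimate, we first replace $\upkappa$ by a slightly smaller $\upkappa' \in (\sqrt{M}, \upkappa]$ still verifying the standing hypothesis, and use the monotonicity $\mathcal{I}(u, \upkappa') \subset \mathcal{I}(u, \upkappa)$ to pull back the conclusion.)

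\medskip

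Since $s_0 > M$, the superlevel set $E := \{w_i^2 > s_0\}$ has $\overline{E}$ compactly contained in $\D^2(\varrho)$, so $\partial E = (w_i^2)^{-1}(s_0)$ is a disjoint union of smooth Jordan curves. For each connected component $C$ of $E$, write $\pi_r(x) := |x|$ and $\pi_r(\overline{C}) = [a_C, b_C]$. A coarea argument on $\partial C$, using that $\#\bigl(\S^1(r) \cap \partial C\bigr) \geq 2$ for $r \in (a_C, b_C)$ lying in the "transverse" radii (together with the fact that the annular radii $r$ for which $\S^1(r) \subset C$ force full circles $\S^1(r) \subset \partial C$ of length $\geq 2\pi r$ at the endpoints, which more than compensate), yields the bound $\mathcal{L}(\partial C) \geq 2(b_C - a_C)$. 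Summing over components and using $\{w_i^2 > \upkappa^2\} \subset E$,
\begin{equation*}
|B| \leq |\pi_r(E) \cap [1/2, \varrho]| \leq \sum_C (b_C - a_C) \leq \tfrac{1}{2}\, \mathcal{L}\bigl((w_i^2)^{-1}(s_0)\bigr).
\end{equation*}
Combining this with the coarea length estimate and the hypothesis $\upkappa^2 \geq (32\sqrt{\lambda_0})^{-1}\E_\eps(u, \D^2(\varrho))$ yields $|B| \leq 1/16$, which is the required conclusion. The main obstacle is the technical passage from the coarea-length bound on $(w_i^2)^{-1}(s_0)$ to $|\pi_r(E)|$: the Jordan-curve total-variation argument works cleanly for simply connected components, but components with holes or annular fills need to be treated separately via the coarea formula applied to $\pi_r|_{\partial C}$, ensuring that the case-(i) radii (where $\S^1(r)$ is entirely inside $C$) are still accounted for through the full circles they force on $\partial C$.
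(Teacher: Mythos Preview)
Your overall strategy—pick a small-length level set of $w_i^2$ via coarea and a mean-value argument, then control the bad radii by radial projection—is exactly the paper's strategy. The difference lies in how you execute the topological step, and your version has a gap precisely where you flag the ``main obstacle.''

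The paper separates the argument into two clean steps. First, it rules out the set $\mathcal Z$ of radii $r$ for which $\S^1(r)$ lies entirely in the superlevel set: if such an $r$ existed, then combining with the boundary condition \eqref{kappacite0} (which forces $w_i<A_0$ on $\partial\D^2(\varrho)$), the intermediate value theorem produces a connected component of the level set $w_i^{-1}(A_0)$ whose boundary encloses $\D^2(r)$, hence has length $\geq 2\pi r\geq \pi$, contradicting $\mathcal L(w_i^{-1}(A_0))<1/16$. Second, once $\mathcal Z=\emptyset$, every bad radius $r$ has $\S^1(r)$ meeting both the superlevel set and its complement, hence $\S^1(r)\cap w_i^{-1}(A_0)\neq\emptyset$; Fubini then gives directly that the measure of bad radii is at most $\mathcal L(w_i^{-1}(A_0))<1/16$, with no factor $2$ and no component-by-component analysis.

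Your unified Jordan-curve argument tries to prove $\mathcal L(\partial C)\geq 2(b_C-a_C)$ for each component $C$ of $E$, but the ``$\geq 2$ crossings'' count fails exactly when $\S^1(r)\subset C$, i.e.\ on the $\mathcal Z$-type radii. Your proposed patch (``full circles on $\partial C$ more than compensate'') is morally the paper's enclosure argument, but you are invoking it inside a component-by-component sum where it does not fit cleanly: for instance, if $C$ is an annulus $\{r_1<|x|<r_2\}$ with a small disk removed, the intervals $[m_\gamma,M_\gamma]$ of the three boundary curves do not cover $[a_C,b_C]$, so the inequality $\sum_\gamma(M_\gamma-m_\gamma)\geq b_C-a_C$ is false, and your chain breaks at that link. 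The inequality $\mathcal L(\partial C)\geq 2(b_C-a_C)$ may still hold, but not by the route you sketch.

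The fix is simply to adopt the paper's two-step structure: use the enclosure argument once, globally, to kill $\mathcal Z$, and then project with a factor $1$. This dissolves your obstacle entirely and also makes the aimed-for factor $1/2$ in your final display unnecessary.
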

\begin{proof}
We consider the number $\displaystyle{A_0  \in [ \upkappa , 2\upkappa]} $ provided by Lemma \ref{claudio} with the choice $r=\varrho$ and $A=\upkappa$, so that $w^{-1} (A_0)\subset \D^2(\varrho)$ is smooth and 
\begin{equation*}
\mathcal L (w^{-1} (A_0)) \leq \frac{8E_\eps (u, \D^2(\varrho))}{4\sqrt{\lambda_0} \upkappa^2}=\frac{2E_\eps (u, \D^2(\varrho))}{\sqrt{\lambda_0} \upkappa^2}.
 \end{equation*}
If moreover \eqref{camembert} is satisfied, then we have
\begin{equation}
\label{brie}
\mathcal L (w^{-1} (A_0)) < \frac{1}{16}.
\end{equation}
We introduce the auxiliary set 
\begin{equation*}
\left\{
\begin{aligned}
\mathcal J (u, \upkappa) &=\{ r \in [\frac{1}{2}, \varrho],  {\rm \ such \ that \ }
 \vert u_\eps  (\ell) -\upsigma \vert < A_0, \, \forall \ell \in \S^1(r) \},   {\rm \ and \ } \\
 \mathcal Z (u, \upkappa) &=\{ r \in [\frac{1}{2}, \varrho],  {\rm \ such \ that \ }
 \vert u_\eps  (\ell) -\upsigma \vert > A_0, \, \forall \ell \in \S^1(r) \}.
   \end{aligned}
 \right.
  \end{equation*}
We first show that 
\begin{equation}
\label{euphrate0}
\displaystyle{\mathcal Z (u, \upkappa)= \emptyset}. 
\end{equation}
  Indeed,  assume by contradiction that \eqref{euphrate0} does not hold, so that there exists some  radius 
 $\frac 12 \leq r_0\leq \varrho$ in  $\mathcal Z (u, \upkappa)$.  In view of the definition of  $\mathcal Z(u, \upkappa)$, we have  therefore
 \begin{equation}
 \label{inte}
 \vert u_\eps-\upsigma \vert >A_0  {\rm \ on \ } \partial \D^2(r_0).
 \end{equation}
 On the  other hand,in view of assumption 
 \eqref{kappacite0},  we have $\vert u_\eps-\upsigma \vert <\upkappa <A_0$ on $\partial \D^2(\varrho)$.   Combining \eqref{inte} and \eqref{kappacite0}, it follows from the intermediate value theorem that there exists  some smooth domain $V$ such that  $u(x)=A_0$ for $ x \in \partial V$, so that $\partial V\subset w^{-1}(A_0)$, and hence is smooth,  and  such that 
  \begin{equation}
  \label{uniqlo}
  \D^2(r_0)\subset V\subset \D^2(\varrho).
  \end{equation}
    We deduce  from \eqref{uniqlo}    that, since by assumption $1\slash 2 \leq r \leq \varrho$,  
 $$
    \partial V\subset w^{-1}(A_0)               {\rm \ and \ } \mathcal L(\partial V) \geq 2\pi r \geq \pi , 
 $$
Hence, we obtain, in view of \eqref{uniqlo},   
$$
\mathcal L(w^{-1}(A_0) ) \geq \pi. 
 $$
 This however contradicts  inequality \eqref{brie} and hence establishes \eqref{euphrate0}. \\
 
 We next show that 
  \begin{equation}
 \label{carensac00}
  \vert \mathcal J (u, \upkappa) \vert  \geq 
  \varrho- \frac{9}{16}, 
  \end{equation}
For that purpose,   consider an arbitrary radius 
 $\frac 12 \leq r\leq \varrho$  such that $r \not \in  \mathcal   J (u, \upkappa)$ (see Figure \ref{coaire1}). It follows from  the definition of $\mathcal J(u, \upkappa)$ that there exists some $\ell_r \in \S^1(r)$ such that  $\vert u_\eps(\ell_r) -\upsigma \vert \geq  A_0$. We deduce  from identity  \eqref{euphrate0} and the intermediate value theorem that 
  $$ 
 w^{-1}(A_0)   \cap \S^1(r) \neq \emptyset,  \, \,  \forall r \not \in \mathcal J(u, \upkappa).
$$
This relation implies,  by Fubini's theorem,  that 
$$
\mathcal  L ( w^{-1}(A_0) ) \geq \left (\varrho-\frac 12\right)- \vert \mathcal J (u, \upkappa)\vert, 
 $$
 so that 
 \begin{equation}
 \label{carensac0}
  \vert \mathcal J (u, \upkappa) \vert \geq  \left (\varrho-\frac 12\right)-\mathcal  L ( w^{-1}(A_0) ) \geq 
  \varrho- \frac{9}{16}, 
  \end{equation}
where we made use of  estimate \eqref{brie}. This establishes \eqref{carensac00}. 
Since  $0< \upkappa \leq  A_0 $ by construction, we have 
$$\mathcal J (u, \upkappa) \subset \mathcal I (u, \upkappa),  {\rm so \ that \ }
\vert \mathcal J (u, \upkappa) \vert \leq \vert \mathcal I (u, \upkappa) \vert.
$$  
 Combining with inequality \eqref{carensac00}, we obtain   the desired inequality \eqref{clamart}. 
 \end{proof}
 
 \begin{figure}[h]
\centering
\includegraphics[height=7.5cm]{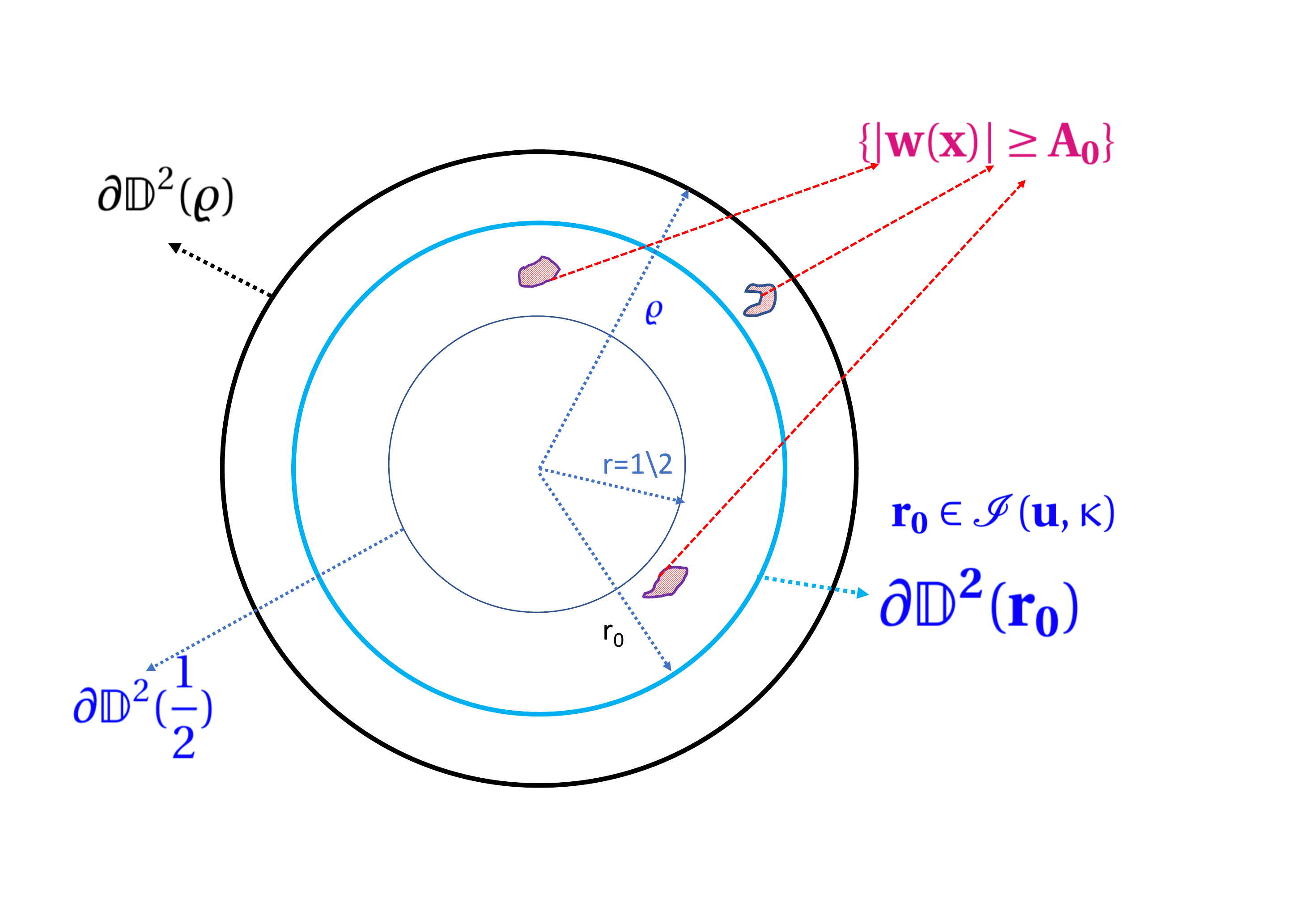}
\caption{  {\it  The circle $\partial \D^2(r_0)$ does not interset the level set $\mathcal L ( w^{-1}(A_0))$.}}
\label{coaire1}
\end{figure}
 \subsection{Revisiting the control of the energy on concentric circles}
Using  the results of  the previous section, we   work out variants of the Lemma \ref{moyenne}. For that purpose, given a radius $\varrho \in [\frac{3}{4}, 1]$, a number 
  $\displaystyle{0<\upkappa \leq \frac{\upmu_0}{2}}$, a smooth map $u:\overline{ \D^2(\varrho)} \to \R^k$   and an element $\upsigma \in \Sigma$ such that \eqref{kappacite0}  holds, we introduce the set 
  \begin{equation}
  \label{subsub}
  \Upsilon_\upsigma(u,  \varrho,  \upkappa)=\left \{ x\in \D^2(\varrho),{ \rm \ such \ that \ } \vert u(x)-\upsigma \vert \rm  \leq \upkappa\right\}.
  \end{equation}
The following result is a major tool  in the proof of our main results:

     \begin{lemma}  
     \label{remoyen}
     Let $u, \varrho$ and $\upkappa$ be as above  and assume that the bound \eqref{camembert} holds.  Assume that $\varrho \geq \frac 34$. There exists a radius $\displaystyle{ \uptau_\eps \in[\frac{5}{8}, \varrho]}$  such that $\S^1(\uptau_\eps) \subset \Upsilon_\upsigma(u,  \varrho,  \upkappa)$, i.e. 
     $$
     \vert u(\ell)-\upsigma) \vert \leq \upkappa,   {\rm  \ for \ any \ } \ell \in \S^1(\uptau_\eps), 
     $$
      and such that 
         $$ \int_{\S^1(\uptau_\eps)}e_\eps(u){\rm  d } \ell  \leq   \frac{1}{\varrho-\frac{11}{16}} \, \E_\eps(u, \Upsilon_\upsigma(u,  \varrho,  \upkappa)). $$
     \end{lemma}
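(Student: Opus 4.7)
The plan is to combine Proposition \ref{jarre}, which produces a large measure of radii on which $u$ stays close to $\upsigma$, with a standard mean-value (Fubini) argument for the energy restricted to the subdomain $\Upsilon_\upsigma(u,\varrho,\upkappa)$. The assumption $\upkappa^{2}\geq \tfrac{1}{32\sqrt{\lambda_0}}\,\E_\eps(u,\D^{2}(\varrho))$ is precisely \eqref{camembert}, so Proposition \ref{jarre} is applicable under the present hypotheses and yields
\begin{equation*}
|\mathcal I(u,\upkappa)|\ \geq\ \varrho-\frac{9}{16},\qquad \mathcal I(u,\upkappa)\subset\left[\tfrac12,\varrho\right].
\end{equation*}

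First I would restrict attention to radii in $[5/8,\varrho]$. Since $[1/2,5/8]$ has length $1/8=2/16$, one has the elementary bound
\begin{equation*}
\bigl|\mathcal I(u,\upkappa)\cap[5/8,\varrho]\bigr|\ \geq\ |\mathcal I(u,\upkappa)|-\tfrac{1}{8}\ \geq\ \varrho-\tfrac{9}{16}-\tfrac{2}{16}=\varrho-\tfrac{11}{16},
\end{equation*}
which is strictly positive because $\varrho\geq 3/4$. For every $r$ in this set, the definition \eqref{sunyu} of $\mathcal I(u,\upkappa)$ says exactly that $|u(\ell)-\upsigma|\leq\upkappa$ for every $\ell\in\S^{1}(r)$, i.e.\ $\S^{1}(r)\subset \Upsilon_\upsigma(u,\varrho,\upkappa)$.

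Next I would apply Fubini's theorem in polar coordinates. Since the circles $\S^{1}(r)$ for $r\in\mathcal I(u,\upkappa)\cap[5/8,\varrho]$ are disjoint and all contained in $\Upsilon_\upsigma(u,\varrho,\upkappa)$, we get
\begin{equation*}
\int_{\mathcal I(u,\upkappa)\cap[5/8,\varrho]}\!\left(\int_{\S^{1}(r)}e_\eps(u)\,{\rm d}\ell\right){\rm d}r\ \leq\ \int_{\Upsilon_\upsigma(u,\varrho,\upkappa)}e_\eps(u)\,{\rm d}x=\E_\eps\bigl(u,\Upsilon_\upsigma(u,\varrho,\upkappa)\bigr).
\end{equation*}
A standard mean-value argument now produces some $\uptau_\eps\in \mathcal I(u,\upkappa)\cap[5/8,\varrho]$ at which
\begin{equation*}
\int_{\S^{1}(\uptau_\eps)} e_\eps(u)\,{\rm d}\ell\ \leq\ \frac{1}{|\mathcal I(u,\upkappa)\cap[5/8,\varrho]|}\,\E_\eps\bigl(u,\Upsilon_\upsigma(u,\varrho,\upkappa)\bigr)\ \leq\ \frac{1}{\varrho-\frac{11}{16}}\,\E_\eps\bigl(u,\Upsilon_\upsigma(u,\varrho,\upkappa)\bigr),
\end{equation*}
and by construction $\S^{1}(\uptau_\eps)\subset\Upsilon_\upsigma(u,\varrho,\upkappa)$, i.e.\ $|u(\ell)-\upsigma|\leq\upkappa$ for every $\ell\in\S^{1}(\uptau_\eps)$.

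There is no real obstacle here: Proposition \ref{jarre} does all the nontrivial work (it is the step where the coarea bound \eqref{coaforme} is converted into a large measure of ``good'' radii on which $u$ stays in a small neighborhood of $\upsigma$). The only point that requires a bit of care is the elementary bookkeeping needed to pass from the interval $[1/2,\varrho]$ appearing in Proposition \ref{jarre} to the interval $[5/8,\varrho]$ demanded here, which is the source of the factor $\varrho-\tfrac{11}{16}$ (rather than $\varrho-\tfrac{9}{16}$) on the right-hand side.
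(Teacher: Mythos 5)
Your proof is correct and follows essentially the same route as the paper: invoke Proposition \ref{jarre} under \eqref{camembert} to get $|\mathcal I(u,\upkappa)|\geq\varrho-\tfrac{9}{16}$, trim to $[\tfrac58,\varrho]$ losing at most $\tfrac18$, then conclude by Fubini in polar coordinates plus a mean-value argument. The only cosmetic difference is that the paper integrates over all of $\mathcal I(u,\upkappa)$ before applying the mean value on the trimmed set, while you trim first; both give the same bound.
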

     
\begin{proof} In view of definition  \eqref{subsub} of $  \Upsilon_\upsigma(u,  \varrho,  \upkappa)$ and the definition \eqref{sunyu} of $\mathcal I(u, \upkappa)$, we have 
 $ \S^1(r) \subset  \Upsilon_\upsigma(u,  \varrho,  \upkappa) $ for  any $r \in \mathcal I (u, \upkappa)$, so that, by Fubini's theorem, we have 
$$ 
\int_{\mathcal I (u, \upkappa)}  \left( \int_{\S^1(\varrho)}e_\eps(u_\eps){\rm d} \ell \right) {\rm d}\varrho \leq 
\int_{  \Upsilon_\upsigma(u,  \varrho,  \upkappa)} e_\eps(u_\eps){\rm d}x.
$$ 
Since we assume that the bound \eqref{camembert} holds, it follows from Proposition \ref{jarre} that 
$$ 
\vert \mathcal I (u, \upkappa) \vert \geq \varrho-\frac{9}{16} {\rm \ and  \ hence \ }
\vert \mathcal I (u, \upkappa) \cap   [\frac 58, \varrho] \vert \geq \varrho-\frac{11}{16}. 
$$
 Hence by a mean value argument that there exists some radius $\uptau_\eps   \in [\frac 58, \varrho] \cap \mathcal I_\eps$ such that 
 $$
  \int_{\S^1(\uptau_\eps )}e_\eps(u_\eps){\rm d} \ell \leq  \frac{1}{\varrho-\frac{11}{16}}\int_{ \Upsilon_\upsigma(u,  \varrho,  \upkappa)} e_\eps(u_\eps){\rm d}x,
 $$
 which is precisely  the conclusion. 
\end{proof}     

\noindent
{\bf Comment.}  The result above will be used in connection with the estimates for $u$ when $u$ is the solution to \eqref{elipes}. Thanks to the equation, we will be able to estimate the growth of $\E_\eps(u, \Upsilon_\upsigma(u,  \varrho,  \upkappa))$ with $\upkappa$. We will choose $\upkappa$ as small as possible to satify \eqref{camembert}, which amounts to choose of the magnitude of $\sqrt{\E_\eps(u)}$, as we will see in \eqref{cabrovski}.
\subsection{Gradient estimates on level sets}
  Given a arbitrary   smooth function $\varphi : \Omega \to \R$, where   $\Omega$  denotes a denote of $\R^N$, and an arbitrary integrable  function $f: \Omega \to \R$,  the coarea formula  \eqref{coarea} generalized as 
  \begin{equation}
     \label{coarea2}
     \int_{\R} \left (\int_{\varphi^{-1}(s)}  f(\ell){\rm d}\ell \right){\rm d} s=     \int_\Omega  \vert  \nabla  \varphi  (x)\vert  f(x){\rm d}x.
     \end{equation}
Given a smooth function $u: \Omega \to \R^k$,  we specify identity \eqref{coarea2}  with choices $\varphi =\vert u \vert$ and $f=\vert \nabla u \vert$: We are led to  the identity
 \begin{equation}
 \begin{aligned}
     \label{coarea3}
     \int_{\R} \left (\int_{\vert u \vert ^{-1}(s)} \vert \nabla u \vert (\ell){\rm d}\ell\right){\rm d} s&=     \int_\Omega  \vert  \nabla  u  (x)\vert. 
     \vert \nabla \vert u \vert  \vert {\rm d}x,  \\
     &\leq \int_\Omega  \vert  \nabla  u  (x)\vert^2{\rm d} x. 
    \end{aligned}
     \end{equation}
     We specify furthermore  this formula,  as in Subsection \ref{detroit},   for  a given map  $u$ defined on a disk $\D^2(r)$ and $w_i$ being  the corresponding  maps $w_i$ defined  on $\D^2(r)$ by formula  \eqref{doublevi}.  We introduce the subdomain 
       \begin{equation}
 \label{nablalala}
 \begin{aligned}
 \Theta (u, r)&=\left\{  x \in \D^2(r)  {\rm \ such \ that \ } u(x) \in \D^2(r) \setminus 
 \underset{i=1}{\overset {q} \cup} \B^k (\upsigma_i, \frac{\upmu_0}{2})
   \right \}   \\
   &=u^{-1}\left(\D^2(r) \setminus \underset{i=1}{\overset {q} \cup} \B^k(\upsigma_i, \frac{\upmu_0}{2})\right) 
   =  \underset{i=1}{\overset {q} \cup}  \Upsilon_{\upsigma_i} (u, r, \frac{\upmu_0}{2}).\\
  \end{aligned}
 \end{equation}
     We have: 
     
     \begin{lemma}
     \label{claudius}  
     Let $u$  be as above. 
  There exists   some  number   
  $\displaystyle{  \tilde \upmu  \in  [\frac{\upmu_0}{2},\upmu_0] }$, where $\upmu_0$ denotes the constant introduced Paragraph \ref{potentiel},  such that 
  \begin{equation}
  \label{claudius2}
\underset{i=1}{\overset{q}  \sum }  \int_{w_i^{-1}(\tilde \upmu)} \vert \nabla u \vert (\ell){\rm d}\ell  \leq  
  \frac{2}{\upmu_0} \int_{\Theta(u, r)} \vert \nabla u \vert^2 \leq \frac{4}{\upmu_0 \eps} \E_\eps(u, \Theta (u, r)).
  \end{equation}
        \end {lemma}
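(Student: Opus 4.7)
\medskip
\noindent
\textbf{Proof proposal for Lemma \ref{claudius}.}

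The plan is to apply the coarea formula \eqref{coarea2} separately to each of the scalar functions $w_i$, with test integrand $f = |\nabla u|$, then use the pairwise disjointness of the supports of $|\nabla w_i|$ to sum cleanly, and finally pick the level $\tilde\upmu$ by a mean-value argument on the interval $[\upmu_0/2,\upmu_0]$. The second inequality in \eqref{claudius2} is then just the pointwise bound $|\nabla u|^2 \leq 2\eps^{-1} e_\eps(u)$ which is immediate from the definition of $e_\eps$.

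For each $i \in \{1,\ldots,q\}$, formula \eqref{coarea2} applied with $\varphi = w_i$ and $f = |\nabla u|$ yields
\begin{equation*}
\int_{\R} \left(\int_{w_i^{-1}(s)} |\nabla u|(\ell)\, {\rm d}\ell\right) {\rm d}s
= \int_{\D^2(r)} |\nabla w_i(x)| \cdot |\nabla u(x)|\, {\rm d}x.
\end{equation*}
Property \eqref{monster} gives $|\nabla w_i| \leq |\nabla u|$ and $\nabla w_i \equiv 0$ on $\{|u-\upsigma_i|\geq \upmu_0\}$, so the integrand on the right is bounded by $|\nabla u|^2$ and supported in $\{|u-\upsigma_i|<\upmu_0\}$. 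Assumption \eqref{kiv} forces $|\upsigma_i-\upsigma_j|\geq 4\upmu_0$ for $i\neq j$, hence by the triangle inequality the sets $\{|u-\upsigma_i|<\upmu_0\}$ are pairwise disjoint. Their union is contained in $\Theta(u,r)$ (interpreting \eqref{nablalala} as the neck region where the values of $u$ lie near, but not too near, some $\upsigma_i$). Summing over $i$, and restricting the outer $s$-integration to $[\upmu_0/2,\upmu_0]$, we obtain
\begin{equation*}
\sum_{i=1}^{q} \int_{\upmu_0/2}^{\upmu_0} \left(\int_{w_i^{-1}(s)} |\nabla u|(\ell)\, {\rm d}\ell\right) {\rm d}s
\leq \int_{\Theta(u,r)} |\nabla u|^2\, {\rm d}x.
\end{equation*}

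Since the outer interval has length $\upmu_0/2$, the mean-value principle produces some $\tilde\upmu \in [\upmu_0/2,\upmu_0]$ at which the first (equivalently, bracketed) inequality of \eqref{claudius2} holds:
\begin{equation*}
\sum_{i=1}^{q} \int_{w_i^{-1}(\tilde\upmu)} |\nabla u|(\ell)\, {\rm d}\ell
\leq \frac{2}{\upmu_0} \int_{\Theta(u,r)} |\nabla u|^2\, {\rm d}x.
\end{equation*}
For the second inequality in \eqref{claudius2}, the definition of $e_\eps$ directly yields $|\nabla u|^2 \leq 2\eps^{-1} e_\eps(u)$, and inserting this bound produces the factor $4/(\upmu_0\eps)$ multiplying $\E_\eps(u,\Theta(u,r))$.

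The essentially routine character of the argument means there is no serious obstacle: the only point requiring minor care is matching the domain $\Theta(u,r)$ as written in \eqref{nablalala} with the disjoint union of the supports of $|\nabla w_i|$, which amounts to observing that, on account of \eqref{kiv} and the cut-off defining $w_i$, the contribution of each $i$ localizes in one connected component of the neck, and Sard's theorem handles the (negligible) set of critical values of $w_i$.
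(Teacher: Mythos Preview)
Your approach is the same as the paper's: coarea formula for each $w_i$ with integrand $|\nabla u|$, restriction of the level parameter to $[\upmu_0/2,\upmu_0]$, disjointness of the relevant pieces, and a mean-value choice of $\tilde\upmu$. The second inequality is handled identically.

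There is one inaccuracy in your write-up that you should fix. The inclusion $\bigcup_i\{|u-\upsigma_i|<\upmu_0\}\subset\Theta(u,r)$ is false as stated: any point with $|u-\upsigma_i|<\upmu_0/2$ lies in the left-hand side but not in $\Theta(u,r)$, since by \eqref{nablalala} the latter is the set where $|u-\upsigma_j|\geq\upmu_0/2$ for \emph{every} $j$. What is true, and what you in fact need, is that once you restrict the outer integration to $s\in[\upmu_0/2,\upmu_0]$, the coarea identity localizes the right-hand side to $w_i^{-1}([\upmu_0/2,\upmu_0])$; combined with $\nabla w_i=0$ on $\{|u-\upsigma_i|\geq\upmu_0\}$ this gives an effective domain contained in $\{\upmu_0/2\leq|u-\upsigma_i|<\upmu_0\}$. \emph{These} annular sets are pairwise disjoint (by \eqref{kiv}) and each is contained in $\Theta(u,r)$. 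Your parenthetical remark suggests you have this in mind, but the displayed inclusion should be corrected.
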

        
        \begin{proof}   It follows from identity \eqref{coarea3}, applied to $u-\upsigma_i$,  that 
     \begin{equation}
    \begin{aligned} 
     \underset{i=1}{\overset{q}  \sum }   \int_{\frac{\upmu_0}{2}}^{\upmu_0 }    \left( \int_{w_i^{-1}(s)} \vert \nabla u \vert (\ell){\rm d}\ell\right){\rm d} s&=   \int_{\frac{\upmu_0}{2}}^{\upmu_0 }  \underset{i=1}{\overset{q}  \sum }   \left( \int_{w_i^{-1}(s)} \vert \nabla u \vert (\ell){\rm d}\ell\right){\rm d} s \\
&\leq \int_{\Theta(u, r)} \vert \nabla u \vert^2  {\rm d} x. 
\end{aligned}
     \end{equation}
     We conclude once more by a mean-value argument.
        \end{proof}
        
  \bigskip
\begin{center}{ \Large \bf
Part II : PDE Analysis }
\end{center}
\addcontentsline{toc}{section}{Part II : PDE Analysis}

        \section{Some  properties of the PDE}
        \label{pde}
       In this section, we recall first several classical  properties of the solutions to the equation \eqref{elipes}.  We then provide some energy and potential estimates (see e.g. \cite{BBH}).
       
       \subsection{Uniform bound  through the maximum principle}
       The following uniform upper bound is standard:
       
       \begin{proposition}  
       \label{princours0}
       Let $u_\eps\in H^1(\Omega)$  be a solution of \eqref{elipes}. Then we have the  uniform bound bound, for $x\in \Omega$
\begin{equation}
\label{princours}
\vert u(x) \vert^2 \leq  \frac{4 {\rm C}_{\rm unf}}{{\rm dist}(x, \partial \Omega)} {\E}_\eps(u_\eps) +2 \sup \{ \vert \sigma \vert^2, \upsigma \in \Sigma \}.
\end{equation}
       \end{proposition}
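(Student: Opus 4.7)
The idea is to combine the one-dimensional uniform estimate of Lemma \ref{valli} with the maximum principle for $\phi:=|u_\eps|^2$, using the growth condition $(\text{H}_3)$ to make $\phi$ subharmonic in the regime of large $|u_\eps|$.

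Fix $x_0\in\Omega$ and set $d=\mathrm{dist}(x_0,\partial\Omega)$; assume $\eps\leq d/2$ (the opposite regime reduces to a trivial bound). The first step is to pick a "good" circle $\S^1(x_0,r)$ around $x_0$ on which $|u_\eps|$ is controlled. Applying the mean-value argument of Lemma \ref{moyenne} (centered at $x_0$ instead of at the origin) to the annulus $d/2\leq \rho\leq d$, one obtains some radius $r\in[d/2,d]$ such that
\[
\int_{\S^1(x_0,r)}e_\eps(u_\eps)\,\mathrm{d}\ell \;\leq\; \frac{2}{d}\,\E_\eps(u_\eps).
\]
Since $r\geq d/2\geq\eps$, the one-dimensional bound \eqref{bornunif} of Lemma \ref{valli} (translated so as to apply on $\S^1(x_0,r)$) yields some $\upsigma_{\mathrm{main}}\in\Sigma$ with
\[
|u_\eps(\ell)-\upsigma_{\mathrm{main}}|\;\leq\; \mathrm{C}_{\mathrm{unf}}\sqrt{\tfrac{2}{d}\E_\eps(u_\eps)}\qquad\text{for every }\ell\in\S^1(x_0,r),
\]
and the elementary inequality $|a|^2\leq 2|a-\upsigma|^2+2|\upsigma|^2$ gives the desired bound for $\phi$ on the circle:
\[
\phi(\ell) \;\leq\; \frac{4\,\mathrm{C}_{\mathrm{unf}}^2}{d}\,\E_\eps(u_\eps)+2\sup\{|\upsigma|^2,\,\upsigma\in\Sigma\}\qquad \forall\ell\in\S^1(x_0,r).
\]

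The second step propagates this boundary bound to the interior of $\D^2(x_0,r)$. A direct computation using equation \eqref{elipes} gives
\[
\Delta\phi \;=\; 2|\nabla u_\eps|^2+\frac{2}{\eps^{2}}\,u_\eps\cdot\nabla V(u_\eps),
\]
and by $(\text{H}_3)$ we have $u_\eps\cdot\nabla V(u_\eps)\geq\upalpha_\infty|u_\eps|^2\geq 0$ wherever $|u_\eps|\geq R_\infty$. Hence $\phi$ is subharmonic on the open set $\{|u_\eps|>R_\infty\}$. Let $M$ denote the boundary bound on the right-hand side above (increased if necessary to exceed $R_\infty^2$, a constant depending only on $V$). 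Arguing by contradiction, suppose the open set
\[
A=\{x\in\D^2(x_0,r):\phi(x)>M\}
\]
is non-empty. Then $A\subset\{|u_\eps|>R_\infty\}$, so $\phi$ is subharmonic on $A$; on $\partial A\cap\D^2(x_0,r)$ we have $\phi=M$ by continuity, while on $\partial A\cap\S^1(x_0,r)$ we have $\phi\leq M$ by the first step. The weak maximum principle applied to each connected component of $A$ forces $\phi\leq M$ on $A$, contradicting the definition of $A$. Thus $A=\emptyset$, and in particular $\phi(x_0)\leq M$, which is the asserted estimate.

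The only genuine subtlety is absorbing the $R_\infty^2$ contribution into the stated form of the bound; since $R_\infty$ is an absolute constant depending only on $V$, this is a cosmetic adjustment of the constants $\mathrm{C}_{\mathrm{unf}}$ and $\sup_{\Sigma}|\upsigma|^2$. The only part of the argument that uses the PDE is the subharmonicity of $\phi$ on $\{|u_\eps|>R_\infty\}$; everything else is a straightforward application of Lemmas \ref{valli} and \ref{moyenne}.
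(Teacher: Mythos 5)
Your proof is correct and follows essentially the same route as the paper's: a good circle selected by the mean-value argument of Lemma \ref{moyenne}, the one-dimensional estimate of Lemma \ref{valli} to control $|u_\eps|^2$ on that circle, and the maximum principle to propagate the bound to the center. The only divergence is in the last step, where the paper integrates $(\text{H}_3)$ into the global inequality $y\cdot\nabla V(y)\geq \upalpha_\infty|y|^2-C_\infty$ and applies the maximum principle to $W_\eps=|u_\eps|^2-C_\infty/\upalpha_\infty$ on the whole disc, while you use subharmonicity of $|u_\eps|^2$ only on $\{|u_\eps|>R_\infty\}$ together with a superlevel-set argument; this is equally valid and, as you yourself note, only changes the $V$-dependent constants in \eqref{princours}.
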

       
       \begin{proof} 
       We argue as in \cite{BBH0}. We compute, using equation \eqref{elipes}
       \begin{equation}
       \label{multiplie}
       \begin{aligned}
       \Delta \vert u_\eps \vert^2&=u_\eps\cdot \Delta u_\eps+ \vert \nabla u_\eps \vert^2   
       =\eps^{-2} u_\eps\cdot \nabla  V_u( u_\eps)+ \vert \nabla u_\eps \vert^2  \\
     &  \geq \eps^{-2} u_\eps\cdot \nabla  V_u( u_\eps),   {\rm \ on \ } \Omega. 
       \end{aligned}
       \end{equation}
     On the other hand, it follows from assumption \eqref{condinfty}, see \eqref{cahors},  that there exists some constant $C_\infty \geq 0$ such that 
     \begin{equation}
     \label{convexita}
     y.\nabla  V( y )\geq \upalpha_\infty \vert y \vert^2- C_\infty  {\rm \ for \ any \ } y \in \R^N.
     \end{equation}
     Hence, combining \eqref{multiplie} and \eqref{convexita} we obtain the  inequality
     \begin{equation*}
    -\Delta \vert u_\eps \vert^2  +\upalpha_\infty \eps^{-2} \left( \vert u_\eps\vert^2 -\frac{ C_\infty}{\upalpha_\infty}\right) \leq 0 {\rm \ on \ } \Omega.
     \end{equation*}
     We introduce the function$W_\eps=\vert u_\eps \vert^2-\frac{C_\infty}{\upalpha_\infty}$. We are led to  the differential  inequality for $W_\eps$ 
       \begin{equation}
       \label{prince}
    -\Delta W_\eps   +\upalpha_\infty \eps^{-2} W_\eps \leq 0  {\rm \ on \ } \Omega. 
     \end{equation}
     Let $x\in \Omega$ and set $R_x={\rm dist}( x, \partial \Omega)$, so that $\D^2(x, R_x)\subset \Omega$. It follows from Lemma \ref{moyenne} and inequality \eqref{bornuni2} that there exists some radius $\displaystyle{\uptau \in [\frac{R_x}{2}, R_x]}$ and some element $\upsigma \in \Sigma$ such that 
      \begin{equation*}
      \label{bornunivert}
     \vert u_\eps(\ell)-\upsigma \vert \leq \frac{\sqrt{2} {\rm C}_{\rm unf}}{\sqrt{{R_x}}} \sqrt{{\E}_\eps(u_\eps, \D^2(R_x)})
    \leq  \frac{\sqrt{2} {\rm C}_{\rm unf}}{\sqrt{{R_x}}} \sqrt{{\E}_\eps(u_\eps)}), 
      \  \   {\rm \ for \ all \ } \ell \in 
     \S^1(\uptau).
      \end{equation*}
      Hence, we deduce  from the previous inequality  that 
      \begin{equation}
      \label{vulcain}
       W_\eps (\ell)  = \vert u_\eps (\ell) \vert^2 -\frac{C_\infty}{\upalpha_\infty}\leq  \frac{4 {\rm C}_{\rm unf}}{{R_x}} {\E}_\eps(u_\eps) +
      2\sup \{ \vert \sigma \vert^2\}-\frac{C_\infty}{\upalpha_\infty},
       \   { \rm \ for \ all \ } \ell \in    \S^1(x, \uptau),
    \end{equation}
where $\S^1(x, \uptau)=\{ \ell \in \R^2, \vert \ell-x\vert =\uptau\}$. Since $W_\eps$ satisfies inequality    \eqref{prince},  we may apply the maximum principle to assert that  
\begin{equation}
\label{balkan0}
W_\eps(y) \leq \sup\{ W_\eps (\ell), \ell \in \S^1(\uptau)\}  {\rm \ for \ } y \in \D^2(x, \uptau), 
   \end{equation} 
   so that, combining \eqref{vulcain} and \eqref{balkan0} and the definition of $R_x$, we obtain, 
   \begin{equation*}
    W_\eps(y)  \leq  \frac{4 {\rm C}_{\rm unf}}{{\rm dist}(x, \partial \Omega)} {\E}_\eps(u_\eps) +2 \sup \{ \vert \sigma \vert^2\}-\frac{c_\infty}{\upalpha_\infty}  \   { \rm \ for \ all \ } y \in    \D^2(x, \frac{R}{2}) 
  \end{equation*}
   Choosing $y=x$,    the conclusion follows.
       \end{proof}
       
       \subsection{Regularity and gradient bounds}       
    The next  result is  a  standard  a consequence of the smoothness of the potential, the regularity theory for the Laplacian and the maximum principle. 
    
    \begin{proposition}
    \label{classic}  Let $u_\eps\in H^1(\Omega)$  be a solution of \eqref{elipes} and $\delta>0$. Set $\mathcal O_\delta=\{x \in \Omega, {\rm dist} (x, \partial \Omega)\geq \delta\}$. Then $u_\eps$ is smooth on $\Omega$  and there exists a constant 
    $C_{\rm gd}\left(\Vert u \Vert_{L^\infty(\mathcal O_{\delta\slash 2})}, \delta\right)$, depending only on $V$, 
    $\Vert u \Vert_{L^\infty(\mathcal O_{\delta\slash2})}$ and $\delta$ such that
     \begin{equation}
   \label{classicic0}
 \vert    \nabla u_\eps  \vert (x) \leq \frac{C_{\rm gd}\left(\Vert u \Vert_{L^\infty(\mathcal O_\delta)}, \delta \right)}{\eps}, {\rm \ if \ } {\rm dist} (x, \partial \Omega)\geq \delta.
 \end{equation}
    \end{proposition}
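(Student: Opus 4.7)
The plan is to establish Proposition \ref{classic} by the classical scaling argument for semilinear elliptic equations, splitting smoothness and the gradient bound. The main point is that, after blowing up at scale $\eps$, equation \eqref{elipes} becomes the $\eps$-independent system $-\Delta v = -\nabla_u V(v)$ to which interior elliptic estimates apply directly.

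First I would establish smoothness by a standard bootstrap. Since $u_\eps \in H^1(\Omega)$ satisfies \eqref{elipes}, and since by Proposition \ref{princours0} $u_\eps$ is locally bounded in $\Omega$, the composition $\nabla_u V(u_\eps)$ is locally bounded on $\Omega$ (recall $V \in C^\infty(\R^k)$). Thus $\Delta u_\eps \in L^\infty_{\mathrm{loc}}(\Omega)$, and $W^{2,p}_{\mathrm{loc}}$-regularity theory yields $u_\eps \in C^{1,\alpha}_{\mathrm{loc}}(\Omega)$ for every $\alpha \in (0,1)$. Iterating through Schauder estimates then gives $u_\eps \in C^{k,\alpha}_{\mathrm{loc}}(\Omega)$ for every $k$, hence $u_\eps \in C^\infty(\Omega)$.

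For the gradient estimate, fix $x_0 \in \mathcal O_\delta$. Consider first the regime $\eps \leq \delta/8$. Define the rescaled map
\begin{equation*}
v(y) = u_\eps(x_0 + \eps y), \qquad y \in \D^2(0, R), \quad R = \frac{\delta}{4\eps} \geq 2,
\end{equation*}
which satisfies the $\eps$-free equation $-\Delta v = -\nabla_u V(v)$ on $\D^2(0,R)$. Moreover, by construction, $\D^2(x_0, \eps R) \subset \mathcal O_{\delta/2}$, so $\|v\|_{L^\infty(\D^2(0,R))} \leq M := \|u_\eps\|_{L^\infty(\mathcal O_{\delta/2})}$. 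Since $v$ is bounded and $\nabla_u V$ is smooth, the right-hand side of the equation for $v$ is bounded by a constant $C(M,V)$ on $\D^2(0,2)$. Interior $W^{2,p}$-estimates for the Laplacian, followed by Sobolev embedding (or Schauder's estimates applied to the $C^{0,\alpha}$ right-hand side obtained from $v \in C^{0,\alpha}$), yield
\begin{equation*}
|\nabla v|(0) \leq C_1(M,V),
\end{equation*}
for some constant depending only on $M$ and $V$. Scaling back gives $|\nabla u_\eps|(x_0) = \eps^{-1}|\nabla v|(0) \leq C_1(M,V)/\eps$, which is the desired bound.

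In the complementary regime $\delta/8 < \eps \leq 1$, the estimate is essentially trivial: $1/\eps$ is bounded below by $1/1$ and above by $8/\delta$, so any fixed $L^\infty$ bound on $\nabla u_\eps$ over $\mathcal O_\delta$ — obtained by the same interior Schauder argument applied directly to \eqref{elipes} on the fixed disk $\D^2(x_0, \delta/2)$, where the right-hand side is controlled by $\eps^{-2}\sup_{|y|\leq M}|\nabla V(y)| \leq 64 \delta^{-2}\sup_{|y|\leq M}|\nabla V(y)|$ — is of the form $C_2(M,V,\delta) \leq C_2(M,V,\delta) \cdot 8/(\delta \eps)$. Absorbing these constants, one obtains a single constant $C_{\rm gd}(\|u_\eps\|_{L^\infty(\mathcal O_{\delta/2})}, \delta)$ for which \eqref{classicic0} holds uniformly in $0 < \eps \leq 1$. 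There is no genuine obstacle here; the only mild care needed is in tracking how the estimates depend on the two regimes $\eps \lesssim \delta$ and $\eps \gtrsim \delta$ so that a single constant of the claimed form suffices.
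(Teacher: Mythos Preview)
Your proof is correct. Both your argument and the paper's establish the same estimate, but by different routes.

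The paper does not rescale. Instead, it invokes directly a pointwise gradient estimate from \cite{BBH0} (Lemma~A.1 there), which states that any solution of $-\Delta v = f$ on a domain $\mathcal U$ satisfies
\[
|\nabla v|^2(x) \leq C\left(\|f\|_{L^\infty(\mathcal U)}\|v\|_{L^\infty(\mathcal U)} + \frac{1}{\mathrm{dist}(x,\partial\mathcal U)^2}\|v\|_{L^\infty(\mathcal U)}^2\right).
\]
Applying this with $v=u_\eps$, $f=\eps^{-2}\nabla_u V(u_\eps)$ and $\mathcal U=\mathcal O_{\delta/2}$ gives \eqref{classicic0} in one stroke, with no need to distinguish between the regimes $\eps\lesssim\delta$ and $\eps\gtrsim\delta$.

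Your scaling argument is the more textbook approach and is perhaps more transparent for a reader who has not seen the BBH lemma; it also makes explicit the mechanism (the rescaled equation is $\eps$-free). The paper's approach is shorter and avoids the case split, at the cost of quoting a black-box estimate. Both are standard and equally acceptable here.
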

   
 \begin{proof}  Estimate \eqref{classicic0} is a  consequence of Lemma A.1 of \cite{BBH0}. It  asserts that, if  $v$ is a solution on  some domain $\mathcal U$ of 
 $\R^n$ of  $-\Delta v=f$,  then we have the inequality
 \begin{equation}
 \label{bbhoo}
 \vert \nabla v \vert^2(x) \leq C\left (\Vert f \Vert_{L^{\infty}(\mathcal U)} \Vert v \Vert_{L^{\infty}(\mathcal U)}  +
 \frac{1}{{\rm dist } (x, \partial \mathcal U)^2} \Vert v \Vert_{L^{\infty}(\mathcal U)}^2\right),   {\rm \ for \ all \ } x \in \mathcal  U.
 \end{equation}
 We apply inequality \eqref{bbhoo} to the solution $u_\eps$, with  source term  $f=\eps^{-2}\nabla_u V (u_\eps) $  on the domain $\mathcal U= 
 \mathcal O_{\frac{\delta}{2}}$: This  yields \eqref{classicic0}.    we invoke the uniform estimates provided  by  Proposition \ref{princours}.  
 \end{proof}
 
 Whereas the result of Proposition \ref{classic} involves the uniform norm of $u_\eps$, the next results provides a related results, involving the energy 
 $\E_\eps(u_\eps)$.
 
 \begin{proposition}
 \label{milan}
 Let $u_\eps\in H^1(\Omega)$  be a solution of \eqref{elipes},  $\delta>0$,  $M>0$,  and assume that that $\E_\eps (u_\eps) \leq M$.
 There exists some constant $K_{\rm dr}(M, \delta)>0$,  depending only on the potential $V$, $M$ and $\delta$, such  that,
   \begin{equation}
   \label{classicic}
 \vert    \nabla u_\eps  \vert (x) \leq \frac{K_{\rm dr}(M, \delta)}{\eps}, {\rm \ if \ } {\rm dist} (x, \partial \Omega)\geq \delta.
 \end{equation}
\end{proposition}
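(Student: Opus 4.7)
The plan is to derive Proposition \ref{milan} by chaining together the two previous results, Proposition \ref{princours0} and Proposition \ref{classic}. The first controls $\|u_\eps\|_{L^\infty}$ on any interior region purely in terms of the energy bound and the distance to $\partial \Omega$; the second then converts that uniform bound into a pointwise gradient bound of order $\eps^{-1}$. The key observation is that, once we substitute the energy-based uniform estimate for $|u_\eps|$ into $C_{\rm gd}$, the resulting constant depends only on $V$, $M$ and $\delta$, as required.

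First, I would apply Proposition \ref{princours0} on the interior set $\mathcal O_{\delta/2}=\{x\in\Omega : {\rm dist}(x,\partial\Omega)\geq \delta/2\}$. For any $x\in \mathcal O_{\delta/2}$, the distance to $\partial\Omega$ is at least $\delta/2$, so \eqref{princours} combined with the assumption $\E_\eps(u_\eps)\leq M$ yields
\begin{equation*}
|u_\eps(x)|^2 \;\leq\; \frac{8\,{\rm C}_{\rm unf}\,M}{\delta} \;+\; 2\sup\{|\upsigma|^2 : \upsigma\in\Sigma\} \;=:\; M_1(M,\delta)^2,
\end{equation*}
so that $\|u_\eps\|_{L^\infty(\mathcal O_{\delta/2})} \leq M_1(M,\delta)$, a constant that depends only on $V$, $M$ and $\delta$ (since $\sup\{|\upsigma|^2\}$ is determined by $V$, and ${\rm C}_{\rm unf}$ is also a constant depending only on $V$).

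Second, I would invoke Proposition \ref{classic} directly with the same $\delta$. For $x\in\Omega$ with ${\rm dist}(x,\partial\Omega)\geq \delta$, inequality \eqref{classicic0} gives
\begin{equation*}
|\nabla u_\eps|(x) \;\leq\; \frac{C_{\rm gd}\bigl(\|u_\eps\|_{L^\infty(\mathcal O_{\delta/2})},\,\delta\bigr)}{\eps} \;\leq\; \frac{C_{\rm gd}\bigl(M_1(M,\delta),\,\delta\bigr)}{\eps}.
\end{equation*}
Setting $K_{\rm dr}(M,\delta):= C_{\rm gd}(M_1(M,\delta),\delta)$, which by construction depends only on $V$, $M$ and $\delta$, we obtain precisely \eqref{classicic}.

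No serious obstacle is anticipated: the argument is merely a composition of two already-established estimates, the only real content being the observation that Proposition \ref{princours0} converts the energy bound into the $L^\infty$ control needed to feed into Proposition \ref{classic}, and that the two nested distance parameters $\delta$ and $\delta/2$ are compatible with how Proposition \ref{classic} is stated. If any subtlety arises, it would only be in carefully matching the interior layers (e.g.\ using $\delta/2$ for the $L^\infty$ bound so that $\delta$ can be used in the gradient estimate), which is straightforward.
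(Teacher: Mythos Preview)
Your proposal is correct and follows essentially the same approach as the paper: first invoke Proposition \ref{princours0} to obtain an $L^\infty$ bound on $u_\eps$ over $\mathcal O_{\delta/2}$ depending only on $V$, $M$, and $\delta$, and then feed this into the gradient estimate of Proposition \ref{classic}. The paper phrases the second step as ``combining with \eqref{bbhoo} and arguing as in Proposition \ref{classic}'' rather than citing Proposition \ref{classic} directly, but this is a cosmetic difference only.
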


 \begin{proof}
 we invoke the uniform estimates provided  by  Proposition \ref{princours}.  
  We have, indeed, in view of \eqref{princours0}, the uniform  upper bounds, for $u_\eps$ and $f=eps^{-2}\nabla_u V (u_\eps)$, 
 \begin{equation*}
 \left\{
 \begin{aligned}
 \vert u(x) \vert^2 &\leq  C\left ( \frac{M}{\delta}+1\right) ,   {\rm \ for \ } x  \in \mathcal O_{\frac{\delta}{2}} \\
 \vert f (x)\vert &\leq   \eps^{-2} C(M, \delta) ,   {\rm \ for \ } x  \in  \mathcal O_{\frac{\delta}{2}}. 
 \end{aligned}
 \right.
 \end{equation*}
 Combining   again these bounds with \eqref{bbhoo} and arguing as in Proposition \ref{classic},  we derive the conclusion.
 \end{proof}
   
   \subsection{Gradient term versus potential term: First estimates}
   Major ingredients in the proof of our main PDE result, namely Proposition \ref{sindec},  are provided  in  Proposition \ref{kappacity} and  Proposition \ref{borneo},  which we will state  below  and prove a little  later. They  roughly states that the \emph{total energy}, which involves both  a gradient term and  a potential terms,  can "essentially" be bounded by \emph{ the integral of the  sole  potential term}.  In order to derive these results,  we are led to divide domains into two regions: the region where the solution is near the set of potential wells  $\Sigma$, and  the region where it is far.  Whereas  the region where the solution is  \emph{near the potential wells} requires some further analysis, the region where the solution is far from the wells can be handled thanks to the results of the previous subsection, in particular the gradient bound described in Proposition \ref{classic}.
   
   Restricting ourselves to the  case $u_\eps$ is defined on  $\Omega=\D^2$, we introduce for $r>0$  the set 
 \begin{equation}
 \label{nablala}
 \begin{aligned}
 \Upxi_\eps(r)\equiv \Upxi( u_\eps, r)&=\left\{  x \in \D^2(r)  {\rm \ such \ that \ } u_\eps(x) \in \R^k \setminus 
 \underset{i=1}{\overset {q} \cup} \B^k(\upsigma_i, \frac{\upmu_0}{4})
   \right \}   \\
  & ={({u_\eps}_{_{\vert \D^2(r)}})}^{-1}\left(\R^k\setminus  \underset{i=1}{\overset {q} \cup} \B^k(\upsigma_i, \frac{\upmu_0}{4})
 \right).
    \end{aligned}
 \end{equation}
 The sets $\Upxi_\eps$ are aimed to describe region where the solution is \emph{ far from } $\Sigma$. Indeed, we have, by definition
 \begin{equation}
 \label{sofar} 
  {\rm dist} \left( u(x), \Sigma\right) \geq \frac{\upmu_0}{4} {\rm \ for \  } x \in \Upxi_\eps(r).
  \end{equation}
The integral of the energy on the set $\Upxi_\eps$ can be estimated by the integral of the  potential as follows:
 
 \begin{lemma}
\label{bornepote}
 $u_\eps\in H^1(\D^2)$  be a solution of \eqref{elipes}. There exist a constant $C_{\rm pt} \left(\Vert u \Vert_{L^\infty(\D^2(4\slash5))}\right)$  depending only on $V$ and 
 $\displaystyle{\Vert u \Vert_{L^\infty(\D^2(\frac45))}}$ such that 
 \begin{equation}
 \label{portos}
e_\eps(u_\eps)  \leq  C_{\rm  pt } \left(\Vert u \Vert_{L^\infty(\D^2(\frac45)}\right) \frac{V(u_\eps)}{\eps} \ 
{\rm  \ on \ } \Upxi_\eps (\frac 34). 
 \end{equation}
 Let $M>0$ and assume that $E(u_\eps) \leq M$. 
 There exists a constant $C_{\rm T}$ depending only on the potential $V$ and  on $M$ such that 
 \begin{equation}
 \label{portos2}
e_\eps(u_\eps)  \leq  C_{\rm  T } (M) \frac{V(u_\eps)}{\eps} \ 
{\rm  \ on \ } \Upxi_\eps (\frac 34). 
 \end{equation}
 \begin{proof} It follows from the definition of $\Upxi_\eps$  and in view of inequality \eqref{extrut} that 
 $$
 V(u_\eps(x))  \geq  \frac{\alpha_0}{16}, \, {\rm \ for \ } x \in \Upxi_\eps.
 $$
 Since, by definition $\Upxi_\eps \subset \D^2(4\slash 5)$, we have ${\rm dist}(x, \partial \D^2)=1\slash 5$, for $x\in \Upxi_\eps$.  We may therefore  invoke  inequality \eqref{classicic} of Proposition  \ref{classic} with $\delta=1\slash 20$, we obtain, for $x\in \Upxi_\eps$
 \begin{equation}
 \begin{aligned}
 \label{turin}
 \eps  \vert \nabla u_\eps \vert^2(x) &\leq C_{\rm gd}^2\left(\Vert u \Vert_{L^\infty(\D^2(4 \slash 5)}, {1}\slash{20} \right)\eps^{-1} \\
 &= \frac{\alpha_0}{4\eps} \left( \frac{4C_{\rm gd}^2}{\alpha_0}\right)
\leq   \left( \frac{4C_{\rm gd}^2}{\alpha_0}\right) \frac{V(u_\eps(x))}{\eps}. 
\end{aligned}
 \end{equation}
 Set ${\rm L}=\Vert u \Vert_{L^\infty(\D^2(4 \slash 5))}$.  Inequality \eqref{turin}   yields 
 $$e(u_\eps) \leq \left( \frac{2C_{\rm gd}({\rm L}, 1\slash20)^2}{\alpha_0}+1\right) \frac{V(u_\eps)}{\eps}.$$
 The conclusion \eqref{portos} follows choosing the constant   $C_{\rm pt}$ as $\displaystyle{C_{\rm pt}=  \left( \frac{4C_{\rm gd}({\rm L}, 1\slash 20) ^2}{\alpha_0}\right)}$. For \eqref{portos2}, we combine 
 \eqref{portos} with the uniform bound \eqref{princours}.
 \end{proof}
 \end{lemma}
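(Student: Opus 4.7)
The plan is to estimate the gradient term $\eps|\nabla u_\eps|^2$ by a constant multiple of $V(u_\eps)/\eps$ on the set $\Upxi_\eps(3/4)$ where $u_\eps$ stays uniformly away from every well. The point is that on this region the potential $V(u_\eps)$ is bounded below by a positive constant (so that a $1/\eps$ prefactor is available), while elliptic regularity gives a pointwise gradient bound of order $1/\eps$; comparing the two yields the claim.

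First I would obtain a lower bound on $V(u_\eps)$ on $\Upxi_\eps(3/4)$. By the definition \eqref{nablala}, $u_\eps(x)$ lies outside each ball $\B^k(\sigma_i,\upmu_0/4)$. Splitting $\R^k\setminus \cup_i \B^k(\sigma_i,\upmu_0/4)$ into the intermediate annular shells $\B^k(\sigma_i,\upmu_0)\setminus \B^k(\sigma_i,\upmu_0/4)$ and the complement of $\cup_i\B^k(\sigma_i,\upmu_0)$, the quadratic bound \eqref{glutz} handles the shells and \eqref{extrut} handles the exterior region, yielding a constant $c_V>0$, depending only on $V$, such that $V(u_\eps(x))\geq c_V$ for every $x\in\Upxi_\eps(3/4)$.

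Next I would produce a pointwise gradient bound on $\D^2(3/4)$. Since $\Upxi_\eps(3/4)\subset \D^2(3/4)\subset\D^2(4/5)$, every point of $\Upxi_\eps(3/4)$ has distance at least $1/20$ from $\partial \D^2(4/5)$. Applying the elliptic regularity estimate \eqref{bbhoo} from Proposition \ref{classic} (with $\mathcal U=\D^2(4/5)$ and right-hand side $f=\eps^{-2}\nabla_u V(u_\eps)$), and controlling $\|f\|_{L^\infty}$ through the smoothness of $V$ and through the $L^\infty$ bound $L:=\|u_\eps\|_{L^\infty(\D^2(4/5))}$, the $\eps^{-2}$ term on the right dominates and produces
\begin{equation*}
\vert\nabla u_\eps\vert(x)\leq \frac{C_{\rm gd}(L,1/20)}{\eps},\qquad x\in\D^2(3/4).
\end{equation*}
Combining the two bounds, $\eps|\nabla u_\eps|^2\leq C_{\rm gd}(L)^2/\eps\leq (C_{\rm gd}(L)^2/c_V)\cdot V(u_\eps)/\eps$ on $\Upxi_\eps(3/4)$, whence $e_\eps(u_\eps)\leq C_{\rm pt}(L)\,V(u_\eps)/\eps$ with $C_{\rm pt}(L)=1+C_{\rm gd}(L)^2/(2c_V)$; this gives \eqref{portos}.

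Finally, to pass from \eqref{portos} to \eqref{portos2}, the only missing ingredient is to replace the dependence on $L$ by a dependence on the energy bound $M$. This is exactly what Proposition \ref{princours} provides: from $\E_\eps(u_\eps)\leq M$ one obtains $\|u_\eps\|_{L^\infty(\D^2(4/5))}^2\leq 20 C_{\rm unf}M+2\sup_\Sigma |\sigma|^2$, since every point of $\D^2(4/5)$ lies at distance $\geq 1/5$ from $\partial\D^2$. Substituting this uniform bound for $L$ in the constant $C_{\rm pt}$ produces a constant $C_{\rm T}(M)$ depending only on $V$ and $M$ and yields \eqref{portos2}. The only delicate point of the argument is the gradient estimate: one has to check that elliptic regularity indeed controls $|\nabla u_\eps|$ by $1/\eps$, not merely by $1/\eps^2$, which is precisely what the BBH-type inequality \eqref{bbhoo} allows.
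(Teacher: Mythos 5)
Your proposal is correct and follows essentially the same route as the paper: a uniform positive lower bound on $V(u_\eps)$ on $\Upxi_\eps$ (from \eqref{glutz}--\eqref{extrut}), the gradient bound $|\nabla u_\eps|\leq C_{\rm gd}(L,\cdot)/\eps$ from Proposition \ref{classic} applied away from the boundary, their combination to get \eqref{portos}, and Proposition \ref{princours} to convert the $L^\infty$ dependence into a dependence on $M$ for \eqref{portos2}. No gaps.
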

 
    \subsection{The stress-energy tensor}
    The stress-energy tensor is an important tool in the analysis of singularly perturbed gradient-type problems. In dimension two, its expression is simplified thanks to complex analysis.  
      \begin{lemma}
      \label{canardwc}
Let $u_\eps$ be a solution of \eqref{elipes} on $\Omega$. Given any vector field $\vec X \in \mathcal{D}(\Omega,\R^2)$ we have
\begin{equation}
\label{canardwc1}
\int_{\Omega}   {A_\eps(u_\eps)}_{i, j}
 \cdot
\frac{\partial X_i}{\partial x_j}\,dx =0 {\rm \ where \ }
A_\eps(u_\eps)=e_\eps(u_\eps)\delta_{ij}-\eps
\frac{\partial u_\eps}{\partial x_i} \cdot\frac{\partial u_\eps}{\partial x_j}.
\end{equation}
  \end{lemma}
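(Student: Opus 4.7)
\textbf{Proof proposal for Lemma \ref{canardwc}.} The identity is the integral form of the fact that the stress-energy tensor $A_\eps(u_\eps)$ is divergence-free whenever $u_\eps$ solves the Euler--Lagrange equation \eqref{elipes}. Accordingly, the plan is to establish the pointwise identity $\sum_{j} \partial_j (A_\eps(u_\eps))_{i,j} = 0$ for every $i \in \{1,2\}$ on $\Omega$, and then to integrate against the test vector field $\vec X \in \mathcal D(\Omega,\R^2)$ by parts. Since $u_\eps$ is smooth by Proposition \ref{classic}, no regularisation is needed and the computation below is entirely classical.

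First I would compute the two contributions in $(A_\eps(u_\eps))_{i,j} = e_\eps(u_\eps)\delta_{i,j} - \eps\, \partial_i u_\eps \cdot \partial_j u_\eps$ separately. Differentiating the energy density gives
\begin{equation*}
\partial_i e_\eps(u_\eps) = \eps \sum_k \partial_k u_\eps \cdot \partial_i \partial_k u_\eps + \frac{1}{\eps}\nabla_u V(u_\eps)\cdot \partial_i u_\eps,
\end{equation*}
while differentiating the second term and summing in $j$ yields
\begin{equation*}
\eps \sum_j \partial_j\bigl(\partial_i u_\eps \cdot \partial_j u_\eps\bigr) = \eps \sum_j \partial_i\partial_j u_\eps \cdot \partial_j u_\eps + \eps\, \partial_i u_\eps \cdot \Delta u_\eps.
\end{equation*}
The first sum on each right-hand side coincides (after relabelling $j \leftrightarrow k$), so the difference reduces to
\begin{equation*}
\sum_j \partial_j (A_\eps(u_\eps))_{i,j} = \frac{1}{\eps}\nabla_u V(u_\eps)\cdot \partial_i u_\eps - \eps\, \partial_i u_\eps \cdot \Delta u_\eps.
\end{equation*}
Substituting $-\Delta u_\eps = -\eps^{-2}\nabla_u V(u_\eps)$ from equation \eqref{elipes} makes the two terms cancel, giving $\sum_j \partial_j (A_\eps(u_\eps))_{i,j} = 0$ pointwise on $\Omega$.

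With the pointwise divergence identity in hand, the final step is a straightforward integration by parts. Since $\vec X$ has compact support in $\Omega$, no boundary term appears, and we obtain
\begin{equation*}
\int_\Omega (A_\eps(u_\eps))_{i,j} \frac{\partial X_i}{\partial x_j}\, dx = -\int_\Omega \Bigl(\sum_j \partial_j (A_\eps(u_\eps))_{i,j}\Bigr) X_i\, dx = 0,
\end{equation*}
which is the desired identity. The only delicate point, should one wish to avoid the pointwise computation, would be to derive \eqref{canardwc1} directly from translation invariance of the action by inserting the variation $u_\eps^t(x) = u_\eps(x + t \vec X(x))$ into $\E_\eps$ and differentiating at $t = 0$; this is conceptually cleaner but requires more care with the change of variables, so I would favour the direct pointwise computation above.
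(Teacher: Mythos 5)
Your proof is correct and is essentially the paper's argument: the paper obtains \eqref{canardwc1} by multiplying \eqref{elipes} by $v=\sum X_i\,\partial_i u_\eps$ and integrating by parts, which is the same computation you carry out, merely reorganised so that the pointwise identity $\sum_j\partial_j (A_\eps(u_\eps))_{i,j}=0$ is isolated before testing against $\vec X$. Your closing remark about deriving the identity from inner variations $u_\eps(x+t\vec X(x))$ also matches the paper's Remark following the lemma, so nothing is missing.
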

 The proof is standard (see \cite{BOS2}
 and references therein): It is derived multiplying the equation \eqref{elipes} by the function $\displaystyle{v=\sum X_i 
 \partial _i u_\eps}$ and integrating by parts on $\Omega$. The $2 \times 2$ stress-energy matrix $A_\eps$ may  be decomposed as
  \begin{equation}
  \label{matrixaeps}
 A_\eps\equiv A_\eps(u_\eps)= T_\eps(u_\eps)
  + \frac{V(u_\eps)}{\eps}  \,\text{I}_2\, ,
  \end{equation}
  where the matrix $T_\eps(u)$ is defined, for a map $u: \Omega \to \R^2$, by
  \begin{equation}\label{stresstensor}
  T_\eps(u)=\frac{\eps}{2} \left(
  \begin{array}{cc}
   |u_{x_2}|^2-|u_{x_1}|^2  &  -2u_{x_1}\cdot u_{x_2} \\
    -2u_{x_1}\cdot u_{x_2}  & |u_{x_1}|^2-|u_{x_2}|^2 \\
  \end{array}
  \right).
\end{equation}

\begin{remark}
\label{scratch}
{\rm  Formula \eqref{canardwc} corresponds to  the first variation of the energy when one performs  deformations of the domain induced by the diffeomorphism related to the vector field $\vec X$. More precisely, it can be derived from the fact that 
$$\frac{d}{dt} \E_\eps (u_\eps \circ \Phi_t)=0,$$
where, for $t \in \R$  $\Phi_t: \Omega \to \omega$ is a diffeomorphism such that 
$$\frac{d}{dt} \Phi_t(x)=\vec X(\Phi_t(x)),  \forall x \in \Omega.
$$
}
\end{remark}

In dimension two, one may use complex notation to obtain a simpler expression of  $\displaystyle{T_{ij}\, \frac{\partial
X_i}{\partial x_j}}$.  Setting  $\displaystyle{X= X_1+ iX_2}$ we consider the complex function $\omega_\eps: \Omega \to \C$ defined by 
\begin{equation}
\label{hopfique}
\omega_\eps=\eps \left( \vert {u_\eps}_{_{x_1}}\vert^2-\vert {u_\eps}_{_{x_2}} \vert^2-2i
{u_\eps}_{_{x_1}}\cdot {u_\eps}_{_{x_2}}\right) , 
\end{equation}
the quantity $\omega_\eps$ being usually termed the \emph{Hopf
differential} of $u_\eps$.
 We obtain the identities
\begin{equation*}
T_{ij}(u_\eps)\frac{\partial X_i}{\partial x_j} =
\mathrm{Re}\left( - \omega_\eps \frac{\partial X}{\partial
\bar{z} }\right) {\rm  \  and \ }
 \delta_{i, j} \frac{\partial X_i}{\partial x_j}=2\mathrm{Re}\left( \frac{\partial X}{\partial{z}}\right).
\end{equation*}
 Identity  \eqref{canardwc1} is turned into
 \begin{equation}
 \label{canardwc2}
 \int_{\Omega}\mathrm{Re}\left(  \omega_\eps \frac{\partial X}{\partial
\bar{z} }\right)=\frac{2}{\eps} \int_{\Omega}  {V(u_\eps)}\, \mathrm {Re} \left(\frac{\partial X}{\partial
{z} }\right)=\frac{1}{\eps} \int_{\Omega}  {V(u_\eps)}\, \mathrm {div} \, \vec X.
 \end{equation}
  
  \begin{remark}
  \label{scratch2}  {\rm  Recall that the Dirichlet energy is invariant by conformal transformation. Such transformation are locally obtained through vector-fields $\vec X$ which are holomorphic.  
  
  }
  \end{remark}
  
 \subsection{Pohozaev's identity on disks}     
 \label{pohodisk}
  Identity \eqref{canardwc2} allows to derive integral estimates of the potential $V(u_\eps)$ using a suitable choice of test vector fields. We restrict ourselves to the special case the domain is  $\Omega=\D^2(r)$, for some $r>0$. We notice that for the vector field  $X=z$, we have
   $$
   \frac{\partial X}{\partial\bar{z}}=0   {
   \rm \ and \ }
\frac{\partial X}{\partial{z}}=1.
$$
However $X=z$ is  not a test vector field, since in does not have compact support, so that we consider instead vector fields $X_\delta$ of the form 
$$X_\delta=z \varphi_\delta ({\vert z \vert })),$$
 where $0<\delta<\frac 12$ is a small parameter  and $\varphi_\delta$ is a scalar function defined on $[0, r]$ such that  
\begin{equation}
\varphi_\delta (s)=1 {\rm \ for \ } s \in [0, r-\delta) ,\ \,  \vert \varphi' (s)\vert \leq 2 \delta {\rm \ for \ } s\in [ r-\delta, r]{\rm \ and \ }  \varphi(s)=0
{\rm \ on \ } [r-\delta\slash 4, r],
\end{equation}
  so that $\varphi_\delta(r)=0$. A short  computation shows  that
$$
  \frac{\partial  \varphi_\delta ({\vert z \vert })}{\partial \bar z}=\frac{z}{2\vert z\vert} \varphi'_\delta (\vert z \vert)  {\rm \ and \ } 
   \frac{\partial  \varphi_\delta ({\vert z \vert })}{\partial  z}= \frac{\bar z}{2\vert z \vert} \varphi'_\delta (\vert z \vert),
   $$
so that 
$$
\frac{\partial  X_\delta }{\partial \bar z}=\frac{ z^2 }{2\vert z\vert} \varphi'_\delta (\vert z \vert)  {\rm \ and \ } 
   \frac{\partial  X_\delta}{\partial  z}= \frac{ \vert z \vert }{2} \varphi'_\delta (\vert z \vert)+\varphi_\delta (\vert z \vert) \in \R.
$$
We drop  the subscript $\eps$ and simply wrire $u=u_\eps$. Using polar coordinates $(r, \theta)$ such that $(x_1, x_2)=(r\cos \theta, r\sin \theta)$, we have 
$u_{x_1}=\cos \theta \, u_r-r^{-1}\, \sin \theta\,  u_\theta$  and $u_{x_2}=\sin \theta \,  u_r+r^{-1}\, \cos  \theta \, u_\theta$. After some computations, this  leads to the formula 
\begin{equation*}
\begin{aligned}
\omega_\eps &= \eps(\cos 2\theta - i\sin 2\theta)  \left[ (\vert u_r \vert^2-r^{-1} \vert u_\theta \vert^2)- 2i u_r. u_\theta)\right]  \\
&=\frac{{\bar z}^2}{\vert z \vert^2} \left[ (\vert u_r \vert^2-r^{-2} \vert u_\theta \vert^2)- 2i u_r. u_\theta)\right].
\end{aligned}
\end{equation*} 
  Combining the previous computations, we obtain
  \begin{equation}
  \label{wallonne}
  \left\{
  \begin{aligned}
{\mathrm {Re}}\left( \omega_\eps \frac{\partial X_\delta}{\partial\bar{z}}\right)
&=\frac \eps 2 \left( \vert u_r\vert^2-r^2 \vert u_\theta \vert^2\right) \vert z \vert \varphi'_\delta (\vert z \vert)   {\rm \ and \ }  \\
{\mathrm {Re}}\left( \frac{\partial X_\delta}{\partial{z}}\right)
&=\frac{1}{2} \vert z \vert \varphi_\delta '(\vert z \vert) + \varphi_\delta (\vert z \vert)  {\rm \ on \ } 
\D^2 (r).
\end{aligned}
 \right. 
  \end{equation}   
  We check that, as expected, we have 
    \begin{equation*}
   \frac{\partial X_\delta}{\partial{ \bar z}}=0 {\rm \ and \ }   \frac{\partial X_\delta}{\partial{z}}=1
 {\rm \ on \ }    \D^2( r-\delta).   
\end{equation*}
    Inserting these relations into \eqref{canardwc2} and passing to the limit $\delta\to 0$ yields the following identity, usually termed Pohozaev's identity:
   
   \begin{lemma}
    \label{poho}     Let $u_\eps$  be  a solution of \eqref{elipes} on $\D^2$. We have, for any  radius $0<r \leq 1$      
    \begin{equation}
   \label{poho1}
  \frac{1}{\eps^2} \int_{\D^2( r)} V(u_\eps) =\frac r 4\int_{\partial \D^2( r) } 
  \left(
  \left\vert \frac{\partial  u_\eps}{\partial \tau}\right \vert^2 -\left\vert \frac{\partial  u_\eps}{\partial r} \right\vert^2
  +\frac{2}{\eps^2}V(u_\eps)
  \right) {\rm d} \tau. 
    \end{equation}
      \end{lemma}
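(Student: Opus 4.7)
The plan is to plug the specific test vector field $X_\delta(z) = z\,\varphi_\delta(|z|)$ already introduced just before the lemma into the stress-energy identity \eqref{canardwc2}, exploit the explicit expressions \eqref{wallonne} for $\mathrm{Re}(\omega_\eps\,\partial X_\delta/\partial\bar z)$ and $\mathrm{Re}(\partial X_\delta/\partial z)$, and pass to the limit $\delta\to 0$. Since $X_\delta$ vanishes in a neighborhood of $\partial \D^2$, it is admissible as a test vector field for \eqref{canardwc2}; and because $\partial X_\delta/\partial\bar z$ is supported in the thin annulus $\{r-\delta\le |z|\le r-\delta/4\}$, the left-hand side will localize on $\partial \D^2(r)$ in the limit, while the bulk term $\varphi_\delta$ in $\partial X_\delta/\partial z$ contributes the full integral over $\D^2(r)$.

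Concretely, inserting \eqref{wallonne} into \eqref{canardwc2} yields
\begin{equation*}
\int_{\D^2}\frac{\eps}{2}\,|z|\bigl(|u_r|^2 - r^{-2}|u_\theta|^2\bigr)\varphi'_\delta(|z|)\,dx
=\frac{2}{\eps}\int_{\D^2}V(u_\eps)\Bigl(\tfrac{|z|}{2}\varphi'_\delta(|z|)+\varphi_\delta(|z|)\Bigr)\,dx.
\end{equation*}
Next I would rewrite both sides via the co-area formula, parameterizing by the radial coordinate $\rho=|z|$, so that all $\varphi'_\delta$-terms become integrals of the form $\int_0^{r}\varphi'_\delta(\rho)\,F(\rho)\,d\rho$ where $F(\rho)$ is a continuous function of $\rho$ (thanks to the smoothness of $u_\eps$ given by Proposition \ref{classic}) involving integrals over $\partial\D^2(\rho)$. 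Since $\varphi'_\delta$ is supported in $[r-\delta,\,r-\delta/4]$ with total mass $-1$, as $\delta\to 0$ we have $\int_0^r\varphi'_\delta(\rho)F(\rho)\,d\rho\to -F(r)$, while $\varphi_\delta\to \mathbbm 1_{\D^2(r)}$ by dominated convergence.

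Passing to the limit therefore gives
\begin{equation*}
-\frac{\eps r}{2}\int_{\partial\D^2(r)}\bigl(|u_r|^2-r^{-2}|u_\theta|^2\bigr)d\tau
=\frac{2}{\eps}\int_{\D^2(r)}V(u_\eps)\,dx-\frac{r}{\eps}\int_{\partial\D^2(r)}V(u_\eps)\,d\tau.
\end{equation*}
Identifying $|u_r|^2=|\partial u_\eps/\partial r|^2$ and $r^{-2}|u_\theta|^2=|\partial u_\eps/\partial\tau|^2$, rearranging and dividing by $2\eps$ yields \eqref{poho1}.

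The only non-routine point is justifying the limit of the $\varphi'_\delta$-integrals; this is resolved by the interior smoothness of $u_\eps$ on $\overline{\D^2(r)}$ for fixed $r\le 1$ (a consequence of elliptic regularity, cf.\ Proposition \ref{classic}), which makes the functions $\rho\mapsto \int_{\partial\D^2(\rho)}(|u_r|^2-r^{-2}|u_\theta|^2)\,d\tau$ and $\rho\mapsto\int_{\partial\D^2(\rho)}V(u_\eps)\,d\tau$ continuous at $\rho=r$ and thus allows the weak-type convergence $\varphi'_\delta\rightharpoonup -\delta_{\{\rho=r\}}$ to be applied. Everything else is algebraic manipulation from the already established identity \eqref{canardwc2}.
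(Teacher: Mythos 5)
Your proposal is correct and follows essentially the same route as the paper: insert the dilation field $X_\delta(z)=z\,\varphi_\delta(|z|)$ into the stress-energy identity \eqref{canardwc2}, use the expressions \eqref{wallonne}, and let $\delta\to 0$. Your co-area/continuity justification of the limit is just a spelled-out version of the paper's statement that $\varphi_\delta(|\cdot|)\to{\bf 1}_{\D^2(r)}$ and $|\cdot|\,\varphi'_\delta(|\cdot|)\to -r\,\rd\tau$ in the sense of distributions, so the two arguments coincide in substance.
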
 
      
  \begin{proof}  using the vector field $X_\delta$ in \eqref{canardwc2}, we obtain, in view of  identities  \eqref{wallonne}
  \begin{equation*}
  \frac{2}{\eps^2} \int_{\D^2(r)} V(u_\eps)\left[\frac{1}{2} \vert x \vert \varphi'_\delta (\vert x \vert) + \varphi_\delta (\vert x\vert)\right] \rd x = 
  \int_{\D^2(r)} 
\frac 12 \left( \vert u_r\vert^2-r^{-2} \vert u_\theta \vert^2\right) \vert x \vert \varphi'_\delta (\vert x \vert)  \rd x. 
  \end{equation*}
   so that 
   \begin{equation}
   \label{titeuf}
    \frac{2}{\eps} \int_{\D^2(r)} V(u_\eps) \varphi_\delta (\vert z \vert) \rd x=
   \frac 12  \int_{\D^2(r)} 
 \left( \vert u_r\vert^2-r^{-2} \vert u_\theta \vert^2 -\frac{2}{\eps}V(u_\eps) \right) \vert x \vert \varphi'_\delta (\vert x \vert)  \rd x. 
   \end{equation}
   Next we observe that 
   \begin{equation*}
   \left\{
   \begin{aligned}
   &\varphi_\delta (\vert \cdot \vert ) \to {\bf 1}_{\D^2(r)}   {\rm \ as  \ } \delta \to  0  \ {\rm \ in \  the  \ sense \ of \ measures}, {\rm \ and \ } \\
   & \vert \cdot  \vert \varphi'_\delta (\vert \cdot  \vert) \to -r\rd \tau  {\rm \ as  \ } \delta \to  0  {\rm \ in \ } \mathcal D'( \R^2), 
  \end{aligned} 
   \right.
   \end{equation*}
  where $\rd \tau$    denotes the $\mathcal H^1$ measure on $\S^1(r)$. the conclusion follows.
    
  \end{proof}    
      
      Identity \eqref{poho1} is central in the paper, in particular it leads to the monotonicity for $\upzeta_\star$.    This identity has the remarkable property that it yields an identity   of the integral of the potential \emph{inside}   the disk involving only    energy terms on  the  \emph{boundary}. A straightforward consequence of Lemma \ref{poho} is  the estimate: 
        
         \begin{proposition}
     \label{pascap0}
     Let $u_\eps$  be  a solution of \eqref{elipes} on $\D^2$. We have, for any $0<r\leq 1$ 
     \begin{equation}
     \label{pascap}
      \frac{1}{\eps} \int_{\D^2( r)} V(u_\eps) \leq  \frac r 2 \int_{\S^1(r)} e_\eps(u_\eps) {\rm d}\ell.
     \end{equation}
     \end{proposition}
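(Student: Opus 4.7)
The plan is to obtain \eqref{pascap} as an immediate corollary of Pohozaev's identity, Lemma \ref{poho}, by discarding a single non-negative boundary term. First I would rescale \eqref{poho1} by multiplying both sides by $\eps$, so that the left hand side becomes $\eps^{-1}\int_{\D^2(r)} V(u_\eps)$, matching the quantity that appears in \eqref{pascap}. The right hand side then reads
\[
\frac{r}{4}\int_{\partial \D^2(r)} \left( \eps\left\vert \frac{\partial u_\eps}{\partial \tau}\right\vert^2 - \eps\left\vert \frac{\partial u_\eps}{\partial r}\right\vert^2 + \frac{2}{\eps} V(u_\eps) \right) \rd \tau.
\]

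Next I would use the orthogonal decomposition $\vert \nabla u_\eps \vert^2 = \vert \partial_\tau u_\eps \vert^2 + \vert \partial_r u_\eps \vert^2$ on the boundary circle $\S^1(r)$ to recognise that the integrand above equals
\[
2 e_\eps(u_\eps) - 2\eps \left\vert \frac{\partial u_\eps}{\partial r}\right\vert^2,
\]
where we recall that $e_\eps(u_\eps) = \frac{\eps}{2}\vert \nabla u_\eps\vert^2 + \frac{1}{\eps} V(u_\eps)$. Since the subtracted term is pointwise non-negative, dropping it yields the bound of the integrand by $2 e_\eps(u_\eps)$; multiplying by $r/4$ and integrating over $\S^1(r)$ gives exactly the factor $\frac{r}{2}\int_{\S^1(r)} e_\eps(u_\eps)\rd \ell$ on the right hand side of \eqref{pascap}. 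The argument involves no genuine obstacle: all the analytic content lies in Lemma \ref{poho}, and the present proposition only requires observing that the ``wrong sign'' radial derivative contribution in the Pohozaev boundary integrand is precisely what allows one to absorb it into the full energy density $e_\eps(u_\eps)$.
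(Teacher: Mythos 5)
Your proof is correct and follows the same route as the paper: both deduce \eqref{pascap} from Pohozaev's identity \eqref{poho1} by bounding the boundary integrand by $2\eps^{-1}e_\eps(u_\eps)$, your step of dropping the non-negative term $2\eps\vert\partial_r u_\eps\vert^2$ being the same elementary estimate $\vert\partial_\tau u_\eps\vert^2-\vert\partial_r u_\eps\vert^2\leq\vert\nabla u_\eps\vert^2$ used in the paper.
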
  
     Proposition \ref{pascap0} follows immediately from Lemma \ref{poho} noticing that the absolute value of the integrand on the left hand  side is bounded by $2\eps^{-1}e_\eps(u_\eps)$.
     
\smallskip
Besides Proposition \ref{pascap0}, we notice      that     Pohozaev's identity leads directly to remarkable  consequences:  For instance,  all solutions which are constant with values in $\Sigma$ on $\D^2(r)$ are necessarily constant. 
      \begin{remark}{\rm 
        The previous  results are  specific to dimension $2$, however the use of the stress-energy tensor yields other results in higher dimensions (for instance monotonicity formulas).   
    }
    \end{remark}
   \subsection{Proofs of the "monotonicity" formula  for $\upzeta_\eps$}
      \label{monotoinou}
   We provide here a proof of formula \eqref{monotonio0}, which is actually not a real monotonicity, since there is no evidence that the right hand side is non negative (only the asymptotic version is a monotonicity formula). The proof relies  on Lemma \ref{poho},  identity \eqref{poho1}. We have indeed, by Leibnitz rules 
   $$
   \frac{d}{dr}\left( \frac{\Veps\left(u_\eps, \D^2(r)\right)}{ r}\right)=
   -\frac{1}{r^2} \Veps\left(u_\eps, \D^2(r)\right)  +\frac{1}{r}  \frac{d}{dr}\left( \Veps\left(u_\eps, \D^2(r)\right)\right).
   $$
   By Fubini's theorem, we have 
   $$
   \frac{d}{dr}\left( \Veps\left(u_\eps, \D^2(r)\right)\right)=\frac{1}{\eps} \int_{\S^1(r)} V(u_\eps) \rd \tau,
   $$
   so that, combining the previous identities, we obtain 
    \begin{equation*}
    \label{borg}
    \begin{aligned}
     \frac{d}{dr}\left( \frac{\Veps\left(u_\eps, \D^2(r)\right)}{ r}\right)&= 
    -\frac{1}{r^2} \int_{\D^2(r)} \eps^{-1} V(u_\eps) \rd x +\frac{1}{r} \int_{\S^1(r)}\eps^{-1} V(u_\eps) \rd \tau \\
    &=\frac{1}{4r}\int_{\S^1(r)} (\eps \vert u_r \vert^2-\eps\vert u_\tau\vert^2-2V_\eps(u)) \rd \tau +\frac{1}{r} \int_{\S^1(r)}\eps^{-1} V(u_\eps) \rd \tau \\
    &=\frac{1}{4r}\int_{\S^1(r)} (\eps \vert u_r \vert^2-\eps\vert u_\tau\vert^2+2V_\eps(u)) \rd \tau 
   \end{aligned}
    \end{equation*}
    where we have used \eqref{poho1} for the second line. Hence, identity  \eqref{monotonio0} is established.

   \subsection {Proof of formula \eqref{monantoine0}}

    For the identity \eqref{monantoine0}, we have similarily 
    \begin{equation}
    \label{connors}
    \begin{aligned}
     \frac{d}{dr}\left( \frac{\Eps\left(u_\eps, \D^2(r)\right)}{r}\right)&=-\frac{1}{r^2} \int_{\D^2(r)} e_\eps(u_\eps) \rd x+ \frac{1}{r} \int_{\S^1(r)} e_\eps(u) \rd \tau \\\
     &=-\frac{1}{2r^2} \int_{\D^2(r)}  \eps\vert \nabla u \vert^2\rd x-\frac{1}{r^2} \int_{\D^2(r) }\eps^{-1} V(u_\eps)  \rd x \\
     &+\frac{1}{2r} \int_{\S^1(r)} ( \eps(\vert u_{\tau}\vert^2+\vert u_{r}\vert^2)+2\eps^{-1} V(u_\eps)) \rd \tau 
          \end{aligned}
    \end{equation}
    We  may decompose  $\eps\vert \nabla u \vert^2$ as  $\eps\vert \nabla u \vert^2=2\eps^{-1}V(u)- 2\xi_\eps(u)$, where  the discrepancy 
   $
   \xi_\eps(u_\eps)$
   is defined in \eqref{discretpanse}, so that the second line in \eqref{connors} may be written as
    \begin{equation} 
    \label{connors2} 
    -\frac{1}{2r^2} \int_{\D^2(r)}  \eps\vert \nabla u \vert^2\rd x-\frac{1}{r^2} \int_{\D^2(r) }\eps^{-1} V(u_\eps)  \rd x =
    \int_{\D^2(r)}\xi_\eps(u)-\frac{2}{r^2} \int_{\D^2(r) }\eps^{-1} V(u_\eps)  \rd x \\
    \end{equation} 
    Combining \eqref{connors}, \eqref{connors2} with \eqref{poho1}, we obtain a nice cancelation which yields \eqref{monantoine0}.
    
    \medskip

 \subsection{Pohozaev's type inequalities on general subdomain}
  We present in this subsection a related tool which will be of interest in the proof of Theorem \ref{bordurer}. We consider a solution $u_\eps$ of \eqref{elipes} on a general domain $\Omega$, a  subdomain $\mathcal U$ of $\Omega$ and for $\updelta >0$ the domain 
  $\mathcal U_\updelta$ introduced in \eqref{Udelta}.  As  a variant of Proposition \ref{pascap0}, we have:   
     
          \begin{proposition}
     \label{pascapit}
     Let $u_\eps$  be  a solution of \eqref{elipes} on $\Omega$. We have, for any $0<\updelta $ 
     \begin{equation}
     \label{pascap36}
      \frac{1}{\eps} \int_{\mathcal U_{\frac{\updelta}{2}} }V(u_\eps) {\rm d} x \leq  C(\mathcal U, \updelta)  \int_{\mathcal V_\updelta} e_\eps(u_\eps) {\rm d}x,
     \end{equation}
      where the constant $C(\mathcal U, \updelta) >0$ depends on $\mathcal U$, $\updelta$ and $V$.
     \end{proposition}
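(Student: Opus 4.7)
I would apply the stress–energy identity \eqref{canardwc1} to a test vector field of the form
$$\vec X(x) = \eta(x)\,(x-x_0),$$
where $x_0 \in \mathcal U$ is a fixed reference point and $\eta$ is a smooth cutoff satisfying $\eta\equiv 1$ on $\mathcal U_{\updelta/2}$, $\eta\equiv 0$ outside $\mathcal U_\updelta$, with $|\nabla\eta|\leq C_0/\updelta$. Such $\eta$ is obtained by standard mollification of the distance function to $\mathcal U_{\updelta/2}$, and $\vec X \in \mathcal D(\Omega,\R^2)$ provided $\overline{\mathcal U_\updelta}\subset \Omega$, which is implicit in the statement.

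The crucial observation, \emph{specific to dimension two}, is that the stress matrix $T_\eps(u_\eps)$ defined in \eqref{stresstensor} is pointwise trace-free, so that by the decomposition \eqref{matrixaeps},
$$\mathrm{tr}\bigl(A_\eps(u_\eps)\bigr) = \frac{2V(u_\eps)}{\eps}.$$
A direct computation $\partial X_i/\partial x_j = (x_i-x_{0,i})\,\partial_j\eta + \eta\,\delta_{ij}$ then yields
$$A_\eps(u_\eps)_{ij}\,\frac{\partial X_i}{\partial x_j} \;=\; (x-x_0)\cdot\bigl(A_\eps(u_\eps)\nabla\eta\bigr) + \frac{2\eta V(u_\eps)}{\eps}.$$
Substituting into \eqref{canardwc1} produces the exact identity
$$\frac{2}{\eps}\int_\Omega \eta\,V(u_\eps)\,dx \;=\; -\int_\Omega (x-x_0)\cdot\bigl(A_\eps(u_\eps)\nabla\eta\bigr)\,dx.$$

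To finish, I would bound both sides. On the left, using $V\geq 0$ and $\eta\equiv 1$ on $\mathcal U_{\updelta/2}$, one gets the lower bound $\tfrac{2}{\eps}\int_{\mathcal U_{\updelta/2}} V(u_\eps)\,dx$. On the right, the support of $\nabla\eta$ lies in $\mathcal U_\updelta\setminus \mathcal U_{\updelta/2}\subset\mathcal V_\updelta$; combined with the pointwise estimate $|A_\eps(u_\eps)|\leq c\,e_\eps(u_\eps)$ (which follows from $|T_\eps|\leq \eps|\nabla u_\eps|^2\leq 2e_\eps$ and $V/\eps\leq e_\eps$) and the bound $|x-x_0|\leq \mathrm{diam}(\mathcal U_\updelta)$ on $\mathrm{supp}(\eta)$, one concludes
$$\frac{1}{\eps}\int_{\mathcal U_{\updelta/2}} V(u_\eps)\,dx \;\leq\; C(\mathcal U,\updelta)\int_{\mathcal V_\updelta} e_\eps(u_\eps)\,dx,$$
with $C(\mathcal U,\updelta)$ of the order of $\mathrm{diam}(\mathcal U_\updelta)/\updelta$. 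No serious obstacle is expected: this is a localized Pohozaev-type argument whose whole engine is the two-dimensional trace-free identity $\mathrm{tr}(T_\eps(u_\eps))=0$, which makes the bulk contribution in $\mathcal U_{\updelta/2}$ collapse exactly to the potential term, leaving only terms carried by $\nabla\eta$ and hence supported in $\mathcal V_\updelta$.
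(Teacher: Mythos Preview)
Your proof is correct and is essentially the same argument as the paper's: the paper applies the complex form \eqref{canardwc2} to the vector field $X_\updelta(z)=z\,\chi_\updelta(z)$ with a cutoff $\chi_\updelta$ equal to $1$ on $\mathcal U_{\updelta/2}$ and vanishing outside $\mathcal U_\updelta$, which is precisely your $\vec X(x)=\eta(x)(x-x_0)$ with $x_0=0$. Your real-variable computation via $\mathrm{tr}(T_\eps)=0$ is exactly the mechanism encoded in the passage from \eqref{canardwc1} to \eqref{canardwc2}, so the two arguments coincide.
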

     
     The main difference with Proposition \ref{pascap0} is that, in the case of a disk,  the form of the $C(\mathcal U, \updelta) >0$  is determined more accurately. 
\begin{proof} [Proof of Proposition \ref{pascapit}] Turning back to identity \eqref{canardwc2},  we choose once more a  test vector field $\vec X_\updelta $ of the form $X_\updelta(z)=z  \chi _{_\delta} (z)$,  where the function $\chi_{_\delta} $ is a  smooth scalar positive function  such that 
$$
\chi_{_\delta} (z)=1 {\rm \ for \ } z \in {\mathcal U}_{\frac{\updelta}{2}}  {\rm \ and \ } \chi_{_\delta} (z)=0 {\rm \ for \ } z \in \R^2 \setminus \,  
{\mathcal U}_{\updelta}
$$
 so that $\nabla \chi_{_\updelta}=0$ on the set ${\mathcal U}_{\frac{\updelta}{2}}$ and hence 
 \begin{equation*}
 \frac{\partial X_\delta}{\partial
\bar{z}}=0  
{\rm \ and \ } 
\frac{\partial X_\delta}{\partial
{z}}=1 \,  
 {\rm \ on \ } {\mathcal U}_{\frac{\updelta}{2}}.
 \end{equation*}   
Inserting these relations into \eqref{canardwc2}, we are led  to inequality \eqref{pascap36}.
\end{proof}
     

 \section{Energy estimates}

\subsection{First energy estimates on levels sets close to $\Sigma$} 
In this subsection, we estimate the energy on domains where the solution is close to one of minimizers of the potential  $\upsigma \in \Sigma$.  Near such a  point,  the potential is locally convex, close to a quadratic potential. In such a situation,   solutions to the equation behave, at first order,  as solution to the \emph{linear equation} of the type
$$-\Delta v+ \eps^{-2} \nabla^2 V(\upsigma) \cdot v\simeq 0, $$ 
for which energy estimates can be obtained directly by multiplying the equation  by the solution itself and integration by parts, provided estimates on the boundary are available. 
More precisely,  we consider again for given $0<\eps \leq 1$ a solution  $u_\eps: \D^2 \to \R^k$  to \eqref{elipes} and assume that we  are given a radius $\varrho_\eps \in [\frac 12, \frac 34]$, a number   
$\displaystyle{0<\upkappa <\frac{\upmu_0}{2}}$, where $\upmu_0>0$ is the constant provided in \eqref{kiv}. 
 We introduce the subdomain $\Upsilon_\eps(\varrho_\eps, \upkappa)$ defined  by
   \begin{equation}
\label{lilu}
\begin{aligned}
\Upsilon_\eps(\varrho_\eps, \upkappa) &=\left\{ x \in \D^2 ( \varrho_\eps ) {\rm \ such \ that \  \ }   \vert  u_\eps(x) -\upsigma_i \vert  <\upkappa, \ \ 
{\rm\ for  \ some  \ } i=1\ldots q \ \right\} \\
&=\underset {i=1}{\overset {q} {\large  \cup}}   \Upsilon_{\eps, i}  (\varrho_\eps, \upkappa), 
\end{aligned}
\end{equation}
 where  we have set 
 $$
  \Upsilon_{\eps, i}(\varrho_\eps, \upkappa)=w_i^{-1} ([0,   \upkappa) \cap  
\D^2 ( \varrho_\eps )=\Upsilon_{\upsigma_i}(u_\eps,  \varrho_\eps,  \upkappa)=
\{x\in \D^2 ( \varrho_\eps ), \vert u_\eps-\upsigma_i \vert  \leq   \upkappa \}.
 $$
 Notice that sets of the above form   have  already  been introduced in \eqref{subsub} for general maps $u$ and are denotes there $\Upsilon_{\upsigma}(u,  \varrho,  \upkappa)$.
The set $\Upsilon_\eps(\varrho_\eps, \upkappa)$  corresponds  hence to a truncation of the domain 
  $\D^2(\varrho_\eps)$,  where points with values far from the set $\Sigma$  have been removed, whereas the set $\Upsilon_{\eps, i}(\varrho_\eps, \upkappa)$  corresponds  to a truncation of the domain 
  $\D^2(\varrho_\eps)$ where points with values far from the point $\upsigma_i \in \Sigma$  have been removed.
  
  The main result of the present  section is to establish  an estimate on the integral of the energy on the domain $\Upsilon_\eps(\varrho_\eps, \upkappa)$ in terms of  the integral of the potential  as well as boundary integrals.  As a matter of fact, we choose here a fixed value of $\upkappa$, namely
   \begin{equation}
     \label{menhir} 
     \upkappa=\upmu_1=\frac{\upmu_0}{4}.
     \end{equation} 
  However, many elements in the proof carry out for a full range of values of $\upkappa$, and will be used later in Subsection \ref{later}.
      
        \begin{proposition}
  \label{kappacity20}
  Let $u_\eps$ be a solution of \eqref{elipes} on $\D^2$, let $\rm L >0$ be given and assume that 
  \begin{equation}
  \label{belliard}
   \Vert u_\eps \Vert_{L^\infty} \leq {\rm L}.
   \end{equation}
Let  $\varrho_\eps \in [\frac12, \frac 34]$. We have, for some constant ${\rm K}_\Upsilon(\rm L)  >0$,  depending only on the potential  $V$  and $\rm L$, the inequality
  \begin{equation}
  \label{nonodesu}
   \int_{\Upsilon_\eps (\varrho_\eps, \upmu_1)} e_\eps(u_\eps)(x) {\rm d}x \leq  {\rm K_\Upsilon}({\rm L}) \left[ 
   \int_{\D^2(\varrho_\eps)} \frac{V(u_\eps)}{\eps} {\rm d} x  +\eps    \int_{\partial \D^2({\varrho_\eps})} e_\eps(u_\eps){\rm d} \ell
     \right].
   \end{equation}
  \end{proposition}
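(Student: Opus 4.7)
The strategy is to exploit the local convexity of $V$ near each minimizer $\upsigma_i$, expressed by \eqref{glutz}, via an integration by parts on a slightly enlarged version of $\Upsilon_\eps(\varrho_\eps,\upmu_1)$, the enlargement being chosen by a coarea argument so that the induced interior boundary carries a controlled $L^{1}$-norm of $|\nabla u_\eps|$.

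Applying the coarea formula to the functions $|u_\eps-\upsigma_i|$ together with a mean value argument on the transition region $R_{1}=\{x\in\D^2(\varrho_\eps):\upmu_1\leq|u_\eps(x)-\upsigma_i|\leq\upmu_0/2\text{ for some }i\}$ produces some $\tilde\upmu\in[\upmu_1,\upmu_0/2]$ such that
\[
\sum_{i=1}^{q}\int_{\{|u_\eps-\upsigma_i|=\tilde\upmu\}\cap\D^2(\varrho_\eps)}|\nabla u_\eps|\,d\ell \;\leq\; \frac{4}{\upmu_0}\int_{R_1}|\nabla u_\eps|^2\,dx.
\]
Since by \eqref{kiv} the balls $\B^k(\upsigma_i,\upmu_0)$ are pairwise disjoint, any $x\in R_1$ satisfies $|u_\eps(x)-\upsigma_j|\geq\upmu_1$ for \emph{every} $j$, hence $R_1\subset\Upxi_\eps(\varrho_\eps)\subset\Upxi_\eps(3/4)$. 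Under the uniform bound \eqref{belliard}, Proposition \ref{bornepote} gives $e_\eps(u_\eps)\leq C_{\mathrm{pt}}({\rm L})V(u_\eps)/\eps$ on $R_1$, and combined with $\eps|\nabla u_\eps|^2\leq 2e_\eps(u_\eps)$ this yields the key interior-boundary estimate
\[
\sum_{i}\int_{\{|u_\eps-\upsigma_i|=\tilde\upmu\}\cap\D^2(\varrho_\eps)}|\nabla u_\eps|\,d\ell \;\leq\; \frac{C({\rm L})}{\eps}\int_{\D^2(\varrho_\eps)}\frac{V(u_\eps)}{\eps}\,dx.
\]

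Next, set $\tilde\Upsilon_{\eps,i}=\Upsilon_{\upsigma_i}(u_\eps,\varrho_\eps,\tilde\upmu)$; these sets are pairwise disjoint (again by \eqref{kiv}) and their union contains $\Upsilon_\eps(\varrho_\eps,\upmu_1)$. Multiplying \eqref{elipes} by $(u_\eps-\upsigma_i)$ and integrating by parts on $\tilde\Upsilon_{\eps,i}$ gives
\[
\int_{\tilde\Upsilon_{\eps,i}}|\nabla u_\eps|^2+\frac{1}{\eps^{2}}\int_{\tilde\Upsilon_{\eps,i}}\nabla V(u_\eps)\cdot(u_\eps-\upsigma_i)=\int_{\partial\tilde\Upsilon_{\eps,i}}\frac{\partial u_\eps}{\partial\nu}\cdot(u_\eps-\upsigma_i)\,d\ell.
\]
Since $\tilde\upmu<2\upmu_0$, the middle term is non-negative by \eqref{glutz} and can be dropped. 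The boundary splits into an inner part $\{|u_\eps-\upsigma_i|=\tilde\upmu\}\cap\D^2(\varrho_\eps)$, on which $|\partial_\nu u_\eps\cdot(u_\eps-\upsigma_i)|\leq\tilde\upmu|\nabla u_\eps|$, and the preceding estimate controls the sum over $i$ (after multiplication by $\eps/2$) by $C({\rm L})\int_{\D^2(\varrho_\eps)}V(u_\eps)/\eps\,dx$; and an outer part $\partial\D^2(\varrho_\eps)\cap\tilde\Upsilon_{\eps,i}$, on which Young's inequality
\[
\frac{\eps}{2}|\nabla u_\eps|\cdot|u_\eps-\upsigma_i|\;\leq\;\frac{\eps^{2}}{4}|\nabla u_\eps|^2+\frac{1}{4}|u_\eps-\upsigma_i|^2,
\]
combined with the convexity bound $|u_\eps-\upsigma_i|^2\leq 4\lambda_0^{-1}V(u_\eps)\leq 4\lambda_0^{-1}\eps\,e_\eps(u_\eps)$ (which uses \eqref{glutz} and $\eps\leq 1$), produces after summation the bound $C(\lambda_0)\eps\int_{\partial\D^2(\varrho_\eps)}e_\eps(u_\eps)\,d\ell$.

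Combining the two boundary estimates with the trivial inequality $\sum_i\int_{\tilde\Upsilon_{\eps,i}}V(u_\eps)/\eps\leq\int_{\D^2(\varrho_\eps)}V(u_\eps)/\eps$ to reconstitute the full density $e_\eps(u_\eps)$ on $\bigcup_i\tilde\Upsilon_{\eps,i}\supset\Upsilon_\eps(\varrho_\eps,\upmu_1)$ yields \eqref{nonodesu}, with $K_\Upsilon({\rm L})$ depending on $V$ and $\rm L$ through $C_{\mathrm{pt}}({\rm L})$ and $\lambda_0$. The main technical hurdle is the outer boundary estimate, where Young's inequality must be calibrated so that the factor $\eps$ appearing in front of $\int_{\partial\D^2(\varrho_\eps)}e_\eps\,d\ell$ in \eqref{nonodesu} is recovered; this works precisely because the convexity of $V$ supplies the pointwise bound $|u_\eps-\upsigma_i|\leq 2\sqrt{V(u_\eps)/\lambda_0}$ on $\tilde\Upsilon_{\eps,i}$, so that $|u_\eps-\upsigma_i|^{2}$ contributes an extra factor of $\eps$ rather than being of size $O(1)$.
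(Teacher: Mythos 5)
Your proof is correct and takes essentially the same route as the paper: an integration by parts on the super\hbox{-}level sets $\tilde\Upsilon_{\eps,i}=\Upsilon_{\upsigma_i}(u_\eps,\varrho_\eps,\tilde\upmu)$, with the level $\tilde\upmu$ selected by the coarea formula so that the interior boundary $\Gamma_\eps^i$ carries a controlled $L^1$-norm of $|\nabla u_\eps|$, the latter then bounded via Proposition~\ref{bornepote} by the integral of the potential. The paper's Lemmas~\ref{dugoin007}, \ref{qidonc}, \ref{eliot} and \ref{toubon} carry out exactly this program. Your two deviations are cosmetic: you discard the non-negative convexity term $\eps^{-2}\nabla V\cdot(u_\eps-\upsigma_i)$ and then add $\int V/\eps$ back trivially, whereas the paper retains it and bounds $e_\eps$ by $\mathfrak Q^i_\eps$ in Lemma~\ref{dugoin007}; and you handle the outer boundary $\Pi_\eps^i\subset\partial\D^2(\varrho_\eps)$ via Young's inequality plus $|u_\eps-\upsigma_i|^2\leq 4\lambda_0^{-1}V(u_\eps)\leq 4\lambda_0^{-1}\eps\,e_\eps(u_\eps)$, whereas the paper uses the factorization $\sqrt{V}\,|\nabla u_\eps|=J(u_\eps)\leq e_\eps(u_\eps)$ from Lemma~\ref{ab0}. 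Both choices yield the same $\eps\int_{\partial\D^2(\varrho_\eps)}e_\eps$ contribution. One small point you implicitly use without stating: the mean-value selection of $\tilde\upmu$ should be made among regular values of $|u_\eps-\upsigma_i|$ (Sard) so that $\{|u_\eps-\upsigma_i|=\tilde\upmu\}$ is a smooth curve and the integration by parts on $\tilde\Upsilon_{\eps,i}$ is legitimate; the paper makes this explicit. The parenthetical claim that $V\leq\eps e_\eps$ uses $\eps\leq 1$ is unnecessary (it holds by definition of $e_\eps$), but this is harmless.
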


  The proof will be divided in several results  of independznt interest.   Firstly, since $u_\eps$ is smooth and in view of Sard's Lemma, the boundary $\partial \Upsilon_\eps (\varrho_\eps, \upkappa)$ is smooth and  a finite union of smooth curves \emph{for almost every $\upkappa$}, which we will assume throughout. Hence, for $i=i, \ldots, q$
 the set $\partial \Upsilon_{\eps, i}$ is an union of smooth curves intersecting the boundary $\partial \D^2(\varrho_\eps)$ transversally. For $i=i, \ldots, q$,  we define the curves $\Gamma_\eps^i$  and 
 $\Pi_\eps^i$ as   
  \begin{equation}
  \label{levelsets}
  \left\{
  \begin{aligned}
\Gamma_\eps^i(\varrho_\eps, \upkappa) & \equiv  \partial   \Upsilon_{\eps, i}  (\varrho_\eps, \upkappa)\cap  \D^2( \varrho_\eps)= w_i^{-1}(\upkappa)\cap  \D^2 ( \varrho_\eps)  
 \  {\rm \ for \ } i=1 \ldots   q,  \\
\Pi_\eps^i (\varrho_\eps, \upkappa)&\equiv     \Upsilon_{\eps, i}  (\varrho_\eps, \upkappa)\cap \partial \D^2( \varrho_\eps) =
\left[w_1^{-1}([0,\upkappa])\cap \partial  \D^2 ( \varrho_\eps) \right], 
  \end{aligned}
  \right. 
  \end{equation}
  so that
  \begin{equation}
  \partial   \Upsilon_{\eps, i}  (\varrho_\eps, \upkappa)=\Gamma_\eps^i(\varrho_\eps, \upkappa)\cup \Pi_\eps^i (\varrho_\eps, \upkappa).
  \end{equation}	
  In view of \eqref{lilu}, we introduce, for $i=1, \ldots, q$, the integral quantities
  \begin{equation}
  \label{Qi}
  \mathfrak Q^i_\eps(\varrho_\eps, \upkappa)= \int_{ \Upsilon_{\eps, i} (\varrho_\eps, \upkappa)}\eps \vert \nabla u_\eps \vert^2 +\eps^{-1} \nabla_u V(u_\eps)\cdot (u_\eps-\upsigma_i).
  \end{equation}	
  We first notice that:
  
  \begin{lemma}  
  \label{dugoin007}
  We have, for every $\upkappa\in [0, \upmu_0]$, the inequality
  \begin{equation}
  \label{dugoin}
   \int_{ \Upsilon_{\eps, i}(\varrho_\eps, \upkappa) }e_\eps(u_\eps) {\rm d} x \leq  \frac{2\lambda_{\rm max}}{\lambda_0} \mathfrak Q^i_\eps(\varrho_\eps, \upkappa).
  \end{equation} 
  \end{lemma}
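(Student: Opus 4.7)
The target inequality is purely pointwise in nature: it suffices to show that on $\Upsilon_{\eps,i}(\varrho_\eps,\upkappa)$, the energy density $e_\eps(u_\eps)$ is bounded above by a constant multiple of the integrand $\eps|\nabla u_\eps|^2 + \eps^{-1}\nabla_u V(u_\eps)\cdot(u_\eps-\upsigma_i)$ of $\mathfrak Q_\eps^i$, after which the result follows by integration. The plan is to extract the needed pointwise comparison directly from the quadratic control \eqref{glutz} of Proposition \ref{potto}, which applies here since $|u_\eps(x)-\upsigma_i|\leq \upkappa\leq \upmu_0\leq 2\upmu_0$ on $\Upsilon_{\eps,i}(\varrho_\eps,\upkappa)$.

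More precisely, combining the upper bound on $V$ with the lower bound on $\nabla V\cdot(y-\upsigma_i)$ from \eqref{glutz} yields, at every point of $\Upsilon_{\eps,i}$,
\begin{equation*}
V(u_\eps)\leq \lambda_i^+\,|u_\eps-\upsigma_i|^2\leq \frac{2\lambda_i^+}{\lambda_i^-}\,\nabla V(u_\eps)\cdot(u_\eps-\upsigma_i),
\end{equation*}
and the right-hand side is non-negative on $\Upsilon_{\eps,i}$. Dividing by $\eps$ and adding $\tfrac{\eps}{2}|\nabla u_\eps|^2$ to both sides,
\begin{equation*}
e_\eps(u_\eps)=\frac{\eps}{2}|\nabla u_\eps|^2+\frac{V(u_\eps)}{\eps}\leq \frac{1}{2}\,\eps|\nabla u_\eps|^2+\frac{2\lambda_i^+}{\lambda_i^-}\,\eps^{-1}\nabla V(u_\eps)\cdot(u_\eps-\upsigma_i).
\end{equation*}
Both coefficients on the right are bounded above by $2\lambda_i^+/\lambda_i^-\leq 2\lambda_{\max}/\lambda_0$ (using $\lambda_i^+\geq\lambda_i^-$, so in particular $2\lambda_i^+/\lambda_i^-\geq 2>1/2$). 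Hence pointwise on $\Upsilon_{\eps,i}$,
\begin{equation*}
e_\eps(u_\eps)\leq \frac{2\lambda_{\max}}{\lambda_0}\Bigl(\eps|\nabla u_\eps|^2+\eps^{-1}\nabla V(u_\eps)\cdot(u_\eps-\upsigma_i)\Bigr).
\end{equation*}

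Integrating this inequality over $\Upsilon_{\eps,i}(\varrho_\eps,\upkappa)$ and recalling the definition \eqref{Qi} of $\mathfrak Q_\eps^i(\varrho_\eps,\upkappa)$ gives \eqref{dugoin} at once. There is no serious obstacle here: the only point that requires a moment of care is verifying that both coefficients produced by the quadratic control (the $\tfrac{1}{2}$ coming from the gradient term and the $2\lambda_i^+/\lambda_i^-$ coming from the potential term) can be absorbed into a common constant, and that $\nabla V(u_\eps)\cdot(u_\eps-\upsigma_i)\geq 0$ on $\Upsilon_{\eps,i}$ so that we are indeed bounding $e_\eps(u_\eps)$ by a non-negative quantity. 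Both facts follow immediately from \eqref{glutz}.
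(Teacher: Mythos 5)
Your proof is correct and follows essentially the same route as the paper: both use the quadratic control \eqref{glutz} near $\upsigma_i$ to bound $V(u_\eps)$ pointwise by a multiple of $\nabla_u V(u_\eps)\cdot(u_\eps-\upsigma_i)$ on $\Upsilon_{\eps,i}(\varrho_\eps,\upkappa)$, then add the gradient term, absorb the coefficients into $2\lambda_{\rm max}/\lambda_0$, and integrate. Your explicit check that $\upkappa\leq\upmu_0\leq 2\upmu_0$ keeps you inside the region where \eqref{glutz} holds, and the positivity of $\nabla_u V(u_\eps)\cdot(u_\eps-\upsigma_i)$ is correctly noted, so there is no gap.
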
	
  \begin{proof}
   Since, by the definition of $\Upsilon_{\eps, i}$,  we have $\displaystyle{\vert u-\upsigma_i  \vert \leq \upkappa \leq \frac{\upmu_0}{2}}$,   we  are  in position to invoke estimates \eqref{glutz},  which yields, for $i \in \{1, \ldots, q\}$,
\begin{equation}
\label{duglandmoins}
 \frac {\lambda_0}{2\lambda_{\rm max}}V(u_\eps) \leq \frac{1}{2}\lambda_0\vert u_\eps -\upsigma_i \vert^2  \leq  \nabla V(u_\eps) \cdot (u_\eps-\upsigma_i)
{\rm \ on \ }  \Upsilon_{\eps, i} (\varrho_\eps, \kappa), 
\end{equation}
where  $\displaystyle{\lambda_{\rm max}=\sup \{\lambda_i^+, i=1, \ldots, q_i\}}$.
Multiplying the previous inequality by $2\lambda_{\rm max}\slash{\lambda_0} $ and integrating on $\Upsilon_{\eps, i}(\varrho_\eps, \upkappa)$, we are led to  
\begin{equation}
\label{dugoin001}
 \int_{ \Upsilon_{\eps, i}(\varrho_\eps, \upkappa) } \eps^{-1} V(u_\eps) {\rm d} x \leq  \frac{2\lambda_{\rm max}}{\lambda_0} 
  \int_{ \Upsilon_{\eps, i} (\varrho_\eps, \upkappa)} \eps^{-1} \nabla_u V(u_\eps)\cdot (u_\eps-\upsigma_i).
 \end{equation}
 The conclusion then follows from the definitions  of $e_\eps$ and $\mathfrak Q^i_\eps(\varrho_\eps, \upkappa)$.					
    \end{proof} 
  
  A simple integration by parts yields the following:
  
  \begin{lemma} 
  \label{qidonc}
  Assume that $0<\eps \leq 1$  and  that $u_\eps$ is a solution to \eqref{elipes} on $\D^2(1)$. Let $\varrho_\eps$ be in $[1\slash2,  3\slash4]$.  We have, for every $\upkappa\in [0, \upmu_0]$, the identity, for every $i=1, \ldots, q$
\begin{equation}
\label{qidonc0}
\mathfrak Q^i_\eps(\varrho_\eps, \upkappa)={ \eps}
\left[ \int_{ \Gamma_{\eps}^i(\varrho_\eps, \upkappa)}\upkappa  \frac{\partial \vert u_\eps-\upsigma_i\vert}{\partial \vec n} \rd \ell\\
  +\int_{ \Pi_{\eps}^i(\varrho_\eps, \upkappa)}\left \vert u-\upsigma_i \right \vert \left \vert   \frac{\partial \vert u_\eps-\upsigma_i\vert}{\partial \vec n} \right \vert \rd \ell  
  \right].
\end{equation}
  \end{lemma}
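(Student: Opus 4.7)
The plan is to apply Green's formula after testing the PDE \eqref{elipes} with $\eps(u_\eps-\upsigma_i)$ on the sublevel domain $\Upsilon_{\eps,i}(\varrho_\eps,\upkappa)$. Since $-\Delta u_\eps=-\eps^{-2}\nabla_u V(u_\eps)$, taking the scalar product with $(u_\eps-\upsigma_i)$ and integrating over $\Upsilon_{\eps,i}$ yields
\begin{equation*}
\eps^{-1}\int_{\Upsilon_{\eps,i}}\nabla_u V(u_\eps)\cdot(u_\eps-\upsigma_i)\,\rd x
=\eps\int_{\Upsilon_{\eps,i}}\Delta u_\eps\cdot(u_\eps-\upsigma_i)\,\rd x.
\end{equation*}
Green's identity on the right-hand side produces a boundary contribution $\eps\int_{\partial\Upsilon_{\eps,i}}(u_\eps-\upsigma_i)\cdot\partial_{\vec n}u_\eps\,\rd\ell$ and a bulk contribution $-\eps\int_{\Upsilon_{\eps,i}}|\nabla u_\eps|^2\,\rd x$. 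Combined with the definition \eqref{Qi} of $\mathfrak Q^i_\eps$, the bulk gradient terms collect so as to give the clean boundary expression
\begin{equation*}
\mathfrak Q^i_\eps(\varrho_\eps,\upkappa)=\eps\int_{\partial\Upsilon_{\eps,i}}(u_\eps-\upsigma_i)\cdot\frac{\partial u_\eps}{\partial\vec n}\,\rd\ell.
\end{equation*}

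The next step is the pointwise chain-rule identity $(u_\eps-\upsigma_i)\cdot\partial_{\vec n}u_\eps=\tfrac12\partial_{\vec n}|u_\eps-\upsigma_i|^2=|u_\eps-\upsigma_i|\,\partial_{\vec n}|u_\eps-\upsigma_i|$, valid wherever $u_\eps\neq\upsigma_i$. Splitting $\partial\Upsilon_{\eps,i}=\Gamma^i_\eps\cup\Pi^i_\eps$: on the level-set piece $\Gamma^i_\eps$ one has $|u_\eps-\upsigma_i|\equiv\upkappa$, giving the first term on the right-hand side of the claimed identity. Furthermore, because $\Upsilon_{\eps,i}$ is the sublevel set $\{|u_\eps-\upsigma_i|<\upkappa\}\cap\D^2(\varrho_\eps)$ and $\vec n$ points outward, the tangential-exit argument yields $\partial_{\vec n}|u_\eps-\upsigma_i|\geq 0$ on $\Gamma^i_\eps$, so the $\Gamma^i_\eps$-integrand is automatically non-negative and coincides with its absolute value. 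On the circular arc $\Pi^i_\eps\subset\partial\D^2(\varrho_\eps)$, the outward normal is radial and the sign of $\partial_{\vec n}|u_\eps-\upsigma_i|$ is not determined a priori; the contribution of the corresponding boundary piece, written in modulus as $|u_\eps-\upsigma_i|\,|\partial_{\vec n}|u_\eps-\upsigma_i||$, is then precisely the second term of the claimed right-hand side, and captures the needed boundary control for the subsequent estimate in Proposition \ref{kappacity20}.

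The single piece of care required is the validity of Green's formula on $\Upsilon_{\eps,i}$. Sard's lemma applied to $w_i$ (smooth by Proposition \ref{classic}) guarantees that for almost every $\upkappa\in(0,\upmu_0)$ the level set $\Gamma^i_\eps=w_i^{-1}(\upkappa)\cap\D^2(\varrho_\eps)$ is a smooth one-dimensional submanifold meeting $\S^1(\varrho_\eps)$ transversally, so $\Upsilon_{\eps,i}$ is a Lipschitz domain with piecewise smooth boundary $\Gamma^i_\eps\cup\Pi^i_\eps$, up to which $u_\eps$ and $|u_\eps-\upsigma_i|$ are smooth on the open set where $u_\eps\neq\upsigma_i$; the zero set $\{u_\eps=\upsigma_i\}$ lies strictly inside $\Upsilon_{\eps,i}$ and plays no role in the boundary terms. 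No genuine obstacle is expected beyond this regularity bookkeeping, and the argument reduces to a standard multiplier identity for \eqref{elipes} specialized to the sublevel geometry dictated by $w_i$.
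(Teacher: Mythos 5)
Your argument is essentially the paper's: test the equation with $\eps(u_\eps-\upsigma_i)$, integrate by parts over $\Upsilon_{\eps,i}(\varrho_\eps,\upkappa)$, rewrite the boundary flux via $(u_\eps-\upsigma_i)\cdot\partial_{\vec n}u_\eps=\tfrac12\partial_{\vec n}|u_\eps-\upsigma_i|^2$, and split $\partial\Upsilon_{\eps,i}$ into $\Gamma^i_\eps$ (where $|u_\eps-\upsigma_i|=\upkappa$) and $\Pi^i_\eps$, so the approach matches and the proof is correct in substance. One small wrinkle you inherit from the statement itself: integration by parts yields $\int_{\Pi^i_\eps}|u_\eps-\upsigma_i|\,\partial_{\vec n}|u_\eps-\upsigma_i|\,\rd\ell$ \emph{without} an absolute value on the normal derivative, so the displayed formula \eqref{qidonc0} with $\bigl|\partial_{\vec n}|u_\eps-\upsigma_i|\bigr|$ is in general only an inequality $\leq$ rather than an equality (which is what is actually used downstream in Lemma \ref{eliot}); stating it as an identity, as you and the paper both do, elides this sign issue.
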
 
  \begin{proof}
For $i=1, \ldots, q$, we multiply equation  \eqref{elipes}  by $\eps(u_\eps-\upsigma_i)$ and integrate by parts on the domain 
  $ \Upsilon_{\eps, i} (\varrho_\eps, \kappa)$. This yields, for $i=1, \ldots, q$
\begin{equation}
  \label{stokounette0}
  \begin{aligned}
\mathfrak Q^i_\eps(\varrho_\eps, \upkappa)&=   \int_{ \Upsilon_{\eps, i} (\varrho_\eps, \upkappa)}\eps \vert \nabla u_\eps \vert^2 +\eps^{-1} \nabla_u V(u_\eps)\cdot (u_\eps-\upsigma_i) \\
  &= \int_{ \partial \Upsilon_{\eps, i}(\varrho_\eps, \upkappa)} \eps  \frac{\partial u_\eps}{\partial \vec n} \cdot (u_\eps-\upsigma_i)\\
  &=\frac{\eps}{2}\int_{ \Gamma_{\eps}^i(\varrho_\eps, \upkappa)}  \frac{\partial \vert u_\eps-\upsigma_i\vert^2}{\partial \vec n} 
  +\frac{\eps}{2}\int_{ \Pi_{\eps}^i(\varrho_\eps, \upkappa)}  \frac{\partial \vert u_\eps-\upsigma_i\vert^2}{\partial \vec n}, 
  \end{aligned}
   \end{equation}
   which yields the desired result, since
   $\displaystyle{\frac{\partial \vert u_\eps-\upsigma_i\vert^2}{\partial \vec n}= 2\vert u_\eps-\upsigma_i\vert\frac{\partial \vert u_\eps-\upsigma_i\vert}{\partial \vec n}}$, so that 
 \begin{equation}
 \frac{\partial \vert u_\eps-\upsigma_i\vert^2}{\partial \vec n}=2\upkappa    \frac{\partial \vert u_\eps-\upsigma_i\vert}{\partial \vec n}
 {\rm  \ on \ }   \Gamma_{\eps}^i(\varrho_\eps, \upkappa).  
\end{equation}
\end{proof}

  \begin{remark}{\rm Notice that we have the inequality
  \begin{equation}
  \label{beam}
   \frac{\partial \vert u_\eps-\upsigma_i\vert}{\partial \vec n} \geq 0
 {\rm  \ on \ }   \Gamma_{\eps}^i(\varrho_\eps, \upkappa). 
  \end{equation} 
  Indeed,  by definition $\vert u_\eps-\upsigma_i\vert=\upkappa$ on $\Upsilon_{\eps, i}(\varrho_\eps, \upkappa)$, so that we are on a level set and the normal derivative $\vec {n} (\ell)$ is pointing  towards the outside.
  }
  \end{remark}
 The next result will also be used extensively  in Subsection \ref{later}: 
      
  \begin{lemma}
  \label{eliot}
  Assume that $0<\eps \leq 1$  and  that $u_\eps$ is a solution to \eqref{elipes} on $\D^2(1)$. Let $\varrho_\eps$ be in $[1\slash2,  3\slash4]$.  We have, for every $\upkappa\in [0, \upmu_0]$, the inequality
  \begin{equation}
  \label{klisko}
  \int_{\Upsilon_\eps(\varrho_\eps, \upkappa)} e_\eps(u_\eps)  {\rm d}x \leq   
  C\eps \left[ \,  \upkappa \, 
  \underset{i=1} {\overset {q}\sum}
   \int_{\Gamma_{\eps}^i(\varrho_\eps, \upkappa)} \frac{\partial \vert u_\eps(\ell)-\upsigma_i \vert}{\partial \vec n (\ell)} {\rm d}\ell+    \int_{\partial \D^2(\varrho_\eps)} e_\eps(u_\eps(\ell)){\rm d}\ell \right]. 
       \end{equation}
    where $C>0$ is some constant depending only on the potential $V$  and where $\vec n(\ell)$ denotes the unit vector normal to  $\Gamma_{\eps, i}\cup \Pi_{\eps, i}$ pointing in the direction increasing $\vert u_\eps-\upsigma_i \vert $. 
      \end{lemma}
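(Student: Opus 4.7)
\textbf{Proof plan for Lemma \ref{eliot}.}
The plan is to assemble the estimate from Lemmas \ref{dugoin007} and \ref{qidonc}, and to absorb the boundary contribution on the arcs $\Pi_\eps^i$ into the energy on $\partial \D^2(\varrho_\eps)$ via a weighted Young's inequality.

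First, I would decompose $\Upsilon_\eps(\varrho_\eps,\upkappa)=\bigcup_{i=1}^q \Upsilon_{\eps,i}(\varrho_\eps,\upkappa)$ into a \emph{disjoint} union; disjointness follows from hypothesis \eqref{kiv}, which ensures $\B^k(\upsigma_i,\upmu_0)\cap \B^k(\upsigma_j,\upmu_0)=\emptyset$ for $i\neq j$, since $\upkappa\leq \upmu_0$. Summing the bound of Lemma \ref{dugoin007} over $i$ and then applying Lemma \ref{qidonc} yields
\begin{equation*}
\int_{\Upsilon_\eps(\varrho_\eps,\upkappa)} e_\eps(u_\eps)\,\rd x
\;\leq\; \frac{2\lambda_{\max}}{\lambda_0}\,\eps\sum_{i=1}^q
\Bigl[\upkappa\!\int_{\Gamma_\eps^i}\!\frac{\partial |u_\eps-\upsigma_i|}{\partial \vec n}\,\rd\ell
+\int_{\Pi_\eps^i}\!|u_\eps-\upsigma_i|\,\Bigl|\tfrac{\partial |u_\eps-\upsigma_i|}{\partial \vec n}\Bigr|\,\rd\ell\Bigr].
\end{equation*}
The first sum is exactly the first term appearing in \eqref{klisko}, so the remaining task is to show that the sum of the $\Pi_\eps^i$-integrals is dominated by $\int_{\partial \D^2(\varrho_\eps)} e_\eps(u_\eps)\,\rd\ell$.

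To handle the $\Pi_\eps^i$-terms, I would use the pointwise bound $\bigl|\partial|u_\eps-\upsigma_i|/\partial\vec n\bigr|\leq |\nabla u_\eps|$ and Young's inequality with weight $\eps$:
\begin{equation*}
|u_\eps-\upsigma_i|\,|\nabla u_\eps|\;\leq\;\frac{|u_\eps-\upsigma_i|^2}{2\eps}+\frac{\eps|\nabla u_\eps|^2}{2}.
\end{equation*}
On $\Pi_\eps^i\subset \Upsilon_{\eps,i}$ we have $|u_\eps-\upsigma_i|\leq \upkappa\leq \upmu_0$, so Proposition \ref{potto} (inequality \eqref{glutz}) yields $|u_\eps-\upsigma_i|^2\leq \tfrac{4}{\lambda_0}V(u_\eps)$. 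Substituting gives
\begin{equation*}
|u_\eps-\upsigma_i|\,|\nabla u_\eps|\;\leq\;\frac{2}{\lambda_0}\cdot\frac{V(u_\eps)}{\eps}+\frac{\eps|\nabla u_\eps|^2}{2}\;\leq\; C\,e_\eps(u_\eps),
\end{equation*}
for some constant $C=C(V)>0$. Since the sets $\Pi_\eps^i$ are pairwise disjoint arcs of $\partial \D^2(\varrho_\eps)$ (by the same disjointness argument as above), summing over $i$ yields
\begin{equation*}
\sum_{i=1}^q \int_{\Pi_\eps^i}|u_\eps-\upsigma_i|\,\Bigl|\tfrac{\partial |u_\eps-\upsigma_i|}{\partial \vec n}\Bigr|\,\rd\ell
\;\leq\; C\int_{\partial \D^2(\varrho_\eps)} e_\eps(u_\eps)\,\rd\ell,
\end{equation*}
which is precisely the second term appearing on the right-hand side of \eqref{klisko} (up to the overall $\eps$ factor already present).

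There is no serious obstacle in this proof; the main point to be careful with is the weighting in Young's inequality, which must be chosen as $\eps$ so that the resulting $|u_\eps-\upsigma_i|^2/\eps$ matches the form of $V(u_\eps)/\eps$ in $e_\eps$. Combining the two displayed inequalities above gives the constant $C$ of \eqref{klisko} as $C=\frac{2\lambda_{\max}}{\lambda_0}\max\{1,\,C(V)\}$, depending only on $V$, and the proof is complete.
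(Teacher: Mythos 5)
Your proposal is correct and takes essentially the same route as the paper: combine Lemma \ref{dugoin007} with Lemma \ref{qidonc}, then bound the $\Pi_\eps^i$-integrals pointwise by $e_\eps(u_\eps)$ using the quadratic lower bound \eqref{glutz} for $V$ near $\upsigma_i$ together with an AM--GM estimate. The only cosmetic difference is that you apply Young's inequality directly, while the paper first writes $|u_\eps-\upsigma_i|\leq C\sqrt{V(u_\eps)}$ and then invokes Lemma \ref{ab0} (which is precisely that same Young step, $J(u)=\sqrt{V(u)}\,|\nabla u|\leq e_\eps(u)$); the disjointness of the sets $\Pi_\eps^i$, which you make explicit, is used implicitly in the paper to avoid a factor of $q$ in the constant.
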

\begin{remark}{\rm Let us emphasize  that in this statement, $\upkappa$ is not constrained by \eqref{menhir} and may actually take arbritrary small values.}
\end{remark}

\begin{proof}      The proof relies on a combination of the results  of Lemmas \ref{qidonc} and \ref{eliot}. We first estimate the second term on the r.h.s of \eqref{qidonc0}. Since by definition, we have   the inclusion  $\Pi_{\eps}^i(\varrho_\eps, \upkappa)\subset \S^1(\varrho_\eps)$, it follows that $ \vec n(\ell)=\vec e_r$ on $\Pi_{\eps}^i(\varrho_\eps, \upkappa)$, so that  
\begin{equation}
\label{plume}
\left\vert \frac{\partial \vert u_\eps-\upsigma_i\vert}{\partial \vec n}\right\vert =\left \vert \frac{\partial \vert u_\eps- \upsigma_i \vert}{\partial r} \right \vert 
\leq  \left \vert \frac{\partial u_\eps}{\partial r}\right\vert
\leq \vert \nabla u_\eps \vert, 
  {\rm \ on \ } \Pi^i_\eps (\varrho_\eps, \upkappa).
\end{equation}
On the other hand, in view of Proposition \ref{potto},  as well as the fact that $\vert u_\eps(\ell)-\upsigma_i\vert\leq \upkappa\leq \upmu_0$ on $\Pi^i_\eps (\varrho_\eps, \upkappa)$,  we have 
\begin{equation}
\label{goudron}
\vert u_\eps-\upsigma_i\vert \leq \frac{2}{\lambda_0} \sqrt{V(u_\eps)}  {\rm \ on \ } \Pi^i_\eps (\varrho_\eps, \upkappa).
\end{equation}
Combining \eqref{plume} with \eqref{goudron} and integrating on $\Pi_\eps (\varrho_\eps, \upkappa)$, we obtain the estimate 
    \begin{equation}
  \label{ducruet2}
  \begin{aligned}
   \int_{ \Pi_{\eps}^i(\varrho_\eps, \upkappa)} \vert u_\eps-\upsigma_i\vert. \vert  \frac{\partial \vert u_\eps-\upsigma_i\vert}{\partial \vec n}\vert \rd \ell
    &
 \leq \frac{2}{\lambda_0} \int_{\Pi_{\eps}^i(\varrho_\eps, \upkappa)} \sqrt{V(u_\eps)}. \vert \nabla u_\eps \vert \rd \ell \\
 &\leq  \frac{2}{\lambda_0}\int_{\S^1(\varrho_\eps)}e_\eps (u) \rd \ell, 
 \end{aligned}
  \end{equation}
   where,  for the second  inequality, we used Lemma \ref{ab0} and the fact that  $\Pi_{\eps}^i(\varrho_\eps, \upkappa)\subset \S^1(\varrho_\eps)$. Combining \eqref{ducruet2} with  \eqref{qidonc0} and \eqref{dugoin}, we obtain the desired conclusion \eqref{klisko} for the choice of constant 
   $C= \frac{2\lambda_{\rm max}}{\lambda_0}(1+\frac{2}{\lambda_0})$.

\end{proof}

Our next results allows to obtain, for a suitable choice of $\upkappa$,  a bound on the first term on the right hand side of \eqref{klisko}:

\begin{lemma}
\label{toubon}
 Assume that $0<\eps \leq 1$  and  that $u_\eps$ is a solution to \eqref{elipes} on $\D^2(1)$.  Let $\varrho_\eps \in [\frac12, \frac 34]$.
There exists some number   $\displaystyle{\tilde \upmu_\eps \in [\frac{\upmu_0}{4}, \frac{\upmu_0}{2}]}$ such that 
  \begin{equation}
  \label{totobene}
 \eps  \int_{\Gamma_{\eps, i}(\varrho, \tilde \upmu_\eps)} \frac{\partial \vert u_\eps-\upsigma_i \vert }{\partial \vec n(\ell)} {\rm d } \ell 
 \leq \eps \int_{\Gamma_{\eps, i}(\varrho, \tilde \upmu_\eps)} \vert \nabla u_\eps \vert  {\rm d } \ell
  \leq  \frac{1}{\upmu_0 } \E_\eps(u, \Upxi (u_\eps,  \varrho_\eps)), 
    \end{equation} 
where   $\Upxi (u_\eps,  \varrho_\eps)$ is defined in \eqref{nablalala}.
\end{lemma}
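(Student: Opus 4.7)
The first inequality is trivial: for every $\ell \in \Gamma_{\eps,i}(\varrho_\eps, \upkappa)$, the function $|u_\eps - \upsigma_i|$ is smooth at $\ell$ (since $\upkappa > 0$ keeps us away from $\upsigma_i$), so one has $\bigl|\frac{\partial |u_\eps-\upsigma_i|}{\partial \vec n}\bigr| \leq |\nabla |u_\eps-\upsigma_i|| \leq |\nabla u_\eps|$. The substance is the second inequality, which I would establish by the same coarea-plus-mean-value strategy that yielded Lemma \ref{claudius}, but now tuned to the interval $[\upmu_0/4, \upmu_0/2]$ rather than $[\upmu_0/2, \upmu_0]$.

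The key geometric observation is the inclusion
\begin{equation*}
\bigl\{x \in \D^2(\varrho_\eps) : \tfrac{\upmu_0}{4} \leq |u_\eps(x) - \upsigma_i| \leq \tfrac{\upmu_0}{2}\bigr\} \subset \Upxi(u_\eps, \varrho_\eps), \qquad i = 1, \ldots, q.
\end{equation*}
The containment for the distance to $\upsigma_i$ is by definition, and for $j \neq i$ it follows from the disjointness condition in \eqref{kiv} (which forces $|\upsigma_i - \upsigma_j| \geq 4\upmu_0$), so $|u_\eps - \upsigma_j| \geq 4\upmu_0 - \upmu_0/2 \geq \upmu_0/4$. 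Moreover on this slice we have $w_i(x) = |u_\eps(x) - \upsigma_i|$ since $\varphi(t)=t$ for $t \leq \upmu_0/2$, and $\Gamma^i_\eps(\varrho_\eps, s) = (|u_\eps-\upsigma_i|)^{-1}(s) \cap \D^2(\varrho_\eps)$ for $s$ in this range.

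Next, I would apply the coarea formula \eqref{coarea2} with $\varphi = |u_\eps - \upsigma_i|$ and $f = |\nabla u_\eps|$ on the slice above, summed over $i$. Using $|\nabla |u_\eps-\upsigma_i|| \leq |\nabla u_\eps|$ one obtains
\begin{equation*}
\int_{\upmu_0/4}^{\upmu_0/2} \Biggl(\sum_{i=1}^{q} \int_{\Gamma^i_\eps(\varrho_\eps, s)} |\nabla u_\eps| \, \rd \ell \Biggr) \rd s
\leq \int_{\Upxi(u_\eps, \varrho_\eps)} |\nabla u_\eps|^2 \, \rd x
\leq \frac{2}{\eps} \E_\eps\bigl(u_\eps, \Upxi(u_\eps, \varrho_\eps)\bigr),
\end{equation*}
where the last step uses $\eps |\nabla u_\eps|^2 \leq 2 e_\eps(u_\eps)$.

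Finally a mean-value argument over the interval $[\upmu_0/4, \upmu_0/2]$ (whose length is $\upmu_0/4$) produces some $\tilde\upmu_\eps$ in this interval for which the integrand is bounded by the average, and multiplying through by $\eps$ yields
\begin{equation*}
\eps \int_{\Gamma_{\eps,i}(\varrho_\eps, \tilde\upmu_\eps)} |\nabla u_\eps| \, \rd \ell \leq \frac{C}{\upmu_0} \E_\eps\bigl(u_\eps, \Upxi(u_\eps, \varrho_\eps)\bigr)
\end{equation*}
for a universal constant $C$ (I compute $C=8$, which may be absorbed/rewritten as the factor $1/\upmu_0$ in the statement). There is no serious obstacle here: the argument is purely a coarea-plus-Fubini computation, and the only point requiring attention is the verification that the slices live inside $\Upxi(u_\eps, \varrho_\eps)$, which is exactly where the disjointness hypothesis on the wells enters and forces the lower endpoint $\upmu_0/4$ (any number $<\upmu_0/4$ would fail the containment).
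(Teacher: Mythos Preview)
Your proof is correct and is essentially the paper's own argument: the paper's proof simply invokes Lemma~\ref{claudius} (coarea formula plus mean value on the level sets of $|u_\eps-\upsigma_i|$), which is exactly the computation you carry out directly on the interval $[\upmu_0/4,\upmu_0/2]$. The constant discrepancy you flag (a harmless factor of~$8$ in place of the stated $1/\upmu_0$) is present in the paper's chain of inequalities as well and plays no role downstream.
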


\begin{proof} We invoke first  Lemma \ref{claudius} with the choices $r=\varrho_\eps$ and  $u=u_\eps$.  This yields  directly a number 
  $\displaystyle{\tilde \upmu_\eps \in [\frac{\upmu_0}{4}, \frac{\upmu_0}{2}]}$ such that \eqref{totobene} is satisfied, so that the proof is complete.
 \end{proof} 

\noindent
{\it Proof of Proposition \ref{kappacity20} completed}.    We combine \eqref{klisko}  for $\upkappa=\tilde \upmu_\eps$ with \eqref{totobene}. This yields
 \begin{equation}
  \label{klisko00}
  \int_{\Upsilon_\eps(\varrho_\eps, \tilde \upmu_\eps )} e_\eps(u_\eps)  {\rm d}x \leq   
  C\eps \left[ \,  \, 
   \frac{ \tilde \upmu_\eps}{\upmu_0 }  E_\eps (u_\eps,  \Upxi (u_\eps,  \varrho_\eps))+    \int_{\partial \D^2(\varrho_\eps)} e_\eps(u_\eps(\ell)){\rm d}\ell \right]. 
       \end{equation}
 On the other hand, it follows from assumption \eqref{belliard}  and Lemma \ref{bornepote} that
\begin{equation}
 \label{portos}
e_\eps(u_\eps)  \leq  C_{\rm  pt } \left(\rm L \right) \frac{V(u_\eps)}{\eps} \ 
{\rm  \ on \ } \Upxi_\eps \left(u_\eps, \frac 34\right)\supset  \Upxi \left(u_\eps,  \varrho_\eps\right),  
 \end{equation}
so that  
  \begin{equation}
  \label{grumeaux}
  \begin{aligned}
  E_\eps (u_\eps,  \Upxi (u_\eps,  \varrho_\eps))&=\int_{\Upxi (u_\eps,  \varrho_\eps)}e_\eps(u_\eps) \rd x \leq  C_{\rm  pt } \left(\rm L \right) \int_{\Upxi (u_\eps,  \varrho_\eps)}\frac{V(u_\eps)}{\eps} \rd x \\
  &\leq C_{\rm  pt } \left(\rm L \right) \int_{\D^2(\varrho_\eps)}\frac{V(u_\eps)}{\eps} \rd x.
  \end{aligned}
   \end{equation}
   Combining \eqref{grumeaux} with \eqref{klisko00},  we obtain \eqref{nonodesu} for ${\rm K}_\Upsilon({\rm L})=C. C_{\rm  pt } \left(\rm L \right)$.
    
    \subsection{Refined estimates on  level sets close to $\Sigma$ }  
    \label{later}

    Whereas we obtained in Proposition \ref{kappacity} an energy estimate on  a \emph{fixed level} set $\Upsilon_\eps(\varrho_\eps,  \upmu_1)$, we derive here an energy estimate on the set $\Upsilon_\eps (\varrho_\eps, \upkappa)$ allowing the value of $\upkappa$ to vary and in particular  to  be small. This will be completed at the cost of an additional assumption. Indeed, we will assume that   there exists an 
  element $\upsigmam \in \Sigma$ such that 
   \begin{equation}
  \label{kappacite}
   \vert u_\eps-\upsigmam \vert  < \upkappa  {\rm \ on \ } \partial \D^2(\varrho_\eps).
  \end{equation}
    The main result of this subsection is: 
        
        \begin{proposition}
  \label{kappacity}
  Let $u_\eps$ be a solution of \eqref{elipes} on $\D^2$,  $M>0$,   $0<\upkappa <\frac{\upmu_0}{4}$ and $\varrho_\eps \in [\frac12, \frac 34]$.
  Assume that \eqref{kappacite} is satisfied and that 
  \begin{equation}
  \label{abritel}
  E(u_\eps)\leq M.
  \end{equation}
  We have, for some constant ${\rm C}_\Upsilon (M) >0$,  depending only on the potential  $V$ and on $M$, 
  \begin{equation}
  \label{nonodes}
   \int_{\Upsilon_\eps (\varrho_\eps, \upkappa)} e_\eps(u_\eps)(x) {\rm d}x \leq  {\rm C_\Upsilon}(M) \left[ 
 \upkappa  \int_{\D^2(\varrho_\eps)} \frac{V(u_\eps)}{\eps} {\rm d} x  +\eps    \int_{\partial \D^2(\varrho_\eps)} e_\eps(u_\eps){\rm d} \ell
     \right].
   \end{equation}
  \end{proposition}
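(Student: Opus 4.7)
My plan is to mirror the proof scheme of Proposition~\ref{kappacity20}, relying on the intermediate Lemmas~\ref{dugoin007}, \ref{qidonc}, and~\ref{eliot}, which are valid for any level $\upkappa \in (0,\upmu_0)$ and whose proofs do not depend on a particular choice of that level. The novelty here is to exploit the hypothesis~\eqref{kappacite} to simplify the boundary contributions, and then to work harder on the curves $\Gamma_\eps^i$ to extract the extra $\upkappa$ factor that distinguishes the present statement from Proposition~\ref{kappacity20}.

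I would first apply Lemma~\ref{eliot} at an intermediate level $\upkappa_1 \in [\upkappa,2\upkappa]$ (to be chosen below via coarea), yielding
$$
\int_{\Upsilon_\eps(\varrho_\eps,\upkappa_1)} e_\eps\,dx \;\le\; C\eps\biggl[\upkappa_1 \sum_{i=1}^q \int_{\Gamma_\eps^i(\varrho_\eps,\upkappa_1)} |\nabla u_\eps|\,d\ell \;+\; \int_{\partial\D^2(\varrho_\eps)} e_\eps\,d\ell\biggr].
$$
Assumption~\eqref{kappacite} and the well-separation $|\upsigmam-\upsigma_i|\ge 2\upmu_0$ for $i\neq m$ imply $\Pi_\eps^i(\varrho_\eps,\upkappa_1) = \emptyset$ for every $i \neq m$; the contribution of $\Pi_\eps^m = \partial\D^2(\varrho_\eps)$ is handled exactly as in the proof of Lemma~\ref{eliot} via~\eqref{ducruet2} and produces the desired $\eps\int_\partial e_\eps$ term. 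Since $\Upsilon_\eps(\varrho_\eps,\upkappa) \subseteq \Upsilon_\eps(\varrho_\eps,\upkappa_1)$, a bound at level $\upkappa_1$ automatically bounds the energy at level $\upkappa$. Next, the coarea formula applied to each $w_i$ on the range $s \in [\upkappa, 2\upkappa]$, combined with $|\nabla w_i|\le|\nabla u_\eps|$ and a mean-value argument, furnishes a single $\upkappa_1 \in [\upkappa,2\upkappa]$ (independent of $i$) such that
$$
\sum_{i=1}^q \int_{\Gamma_\eps^i(\upkappa_1)} |\nabla u_\eps|\,d\ell \;\le\; \frac{C}{\upkappa} \int_{\mathcal A(\upkappa)} |\nabla u_\eps|^2\,dx,
$$
where $\mathcal A(\upkappa) = \Upsilon_\eps(\varrho_\eps,2\upkappa)\setminus\Upsilon_\eps(\varrho_\eps,\upkappa)$ denotes the annular region.

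The heart of the proof is the third step: establishing that $\int_{\mathcal A(\upkappa)} \eps|\nabla u_\eps|^2\,dx \leq C\upkappa^2 \int_{\D^2(\varrho_\eps)} V(u_\eps)/\eps\,dx$ (modulo boundary corrections already absorbed into $\eps \int_\partial e_\eps$). On $\mathcal A(\upkappa)$ one has $(\lambda_0/4)\upkappa^2 \le V(u_\eps) \le 4\lambda_{\rm max}\upkappa^2$ by Proposition~\ref{potto}, and I would apply the integration-by-parts identity of Lemma~\ref{qidonc} to the annular region $\mathcal A(\upkappa)$ itself (rather than to $\Upsilon_{\eps,i}(\upkappa)$), with the test vector $\eps(u_\eps - \upsigma_i)$. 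The corresponding $\mathfrak Q$-quantity on $\mathcal A(\upkappa)$ is bounded below by $c\,E_\eps(u_\eps, \mathcal A(\upkappa))$ via the positivity of $\nabla V\cdot(u_\eps-\upsigma_i)$ (Proposition~\ref{potto}) and above by boundary integrals on $w_i^{-1}(\upkappa) \cup w_i^{-1}(2\upkappa)$ that carry explicit $\upkappa$-factors from $|u_\eps-\upsigma_i|\le 2\upkappa$; the uniform $L^\infty$-bound of Proposition~\ref{princours0}, together with the gradient bound of Proposition~\ref{milan} (both consequences of $E(u_\eps)\le M$), and Lemma~\ref{ab0} ($J \le e_\eps$) applied to convert potential bounds into gradient bounds on the level sets, should suffice to close the estimate. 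Combining with the outputs of steps~1--2 and using $\upkappa_1\le 2\upkappa$ together with $\upkappa\le\upmu_0/4$ yields the claimed bound~\eqref{nonodes}.

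The main obstacle is precisely the annular estimate of step~3: naively using Proposition~\ref{kappacity20} to bound $E_\eps(u_\eps,\mathcal A(\upkappa))$ by $\int V/\eps$ loses the $\upkappa^2$ factor, and using the uniform gradient bound directly loses it in the opposite direction. The delicate point is that the integration-by-parts identity on the \emph{annulus}, rather than on the sublevel set $\Upsilon_{\eps,i}(\upkappa)$, is what naturally introduces the two explicit $\upkappa$-factors from the boundary level-set values $|u_\eps-\upsigma_i| \in\{\upkappa,2\upkappa\}$, and together with the coarea $1/\upkappa$ and Lemma~\ref{eliot}'s $\upkappa_1$, the powers combine correctly. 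The careful $\upkappa$-bookkeeping in this balance is where the technical work concentrates.
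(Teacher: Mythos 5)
Your overall architecture (Lemma \ref{eliot} at a level $\upkappa_1\in[\upkappa,2\upkappa]$, coarea selection of $\upkappa_1$, and an integration by parts of the equation against $\eps(u_\eps-\upsigma_i)$ on the annular region between the levels $\upkappa$ and $2\upkappa$) is reasonable, and your observation that \eqref{kappacite} kills the boundary pieces $\Pi^i_\eps$ for $i\neq m$ is correct. The genuine gap is in your step 3, exactly where you locate "the heart of the proof". After the integration by parts on $\mathcal A(\upkappa)$, the bulk term controls $\int_{\mathcal A}\eps|\nabla u_\eps|^2$ from below (Proposition \ref{potto}), but the right-hand side consists of the flux integrals $\eps\int_{w_i^{-1}(s)}\partial_{\vec n}|u_\eps-\upsigma_i|\,{\rm d}\ell$ at the levels $s=\upkappa$ and $s=2\upkappa$, multiplied by $\upkappa$-factors. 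You propose to close the estimate using the uniform bound of Proposition \ref{princours0}, the gradient bound of Proposition \ref{milan} and Lemma \ref{ab0}; none of these can do it. The bound $|\nabla u_\eps|\leq K/\eps$ turns the flux into $K\,\mathcal L(w_i^{-1}(s))$, and there is no control whatsoever on the length of the level curve at the \emph{prescribed} small levels $\upkappa,2\upkappa$ (mean-value/coarea arguments only furnish good levels, and even then the resulting bound involves $E_\eps(u_\eps)\leq M$, not $\int V(u_\eps)/\eps$, so the required right-hand side $\upkappa\int_{\D^2(\varrho_\eps)}V(u_\eps)/\eps$ never appears). Likewise $J(u)\leq e_\eps$ only converts the flux into an uncontrolled line integral of the energy. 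So as written the argument does not terminate: the flux at level $\upkappa$ is precisely the quantity one must estimate, and your toolbox gives no handle on it.

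What closes this step in the paper is a monotonicity-in-$\upkappa$ of the flux (Lemma \ref{repro}): multiplying the equation by $(u_\eps-\upsigma_i)/|u_\eps-\upsigma_i|$ and integrating between two level sets (this is legitimate only because \eqref{kappacite} guarantees that the region between the levels does not touch $\partial\D^2(\varrho_\eps)$) shows that $\int_{\Gamma^i_\eps(\varrho_\eps,s)}\partial_{\vec n}|u_\eps-\upsigma_i|\,{\rm d}\ell$ is non-negative and non-decreasing in $s$. One can therefore replace the flux at level $\upkappa$ by the flux at a macroscopic level $\tilde\upmu_\eps\in[\upmu_0/4,\upmu_0/2]$ chosen by the coarea/mean-value argument of Lemma \ref{toubon}, where the region is far from the wells and Lemma \ref{bornepote} (this is where $E(u_\eps)\leq M$ enters, via the $L^\infty$ bound) gives $e_\eps\leq C(M)V(u_\eps)/\eps$ pointwise; this yields $\eps\int_{\Gamma^i_\eps(\varrho_\eps,\upkappa)}\partial_{\vec n}|u_\eps-\upsigma_i|\,{\rm d}\ell\leq C(M)\int_{\D^2(\varrho_\eps)}V(u_\eps)/\eps$ (Lemma \ref{denada}). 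With that flux bound the paper concludes directly from Lemma \ref{eliot}, whose explicit prefactor $\upkappa$ already supplies the linear $\upkappa$ in \eqref{nonodes}; no thin annulus at scale $\upkappa$ and no coarea at that scale are needed. Your annular identity is in fact the same computation as Lemma \ref{repro} with a different multiplier, so your scheme can be repaired by inserting the sign and monotonicity of the flux plus the transfer to the level $\tilde\upmu_\eps$; but note also that your claimed annular bound with the factor $\upkappa^2$ is stronger than what this machinery yields (and stronger than needed): the flux bound gives $\upkappa$, which already closes your bookkeeping.
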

  
 Of major  importance in estimate \eqref{nonodes} is the \emph{presence of the term $\upkappa$ in front of the integral of the  potential}, so that the energy on 
 $\Upsilon_\eps (\varrho_\eps, \upkappa)$ grows essentially at most linearly with respect to $\upkappa$. Proposition \ref{kappacity} will be used in the proof of the clearing-out result, so that we will use it for \emph{small} $M$. 
 
   We may assume without loss of generality that $\upsigmam=\upsigma_1$, so that it follows from   assumption \eqref{kappacite} that 
\begin{equation}
\label{versailles}
\vert u_\eps(\ell)-\upsigma_1\vert < \upkappa  {\rm \  for \ }  \ell \in \partial \D^2 ( \varrho_\eps). 
\end{equation}

We deduce from  inequality  \eqref{versailles} that  
$\displaystyle { \partial \D^2 ( \varrho_\eps)  \subset \overline{\Upsilon_{\eps, 1}(\varrho_\eps, \upkappa)}}$, 
and that, for 
   $i=2, \ldots, q$, we have 
   $$\partial \D^2 ( \varrho_\eps) \cap  \partial   \Upsilon_{\eps, i} (\varrho_\eps, \upkappa)=\emptyset.$$ 
In particuler, we have the identities 
 \begin{equation}
  \left\{
  \begin{aligned}
\Gamma_\eps^i(\varrho_\eps, \upkappa) & \equiv  \partial   \Upsilon_{\eps, i}  (\varrho_\eps, \upkappa)= w_i^{-1}(\upkappa)\cap  \D^2 ( \varrho_\eps)  
 \  {\rm \ for \ } i=2 \ldots   q,  \\
 \Pi_\eps^i(\varrho_\eps, \upkappa) &=\emptyset,  \  {\rm \ for \ } i=2 \ldots   q,  \\
\Gamma_\eps^1 (\varrho_\eps, \upkappa)&\equiv  \partial   \Upsilon_{\eps, 1}  (\varrho_\eps, \upkappa)\setminus \partial \D^2( \varrho_\eps) =
\left( \left[w_1^{-1}(\upkappa)\cap  \D^2 ( \varrho_\eps) \right] \right)\setminus \partial \D^2( \varrho_\eps) {\rm \ and \ } \\
\Pi_\eps^1(\varrho_\eps, \upkappa)&=\partial \D^2(\varrho_\eps).
  \end{aligned}
  \right. 
  \end{equation}				
       As for Proposition \ref{kappacity20}, we will  deduce Proposition \ref{kappacity} from Lemma \ref{eliot}. For that purpose, we will make use of a \emph{new ingredient}, given by the following monotonicity formula:  
  
  \begin{lemma}  
  \label{repro}  
  Let  $\upkappa_1 \geq \upkappa_0 \geq \upkappa$ be given. If $u_\eps$ satisfies condition \eqref{kappacite},  then we have, for $i=1, \ldots, q$,   the inequality
   \begin{equation}
     \label{sting}
 0\leq  \int_{\Gamma_{\eps}^i(\varrho_\eps, \upkappa_0)} \frac{\partial \vert u_\eps(\ell)-\upsigma_i \vert }{\partial \vec n(\ell)} {\rm d} \ell \leq 
   \int_{\Gamma_{\eps}^i(\varrho_\eps, \upkappa_1)}  \frac{\partial \vert u_\eps(\ell)-\upsigma_i \vert }{\partial \vec n (\ell)}{\rm d} \ell.
\end{equation}
  \end{lemma}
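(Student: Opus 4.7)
The plan is to exploit a subharmonicity-type property of the function $f_i := \vert u_\eps - \upsigma_i \vert$ on $\Upsilon_{\eps, i}(\varrho_\eps, \upkappa_1)$ and then apply the divergence theorem to the annular region between the two level sets. Throughout, via Sard's theorem we may and do assume that $\upkappa_0, \upkappa_1$ are chosen so that the level sets $\Gamma_\eps^i(\varrho_\eps, \upkappa_j)$ are regular one-dimensional submanifolds. I will also implicitly assume $\upkappa_1 \leq \upmu_0$, which is the relevant regime for applications.

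The lower bound in \eqref{sting} is essentially the content of \eqref{beam}: on the smooth level set $\{f_i = \upkappa_0\}$, the function $f_i$ is strictly positive and smooth, the unit normal $\vec n$ in the direction of increasing $f_i$ coincides with $\nabla f_i / \vert \nabla f_i \vert$, and hence $\partial f_i / \partial \vec n = \vert \nabla f_i \vert \geq 0$ pointwise on $\Gamma_\eps^i(\varrho_\eps, \upkappa_0)$, so the integral is non-negative.

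For the upper bound, I first establish that $\Delta f_i \geq 0$ on $\{0 < f_i \leq \upmu_0\}$, which contains the annular set $W := \Upsilon_{\eps, i}(\varrho_\eps, \upkappa_1) \setminus \overline{\Upsilon_{\eps, i}(\varrho_\eps, \upkappa_0)}$ since $f_i \geq \upkappa_0 > 0$ on $W$. Combining $\Delta f_i^2 = 2 f_i \Delta f_i + 2\vert \nabla f_i\vert^2$ with $\Delta f_i^2 = 2(u_\eps - \upsigma_i) \cdot \Delta u_\eps + 2\vert\nabla u_\eps\vert^2$ and the equation $\Delta u_\eps = \eps^{-2} \nabla_u V(u_\eps)$ yields the pointwise identity
\begin{equation*}
f_i \, \Delta f_i = \eps^{-2} (u_\eps - \upsigma_i) \cdot \nabla_u V(u_\eps) + \bigl(\vert\nabla u_\eps\vert^2 - \vert\nabla f_i\vert^2\bigr).
\end{equation*}
The first term on the right is non-negative by the second line of \eqref{glutz} (applicable since $f_i \leq \upmu_0$); the second is non-negative by Cauchy-Schwarz applied to $\nabla f_i = f_i^{-1}(u_\eps - \upsigma_i) \cdot \nabla u_\eps$. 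Hence $\Delta f_i \geq 0$ on $W$.

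It then remains to identify $\partial W$. The key topological fact is that $\partial W = \Gamma_\eps^i(\varrho_\eps, \upkappa_0) \cup \Gamma_\eps^i(\varrho_\eps, \upkappa_1)$, so in particular $\partial W \cap \partial \D^2(\varrho_\eps) = \emptyset$. For $i = 1$ (the index of $\upsigmam$), assumption \eqref{kappacite} forces $f_1 < \upkappa \leq \upkappa_0$ on $\partial \D^2(\varrho_\eps)$, so that circle lies inside $\Upsilon_{\eps, 1}(\varrho_\eps, \upkappa_0)$ and thus outside $W$. For $i \neq 1$, the separation \eqref{kiv} gives $\vert\upsigma_1 - \upsigma_i\vert \geq 4\upmu_0$, so $f_i \geq 4\upmu_0 - \upkappa > \upmu_0 \geq \upkappa_1$ on $\partial \D^2(\varrho_\eps)$, placing this circle outside $\Upsilon_{\eps, i}(\varrho_\eps, \upkappa_1)$. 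Applying the divergence theorem to the smooth function $f_i$ on $W$ then yields
\begin{equation*}
0 \leq \int_W \Delta f_i \, \rd x = \int_{\Gamma_\eps^i(\varrho_\eps, \upkappa_1)} \frac{\partial f_i}{\partial \vec n} \rd \ell - \int_{\Gamma_\eps^i(\varrho_\eps, \upkappa_0)} \frac{\partial f_i}{\partial \vec n} \rd \ell,
\end{equation*}
where the minus sign on the inner boundary reflects that the outward normal to $W$ there is $-\vec n$. Rearranging yields the upper bound in \eqref{sting}. The main delicate point, and the step where the hypothesis \eqref{kappacite} is genuinely needed, is the topological description of $\partial W$: without that hypothesis, $\partial W$ could meet $\partial \D^2(\varrho_\eps)$ and produce extra boundary terms that would spoil the monotonicity.
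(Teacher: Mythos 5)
Your proof is correct and follows essentially the same route as the paper: integrating by parts over the annulus $\mathcal C(\upkappa_0,\upkappa_1)=\Upsilon_{\eps,i}(\varrho_\eps,\upkappa_1)\setminus\Upsilon_{\eps,i}(\varrho_\eps,\upkappa_0)$, with the bulk term split into the Cauchy--Schwarz piece $\vert\nabla u_\eps\vert^2-\vert\nabla\vert u_\eps-\upsigma_i\vert\vert^2\geq 0$ and the potential piece controlled by \eqref{glutz}, and with \eqref{kappacite} ensuring the annulus stays away from $\partial\D^2(\varrho_\eps)$; your formulation via $\Delta\vert u_\eps-\upsigma_i\vert\geq 0$ is just the divergence-form repackaging of the paper's multiplication of \eqref{elipes} by $(u_\eps-\upsigma_i)/\vert u_\eps-\upsigma_i\vert$. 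If anything, you are slightly more explicit than the paper on the sign of the potential term (which requires $\upkappa_1\leq\upmu_0$, as you note) and on why $\partial\D^2(\varrho_\eps)$ contributes no boundary terms for $i\neq 1$.
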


  \begin{proof}  
  The proof involves again Stokes formula, now on the domain 
  $$
  \mathcal C(\upkappa_0, \upkappa_1)=\Upsilon_{\eps, i} (\varrho_\eps, \upkappa_1)\setminus  
 \Upsilon_{\eps, i} (\varrho_\eps, \upkappa_0).
  $$
 It follows from assumption \eqref{kappacite} that  
 $$
 \overline{  \mathcal C(\upkappa_0, \upkappa_1)} \cap \partial \D^2(\varrho_\eps)=\emptyset, 
$$
 so that 
 $$\partial \mathcal C(\upkappa_0, \upkappa_1)=  \partial  \Upsilon_{\eps, i} (\varrho_\eps, \upkappa_1)\cup
\partial  \Upsilon_{\eps, i} (\varrho_\eps, \upkappa_0).
$$
We multiply the equation \eqref{elipes}  by $\displaystyle{ \frac{u_\eps-\upsigma_i} {\vert u_\eps-\upsigma_i \vert }}$ which is well defined   on $  \mathcal C(\upkappa_0, \upkappa_1)$ and integrate by parts.  Since, on $\Gamma_{\eps, i}(\varrho_\eps, \upkappa)$, we have 
$$
\frac{\partial u_\eps}{\partial  \vec n} \cdot \frac{u_\eps-\upsigma_i} {\vert u_\eps-\upsigma_i \vert }=\frac{\partial (u_\eps-\upsigma_i)}{\partial  \vec  n} \cdot \frac{u_\eps-\upsigma_i} {\vert u_\eps-\upsigma_i \vert }= \frac{\partial \vert u_\eps-\upsigma_i \vert }{\partial \vec n}, 
$$
whereas on  $\mathcal C(\upkappa_0, \upkappa_1)$, we have 
\begin{equation*}
\begin{aligned}
\nabla u_\eps\cdot \nabla \left( \frac{u_\eps-\upsigma_i} {\vert u_\eps-\upsigma_i \vert }\right)&=
\nabla( u_\eps-\upsigma_i) \cdot \nabla\left (\frac{u_\eps-\upsigma_i} {\vert u_\eps-\upsigma_i \vert }\right) \\
&=\frac{1}{\vert u_\eps-\upsigma_i \vert} \vert \nabla( u_\eps-\upsigma_i)  \vert^2 +
\left[\nabla( u_\eps-\upsigma_i) \cdot( u_\eps-\upsigma_i)\right] \cdot \nabla (\frac{1}{\vert u_\eps-\upsigma_i\vert})\\
&=\frac{1}{\vert u_\eps-\upsigma_i \vert} \left[\vert \nabla( u_\eps-\upsigma_i)  \vert^2-\left \vert \nabla \vert u_\eps-\upsigma_i \vert \right \vert ^2\right], 
\end{aligned}
\end{equation*}
integration by parts  thus   yields
 \begin{equation}
  \label{stokounette}
  \begin{aligned}
  \int_{\Gamma_{\eps, i}(\varrho_\eps, \upkappa_1)} \frac{\partial \vert u_\eps-\upsigma_i \vert }{\partial \vec n}-
   \int_{\Gamma_{\eps, i}(\varrho_\eps, \upkappa_0)}  \frac{\partial \vert u_\eps-\upsigma_i \vert }{\partial \vec n} 
  &=
  \int_{ \mathcal C(\upkappa_0, \upkappa_1)} \frac{1}{\vert u-\upsigma_i \vert  }\left[\vert \nabla u_\eps \vert^2  -  
   \left \vert \nabla \vert u_\eps-\upsigma_i \vert \right \vert ^2\right]\\
+  & \int_{\mathcal C(\upkappa_0, \upkappa_1)} \eps^{-2} \nabla_u V(u_\eps)\cdot \frac{(u_\eps-\upsigma_i)}{\vert u-\upsigma_i \vert}.
  \end{aligned}
  \end{equation}
 Since 
 $$\displaystyle{\vert \nabla u_\eps \vert^2  -  
   \left \vert \nabla \vert u_\eps-\upsigma_i \vert \right \vert ^2}=\vert \nabla (u_\eps-\upsigma_i) \vert^2  -  
   \left \vert \nabla \vert u_\eps-\upsigma_i \vert \right \vert ^2\geq 0,$$  
   it follows that the r.h.s of inequality \eqref{stokounette} is positive.   Hence, we deduce \eqref{sting}.

  \end{proof}
  
   \begin{lemma} Assume that $0<\eps \leq 1$  and  that $u_\eps$ is a solution to \eqref{elipes} which  satisfies  \eqref{kappacite} and \eqref{abritel}. Then, there exits a constant $\rm C(M)>0$ depending only on $V$ and $M$ such that have 
  \label{denada}
  \begin{equation}
  \label{denada1}
     0\leq  \eps \int_{\Gamma_{\eps, i}(\varrho_\eps, \upkappa)} \frac{\partial \vert u_\eps-\upsigma_i \vert }{\partial \vec n(\ell)} {\rm d } \ell 
   \leq  
   C(M) \int_{\D^2(\varrho_\eps)}\frac{V(u)}{\eps} {\rm d} x   \leq C (M)\mathbb V(u_\eps, \D^2(\frac34)),  
     \end{equation}
     where, for a point $\ell \in \Gamma_\eps$, $\vec n (\ell)$  denotes the unit vector  perpendicular to $\Gamma_\eps$  and oriented in the direction  which  increases $\vert u-\upsigma_i \vert$. 
  \end{lemma}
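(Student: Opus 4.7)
The strategy is to combine three earlier results: the monotonicity statement of Lemma \ref{repro}, the specific level-set bound of Lemma \ref{toubon}, and the pointwise energy-versus-potential comparison of Lemma \ref{bornepote}. The first step is to select the right level at which to evaluate the boundary integral. Invoking Lemma \ref{toubon} applied to $u_\eps$ on $\D^2(\varrho_\eps)$, one obtains a value $\tilde \upmu_\eps \in [\upmu_0/4,\upmu_0/2]$ such that, for every $i=1,\ldots,q$,
\begin{equation}
\label{denadaP1}
0 \leq \eps \int_{\Gamma_{\eps,i}(\varrho_\eps,\tilde\upmu_\eps)} \frac{\partial |u_\eps-\upsigma_i|}{\partial \vec n(\ell)}\,{\rm d}\ell \leq \frac{1}{\upmu_0}\,\E_\eps\bigl(u_\eps,\Upxi(u_\eps,\varrho_\eps)\bigr).
\end{equation}

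Next, since by assumption $\upkappa<\upmu_0/4\leq \tilde\upmu_\eps$ and condition \eqref{kappacite} is satisfied, the monotonicity result of Lemma \ref{repro}, applied with $\upkappa_0=\upkappa$ and $\upkappa_1=\tilde\upmu_\eps$, yields the comparison
\begin{equation}
\label{denadaP2}
0 \leq \int_{\Gamma_{\eps,i}(\varrho_\eps,\upkappa)} \frac{\partial |u_\eps-\upsigma_i|}{\partial \vec n}\,{\rm d}\ell \leq \int_{\Gamma_{\eps,i}(\varrho_\eps,\tilde\upmu_\eps)} \frac{\partial |u_\eps-\upsigma_i|}{\partial \vec n}\,{\rm d}\ell.
\end{equation}
Multiplying by $\eps$ and combining with \eqref{denadaP1} transfers the boundary control from the level $\tilde\upmu_\eps$ down to the level $\upkappa$, which is precisely the point where Lemma \ref{toubon} alone is insufficient.

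Finally, since $E_\eps(u_\eps)\leq M$ by \eqref{abritel} and $\varrho_\eps\leq 3/4$, we have the inclusion $\Upxi(u_\eps,\varrho_\eps)\subset \Upxi_\eps(3/4)$ so that the pointwise bound \eqref{portos2} of Lemma \ref{bornepote} applies and gives
\begin{equation}
\label{denadaP3}
\E_\eps\bigl(u_\eps,\Upxi(u_\eps,\varrho_\eps)\bigr) \leq C_{\rm T}(M)\int_{\Upxi(u_\eps,\varrho_\eps)}\frac{V(u_\eps)}{\eps}\,{\rm d}x \leq C_{\rm T}(M)\int_{\D^2(\varrho_\eps)}\frac{V(u_\eps)}{\eps}\,{\rm d}x.
\end{equation}
Chaining \eqref{denadaP2}, \eqref{denadaP1} and \eqref{denadaP3} gives \eqref{denada1} with the choice $C(M)=C_{\rm T}(M)/\upmu_0$; the last inequality in \eqref{denada1} is immediate since $\varrho_\eps\leq 3/4$, hence $\D^2(\varrho_\eps)\subset \D^2(3/4)$.

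The proof is essentially an assembly of previously established ingredients, so no substantial technical obstacle is expected. The only point requiring attention is verifying that the hypotheses of each of the three lemmas are met with the correct choice of level parameter: in particular, one must check that $\tilde\upmu_\eps$ provided by Lemma \ref{toubon} lies above $\upkappa$ (ensured by the hypothesis $\upkappa<\upmu_0/4$) so that the monotonicity of Lemma \ref{repro} is applicable, and that the $L^\infty$ control needed for Lemma \ref{bornepote} follows from the energy bound \eqref{abritel} via Proposition \ref{princours0}, which is exactly the mechanism by which $C(M)$ inherits its dependence only on $V$ and $M$.
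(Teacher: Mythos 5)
Your proposal is correct and follows essentially the same route as the paper's own proof: select the level $\tilde\upmu_\eps$ via Lemma \ref{toubon}, pass from level $\tilde\upmu_\eps$ down to level $\upkappa$ by the monotonicity of Lemma \ref{repro} (valid since $\upkappa<\upmu_0/4\leq\tilde\upmu_\eps$ and \eqref{kappacite} holds), and control the energy on $\Upxi(u_\eps,\varrho_\eps)$ by the potential through \eqref{portos2} of Lemma \ref{bornepote}, yielding $C(M)=C_{\rm T}(M)/\upmu_0$. The only difference is the order in which the monotonicity step and the energy-to-potential step are chained, which is immaterial.
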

  \begin{proof}
  By Lemma \ref{toubon}, there exists a number 
  $\displaystyle{\tilde \upmu_\eps \in [\frac{\upmu_0}{4}, \frac{\upmu_0}{2}]}$ such that 
  \begin{equation}
  \label{totobene33}
 \eps  \int_{\Gamma_{\eps, i}(\varrho, \tilde \upmu_\eps)} \frac{\partial \vert u_\eps-\upsigma_i \vert }{\partial \vec n(\ell)} {\rm d } \ell 
 \leq \eps \int_{\Gamma_{\eps, i}(\varrho, \tilde \upmu_\eps)} \vert \nabla u_\eps \vert  {\rm d } \ell
  \leq  \frac{1}{\upmu_0 } \E_\eps(u, \Upxi (u_\eps,  \varrho_\eps))
    \end{equation} 
 On the level set $\Upxi (u_\eps,  \varrho_\eps)$, we may however bound point-wise the energy in terms of the potential, as stated in Lemma \ref{bornepote}, inequality \eqref{portos2}. This yields by integration
  $$
 \E_\eps(u, \Upxi (u_\eps, \varrho_\eps)) \leq {\rm C_T} (M) \mathbb V(u_\eps, \Upxi  (u_\eps, \varrho_\eps) ). 
 $$
Combining the two previous inequalities, we obtain
 \begin{equation}
 \label{eau}
  \eps  \int_{\Gamma_{\eps, i}(\varrho, \tilde \upmu_\eps)} \frac{\partial \vert u_\eps-\upsigma_i \vert }{\partial \vec n(\ell)} {\rm d } \ell 
\leq \frac{\rm C_T(M)} {\upmu_0}  \mathbb V(u_\eps, \Upxi_\eps (u, \varrho_\eps)).
 \end{equation}
 On the other hand, we invoke  to Lemma \ref{repro} with the  $\upkappa_1=\tilde \upmu_\eps$ and $\upkappa_0=\upkappa$ to deduce  that
$$
 \int_{\Gamma_{\eps, i}(\varrho,  \upkappa)} \frac{\partial \vert u_\eps-\upsigma_i \vert }{\partial \vec n(\ell)} {\rm d } \ell  \leq \int_{\Gamma_{\eps, i}(\varrho, \tilde \upmu_\eps)} \frac{\partial \vert u_\eps-\upsigma_i \vert }{\partial \vec n(\ell)} {\rm d } \ell, 
 $$
 which together with \eqref{eau} leads  to   the desired result \eqref{denada1}. 
  \end{proof}
  
  \begin{proof}[Proof of Proposition \ref{kappacity} completed] We go back to Lemma \ref{eliot}  and   combine \eqref{klisko} with \eqref{denada1}:  This yields   
 the  desired inequality  \eqref{nonodes}.
      \end{proof}

      \subsection{Bounding  the total energy by the  integral of the potential}
      The main result of the present paragraph is the following result:

   \begin{proposition}
 \label{bolero}
  Let $u_\eps$ be a solution of \eqref{elipes} on $\D^2$ and let ${\rm L}>0$ be given and assume that 
   \begin{equation}
 \label{lajonie}
 \Vert u_\eps \Vert_{L^\infty (\D^2(\frac 45))}\,  \leq  {\rm L}.
   \end{equation}
  There exists some constant   ${\rm K_{\rm pot}}(\rm L)$ depending only on $V$ and $\rm L$ such that
 \begin{equation}
 \label{dupondt}
 \int_{\D^2(\frac 12)} e_\eps(u_\eps)(x) {\rm d}x \leq  {\rm K_{\rm pot}}({\rm L}) \left[ 
   \int_{\D^2(\frac34)} \frac{V(u_\eps)}{\eps} {\rm d} x  +\eps    \int_{ \D^2 \setminus \D^2(\frac 34) } e_\eps(u_\eps){\rm d} x. 
     \right].
 \end{equation}
 \end{proposition}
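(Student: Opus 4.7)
The plan is to split $\D^2(\frac{1}{2})$ into two regions according to whether $u_\eps$ sits close to or far from the vacuum manifold $\Sigma$, and handle each region with a different tool. Set
\[
A_\eps = \D^2(\tfrac{1}{2}) \cap \Upxi_\eps(1), \qquad B_\eps = \D^2(\tfrac{1}{2}) \setminus A_\eps,
\]
so that $A_\eps$ is the region where $\mathrm{dist}(u_\eps, \Sigma) \geq \upmu_0/4$, and $B_\eps$ is the ``close'' region where $u_\eps$ lies within $\upmu_0/4$ of some $\upsigma_i$. On $A_\eps \subset \Upxi_\eps(3/4)$, the pointwise bound \eqref{portos} of Lemma \ref{bornepote} gives $e_\eps(u_\eps) \leq C_{\rm pt}(\mathrm L)\,V(u_\eps)/\eps$, which integrates to
\[
\int_{A_\eps} e_\eps(u_\eps)\,\rd x \;\leq\; C_{\rm pt}(\mathrm L) \int_{\D^2(3/4)} \frac{V(u_\eps)}{\eps}\,\rd x,
\]
already accommodated by the first term of \eqref{dupondt}.

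For the close region $B_\eps$, observe that $B_\eps \subset \Upsilon_\eps(\varrho_\eps, \upmu_1)$ for any $\varrho_\eps \geq 1/2$. The plan is to apply Proposition \ref{kappacity20} at a carefully chosen $\varrho_\eps$. By a mean-value argument, since
\[
\int_{3/4}^{7/8}\Big(\int_{\S^1(\varrho)} e_\eps(u_\eps)\,\rd\ell\Big)\rd\varrho = \int_{\D^2(7/8)\setminus \D^2(3/4)} e_\eps(u_\eps)\,\rd x \;\leq\; \int_{\D^2 \setminus \D^2(3/4)} e_\eps(u_\eps)\,\rd x,
\]
there exists $\varrho_\eps \in [3/4, 7/8]$ with $\int_{\partial \D^2(\varrho_\eps)} e_\eps\,\rd\ell \leq 8\int_{\D^2 \setminus \D^2(3/4)} e_\eps\,\rd x$. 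Although Proposition \ref{kappacity20} is stated only for $\varrho_\eps \in [1/2, 3/4]$, its proof rests on Lemma \ref{bornepote} and Proposition \ref{classic}, which are valid on any disc compactly contained in $\D^2$; the argument therefore extends to $\varrho_\eps \in [1/2, 7/8]$ with a constant $K_\Upsilon(\mathrm L)$ depending on the enlarged range. This yields
\[
\int_{B_\eps} e_\eps \;\leq\; K_\Upsilon(\mathrm L)\Big[\int_{\D^2(\varrho_\eps)} \frac{V(u_\eps)}{\eps}\,\rd x + 8\eps \int_{\D^2 \setminus \D^2(3/4)} e_\eps(u_\eps)\,\rd x\Big].
\]

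\textbf{Main obstacle.} The potential integral on the right-hand side is over $\D^2(\varrho_\eps) \supset \D^2(3/4)$, whereas \eqref{dupondt} only tolerates the integral over $\D^2(3/4)$. The excess $\int_{\D^2(\varrho_\eps) \setminus \D^2(3/4)} V/\eps$ lies in the exterior annulus, and the na\"ive bound $V/\eps \leq e_\eps$ produces an unwanted term of the form $\int_{\D^2 \setminus \D^2(3/4)} e_\eps$ \emph{without} the crucial $\eps$ prefactor. The key difficulty is to recover this missing $\eps$. I plan to do so by strengthening the mean-value selection: pick $\varrho_\eps \in [3/4, 7/8]$ satisfying simultaneously the bound above and, in addition, by applying Pohozaev's identity \eqref{poho1} at radius $\varrho_\eps$ to control $\int_{\D^2(\varrho_\eps)\setminus \D^2(3/4)} V/\eps$ as a difference of boundary terms, each controlled via the mean-value selection by $\int_{\D^2 \setminus \D^2(3/4)} e_\eps$ times a factor of $\eps$ (the $\eps$ arising from the scaling $\eps r \int_{\partial \D^2(r)}(\cdots)$ in Pohozaev). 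Assembling all three contributions yields \eqref{dupondt} with $K_{\rm pot}(\mathrm L)$ a combination of $C_{\rm pt}(\mathrm L)$ and $K_\Upsilon(\mathrm L)$.
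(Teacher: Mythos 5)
Your first two steps (splitting the inner disk into the far region, handled by the pointwise bound \eqref{portos}, and the near region, handled by Proposition \ref{kappacity20} at a mean-value radius) are in the spirit of the paper, but the step you yourself flag as the main obstacle is where the argument breaks down, and the proposed repair via Pohozaev does not work. Pohozaev's identity \eqref{poho1}, after multiplication by $\eps$, reads $\frac1\eps\int_{\D^2(r)}V(u_\eps)=\frac r4\int_{\partial\D^2(r)}\bigl(\eps\vert\partial_\tau u_\eps\vert^2-\eps\vert\partial_r u_\eps\vert^2+\frac2\eps V(u_\eps)\bigr)\,\rd\tau$: the boundary integrand contains the term $\frac2\eps V(u_\eps)$, so the right-hand side is of size $\int_{\partial\D^2(r)}e_\eps\,\rd\ell$ \emph{without} any factor $\eps$ (this is exactly Proposition \ref{pascap0}). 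Taking the difference of the identities at radii $\varrho_\eps$ and $\frac34$ does not change this: each boundary term, after your mean-value selection, is only controlled by $\int_{\D^2\setminus\D^2(3/4)}e_\eps\,\rd x$ with no $\eps$ prefactor. There is no ``scaling $\eps r\int_{\partial\D^2(r)}(\cdots)$'' to exploit, so the missing $\eps$ on the excess potential term $\int_{\D^2(\varrho_\eps)\setminus\D^2(3/4)}V/\eps$ is not recovered, and the resulting estimate is strictly weaker than \eqref{dupondt}. A secondary problem is your extension of Proposition \ref{kappacity20} to $\varrho_\eps\in[\frac34,\frac78]$: the pointwise bound \eqref{portos} behind it requires interior gradient estimates, hence an $L^\infty$ bound on a disk strictly containing $\Upxi(u_\eps,\varrho_\eps)$, while hypothesis \eqref{lajonie} only controls $\Vert u_\eps\Vert_{L^\infty(\D^2(4/5))}$ and $\frac45<\frac78$; so the constant cannot be taken to depend only on ${\rm L}$.

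The difficulty disappears if you do not push the radius beyond $\frac34$. The paper keeps $\varrho_\eps\in[\frac12,\frac34]$, chosen by a mean-value argument so that $\int_{\partial\D^2(\varrho_\eps)}e_\eps\,\rd\ell\leq C\int_{\D^2(3/4)\setminus\D^2(1/2)}e_\eps\,\rd x$, and applies the far/close decomposition (Lemma \ref{bornepote} together with Proposition \ref{kappacity20}) on $\D^2(\varrho_\eps)\subset\D^2(\frac34)$: the potential integral is then automatically over a subset of $\D^2(\frac34)$, and the crucial $\eps$ prefactor on the remaining term is supplied directly by the boundary term $\eps\int_{\partial\D^2(\varrho_\eps)}e_\eps\,\rd\ell$ in \eqref{nonodesu}, with no recourse to Pohozaev at this stage. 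Note that this route produces the $\eps$-weighted energy over the annulus $\D^2(\frac34)\setminus\D^2(\frac12)$ rather than literally over $\D^2\setminus\D^2(\frac34)$; that is how the paper's own proof concludes, and it is the form in which the proposition is subsequently used (compare \eqref{dupont} and \eqref{dupondtbis}), so you should not contort the argument to reach the outer annulus as written.
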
  
 In  the context of the present paper, the main contribution of the r.h.s  of inequality \eqref{dupondt}  is given by the potential terms, so that Proposition \ref{bolero}  yields an estimate of the energy by the integral of  potential, provided  the later is sufficiently small, according to assumption \eqref{lajonie}.

 \smallskip
Before turning to the proof of Proposition \ref{bolero}, we observe, as a preliminary remark, that the result of proposition \ref{bolero} is, at first sight,  rather close to the result of Proposition  \ref{kappacity20}.  However, let us emphasize that  estimate  \eqref{nonodes} yields only an energy bound  only  for the domain where the  value of \emph{$u_\eps$ is close to one of the wells} whereas \eqref{dupondt} yield an estimate for the full domain $\D^2(1\slash2)$.

\smallskip
The first step in the proof of Proposition \ref{bolero} is:

\begin{lemma}
\label{bornepoting}
Let $\varrho_\eps \in [\frac 12, \frac34]$,   let $u_\eps$ be a solution of \eqref{elipes} on $\D^2$ and 
  assume that \eqref{lajonie} is satisfied.  We have, for some constant $\rm C_{\rm pot} ({\rm L}) >0$ depending only on the potential  $V$ and the value of  $\rm L$  such that
    \begin{equation}
  \label{nonodesing}
   \int_{\D^2(\varrho_\eps)} e_\eps(u_\eps)(x) {\rm d}x \leq  {\rm C_{\rm pot}}({\rm L}) \left[ 
   \int_{\D^2(\varrho_\eps)} \frac{V(u_\eps)}{\eps} {\rm d} x  +\frac{\eps}{4}    \int_{\partial \D^2(\varrho_\eps)} e_\eps(u_\eps){\rm d} \ell
     \right].
   \end{equation}
  \end{lemma}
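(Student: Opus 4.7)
The plan is to decompose the disk $\D^2(\varrho_\eps)$ into the ``close-to-$\Sigma$'' region and its complement, and to estimate each piece with the tools already developed in this subsection. With the notation $\upmu_1 = \upmu_0/4$ of \eqref{menhir}, the sets $\Upsilon_\eps(\varrho_\eps, \upmu_1)$ from \eqref{lilu} and $\Upxi_\eps(\varrho_\eps)$ from \eqref{nablala} form a disjoint decomposition of $\D^2(\varrho_\eps)$: by definition a point $x\in\D^2(\varrho_\eps)$ either satisfies $\vert u_\eps(x)-\upsigma_i\vert<\upmu_1$ for some $i$ (and belongs to $\Upsilon_\eps$) or $\vert u_\eps(x)-\upsigma_i\vert\geq \upmu_1$ for all $i$ (and belongs to $\Upxi_\eps$). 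The inequality \eqref{nonodesing} will then be obtained by adding the separate bounds on the two pieces.

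On the ``far'' region $\Upxi_\eps(\varrho_\eps)$ we invoke \eqref{portos} of Lemma \ref{bornepote}, which applies since $\varrho_\eps\leq 3/4$ yields $\Upxi_\eps(\varrho_\eps)\subset \Upxi_\eps(3/4)$ and since the $L^\infty$ hypothesis \eqref{lajonie} matches the assumption of Lemma \ref{bornepote}. Integration of the pointwise bound $e_\eps(u_\eps)\leq C_{\rm pt}({\rm L})\,V(u_\eps)/\eps$ yields
$$
\int_{\Upxi_\eps(\varrho_\eps)} e_\eps(u_\eps)\,\rd x \;\leq\; C_{\rm pt}({\rm L})\int_{\D^2(\varrho_\eps)}\frac{V(u_\eps)}{\eps}\,\rd x.
$$
On the ``close'' region $\Upsilon_\eps(\varrho_\eps, \upmu_1)$, the desired estimate is precisely the content of Proposition \ref{kappacity20} (applied with the choice $\upkappa=\upmu_1$), giving
$$
\int_{\Upsilon_\eps(\varrho_\eps,\upmu_1)} e_\eps(u_\eps)\,\rd x \;\leq\; {\rm K}_\Upsilon({\rm L})\left[\int_{\D^2(\varrho_\eps)}\frac{V(u_\eps)}{\eps}\,\rd x + \eps\int_{\partial \D^2(\varrho_\eps)} e_\eps(u_\eps)\,\rd\ell\right].
$$
Summing the two inequalities and setting ${\rm C}_{\rm pot}({\rm L})=4\bigl(C_{\rm pt}({\rm L})+{\rm K}_\Upsilon({\rm L})\bigr)$ gives the conclusion \eqref{nonodesing}.

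There is no real obstacle here: the statement is essentially a packaging of the preceding two results. The only mild caveat is that Proposition \ref{kappacity20} was stated with a global $L^\infty$ bound on $\D^2$, whereas our hypothesis \eqref{lajonie} only controls $u_\eps$ on $\D^2(4/5)$; but inspection of its proof shows that the $L^\infty$ bound is used solely to invoke Lemma \ref{bornepote}, which itself only requires the bound on $\D^2(4/5)$, so the reduction is automatic. The genuine work, namely the splitting argument that absorbs the gradient term near $\Sigma$ into the potential integral plus a boundary term, has already been carried out in Proposition \ref{kappacity20} via the stress-energy type identity of Lemma \ref{qidonc} and the coarea selection of Lemma \ref{toubon}.
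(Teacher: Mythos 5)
Your proposal is correct and follows essentially the same route as the paper: the decomposition $\D^2(\varrho_\eps)=\Upxi(\varrho_\eps)\cup \Upsilon_\eps(\varrho_\eps,\frac{\upmu_0}{4})$, the pointwise bound of Lemma \ref{bornepote} on the far region, and Proposition \ref{kappacity20} on the near region. Your remark that the $L^\infty$ hypothesis in Proposition \ref{kappacity20} is only used through Lemma \ref{bornepote}, and hence that the bound on $\D^2(\frac45)$ from \eqref{lajonie} suffices, is a sound (and welcome) clarification of a point the paper glosses over.
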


\begin{proof}  We observe first  that 
\begin{equation}
\label{fruc}
\D^2(\varrho_\eps)=\Upxi(\varrho_\eps) \cup \Upsilon_\eps( \varrho_\eps, \frac{\upmu_0}{4}).
\end{equation}
In view  of Lemma \ref{bornepote},  we have
$$
 \int_{\Upxi(\varrho_\eps)} e_\eps(u_\eps){\rm d} x  \leq {\rm C}_{\rm T}({\rm L}) \int_{\Upxi(\varrho_\eps)} \frac{V(u_\eps)}{\eps} 
 {\rm d}x, 
$$
whereas Proposition \ref{kappacity20} yields
 \begin{equation*}
    \int_{\Upsilon_\eps (\varrho_\eps, \frac{\upmu_0}{4})} e_\eps(u_\eps)(x) {\rm d}x \leq  {\rm K_\Upsilon} (L)\left[ 
 \int_{\D^2(\varrho_\eps)} \frac{V(u_\eps)}{\eps} {\rm d} x  +\eps    \int_{\partial \D^2(\varrho_\eps)} e_\eps(u_\eps){\rm d} \ell
     \right].
   \end{equation*}
The proof  of \eqref{nonodesing}  then follows straightforwardly from our first observation  \eqref{fruc}. 
\end{proof}

\begin{proof}[Proof of Proposition \ref{bolero} completed]
As usual,  a mean-value argument allows us to 
choose  some radius $\displaystyle{\varrho_\eps \in [\frac 12, \frac 34]}$  such that 
\begin{equation}
\label{zebulon}
\int_{\partial \D^2(\varrho_\eps)}   e_\eps(u_\eps) {\rm d} \ell \leq 
8\int_{\D^2(\frac 34) \setminus \D^2(\frac 12)}  e_\eps(u_\eps) {\rm d} x.
\end{equation}
Combining with Lemma \ref{bornepoting}, we are led to
 \begin{equation}
  \label{gaston}
  \begin{aligned}
   \int_{\D^2(\frac 12)} e_\eps(u_\eps)(x) {\rm d}x &\leq \int_{\D^2(\varrho_\eps)} e_\eps(u_\eps)(x) {\rm d}x  \\
 &  \leq  {\rm C_{\rm pot}} \left[ 
   \int_{\D^2(\varrho_\eps)} \frac{V(u_\eps)}{\eps} {\rm d} x  +\frac{\eps}{4}    \int_{\partial \D^2(\varrho_\eps)} e_\eps(u_\eps){\rm d} \ell
     \right] \\
     &\leq  {\rm C_{\rm pot}} \left[ 
   \int_{\D^2(\frac 34)} \frac{V(u_\eps)}{\eps} {\rm d}x+  \eps    \int_{\D^2(\frac 34)} e_\eps(u_\eps){\rm d} \ell \right].
        \end{aligned}
   \end{equation}
The proof  of Proposition \ref{bolero} is hence  complete. 
 \end{proof}

 We will also invoke  the following variant of Proposition \ref{bolero}:
 
  \begin{proposition}
 \label{borneo}
  Let $u_\eps$ be a solution of \eqref{elipes} on $\D^2$,  let ${\rm M}>0$ be given and assume that \eqref{abritel} holds.
    There exists some constant   ${\rm C_{\rm pot}}(M)$ depending only on $V$ and $\rm M$ such that
 \begin{equation}
 \label{dupont}
 \int_{\D^2(\frac 12)} e_\eps(u_\eps)(x) {\rm d}x \leq  {\rm C_{\rm pot}}({\rm M}) \left[ 
   \int_{\D^2(\frac34)} \frac{V(u_\eps)}{\eps} {\rm d} x  +\eps    \int_{ \D^2 \setminus \D^2(\frac 12) } e_\eps(u_\eps){\rm d} x. 
     \right].
 \end{equation}
 \end{proposition}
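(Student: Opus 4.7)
The plan is to reduce Proposition \ref{borneo} to Proposition \ref{bolero} by converting the energy bound \eqref{abritel} into a pointwise bound on $u_\eps$ on a slightly larger disk, using the maximum-principle estimate of Proposition \ref{princours0}. The only real work is to track that the resulting constant depends only on $V$ and $M$ (and not on $\eps$), and to check that the integration domains in the two statements are compatible.

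First, I would apply Proposition \ref{princours0} to the solution $u_\eps$ on the domain $\Omega = \D^2(1)$. For every $x \in \D^2(4/5)$ one has $\mathrm{dist}(x,\partial \D^2) \geq 1/5$, so \eqref{princours} yields
\begin{equation*}
\vert u_\eps(x) \vert^2 \;\leq\; 20\,{\rm C}_{\rm unf}\,\Eps(u_\eps)\, +\, 2\sup\{\vert \upsigma \vert^2:\upsigma \in \Sigma\}
\;\leq\; 20\,{\rm C}_{\rm unf}\,M + 2\sup\{\vert \upsigma \vert^2:\upsigma \in \Sigma\}.
\end{equation*}
Denote the square root of the right-hand side by $\mathrm{L}=\mathrm{L}(M)$; this is a constant depending only on $V$ and $M$, and we obtain $\Vert u_\eps\Vert_{L^\infty(\D^2(4/5))} \leq \mathrm{L}(M)$, which is precisely hypothesis \eqref{lajonie} of Proposition \ref{bolero}.

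Next, I would invoke Proposition \ref{bolero} with this value of $\mathrm{L}$. This gives the inequality
\begin{equation*}
\int_{\D^2(\frac 12)} e_\eps(u_\eps)\,\rd x \;\leq\; {\rm K}_{\rm pot}(\mathrm{L}(M))\Bigl[\,\int_{\D^2(\frac 34)} \frac{V(u_\eps)}{\eps}\,\rd x \,+\, \eps\int_{\D^2\setminus \D^2(\frac 34)} e_\eps(u_\eps)\,\rd x\,\Bigr].
\end{equation*}
Since $\D^2 \setminus \D^2(3/4) \subset \D^2 \setminus \D^2(1/2)$, the second term on the right is bounded by $\eps\int_{\D^2\setminus \D^2(1/2)} e_\eps(u_\eps)\,\rd x$, so we obtain exactly inequality \eqref{dupont} with the constant ${\rm C}_{\rm pot}(M)\equiv {\rm K}_{\rm pot}(\mathrm{L}(M))$, which depends only on $V$ and $M$.

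There is no real obstacle here: the whole content of Proposition \ref{borneo} compared with Proposition \ref{bolero} is the passage from an $L^\infty$ hypothesis to an $L^2$-type hypothesis on the gradient/potential, and this passage is exactly the content of the maximum-principle bound of Proposition \ref{princours0}. The only point requiring a moment of care is that \eqref{princours} is applied on the whole of $\Omega=\D^2(1)$ (where $u_\eps$ is assumed to solve the equation) rather than on the smaller disk $\D^2(4/5)$; this is why the denominator $\mathrm{dist}(x,\partial\Omega)$ can be bounded from below by $1/5$ uniformly on $\D^2(4/5)$, yielding the desired $\eps$-independent constant $\mathrm{L}(M)$.
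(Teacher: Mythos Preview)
Your proof is correct and follows essentially the same approach as the paper: apply Proposition \ref{princours0} to convert the energy bound \eqref{abritel} into an $L^\infty$ bound on $\D^2(4/5)$, then invoke Proposition \ref{bolero} with $\mathrm{L}=\mathrm{L}(M)$ and set ${\rm C}_{\rm pot}(M)={\rm K}_{\rm pot}(\mathrm{L}(M))$. Your version is actually slightly more careful, since you explicitly note the inclusion $\D^2\setminus\D^2(3/4)\subset \D^2\setminus\D^2(1/2)$ needed to match the integration domain in \eqref{dupont}, a point the paper leaves implicit.
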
  

\begin{proof}  If $u_\eps$ satisfies \eqref{abritel}, then it follows from Lemma \ref{princours0}
 \begin{equation}
 \label{lajonie11}
 \Vert u_\eps \Vert_{L^\infty (\D^2(\frac 45))}\,  \leq  {\rm L_M}\equiv   {5 {\rm C}_{\rm unf}}{\rm M} +2 \sup \{ \vert \sigma \vert^2, \upsigma \in \Sigma \}.
   \end{equation}
Invoking Proposition \ref{bolero}, inequality \eqref{dupont} follows with
$$ {\rm C_{\rm pot}}({\rm M})= {\rm K_{\rm pot}}({\rm L}_{\rm M})= {\rm K_{\rm pot}}\left (   {5 {\rm C}_{\rm unf}}{\rm M} +2 \sup \{ \vert \sigma \vert^2, \upsigma \in \Sigma \}\right).
$$
\end{proof}

    In the course of the paper, we will invoke the  scaled versions of Proposition \ref{bolero} and \ref{borneo}. Given 
 $\varrho>\eps>0$ and $x_0 \in \Omega$, we consider a solution $u_\eps$ on $\Omega$ and assume it satisfies the bound \eqref{lajonie} or the bound 
 \begin{equation}
 \label{lajoiebis}
 \E(u_\eps,\D^2(x_0, \varrho)) \leq M\varrho, 
  \end{equation}
then,  thanks to the relations \eqref{scalingv}, we have the scaled version of \eqref{dupondt} or \eqref{dupont} respectively, namely
\begin{equation}
 \label{dupondtbis}
 \int_{\D^2(x_0, \frac \varrho 2)} e_\eps(u_\eps) {\rm d}x \leq  {\rm K_{\rm pot}}(L) \left[ 
   \int_{\D^2(x_0,\frac{3\varrho}{4})} \frac{V(u_\eps)}{\eps} {\rm d} x  +\frac{\eps}{\varrho}    
   \int_{ \D^2(x_0, \varrho) \setminus \D^2(x_0, \frac  {\varrho}2) } e_\eps(u_\eps){\rm d} x 
     \right],
 \end{equation}
and
\begin{equation}
 \label{dupontbis}
 \int_{\D^2(x_0, \frac \varrho2)} e_\eps(u_\eps) {\rm d}x \leq  {\rm C_{\rm pot}}(M) \left[ 
   \int_{\D^2(x_0,\frac {3\varrho}{4})} \frac{V(u_\eps)}{\eps} {\rm d} x  +\frac{\eps}{\varrho}    
   \int_{ \D^2(x_0, \varrho) \setminus \D^2(x_0, \frac\varrho2) } e_\eps(u_\eps){\rm d} x 
     \right].
 \end{equation}
 These relations lead to:
 
 \begin{proposition}
 \label{lheure} 
Let  $M_0>0$ and $\eps>0$  be given.   Let $u_\eps$ be a solution of \eqref{elipes} on $\Omega$ such that  $E_\eps (u_\eps) \leq M_0$, and  $x_0\in \Omega$ and $\varrho>\eps >0$ such that $\D^2(x_0, \varrho) \subset \Omega$. Then, we have 
\begin{equation*}
\label{demanger}
 \int_{\D^2(x_0, \frac \varrho 2)} e_\eps(u_\eps) {\rm d}x  \leq {\rm K}_V\left ({\rm dist}(x_0, \partial \Omega)\right) \left[ 
   \int_{\D^2(x_0,\frac{3\varrho}{4})} \frac{V(u_\eps)}{\eps} {\rm d} x  +\frac{\eps}{\varrho}    
   \int_{ \D^2(x_0, \varrho) \setminus \D^2(x_0, \frac  {\varrho}2) } e_\eps(u_\eps){\rm d} x, 
     \right].
\end{equation*}
where the constant  ${\rm K}_V\left ({\rm dist}(x_0, \partial \Omega)\right)$ depends only on $V$, $M_0$ and   ${\rm dist}(x_0, \partial \Omega)$, 
 \end{proposition}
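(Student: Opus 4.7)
The plan is to obtain Proposition \ref{lheure} as a rescaled form of Proposition \ref{bolero}, the main point being that the constant in Proposition \ref{bolero} depends only on an $L^\infty$ bound for $u_\eps$, a quantity which is controlled \emph{uniformly in the scaling parameter} $\varrho$ by the maximum-principle estimate of Proposition \ref{princours0}. I would not use the scaled form \eqref{dupontbis} of Proposition \ref{borneo} directly, because its constant $C_{\rm pot}(M)$ depends on the local energy bound $E_{\tilde\eps}(\tilde u_\eps, \D^2(1)) \leq M_0/\varrho$, which blows up as $\varrho \to 0$.

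More precisely, I would first set $\tilde\eps = \eps/\varrho \leq 1$ and $\tilde u_\eps(x) = u_\eps(\varrho x + x_0)$, which solves \eqref{elipes} on $\D^2$ with parameter $\tilde\eps$. The key step is to control $\|\tilde u_\eps\|_{L^\infty(\D^2(4/5))} = \|u_\eps\|_{L^\infty(\D^2(x_0, 4\varrho/5))}$ uniformly in $\varrho$. For any $y \in \D^2(x_0, 4\varrho/5)$, since $\D^2(x_0,\varrho) \subset \Omega$ forces $\varrho \leq {\rm dist}(x_0, \partial \Omega)$, one has
\begin{equation*}
{\rm dist}(y, \partial \Omega) \;\geq\; {\rm dist}(x_0, \partial\Omega) - \tfrac{4\varrho}{5} \;\geq\; \tfrac{1}{5}{\rm dist}(x_0, \partial\Omega),
\end{equation*}
so that Proposition \ref{princours0} combined with the global bound $E_\eps(u_\eps) \leq M_0$ yields
\begin{equation*}
|u_\eps(y)|^2 \leq \frac{20\, {\rm C}_{\rm unf}\, M_0}{{\rm dist}(x_0, \partial \Omega)} + 2\sup\{|\upsigma|^2,\,\upsigma\in\Sigma\} \;=:\; L\bigl({\rm dist}(x_0, \partial\Omega)\bigr)^2.
\end{equation*}

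Having fixed this $\varrho$-independent bound $L$, I would apply Proposition \ref{bolero} to $\tilde u_\eps$ with parameter $\tilde\eps$, obtaining
\begin{equation*}
\int_{\D^2(1/2)} e_{\tilde\eps}(\tilde u_\eps)\,{\rm d}x \;\leq\; {\rm K}_{\rm pot}(L)\left[ \int_{\D^2(3/4)} \frac{V(\tilde u_\eps)}{\tilde \eps}\,{\rm d}x + \tilde\eps \int_{\D^2 \setminus \D^2(3/4)} e_{\tilde\eps}(\tilde u_\eps)\,{\rm d}x \right].
\end{equation*}
Undoing the change of variables via the scaling identities \eqref{scalingv} applied to energy and potential, each integral above becomes $1/\varrho$ times the corresponding integral for $u_\eps$ on the rescaled disc, so after multiplying by $\varrho$ one gets exactly the desired inequality, after noting that $\D^2(x_0, \varrho) \setminus \D^2(x_0, 3\varrho/4) \subset \D^2(x_0, \varrho) \setminus \D^2(x_0, \varrho/2)$ to match the right-hand side. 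The constant is then ${\rm K}_V({\rm dist}(x_0, \partial\Omega)) = {\rm K}_{\rm pot}(L({\rm dist}(x_0, \partial\Omega)))$, depending only on $V$, $M_0$, and ${\rm dist}(x_0, \partial\Omega)$.

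There is no genuine obstacle in this argument; the only delicate point is recognizing that the $L^\infty$ route (Proposition \ref{bolero}), combined with the global maximum-principle bound of Proposition \ref{princours0}, avoids the $\varrho$-dependent blow-up that the energy-based version (Proposition \ref{borneo}) would create. The geometric constraint $\varrho \leq {\rm dist}(x_0, \partial\Omega)$ is precisely what allows the factor $1/5$ above and thus a bound depending only on ${\rm dist}(x_0, \partial\Omega)$.
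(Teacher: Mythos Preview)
Your proposal is correct and follows essentially the same route as the paper: obtain a $\varrho$-independent $L^\infty$ bound on $u_\eps$ over $\D^2(x_0,4\varrho/5)$ via Proposition \ref{princours0} and the inequality ${\rm dist}(y,\partial\Omega)\geq \tfrac15\,{\rm dist}(x_0,\partial\Omega)$, then feed this into the scaled form of Proposition \ref{bolero}. Your explicit remark that the energy-based route through Proposition \ref{borneo} would produce a constant depending on $M_0/\varrho$ is exactly the reason the $L^\infty$ route is needed, and the paper (despite a minor labeling slip in citing \eqref{dupontbis} while using the constant $K_{\rm pot}(L_0)$) does the same.
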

 
\begin{proof}
Since $\D^2(x_0, \varrho) \subset \Omega$, we have 
$\displaystyle{{\rm dist}\left(\D^2(x_0, \frac{4\varrho}{5}), \partial \Omega)\right)\geq \frac{\varrho}{5}}$. It therefore follows from Lemma \ref{princours} that
$$
\Vert u \Vert_{L^\infty(\D^2(x_0, \frac{4\varrho}{5}))}
\leq  {\rm L}_0 \equiv \frac{20 {\rm C}_{\rm unf}M_0}{{\rm dist}(x_0, \partial \Omega)} +2 \sup \{ \vert \sigma \vert^2, \upsigma \in \Sigma \}.
$$
The conclusion then follows directly from \eqref{dupontbis} with the choice 
$ \displaystyle{{\rm K}_V\left ({\rm dist}(x_0, \partial \Omega)\right)={\rm K}_{\rm pot} ({\rm L}_0)}$.
\end{proof}
 
\subsection{Bounds energy by integrals on    external domains}
      Our next result paves the way for the proof of Theorem \ref{bordurer}. As there, we consider a open subset $\mathcal U$ of $\Omega$ and define $\mathcal U_\delta$ and $\mathcal V_\delta$ according to \eqref{Udelta}. 

  \begin{proposition}
  \label{pave}
let $u_\eps$ be a solution of \eqref{elipes} on $\Omega$, $\mathcal U$ be an open bounded subset of $\Omega$ and 
$1>\delta>\eps >1>0$ be given such that $\mathcal U_\delta \subset \Omega$. Assume that 
 \begin{equation}
 \label{carpediem}
 \int_{\mathcal V_\updelta} 
      e_{\eps}(u_{\eps}) \, {\rm d} x  \leq {\rm  K}_{\rm ext} (\mathcal  U, \delta), 
 \end{equation}
  where  ${\rm  K}_{\rm ext} (\mathcal  U, \delta)>0$ denotes some constant depending possibly on $\mathcal U$ and $\delta$. 
 Then, we have the bound,   for some constant  ${\rm C}_{\rm ext} (\mathcal  U, \delta)$  depending possibly on $\mathcal U$ and $\delta$
\begin{equation}
\label{borges}
 \int_{\mathcal U_{\frac{\delta}{4}} }e_\eps (u_\eps) {\rm d} x \leq {\rm C}_{\rm ext} (\mathcal  U, \delta)
  \left(
  \int_{\mathcal V_\updelta} 
      e_{\eps}(u_{\eps})  + \eps \int_{\mathcal U_\delta} e_\eps(u_\eps) {\rm d} x.
\right)
 \end{equation}
 \end{proposition}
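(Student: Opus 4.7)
The approach combines two tools already developed in the paper: the Pohozaev-type estimate of Proposition \ref{pascapit}, which controls the integral of the potential on an inner tube in terms of the energy on the boundary collar $\mathcal V_\updelta$, with the scaled interior estimate of Proposition \ref{lheure}, which controls the energy on a small disk by the integral of the potential on a slightly larger concentric disk plus an $\eps$-factor of the energy on a thin annulus. The strategy is to cover $\mathcal U_{\updelta/4}$ by finitely many small disks whose $(3/4)$-enlargements stay in $\mathcal U_{\updelta/2}$, apply Proposition \ref{lheure} on each one, sum, and then replace the resulting potential integral on $\mathcal U_{\updelta/2}$ by an energy integral on $\mathcal V_\updelta$ via Proposition \ref{pascapit}.

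First I would treat the degenerate case $\eps\geq c\updelta$ (for some small absolute constant $c$, say $c=1/16$) separately: there the inclusion $\mathcal U_{\updelta/4}\subset \mathcal U_\updelta$ gives directly $\int_{\mathcal U_{\updelta/4}}e_\eps\leq (1/c)\eps\int_{\mathcal U_\updelta}e_\eps$, so \eqref{borges} holds with a constant depending on $\updelta$. I may therefore assume $\eps<\updelta/16$ in what follows. Next I apply Proposition \ref{pascapit} to the open set $\mathcal U$ with parameter $\updelta$, obtaining
\begin{equation*}
\frac{1}{\eps}\int_{\mathcal U_{\updelta/2}}V(u_\eps)\,{\rm d}x\leq C(\mathcal U,\updelta)\int_{\mathcal V_\updelta}e_\eps(u_\eps)\,{\rm d}x.
\end{equation*}

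The key step is then the covering argument. Since $\mathcal U_{\updelta/4}$ is bounded, a standard Vitali/packing argument in $\R^2$ yields a finite family of points $\{x_i\}_{i=1}^N\subset \mathcal U_{\updelta/4}$ such that the disks $\D^2(x_i,\updelta/16)$ cover $\mathcal U_{\updelta/4}$ and the enlarged disks $\D^2(x_i,\updelta/8)$ have multiplicity bounded by an absolute geometric constant $N_0$; the cardinality $N$ depends only on $\mathcal U$ and $\updelta$. By construction, for every $i$, the inclusions $\D^2(x_i,3\updelta/32)\subset \mathcal U_{\updelta/2}$ and $\D^2(x_i,\updelta/8)\subset \mathcal U_\updelta\subset\Omega$ hold, and ${\rm dist}(x_i,\partial\Omega)\geq 3\updelta/4$. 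Applying Proposition \ref{lheure} on each disk with $\varrho=\updelta/8$ (which is allowed since $\eps<\updelta/16<\updelta/8$) yields, with a constant $K=K_V(3\updelta/4)$ depending only on $V$, $M_0$ and $\updelta$,
\begin{equation*}
\int_{\D^2(x_i,\updelta/16)}e_\eps(u_\eps)\,{\rm d}x\leq K\left[\int_{\D^2(x_i,3\updelta/32)}\frac{V(u_\eps)}{\eps}\,{\rm d}x+\frac{8\eps}{\updelta}\int_{\D^2(x_i,\updelta/8)\setminus\D^2(x_i,\updelta/16)}e_\eps(u_\eps)\,{\rm d}x\right].
\end{equation*}
Summing over $i$, using the bounded multiplicity of the covers of $\mathcal U_{\updelta/2}$ and $\mathcal U_\updelta$, and then inserting the Pohozaev bound from the previous paragraph, I obtain \eqref{borges} with a constant ${\rm C}_{\rm ext}(\mathcal U,\updelta)$ depending only on $V$, $M_0$, $\mathcal U$ and $\updelta$.

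\textbf{Main obstacle.} The proof itself is essentially a careful bookkeeping of constants and inclusions; the genuine work was carried out in Propositions \ref{pascapit} and \ref{lheure}. The delicate point is the choice of the various radii $\updelta/16$, $3\updelta/32$, $\updelta/8$, $\updelta/2$, $\updelta$ so that all inclusions are strict and the enlarged disks of the covering of $\mathcal U_{\updelta/4}$ remain inside $\mathcal U_{\updelta/2}$ (for the potential term) while still inside $\mathcal U_\updelta$ (for the annular remainder term); any sloppiness here would either spill outside $\mathcal U_\updelta$, losing the ability to apply Proposition \ref{pascapit}, or lose a factor in the $\eps$-term. The assumption \eqref{carpediem} is only used to guarantee that the right-hand side of \eqref{borges} is finite and does not enter the proof quantitatively.
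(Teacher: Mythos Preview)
Your proposal is correct and follows essentially the same three-step architecture as the paper: Pohozaev-type control of the potential on $\mathcal U_{\updelta/2}$ via Proposition~\ref{pascapit}, a finite covering of $\mathcal U_{\updelta/4}$ by small disks, and an interior energy-by-potential estimate on each disk, followed by summation.

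There is one substantive difference worth flagging. You invoke Proposition~\ref{lheure} on each covering disk, which requires only the ambient global bound $E_\eps(u_\eps)\leq M_0$; this is why in your argument the smallness assumption~\eqref{carpediem} plays no quantitative role. The paper instead applies the scaled form~\eqref{dupontbis} of Proposition~\ref{borneo}, and for that it \emph{does} use~\eqref{carpediem} quantitatively: it fixes the threshold ${\rm K}_{\rm ext}(\mathcal U,\updelta)$ so that the Pohozaev bound forces the potential integral on each disk below $\tfrac{\updelta}{8}K_{\rm pot}(M_0)$, which is what the paper needs to put itself in the regime where Proposition~\ref{borneo} applies. So your final remark that \eqref{carpediem} ``does not enter the proof quantitatively'' is accurate for \emph{your} route but not for the paper's. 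Your version is slightly cleaner in this respect and yields the same conclusion; just be aware that you are tacitly importing the global energy bound $E_\eps(u_\eps)\leq M_0$ through Proposition~\ref{lheure}, which is not listed among the hypotheses of Proposition~\ref{pave} but is the standing assumption of the paper.
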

 \begin{proof}  The proof combines Proposition \ref{borneo}, Proposition \ref{pascapit} with a standard covering by disks. 
We first bound the potential on the set $\mathcal U_{\frac{\delta}{2}}$  thanks to  of Proposition \ref{pascapit}, which yields 
 \begin{equation}
     \label{pascap360}
      \frac{1}{\eps} \int_{\mathcal U_{\frac{\updelta}{2}} }V(u_\eps) {\rm d} x \leq  C(U, \updelta)  \int_{\mathcal V_\updelta} e_\eps(u_\eps) {\rm d}x\leq  C(U, \updelta)  {\rm  K}_{\rm ext} (\mathcal  U, \delta). 
     \end{equation}
   In inequality \eqref{pascap360}, we have  assumed  that the bound \eqref{carpediem} is fullfilled for some constant  ${\rm  K}_{\rm ext} (\mathcal  U, \delta)$, which we choose now as
   \begin{equation}
   \label{thechoice} 
  {\rm  K}_{\rm ext} (\mathcal  U, \delta) = \frac{{\rm  K}_{\rm pot}(M_0) \delta}{8C(\mathcal U, \delta)}.
   \end{equation}
    Inequality  \eqref{pascap360} then yields 
    \begin{equation}
    \label{pascap361}
     \frac{1}{\eps} \int_{\mathcal U_{\frac{\updelta}{2}} }V(u_\eps) {\rm d} x  \leq  \frac{\delta}{8} K_{\rm pot}(M_0). 
    \end{equation}
This bound will allow us  to apply inequality \eqref{lajoiebis} on disks of radius $\frac{\delta}{8}$ covering $\mathcal U_{\frac{\delta}{4}}$. In this direction, we   claim that there exists  a finite collections of disks 
  $\displaystyle{\left\{\D^2\left(x_i, \frac{\delta}{8}\right)\right\}_{i \in I}}$  such that
  \begin{equation}
  \label{rez}
  \mathcal U_{\frac{\delta}{4}} \subset  \underset{i \in I} \cup \D^2\left(x_i, \frac{\delta}{8}\right)  {\rm \ and \ } x_i \in  \overline{\mathcal U_{\frac{\delta}{4}}}, {\rm \ for \ any \ } i \in I.
  \end{equation}
   Indeed, such a collections may be obtained invoking the  collection of  disks $\displaystyle{\left\{D^2\left(x, \frac{\delta}{8}\right)\right\}}$  with $x \in \overline{\mathcal U_{\frac{\delta}{4}}}$ and then extracting a finite subcover thanks to Lebesgue's Theorem. Notice that we also have 
 \begin{equation}
 \label{chausson}
  \underset{i \in I} \cup \D^2\left(x_i, \frac{\delta}{4}\right)  \subset \mathcal U_{ \frac{\delta}{2}}.
 \end{equation}
 On each of  the disks $\D^2\left(x_i, \frac{\delta}{4}\right)$, we  have, thanks to  \eqref{pascap361}
 $$
  \frac{1}{\eps} \int_{\D^2(x_i, {\frac{\delta}{4}} )}V(u_\eps) {\rm d} x  \leq  \frac{\delta}{8} K_{\rm pot}(M_0),
  $$
  so that we may apply the scaled version \eqref{dupontbis} of Proposition \ref{borneo}  on the disk 
  $\D^2(x_i, \frac{1}{4} \delta)$: 
   This  yields  the estimate
   \begin{equation*}
 \int_{\D^2(x_i, \frac 18\delta )} e_\eps(u_\eps)(x) {\rm d}x \leq  {\rm C_{\rm pot}} \left[ 
   \int_{\D^2(x_i,\frac3{16}\delta )} \frac{V(u_\eps)}{\eps} {\rm d} x  +\frac{\eps}{\delta}    
   \int_{ \D^2(x_i, \frac{\delta}{4})} e_\eps(u_\eps){\rm d} x 
     \right].
 \end{equation*}
 Adding these relations for $i \in I$ and invoking relations \eqref{rez} and \eqref{chausson}
 we are led to
 \begin{equation}
  \int_{\mathcal U_{\frac{\delta}{4}}} e_\eps(u_\eps)(x) {\rm d}x \leq \sharp (I) 
   {\rm C_{\rm pot}} \left[ 
   \int_{\mathcal U_{\frac{\delta}{2}}} \frac{V(u_\eps)}{\eps} {\rm d} x  +\frac{\eps}{\delta}  
    \int_{\mathcal U_{\frac{\delta}{2}} } e_\eps(u_\eps){\rm d} x 
     \right].
 \end{equation}
 Invoking again the first inequality in \eqref{pascap360} we may bound the potential term on the right hand side,  so that   we obtain 
$$
\int_{\mathcal U_{\frac{\delta}{4}}} e_\eps(u_\eps)(x) {\rm d}x \leq \sharp (I) 
   {\rm C_{\rm pot}} \left[  C(U, \updelta)  \int_{\mathcal V_\updelta} e_\eps(u_\eps) {\rm d}x
     +\frac{\eps}{\delta}  
    \int_{\mathcal U_{\frac{\delta}{2}} } e_\eps(u_\eps){\rm d} x 
     \right].
$$
This inequality finally leads to the conclusion \eqref{borges}.
 \end{proof}

 \section {Proof of the energy decreasing property}
 \label{challengedata}
  The purpose of this section is to provide a proof to  Proposition \ref{sindec}, which is a major step in the proofs of the main theorems of the paper.

  \subsection{An improved estimate of the energy on level sets}
  In this paragraph,   we consider again for given $0<\eps \leq 1$ a solution  $u_\eps: \D^2 \to \R^k$  to \eqref{elipes} and  specify the result of Proposition \ref{kappacity} for special choices of $\upkappa$ and $\varrho_\eps$. More precisely, we choose
  \begin{equation}
  \label{cabrovski}
  \varrho_\eps=\mathfrak r_\eps  {\rm \ and \ } \upkappa_\eps=C_{\rm bd} \sqrt{\E_\eps(u_\eps)}, 
  \end{equation}
   where $\frac 3 4 \leq \mathfrak r_\eps \leq 1$ is  the radius introduced in  
  subsection \ref{radamel}, Lemma \ref{moyenne}  for   the choice $\displaystyle{r_1=1, r_0=\frac 34}$ and where the constant  ${\rm K}_{\rm bd}$ is choosen as 
  \begin{equation}
  \label{Cbd}
  C_{\rm bd}=\sup \left\{ 2{C_{\rm unf}}, \sqrt{\frac{1}{16\sqrt{\lambda_0}}}\right\},
  \end{equation}
   $C_{\rm unf}$ being  the constant provided in Lemma \ref{valli}. With this choice, we have 
   \begin{equation}
   \label{minos}
   \upkappa_\eps^2 \geq \frac{1}{16\sqrt{\lambda_0}}E_\eps(u_\eps), 
   \end{equation}
so that the bound \eqref{camembert} is  satisfied for $\upkappa=\upkappa_\eps$. 
  We notice, in view of \eqref{bornuni2},  the definition \eqref{cabrovski} of $\upkappa_\eps$ and the definition  \eqref{Cbd}  of $C_{\rm bd}$, that there exists some element $\upsigmam \in \Sigma$ such that 
  \begin{equation}
      \label{bornuni20}
     \vert u(\ell)-\upsigmam \vert \leq  2{\rm C}_{\rm unf} \sqrt{{\E}_\eps(u_\eps, \D^2)})\leq \upkappa_\eps,  \  \   {\rm \ for \ all \ } \ell \in 
     \S^1(\tilde {\mathfrak r}_\eps),
      \end{equation}
      so that condition \eqref{kappacite} is automatically fullfilled in view of our choice  our choices of parameters.
   The main result of this subsection is  the following:

  \begin{proposition}  
  \label{pasmain}
Assume that $0<\eps \leq 1$ and that $u_\eps$ is a solution of \eqref{elipes} on $ \D^2$.   There exists a constant ${\rm K}_{\Upsilon}>0$ such 
 \begin{equation}
 \label{smalto}
  \int_{\Upsilon_\eps( \mathfrak r_\eps, \upkappa_\eps)} e_\eps(u_\eps)(x) {\rm d}x \leq  {\rm K}_{\Upsilon} \left[ \left(\int_{\D^2} e_\eps(u_\eps)(x) {\rm d}x\right)^{\frac32} 
  +
  \eps \int_{\D^2} e_\eps(u_\eps)(x) {\rm d}x\right]. 
  \end{equation}
  \end{proposition}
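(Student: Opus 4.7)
The strategy is to reduce the statement to a direct application of Proposition \ref{kappacity}, once the parameters $\varrho_\eps = \mathfrak r_\eps$ and $\upkappa = \upkappa_\eps$ are shown to satisfy its hypotheses. By construction (see \eqref{cabrovski} and \eqref{Cbd}), the lower bound \eqref{minos} holds, so condition \eqref{camembert} is fulfilled. Moreover, \eqref{bornuni20}, obtained by applying Lemma \ref{valli} on the circle $\S^1(\mathfrak r_\eps)$, guarantees the existence of an element $\upsigmam \in \Sigma$ for which \eqref{kappacite} is satisfied with $\upkappa = \upkappa_\eps$.

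The only remaining constraint on $\upkappa_\eps$ in Proposition \ref{kappacity} is that $\upkappa_\eps < \upmu_0/4$, i.e. $C_{\rm bd}\sqrt{\E_\eps(u_\eps)} < \upmu_0/4$. This suggests splitting the argument into two cases, separated by the threshold $\tau_0 = (\upmu_0/(4 C_{\rm bd}))^2$. \emph{Case 1 (large energy).} If $\E_\eps(u_\eps) \geq \tau_0$, then the inequality is trivial, since
\begin{equation*}
\int_{\Upsilon_\eps(\mathfrak r_\eps, \upkappa_\eps)} e_\eps(u_\eps)\,{\rm d}x \;\leq\; \E_\eps(u_\eps) \;\leq\; \frac{1}{\sqrt{\tau_0}}\,\E_\eps(u_\eps)^{3/2}.
\end{equation*}
\emph{Case 2 (small energy).} If $\E_\eps(u_\eps) \leq \tau_0$, then in particular the uniform bound \eqref{abritel} holds with $M = \tau_0$, so Proposition \ref{kappacity} applies and yields
\begin{equation*}
\int_{\Upsilon_\eps(\mathfrak r_\eps, \upkappa_\eps)} e_\eps(u_\eps)\,{\rm d}x
\;\leq\; {\rm C_\Upsilon}(\tau_0)\!\left[\upkappa_\eps \!\int_{\D^2(\mathfrak r_\eps)}\!\!\!\frac{V(u_\eps)}{\eps}\,{\rm d}x + \eps\!\int_{\partial \D^2(\mathfrak r_\eps)}\!\!\! e_\eps(u_\eps)\,{\rm d}\ell\right].
\end{equation*}

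To conclude, it remains to bound each of the two terms on the right-hand side. The potential integral is estimated by the full energy: $\int_{\D^2(\mathfrak r_\eps)} V(u_\eps)/\eps\,{\rm d}x \leq \E_\eps(u_\eps)$, which combined with $\upkappa_\eps = C_{\rm bd}\sqrt{\E_\eps(u_\eps)}$ gives a contribution bounded by $C_{\rm bd}\,\E_\eps(u_\eps)^{3/2}$. The boundary term is controlled by the defining property of $\mathfrak r_\eps$ from Lemma \ref{moyenne} (with $r_0 = 3/4$, $r_1 = 1$), yielding $\int_{\S^1(\mathfrak r_\eps)} e_\eps(u_\eps)\,{\rm d}\ell \leq 4\,\E_\eps(u_\eps)$. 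Collecting both contributions and taking $K_\Upsilon$ as the maximum of the constants arising in both cases produces the asserted inequality \eqref{smalto}.

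The argument is essentially a bookkeeping exercise once Proposition \ref{kappacity} is available; the substantive work was carried out in establishing that proposition (especially Lemma \ref{repro} and Lemma \ref{denada}, where the monotonicity in $\upkappa$ of the flux across level sets enables the factor $\upkappa$ to appear linearly in front of the potential integral). The only mildly delicate point here is verifying that the case split is indeed compatible with a single universal constant; this is why we fix the threshold $\tau_0$ purely in terms of $V$ and $C_{\rm bd}$, so that ${\rm C_\Upsilon}(\tau_0)$ becomes a constant depending only on the potential.
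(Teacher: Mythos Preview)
Your proposal is correct and follows essentially the same approach as the paper: a two-case split according to the size of $\E_\eps(u_\eps)$ relative to a threshold depending only on $\upmu_0$ and $C_{\rm bd}$, with the large-energy case handled trivially and the small-energy case by invoking Proposition \ref{kappacity} and then bounding the potential integral by the full energy and the boundary integral via Lemma \ref{moyenne}. Your write-up is in fact slightly more explicit than the paper's in handling the boundary term on $\S^1(\mathfrak r_\eps)$, and your threshold $\tau_0=(\upmu_0/(4C_{\rm bd}))^2$ is chosen precisely to ensure $\upkappa_\eps<\upmu_0/4$; the paper uses the slightly larger $\upnu_1=\upmu_0^2/(4C_{\rm bd}^2)$, but the difference is immaterial.
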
    
   
  \begin{proof} Notice first that   the  result \eqref{smalto} is non trivial only when the energy is small, otherwise it is obvious, for a suitable choice of constant. We introduce therefore  the smallness condition  on the energy
    \begin{equation}
  \label{smallo}
  \int_{\D^2} e_\eps(u_\eps){\rm d}x \leq \upnu_1\equiv  \frac{\upmu_0^2}{4C_{\rm bd}^2}, 
  \end{equation}
    and distinguish two cases.
   
   \medskip
  \noindent
   {\bf Case 1}: {\it   Inequality \eqref{smallo} does not hold, that is} $\E(u_\eps) \geq \upnu_1$. In this case  \eqref{smalto} is straightforwardly satisfied, provided we choose the constant $K_{\Upsilon}$ sufficiently large so that
      $$ {\rm K}_{\Upsilon} \geq \frac{1}{\sqrt{\upnu_1}}. 
       $$
     Indeed, we obtain, since \eqref{smallo} is not  satisfied, 
      \begin{equation}
      \begin{aligned}
      {\rm K} _{\Upsilon}  \left(\int_{\D^2} e_\eps(u_\eps)(x) {\rm d}x\right)^{\frac32} &\geq  {\rm K}_{\Upsilon} (\upnu_1)^{\frac 12} \int_{\D^2} e_\eps(u_\eps)(x) {\rm d}x \\
      &\geq \int_{\D^2} e_\eps(u_\eps)(x) {\rm d}x  
      \geq  \int_{\Upsilon_\eps( \mathfrak r_\eps, \upkappa_\eps)} e_\eps(u_\eps)(x) {\rm d}x.
      \end{aligned}
      \end{equation}

      \medskip
     \noindent
   {\bf Case 2}: {\it Inequality \eqref{smallo} does  hold}.     Since assumption \eqref{kappacite} is satisfied  for $\varrho_\eps=\mathfrak r_\eps$  thanks to \eqref{bornuni20}, we are in position to apply Proposition \ref{kappacity}.   It yields 
       \begin{equation}
  \label{nonodesf}
   \int_{\Upsilon_\eps( \mathfrak r_\eps, \upkappa_\eps)} e_\eps(u_\eps)(x) {\rm d}x \leq  {\rm C}_{\Upsilon} (\upnu_1)\left[ 
 \upkappa_\eps  \int_{\D^2(\mathfrak r_\eps)} \frac{V(u_\eps)}{\eps} {\rm d} x  +\eps    \int_{\partial \D^2(\mathfrak  r_\eps)} e_\eps(u_\eps){\rm d} \ell
     \right].
        \end{equation}
  We choose the constant ${\rm K}_{\Upsilon}$ so that 
  $${\rm K}_{\Upsilon} \geq  \sup \{  {\rm C}_{\Upsilon}(\upnu_1)C_{\rm bd}, \frac{1}{\sqrt{\upnu_1}},  1\}.  $$
   Inequality  \eqref{smalto}   then follows directly  from \eqref{nonodesf} in view of the definition  $\upkappa_\eps=C_{\rm bd} \sqrt{\E_\eps(u_\eps)}$ of $\upkappa_\eps$ and the fact that, by definition of the energy,  we have the point-wise inequality
   $\displaystyle{\frac{V(u_\eps)}{\eps} \leq e_\eps(u_\eps)}$. 
   \end{proof}

     At this stage,  we have already derived an inequality very close to \eqref{bien}, namely  inequality \eqref{smalto} of Proposition \ref{pasmain}. However it holds only on a domain where points on which the value of $\vert u_\eps-\upsigma_i \vert $ is large in some suitable sense have been removed. To go further and obtain an estimate on a full disk, we invoke improved estimates on the potential $V$ which are derived in the next subsection.     
     
     \subsection{Improved potential estimates}
   
   \begin{proposition}  
   \label{propsac}  Assume that $0<\eps \leq 1$ and that $u_\eps$ is a solution of \eqref{elipes} on $ \D^2$.
   There exists a constant $C_{\rm V}>0$  such that
   \begin{equation}
   \label{propsac0}
   \frac{1}{\eps} \int_{\D^2(\frac{5}{8})}V(u_\eps) {\rm d} x \leq   C_{\rm V} \left[ 
  \left(\int_{\D^2} e_\eps(u_\eps)(x) {\rm d}x\right)^{\frac32} 
  +
  \eps \int_{\D^2} e_\eps(u_\eps)(x) {\rm d}x
\right]. 
 \end{equation}  
 \end{proposition}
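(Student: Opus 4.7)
The plan is to combine the level-set energy estimate of Proposition \ref{pasmain} with Pohozaev's identity (Proposition \ref{pascap0}) through a mean-value selection of a good circle. As in the proof of Proposition \ref{pasmain}, dispense first with the trivial regime $\E_\eps(u_\eps) \geq \upnu_1 \equiv \upmu_0^2/(4 C_{\mathrm{bd}}^2)$: there the pointwise bound $V(u_\eps)/\eps \leq e_\eps(u_\eps)$ gives
\begin{equation*}
\frac{1}{\eps}\int_{\D^2(\frac 58)} V(u_\eps)\,{\rm d}x \;\leq\; \E_\eps(u_\eps) \;\leq\; \upnu_1^{-1/2}\, \bigl(\E_\eps(u_\eps)\bigr)^{3/2},
\end{equation*}
which is stronger than \eqref{propsac0} as soon as $C_V$ is taken large enough. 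So one may assume $\E_\eps(u_\eps) < \upnu_1$, and then $\upkappa_\eps = C_{\mathrm{bd}}\sqrt{\E_\eps(u_\eps)} < \upmu_0/2$, which is the smallness needed to feed into the subsequent ingredients.

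In the non-trivial regime, recall from \eqref{bornuni20} that there exists $\upsigmam\in\Sigma$ with $\vert u_\eps(\ell) - \upsigmam\vert \leq \upkappa_\eps$ on $\S^1(\mathfrak r_\eps) = \partial \D^2(\mathfrak r_\eps)$, which is precisely condition \eqref{kappacite0}; moreover the choice \eqref{cabrovski}--\eqref{Cbd} of $\upkappa_\eps$ guarantees \eqref{camembert}. Applying Lemma \ref{remoyen} with $\varrho = \mathfrak r_\eps \in [\tfrac 34,1]$ and $\upkappa = \upkappa_\eps$ produces a radius $\uptau_\eps \in [\tfrac 58, \mathfrak r_\eps]$ such that
\begin{equation*}
\int_{\S^1(\uptau_\eps)} e_\eps(u_\eps)\,{\rm d}\ell \;\leq\; \frac{1}{\mathfrak r_\eps - \tfrac{11}{16}}\int_{\Upsilon_\eps(\mathfrak r_\eps, \upkappa_\eps)} e_\eps(u_\eps)\,{\rm d}x \;\leq\; 16 \int_{\Upsilon_\eps(\mathfrak r_\eps, \upkappa_\eps)} e_\eps(u_\eps)\,{\rm d}x,
\end{equation*}
since $\mathfrak r_\eps - \tfrac{11}{16} \geq \tfrac{1}{16}$. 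Now apply Pohozaev's identity from Proposition \ref{pascap0} on $\D^2(\uptau_\eps)$, noting $\uptau_\eps \leq 1$:
\begin{equation*}
\frac{1}{\eps}\int_{\D^2(\uptau_\eps)} V(u_\eps)\,{\rm d}x \;\leq\; \frac{\uptau_\eps}{2} \int_{\S^1(\uptau_\eps)} e_\eps(u_\eps)\,{\rm d}\ell \;\leq\; 8 \int_{\Upsilon_\eps(\mathfrak r_\eps, \upkappa_\eps)} e_\eps(u_\eps)\,{\rm d}x.
\end{equation*}

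To conclude, bound the right-hand side by Proposition \ref{pasmain}:
\begin{equation*}
\int_{\Upsilon_\eps(\mathfrak r_\eps, \upkappa_\eps)} e_\eps(u_\eps)\,{\rm d}x \;\leq\; {\rm K}_\Upsilon \Bigl[\bigl(\E_\eps(u_\eps)\bigr)^{3/2} + \eps\, \E_\eps(u_\eps)\Bigr],
\end{equation*}
and use $\D^2(\tfrac 58) \subset \D^2(\uptau_\eps)$ since $\uptau_\eps \geq \tfrac 58$ to obtain \eqref{propsac0} with $C_V = 8{\rm K}_\Upsilon$ in the non-trivial case; a slightly larger constant absorbs the trivial case. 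I do not expect a significant obstacle here: the delicate analysis has already been packaged into Proposition \ref{pasmain}, itself obtained from the refined level-set estimate in Proposition \ref{kappacity} with the crucial factor $\upkappa_\eps \sim \sqrt{\E_\eps(u_\eps)}$; the present proposition extends that bound from the level set to the full disk $\D^2(\tfrac 58)$ by the boundary-to-interior transfer afforded by Pohozaev's identity on the judiciously chosen circle $\S^1(\uptau_\eps)$.
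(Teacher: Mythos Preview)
Your proof is correct and follows essentially the same route as the paper: select a good circle via Lemma \ref{remoyen}, feed the resulting boundary energy into Pohozaev's inequality (Proposition \ref{pascap0}), and control the level-set energy by Proposition \ref{pasmain}. Your explicit handling of the trivial regime $\E_\eps(u_\eps)\geq\upnu_1$ is a minor addition that cleanly ensures the hypothesis $\upkappa_\eps<\upmu_0/2$ needed to invoke Lemma \ref{remoyen}, a point the paper leaves implicit.
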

   \begin{proof}   The proof  combines the energy estimates  of Proposition \ref{pasmain},   the avering argument of Lemma  \ref{remoyen} together with the Pohozaev type potential estimate provided in Proposition \ref{pascap0}. 
   
   \smallskip
    We first apply Proposition \ref{remoyen} with the choice $\varrho=\mathfrak r_\eps$ and $\upkappa=\upkappa_\eps$, where  $\mathfrak r_\eps$ and $\upkappa_\eps$ have been defined in  \eqref{cabrovski}. Since in view of definitions \eqref{cabrovski}, \eqref{Cbd} and \eqref{minos}  the lower-bound \eqref{camembert} is verified for $\upkappa_\eps$, we may invoke Proposition \ref{remoyen} to assert that there exists some radius $\displaystyle{\uptau_\eps \in [\mathfrak r_\eps, \varrho]}$ such that 
   \begin{equation*}
   \label{rugby}
     \int_{\S^1(\uptau_\eps)}e_\eps(u_\eps){\rm  d } \ell  \leq   \frac{1}{\varrho_\eps-\frac{11}{16}} \, \E_\eps(u_\eps, \Upsilon( \uptau_\eps,  \upkappa_\eps))
     \leq   16 \, \E_\eps(u, \Upsilon_\eps(\tilde {\uptau}_\eps,  \upkappa_\eps)).
  \end{equation*}
Invoking inequality \eqref{smalto} of  Proposition   \ref{pasmain}, are led to 
  \begin{equation}
   \label{rugby2}
     \int_{\S^1(\uptau_\eps)}e_\eps(u_\eps ){\rm  d } \ell  \leq   16{\rm K}_{\Upsilon} \left[ \left(\int_{\D^2} e_\eps(u_\eps)(x) {\rm d}x\right)^{\frac32} 
  +
  \eps \int_{\D^2} e_\eps(u_\eps)(x) {\rm d}x\right]. 
 \end{equation}
On the other hand, thanks to Proposition \ref{pascap0}, we have    
   \begin{equation}
     \label{pascabul0}
      \frac{1}{\eps} \int_{\D^2( \uptau_\eps)} V(u_\eps){\rm d}x \leq  2\uptau_\eps \int_{\S^1(\uptau_\eps)} e_\eps(u_\eps) {\rm d}\ell\leq  
      2 \int_{\S^1(\uptau_\eps)} e_\eps(u_\eps) {\rm d}\ell.
       \end{equation}
Combining \eqref{rugby2} and \eqref{pascabul0} with the fact that $\uptau_\eps \geq \frac 58$, we derive \eqref{propsac0} with
$$C_{\rm V}=32 {\rm K}_\Upsilon.$$
The proof is  hence complete.    
   \end{proof}

\subsection{Proof of Proposition \ref{sindec} completed}

We introduce   first a new radius $\displaystyle{\tilde{\mathfrak r}_\eps\in [\frac {9}{16},\frac 5 8]}$ corresponding  to  the intermediate  radius defined in Lemma \ref{moyenne}  for   the choice  $\displaystyle{r_1=\frac{9}{16}, r_0=\frac7 8}$  so that it satisfies 
\begin{equation}
\label{igloo}
\int_{\S^1(\tilde {\mathfrak r}_\eps)}e_\eps(u){\rm  d } \ell  \leq 16 \, \E_\eps(u, \D^2(\frac 5 8)). 
\end{equation}
It follows  as above from Lemma \ref{valli}  that there exists   some  element $  \upsigman \in \Sigma$, possibly different from $\upsigmam$ defined in \eqref{bornuni20}, such that 
   \begin{equation}
      \label{bornuni30}
     \vert u(\ell)-\upsigman \vert \leq  4 {\rm C}_{\rm unf} \sqrt{ \E_\eps\left(u, \D^2(\frac5 8)\right)},  \  \   {\rm \ for \ all \ } \ell \in 
     \S^1( \tilde {\mathfrak r}_\eps).
      \end{equation}
In order to  apply Proposition \ref{bornepoting}, we introduce once more a smallness condition on the energy, namely
\begin{equation}
\label{smallitude}
\E_\eps( u_\eps) \leq \upeta_2\equiv \frac{ \upmu_0^2}{256  {\rm C}_{\rm unf}^2}.
\end{equation}
 We then  distinguish two cases:

\medskip
\noindent
{\bf Case 1:} {\it The smallness condition \eqref{smallitude} holds}. In this case, we have, in view of \eqref{bornuni30}
  \begin{equation*}
     \vert u(\ell)-\upsigman \vert \leq  4 {\rm C}_{\rm unf} \sqrt{\upeta_2}=\frac{\upmu_0}{4},  \  \   {\rm \ for \ all \ } \ell \in 
     \S^1( \tilde {\mathfrak r}_\eps), 
      \end{equation*}
 so that condition \eqref{kappacite} holds fo $\varrho_\eps=  \tilde {\mathfrak r}_\eps$
(with $\upsigmam$ replaced by $\upsigman$). We are therefore in position to apply Lemma \ref{bornepoting} on the disk 
$\D^2(\tilde{ \mathfrak r}_\eps)$,  which yields
 \begin{equation}
  \label{nonodesing0}
   \int_{\D^2( \tilde {\mathfrak r}_\eps)} e_\eps(u_\eps)(x) {\rm d}x \leq  {\rm C_{\rm pot}}({\rm L}_M )\left[ 
   \int_{\D^2(\frac 5 8)} \frac{V(u_\eps)}{\eps} {\rm d} x  +\eps    \int_{\partial \D^2(\tilde {\mathfrak r}_\eps)} e_\eps(u_\eps){\rm d} \ell
     \right], 
   \end{equation}
  where ${\rm L}_{M}$  is defined in \eqref{lajonie11}, so that 
  $\displaystyle{\Vert u_\eps \Vert_{L^\infty (\D^2(\frac 45))}\,  \leq  {\rm L_M}}$.
  Invoking Proposition \ref{propsac}  and inequality \eqref{igloo} we are hence led to 
\begin{equation*}
\begin{aligned}
\int_{\D^2( \tilde {\mathfrak r}_\eps)} e_\eps(u_\eps)(x) {\rm d}x  & \leq   {\rm C_{\rm pot}}(({\rm L}_{M}) C_{\rm V} 
  \left(\int_{\D^2} e_\eps(u_\eps)(x) {\rm d}x\right)^{\frac32} \\
 & + {\rm C_{\rm pot}}({\rm L}_{M}) \left( {\rm C}_{\rm V}  + 16  \right)
  \eps \int_{\D^2} e_\eps(u_\eps)(x) {\rm d}x,  
\end{aligned}
\end{equation*}
 which yields \eqref{bien}, fore a suitable choice of the constant ${\rm C}_{\rm dec}$.

 \bigskip
\noindent
{\bf Case 2:} {\it The smallness condition \eqref{smallitude}  \emph{does not } holds}. In this case, inequality \eqref{bien} is straightforwardly fullfilled, provided we choose
$${\rm C}_{\rm dec} \geq  \upeta_2^{-\frac 12}.$$
The proof is  hence complete in both cases.
\qed
\section{  Proof of the Clearing-out theorem}
\label{solde}
The purpose of this section  is to provide the proof of the clearing-out property stated in Theorem \ref{clearingoutth}.
a major step being   the uniform bound \eqref{benkon}.  We first introduce a very weak form of the clearing-out theorem.
\subsection{A very  weak form of the clearing-out}
The following result is classical in the field (see e.g. \cite{ilmanen, BBH}).

\begin{proposition}
\label{brioche}
   Let $u_\eps$ be a solution of \eqref{elipes} on $\D^2$ with $0< \eps \leq 4$. There exists a constant $\upeta_3>0$ such that if 
$\displaystyle{ \E_\eps (u) \leq \upeta_3 \eps}$, 
 then we have, for  some $\upsigma \in \Sigma$, the bound 
  $$\displaystyle{
  \vert u_\eps(0)-\upsigma  \vert  \leq \frac{\upmu_0}{2}.
  }$$
\end{proposition}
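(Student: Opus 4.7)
\noindent\textbf{Proof plan for Proposition \ref{brioche}.}

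The plan is to argue by contradiction: assume $u_\eps(0)$ stays away from every well, derive a uniform lower bound on $V(u_\eps)$ on a small disk of radius comparable to $\eps$, and show this forces the energy to exceed $\upeta_3 \eps$ if $\upeta_3$ is chosen small enough.

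First I would secure the pointwise gradient bound. Since $\E_\eps(u_\eps) \leq \upeta_3 \eps \leq \upeta_3$ (assuming $\eps \leq 1$), Proposition \ref{princours0} gives a bound
\[
\Vert u_\eps\Vert_{L^\infty(\D^2(\frac34))} \leq {\rm L}_0,
\]
where ${\rm L}_0$ depends only on $\upeta_3$ and $V$. Proposition \ref{classic} (applied with $\delta = 1/4$) then yields
\[
\vert \nabla u_\eps(x)\vert \leq \frac{C_{\rm gd}}{\eps} \quad \text{for all } x \in \D^2(\tfrac{1}{2}),
\]
with $C_{\rm gd}$ depending only on ${\rm L}_0$ and hence only on $V$.

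Next, suppose for contradiction that $\vert u_\eps(0) - \upsigma \vert > \upmu_0/2$ for every $\upsigma \in \Sigma$. Two subcases arise: either $u_\eps(0) \notin \bigcup_i \B^k(\upsigma_i,\upmu_0)$, in which case inequality \eqref{extrut} gives $V(u_\eps(0)) \geq \upalpha_0$; or $u_\eps(0) \in \B^k(\upsigma_i,\upmu_0) \setminus \B^k(\upsigma_i,\upmu_0/2)$ for some $i$, in which case \eqref{glutz} gives $V(u_\eps(0)) \geq \lambda_0 \upmu_0^2 / 16$. In either case there is a constant $c_0 > 0$, depending only on $V$, such that $V(u_\eps(0)) \geq c_0$. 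Using the gradient bound, for $\vert x\vert \leq r\eps$ we have $\vert u_\eps(x) - u_\eps(0)\vert \leq C_{\rm gd}\, r$; by uniform continuity of $V$ on the compact ball $\B^k({\rm L}_0 + 1)$, I can choose $r > 0$ (depending only on $V$) so that
\[
V(u_\eps(x)) \geq \frac{c_0}{2}, \qquad \text{for all } x \in \D^2(r\eps).
\]

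Integrating gives
\[
\E_\eps(u_\eps) \geq \frac{1}{\eps}\int_{\D^2(r\eps)} V(u_\eps)\,\rd x \geq \frac{\pi r^2 c_0}{2}\, \eps.
\]
Choosing $\upeta_3 = \pi r^2 c_0 / 4$, this contradicts the hypothesis $\E_\eps(u_\eps) \leq \upeta_3\eps$, so some $\upsigma \in \Sigma$ must satisfy $\vert u_\eps(0) - \upsigma\vert \leq \upmu_0/2$. The only delicate point is keeping all constants genuinely independent of $\eps$, which is guaranteed by the scale-invariant form of Propositions \ref{princours0} and \ref{classic} (alternatively, by rescaling to $\tilde u(y) = u_\eps(\eps y)$ on $\D^2(1/\eps)$ and applying standard elliptic regularity to $-\Delta \tilde u = -\nabla V(\tilde u)$).
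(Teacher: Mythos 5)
Your proposal is correct and follows essentially the same approach as the paper: obtain $L^\infty$ and gradient bounds from the maximum principle and elliptic regularity, observe that staying away from all wells forces a lower bound on $V(u_\eps)$ at the origin, propagate this lower bound to a disk of radius $\sim\eps$ using the gradient control, and integrate to contradict the energy hypothesis. The paper propagates the lower bound via a pointwise bound on $\nabla V(u_\eps)$ (of order $1/\eps$) rather than your combination of the gradient bound on $u_\eps$ and uniform continuity of $V$, and it runs the contradiction in two passes (first to reach the threshold $\upmu_0$, then to sharpen to $\upmu_0/2$), but these are cosmetic differences in an otherwise identical argument.
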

\begin{remark}
{\rm   In the scalar case, Lemma \ref{glupsitude} combined with the monotonicity formula for the energy directly yields 
the proof of Theorem \ref{clearingoutth}.
}
\end{remark}

\begin{proof} Assume that the bound $\displaystyle{ \E_\eps (u) \leq \upeta_3 \eps}$ holds, for some constant $\upeta_2$ to be determined later. 
Imposing first $\upeta_2\leq 1$, it follows from Proposition \ref{princours}  and Proposition \ref{classic} that there exists a constant $C_0>0$ depending only on $V$ such that 
\begin{equation*}
 \vert \nabla u_\eps (x) \vert \leq \frac{C_0}{\eps}   {\rm \ and \ } \vert u_\eps(x) \vert \leq C_0 ,  {\rm \ for \ } x \in \D^2(\frac{7}{8}).
\end{equation*}
Since the potential $V$ is smooth, and hence its gradient is bounded on the disc $\B^k(C_0)$, we deduce that there exists a constant $C_1$ such that 
\begin{equation}
\label{bbq}
\vert \nabla  V(u_\eps) (x) \vert \leq \frac{C_1}{\eps}    {\rm \ for \ } x \in \D^2(\frac{7}{8}).
\end{equation}
Since $\E_\eps (u_\eps) \leq \upeta_2 \eps$, we deduce from the definition of the energy that 
\begin{equation}
\label{lucien}
\int_{\D^2 (\frac{7}{8})} V(u_\eps(x)) {\rm d} x \leq \int_{\D^2} V(u_\eps(x)) {\rm d} x \leq \upeta_3 \eps^2. 
\end{equation}
We claim that 
\begin{equation}
\label{ouidada}
V(u_\eps(x)) \leq \upalpha_0 {\rm \ for \  any  \   } x \in \D^2(\frac{3}{4}).
\end{equation}
Indeed, assume by contradiction  that there exists  some $x_0 \in \D^2(\frac{3}{4})$ such that 
$\displaystyle{V(u(x_0))>\upalpha}$.  Invoking the gradient bound \eqref{bbq}, we deduce that 
$$
V(u_\eps(x) )\geq \frac{\upalpha_0}{2}  {\rm \ for \ }   x\in \D^2\left(x_0, \frac {\upalpha_0 \eps}{2C_1}\right).
$$
Without loss of generality, we may assume that  $C_1$ is chosen sufficiently large so that   
$\displaystyle{\frac {4\upalpha_0}{2C_1}\leq  \frac 18}$ and hence 
$\displaystyle{ \D^2\left(x_0, \frac {\upalpha_0 \eps}{2C_1}\right)\subset \D^2(\frac 78)}$. Integrating  \eqref{ouidada}, on the disk  
$\displaystyle{\D^2\left(x_0, \frac {\upalpha_0 \eps}{2C_1}\right)}$, we are led to 
\begin{equation*}
 \int_{\D^2(\frac 78)}  V(u_\eps(x)) {\rm d} x  \geq  \int_{\D^2(x_0, \frac {\upalpha_0 \eps}{2C_1})}  V(u_\eps(x)) {\rm d} x \geq 
\pi  \frac{\upalpha_0^3}{8C_1^2} \eps^2.
\end{equation*}
This yields a contradiction with \eqref{lucien}, provided we impose  the upper bound on $\upeta_3$ given by
\begin{equation}
\label{imposition}
\upeta_3\leq \pi  \frac{\upalpha_0^3}{8C_1^2} \eps^2, 
\end{equation}
and  established the claim \eqref{ouidada}.   To complete the proof, we may invoke  Lemma \ref{watson}   and the continuity of the map $u_\eps$ to   asserts that 
there exists some $\upsigma \in \Sigma$ such that
\begin{equation}
\label{presque}
 \vert u_\eps(x)-\upsigma \vert \leq \upmu_0  {\rm \ for \ any \ } x \in \D^2(\frac{3}{4}).
 \end{equation}
This yields almost estimate \eqref{benkon}, except that we   still have to replace $\upmu_0$ by  $\upmu_0 \slash 2$ on the right-hand side of \eqref{presque}. In  order to improve the constant, we merely rely on the  same  type of argument. Arguing as above by contradiction, let us assume that there exists a point $x_1\in \D^3(3\slash 4)$ such that 
\begin{equation}
\label{claimobil}
\vert u_\eps(x_1)-\upsigma \vert > \frac{\upmu_0}{2} {\rm \ and \ hence  \ } V(u_\eps(x_1))> \frac{\lambda_0 \,  \upmu_0^2}{16}, 
\end{equation}  
the second inequality in \eqref{claimobil} being a consequence of the  second statement in  Lemma \ref{watson}.  Invoking again  the gradient bound \eqref{bbq}, we deduce that 
$$V(u_\eps(x) )\geq \frac{\lambda_0 \,  \upmu_0^2}{32}, {\rm \ for \ }   x\in \D^2\left(x_1, \frac {\lambda_0 \upmu_0^2\eps}{32C_1}\right). $$
Integrating the previous inequality, we obtain 
\begin{equation*}
 \int_{\D^2(\frac 78)}  V(u_\eps(x)) {\rm d} x  \geq  \int_{\D^2(x_0, \frac {\upalpha_0 \eps}{2C_1})}  V(u_\eps(x)) {\rm d} x 
 \geq \pi  \frac{ \lambda_0^3 \upmu_0^5}{32768 C_1} \eps^2, 
\end{equation*}
 a contradiction with  \eqref{lucien}, provided we impose that $\upeta_2$  is sufficiently small.
\end{proof}

\subsection {Confinement near a well of the potential}   
Our next result is the main step in the proof of Theorem \ref{clearingoutth}. It shows that, if the energy is sufficiently small near, that $0$ takes values  inside a well of the potential.
 \begin{proposition}
 \label{benko0}
Let  $0<\eps \leq 1$ and  $u_\eps$  be a solution of 
  \eqref{elipes}  on $\D^2$. There exists a constant $\upeta_2>0$ such that 
  if 
 \begin{equation}
 \label{benkito}
 E_\eps(u_\eps, \D^2) \leq  \upeta_2
 \end{equation}
  then, we have, for  some $\upsigma \in \Sigma$, the bound 
  $\displaystyle{
  \vert u_\eps(0)-\upsigma  \vert  \leq \frac{\upmu_0}{2}.
  }$
 \end{proposition}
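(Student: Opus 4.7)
The approach is to reduce Proposition \ref{benko0} to the much weaker clearing-out statement of Proposition \ref{brioche}, whose hypothesis requires the stronger smallness $\E_\eps(u_\eps,\D^2)\leq\upeta_3\eps$ (of order $\eps$) rather than a fixed small constant. The reduction is based on iterating the scaled decay estimate \eqref{bienscale} on a geometrically shrinking sequence of disks centered at the origin, until the effective energy becomes of order $\eps$, at which point the weak clearing-out applies after rescaling.

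Concretely, I will set $r_n=(9/16)^n$, $E_n=\E_\eps(u_\eps,\D^2(r_n))$, and $\tilde E_n=E_n/r_n$. Provided $\eps\leq r_n$, the scaled inequality \eqref{bienscale} translates, after dividing by $r_{n+1}$, into the dimensionless recursion
\begin{equation*}
\tilde E_{n+1}\leq K\bigl[\tilde E_n^{3/2}+(\eps/r_n)\tilde E_n\bigr],\qquad K=\tfrac{16}{9}C_{\rm dec}.
\end{equation*}
I will then choose $\upeta_2$ small enough that $K\sqrt{\upeta_2}\leq 1/4$ (so that, by an immediate induction, the cubic term contributes at most $\tilde E_n/4$ to the bracket along the iteration) and also $\upeta_2\leq \upeta_3/(4K)$. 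Defining $n^\star$ to be the largest integer with $r_{n^\star}\geq 4K\eps$, the condition $K\eps/r_n\leq 1/4$ holds for every $n\leq n^\star$, and the recursion therefore halves $\tilde E_n$ at each of these steps. This yields $\tilde E_{n^\star+1}\leq\upeta_2/2$ together with $r_{n^\star+1}<4K\eps$ but $r_{n^\star+1}\geq (9K/4)\eps\geq\eps$, whence
\begin{equation*}
E_{n^\star+1}=r_{n^\star+1}\tilde E_{n^\star+1}\leq 2K\upeta_2\,\eps\leq\upeta_3\eps.
\end{equation*}

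Having reached a scale $r=r_{n^\star+1}\geq\eps$ at which $E_{n^\star+1}\leq\upeta_3\eps$, I will conclude by applying Proposition \ref{brioche} to the scaled map $\tilde u(y)=u_\eps(ry)$ on $\D^2$, which solves \eqref{elipes} with parameter $\tilde\eps=\eps/r\leq 1$ and satisfies $\E_{\tilde\eps}(\tilde u,\D^2)=E_{n^\star+1}/r\leq \upeta_3\tilde\eps$. That proposition then produces $\upsigma\in\Sigma$ with $|\tilde u(0)-\upsigma|\leq\upmu_0/2$, and since $\tilde u(0)=u_\eps(0)$ the desired conclusion follows. The degenerate case $n^\star=-1$ (i.e.\ $\eps>1/(4K)$) is even simpler: one applies Proposition \ref{brioche} directly to $u_\eps$, since then $\upeta_2\leq \upeta_3/(4K)<\upeta_3\eps$.

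The main obstacle is keeping a consistent quantitative grip on the iteration across the borderline scale $r_n\sim\eps$, where the coefficient $\eps/r_n$ ceases to be negligible and the halving mechanism breaks down. The smallness of $\upeta_2$ must be calibrated so that a single halving of $\tilde E_n$, combined with the conversion factor $r_{n^\star+1}\asymp\eps$ that turns $\tilde E_{n^\star+1}\leq\upeta_2/2$ into the dimensional bound $E_{n^\star+1}\leq\upeta_3\eps$, already suffices to meet the hypothesis of Proposition \ref{brioche}, uniformly over all $\eps\in(0,1]$.
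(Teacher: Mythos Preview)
Your proof is correct and follows the same overall scheme as the paper: iterate the decay inequality \eqref{bienscale} on a shrinking sequence of disks centered at the origin, then invoke the weak clearing-out (Proposition~\ref{brioche}) at the terminal scale. The difference lies entirely in how the iteration is organized.

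The paper works with the unnormalized energies $E_n=\E_\eps(u_\eps,\D^2(2^{-n}))$, uses the auxiliary condition $E_\eps(r)\geq\eps^2/r$ (its \eqref{atletico}) to absorb the linear term in \eqref{bienscale} into the $3/2$-power term, and is then left with a pure superlinear recursion $E_{n+1}\leq 2\sqrt{2}^{\,n}C_{\rm dec}E_n^{3/2}$. This recursion is analyzed by passing to logarithms and invoking an abstract comparison lemma (Lemma~\ref{suites}), which produces the doubly-exponential decay $E_n\leq\exp[-(3/2)^n]$; the stopping index $n_\eps$ is then estimated to be of order $\log|\log\eps|$, and the final reduction to Proposition~\ref{brioche} requires a further splitting into the regimes $\eps\leq\eps_2$ and $\eps>\eps_2$.

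Your route is more economical. By tracking the scale-invariant quantity $\tilde E_n=E_n/r_n$ and keeping both terms of \eqref{bienscale}, you turn the recursion into $\tilde E_{n+1}\leq K[\tilde E_n^{3/2}+(\eps/r_n)\tilde E_n]$ and observe that each term can be forced to contribute at most $\tilde E_n/4$ by the smallness of $\upeta_2$ and the constraint $n\leq n^\star$. This yields plain geometric halving, which is all that is needed: the conversion $E_{n^\star+1}=r_{n^\star+1}\tilde E_{n^\star+1}$ with $r_{n^\star+1}\asymp\eps$ immediately gives $E_{n^\star+1}\leq\upeta_3\eps$. The paper's doubly-exponential decay is stronger but unnecessary for the conclusion, and your argument dispenses with Lemma~\ref{suites} and the attendant bookkeeping. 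One minor implicit assumption in your write-up is $K\geq 4/9$ (equivalently $C_{\rm dec}\geq 1/4$), needed for $r_{n^\star+1}\geq\eps$; this is harmless since $C_{\rm dec}$ may be enlarged without loss.
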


 The proof   of the Proposition \ref{benko0} relies  on   inequality  \eqref{bienscale}  of Proposition \ref{sindec}, a standard scaling argument combined  with an iteration procedure. 
 
 \medskip
 \noindent
 {\it   Step 1: A scaled version of inequality \eqref{bienscale}}. Set, for   $0<r\leq 1$, 
 $\displaystyle{\E_\eps(r)= \E_\eps \left(u_\eps, \D^2(r)\right)}$, and assume that
 \begin{equation}
  \label{atletico}
  E_\eps (r) \geq  \frac{\eps^2}{r}. 
  \end{equation}
   Then, we have 
   \begin{equation}
  \label{bienscale2}
 \E_\eps(\frac r 2)  \leq  2\Cdec
 \frac{  {\E_\eps (r)\ }^{\frac 32}
}{\sqrt{r}}, 
   {\rm \  \   \ provided  \ }  r\geq \eps. 
    \end{equation}
Indeed, scaling inequality \eqref{bienscale}, we obtain 
  \begin{equation}
  \label{bienscale3}
 \E_\eps(\frac r 2)  \leq  \Cdec
  \left[ \frac{1}{\sqrt{r}}
    {\E_\eps (r)\ }^{\frac 32}+ 
  \frac{ \eps}{r} \E_\eps (r)
    \right],
     {\rm  \ provided  \ }  r\geq \eps, 
    \end{equation}
 which yields \eqref{bienscale2}.
 
 \medskip
 \noindent
 {\it Step 2: The iteration procedure}.   
     We  consider the  sequence $(r_n)_{n \in \N}$ of decreasing  radii $r_n$  defined as   $\displaystyle{r_n=\frac{1}{2^n}}$,  for $n \in \N$,  and set
     $\displaystyle{\E^\eps_n=   \E_\eps(r_n)= \E_\eps(\frac {1}{2^n})}$, dropping the superscript in case this  induces  no ambiguity.  We  introduce the number
     \begin{equation}
     \label{benkita}
     n_\eps=\sup\left \{ n \in \N,   {\rm \ such \ that \ }   \E^\eps_n \geq   2^n \eps^2 {\rm \  and \ } r_n=\frac{1}{2^n} \geq \eps \right\}.
     \end{equation}
 If we impose that  $\upeta_2\leq 1$, then condition \eqref{benkito} implies that $\E_\eps (u_\eps)\leq 1$, so that $0$ belongs to the set of the r.h.s of \eqref{benkita}, which is hence not empty. On the other hand,    since $2^n$ tends to infinity as $n$ tends to infinity, and since the sequence $(\E_n)_{n \in \N}$ is non-increasing, hence  bounded by $\E^\eps_0$,   the set of the r.h.s of \eqref{benkita} is a finite set of sequential number  and the number $n_\eps$ is hence  a well-defined integer. In view of the definition of $n_\eps$, inequality \eqref{atletico} is straighforwardly satisfied for every $r_n< r_{n_\eps}$.  We deduce therefore from Step 1  and the definition of $r_n$ that we have   the inequality
 \begin{equation}
 \label{ratp}
 \E_{n+1} \leq 2{\sqrt 2}^n\Cdec \left(\E_{n} \right)^{\frac 32},   {\rm \ for \  } n=0, \ldots n_\eps.
 \end{equation}
  We introduce,  for $n\in \N$,  the number   $A_n= -\log E_n $.  Inequality  \eqref{ratp} for $E_n$ is turned into  the inequality for $A_n$ given by 
     \begin{equation}
     \label{turnlog}
     A_{n+1} \geq  \frac 3 2 A_n -\frac{(\log 2)}{2} \, n -\log (2\Cdec),   \   {\rm \ for \  } n=0, \ldots n_\eps.
     \end{equation}
   In order to study   the sequence $(A_n)_{n \in \N}$,  we will invoke  the next elementary result. 
   
   \begin{lemma} 
   \label{suites}
    Let $n_\star \in \N^*$,  $(a_n)_{n \in \N}$  and $(f_n)_{n \in \N}$ be two  sequences of numbers such that
   \begin{equation} 
   \label{suitineq}
   a_{n+1} \geq {\rm c}_0 \, a_n  -f_n, {\rm \ for \ all \ } n \in \N, n \leq  n_\star,  
   \end{equation}
    where ${\rm c}_0>1$  represents a  given constant.    Then we have  the inequality,
    \begin{equation}
    \label{suiteconc}
    a_n\geq {\rm c}_0^n \left(a_0 -\underset{k=0}{\overset {n}\sum  }\frac{1}{{\rm c}_0^{k+1}} f_k\right)  {\rm \ for \ }
    n \in \N^*n \leq n_\star.
       \end{equation}
  \end{lemma}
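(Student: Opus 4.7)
The plan is to prove \eqref{suiteconc} by a straightforward induction on $n$, iterating the one-step bound \eqref{suitineq} exactly $n$ times. The argument is pure bookkeeping of the geometric factors ${\rm c}_0^{n-k}$ that accumulate through the iteration, and I do not expect any genuine obstacle.

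Concretely, writing \eqref{suitineq} as $a_{k+1} \geq {\rm c}_0 a_k - f_k$ and iterating yields, by induction with a tautological base case $n=0$, the sharp lower bound
\begin{equation*}
a_n \;\geq\; {\rm c}_0^n a_0 \;-\; \sum_{k=0}^{n-1} {\rm c}_0^{n-1-k} f_k \;=\; {\rm c}_0^n \Bigl( a_0 \;-\; \sum_{k=0}^{n-1} {\rm c}_0^{-(k+1)} f_k \Bigr),
\end{equation*}
for every $n \leq n_\star$, using ${\rm c}_0^{n-1-k} = {\rm c}_0^n \cdot {\rm c}_0^{-(k+1)}$ in the last equality. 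The inductive step itself is a one-line calculation: from $a_n \geq {\rm c}_0^n a_0 - \sum_{k=0}^{n-1} {\rm c}_0^{n-1-k} f_k$ and \eqref{suitineq}, one obtains $a_{n+1} \geq {\rm c}_0^{n+1} a_0 - \sum_{k=0}^{n-1}{\rm c}_0^{n-k} f_k - f_n$, and the last two negative contributions recombine as $\sum_{k=0}^n {\rm c}_0^{n-k} f_k$. This is slightly stronger than \eqref{suiteconc} as stated: since the $f_k$ are non-negative in the intended application, adding the extra non-positive term $-{\rm c}_0^n \cdot {\rm c}_0^{-(n+1)} f_n$ inside the parenthesis only weakens the inequality and reproduces the sum $\sum_{k=0}^{n}$ appearing in \eqref{suiteconc}.

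The hypothesis ${\rm c}_0>1$ plays no role in the derivation itself. Its importance surfaces in the intended application to the sequence $A_n = -\log E_n^\eps$ satisfying \eqref{turnlog} with ${\rm c}_0 = 3/2$ and the logarithmically growing forcing $f_k = \tfrac{\log 2}{2}\, k + \log(2\Cdec)$: the geometric decay ${\rm c}_0^{-(k+1)}$ dominates the logarithmic growth of $f_k$, so the corrective series $\sum_{k\geq 0} {\rm c}_0^{-(k+1)} f_k$ converges to a finite constant $\Lambda$. The bound \eqref{suiteconc} then forces $A_n$ to grow at least like $(3/2)^n$ as soon as $A_0 = -\log E_\eps(u_\eps)$ exceeds $\Lambda$, and confronting this geometric blow-up with the defining lower bound $E_n^\eps \geq 2^n \eps^2$ on $[0, n_\eps]$ is what will produce the contradiction driving Proposition \ref{benko0}. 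The real subtlety lies downstream; Lemma \ref{suites} itself is the easy piece.
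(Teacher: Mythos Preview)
Your proof is correct and follows essentially the same idea as the paper: iterate the one-step inequality $n$ times. The paper packages this via the variation-of-constants substitution $b_n = a_n/{\rm c}_0^n$ and then telescopes the differences $b_{k+1}-b_k \geq -{\rm c}_0^{-(k+1)} f_k$, whereas you do the equivalent bookkeeping by direct induction; both routes land on the sharp bound with $\sum_{k=0}^{n-1}$, and your remark that passing to $\sum_{k=0}^{n}$ requires $f_n \geq 0$ (true in the application) is a point the paper glosses over.
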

  We postpone the proof of Lemma \ref{suites} and complete first the proof of Proposition \ref{benko0}.
  
\medskip
\noindent
{\it Step 3: Choice of $\upeta_2$ and energy decay estimates}. 
We apply Lemma \ref{suites} to the sequences 
  $(A_n)_{n \in \N}$ and  $(f_n)_{n \in \N}$ with $\displaystyle{f_n= \frac{(\log 2)}{2} \, n+\log (2\Cdec)}$, for any $n \in \N$, so that inequality \eqref{suitineq} is satisfied with  $\displaystyle{{\rm c}_0=\frac 32}$ and $n_\star=n_\eps$.  Inequality \eqref{suiteconc} then yields, for $n=0, \ldots n_\eps,$
\begin{equation}
\label{meray0}
\begin{aligned}
A_n =-\log E_n &\geq   \left(\frac32 \right)^n \left[ 
\log  \left( \frac{1}{E_\eps(u_\eps) }\right)
- \gamma_0 \right] \\
&\geq \left(\frac32 \right)^n \left[ 
  -\log \upeta_2
- \gamma_0 \right]. \\
\end{aligned}
\end{equation}
Here we have used, for the second inequality,  assumption \eqref{benkito} and  we have set
$$\gamma_0  =\underset{k=0}{\overset {\infty}\sum} \left(\frac 23\right)^{k+1} (\frac{(\log 2)}{2} \, k+\log (2\Cdec) )<+\infty.$$
  We impose a first constraint on the constant $\upeta_2$,   namely   we impose
  \begin{equation}
  \label{firstconstra}
   \upeta_2 \leq \exp\left[- (1+\gamma_0)\right] {\rm \ so \ that \ }    -\log \upeta_2 \geq 1+\gamma_0,  
   \end{equation}
 It follows that inequality \eqref{meray0} yields, provided inequality \eqref{benkito} holds,   
 \begin{equation}
 \label{meray}
 \E_n \leq  \exp \left[-\left(\frac32 \right)^n\right]   \   {\rm \ for \  } n=0, \ldots n_\eps-1.  
  \end{equation}

\medskip
\noindent
{\it Step 4:  Estimating  $n_\eps$ and $r_{n_\eps}$}. 
It follows from \eqref{meray}  and the definition of $n_\eps$ that 
$$
\eps^2= \exp (2 \log \eps ) \leq r_n E_n= 2^{-n}  \E_n \leq \exp \left[-\left(\frac32 \right)^n-n \log 2 \right]  \   {\rm \ for \  } n=0, \ldots n_\eps, 
$$
so that we are led to  the inequality
$$\left(\frac32 \right)^{n_\eps} +n_\eps \log 2\leq  2 \vert \log \eps \vert, $$
 and  hence  
$$\left(\frac32 \right)^{n_\eps} \leq  2 \vert \log \eps \vert.$$
Taking the logarithm of both sides,  we obtain the  bound for $n_\eps$
$$ n_\eps \leq\frac{  \log (2\vert \log \eps \vert)}{\log 3-\log 2}.   $$
It  yields a lower bound for $r_{n_\eps}$, namely
\begin{equation}
\label{rateau}
\begin{aligned}
r_{n_\eps}=2^{-n_\eps}=\exp (-(\log 2)\,  n_\eps)
& \geq 
\exp \left(-\log (2\vert \log \eps \vert)\frac{\log 2} {\log 3- \log 2}\right)   \\
& \geq \exp \left(-\upgamma_1\log (2\vert \log \eps \vert)\right) \\
&\geq  (2  \vert \log \eps \vert)^{-\upgamma_1} .
\end{aligned}
\end{equation}
Here we have set 
$$\upgamma_1=\frac{\log 2} {\log 3- \log 2},   {\rm \ so \ that \ } 1\leq \upgamma_1\leq 2.$$
We notice that $\displaystyle{(2  \vert \log \eps \vert)^{-\upgamma_1} \underset{\eps\to 0}\ll \eps}$, so that there exists some $0<\eps_1\leq 1$ such that  
 
\begin{equation}
\label{rne}
r_{n_\eps} \geq   2\, \eps,  {\rm \ provided  \ }  0<\eps\leq \eps_1.
\end{equation}
Going back to  the definition of $n_\eps$, we deduce from \eqref{rne} and \eqref{rateau} that
\begin{equation}
\label{enfin}
\E_{n_\eps+1}^\eps \leq \eps^2  r_{n_\eps+1}^{-1}=2^{n_\eps+1} \eps^2
\leq 8\vert \log \eps \vert^{\upgamma_1} \eps^2,  {\rm \ if   \ }  0<\eps\leq \eps_1.
\end{equation}

\medskip
\noindent
{\it Step 5: change of scale}. We consider the scaled map
 $\tilde u_\eps$ and the scaled parameter $\tilde \eps \geq \eps$ defined by
 $$
  \tilde u_\eps(x)= u_\eps (r_{n_\eps+1} x),  {\rm \ for \ } x \in \D^2, {\rm \ 	and \ the \ scaled \ parameter \ } 
  \tilde \eps=r_{n_\eps+1}^{-1}\eps =2 r_{n_\eps}^{-1} \eps.
  $$
   Turning back to \eqref{scalingv},  we  are led to 
  the identity, for the energy
  $$
  \E_{ \tilde \eps}(\tilde u_{ \eps}) = r_{n_\eps+1}^{-1} \Eps (u_\eps, \D^2(r_{n_\eps+1}))=
  r_{n_\eps+1}^{-1}\E_{n_\eps+1}^\eps,$$
  so that,   in view of \eqref{rateau} and \eqref{enfin}, we have  
 \begin{equation}
 \label{chap}
 \left\{
 \begin{aligned}
  \E_{ \tilde \eps}(\tilde u_{ \eps})&\leq 16 \vert \log \eps \vert)^{2\upgamma_1}\,  \eps^2,  {\rm \ if \ } \eps\leq \eps_1, {\rm \ and \ } \\
    \frac{\E_{ \tilde \eps}(\tilde u_{ \eps})}{\tilde \eps}&=\frac{\Eps (u_\eps, \D^2(r_{n_\eps+1}))}{\eps}\leq  8\vert \log \eps \vert^{\upgamma_1} \eps,
  {\rm \ if \ } \eps\leq \eps_1.
  \end{aligned}
  \right.
  \end{equation}
  Since the map $s\to  \vert \log s  \vert^{\upgamma_1} s$ is decreasing on the interval $(0, e^{-\upgamma_1})$, assuming that the constant $\upeta_2$ is choosen to be sufficiently small, there exists a unique  number $\eps_2 \in(0, e^{-\upgamma_1})$, such that  
  \begin{equation}
  \label{epson}
8\vert   \log \eps_1  \vert^{\upgamma_1}\eps_1 \leq \upeta_2  {\rm \ and \ } \eps_2 \leq \eps_1.
  \end{equation}

  \medskip
  \noindent
  {\it Step 6: Proof of Proposition \ref{benko0} completed}.  We conclude invoking the weak clearing-out property stated in Proposition \ref{brioche}. For that purpose,  we distinguish two cases:
  
  \smallskip
  \noindent
 {\bf Case 1: $0<\eps \leq \eps_2$}. It follows in this case  from the definition \eqref{epson} of $\eps_2$ and the first inequality in \eqref{chap} that 
 \begin{equation}
 \label{kakuta}
 \left\{
 \begin{aligned}
 \E_{ \tilde \eps}(\tilde u_{ \eps})&\leq \upeta_2\,  \tilde \eps  {\rm \ and \ } \\
 \tilde \eps &\leq 1. 
  \end{aligned}
 \right. 
 \end{equation}
 In view of \eqref{kakuta}, we are hence  
  in position to apply Proposition \ref{brioche} to the map $\tilde u_\eps$  with parameter $\tilde \eps$:  Hence  there exists some point $\upsigma \in \Sigma$ such that 
 $$\vert \tilde u_\eps(0)-\upsigma \vert \leq \frac{\upmu_0}{2}. $$
  since  $u_\eps(0)= \tilde u_\eps(0)$ the conclusion of Proposition \ref{benko0} follows. 
  
   \medskip
  \noindent
 {\bf Case 2: $1 \geq \eps > \eps_2$}.  Here we apply directly Proposition \ref{brioche} to $u_\eps$. Besides \eqref{firstconstra} we impose the additional condition 
$\displaystyle{\upeta_2 \leq {\upeta_3}{\eps_2}}$ on $\upeta_3$,  so that we finally may choose the constant $\upeta_2$ as 
 \begin{equation}
 \label{seconstraint}
 \upeta_2=\inf\{ {\upeta_3}\, {\eps_2}, \exp [-(1+\upgamma_1), 1] \}. 
 \end{equation}
  With this choice, we have, in view of assumption \eqref{benkito}, for $\eps \geq \eps_2$, 
 $$ \Eps(u_\eps) \leq \upeta_1 \leq  {\upeta_2}{\eps_2}  \leq  \upeta_2 \eps.  $$
  Hence $u_\eps$ fullfills the assumptions of Proposition \ref{brioche}, so that its conclusion yields  again  the existence of an element $\upsigma \in \Sigma$ such that  $\displaystyle{\vert  u_\eps(0)-\upsigma \vert \leq \frac{\upmu_0}{2}.} $

  \medskip
  In both cases, we have hence established  the conclusion of Proposition \ref{benko0} so that the proof is complete. 
  \qed
  
  
  \smallskip
  In the course of the proof, we have used Lemma \ref{suites}, which has not been proved yet.
   
        \begin{proof} [Proof of Lemma \ref{suites}]
     We introduce, inspired by the method of variation of constant,  the sequence $(b_n)_{n \in \N}$ defined by $a_n={\rm c}_0^n  \, b_n$, for any $n \in \N$.  Substituting into \eqref{suitineq},  we obtain
     $${\rm c}_0^{k+1} b_{k+1} \geq {\rm c}_0^{k+1}b_k-f_k, {\rm \ for \ all \ } k \in \{0, \ldots, n_\star\},  $$
      so that 
      $$b_{k+1}-b_k \geq - \frac{1}{{\rm c}_0^{k+1}} f_k,  {\rm \ for \ all \ } k \in \{0, \ldots, n_\star\}.$$
     Let $n\in \N$, $n\leq n_\star$. Summing these relations for $k=0$ to $k=n-1$, we are led to
      $$b_n \geq b_0 -\underset{k=0}{\overset {n}\sum  }\frac{1}{{\rm c}_0^{k+1}} f_k=
   	a_0 -\underset{k=0}{\overset {n}\sum  }\frac{1}{{\rm c}_0^{k+1}} f_k,$$
   which, in view of the definition of $b_n$,  yields the desired conclusion \eqref{suiteconc}. 
     \end{proof}
     
     A direct consequence of Proposition \ref{benko0} is the following:
     
     \begin{corollary}
      \label{benko0}
Let  $0<\eps \leq 1$ and  $u_\eps$  be a solution of 
  \eqref{elipes}.  Set  $\displaystyle{ \upeta_1=\inf \{\frac{1}{8} \upeta_2, \frac{1}{8} \upeta_3\}}$  and assume that 
   \begin{equation}
 \label{benkitoni}
 E_\eps(u_\eps, \D^2) \leq 2\upeta_1.
 \end{equation}
  then, there exists   some $\upsigma \in \Sigma$ such that 
  \begin{equation}
  \label{}
  \vert u_\eps(x)-\upsigma  \vert  \leq \frac{\upmu_0}{2}, {\rm \ for \ any \ } x \in \D^2\left( \frac{3}{4}\right).
  \end{equation}
     \end{corollary}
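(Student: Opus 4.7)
The strategy is to deduce the corollary from Proposition~\ref{benko0} (and, in a complementary range of $\eps$, from Proposition~\ref{brioche}) by applying the scaling invariance described in Subsection~\ref{squale} at every point $x_0 \in \D^2(3/4)$, and then to use the well-separation property \eqref{kiv} together with the continuity of $u_\eps$ to show that the selected minimizer of $V$ is independent of $x_0$.

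Fix $x_0 \in \D^2(3/4)$ and set $r=1/4$, so that $\D^2(x_0, r) \subset \D^2$. Following Subsection~\ref{squale}, define the scaled and translated map $\tilde u(y) = u_\eps(x_0 + r y)$ for $y \in \D^2$, which solves \eqref{elipes} with scaled parameter $\tilde \eps = \eps/r = 4\eps$. By the energy scaling identity \eqref{scalingv} and the assumption $E_\eps(u_\eps, \D^2) \leq 2\upeta_1$, we get
\begin{equation*}
E_{\tilde \eps}(\tilde u, \D^2) \;=\; r^{-1}\, E_\eps(u_\eps, \D^2(x_0, r)) \;\leq\; 4\, E_\eps(u_\eps, \D^2) \;\leq\; 8\upeta_1.
\end{equation*}
By the definition $\upeta_1 = \inf\{\upeta_2/8, \upeta_3/8\}$, this yields simultaneously $E_{\tilde \eps}(\tilde u) \leq \upeta_2$ and $E_{\tilde \eps}(\tilde u) \leq \upeta_3$.

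I then distinguish two cases according to the size of $\eps$. If $0 < \eps \leq 1/4$, then $\tilde \eps \leq 1$ and the map $\tilde u$ satisfies the assumption \eqref{benkito} of Proposition~\ref{benko0}; its conclusion applied at the origin gives some $\upsigma(x_0) \in \Sigma$ with $|\tilde u(0) - \upsigma(x_0)| = |u_\eps(x_0) - \upsigma(x_0)| \leq \upmu_0/2$. If $1/4 < \eps \leq 1$, then $1 < \tilde \eps \leq 4$, and we have $E_{\tilde \eps}(\tilde u) \leq \upeta_3 \leq \upeta_3\, \tilde \eps$; Proposition~\ref{brioche} applied to $\tilde u$ yields the same conclusion $|u_\eps(x_0) - \upsigma(x_0)| \leq \upmu_0/2$ for some $\upsigma(x_0) \in \Sigma$. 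In either case, we obtain a pointwise bound at the arbitrary point $x_0$.

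It remains to show that the selection $\upsigma(x_0)$ can be made independent of $x_0 \in \D^2(3/4)$. This is the only place where anything slightly subtle occurs, but it is immediate from the separation property \eqref{kiv}: since $\upmu_0/2 < 2\upmu_0$ and the balls $\B^k(\upsigma_i, 2\upmu_0)$ are pairwise disjoint, the element $\upsigma(x_0) \in \Sigma$ satisfying $|u_\eps(x_0) - \upsigma(x_0)| \leq \upmu_0/2$ is unique. The map $x_0 \mapsto \upsigma(x_0)$ from the connected set $\D^2(3/4)$ into the finite (hence discrete) set $\Sigma$ is locally constant by continuity of $u_\eps$, and therefore constant. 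This completes the proof. The only real "obstacle" is a bookkeeping one, namely verifying that the factor $8$ in the definition $\upeta_1 = \inf\{\upeta_2/8, \upeta_3/8\}$ is exactly what the scaling $r=1/4$ requires, which we have done above.
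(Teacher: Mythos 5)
Your proof is correct and follows essentially the same route as the paper: rescale around each $x_0\in\D^2(3/4)$ with $r=1/4$, apply the confinement result (Proposition with hypothesis \eqref{benkito}) when $\eps\leq 1/4$ and the weak clearing-out Proposition \ref{brioche} when $1/4<\eps\leq 1$, and then remove the $x_0$-dependence of $\upsigma$ by the well-separation \eqref{kiv} plus continuity and connectedness. Your write-up in fact makes explicit two points the paper glosses over (the correct proposition to invoke in the first case and the locally-constant argument for $\upsigma(x_0)$), but the argument is the same.
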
 
      \begin{proof}
  Let $\displaystyle{x_0 \in \D^2(\frac 3 4)}$  be an arbitrary point.  We consider the  scaled parameter $\tilde \eps= 4\eps$   and the scaled and translated map $\tilde u_\eps$ defined on $\D^2$ by 
     $$\tilde u_{\tilde \eps} (x)=u_\eps \left(x_0+ \frac{1}{4} x\right) {\rm \ for \ every \ } x \in \D^2, $$
      so that
      \begin{equation}
      \label{arnault}
       \E_{\tilde \eps}(\tilde u_{\tilde \eps})=4 \E_\eps\left (u_\eps, \D^2 (x_0, \frac 14)\right)
       \leq 4  \E_\eps (u_\eps) \leq 
      8  \upeta_1 \leq  \upeta_2, 
     \end{equation}
  where we have used assumption \eqref{benkitoni} and  the definition of $\upnu_1$ for the last  inequality. As above, we distinguish two cases. 
  
  \medskip
  \noindent
  {\bf Case 1: $\eps\leq \frac{1}{4}$}. In this case $\tilde\eps \leq 1$, so that, in view of \eqref{arnault}, we are in position to apply Proposition \ref{brioche}: It yields  an element $\upsigma_{x_0} \in \Sigma$, depending possibly on the point $x_0$,   such that  
  $$\displaystyle{\vert  \tilde u_\eps(0)-\upsigma_{x_0} \vert \leq \frac{\upmu_0}{2}.} $$
    Since 
  $\tilde u_{\tilde \eps} (0)=u_\eps (x_0)$, we conclude that
 \begin{equation}
 \label{upsigmax0}
  \vert u_\eps(x_0)-\upsigma_{x_0} \vert \leq \frac{\upmu_0}{2}. 
  \end{equation}
  Since inequality \eqref{upsigmax0} holds for \emph{any point} $x_0\in \D^2(3 \slash 4)$, a continuity argument shows  that the point $\upsigma_{x_0}$ does not depend on $x_0$, so that the proof  of Proposition \ref{benko0} is complete in Case 1. 
  
   \medskip
  \noindent
  {\bf Case  2: $1\geq \eps\geq \frac{1}{4}$}. In this case $1 \leq \tilde \eps \leq 4$. In view of the definition of $\upeta_1$, we have  $16 \upeta_1\leq \upeta_3$, 
  It then follows from assumption \eqref{benkitoni} that 
        \begin{equation}
      \label{arnault2}
       \E_{\tilde \eps}(\tilde u_\eps)=4 \E_\eps\left (u_\eps, \D^2 \left(x_0, \frac 14\right)\right)
       \leq 4  \E_\eps (u_\eps) \leq 
      8 \upeta_1
      \leq   {\upeta_3}\leq \upeta_3 \tilde \eps. 
     \end{equation}
Hence, we are once more  in position to apply Proposition \ref{brioche}, so that there exists  an element $\upsigma_{x_0} \in \Sigma$, depending possibly on the point $x_0$  such that  
  $\displaystyle{\vert  \tilde u_\eps(0)-\upsigma_{x_0} \vert \leq \frac{\upmu_0}{2}.} $  Since 
  $\tilde u_\eps (0)=u_\eps (x_0)$, we conclude that
  $$
  \vert u_\eps(x_0)-\upsigma_{x_0} \vert \leq \frac{\upmu_0}{2}. 
  $$
The proof of Corollary \ref{benko0} is hence complete. 

     \end{proof}
     \subsection{Proof of Theorem \ref{clearingoutth} completed}
      We have determined so far the value of $\upeta_1$ in Corollary \ref{benko0}, which as matter of fact provides the proof of \eqref{benkon}. 
    The only remaining unproved assertion is  the energy estimate \eqref{engie}, which we establish next.  For that purpose, we notice that the restriction of the map $u_\eps$ to the smaller disk $\D^2(3\slash4)$ takes values into one of the wells, so that the functionals behaves there as a convex functional.

    The proof is  parallel and actually much easier then our earlier energy estimate.  We first invoke Lemma \ref{moyenne} with 
    $\displaystyle{r_1=\frac 34}$  and  $\displaystyle{r_0= \frac{5}{8}}$: This yields a radius $\displaystyle{\mathfrak r_\eps\in [\frac 58, \frac 34]}$  and an element $\upsigma \in \Sigma$ such that  
    \begin{equation}
    \label{ret}
     \int_{\S^1(\mathfrak r_\eps)}e_\eps(u_\eps){\rm  d } \ell  \leq 8 \, \E_\eps(u, \D^2)) {\rm \ and \ }
         \int_{\S^1(\mathfrak r_\eps) }\vert u_\eps- \upsigma\vert \vert \nabla  u_\eps\vert \leq 16\sqrt{\lambda_0^{-1}} \E_\eps(u_\eps, \D^2)).
     \end{equation}
 We multiply the equation by $(u_\eps-\upsigma)$ and integrate on the disk $\D^2(\mathfrak r_\eps)$ which yields, as in \eqref{stokounette}   
     \begin{equation}
  \label{stokom}
  \begin{aligned}
  \int_{\D^2(\mathfrak r_\eps)} \eps \vert \nabla u_\eps \vert^2 +\eps^{-1} \nabla_u V(u_\eps)\cdot (u_\eps-\upsigma)
  &= \eps  \int_{\S^1(\mathfrak r_\eps)}  \frac{\partial u_\eps}{\partial r} \cdot (u_\eps-\upsigma). 
 \end{aligned}
  \end{equation} 
     We deduce from \eqref{ret}   that
     \begin{equation}
     \label{reti}
     \int_{\S^1(\mathfrak r_\eps)}  \frac{\partial u_\eps}{\partial r} \cdot (u_\eps-\upsigma)
  \leq     \int_{\S^1(\mathfrak r_\eps) }\vert u_\eps- \upsigma\vert \vert \nabla  u_\eps \vert
  \leq 16\sqrt{\lambda_0^{-1}} \E_\eps(u_\eps, \D^2)). 
     \end{equation}
 We use next  the fact that, in view of assertion \eqref{benkon}, we have  $\displaystyle{\vert u_\eps-\upsigma  \vert   \leq \frac{\upmu_0}{2}}$ on the disk 
 $\D^2(\mathfrak r_\eps)$.  Arguing  as in \eqref{duglandmoins}, we have  the point-wise inequality
     \begin{equation}
     \label{retina}
     \eps \vert \nabla u_\eps \vert^2 +\eps^{-1} \nabla_u V(u_\eps)\cdot (u_\eps-\upsigma)\geq  \frac{\lambda_0} {2\lambda_{\rm max}} e_\eps(u). 
     \end{equation}
  Combining \eqref{stokom}    with \eqref{retina} and \eqref{reti},  we obtain
  $$
   \int_{\D^2(\mathfrak r_\eps)} e_\eps(u_\eps){\rm d}x \leq  {16 }{\lambda_0^{-\frac 32}}\lambda_{\rm max}\, \eps \E_\eps(u, \D^2)), 
  $$
 Which yields the energy estimate \eqref{engie}  choosing $\displaystyle{\Cnrg= {16}{\lambda_0^{-\frac 32}}\lambda_{\rm max}}$. The proof of Theorem \ref{clearingoutth} is hence complete. 

\bigskip

     \begin{center}{ \Large \bf
Part III:  Analysis of the limiting sets and measures}
\end{center}
\addcontentsline{toc}{section}{Part III: Analysis of the limiting sets and measures}


 \section{Properties of the concentration set $\mathfrak S_\star$}
 \label{sectionsix}
 The purpose of this section is to provide the proof of  assertion i) of  Theorem \ref{maintheo}.  We start with the proof of Theorem \ref{claire}, the clearing-out property for the measure $\upnu_\star$.
\subsection{Proof of Theorem \ref{claire}} 
Recall that $\upnu_\star$  is the weak limit of the measure $\upnu_{\eps_n}$ defined in \eqref{mesure} by $\upnu_\eps=e_\eps(u_\eps) \rd x$, so that 
$\E_\eps (u, \D^2(x_0, r))=\upnu_\eps (\D^2(x_0, r))=\upnu_\eps (\overline{\D^2(x_0, r)})$. Let  $x_0\in \Omega$ and $r>\rho>0$ be  such that $\D^2(x_0, r) \subset \Omega$. Since $\overline{\D^2(x_0, \rho)}$ is an closed set, we have,  by standard properties of weak convergence
\begin{equation}
\label{miseritude}
\underset{n \to +\infty} \limsup \,  \upnu_{\eps_n} (\D^2(x_0, \rho))  \leq \upnu_{\star} (\overline{\D^2(x_0, \rho)}) \leq \upnu_{\star} ({\D^2(x_0, r)})
\end{equation} 
Next, let  $x_0$ and $r>0$ are such that 
   $$\upnu_\star (\D^2(x_0, r))<\upeta_1\, r.$$
 It follows from \eqref{miseritude}  that, for  given $\rho < r$,  there exists some $n(\rho)\in \N$ such that, if $n \geq n(\rho)$,  then  we have 
 \begin{equation}
 \label{miserere}
 \upnu_{\eps_n} (\D^2(x_0, \rho)) \leq \frac{5}{4} \upeta_1 r. 
 \end{equation}
 We choose $\displaystyle{\rho=\frac{8r}{9}}$. We obtain, inserting in \eqref{miserere}, 
 \begin{equation}
 \label{nobis}
  \upnu_{\eps_n} (\D^2(x_0, \rho)) =\upnu_{\eps_n} (\D^2(x_0, \frac{8r}{9})) \leq \frac{5}{4}.\frac{8r}{9}  \upeta_1 =\frac{10}{9}\upeta_1<2\upeta_1. 
 \end{equation}

   Hence,  for sufficiently large $n$,  we are in position to apply Theorem \ref{clearingoutth}, so that
 \begin{equation}
 \label{peccata}
 \begin{aligned}
  \upnu_{\eps_n} \left( \D^2\left(x_0, \frac{ 5r}{9}\right)\right)= \upnu_{\eps_n} \left( \D^2\left(x_0, \frac{ 5\rho}{8}\right)\right)& \leq \Cnrg \, \frac{\eps_n}{r} E_{\eps_n} \left(u_{\eps_n}, \D^2\left(x_0, \rho\right)\right) \\
 & \leq  \frac{5}{4} \eps_n\upeta_1   \to 0 {\rm \ as \ } n \to + \infty.
\end{aligned}
\end{equation}
Letting $n \to +\infty$,  it follows that $\displaystyle{\upnu_\star \left(\D^2(x_0,\frac{r}{2})\right)=0}$ and the proof is complete. 
 \subsection{Elementary consequences of the clearing-out property}
We present here some simple consequences of the definition of $\mathfrak S_\star$,  as well as of the clearing out property stated in Proposition \ref{danes}.

  \begin{proposition} 
  \label{proptheo1}
  The set $\mathfrak S_\star$ is  a closed subset of $\Omega$.
 \end{proposition}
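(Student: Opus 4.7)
The plan is to show the complement $\Omega \setminus \mathfrak S_\star$ is open, by exhibiting, around any point of the complement, an open disk on which the measure $\upnu_\star$ vanishes identically (which makes all lower densities zero there, hence strictly less than $\upeta_1$).

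First I would fix $x_0 \in \Omega \setminus \mathfrak S_\star$. By the very definition \eqref{mathfrakSstar} of $\mathfrak S_\star$, this means $\tbe_\star(x_0) < \upeta_1$, and by the definition of the $\liminf$ there exists some radius $r_0 > 0$, with $\overline{\D^2(x_0, r_0)} \subset \Omega$, such that
\begin{equation*}
\frac{\upnu_\star\bigl(\overline{\D^2(x_0, r_0)}\bigr)}{r_0} < \upeta_1.
\end{equation*}
Applying Theorem \ref{claire} to this choice of $x_0$ and $r_0$ yields $\upnu_\star\bigl(\overline{\D^2(x_0, r_0/2)}\bigr) = 0$.

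Next, for any $y$ in the open disk $\D^2(x_0, r_0/2)$, I choose $\rho > 0$ so small that $\overline{\D^2(y, \rho)} \subset \overline{\D^2(x_0, r_0/2)}$. Monotonicity of $\upnu_\star$ on sets then gives $\upnu_\star(\overline{\D^2(y,\rho)}) = 0$, so
\begin{equation*}
\tbe_\star(y) = \liminf_{\rho \to 0} \frac{\upnu_\star(\overline{\D^2(y, \rho)})}{\rho} = 0 < \upeta_1,
\end{equation*}
which shows $y \notin \mathfrak S_\star$. Hence $\D^2(x_0, r_0/2) \subset \Omega \setminus \mathfrak S_\star$, proving that $\Omega \setminus \mathfrak S_\star$ is open in $\Omega$, i.e.\ that $\mathfrak S_\star$ is closed in $\Omega$.

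There is no genuine obstacle here: the entire argument reduces to applying Theorem \ref{claire} once and exploiting the monotonicity of the nonnegative measure $\upnu_\star$ with respect to set inclusion. The only mild point of care is to use the closed-disk version of the density (as in the definition of $\tbe_\star$ and in the hypothesis of Theorem \ref{claire}) so that the comparison $\upnu_\star(\overline{\D^2(y,\rho)}) \leq \upnu_\star(\overline{\D^2(x_0, r_0/2)})$ is immediate.
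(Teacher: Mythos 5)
Your proof is correct and follows essentially the same route as the paper: show the complement is open by applying Theorem \ref{claire} to find a disk around $x_0$ on which $\upnu_\star$ vanishes, then conclude that the lower density $\tbe_\star$ is zero (hence $<\upeta_1$) on that disk. The only cosmetic difference is that the paper states the conclusion on $\D^2(x_0, r_0/4)$ while you keep $\D^2(x_0, r_0/2)$ and shrink $\rho$ depending on $y$; both are fine.
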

 \begin{proof} It suffices to prove that its complement, the set $\mathfrak U_\star=\Omega\setminus \mathfrak S_\star$ is an \emph{open  subset}  of $\Omega$. This property is actually a direct consequence of the clearing out  property stated in Theorem \ref{claire}. Indeed let $x_0$ be an arbitrary point in $\mathfrak U_\star$. It follows from the definition  \eqref{mathfrakSstar} of $\mathfrak S_\star$ that  $\theta_\star (x_0) <\upeta_1$,  so that there exists some radius 
 $r_0>0$ such that $\D^2(x_0, r_0) \subset \Omega$ and such that 
  $$ \upnu_\star (\D^2(x_0, r_0) ) <r_0\upeta_1.$$
  In view   of Theorem \ref{claire}, we  deduce  that
  $\displaystyle{ \upnu_\star (\D^2(x_0, \frac{r_0}{2}) )=0.}$
  Hence, for any point $\displaystyle{x\in \D^2(x_0, \frac{r_0}{4})}$, we  have $\theta_\star (x)=0$ and  therefore
  $$  \D^2(x_0, \frac{r_0}{4}) \subset \mathfrak U_\star. $$
Hence,  $\mathfrak U_\star$ is an open set.
 \end{proof}

 \begin{proposition}
 \label{danes}
 The set $\mathfrak S_\star$ has finite one-dimensional Hausdorff dimension. There exist a  constant ${\rm C_H} >0$ depending only on the potential $V$ such that 
 $$\mathcal H^1 (\mathfrak S_\star)\leq {\rm C_H} M_0.$$
 \end{proposition}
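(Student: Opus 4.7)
The plan is to deduce the estimate from the pointwise lower density bound built into the definition of $\mathfrak{S}_\star$, combined with a standard Vitali-type covering argument. The key point is that by construction, every $x_0 \in \mathfrak{S}_\star$ satisfies $\tbe_\star(x_0) \geq \upeta_1$, so that the measure $\upnu_\star$ "sees" the one-dimensional scale of $\mathfrak{S}_\star$ from below, while $\upnu_\star(\Omega) \leq M_0$ provides the global upper bound.

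First, I would unpack the density hypothesis. Fix $\updelta>0$. For each $x_0 \in \mathfrak{S}_\star$, the inequality $\liminf_{r \to 0} \upnu_\star(\overline{\D^2(x_0,r)})/r \geq \upeta_1$ provides arbitrarily small radii $r < \updelta$ with $\upnu_\star(\overline{\D^2(x_0,r)}) \geq (\upeta_1/2)\, r$. The collection of all such closed disks forms a fine (Vitali) cover of $\mathfrak{S}_\star$. I would then apply the $5r$-covering lemma to extract a countable subcollection $\{\overline{\D^2(x_i,r_i)}\}_{i \in I}$ that is pairwise disjoint and satisfies $\mathfrak{S}_\star \subset \bigcup_{i \in I} \overline{\D^2(x_i, 5 r_i)}$, with still $r_i < \updelta$ for all $i$.

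The disjointness then allows the direct estimate
\begin{equation*}
\mathcal{H}^1_{10\updelta}(\mathfrak S_\star) \;\leq\; \sum_{i \in I} 10\, r_i \;\leq\; \frac{20}{\upeta_1} \sum_{i \in I} \upnu_\star\bigl(\overline{\D^2(x_i,r_i)}\bigr) \;\leq\; \frac{20}{\upeta_1}\, \upnu_\star(\Omega) \;\leq\; \frac{20}{\upeta_1}\, M_0,
\end{equation*}
where the first inequality uses the definition of the $10\updelta$-Hausdorff premeasure, the second uses the density lower bound $\upnu_\star(\overline{\D^2(x_i,r_i)}) \geq (\upeta_1/2) r_i$, and the third uses the disjointness of the selected disks together with the global mass bound $\upnu_\star(\Omega) \leq M_0$ from \eqref{naturalbound} (passed to the limit).

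Finally, letting $\updelta \to 0$ yields $\mathcal H^1(\mathfrak S_\star) \leq (20/\upeta_1)\, M_0$, which gives the claim with $\mathrm{C_H} = 20/\upeta_1$ (the constant depending only on $V$ through $\upeta_1$). The only non-routine ingredient is verifying measurability of $\mathfrak{S}_\star$ so that the covering lemma applies cleanly, but this is ensured by Proposition \ref{proptheo1}, which states that $\mathfrak S_\star$ is closed and therefore Borel; I would record this as a preliminary remark before the covering argument. There is no substantial obstacle here: once the clearing-out result and the resulting density definition are in hand, the whole statement is a classical measure-theoretic consequence.
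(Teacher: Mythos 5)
Your argument is correct, but it follows a genuinely different route from the paper. You use only the defining property of $\mathfrak S_\star$ — the lower bound $\tbe_\star(x)\geq \upeta_1$ on the lower one-dimensional density — together with the global mass bound $\upnu_\star(\Omega)\leq M_0$, and you run the classical comparison between lower density and Hausdorff measure via the Vitali $5r$-covering lemma; the clearing-out Theorem \ref{claire} plays no role at this step (and, as you note, measurability is not even needed for an outer-measure estimate, so the appeal to Proposition \ref{proptheo1} is a harmless extra). The paper instead covers $\Omega_\rho$ by a lattice of disks at a \emph{fixed} scale $\delta$ and, crucially, invokes Theorem \ref{claire} to upgrade the pointwise $\liminf$ bound at a point $y_i\in\mathfrak S_\star$ to the scale-uniform estimate $\upnu_\star(\D^2(y_i,r))\geq \upeta_1 r$ for \emph{all} $0<r\leq\delta$ (if it failed at some radius, clearing-out would force $\theta_\star(y_i)=0$), after which a bounded-overlap count of the doubled disks gives the bound, followed by $\rho\to 0$. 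Your version is the more elementary, purely measure-theoretic one and yields a slightly better constant ($20/\upeta_1$ versus $50/\upeta_1$), while the paper's version has the side benefit of recording the uniform lower density bound at all scales — an ingredient of independent use in the subsequent analysis — and of avoiding the Vitali covering lemma in favor of an explicit lattice covering. Both proofs are complete and give $\mathrm{C_H}$ depending only on $V$ through $\upeta_1$.
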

 
 \begin{proof} The proof relies on a standard covering argument. Let 
$0<\rho<\frac{1}{4}$ be given, and  consider the set 
$$\displaystyle{\Omega_\rho=\{ x \in \Omega, {\rm dist}(x, \partial \Omega) \geq \rho\}}.$$
 Next let $0<\delta<\rho\slash 4$ be given. Consider  the points $x_i$ on a uniform  square lattice of $\R^2$, with nearest neighbour at  distance 
$\displaystyle{\frac \delta 2}$.   We obtain for a subfamily $I$    a standard finite  covering of $\Omega_\rho$  of size $\delta$, that is such that
$$
\Omega_\rho \subseteq \underset{ j \in I}\cup \D^2\left(x_j,\delta\right) \,  \  \text{and} \ 
\D^2\left(x_i,\frac{\delta}{2}\right) \cap \D^2\left(x_j,\frac{\delta}{2}\right) = \emptyset
\text{ for } i\neq j \in I.
$$
We introduce  then the set of indices 
$$\displaystyle{
I_\delta = \left\{ i  \in I, \text{ such \ that \  } \D^2(x_i,\delta)\cap \mathfrak S_\star \neq \emptyset\right\}, }$$
 so that 
given any arbitrary index  $i\in I_\delta,$ there exists a point  $y_i \in \mathfrak S_\star \cap
\D^2(x_i,\delta)$. It follows from the definition of $\mathfrak S_\star$ that 
\begin{equation}
\label{davy}
\theta_\star (y_i) \geq \upeta_1.
\end{equation}
We claim that, for any $0<r\leq \delta$, we have 
\begin{equation}
\label{pret}
\upnu_\star(\D^2(y_i, r )) \geq  \upeta_1 \,  r.
\end{equation}
Indeed, if \eqref{pret} were not true, then  we would be in position to apply Theorem  \ref{claire}, which would imply  that 
$\displaystyle{\upnu_\star(B(y_i, \frac{\delta}{2} ))=0}$, and  hence that $\theta_\star (y_i)=0,$ a contradiction which \eqref{davy}. Inequality  \eqref{pret} is therefore established. Since $\D^2(y_i, \delta) \subset \D^2(x_i, 2\delta)$, we deduce from \eqref{pret} that 
\begin{equation}
\label{preterit}
\upnu_\star(\D^2(x_i, 2\delta )) \geq  \upeta_1 \,  \delta.
\end{equation}
Snce the points $x_i$  are on a uniform grid, we notice that  a given point $x \in \R^2$ belongs to at most 25 distinct  balls of the collection $\D^2(x_i, 2 \delta)$. We have therefore
 \begin{equation}
 \sharp (I_\delta) \upeta_1\delta \leq  {\underset {i \in I_\delta } \sum} \upnu_\star \left( \D^2(x_i, 2 \delta)\right) \leq  25\upnu_\star (\Omega)\leq 25M_0.
   \end{equation}
It follows therefore that
$$\sharp (I_\delta) \delta \leq \frac{25 M_0} {\upeta_1}.$$
Therefore, letting $\delta\to 0$, we deduce, as a consequence of the  definition of the  one-dimensional Hausdorff measure that 
$$
\mathcal H^1 (\mathfrak S_\star \cap\Omega_\rho) \leq \underset { \delta \to 0}\liminf \, 2\, \sharp (I_\delta) \delta  \leq 
 \frac{50M_0} {\upeta_1}.
$$
We conclude letting $\rho \to 0$, choosing $\displaystyle{{\rm C_H}=\frac{50}{\upeta_1}  }$. 
\end{proof}
\subsection{ Proof of Theorem \ref{bordurer}}
Theorem \ref{bordurer}  is a direct consequence of  Proposition \ref{pave} which has actually been taylored for this purpose.      
 Indeed,  since $\upnu_\star(\mathcal V_\updelta)=0$,  we have the convergence
  $$
  \int_{\mathcal V_\updelta} 
      e_{\eps_n}(u_{\eps_n}) \, {\rm d}\, x \to  0 {\rm \ as \ } n \to +\infty, 
  $$    
   so that condition \eqref{carpediem} is fullfilled for $\eps=\eps_n$ and the map $u_{\eps_n}$, provided $n$ is sufficiently large, say larger than some given value $n_0$. We are therefore in position to conclude, thanks to Proposition \ref{pave}, provided $n\geq n_0$ is sufficiently large,  that 
 \begin{equation*}
\label{borges2}
\begin{aligned}
 \int_{\mathcal U_{\frac{\delta}{4}} }e_{\eps_n} (u_{\eps_n}) {\rm d} x 
 &\leq {\rm C}_{\rm ext} (\mathcal  U, \delta)
  \left(
  \int_{\mathcal V_\updelta} 
      e_{\eps_n}(u_{\eps_n}){\rm d}x  + \eps_n \int_{\mathcal U_\delta} e_{\eps_n}(u_{\eps_n}) {\rm d} x 
\right)\\
& \leq   {\rm C}_{\rm ext} (\mathcal  U, \delta)
  \left(
  \int_{\mathcal V_\updelta} 
      e_{\eps_n}(u_{\eps_n}){\rm d}x  + \eps_n {\rm M}_0 \right).
\end{aligned}
 \end{equation*}
It follows that 
$$
 \int_{\mathcal U_{\frac{\delta}{4}} }e_{\eps_n} (u_{\eps_n}) {\rm d} x  \to 0 {\rm \  as \ } n \to +\infty,
$$ 
 so that the proof is complete. 
      \qed

\subsection{ Connectedness properties of  $\mathfrak S_\star$ }
 The purpose of the present  section is, among other things,  to provide the proof of Proposition \ref{connective}.  Given $r>0$ and $x_0\in \Omega$  such that $\D^2(x_0, 2r) \subset \Omega$, we consider the  closed  set 
 $$\mathfrak S_{\star, \varrho}=  \mathfrak S_{\star, \varrho}(x_0)\equiv\mathfrak S_\star \cap \overline{\D^2(x_0, \varrho)} {\rm \ for \  } \varrho \in [0, 2r). $$ 


 The proof of Proposition \ref{connective} relies on several intermediate properties we present next. 

 \begin{proposition} 
 \label{lesconti}
 Let $r>0$ and $x_0\in \Omega$ be as above.
  The closed set 
  \begin{equation}
  \label{theunion}
  \mathfrak Q_{\star, r}(x_0)=\mathfrak S_{\star, r}(x_0)\cup \S^2(x_0, r)
  \end{equation}
   is  a continuum, that is,   it is compact and  connected. 
 \end{proposition}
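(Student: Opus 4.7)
The plan is to treat compactness and connectedness separately. Compactness is immediate: $\mathfrak S_{\star,r}(x_0)$ is closed (since $\mathfrak S_\star$ is closed by Proposition \ref{proptheo1}) and bounded, the circle $\S^1(x_0, r)$ is compact, and a finite union of compact sets is compact. The substantive point is connectedness, which I propose to establish by contradiction, using Theorem \ref{bordurer} as the key lever.

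Suppose $\mathfrak Q_{\star, r}(x_0) = A \sqcup B$ is a nontrivial separation into disjoint nonempty relatively closed subsets. Because $\S^1(x_0, r)$ is connected, it must lie entirely in one piece, say in $A$. Then $B$ is disjoint from $\S^1(x_0, r)$ and therefore is a nonempty compact subset of $\mathfrak S_\star \cap \D^2(x_0, r)$ (the \emph{open} disk). Set
\[
d := \min\{\mathrm{dist}(A, B),\ \mathrm{dist}(B, \S^1(x_0, r))\} > 0,
\]
which is strictly positive by disjointness and compactness.

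Next I would introduce $\mathcal U := \{x \in \Omega : \mathrm{dist}(x, B) < d/3\}$ and $\updelta := d/6$. A brief triangle-inequality check shows that every $y \in \mathcal U_\updelta$ satisfies $\mathrm{dist}(y, B) \leq d/3 + \updelta = d/2 < d$; together with $\mathrm{dist}(B, \S^1(x_0, r)) \geq d$ this forces $\mathcal U_\updelta \subset \D^2(x_0, r) \subset \Omega$, because any point outside the open disk $\D^2(x_0, r)$ is separated from $B$ by the circle and hence is at distance at least $d$ from $B$. The same computation shows that a point of $\mathcal V_\updelta \cap \mathfrak S_\star$ would have to lie in $\mathfrak S_\star \cap \D^2(x_0, r) \subset A \cup B$, but $\mathrm{dist}(y, B) \in [d/3, d/2]$ rules out both $y \in B$ (distance $0$) and $y \in A$ (distance $\geq d$). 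Hence $\mathcal V_\updelta \cap \mathfrak S_\star = \emptyset$, and since $\upnu_\star$ is supported on $\mathfrak S_\star$ (a direct consequence of Theorem \ref{claire} applied at any point where $\theta_\star < \upeta_1$), it follows that $\upnu_\star(\mathcal V_\updelta) = 0$.

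Applying Theorem \ref{bordurer} will then yield $\upnu_\star(\overline{\mathcal U}) = 0$. The contradiction comes from picking any $y_0 \in B$: since $y_0 \in \mathfrak S_\star$ by construction, $\theta_\star(y_0) \geq \upeta_1 > 0$, so for sufficiently small $\rho > 0$ we have both $\overline{\D^2(y_0, \rho)} \subset \mathcal U$ (because $y_0$ lies in the open set $\mathcal U$) and $\upnu_\star(\overline{\D^2(y_0, \rho)}) \geq \upeta_1 \rho / 2 > 0$, contradicting $\upnu_\star(\overline{\mathcal U}) = 0$. The main technical obstacle is the geometric bookkeeping in the middle step, namely ensuring simultaneously that $\mathcal U_\updelta$ stays inside $\D^2(x_0, r)$ (so that Theorem \ref{bordurer} applies and no part of $\mathfrak S_\star$ lying outside the disk can sneak into $\mathcal V_\updelta$) and that $\mathcal V_\updelta$ separates $B$ from $A$ cleanly within $\mathfrak S_\star$; the single length scale $d = \min\{\mathrm{dist}(A, B), \mathrm{dist}(B, \S^1(x_0, r))\}$ is what allows both bounds to hold with the same choice of $\updelta$.
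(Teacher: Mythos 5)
Your proof is correct, and the connectedness part takes a genuinely more direct route than the paper. The paper does not argue on $\mathfrak Q_{\star, r}(x_0)$ itself: it first replaces $\mathfrak S_{\star, r}$ by an approximation $\mathfrak S_{\updelta, r}$, a finite union of closed disks of radius $\updelta$ centered on $\mathfrak S_{\star,r}$ obtained from a Besicovitch covering, proves that $\mathfrak Q_{\updelta, r}=\mathfrak S_{\updelta,r}\cup \S^1(x_0,r)$ is connected by the same mechanism you use (a tubular neighborhood $\mathcal U$ of a putative stray component, $\upnu_\star(\mathcal V_{\updelta})=0$, Theorem \ref{bordurer}, contradiction with $\theta_\star\geq \upeta_1$ on $\mathfrak S_\star$), and then passes to the limit $\updelta\to 0$ using Hausdorff convergence of continua (Falconer, Theorem 3.18). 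The point of the paper's approximation is that a finite union of disks and a circle has finitely many connected components, so each component is clopen and at positive distance from the union of the others; for the limit set $\mathfrak Q_{\star,r}$ itself a connected component need not be at positive distance from its complement, so the component-based argument does not transfer directly. You sidestep this entirely by phrasing disconnectedness as a separation into two nonempty relatively closed pieces $A\sqcup B$, whose compactness immediately gives the positive length scale $d$; your bookkeeping ($\mathcal U_\updelta\subset \D^2(x_0,r)$, $\mathcal V_\updelta\cap\mathfrak S_\star=\emptyset$ because points of $\mathcal V_\updelta\cap\mathfrak S_\star$ would lie in $A\cup B$ at distance in $[d/3,d/2]$ from $B$, which is impossible) is sound, and the final contradiction via the lower density bound at a point of $B$ is exactly the paper's. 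What your version buys is the elimination of the covering construction and of the Hausdorff-limit-of-continua theorem; what the paper's buys is nothing extra for this statement, though the approximating sets are of a type the author reuses conceptually elsewhere. The compactness part is the same in both.
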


 \noindent
 {\it Proof}.  The proof of compactness  of $ \mathfrak Q_{\star, r}(x_0)$  is a straightforward consequence of Proposition \ref{proptheo1}, since both sets composing the union \eqref{theunion} are compact. The  proof of  connectedness of $ \mathfrak Q_{\star, r}(x_0)$ is more involved, and strongly  relies  on Theorem \ref{bordurer}, as we will see next.  In order to invoke Theorem \ref{bordurer},  a first step is to   approximate $\mathfrak S_{\star, r}$ by  sets $\mathfrak S_{\updelta, r}$ with a simpler structure.
 
 \medskip
 \noindent
 {\it Definition of the approximating sets $\mathfrak S_{\updelta, r}$}. These sets are defined  using a \emph{Besicovitch covering} of $\mathfrak S_{\star, r}$. 
 Let   
 $$\updelta_{x_0, r}={\rm dist}(\D^2(x_0, r), \partial \Omega)>0.$$
  For given $0<\updelta<\updelta_{x_0, r}$, we consider the covering of $\mathfrak S_{\star, r}$ by the collection of open  disks 
 $\displaystyle{\{\D^2(x_0, \updelta)\}_{x \in \mathfrak S_{\star, r}}}$, which is obviously a covering of $\mathfrak S_{\star, r}$, and actually a Besicovitch covering.  We may therefore invoke  Besicovitch covering theorem to  assert that there exists 
 a  universal constant $\mathfrak p$, depending only on the dimension $N=2$,  and  $\mathfrak p$ families of  points 
 $\{x_{i_1}\}_{i_1\in A_1}$,  $\{x_{i_2}\}_{i_2\in A_1}, \ldots,  \{x_{i_{\mathfrak  p}}\}_{i_{\mathfrak p}\in A_{\mathfrak p}}$, 
  such that $x_i \in \mathfrak S_{\star, r}(x_0)$, for any $i\in  A\equiv A_1\cup A_2 \ldots\cup A_{\mathfrak p}$, 
  \begin{equation}
  \label{labbe}
  \mathfrak S_{\star, r} \subset \mathfrak V_{\updelta, r}\equiv  \underset {\ell=1} {\overset {\mathfrak p}\cup} \, 
  \left( { \underset{i_\ell  \in A_\ell} \cup} \D^2(x_{i_\ell}, \updelta)
  \right) = \underset{i \in A } \cup \D^2(x_{i}, \updelta), 
  \end{equation}
   and such that the balls in each collection $\{\D^2(x_i, \updelta)\}_{i \in A_\ell}$ are disjoint, that is, for any 
   $\ell=1, \ldots, \mathfrak p$, we have 
   \begin{equation}
   \label{zikovich}
   \D^2(x_i, \updelta) \cap \D^2(x_j, \updelta)=\emptyset {\rm \ for \ } i\not=j {\rm \ with \ }  i, j \in A_\ell.
   \end{equation}
   As a consequence of the above constructions, a point $x \in \mathfrak   V_{\updelta, r}$, where $\mathfrak   V_{\updelta, r}$ is defined in \eqref{labbe}, belongs to at most $\mathfrak p$ distinct disks of the collection 
   $\{\overline{\D^2(x_{i}, \updelta)}\}_{i\in A}$.
   We define the set $\mathfrak S_{\updelta, r}$ as the closure  of the set $\mathfrak V_{\updelta, r}$ that is 
 $$ 
 \mathfrak S_{\updelta, r}\equiv \overline{\mathfrak V_{\updelta, r}}= \underset {\ell=1} {\overset {\mathfrak p}\cup} \, 
  { \underset{i_\ell  \in A_\ell} \cup}\overline{\D^2(x_{i_\ell}, \updelta)},  
  $$
     Notice that, by construction,  the total number $\sharp (A)$ of distinct disks  is finite. Actually, we have the bound
   \begin{equation}
   \label{sharpa}
    \sharp (A) \leq  \frac{4\mathfrak p {r^2}}{\updelta^2}. 
   \end{equation} 
   Indeed, since the famille of balls  $\{\D^2(x_{i_\ell}, \updelta)\}_{i\in A_\ell}$ are disjoint disks of radius $\updelta$ which are included in a ball of radius $2r$, we have 
   $$
   \sharp (A_\ell) \leq  \frac{ {4r^2}}{\updelta^2}  {\rm \ for \ } \ell=1, \ldots, \mathfrak p,
   $$
    so that \eqref{sharpa} follows by summation.
    
   We next consider  the set 
   $$
   \mathfrak Q_{\updelta, r}= \mathfrak S_{\updelta, r} \, \cup \S^2(x_0, r)
   $$
 and    its  distinct connected components  $\{\mathfrak T^k_{\updelta, r}\}_{_{k \in \mathcal J_\updelta }} $.  In view of the structure of $\mathfrak T_{\updelta, r}$,  which is an union of $\sharp (A)$ disks with a circle,  the total  number of connected components $\sharp {\mathcal J_\updelta}$ is  finite and actually bounded by $\sharp(A)+1$, hence the number on the right hand side of inequality \eqref{sharpa} plus one. As a matter of  fact,  we claim 
  
\begin{equation}
\label{fauconnier}
{\it    The  \ set \   } \mathfrak Q_{\updelta, r}   {\it  \ is  \ simply  \ connected, \ so  \ that \  }  \sharp(\mathcal J_\updelta)=1.
\end{equation}
   
   \medskip
   \noindent
 {\it Proof of the claim  \eqref{fauconnier}. }
   We assume by contradiction that $\mathfrak Q_{\updelta, r}$ has at least two distinct connected components and denote  by 
   $\mathfrak Q^1_{\updelta, r}$  the connected component which contains the circle $\S^1(x_0, r)$. Let 
   $\mathfrak Q^2_{\updelta, r}$ be a connected component distinct  from $\mathfrak Q^1_{\updelta, r}$, and set
   $$
   \upbeta\equiv  \inf\left\{ {\rm dist} (\mathfrak Q^2_{\updelta, r}, \mathfrak Q^j_{\updelta, r}),
    j \in \mathcal J_\updelta, j\not =2\right\}>0.
   $$
     We consider the open set 
     $$ \mathcal U=\left\{ x\in \R^2, {\rm dist }\left(x, \mathfrak Q^2_{\updelta, r}\right)<\frac{\upbeta}{4}\right\}
      \subset  \D^2(x_0, r) \setminus { \underset{ j \in \mathcal J_\updelta \setminus \{2\}}  \cup}\mathfrak Q^j_{\updelta, r}, 
      $$
      so that using the notation \eqref{Udelta}, we have
     $$
     \mathcal U_{\frac{\upbeta}{4}}=\left\{ x\in \R^2, {\rm dist }\left(x, \mathcal  U\right)<\frac{\upbeta}{4}\right\}  \subset 
      \D^2(x_0, r) \setminus { \underset{ j \in \mathcal J_\updelta \setminus \{2\}}  \cup}\mathfrak Q^j_{\updelta, r}
     $$
    and 
    \begin{equation}
    \label{udelta2}
    \mathcal V_{\frac{\upbeta}{4}}\equiv     \mathcal U_{\frac{\upbeta}{4}} \setminus \mathcal U \subset 
    \left\{ x \in \R^2, \frac{\upbeta}{4}\leq   {\rm dist }\left(x, \mathfrak Q^2_{\updelta, r}\right)\leq \frac{\upbeta}{2};
    \right\}
    \end{equation}
    combining \eqref{udelta2}  with the definition of $\upbeta$, we obtain
 \begin{equation}
 \label{udelta3}
 \mathcal V_{\frac{\upbeta}{4}} \cap \mathfrak S_{\star}=\emptyset  {\rm \ and \ }
 \upnu_\star\left(\mathcal V_{\frac{\upbeta}{4}}\right)=0.
 \end{equation}
     We are therefore in position to apply Theorem \ref{bordurer}  to assert that
   $ \displaystyle{\upnu_\star (\mathcal U)=0.}$
     However, since  by definition $\mathfrak Q^2_{\updelta, r}  \subset  \mathcal U $, it follows that 
     $\mathcal U \cap \mathfrak S_\star \not = \emptyset,$ so that $\upnu_\star(\mathcal U)>0$. We have hence reached a contradiction, which establishes the proposition.
       
 \begin{proof}[Proof of Proposition \ref{lesconti} completed]   It follows from the definition of $\mathfrak S_{\updelta, r}$ that
  $${\rm dist} (  \mathfrak Q_{\updelta, r} ,\mathfrak Q_{\star, r} ) \leq \updelta,  {\rm \ where \ } \mathfrak Q_{\star, r}=  \mathfrak S_{\updelta, r} \, \cup \S^2(x_0, r),$$
 so that  $\mathfrak Q_{\updelta, r}$ converges as $\updelta \to 0$ to $\mathfrak Q_{\star, r}$ in the Hausdorff metric.  Since for every $\updelta$, the set $\mathfrak S_{\updelta, r}$ is a continuum,    it then follows (see e.g. \cite{falconer}, Theorem 3.18)   that the Hausdorff limit $\mathfrak Q_{\star, r}$ is also a continuum and the proof is complete. 
 \end{proof}     
      
 We deduce as a consequence of Proposition \ref{lesconti}:
 
 \begin{corollary}  
 \label{lesconti2}
 The set $\mathfrak Q_{\star, r}$ is arcwise connected. 
 \end{corollary}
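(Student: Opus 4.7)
The plan is to reduce the arcwise connectedness of $\mathfrak{Q}_{\star, r}$ to a classical theorem of geometric measure theory, namely that every compact connected set of finite one-dimensional Hausdorff measure in $\mathbb{R}^2$ is arcwise connected (see e.g.\ Falconer \cite{falconer}, and also the classical references on continua and the $\mathcal{H}^1$-rectifiability of connected sets of finite length).

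First, I would collect the two ingredients needed to invoke this theorem. On the one hand, Proposition \ref{lesconti}, already established, asserts that $\mathfrak{Q}_{\star, r}$ is a continuum, i.e. compact and connected. On the other hand, the one-dimensional Hausdorff measure of $\mathfrak{Q}_{\star, r}$ is finite: indeed
\begin{equation*}
\mathcal{H}^1\left(\mathfrak{Q}_{\star, r}\right) \leq \mathcal{H}^1(\mathfrak{S}_{\star, r}) + \mathcal{H}^1(\mathbb{S}^1(x_0, r)) \leq \mathrm{C}_{\mathrm{H}} M_0 + 2\pi r,
\end{equation*}
where the bound on $\mathcal{H}^1(\mathfrak{S}_{\star, r}) \leq \mathcal{H}^1(\mathfrak{S}_\star)$ is provided by Proposition \ref{danes}.

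Once these two ingredients are in hand, the conclusion follows from the cited classical result: a compact, connected subset of $\mathbb{R}^2$ with finite $\mathcal{H}^1$-measure is arcwise connected (in fact, any two of its points can be joined by a rectifiable arc lying inside the set). Since no further analysis of the PDE or of the limiting measure is required at this stage — only the two properties of $\mathfrak{Q}_{\star, r}$ just recalled — the proof reduces to the direct invocation of this theorem, and there is no significant obstacle.
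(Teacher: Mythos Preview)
Your proof is correct and follows essentially the same approach as the paper: you invoke Proposition \ref{lesconti} for the continuum property, verify finite $\mathcal H^1$-measure (the paper leaves this implicit), and then apply the classical result that a continuum of finite length is arcwise connected, which the paper cites as \cite{falconer}, Lemma 3.12.
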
    
   
  \begin{proof}  Indeed, any continuum with finite one-dimensional Hausdorff dimension is arcwise connected, see e.g \cite{falconer}, Lemma 3.12, p 34. 
  \end{proof}

    \subsubsection{Proof of Proposition \ref{connective}}
      Invoking Fubini's theorem together with a mean value argument, we may choose some radius 
    $r_0 \in [r, 2r)$ such that  the number of points in $\mathfrak S_\star \cap \partial \D^2(x_0, r_0)$ is finite, more precisely
    $$
 m_0\equiv    \sharp \left( \mathfrak S_\star \cap \partial \D^2(x_0, r_0)\right)  \leq  \frac{ {\rm C}_{\rm H}}{r} M_0, 
    $$
   where we have used estimate \eqref{herbert} of the $\mathcal H^1$ measure of $\mathfrak S_\star$.  We may hence write
   \begin{equation} 
   \label{vacherol}
   \mathfrak S_\star \cap \partial \D^2(x_0, r_0)=\{a_1, \ldots, a_{m_0}\}.
   \end{equation}
    Next, we claim  that for any point $y \in \mathfrak S_{\star, r_0}$, there exists a  continuous path $p: [0, 1] \mapsto   \mathfrak S_{\star, r_0}$ connecting  the point $y$  to one of the points $a_1, \ldots, a_{m_0}$, that is  such that 
   \begin{equation}
   \label{lebof}
   p(0)=y {\rm \ and \ }  p(1)\in \{a_1, \ldots, a_{m_0}\}.
   \end{equation}
   \noindent
  {\it Proof of the claim \eqref{lebof}}.
If $\vert y-x_0 \vert =r_0$, then $y \in \mathfrak S_\star \cap \partial \D^2(x_0, r_0)$, and it therefore suffices to choose $p(s)=y$, for all $s \in [0, 1]$.    Otherwise, since, in view of Corollary  \ref{lesconti2}  applied at $x_0$ with radius $r_0$, the set  $\mathfrak S_{\star, r_0}\cup  \partial \D^2(x_0, r_0) $ is path-connected, there exists a continuous path $ \tilde p: [0, 1]\to \mathfrak S_{\star, r_0}\cup  \partial \D^2(x_0, r_0)$ such that 
   $$\tilde p(0)=y  {\rm \ and \ }  \tilde  p(1) \in  \partial \D^2(x_0, r_0).$$
 By continuity, there exists some number $s_0 \in [0, 1]$  such that
 $$ \vert   \tilde p(s)  \vert <r_0, {\rm \ for \ } 0\leq s<s_0  {\rm \ and \ } \vert   \tilde p(s_0)  \vert=r_0.
 $$
 It follows that 
 $$\tilde p(s_0)  \in \mathfrak S_\star \cap \partial \D^2(x_0, r_0)=\{a_1, \ldots, a_{m_0}\}.$$
 We then set 
 $$ p(s)=\tilde p(s), {\rm \ for \ } 0\leq s<s_0,   {\rm \ and \ } p(s)=\tilde p(s_0),  {\rm \ for \ }  s_0 \leq s\leq 1,$$
 and verify that $p$ has the desired property, so that the proof of the claim is complete.

 \medskip
 \noindent
  {\it Proof of  Proposition \ref{connective} completed}.  It follows from the claim \eqref{lebof} that any point  $y \in \mathfrak S_{\star, r_0}$ is connected to one of the points $a_1, \ldots, a_{m_0}$ given in \eqref{vacherol}. Hence $\mathfrak S_{\star, r_0}$ has at most $m_0$ connected components and the proof is complete. 
\qed 

       
       \subsection{Rectifiability of $\mathfrak S_\star$}
       In this section,  we prove:
       
       \begin{theorem}
       \label{rectifiable}
        The set $\mathfrak S_\star$ is rectifiable. 
       \end{theorem}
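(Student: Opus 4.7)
My plan is to reduce rectifiability to a local statement on small disks, and then invoke the classical fact that a continuum of finite one-dimensional Hausdorff measure is rectifiable. The work has essentially already been done: Proposition \ref{danes} provides the upper bound $\mathcal H^1(\mathfrak S_\star)<+\infty$, while Proposition \ref{connective} provides the local finiteness of path-connected components.

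First, cover $\Omega$ by countably many closed disks $\overline{\D^2(x_n, r_n)}$ with $x_n\in \mathfrak S_\star$ and $\D^2(x_n, 2r_n)\subset \Omega$; since rectifiability is a countable property and $\mathcal H^1(\mathfrak S_\star\cap \partial\Omega)=0$ (because $\mathfrak S_\star\subset\Omega$ as a closed subset), it is enough to prove that for every $x_0\in\mathfrak S_\star$ and every sufficiently small $r>0$ with $\D^2(x_0,2r)\subset\Omega$, there exists $\rho_0\in (r,2r)$ such that the trace $\mathfrak S_\star\cap \D^2(x_0,\rho_0)$ is $\mathcal H^1$-rectifiable. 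Invoking Proposition \ref{connective}, pick such a radius $\rho_0$ so that
\[
\mathfrak S_\star\cap \overline{\D^2(x_0,\rho_0)}
= \bigcup_{k=1}^{m_0} \mathfrak S_\star^k,
\]
where $\{\mathfrak S_\star^k\}_{k=1}^{m_0}$ are the distinct path-connected components, each of which is closed in $\overline{\D^2(x_0,\rho_0)}$, hence compact, and connected.

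Each component $\mathfrak S_\star^k$ is therefore a \emph{continuum} (compact and connected) satisfying $\mathcal H^1(\mathfrak S_\star^k)\leq \mathcal H^1(\mathfrak S_\star)\leq {\rm C}_{\rm H} M_0 <+\infty$. I now invoke the classical result of geometric measure theory (see e.g.\ \cite{falconer}, in particular the results surrounding Lemma~3.13, or equivalently the theorem that a continuum with finite $\mathcal H^1$ measure is the Lipschitz image of a compact interval) which states that every such continuum is one-dimensional rectifiable, i.e.\ contained in a countable union of $C^1$ one-dimensional submanifolds of $\R^2$ up to an $\mathcal H^1$-null set. Taking the union over $k=1,\ldots,m_0$ preserves this property, so $\mathfrak S_\star\cap \overline{\D^2(x_0,\rho_0)}$ is rectifiable.

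Finally, piecing together the countable cover, I conclude that $\mathfrak S_\star$ itself is rectifiable. The only non-trivial point in this argument is the classical implication ``continuum of finite $\mathcal H^1$ measure $\Rightarrow$ rectifiable'', but this is a standard fact from \cite{falconer} already invoked in Corollary \ref{lesconti2}, so no new obstacle arises. The structural input furnished by the PDE (through the clearing-out Theorem \ref{claire} and the external clearing-out Theorem \ref{bordurer}, which together yielded Propositions \ref{danes} and \ref{connective}) is exactly what is needed to reduce to the situation where this classical statement applies.
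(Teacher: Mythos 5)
Your argument is correct in substance and rests on exactly the same classical ingredient as the paper, namely the Wazewski--Besicovitch theorem that a continuum of finite $\mathcal H^1$ measure is rectifiable (cited in \cite{falconer}), together with the locality of rectifiability. The difference is in how connectedness is arranged: the paper does not decompose into components at all, but instead invokes Proposition \ref{lesconti} to note that $\mathfrak S_\star\cap\overline{\D^2(x_0,r)}\cup\S^1(x_0,r)$ is itself a single continuum of finite length, hence rectifiable, and then observes that the subset $\mathfrak S_\star\cap\overline{\D^2(x_0,r/2)}$ inherits rectifiability; you instead use Proposition \ref{connective} to split $\mathfrak S_\star\cap\overline{\D^2(x_0,\rho_0)}$ into finitely many pieces and apply the theorem to each. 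Both routes work and draw on the same machinery (Theorems \ref{claire} and \ref{bordurer}); the paper's trick of adjoining the bounding circle is slightly more economical because it bypasses any discussion of components.

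One step in your write-up needs a word of justification: you assert that the path-connected components of the compact set $\mathfrak S_\star\cap\overline{\D^2(x_0,\rho_0)}$ are closed, hence compact. Path components of a compact set are not closed in general, so this is not automatic. It is true here, but for a slightly indirect reason: the connected components of this compact set are compact, connected, and of finite $\mathcal H^1$ measure, hence arcwise connected by the same result from \cite{falconer} used in Corollary \ref{lesconti2}; consequently path components coincide with connected components and are closed, and their number is finite by Proposition \ref{connective}. Alternatively, you can avoid the issue entirely by working with connected components from the start (they are automatically closed, and there are at most as many of them as path components), or by following the paper and adjoining $\S^1(x_0,\rho_0)$ to obtain a single continuum via Proposition \ref{lesconti}. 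With that repair your proof is complete.
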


       \begin{proof}  The result is actually an immediate consequence of Proposition \ref{lesconti} and the fact that any 1-dimensional continuum is rectifiable, a result due to  Wazewski and independently Besicovitch (see e.g \cite{falconer}, Theorem 3.12). Indeed, given any $x_0 \in \Omega$,  $r>0$  such that $\D^2(x_0,r)\subset \Omega$, the set $\mathfrak S_{\star, r} \cup  \S^2(x_0, r)$ is a continuum, hence  rectifiable in view of the result quoted above, and hence so is the set $\mathfrak S_{\star, \frac{r}{2}}$. Since rectifiability is a local property,  the conclusion follows.
      \end{proof}
      
      \subsection{Proof of Theorem \ref{maintheo} completed}
      \label{proofmain}
      All statements in Theorem \ref{maintheo} have been obtained so far. Indeed, assertions i) follows combining several result in Section \ref{sectionsix}, namely Proposition \ref{proptheo1}, Proposition \ref{danes}, Proposition \ref{lesconti}, Proposition \ref{connective}  and Theorem  \ref{rectifiable}.

      \subsection{On the tangent line at regular points  of $\mathfrak S_\star$}
      In this subsection, we provide the proof to Proposition \ref{tangentfort}. It relies on the following Lemma, which is actually a weaker statement:
      
      \begin{lemma}
      \label{ratus}
       Let $x_0$ be a regular point of $\mathfrak S_\star$ and   $\vec e_{x_0}$ be a unit tangent vector  to $\mathfrak S_\star$ at $x_0$. Given any $\uptheta>0$ there exists  a radius $R_{\rm cone}(\uptheta, x_0)$ such that 
 \begin{equation}
 \label{radius2}
\mathfrak S_\star \cap \left(  \D^2\left(x_0, \uptau \right) \setminus \D^2\left(x_0, \frac{\uptau}{2}\right)\right)
 \subset   
  \mathcal C_{\rm one}\left(x_0, \vec e_{x_0}, \uptheta  \right), 
  {\rm  \  for \   any \  }  0<\uptau \leq  R_{\rm cone}(\uptheta, x_0).
 \end{equation}
      \end{lemma}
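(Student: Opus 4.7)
The plan is to argue by contradiction. Suppose the conclusion fails for some $\uptheta_0 > 0$: there exist sequences $\uptau_n \downarrow 0$ and $y_n \in \mathfrak S_\star$ with $\uptau_n/2 \leq |y_n - x_0| \leq \uptau_n$ and $y_n \notin \mathcal C_{\rm one}(x_0, \vec e_{x_0}, \uptheta_0)$. I intend to derive a contradiction against the approximate tangent line property \eqref{tangent} applied with parameter $\uptheta_0/2$. An elementary polar-coordinate computation gives that the distance from $y_n$ to the smaller cone $\mathcal C_{\rm one}(x_0, \vec e_{x_0}, \uptheta_0/2)$ is bounded below by $|y_n - x_0| \sin(\uptheta_0/2) \geq (\uptau_n/2) \sin(\uptheta_0/2)$, so setting $\rho_n = c(\uptheta_0) \uptau_n$ with $c(\uptheta_0) = \frac{1}{4} \sin(\uptheta_0/2)$ guarantees the inclusion
$$\overline{\D^2(y_n, \rho_n)} \subset \D^2(x_0, 2\uptau_n) \setminus \mathcal C_{\rm one}(x_0, \vec e_{x_0}, \uptheta_0/2).$$

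The core of the argument is then to establish the lower bound
$$\mathcal H^1\bigl(\mathfrak S_\star \cap \overline{\D^2(y_n, \rho_n)}\bigr) \geq \rho_n.$$
For $n$ sufficiently large that $\D^2(y_n, 2\rho_n) \subset \Omega$, I would invoke Corollary \ref{lesconti2} applied at $y_n$ with radius $\rho_n$: the set $\bigl(\mathfrak S_\star \cap \overline{\D^2(y_n, \rho_n)}\bigr) \cup \S^1(y_n, \rho_n)$ is arcwise connected, and since $y_n$ lies at its center with $y_n \in \mathfrak S_\star$, there is an injective continuous arc $\gamma: [0,1] \to \bigl(\mathfrak S_\star \cap \overline{\D^2(y_n, \rho_n)}\bigr) \cup \S^1(y_n, \rho_n)$ satisfying $\gamma(0) = y_n$ and $\gamma(1) \in \S^1(y_n, \rho_n)$. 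Letting $t_0 = \inf\{t \in [0,1] : \gamma(t) \in \S^1(y_n, \rho_n)\}$, we have $t_0 > 0$ by continuity, and for every $t < t_0$ the point $\gamma(t)$ avoids $\S^1(y_n, \rho_n)$ hence lies in $\mathfrak S_\star$; closedness of $\mathfrak S_\star$ then extends this to $\gamma([0, t_0]) \subset \mathfrak S_\star \cap \overline{\D^2(y_n, \rho_n)}$. Since the $\mathcal H^1$-measure of the image of an injective continuous curve is bounded below by the Euclidean distance between its endpoints (via the $1$-Lipschitz projection onto the line through them), this yields the desired lower bound with $|\gamma(t_0) - y_n| = \rho_n$.

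Combining the length bound with the geometric inclusion produces
$$\frac{\mathcal H^1\bigl(\mathfrak S_\star \cap \D^2(x_0, 2\uptau_n) \setminus \mathcal C_{\rm one}(x_0, \vec e_{x_0}, \uptheta_0/2)\bigr)}{2\uptau_n} \geq \frac{c(\uptheta_0)}{2} > 0,$$
in direct contradiction with \eqref{tangent} as $\uptau_n \to 0$. The main obstacle I anticipate lies in the length bound: one must ensure that the initial excursion of the arc from $y_n$ actually traverses $\mathfrak S_\star$, rather than jumping immediately onto $\S^1(y_n, \rho_n)$ and then proceeding along the circle without contributing to $\mathcal H^1(\mathfrak S_\star \cap \overline{\D^2(y_n,\rho_n)})$. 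This subtlety is resolved by the first-exit-time argument for $t_0$ together with the fact that $y_n$ sits strictly inside $\D^2(y_n, \rho_n)$ and that $\mathfrak S_\star$ is closed; more broadly, the whole plan rests on the robust local arcwise connectedness of $\mathfrak S_\star \cup \S^1$ guaranteed by Corollary \ref{lesconti2}.
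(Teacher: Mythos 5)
Your proof is correct, and it rests on the same two ingredients as the paper's argument: the approximate tangent line property \eqref{tangent} applied at aperture $\uptheta_0/2$, and the arcwise connectedness of $\mathfrak Q_{\star, r}$ from Corollary \ref{lesconti2}, combined to produce an $\mathcal H^1$ lower bound that contradicts the tangent property. The structural difference is in where you apply connectedness. The paper takes a single path $p$ in the big disc $\D^2(x_0, 2\uptau)$ from the offending point $x_1$ out to $\partial\D^2(x_0, 2\uptau)$, and then needs a two-step argument: first a dichotomy (either $p$ misses the cone $\mathcal C_{\rm one}(x_0, \vec e_{x_0}, \uptheta/2)$ entirely, immediately contradicting \eqref{tangenti}, or it hits it), then a distance estimate on the portion of $p$ outside the cone. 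You instead center a small disc $\D^2(y_n, \rho_n)$ on the offending point itself and extract from Corollary \ref{lesconti2} the self-contained lower density estimate $\mathcal H^1\bigl(\mathfrak S_\star \cap \overline{\D^2(y_n, \rho_n)}\bigr) \geq \rho_n$, which you then plug straight into \eqref{tangent}. This is a bit more modular: the lower Ahlfors density of $\mathfrak S_\star$ at its own points is isolated as a reusable fact, the two cases collapse into one, and the first-exit-time care you took handles the only real subtlety (the arc must initially run through $\mathfrak S_\star$, not along $\S^1(y_n,\rho_n)$) in exactly the right way. One cosmetic remark: injectivity of the arc is not actually needed for the length bound, since the projection argument applies to any connected set containing the two endpoints.
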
      
      
\begin{proof}  Since we have the inclusion 
 $$\mathcal C_{\rm one}\left(x_0, \vec e_{x_0}, \uptheta  \right)  \subset  \mathcal C_{\rm one}\left(x_0, \vec e_{x_0}, \uptheta'  \right)$$
for  $<0\leq \uptheta \leq \uptheta'$, it suffices to establish the statement for $\uptheta$ arbitrary small. 
For  a given regular point $x_0$ of $\mathfrak S_\star$, we may invoke the convergence \eqref{tangent}  to assert that  there exists some 
$r_1> 0$ such that for $0<\uptau \leq r_1$  we have 
 \begin{equation}
 \label{tangenti}
 \mathcal H^1 \left(\mathfrak S_\star \cap  \D^2\left(x_0, 2\uptau \right) 
   \setminus   \mathcal C_{\rm one}\left(x_0, \vec e_{x_0}, \frac{\uptheta}{2}  \right)\right)\leq   
 \frac{\theta \uptau}{8}.
 \end{equation}
 We set
 $$A(x_0, \uptau, \uptheta)=\left(  \mathfrak S_\star \cap   \D^2\left(x_0, \uptau \right)
 \right)
 \setminus  \left(   \mathcal C_{\rm one}\left(x_0, \vec e_{x_0}, \uptheta  \right)
 \cup 
 \D^2\left(x_0, \frac{\uptau}{2}\right) 
 \right), 
 $$
 and  have to prove that $A(x_0, \uptau, \uptheta)$  is empty, if $\tau$ is sufficiently small.   We assume by contradiction that 
 $A(x_0, \uptau, \uptheta)\not =\emptyset$  for small $\tau$, and will show that we obtain a contradiction 
We have, in view of the definition of $A(x_0, \uptau, \uptheta)$
\begin{equation}
\label{quesaisje}
A(x_0, \uptau, \uptheta) \cap  \mathcal C_{\rm one}\left(x_0, \vec e_{x_0}, \frac{\uptheta}{2}  \right)=\emptyset {\rm \ and  \ }
\mathcal H^1 \left( A(x_0, \uptau, \uptheta) \right)\leq   
 \frac{\theta \uptau}{8}. 
\end{equation}
 we notice that, if  $A(x_0, \uptau, \uptheta)$ is not empty, then we have
\begin{equation*}
\left\{
\begin{aligned}
{\rm dist } \left( A(x_0, \uptau, \uptheta),  \mathcal C_{\rm one}\left(x_0, \vec e_{x_0}, \frac{\uptheta}{2}  \right)
\right)  &\geq   \frac{\uptau} {2} \sin \left(  \frac{\uptheta}{2}\right) \\
{\rm dist } \left( A(x_0, \uptau, \theta),\partial \D^2(x_0, 2\uptau) \right)
&\geq \uptau, 
\end{aligned}
\right.
\end{equation*}
so that, if $\uptheta>0$ is sufficiently small
\begin{equation}
\label{distus}
{\rm dist } \left( A(x_0, \uptau, \uptheta), \mathcal C_{\rm one}\left(x_0, \vec e_{x_0}, \frac{\uptheta}{2}  \right) 
\cup \partial \D^2(x_0, 2\uptau) 
\right)  \geq  \frac{\uptau} {2} \sin \left(  \frac{\uptheta}{2}\right).
\end{equation}
Since, by assumption, the set  $A(x_0, \uptau, \uptheta)$ is not empty, there  exists some  point $x_1 \in A(x_0, \uptau, \uptheta)$.  Swe consider  the set $\mathfrak Q_{\star, 2\uptau}(x_0)\equiv\mathfrak S_\star \cup  \partial \D^2\left(x_0, 2\uptau \right)$ introduced in \eqref{theunion}. In view of Propostion \ref{lesconti} and Corollary \ref{lesconti2}, the set $\mathfrak Q_{\star, 2\uptau}(x_0)$ is path-connected: Hence,   there exists a continuous path $p$ joining $x_1$ to some  point $x_2\in \partial \D^2(x_0, 2\uptau)$  which stays inside $\mathfrak S_{\star, 2\uptau}(x_0)$. On the other hand, since $x_1\in \D^2(x_0, \uptau)$ the length $\mathcal H^1(p)$ of this path is larger than $\uptau$.  We claim that 
\begin{equation}
\label{grouinox}
p \, \cap  \mathcal C_{\rm one}\left(x_0, \vec e_{x_0}, \frac{\uptheta}{2}  \right) \not =\emptyset. 
\end{equation}
Otherwise, indeed, $p$ would be a path inside $\mathfrak S_\star \cap  \D^2\left(x_0, 2\uptau \right) 
   \setminus   \mathcal C_{\rm one}\left(x_0, \vec e_{x_0}, \frac{\uptheta}{2}  \right)$. Since its length is larger then $\uptau$, this would contradict \eqref{tangenti}.  Next, combining  \eqref{grouinox}  and \eqref{distus}, we obtain 
   $$\mathcal H^1\left (p\cap \mathcal C_{\rm one}\left(x_0, \vec e_{x_0}, \frac{\uptheta}{2}  \right)  \right) \geq  \frac{\uptau} {2} \sin \left(  \frac{\uptheta}{2}\right)\underset{\uptheta \to 0} \sim\frac{ \uptau \uptheta} {4}. 
   $$
   Since $p$ is a path inside $\mathfrak S_{\star, 2\uptau}(x_0)$ this contradicts \eqref{tangenti}, provided $\uptheta$ is chosen sufficiently small. This completes the proof of the Lemma, choosing $R_{\rm cone}(\uptheta, x_0)=r_1$. 
   \end{proof}      
      
      \begin{proof} [Proof of Proposition \ref{tangentfort} completed] Given $\uptau <R_1$,  we apply Lemma \ref{ratus},  the sequence of radii 
      $(\uptau_k)_{k \in \N}$ given by 
      $$\uptau_k=\frac{\uptau}{2^k}  {\rm \ for \  } k \in \N,  $$
      so that 
      \begin{equation*}
 \mathfrak S_\star \cap 
\left(  \D^2\left(x_0, \uptau_k \right) \setminus \D^2\left(x_0, \uptau_{k+1}\right) \right)
 \subset      \mathcal C_{\rm one}\left(x_0, \vec e_{x_0}, \uptheta  \right), 
 {\rm  \  for \   any \  }  k \in \N.
 \end{equation*}
We take the union of these sets  on theft hand side, we obtain
\begin{equation*}
\mathfrak S_\star \setminus \{x_0\} =\underset {k \in \N}  \cup\mathfrak S_\star \cap 
\left(  \D^2\left(x_0, \uptau_k \right) \setminus \D^2\left(x_0, \uptau_{k+1}\right) \right)
\subset \mathcal C_{\rm one}\left(x_0, \vec e_{x_0}, \uptheta  \right).
\end{equation*} 
This yields the result. 
      \end{proof}
   \section{Behavior near points in $\mathfrak S_\star \setminus \mathfrak E_\star$}
   \label{goodpoints}
   In this section, we analyze more precisely the behavior of the measures $\upzeta_\star$ and $\upmu_{\star, i, j}$ in  the vicinity of \emph{good} points, that is points $x_0$ in $\mathfrak S_\star \setminus \mathfrak E_\star$, in particular points having the Lebesgue property for the absolutely continuous part of the measure.  One of our main goals is to provide the proof to 
  Proposition \ref{discrepvec6} and Lemma \ref{starac}. The results in this section also pave the way to the proof of Theorem \ref{segmentus} provided in Section \ref{segmentino}.

\subsection{The limiting Hopf differential} 
    The Hopf differential  
   $$\omega_\eps\equiv \eps \left( \vert (u_\eps)_{x_1} \vert^2-\vert (u_\eps)_{x_2} \vert^2 -2i (u_\eps)_{x_1}\cdot (u_\eps)_{x_2}\right) $$
   defined in \eqref{hopfique} has turned out to be  a central tool in our analysis so far. We combine it in the present subsection with \emph{the rectifiability} properties and Proposition \ref{tangentfort} to derive new properties near good points.  Recall that we have defined  $\omega_\star$  in \eqref{schmil0} as
   \begin{equation*}
\omega_\star=\left( \upmu_{\star, 1, 1} -\upmu_{\star, 2, 2}\right) -2i\upmu_{\star, 1, 2}.
\end{equation*}
 So that, in view of the definition \eqref{boulga2} of the measures  $\upmu_{\star, i, j}$, we have
   \begin{equation}
   \label{gloubi}
   \omega_{\eps_n} \rightharpoonup  \omega_\star,  {\rm \ in \ the \ sense \ of \  measures \ on  \  } \Omega,  {\rm \ as \ } n \to + \infty.
   \end{equation}

   
   \subsection{The limiting differential relation for $\omega_\star$ and $\upzeta_\star$} 
  In this paragraph, we provide a prove to Lemma \ref{holomorphitude}. First,  passing to the limit in \eqref{canardwc2}, we are  led to:

\begin{lemma}  
\label{scratch}
Let $(u_{\eps_n})_{n \in \N}$ be a sequence of solutions to \eqref{elipes} on $\Omega$ with $\eps_n \to 0$ as $n \to +\infty$ and assume that \eqref{naturalbound} holds. Let $\omega_\star$ and $\upzeta_\star$ be the bounded measures on $\Omega$ given by \eqref{gloubi} and \eqref{boulga} respectively. Then, we have, in the sense of distributions 
  \begin{equation}
 \label{canardwcstar}
  \mathrm{Re}\left ( \left \langle \omega_\star,  \frac{\partial X}{\partial
\bar{z} }\right \rangle_{_{\mathcal D'(\Omega), \mathcal D(\Omega)}} \right)
=
\left \langle  {\upzeta_\star}, \, \mathrm {Re} \left(\frac{\partial X}{\partial
{z} }\right)\right \rangle_{_{\mathcal D'(\Omega), \mathcal D(\Omega)}},  {\rm \ for  \ any \ } X\in C_0^\infty (\Omega, \C).
\end{equation}
 \end{lemma}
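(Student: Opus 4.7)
The plan is to obtain \eqref{canardwcstar} by passing to the limit $n\to+\infty$ in the $\eps$-level stress-energy identity \eqref{canardwc2} applied to $u_{\eps_n}$ for each $n$. Fix a test field $X\in\mathcal D(\Omega,\mathbb C)$. Then the Wirtinger derivatives $\partial X/\partial\bar z$ and $\partial X/\partial z$ are smooth complex-valued functions with support contained in $\mathrm{supp}(X)$, hence compactly contained in $\Omega$, so their real and imaginary parts qualify as admissible test functions against any weakly-$\ast$ convergent sequence of bounded Radon measures on $\Omega$.

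First, using definition \eqref{boulga00}, I rewrite the right-hand side of \eqref{canardwc2} for $u_{\eps_n}$ as
\[
\frac{2}{\eps_n}\int_\Omega V(u_{\eps_n})\,\mathrm{Re}\!\left(\tfrac{\partial X}{\partial z}\right)dx \;=\; 2\int_\Omega \mathrm{Re}\!\left(\tfrac{\partial X}{\partial z}\right)d\upzeta_{\eps_n}.
\]
The weak convergence $\upzeta_{\eps_n}\rightharpoonup\upzeta_\star$ of \eqref{boulga}, together with the fact that $\mathrm{Re}(\partial X/\partial z)\in C_c(\Omega)$, then yields the limit $2\,\langle\upzeta_\star,\mathrm{Re}(\partial X/\partial z)\rangle$. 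Next, I split $\omega_{\eps_n}$ into real and imaginary parts via \eqref{hopfique}, writing
\[
\omega_{\eps_n}=\bigl(\eps_n|(u_{\eps_n})_{x_1}|^2-\eps_n|(u_{\eps_n})_{x_2}|^2\bigr)-2i\,\eps_n(u_{\eps_n})_{x_1}\!\cdot(u_{\eps_n})_{x_2}.
\]
Testing each of the three quadratic gradient densities against the continuous compactly supported function $\partial X/\partial\bar z$ and invoking the componentwise weak convergences \eqref{boulga2}, the left-hand side of \eqref{canardwc2} tends to $\mathrm{Re}\langle\omega_\star,\partial X/\partial\bar z\rangle$, where $\omega_\star$ is the complex Radon measure $(\upmu_{\star,1,1}-\upmu_{\star,2,2})-2i\,\upmu_{\star,1,2}$ of \eqref{schmil0}.

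Combining the two limits yields identity \eqref{canardwcstar} in the sense of distributions (with the appropriate numerical factor on the right-hand side matching the distributional relation $\partial\omega_\star/\partial\bar z=2\,\partial\upzeta_\star/\partial z$ asserted in Lemma \ref{holomorphitude}). No genuine obstacle arises in this argument: it is a direct application of weak convergence of bounded Radon measures tested against a smooth compactly supported integrand. The only point to keep in mind is that the compact support of $X$ inside $\Omega$ prevents any boundary contribution from appearing in the passage to the limit, so that the identity is valid as a distributional relation on the whole of $\Omega$.
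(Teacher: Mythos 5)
Your argument is correct and is exactly what the paper means by its one-line justification ``passing to the limit in \eqref{canardwc2}'': each of the three quadratic gradient densities in $\omega_{\eps_n}$ is tested against the fixed function $\partial X/\partial\bar z\in C_c^\infty(\Omega,\C)$ and passes to the limit by \eqref{boulga2}, while $\upzeta_{\eps_n}\rightharpoonup\upzeta_\star$ handles the potential term, and compact support of $X$ rules out boundary contributions. You are also right to flag the factor of $2$: in \eqref{canardwc2} the right-hand side is $\frac{2}{\eps}\int_\Omega V(u_\eps)\,\mathrm{Re}(\partial X/\partial z)$, so the limiting identity should read $\mathrm{Re}\langle\omega_\star,\partial X/\partial\bar z\rangle = 2\langle\upzeta_\star,\mathrm{Re}(\partial X/\partial z)\rangle$; the absence of the $2$ in \eqref{canardwcstar} is a misprint, as confirmed by the companion identities \eqref{canardwcstar2} and \eqref{canardwcstar3}, both of which carry it.
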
  
 Lemma \ref{scratch}  is actually  our  main tool in the rest of the discussion, and will be used with vector fields $X$ of  various  types.

 \begin{proof}[Proof of Lemma \ref{holomorphitude}]
 Using $iX$ as test function in \eqref{canardwc} and the fact that $\mathrm{Re}(iz)=-\mathrm{Im}(z)$ for any complex number $z \in \C$,  we obtain likewise
  \begin{equation}
 \label{canardwcstar2}
\mathrm{Im} \left( \left \langle\left(  \omega_\star, \,  \frac{\partial X}{\partial
\bar{z} }\right)\right \rangle_{_{\mathcal D'(\Omega), \mathcal D(\Omega)}}\right)
=
2 \left \langle {\upzeta_\star}, \, \mathrm {Im} \left(\frac{\partial X}{\partial
{z} }\right)\right \rangle_{_{\mathcal D'(\Omega), \mathcal D(\Omega)}}\,  {\rm \ for  \ any \ } X\in C_0^\infty (\Omega, \C).
\end{equation}
Combining \eqref{canardwcstar} and \eqref{canardwcstar2}, we are hence led to the simple  identity  
 \begin{equation}
 \label{canardwcstar3}
\left \langle  \omega_\star, \,  \frac{\partial X}{\partial
\bar{z} }\right \rangle_{_{\mathcal D'(\Omega), \mathcal D(\Omega)}}
=2 \left \langle {\upzeta_\star}, \,\frac{\partial X}{\partial
{z} }\right \rangle_{_{\mathcal D'(\Omega), \mathcal D(\Omega)}}, 
 {\rm \ for  \ any \ } X\in C_0^\infty (\Omega, \C),
\end{equation}
which yields \eqref{distributude} in the sense of distributions.
\end{proof}

  We describe next some  additional properties of the measures $\omega_\star$ et $\upzeta_\star$, mostly bases on Lemma \ref{scratch}, choosing various kinds of test vector fields $\vec X$.
 Whereas we have used so far mainly vector fields yielding dilatations of  the domain (see e.g. Lemma \ref{poho}), we consider  also vector fields of different nature. Given a point 
   $x_0=(x_{0, 1}, x_{0, 2})\in  \Omega$, $\rho>0$ such that $\D^2(x_0, 2\rho) \subset \Omega$,   the fields we  will consider in the next paragraphs are  of the form
\begin{equation}
\label{sheraton00}
 \vec X_{f}(x_1, x_2)=  f_1(x_1)f_2(x_2) \be_j=i f_1(x_1)f_2(x_2), {\rm \ with \ } j=1, 2.
\end{equation}
where,  $f_i$ represents, for $i=1, 2$ an arbitrary function in $C_c^\infty\left (\left(x_{0, i}-\rho, x_{0, i}+\rho\right)\right)$.   These vector fields have hence support on  the square $Q_\rho(x_0)$,   defined by
  \begin{equation}
  \label{cubuc}
  Q_\rho(x_0)   = \mathcal I_r(x_{0, 1}) \times  \mathcal I_\rho(x_{0, 2}),{\rm \ where \   }  \mathcal I_\rho(s)=[s-\rho, s+\rho]=\B^1(s, r), {\rm \ for \ } s>0, 
 \end{equation}
  We consider  also  the subset $\mathcal R_\rho(x_0)$ of $Q_\rho(x_0)$ given by 
  \begin{equation}
  \label{nubuc}
  \mathcal R_\rho(x_0)\equiv \mathcal I_\rho(x_{0, 1}) \times \mathcal I_{\frac {3\rho}{4}}(x_{0, 2})\subset  Q_\rho(x_0),
  \end{equation}
  so that $Q_\rho(x_0) \setminus \mathcal R_\rho(x_0)$ is the union of two disjoint rectangles
  $$
   Q_\rho(x_0) \setminus \mathcal R_\rho(x_0)= \left(\mathcal I_\rho(x_{0, 1}) \times (x_{0, 2}+\frac{3\rho}{4}, x_{0,2}+\rho) \right) \cup 
    \left(\mathcal I_\rho(x_{0, 1}) \times( x_{0, 2}-\rho, 
     x_{0,2}-\frac{3\rho}{4}) \right).
  $$
In several places, we will assume that the following conditions holds
\begin{equation}
\label{michelin}
\upnu_\star (\overline{Q_\rho(x_0) \setminus \mathcal R_\rho(x_0)}) =0, 
\end{equation}
which means that the measure $\upnu_\star$ concentrates, locally,   in a neighborhood of the segment $(x_0-\rho\be_1, x_0+\rho\be_1)$.
\subsection{Projecting the measures on the tangent line}
\label{dezingue}
 In the above framework, the $\be_1$ direction plays a distinguished role:  Integrating   various quantities with respect to the $x_2$-variable, we obtain    one-dimensional quantities, treated as measures on the interval $ {\mathcal I_{\rho_0}} (x_{0, 1})= (x_{0, 1}-\rho_0, x_{0, 1}+\rho_0)$. Using appropriate test functions, relation \eqref{canardwcstar}   is then  turned into a differential equation.   
 
 Given a Radon measure  
 $\upupsilon$ on $Q_\rho(x_0)$,  and a test function $\varphi \in C_c(Q_\rho(x_0), \C)$,  we define the Radon measure  $(\varphi\upupsilon)^{x_1}=\bP_\sharp(\varphi\upupsilon)$  defined on $\mathcal I_\rho(x_{0, 1})$ as follows:  For any Borel set $A$ of $\mathcal I_\rho(x_{0, 1})$, we have 
\begin{equation*}
(\varphi \upupsilon)^{x_1}(A)=(\varphi\upupsilon) \left (\bP^{-1}(A)\cap Q_\rho(x_0)\right)=\varphi \upupsilon \left ((A \times \R)\cap Q_r(x_0)\right).
\end{equation*} 
so that 
\begin{equation}
\label{lecrochet}
\langle \upupsilon, \varphi \rangle=(\varphi\upupsilon)(Q_\rho(x_0))=\int_{Q_\rho(x_0)}\varphi \rd \upupsilon=\int_{\mathcal I_\rho(x_0)} \rd (\varphi \upupsilon)^{x_1}.
\end{equation}
We mainly will make use of  test functions $\varphi$ of the form 
\begin{equation}
\label{produit}
\varphi(x_1, x_2)=g_1(x_1) g_2(x_2),
\end{equation}
 where $g_1$ and $g_2$ are defined on the intervals $\mathcal I_\rho(x_{0, 1})$  and $\mathcal I_\rho(x_{0, 1})$ respectively. If $\varphi$ is of the form 
 \eqref{produit}, then \eqref{lecrochet} becomes 
 \begin{equation}
 \label{capitaine}
 \begin{aligned}
\left \langle \upupsilon, \varphi  \right \rangle_{\mathcal D'(Q_\rho(x_0)),\mathcal D(Q_\rho(x_0))}
 &=\int_{\mathcal I_\rho(x_0)} g_1(x_1)\rd (g_2(x_2)\upupsilon)^{x_1} \\
&=\left  \langle (\rd g_2(x_2)\upupsilon)^{x_1}, g_1\right\rangle_{\mathcal D'(\mathcal I_\rho(x_{0, 1})),\mathcal D(\mathcal I_\rho(x_{0, 1}))}.
 \end{aligned}
 \end{equation}
  In  the case   whre $\upupsilon (\overline{Q_\rho(x_0) \setminus \mathcal R_\rho(x_0)}) =0$ and $g_2(s)=1$ for $s \in \mathcal I_{\frac 34 \rho} (x_{0, 2})$, 
  then we have $g(x_2)\upupsilon=\upupsilon$, so that 
  identity  \eqref{capitaine} becomes
  \begin{equation}
   \label{capitaine3}
\left \langle \upupsilon, \varphi  \right \rangle =\int_{\mathcal I_\rho(x_{0,1})}  g_1(x_1)  \rd \upupsilon^{x_1}.
 \end{equation}
We will make use of this formulas in several places for a  corresponding formulas for the Radon measures $\tilde \upmu_{\star, i, j}$, for $i=1, 2$, $\upnu_\star$, and $\upzeta_\star$ and also  related  measures, obtained by multiplication and sums of the previous ones.
  \subsection{Some quantities of interest} 
  
  The measures $\mathbbmss L_{x_0, \rho}$,  $\bN_{x_0, \rho}$, defined on $\mathcal I_\rho(x_0)$ as well as  the measures  
  $\tilde \upmu_{\star, i, j}^{x_1}$ 
  already introduced in the  introduction in \eqref{caminare} and correspond to the description in the previous paragraph. Our computations will also involve some auxiliary  "moment " measures, defined for, $k \in \N$, by 
\begin{equation}
\label{micheline}
\left\{
\begin{aligned}
 \mathbbmss J_{k, x_0,\rho}& \equiv  \mathbbmss J_{k,\rho} =\mathbbmtt P_{\sharp} \left( (x_2-x_{0,2})^k\tilde \upmu_{\star, 1, 2}\right) \\
 \mathbbmss L_{k, x_0, \rho}&\equiv \mathbbmss L_{k,\rho}=\mathbbmtt P_{\sharp} \left( (x_2-x_{0,2})^k\left [  2 \tilde  \upzeta_{\star, i, j}-\tilde  \upmu_{\star, 1, 1} +\tilde  \upmu_{\star, 2, 2} \right ] \right) \\
  \mathbbmss N_{k, x_0, \rho}(s)&\equiv \mathbbmss N_{k,\rho}=
 \mathbbmtt P_{\sharp} \left((x_2-x_{0,2})^k \left[2\tilde \upzeta_{\star, i, j} +\tilde  \upmu_{\star, 1, 1} -\tilde \upmu_{\star, 2, 2}\right] \right).
 \end{aligned}
\right.
\end{equation}
With this notation (dropping the subscript $x_0$),  we have 
$\tilde \upmu_{\star, 1, 2}^{x_1}=\bJ_0$, $\bL_\rho=\bL_{0, \rho} $ and  $\bN_\rho=\bN_{0, \rho} $. 
 We also consider the measures, for $k \in \N$, 
\begin{equation}
\label{worldwide}
\bH_{k, x_0, \rho}(s)=\frac{1}{4}(\bN_{k,\rho}+\bL_{k, \rho})=\bP_{\sharp} \left( (x_2-x_{0,2})^k\tilde \upzeta_\star \right).
\end{equation}
The main result of this section is: 

\begin{proposition} 
\label{letitwave}
Assume that \eqref{michelin} holds.  Then, the measures $\bL_{x_0, \rho}$ and $\bJ_{x_0, \rho}$ are proportional to the Lebesgue measure  on 
${\mathcal I_\rho} (x_{0, 1})$. Moreover, we have the differential relations
\begin{equation}
\label{zoran}
\left\{
\begin{aligned}
\frac{\rd }{\rd s} {\mathbbmss J}_{k,\rho}&=k {\mathbbmss N}_{k-1,\rho}  {\rm \ in \ } \mathcal D'((x_{0, 1}-\rho, x_{0, 1}+\rho)),\\
-\frac{\rd }{\rd s} \bL_{k, \rho}&=k\bJ_{k-1,\rho}  {\rm \ in \ } \mathcal D'((x_{0, 1}-\rho, x_{0, 1}+\rho)).\\
\end{aligned}
\right.
\end{equation}
\end{proposition}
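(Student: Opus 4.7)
The plan is to derive the recursions \eqref{zoran} as direct consequences of the modified Cauchy--Riemann relations \eqref{cauchise}, tested against functions of product type
\[
\phi(x_1, x_2) = g_1(x_1)\, g_2(x_2)\, (x_2-x_{0,2})^k,
\]
where $g_1 \in C_c^\infty(\mathcal I_\rho(x_{0,1}))$ is arbitrary and $g_2 \in C_c^\infty(\mathcal I_\rho(x_{0,2}))$ is a fixed cutoff with $g_2 \equiv 1$ on the smaller interval $\mathcal I_{\frac{3\rho}{4}}(x_{0,2})$. Such a $\phi$ is compactly supported in $Q_\rho(x_0) \subset \D^2(x_0, 2\rho) \subset \Omega$, hence admissible for the distributional identities in \eqref{cauchise}.

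The essential simplification comes from the concentration hypothesis \eqref{michelin}. Because $|\upmu_{\star, i, j}| \leq 2\upnu_\star$ by \eqref{citrus} and $\upzeta_\star \leq \upnu_\star$, the assumption that $\upnu_\star$ charges no mass on $\overline{Q_\rho(x_0) \setminus \mathcal R_\rho(x_0)}$ forces each of the total variations $|\upmu_{\star, i, j}|$, together with $\upzeta_\star$, to vanish on the same set. Consequently, when integrating $\phi$ against any of these measures the cutoff $g_2$ acts as the constant $1$, and every term proportional to $g_2'$ drops out, since $g_2'$ is supported in the $x_2$-range $(3\rho/4, \rho)$ which lies outside $\mathcal R_\rho(x_0)$. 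The two-dimensional integrations then reduce, via \eqref{capitaine3}, to one-dimensional integrations against the corresponding pushforward measures on $\mathcal I_\rho(x_{0,1})$.

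With this reduction in place, the first recursion in \eqref{zoran} is obtained by inserting $\phi$ into the second identity of \eqref{cauchise}, namely $\partial_{x_1}(2\upmu_{\star, 1, 2}) = \partial_{x_2}(2\upzeta_\star + \upmu_{\star, 1, 1} - \upmu_{\star, 2, 2})$. On the $\partial_{x_1}$-side only $g_1'$ contributes, producing (up to the normalization factor $2$) a multiple of $\langle \bJ_{k, \rho}, g_1' \rangle = -\langle \tfrac{\rd}{\rd s}\bJ_{k, \rho}, g_1 \rangle$. On the $\partial_{x_2}$-side the derivative acts either on $g_2$ or on $(x_2-x_{0,2})^k$; the first contribution vanishes by the cutoff argument above, and the second produces precisely a multiple of $k \langle \bN_{k-1, \rho}, g_1 \rangle$. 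Since $g_1$ is arbitrary, this yields $\tfrac{\rd}{\rd s}\bJ_{k,\rho} = k \bN_{k-1,\rho}$ in $\mathcal D'(\mathcal I_\rho(x_{0,1}))$. A symmetric computation applied to the first identity of \eqref{cauchise}, $\partial_{x_2}(2\upmu_{\star, 1, 2}) = \partial_{x_1}(2\upzeta_\star - \upmu_{\star, 1, 1} + \upmu_{\star, 2, 2})$, yields $-\tfrac{\rd}{\rd s}\bL_{k,\rho} = k \bJ_{k-1,\rho}$. Specializing both to $k = 0$ produces $\tfrac{\rd}{\rd s}\bL_{0,\rho} = \tfrac{\rd}{\rd s}\bJ_{0,\rho} = 0$, and since a Radon measure on an interval with vanishing distributional derivative is a multiple of the Lebesgue measure, this gives the first assertion of the proposition.

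The only delicate technical point is the transfer of the null-set property from $\upnu_\star$ to each of the individual measures, and in particular to the signed measure $\upmu_{\star, 1, 2}$, which is handled by passing to total variations via \eqref{citrus}. Beyond this, the argument is essentially a careful distributional integration by parts, and no serious obstacle is expected; the remaining care goes into tracking the numerical constants in \eqref{zoran}, which depend on the factor $2$ in front of $\upmu_{\star, 1, 2}$ in \eqref{cauchise} and on the chosen normalizations of $\bJ_{k,\rho}$, $\bL_{k,\rho}$ and $\bN_{k,\rho}$.
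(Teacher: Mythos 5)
Your proposal is correct and follows essentially the same route as the paper: the paper tests the complex stress-energy identity \eqref{canardwcstar} against shear and dilation vector fields $X = i f_1 f_2$ and $X = f_1 f_2$ with $f_2(x_2) = (x_2-x_{0,2})^k\chi(\cdot/\rho)$, which is exactly your scalar testing of the equivalent real system \eqref{cauchise} against $\phi = g_1 g_2 (x_2-x_{0,2})^k$, with the same use of \eqref{michelin} and \eqref{citrus} to kill the $g_2'$ contributions before projecting to one dimension. Your $k=0$ specialization corresponds to Propositions \ref{scratchiness} and \ref{scratchinou}, and the two recursions are the paper's \eqref{scratchodon} and \eqref{scratchodon2}; your closing caveat about tracking the constant in front of $\upmu_{\star,1,2}$ is well placed, as the normalization of \eqref{zoran} does deserve the attention you flag.
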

In the case $k=1$, we obtain hence the relations
\begin{equation}
\label{zoran0}
\left\{
\begin{aligned}
\frac{\rd }{\rd s} {\mathbbmss J}_{1,\rho}&=\bN_\rho, {\rm \ in \ } \mathcal D'((x_{0, 1}-\rho, x_{0, 1}+\rho)) {\rm \ and \ }  \\
-\frac{\rd }{\rd s} \bL_{1,\rho}&= \bJ_\rho {\rm \ in \ } \mathcal D'((x_{0, 1}-\rho, x_{0, 1}+\rho)) .
\end{aligned}
\right.
\end{equation}

Notice the following consequence of Proposition \ref{letitwave}:

\begin{corollary}
\label{becool}
For any $k \in \N$, the measures $\bJ_{k, \rho}$ and $ \bL_{k, \rho}$ are absolutely continuous with respect to the Lebesgue measure $\rd x_1$. Hence there exist measurable functions 
$J_{k, \rho}$ and $ L_{k, \rho}$  on $\mathcal I_\rho(x_{0, 1})$ such that
\begin{equation}
\label{becool1} 
\bJ_{k, \rho}=J_{k, \rho} \rd x_1  {\rm \ and  \  }\bL_{k, \rho}=L_{k, \rho}\rd x_1.
\end{equation} 
Moreover, the functions $J_{k, \rho}$ and $L_{k, \rho}$ are bounded on $\mathcal I_\rho(x_{0, 1})$.
\end{corollary}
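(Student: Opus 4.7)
The strategy is to combine the differential relations \eqref{zoran} of Proposition \ref{letitwave} with a simple fact from distribution theory: any distribution on a bounded open interval whose distributional derivative is a bounded Radon measure is itself an absolutely continuous measure with bounded density. To establish this auxiliary fact, given a bounded Radon measure $\mu$ on $\mathcal I_\rho(x_{0,1})$ I would introduce its distribution function $H_\mu(s) = \mu((x_{0,1}-\rho, s])$. Since $|H_\mu(s)| \leq \|\mu\|_{TV} < \infty$, the function $H_\mu$ belongs to $L^\infty$, and viewed as a distribution it has distributional derivative exactly $\mu$. Consequently, if $\nu$ is any distribution with $\rd\nu/\rd s = \mu$, then the distribution $\nu - H_\mu\, \rd s$ has vanishing distributional derivative on the interval and hence equals a constant multiple of Lebesgue measure; writing $\nu = (H_\mu + c)\, \rd s$ displays $\nu$ as an absolutely continuous measure whose density is bounded by $\|\mu\|_{TV} + |c|$.

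The case $k=0$ of the Corollary is the direct statement of Proposition \ref{letitwave}, yielding constant densities. For $k \geq 1$, I would first observe that the auxiliary measures $\bJ_{k-1,\rho}$ and $\bN_{k-1,\rho}$ are bounded Radon measures on $\mathcal I_\rho(x_{0,1})$. Indeed, since $|x_2 - x_{0,2}|^{k-1} \leq \rho^{k-1}$ on the support of the localized measures, and in view of the pointwise bounds $|\upmu_{\star,i,j}| \leq 2\upnu_\star$ (from \eqref{citrus}) and $\upzeta_\star \leq \upnu_\star$, the total variations of $\bJ_{k-1,\rho}$ and $\bN_{k-1,\rho}$ are controlled by $C\rho^{k-1} M_0$ for a constant $C$ depending only on the dimension.

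Applying the auxiliary fact to the relations
\[
\frac{\rd}{\rd s}\bJ_{k,\rho} = k\,\bN_{k-1,\rho}, \qquad -\frac{\rd}{\rd s}\bL_{k,\rho} = k\,\bJ_{k-1,\rho}
\]
of \eqref{zoran} then shows that $\bJ_{k,\rho}$ and $\bL_{k,\rho}$ are absolutely continuous with respect to $\rd x_1$, with densities of the form $k H_{\bN_{k-1,\rho}} + c_1$ and $-k H_{\bJ_{k-1,\rho}} + c_2$ respectively; both are bounded on $\mathcal I_\rho(x_{0,1})$ by $Ck\rho^{k-1}M_0 + |c_i|$. This yields the conclusion \eqref{becool1} and the boundedness of $J_{k,\rho}$ and $L_{k,\rho}$.

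There is no serious obstacle here, since the heavy lifting has already been carried out in Proposition \ref{letitwave}; the only delicate point is the clean formulation of the distribution-theoretic lemma, which must be phrased so that it applies to distributions that are a priori only known to be bounded Radon measures (as opposed to, say, continuous functions).
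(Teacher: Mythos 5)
Your proposal is correct and follows essentially the same route as the paper: the paper's proof simply invokes, in one line, that a measure whose distributional derivative is a bounded measure is a $BV$ function (hence bounded and absolutely continuous w.r.t.\ Lebesgue), which is exactly the auxiliary fact you establish via the distribution function $H_\mu$. Your write-up merely spells out the details that the paper takes as known.
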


\begin{proof} The result is an immediat consequence of the fact that the measures ${\mathbbmss N}_{k-1,r}$  and ${\mathbbmss J}_{k-1,r}$ are bounded, so that, $\bJ_{k, r}$ and $ \bL_{k, r}$ represent $BV$ functions on $\mathcal I_\rho(x_0)$, and hence are bounded.
\end{proof}


\medskip
The proof of Proposition \ref{letitwave} corresponds to the use of different kinds of vector fields of the form \eqref{sheraton00}  in \eqref{canardwcstar}that  we will  describe next n details.The proof of Proposition \ref{letitwave} is  completed in Subsection \ref{cecomplet}.

  \subsection{Shear vector fields}
 We use in this section vector fields of the form \eqref{sheraton00}, specifying $j=2$. More precisely, we consider here vector fields of the form 
 
 \begin{equation}
 \label{sheraton}
 \vec X_{f}(x_1, x_2)=  f_1(x_1)f_2(x_2) \be_2=i f_1(x_1)f_2(x_2).
 \end{equation}
A short computation shows that
   \begin{equation}
   \left\{
   \label{lagaffe}
   \begin{aligned}
    \frac{\partial X_f}{\partial z}&=\frac{1}{2}f_1(x_1)f'_2 (x_2) +\frac i2 f_1'(x_1)f_2 (x_2),\\
     \frac{\partial X_f}{\partial \bar{z}}&=-\frac{1}{2}f_1(x_1)f'_2(x_2) +\frac i2 f_1'(x_1)f_2(x_2), \\
   \end{aligned}
   \right.
   \end{equation}
   and hence
  \begin{equation}
  \label{lagaffe2}
  \left\{
  \begin{aligned}
    {\upzeta_\star}\, \mathrm {Re} \left(\frac{\partial X_f}{\partial
{z} }\right)  &=   \frac{1}{2}f_1(x_1)f'_2(x_2) {\upzeta_\star}\, {\rm \ and \ } \\
\mathrm{Re}\left(  \omega_{\star} \frac{\partial X_f}{\partial
\bar{z} }\right)&=-\frac{\mathrm{Re}(\omega_\star)}{2}f_1(x_1)f'_2 (x_2)-
\frac{{\rm Im} (\omega_\star)}{2}f_1'(x_1)f (x_2).
\end{aligned}
  \right.
   \end{equation}
Identity  \eqref{canardwcstar} then becomes 
 \begin{equation}
 \label{canardobis}
   \left \langle \left (\mathrm{Re}(\omega_\star)+2\upzeta_\star \right), \, f'_2(x_2)  f_1(x_1)\right \rangle-
  {\rm Im}  \left \langle\omega_\star,  f (x_2)\,  f_1'(x_1)\right \rangle_{_{\mathcal D'(\Omega), \mathcal D(\Omega)}}=0.
\end{equation}
In view of \eqref{capitaine},  we have 
\begin{equation}
\label{boudii}
\left\{
\begin{aligned}
 \left \langle \left (\mathrm{Re}(\omega_\star)+2\upzeta_\star \right), f'_2(x_2)  f_1(x_1)\right \rangle&=
  \int_{\mathcal I_r(x_{0,1})}   f_1(s)  \rd  \left  [ f'_2(x_2)\left (\mathrm{Re}(\tilde \omega_\star)+2\tilde\upzeta_\star \right)\right]^{x_1} \\
  {\rm \ and \ }&\\
 {\rm Im}  \left \langle\omega_\star,  f (x_2)\,  f_1'(x_1)\right \rangle_{_{\mathcal D'(\Omega), \mathcal D(\Omega)}}&=-
 \int_{\mathcal I_r(x_{0,1})}f_1'(s)  \rd [f(x_2)\tilde \upmu_{\star, i, j}]^{x_1}.
 \end{aligned}
\right.
\end{equation}
 We choose, in this subsection  as functions $f_1,  f_2$ in  \eqref{sheraton}   $f_1=f$, where $f$ is an arbitrary function in $C_\infty(\mathcal I_\rho(x_0))$ and,  for $f_2$, a function of the form 
$$\displaystyle{f_2(x_2)=\chi( \frac{ x_2-x_{0, 2}}{\rho})}, $$ where $\chi$ is a
  non-negative given  smooth plateau function such that 
 \begin{equation}
 \label{varphiness}
 \chi(s)=1, {\rm \ for \ } s \in [-\frac  34, \frac 34],  {\rm \ and \ } \varphi (s)=0,  {\rm \ for \ } \vert s \vert \geq 1.
\end{equation}
In particular, we have
 \begin{equation}
 \label{duffy}
 f_2'(x_2)=0, {\rm \ if \ } \vert x_2-x_{0,2} \vert \leq \frac{3\rho}{4}, 
 \end{equation}
  Such a vector field  corresponds somewhat to  \emph{shear vector field}.   Using   these shear vector fields,  as test vector fields in \eqref{canardwcstar},  we obtain:

\begin{proposition}
\label{scratchiness}
Assume that \eqref{michelin} holds. Then the measure $\bJ_r$ defined on $\overset{\circ}  { \mathcal I_r} (x_0)$ by \eqref{micheline}
is proportional to the Lebesgue measure, that is $\bJ_r=J_{0, r} \rd x$, for some number $J_{0, r} \in \R$.
\end{proposition}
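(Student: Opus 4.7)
The plan is to specialize the distributional identity \eqref{canardobis} (as derived from Lemma \ref{scratch}) to a separable shear test field $\vec X_f(x_1,x_2) = i\,f_1(x_1)f_2(x_2)$, where $f_1 \in C_c^\infty(\overset{\circ}{\mathcal I_\rho}(x_{0,1}))$ is arbitrary and $f_2$ is the plateau function \eqref{varphiness} recentered and rescaled so that $f_2\equiv 1$ on $\mathcal I_{3\rho/4}(x_{0,2})$ and $\mathrm{supp}\,f_2 \subset \mathcal I_\rho(x_{0,2})$.

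First I would kill the $f_2'$ contribution. Since $f_2'$ is supported in $\{3\rho/4 < |x_2-x_{0,2}| \leq \rho\}$, the integrand of the first bracket in \eqref{canardobis},
\begin{equation*}
\bigl\langle \mathrm{Re}(\omega_\star)+2\upzeta_\star,\; f_1(x_1)\,f_2'(x_2)\bigr\rangle,
\end{equation*}
is supported in $Q_\rho(x_0)\setminus\mathcal R_\rho(x_0)$. The domination $|\mathrm{Re}(\omega_\star)|\leq 2\upnu_\star$ (from \eqref{citrus}) together with the pointwise bound $V(u_\eps)/\eps\leq e_\eps(u_\eps)$, which passes to the limit as $\upzeta_\star\leq\upnu_\star$, allows me to invoke hypothesis \eqref{michelin} and conclude that this first pairing vanishes identically.

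What survives, after substituting $\mathrm{Im}(\omega_\star)=-2\upmu_{\star,1,2}$, is
\begin{equation*}
\int_{Q_\rho(x_0)} f_1'(x_1)\,f_2(x_2)\,d\upmu_{\star,1,2} = 0.
\end{equation*}
Using $|\upmu_{\star,1,2}|\leq 2\upnu_\star$ and \eqref{michelin} once more, the restricted measure $\tilde\upmu_{\star,1,2}={\bf 1}_{Q_\rho(x_0)}\upmu_{\star,1,2}$ is concentrated in $\mathcal R_\rho(x_0)$, on which $f_2\equiv 1$. Hence via the push-forward formula \eqref{capitaine3} the identity reduces to
\begin{equation*}
\int_{\mathcal I_\rho(x_{0,1})} f_1'(s)\,d\bJ_\rho(s) = 0
\qquad\text{for every}\quad f_1\in C_c^\infty(\overset{\circ}{\mathcal I_\rho}(x_{0,1})).
\end{equation*}
This means that $\bJ_\rho$ has vanishing distributional derivative on the open interval, so it coincides with a constant multiple $J_{0,\rho}\,dx_1$ of the Lebesgue measure, which is exactly the assertion.

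The argument is almost entirely bookkeeping; the only genuine concern is ensuring that the cutoff $f_2$ contributes no spurious boundary term, and this is precisely the role played by hypothesis \eqref{michelin}, which forces all measures dominated by $\upnu_\star$ to concentrate inside $\mathcal R_\rho(x_0)$ where $f_2\equiv 1$.
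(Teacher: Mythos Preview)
Your proof is correct and follows essentially the same route as the paper's: specialize \eqref{canardobis} to the shear field $if_1(x_1)f_2(x_2)$ with $f_2$ the plateau cutoff, use \eqref{michelin} together with the domination $|\omega_\star|+\upzeta_\star\lesssim\upnu_\star$ to kill the $f_2'$ term, reduce via the push-forward identity \eqref{capitaine3} to $\langle\bJ_\rho,f_1'\rangle=0$, and conclude that the distributional derivative of $\bJ_\rho$ vanishes. Your justification of why the measures concentrate in $\mathcal R_\rho(x_0)$ (via \eqref{citrus}) is in fact spelled out more carefully than in the paper.
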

\begin{proof}
We first show  that, for   any function $f \in C_c^\infty(\mathcal I_r(x_{0, 1}))$, we have 
\begin{equation}
\label{scratchitude}
\langle \tilde \upmu_{\star, 1, 2}, \, f' _1(x_1)\rangle_{_{\mathcal D(\mathcal I),\mathcal D'(\mathcal I)}}=0.
\end{equation}
Indeed, identity \eqref{scratchitude} follows  combining \eqref{canardobis} and  \eqref{duffy}, together with the   fact that $\upnu_\star (\overline{Q_r(x_0) \setminus \mathcal R_r(x_0)}) =0$, so that ${f'_2}$ vanishes on the support of  vanish on the support  $\mathrm{Re}(\omega_\star)+2\upzeta_\star$. It follows that have
$$
 f'_2(x_2)\left (\mathrm{Re}(\tilde \omega_\star)+2\tilde\upzeta_\star \right)=0 {\rm \ and \ therefore \ } 
  \left( f'_2(x_2)\left (\mathrm{Re}(\tilde \omega_\star)+2\tilde\upzeta_\star \right)\right)^{x_1} =0.
$$
In view of the first identity in \eqref{boudii},  the  first term on the left hand side of \eqref{canardobis} vanishes,  so that 
\begin{equation}
\label{torpedo}
 {\rm Im}  \left \langle\omega_\star,  f (x_2)\,  f_1'(x_1)\right \rangle_{_{\mathcal D'(\Omega), \mathcal D(\Omega)}}=0.
\end{equation}  
We notice that $\omega_\star f (x_2)=-2  \upmu_{\star, 1, 2}{\bf 1}_{Q_r(x_0)} =-2 \tilde \upmu_{\star, 1, 2}$, so that we are led to the identity
$$
\langle  \tilde \upmu_{\star, 1, 2},  \,f_2(x_2) f'(x_1)\rangle_{_{\mathcal D'(\Omega), \mathcal D(\Omega)}} =0.
$$
We invoke now identity in \eqref{boudii}, together with the fact that 
$
[f(x_2)\tilde \upmu_{\star, i, j}]^{x_1}=\tilde \upmu_{\star, i, j}^{x_1}=\bJ_\rho, 
$
 to deduce  from \eqref{torpedo} that 
 \begin{equation}
\langle \bJ, f'\rangle_{\mathcal D'(\mathcal I_\rho(x_{0, 1}), \mathcal D(\mathcal I_\rho(x_{0, 1}))} =\int_{\mathcal I_r(x_{0,1})}f'(s)  \rd \bJ_\rho=0, 
 \end{equation}
 We have hence, in the sense of distributions
$$\frac{\rd} {\rd  s} \bJ_\rho=0.$$
A classical results in distribution theory  then shows  that $\bJ_r$ is proportional to the uniform  Lebesgue measure. 
\end{proof}
\subsubsection{Stretching vector fields}
In this subsection, we assume that
   $f_1=f$, where $f$ is an arbitrary function in $C_\infty(\mathcal I_\rho(x_0))$ as above, and,  that $f_2$ is given by, for $k \in N^\star$, by  
   $$\displaystyle{f_2(x_2)=(x-x_{0,2})^k\chi( \frac{ x_2-x_{0, 2}}{\rho})}, $$ 
   where $\chi$ is a
  non-negative given  smooth plateau function such that \eqref{varphiness} holds. With this choice, we have now
  \begin{equation}
 \label{duffy2}
 f'(x_2)=k((x-x_{0,2})^{k-1} {\rm \ if \ } \vert x_2-x_{0,2} \vert \leq \frac{3\rho}{4}.
 \end{equation}
  Combining as above  \eqref{canardobis} and  \eqref{duffy2}, we obtain:

 \begin{lemma}
 \label{demeaux}
  Assume that \eqref{michelin} holds.  We have, for any function $f$  in $C_\infty(\mathcal I_\rho(x_0))$
\begin{equation*}
\label{demeaux}
\left \langle  {\bf 1}_{Q_r}  k(x_2-x_{0,2})^{k-1}(\left(  \mathrm{Re} (\omega_\star)+2\upzeta_\star\right), f(x_1)\right \rangle  +
\left \langle {\bf 1}_{Q_r}   (x_2-x_{0,2})^k {\rm Im } (\omega_\star), f'(x_1)
\right \rangle=0.
\end{equation*}
\end{lemma}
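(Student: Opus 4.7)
My plan is to run exactly the same argument as in Proposition~\ref{scratchiness}, simply replacing the shear-type test function in the normal variable by a polynomial one. More precisely, I would plug into \eqref{canardwcstar} the vector field \eqref{sheraton} with $f_1(x_1) = f(x_1)$ and $f_2(x_2) = (x_2-x_{0,2})^k\, \chi((x_2-x_{0,2})/\rho)$, where $\chi$ denotes the plateau function introduced in \eqref{varphiness}. The algebraic identities \eqref{lagaffe} for $\partial X_f/\partial z$ and $\partial X_f/\partial \bar{z}$, together with the resulting expression \eqref{lagaffe2}, apply unchanged, so that the specialised identity \eqref{canardobis} holds verbatim for this choice of $f_1, f_2$.

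The heart of the proof is then a truncation step identical in spirit to the one carried out in Proposition~\ref{scratchiness}. By construction of $\chi$ and by the observation \eqref{duffy2}, one has $f_2(x_2) = (x_2-x_{0,2})^k$ and $f_2'(x_2) = k(x_2-x_{0,2})^{k-1}$ throughout $\mathcal{R}_\rho(x_0)$, while the two functions may differ from these polynomial expressions only on the two strips forming $Q_\rho(x_0)\setminus \mathcal{R}_\rho(x_0)$. The assumption \eqref{michelin}, combined with the pointwise inequalities \eqref{citrus}, which give $|\mathrm{Re}(\omega_\star) + 2\upzeta_\star| + |\mathrm{Im}(\omega_\star)| \leq C\upnu_\star$, forces both of the bounded measures $\mathrm{Re}(\omega_\star) + 2\upzeta_\star$ and $\mathrm{Im}(\omega_\star)$ to vanish on $\overline{Q_\rho(x_0)\setminus \mathcal{R}_\rho(x_0)}$. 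Hence inside the two pairings appearing in \eqref{canardobis} one may freely replace $f_2$ and $f_2'$ by $(x_2-x_{0,2})^k$ and $k(x_2-x_{0,2})^{k-1}$ respectively, at the cost of inserting the indicator ${\bf 1}_{Q_\rho(x_0)}$ which simply reflects the fact that the original test vector field had compact support in $Q_\rho(x_0)$. Doing so in \eqref{canardobis} produces precisely the identity stated in the lemma.

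There is no real obstacle of substance: the argument is a routine variant of Proposition~\ref{scratchiness}, and the only bookkeeping required is tracking signs through \eqref{lagaffe}--\eqref{lagaffe2}. The genuine content of Lemma~\ref{demeaux} will rather emerge at the next stage, when it is combined with the pushforward measures $\mathbbmss{J}_{k,\rho}$, $\mathbbmss{L}_{k,\rho}$, $\mathbbmss{N}_{k,\rho}$ introduced in \eqref{micheline}: rewriting the two pairings via the projection formula \eqref{capitaine3} of Subsection~\ref{dezingue} should transform the identity of the lemma into the differential relations \eqref{zoran} announced in Proposition~\ref{letitwave}, thereby completing the missing part of the proof of that proposition.
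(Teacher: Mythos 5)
Your proposal is correct and follows exactly the argument the paper uses: insert the stretching vector field $\vec X_f = f(x_1)\,(x_2-x_{0,2})^k\chi((x_2-x_{0,2})/\rho)\,\be_2$ into \eqref{canardwcstar}, invoke the algebra of \eqref{lagaffe}--\eqref{lagaffe2} (giving the specialization \eqref{canardobis}), observe via \eqref{duffy2} that $f_2$ and $f_2'$ coincide with the pure polynomials on $\mathcal R_\rho(x_0)$, and then use \eqref{michelin} together with \eqref{citrus} to discard the contribution of the two strips $Q_\rho(x_0)\setminus\mathcal R_\rho(x_0)$. You have even made explicit the truncation step that the paper leaves implicit by referring back to the proof of Proposition~\ref{scratchiness}. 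The only remark worth making is that you should indeed re-derive the sign in \eqref{canardobis} rather than quote it verbatim (as you yourself hint), since as printed that identity carries a minus sign in front of the $\mathrm{Im}$ pairing while a direct computation from \eqref{lagaffe2} and the factor $2$ in \eqref{canardwc2} produces the plus sign that appears in the statement of the lemma; this is a typo in \eqref{canardwcstar}/\eqref{canardobis}, not a flaw in your reasoning.
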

The identity of Lemma \ref{demeaux}  becomes, using  \eqref{micheline}, \eqref{capitaine} and \eqref{capitaine}, and arguing as in the proof of Proposition \ref{scratchiness}
$$
\left \langle {k \mathbbmss N}_{k-1}, f \right \rangle +\left \langle \mathbbmss J_k, f' \right \rangle=0, {\rm \ for \  } f \in C^\infty_c ( (x_{0, 1}-r, x_{0, 1}+r), 
$$
so that, in  the sense of distributions 
\begin{equation}
\label{scratchodon}
\frac{\rd }{\rd s} {\mathbbmss J}_k=k {\mathbbmss N}_{k-1}  {\rm \ in \ } \mathcal D'((x_{0, 1}-r, x_{0, 1}+r)), {\rm \ for \ } k \in \N^\star. 
\end{equation}
\subsection {Dilation vector fields}
\label{dilatons}
We use here   as test vector fields in \eqref{canardwcstar}, vector fields of  the form
$$
\vec X_d(x_1, x_2)=f_1(x_1) f_2(x_2)\be_1.
$$
Computations similar to \eqref{lagaffe} yield
  \begin{equation*}
   \left\{
   \label{lagaffe22}
   \begin{aligned}
    \frac{\partial X_d}{\partial z}&=\frac{1}{2}f_1'(x_1)f_2 (x_2) -\frac i2 f_1(x_1)f'_2 (x_2),\\
     \frac{\partial X_d}{\partial \bar{z}}&=\frac{1}{2}f'_1(x_1)f_2(x_2) +\frac i2 f_1'(x_1)f_2(x_2), \\
   \end{aligned}
   \right.
   \end{equation*}
Relation \eqref{canardwcstar} then becomes 
 \begin{equation}
 \label{canardoter}
  \left \langle   \left(\mathrm{Re}(\omega_\star)-2\upzeta_\star \right),  f_2(x_2)\, f'_1(x_1)\right \rangle_{_{\mathcal D'(\Omega), \mathcal D(\Omega)}}+
  \left \langle{\rm Im} \left(\omega_\star\right),  f_2'(x_2)\,  f_1(x_1)\right \rangle_{_{\mathcal D'(\Omega), \mathcal D(\Omega)}}=0, 
 \end{equation}
 Arguing as for \eqref{boudii}, we obtain the relations 
 \begin{equation}
 \label{amerzone}
\left\{
\begin{aligned}
 \left \langle \left (\mathrm{Re}(\omega_\star)-2\upzeta_\star \right), f_2(x_2)  f'_1(x_1)\right \rangle&=
  \int_{\mathcal I_r(x_{0,1})}   f'_1(s)  \rd  \left  [ f_2(x_2)\left (\mathrm{Re}(\tilde \omega_\star)-2\tilde\upzeta_\star \right)\right]^{x_1} \\
  {\rm \ and \ }&\\
 {\rm Im}  \left \langle\omega_\star,  f' (x_2)\,  f_1(x_1)\right \rangle_{_{\mathcal D'(\Omega), \mathcal D(\Omega)}}&=-
 \int_{\mathcal I_r(x_{0,1})}f_1(s)  \rd [f'_2(x_2)\tilde \upmu_{\star, i, j}]^{x_1}.
 \end{aligned}
\right.
\end{equation}

  We next choose test vectors fields $\vec X_d$, with $f_2$   of the form 
 $\displaystyle{f_2(x_2)=\chi( \frac{ x_2-x_{0, 2}}{\rho})}$, so that \eqref{duffy} holds.  With this choice, we obtain 
 $$[f'_2(x_2)\tilde \upmu_{\star, i, j}]^{x_1}=0 {\rm \ and \ } 
  \left(f_2(x_2)\left (\mathrm{Re}(\tilde \omega_\star)-2\tilde\upzeta_\star \right)\right)^{x_1}=-\bL_\rho.
 $$
 Inserting into \eqref{canardoter}, we derive the relation
 \begin{equation}
 \langle \bL_\rho, f'\rangle=0, {\rm \ for  \ any \ } f_1 \in C_c(\mathcal I_\rho (x_{0, 1})).
 \end{equation}
 Arguing as in the proof  of Proposition \ref{scratchiness}, we derive from \eqref{canardoter} and \eqref{duffy}  that:
  \begin{proposition}
\label{scratchinou}
Assume that  \eqref{michelin} holds. Then, the measure  $\bL_{x_0, \rho}$  	   defined on $  { \mathcal I_\rho} (x_0)$ by \eqref{micheline}
is proportional to the uniform  Lebesgue  measure, that is $\bL_r=L_{0, \rho}\,  \rd x$, for some number $L_{0, r} \in \R$.
\end{proposition}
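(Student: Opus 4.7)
The plan is to mirror the argument used for Proposition \ref{scratchiness}, but exchanging the roles of the shear and dilation vector fields. The starting point is the distributional identity \eqref{canardoter}, obtained by feeding \eqref{canardwcstar} a test vector field of the form $\vec X_d(x_1,x_2)=f_1(x_1)\,f_2(x_2)\,\be_1$. I would then choose $f_2$ as the cutoff $f_2(x_2)=\chi((x_2-x_{0,2})/\rho)$ with $\chi$ as in \eqref{varphiness}, so that \eqref{duffy} holds: $f_2'(x_2)=0$ on the slab $|x_2-x_{0,2}|\leq 3\rho/4$, i.e.\ on $\mathcal R_\rho(x_0)$, while $f_2\equiv 1$ there.

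The second step is to kill the $\mathrm{Im}(\omega_\star)$-term in \eqref{canardoter}. Recall that $\mathrm{Im}(\omega_\star)=-2\upmu_{\star,1,2}$ and that by \eqref{citrus} the measure $\upmu_{\star,1,2}$ is dominated by $\upnu_\star$. The hypothesis \eqref{michelin} therefore forces $\upmu_{\star,1,2}$ to vanish on $\overline{Q_\rho(x_0)\setminus\mathcal R_\rho(x_0)}$, which is precisely the set where $f_2'$ lives. Consequently
\[
\bigl[f_2'(x_2)\,\tilde\upmu_{\star,1,2}\bigr]^{x_1}=0,
\]
and \eqref{canardoter} reduces to
\[
\bigl\langle f_2(x_2)\,(\mathrm{Re}(\omega_\star)-2\upzeta_\star),\,f_1'(x_1)\bigr\rangle_{\mathcal D'(\Omega),\mathcal D(\Omega)}=0.
\]

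In the third step, I would project this identity onto the tangent line using \eqref{capitaine}. Since $\upnu_\star$ (and hence the bounded measure $\mathrm{Re}(\omega_\star)-2\upzeta_\star$) is supported in $\overline{\mathcal R_\rho(x_0)}$ by \eqref{michelin}, and since $f_2\equiv 1$ on this set, the remark preceding \eqref{amerzone} gives
\[
\bigl[f_2(x_2)\,(\mathrm{Re}(\tilde\omega_\star)-2\tilde\upzeta_\star)\bigr]^{x_1}=-\bL_{x_0,\rho}.
\]
Hence the identity above becomes $\langle \bL_{x_0,\rho},f_1'\rangle=0$ for every $f_1\in C_c^\infty(\mathcal I_\rho(x_{0,1}))$, i.e.\ $\dfrac{\rd}{\rd s}\bL_{x_0,\rho}=0$ in $\mathcal D'(\mathcal I_\rho(x_{0,1}))$. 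The classical characterization of distributions with vanishing derivative on an interval then yields $\bL_{x_0,\rho}=L_{0,\rho}\,\rd x$ for some constant $L_{0,\rho}\in\R$.

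The main obstacle, as in Proposition \ref{scratchiness}, is the bookkeeping: one has to check carefully that the support condition \eqref{michelin} really controls \emph{every} measure appearing in \eqref{canardoter}, in particular the off-diagonal Hopf component $\upmu_{\star,1,2}$, and that the pushforward of $(\mathrm{Re}(\omega_\star)-2\upzeta_\star){\bf 1}_{Q_\rho(x_0)}$ is exactly $-\bL_{x_0,\rho}$ once $f_2$ is set to $1$ on the support. Both are taken care of by \eqref{citrus}, the definition \eqref{caminare}, and the choice of cutoff; once that is verified the conclusion is immediate from distribution theory.
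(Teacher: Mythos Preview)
Your proposal is correct and follows essentially the same approach as the paper: feed the dilation vector field $\vec X_d=f_1(x_1)f_2(x_2)\be_1$ with the plateau cutoff $f_2(x_2)=\chi((x_2-x_{0,2})/\rho)$ into \eqref{canardoter}, use \eqref{michelin} together with \eqref{citrus} to kill the $\mathrm{Im}(\omega_\star)$-term, identify the remaining pushforward as $-\bL_{x_0,\rho}$, and conclude that $\frac{\rd}{\rd s}\bL_{x_0,\rho}=0$. Your explicit bookkeeping of the support conditions via \eqref{citrus} is a welcome clarification of what the paper does more tersely.
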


 We finally use test vectors fields $\vec X_d$, with $f_2$   of the form 
 $$\displaystyle{f_2(x_2)=(x_2-x_{0,2})^k\varphi( \frac{ x_2-x_{0, 2}}{\rho})}, k \in \N^\star$$
  so that \eqref{duffy2} holds.  Inserting into \eqref{canardoter}, and setting $f=f_1$, we are led to 
 $$
\left \langle  k \bJ_{k-1}, f\right \rangle_{_{\mathcal D'(\mathcal I_\rho), \mathcal D'(\mathcal I_\rho)}} +
\left \langle \bL_k(s), f'\right \rangle_{_{\mathcal D'(\mathcal I_\rho), \mathcal D(\mathcal I_\rho)}}=0, {\rm \ for \  } f \in C^\infty_c ( (x_{0, 1}-\rho, x_{0, 1}+\rho). 
$$
Hence, we have, in the sense of distributions, for $k \in \N^\star$
\begin{equation}
\label{scratchodon2}
-\frac{\rd }{\rd s} \bL_k=k\bJ_{k-1}  {\rm \ in \ } \mathcal D'((x_{0, 1}-\rho, x_{0, 1}+\rho)).
\end{equation}
\subsection{Proof of Proposition \ref{letitwave} completed}
\label{cecomplet}
The proof of Proposition \ref{letitwave} follows combining Proposition \ref{scratchiness}, Proposition \ref{scratchinou}, together with identities  \eqref{scratchodon} and \eqref{scratchodon2}.
\subsection{Behavior near regular points}
We specify  Proposition \ref{letitwave} to  regular points.

\subsubsection{Property \eqref{michelin} is satisfied near regular points} 
  We have:

\begin{proposition}  
\label{assos}
Assume that $x_0 \in \mathfrak S_\star \setminus \mathfrak A_\star$.   Then, there exists $\rho_0>0$ such that property \eqref{michelin} is satisfied. Consequently,  the measures $\bL_{x_0, \rho_0}$ and $\bJ_{x_0, \rho_0}$ are proportional to the Lebesgue measure  on ${\mathcal I_{\rho_0}} (x_{0, 1})$ and  the  differential relations \eqref{zoran} hold for $\rho=\rho_0$.
\end{proposition}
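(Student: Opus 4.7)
The plan is to combine the strengthened tangent cone property stated in Proposition \ref{tangentfort} with the fact that $\upnu_\star$ vanishes on $\mathfrak U_\star = \Omega \setminus \mathfrak S_\star$, and then simply invoke Proposition \ref{letitwave}.

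First, since $x_0\in \mathfrak S_\star\setminus \mathfrak A_\star$ is a regular point, Proposition \ref{tangentfort} provides a unit tangent vector $\vec e_{x_0}$, which, by our standing convention, we identify with $\be_1$. Choose any angle $\uptheta\in (0,\pi/2)$ with $\tan\uptheta <3/4$ (e.g.\ $\uptheta$ equal to $\pi/6$) and let $R_{\rm cone}(\uptheta,x_0)>0$ be the corresponding radius given by Proposition \ref{tangentfort}. Pick $\rho_0>0$ small enough so that $\sqrt{2}\,\rho_0\leq R_{\rm cone}(\uptheta,x_0)$ and also $\D^2(x_0,2\rho_0)\subset \Omega$, so that $Q_{\rho_0}(x_0)\subset \D^2(x_0, R_{\rm cone}(\uptheta,x_0))$. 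Then by Proposition \ref{tangentfort},
\begin{equation*}
\mathfrak S_\star \cap Q_{\rho_0}(x_0)\subset \mathcal C_{\rm one}(x_0,\be_1,\uptheta).
\end{equation*}

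Next, I claim that $\overline{Q_{\rho_0}(x_0)\setminus \mathcal R_{\rho_0}(x_0)}\cap \mathfrak S_\star=\emptyset$. Indeed, any point $y$ in this closure satisfies $\vert y_1-x_{0,1}\vert \leq \rho_0$ and $\vert y_2-x_{0,2}\vert \geq 3\rho_0/4$, whereas any point in $\mathcal C_{\rm one}(x_0,\be_1,\uptheta)$ satisfies $\vert y_2-x_{0,2}\vert\leq \tan\uptheta\,\vert y_1-x_{0,1}\vert < (3/4)\rho_0$, which is incompatible. Since $\mathfrak S_\star$ is closed (Proposition \ref{proptheo1}), the compact set $\overline{Q_{\rho_0}(x_0)\setminus \mathcal R_{\rho_0}(x_0)}$ is actually contained in the open set $\mathfrak U_\star=\Omega\setminus \mathfrak S_\star$.

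It then remains to show $\upnu_\star(\mathfrak U_\star)=0$, so that in particular $\upnu_\star\left(\overline{Q_{\rho_0}(x_0)\setminus \mathcal R_{\rho_0}(x_0)}\right)=0$. By the defining property \eqref{convers} of $\mathfrak U_\star$, on any compact subset $K$ of $\mathfrak U_\star$ the sequence $u_{\eps_n}$ converges uniformly to some element $\upsigma_i \in \Sigma$. Together with the scaled clearing-out estimate of Proposition \ref{cestclair} applied on small discs $\D^2(x,r)\subset \mathfrak U_\star$ (the hypothesis \eqref{donnez} being trivially satisfied for $n$ large by uniform convergence and by a covering argument, together with the fact that $E_{\eps_n}(u_{\eps_n},\D^2(x,r))/r$ becomes arbitrarily small), the energy bound \eqref{benkono} yields $e_{\eps_n}(u_{\eps_n})\to 0$ uniformly on compact subsets of $\mathfrak U_\star$. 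Hence $\upnu_\star\rest \mathfrak U_\star=0$, which establishes \eqref{michelin} for $\rho=\rho_0$.

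The consequences stated in the proposition then follow directly: since hypothesis \eqref{michelin} is verified at $\rho_0$, Proposition \ref{letitwave} applies verbatim and gives both the proportionality of $\bL_{x_0,\rho_0}$ and $\bJ_{x_0,\rho_0}$ to Lebesgue measure on $\mathcal I_{\rho_0}(x_{0,1})$ (the case $k=0$) and the differential relations \eqref{zoran}. The main (and really only) step is the geometric one, namely ensuring that the tangent cone at $x_0$ lies strictly inside the horizontal strip $\mathcal R_\rho(x_0)$ for small $\rho$; this is precisely where the improved tangent cone property of Proposition \ref{tangentfort} is used, as the mere existence of an approximate tangent line \eqref{tangent} (in the averaged sense) would not be enough to exclude the thin top and bottom strips from containing mass of $\upnu_\star$.
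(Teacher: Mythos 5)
Your argument is correct and follows essentially the same route as the paper's proof: choose $\rho_0$ small enough via Proposition \ref{tangentfort} so that the (strengthened) tangent cone at $x_0$ fits strictly inside the strip $\mathcal R_{\rho_0}(x_0)$, deduce that the top and bottom bands of the square are disjoint from $\mathfrak S_\star$, conclude \eqref{michelin}, and then invoke Proposition \ref{letitwave}. Your intermediate verification that $\upnu_\star\rest\mathfrak U_\star=0$ is somewhat more elaborate than necessary — rather than combining \eqref{convers} with Proposition \ref{cestclair}, one can note directly that every $x\in\mathfrak U_\star$ has density $\tbe_\star(x)<\upeta_1$, so Theorem \ref{claire} gives $\upnu_\star(\D^2(x,r/2))=0$ for some $r>0$ — but this is a cosmetic shortcut, not a gap.
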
 

Let $x_0\in \Omega$ and $r>0$ be such that $\D^2(x_0, r)\subset \Omega$. We assume that $x_0$ is  a regular point of $\mathfrak S_\star$ and  choose the orthonormal basis so that   $\be_1=\vec e_{x_0}$ is a unit tangent vector  to $\mathfrak S_\star$ at $x_0$.   In view of   Proposition \ref{tangentfort},   we have,  for  any  $\uptheta \in [0, \frac{\pi}{2}]$  and 
  $0 <\varrho \leq R_{\rm cone}(\uptheta, x_0)$
  \begin{equation}
  \label{lecoq}
 \mathfrak S_\star \cap  \D^2\left(x_0, \varrho\right) \subset  \mathcal C_{\rm one}\left(x_0, \vec e_{x_0}, \uptheta  \right)=\mathcal C_{\rm one}\left(x_0, \be_1, \uptheta  \right), 
   \end{equation} 
Since we have  $\displaystyle{Q_{\frac{\varrho}{\sqrt 2}}(x_0) \subset \D^2\left(x_0, \varrho\right)}$,  we obtain,  for $ 0\leq r \leq \rho_0\equiv {\sqrt{2}}^{-1}R_{\rm cone}(\uptheta, x_0)$
\begin{equation}
\label{lecoq2}
\mathfrak S_\star \cap  Q_r(x_0) \subset  \mathcal C_{\rm one}\left(x_0, \be_1, \uptheta \right). 
\end{equation}
 Specifying \eqref{lecoq2}  with  $\displaystyle{\uptheta=\frac {\pi}{8}}$, we obtain, for 
$0\leq r \leq \rho_0\equiv {\sqrt{2}}^{-1}R_{\rm cone}(\frac {\pi}{8}, x_0)$
\begin{equation}
  \label{lecoq}
 \mathfrak S_\star \cap  Q_r(x_0) \subset  \mathcal C_{\rm one}\left(x_0, \vec e_{x_0}, \frac{\pi}{8} \right)
 \cap Q_r(x_0)
  \subset \mathcal R_r(x_0). 
   \end{equation} 
    It follows that,  if $r\leq \rho_0$,  then   \eqref{michelin} holds. In particular, we are in position  to  apply Proposition \ref{letitwave} at the point $x_0$. This yields immediately,  for the number  $\rho_0>0$ provided by the discussion above,  the fact that the functions   $L_{x_0, \rho_0}$ and $J_{x_0, \rho_0}$ are constant on the interval $(x_{0, 1}-\rho_0, x_{0, 1}+\rho_0)$,  and relations  \eqref{zoran} hold.  The proof of the proposition is hence complete.
   \qed

\medskip

\subsubsection{Some additional properties near regular points}
  We derive next  some additional properties for regular points, in connection with the singular part of the measures.  We introduce therefore the set
  $$
  B_\star=\{s \in \R {\rm \ such  \  that \ }  (\{s\} \times \R) \cap \mathfrak B_\star\not = \emptyset\}=\bP(\mathfrak B_\star).  $$
 where $\mathfrak  B_\star$ is defined in \eqref{perpendicularite} and represents the set where the singular part of the measures concentrates.
 Notice that, since $\mathcal H^1(\mathfrak B_\star)=0$,  the  Lebesgue measure of the set $B_\star$ vanishes likewise.
 Recall,  in view of Corollary \ref{becool}, that  we have
$ \bJ_{1, r}=J_{1, r} \rd x_1$ and $\bL_{1, r}=L_{1, r}\rd x_1$, where the function $L_{1, r}$ and $J_{1, r}$ are bounded.  We have first:
  
  \begin{lemma}
 Let $x_0=(x_{0, 1}, x_{0, 2})$ be  a regular point in  $\mathfrak S_\star\setminus \mathfrak A_\star$ and  let $\rho_0$ be given by Proposition \ref{assos}.  Let $\uptheta \in [0, \frac{\pi}{8}]$.  We have,  for any $r\leq  \frac12\inf\{R_{\rm cone}(\uptheta, x_0), \rho_0\},$
  \begin{equation}
  \label{conan}
  \left\{ 
  \begin{aligned}
 \int_{x_{0, 1}-2r}^{x_{0, 1}+2r} \vert J_{1,\rho_0} (s) \vert  \rd s  \    &\leq   4r\sin \uptheta \,  \upnu_\star^{ac} \left(\D^2(x_0, 2r)\right) {\rm \ and \ } \\ 
    \int_{x_{0, 1}-2r}^{x_{0, 1}+2r} \vert  L_{1,\rho_0}(s)  \vert  \rd s     &\leq   8r\sin \uptheta \,  \upnu_\star^{ac} \left(\D^2(x_0, 2r)\right).
\end{aligned} 
\right.
\end{equation}
\end{lemma}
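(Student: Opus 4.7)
The plan is to rewrite each integral on the left of \eqref{conan} as a total variation of an absolutely continuous pushforward measure and then combine the tangent-cone property of $\mathfrak S_\star$ with a Lebesgue-decomposition argument. By the definitions of $\bJ_{1,\rho_0}$ and $\bL_{1,\rho_0}$ in \eqref{micheline} and the absolute continuity ensured by Corollary \ref{becool}, the two left-hand sides equal $|\bJ_{1,\rho_0}|(I_{2r})$ and $|\bL_{1,\rho_0}|(I_{2r})$, where $I_{2r}=(x_{0,1}-2r,x_{0,1}+2r)$. The standard upper bound for the total variation of the pushforward of a signed product measure yields
\begin{equation*}
|\bJ_{1,\rho_0}|(I_{2r})\leq \int_{\bP^{-1}(I_{2r})\cap Q_{\rho_0}(x_0)}|x_2-x_{0,2}|\,\rd|\tilde\upmu_{\star,1,2}|,
\end{equation*}
and the analogous bound holds for $\bL_{1,\rho_0}$ with $|\tilde\upmu_{\star,1,2}|$ replaced by $|2\tilde\upzeta_\star-\tilde\upmu_{\star,1,1}+\tilde\upmu_{\star,2,2}|$. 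The prefactors $2$ and $4$ that ultimately produce the constants $4$ and $8$ in \eqref{conan} come from the pointwise bounds $|\upmu_{\star,1,2}|\leq 2\upnu_\star$ and $|2\upzeta_\star-\upmu_{\star,1,1}+\upmu_{\star,2,2}|\leq 2\upzeta_\star+(\upmu_{\star,1,1}+\upmu_{\star,2,2})\leq 4\upnu_\star$, both consequences of \eqref{citrus} and the identity $\upnu_\star=\tfrac12(\upmu_{\star,1,1}+\upmu_{\star,2,2})+\upzeta_\star$.

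Next, to extract the $\sin\uptheta$ factor I would invoke Proposition \ref{tangentfort}: the hypothesis $2r\leq R_{\rm cone}(\uptheta,x_0)$ yields $\mathfrak S_\star\cap\D^2(x_0,2r)\subset\mathcal C_{\rm one}(x_0,\be_1,\uptheta)$, and an elementary Pythagorean computation inside the cone--disc intersection gives $|y_2-x_{0,2}|\leq 2r\sin\uptheta$. Since $\upnu_\star$ is supported on $\mathfrak S_\star$ by Theorem \ref{claire} together with the definition of $\mathfrak S_\star$, and since $|\upmu_{\star,i,j}|\leq 2\upnu_\star$, all the signed measures appearing in the integrands are concentrated on $\mathfrak S_\star\cap Q_{\rho_0}(x_0)$; the pointwise cone bound then transfers to the integrand on the support of the integration measure, producing the desired factor $2r\sin\uptheta$ in each estimate.

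Finally, to sharpen the bound from $\upnu_\star$ to $\upnu_\star^{ac}$ on the right, I would apply the Lebesgue decompositions \eqref{lebesgue}--\eqref{lebesguebis}: the singular parts of $\upmu_{\star,1,2}$, $\upzeta_\star$, $\upmu_{\star,1,1}$, $\upmu_{\star,2,2}$ all concentrate on $\mathfrak B_\star\subset\mathfrak S_\star$, which has $\mathcal H^1(\mathfrak B_\star)=0$; since $\bP$ is $1$-Lipschitz, $\mathcal L^1(\bP(\mathfrak B_\star))=0$. Therefore the pushforwards of the signed singular parts $(x_2-x_{0,2})\tilde\upmu_{\star,1,2}^{s}$, and of the corresponding $s$-part involved in $\bL_{1,\rho_0}$, are $\mathcal L^1$-singular on $\mathcal I_{\rho_0}(x_{0,1})$, while $\bJ_{1,\rho_0}$ and $\bL_{1,\rho_0}$ themselves are $\mathcal L^1$-absolutely continuous by Corollary \ref{becool}. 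Uniqueness of Lebesgue decomposition then forces those singular pushforwards to vanish identically, so only the absolutely continuous parts of the ambient measures contribute to the right-hand side, producing the $\upnu_\star^{ac}$ factor.

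The main obstacle will be controlling the portion of the integration domain $\bP^{-1}(I_{2r})\cap Q_{\rho_0}(x_0)$ that lies outside $\D^2(x_0,2r)$, because Proposition \ref{tangentfort} in the form used above delivers the inequality $|y_2-x_{0,2}|\leq 2r\sin\uptheta$ only within that disc. Resolving this requires exploiting the concentration property \eqref{michelin}, which confines the $\upnu_\star$-relevant support to the rectangle $\mathcal R_{\rho_0}(x_0)$, together with the wider-angle inclusion $\mathfrak S_\star\cap Q_{\rho_0}\subset \mathcal C_{\rm one}(x_0,\be_1,\pi/8)$ built into the very choice of $\rho_0$ in Proposition \ref{assos}, and a refinement showing that, after possibly decreasing $\rho_0$ relative to $\uptheta$, the intersection $\bP^{-1}(I_{2r})\cap Q_{\rho_0}(x_0)\cap \mathfrak S_\star$ is actually contained in $\D^2(x_0,2r)$ and thus subject to the $\uptheta$-cone bound.
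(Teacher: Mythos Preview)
Your overall strategy matches the paper's: bound $|x_2-x_{0,2}|$ by $2r\sin\uptheta$ on the support using Proposition \ref{tangentfort}, control the signed measures by $\upnu_\star$ via \eqref{citrus}, and then upgrade $\upnu_\star$ to $\upnu_\star^{ac}$. The domain obstacle you flag in the last paragraph is handled exactly as you suggest: the $\pi/8$-cone inclusion built into $\rho_0$ forces any point of $\mathfrak S_\star\cap Q_{\rho_0}(x_0)$ with $|x_1-x_{0,1}|\le 2r$ to satisfy $|x_2-x_{0,2}|\le 2r\tan(\pi/8)<2r$, so the relevant support already sits inside $\mathcal R_{2r}(x_0)\subset\D^2(x_0,\tfrac52 r)$, where the $\uptheta$-cone bound applies.

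There is, however, a genuine gap in your ``uniqueness of Lebesgue decomposition'' step. From $\bJ_{1,\rho_0}=A+S$ with $\bJ_{1,\rho_0}$ absolutely continuous and $S=\bP_\sharp\big((x_2-x_{0,2})\tilde\upmu_{\star,1,2}^{\,s}\big)$ singular, you cannot conclude $S=0$ unless you also know that $A=\bP_\sharp\big((x_2-x_{0,2})\tilde\upmu_{\star,1,2}^{\,ac}\big)$ is absolutely continuous. That is not automatic: the pushforward by $\bP$ of a measure absolutely continuous with respect to $\mathcal H^1\!\rest\mathfrak S_\star$ need not be absolutely continuous with respect to $\rd s$ (a vertical segment in $\mathfrak S_\star$ would already produce an atom). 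A simple counterexample to the abstract implication is $A=\rd s-\delta_0$, $S=\delta_0$: then $A+S=\rd s$ is absolutely continuous and $S$ is singular, yet $S\neq 0$. The paper circumvents this by excising at the level of the projection: set $B_\star=\bP(\mathfrak B_\star)$, which has $\mathcal L^1(B_\star)=0$ since $\bP$ is $1$-Lipschitz, and observe that
\[
\int_{I_{2r}}|J_{1,\rho_0}(s)|\,\rd s=\int_{I_{2r}\setminus B_\star}|J_{1,\rho_0}(s)|\,\rd s
\le \int_{\bP^{-1}(I_{2r}\setminus B_\star)\cap Q_{\rho_0}}|x_2-x_{0,2}|\,\rd|\tilde\upmu_{\star,1,2}|,
\]
the first equality because $J_{1,\rho_0}$ is bounded (Corollary \ref{becool}). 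Since $\bP^{-1}(I_{2r}\setminus B_\star)$ is disjoint from $\mathfrak B_\star$, on that set $\upmu_{\star,1,2}$ coincides with $\upmu_{\star,1,2}^{ac}$ and is dominated by $2\upnu_\star^{ac}$, which yields the $\upnu_\star^{ac}$ on the right-hand side directly, without any decomposition-uniqueness argument.
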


\begin{proof}  
 If $2r \leq  R_{\rm cone}(\uptheta, x_0)$, it follows from \eqref{radius}   that we have   $\upnu_\star  (\mathcal R_{2r}(x_0)\setminus\mathcal C_{\rm one}\left(x_0, \vec e_{x_0}, \uptheta  \right))=0$. On the other hand,   we have
$$
\vert x_2-x_{0, 2}\vert \leq 2r \sin \uptheta  {\rm \ for  \ } x=(x_1, x_2) \in  \mathcal R_r(x_0)\cap \mathcal C_{\rm one}\left(x_0, \vec e_{x_0}, \uptheta  \right).
$$
Multipling by ${\rm Im } (\omega_\star)$ and integrating  on the set  $\mathcal R_{2r}(x_0)\setminus B_\star \times \R$, we are led to 
\begin{equation}
\label{couvrefeu}
\begin{aligned}
\int_{\mathcal R_{2r}(x_0)\setminus B_\star \times \R }  \rd   \left   \vert  \upmu_{\star, 1, 2} (\omega_\star)
 \left( x_2-x_{0, 2}\right) \right \vert  & \leq   4r\sin \uptheta \,  \upnu_\star \left(\D^2(x_0, 2r) \setminus B_\star \times \R\right) \\
 &\leq 4r\sin \uptheta \, \upnu_\star^{ac} \left(\D^2(x_0, 2r)\right).
\end{aligned}
\end{equation}
 For the last inequality, we  invoke   the fact that we have the inclusion  $\D^2(x_0, 2r) \setminus B_\star \times \R\subset \D^2(x_0, 2r) \setminus \mathfrak  B_ \star$,  so that
  $$\upnu_\star \left(\D^2(x_0, 2r) \setminus B_\star \times \R\right)
  \leq \upnu_\star \left(\D^2(x_0, 2r) \setminus \mathfrak B_\star \right) 
  = \upnu_\star^{ac} \left(\D^2(x_0, 2r)\right).$$
 Since, by  definition  $\bJ_{1, r}=\bP_\sharp( \tilde  \upmu_{\star, i, 2}(x_2-x_{0,2}))$, we have hence, in view of\eqref{citrus}
\begin{equation}
\label{lalveole}
 \int_{(x_{0, 1}-2r, x_{0, 1}+2r)\setminus B_\star} \vert J_{1,\rho_0} (s) \vert  \rd s \leq \int_{\mathcal R_{2r}(x_0)\setminus B_\star \times \R }  \rd   \left   \vert  \upmu_{\star, 1, 2} (\omega_\star)
 \left( x_2-x_{0, 2}\right) \right \vert. 
 \end{equation}
Combining \eqref{couvrefeu},\eqref{lalveole} together with the fact that $B_\star$ has zero Lebesgue measure and the function $J_{1, \rho_0}$ is bounded, thus integrable,    we deduce the first inequality in  \eqref{conan}.  The second is established invoking similar arguments. 
 \end{proof}
 
 \begin{lemma} 
 \label{credible}
 Let $x_0=(x_{0, 1}, x_{0, 2})$ be  a regular point  $\mathfrak S_\star\setminus \mathfrak A_\star$, and and  let $\rho_0$ be given by Proposition \ref{assos}.  Let $\uptheta \in [0, \frac{\pi}{8}]$. 
 For any $\displaystyle{0<r <\frac{1}{2}\inf\left \{R_{\rm cone}(\theta, x_0), \rho_0)\right\}}$, there exists some $\varrho_r \in [r, 2r]$  such that 
\begin{equation}
\label{compens}
\left\{
\begin{aligned}
 \left  \vert  \int_{x_{0,1}-\varrho_r}^{x_{0, 1}+\varrho_r}  J_{\rho_0}(s) \rd s \right \vert  &\leq 16\sin \uptheta \,  \upnu_\star^{ac} \left(\D^2(x_0, 2r)\right)  {\rm  \ and \ } \\
 \left  \vert  \int_{x_{0,1}-\varrho_r}^{x_{0, 1}+\varrho_r}  \rd \bN_{\rho_0} \right \vert  &\leq 8\sin \uptheta \,  \upnu_\star^{ac} \left(\D^2(x_0, 2r)\right).
 \end{aligned}
\right.
\end{equation}
 \end{lemma}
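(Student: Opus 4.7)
}

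The plan is to pass from the pointwise bounds \eqref{conan} on $L_{1,\rho_0}$ and $J_{1,\rho_0}$ to the required integral bounds by combining the differential relations \eqref{zoran0} of Proposition \ref{letitwave} with a classical averaging (Fubini/mean-value) argument. The starting observation is that \eqref{zoran0} reads
\begin{equation*}
-\frac{\rd}{\rd s} \bL_{1,\rho_0} = \bJ_{\rho_0}, \qquad \frac{\rd}{\rd s} \bJ_{1,\rho_0} = \bN_{\rho_0}
\end{equation*}
in $\mathcal D'((x_{0,1}-\rho_0, x_{0,1}+\rho_0))$. By Corollary \ref{becool}, the measures $\bL_{1,\rho_0} = L_{1,\rho_0}\rd x_1$ and $\bJ_{1,\rho_0} = J_{1,\rho_0}\rd x_1$ are absolutely continuous with bounded densities. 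Since $\bJ_{\rho_0}$ is also absolutely continuous (in fact proportional to Lebesgue, by Proposition \ref{scratchiness}) while $\bN_{\rho_0}$ is a bounded Radon measure with at most countably many atoms, for almost every $\varrho \in [r,2r]$ the endpoints $x_{0,1}\pm \varrho$ are Lebesgue points of $L_{1,\rho_0}$ and $J_{1,\rho_0}$ avoiding the atoms of $\bN_{\rho_0}$, so that the fundamental theorem of calculus for $BV$ functions yields
\begin{equation*}
\int_{x_{0,1}-\varrho}^{x_{0,1}+\varrho} J_{\rho_0}(s)\,\rd s = L_{1,\rho_0}(x_{0,1}-\varrho) - L_{1,\rho_0}(x_{0,1}+\varrho),
\end{equation*}
\begin{equation*}
\int_{x_{0,1}-\varrho}^{x_{0,1}+\varrho} \rd \bN_{\rho_0} = J_{1,\rho_0}(x_{0,1}+\varrho) - J_{1,\rho_0}(x_{0,1}-\varrho).
\end{equation*}
It therefore suffices to locate $\varrho_r \in [r,2r]$ at which both $|L_{1,\rho_0}(x_{0,1}\pm\varrho_r)|$ and $|J_{1,\rho_0}(x_{0,1}\pm\varrho_r)|$ are controlled by $\sin\uptheta\,\upnu_\star^{ac}(\D^2(x_0,2r))$.

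This is where the hypotheses of the lemma enter, through the previous estimates \eqref{conan}. Introducing the two functions
\begin{equation*}
\Phi_L(\varrho) = \bigl|L_{1,\rho_0}(x_{0,1}+\varrho)\bigr| + \bigl|L_{1,\rho_0}(x_{0,1}-\varrho)\bigr|, \quad \Phi_J(\varrho) = \bigl|J_{1,\rho_0}(x_{0,1}+\varrho)\bigr| + \bigl|J_{1,\rho_0}(x_{0,1}-\varrho)\bigr|,
\end{equation*}
a change of variables gives
\begin{equation*}
\int_r^{2r} \Phi_L(\varrho)\,\rd\varrho \leq \int_{x_{0,1}-2r}^{x_{0,1}+2r} |L_{1,\rho_0}(s)|\,\rd s \leq 8r\sin\uptheta\,\upnu_\star^{ac}(\D^2(x_0,2r)),
\end{equation*}
and analogously $\int_r^{2r}\Phi_J\,\rd\varrho \leq 4r\sin\uptheta\,\upnu_\star^{ac}(\D^2(x_0,2r))$, both from \eqref{conan}. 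A two-sided Chebyshev (mean-value) argument then shows that the set of $\varrho \in [r,2r]$ for which either $\Phi_L(\varrho) > 16\sin\uptheta\,\upnu_\star^{ac}(\D^2(x_0,2r))$ or $\Phi_J(\varrho) > 8\sin\uptheta\,\upnu_\star^{ac}(\D^2(x_0,2r))$ has Lebesgue measure at most $r/2 + r/2 = r$, hence cannot exhaust the full interval $[r,2r]$ (strictly, after excluding the Lebesgue-negligible sets of non-Lebesgue points and atoms). Any $\varrho_r$ in the complement of this exceptional set yields \eqref{compens}.

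The only mildly delicate point is that $J_{1,\rho_0}$ and $L_{1,\rho_0}$ are equivalence classes of functions rather than pointwise objects; however, since both are $BV$ (their distributional derivatives are the bounded Radon measures $\bN_{\rho_0}$ and $-\bJ_{\rho_0}$ respectively), each admits a canonical precise representative, defined outside a countable set of atoms, for which the fundamental theorem of calculus and the preceding estimates hold verbatim. No further ingredient is needed; the main conceptual step is the passage from the localised bounds \eqref{conan}, which express the concentration of the measures along the tangent direction via the cone estimate of Proposition \ref{tangentfort}, to the telescoped boundary identities obtained through the distributional derivatives \eqref{zoran0}.
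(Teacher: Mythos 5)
Your proposal is correct and follows essentially the same route as the paper's proof: you integrate the first-order differential relations \eqref{zoran0} against (smooth approximations of) indicator functions to express the integrals of $J_{\rho_0}$ and $\rd\bN_{\rho_0}$ as telescoped boundary values of $L_{1,\rho_0}$ and $J_{1,\rho_0}$, and then apply a mean-value (Chebyshev) argument together with the integral bounds \eqref{conan} to select a good radius $\varrho_r \in [r,2r]$ where both boundary terms are controlled. The sign computations and the constants ($16$ and $8$, coming from the factor $2/r$ in the mean-value selection applied to the two $BV$-type bounds $8r\sin\uptheta\,\upnu_\star^{ac}$ and $4r\sin\uptheta\,\upnu_\star^{ac}$) match. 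Your parenthetical worry about the bad-set measure bound being exactly $r/2+r/2=r$ is resolved by noting that Chebyshev on the strict super-level set $\{\Phi_L > T\}$ gives $T\,|\{\Phi_L > T\}| < \int \Phi_L$ unless the set is null, so the union of bad sets has measure strictly less than $r$ and a good $\varrho_r$ exists.
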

 \begin{proof} 
 The proof of \eqref{compens} follows from \eqref{conan} integrating  the differential equations \eqref{zoran} for $k=1$. Indeed,  for almost every  $\varrho \in [r, 2r]$, 
$x_{0,1}-\varrho$ and $x_{0,1}+\varrho$ are Lebesgue points of $J_{1, \rho_0}$,  $L_{1, \rho_0}$, $J_{\rho_0}$ and the absolutely continuous part of 
$\bN_{\rho_0}$. We choose next  a sequence of test functions  $\{\psi_m\}_{m \in \N}$ such that   such that  $0\leq \psi_m \leq 1$, for any $m \in \N$, and
  \begin{equation}
  \label{pascool}
   \psi_m {\underset {m \to +\infty} \to}  {\bf 1}_{(x_{0,1}-\varrho, {x_{0, 1}+\varrho})}  {\rm \ in \ } L^1(\mathcal I_r(x_{0, 1})).
   \end{equation}
   In view of  the differential equation \eqref{conan} we have, for any $m \in \N$
   \begin{equation}
   \left\{
   \begin{aligned}
   \int_{x_{0,1}-\varrho}^{x_{0, 1}+\varrho}  \psi_m(s) J_{\rho_0}(s) \rd s&= -\int_{x_{0,1}-\varrho}^{x_{0, 1}+\varrho}  \psi'_m(s) L_{1,\rho_0} (s) \rd s {\rm \ and \ } \\
   \int_{x_{0,1}-\varrho}^{x_{0, 1}+\varrho}\psi_m  \rd \bN_{\rho_0}&=\int_{x_{0,1}-\varrho}^{x_{0, 1}+\varrho}  \psi'_m(s) J_{1,\rho_0} (s) \rd s.
   \end{aligned}
   \right.
   \end{equation}
   Passing to the limit, we obtain, using the Lebesgue properties of the points  $x_{0,1}-\varrho$ and $x_{0,1}+\varrho$
 \begin{equation}
 \label{varant}
 \begin{aligned}
 \int_{x_{0,1}-\varrho}^{x_{0, 1}+\varrho}  \rd \bN_{\rho_0}&=J_{1,\rho_0} (x_{0, 1}+\varrho)-J_{1,\rho_0}(x_{0, 1}-\varrho) {\rm \ and \ } \\
 \int_{x_{0,1}-\varrho}^{x_{0, 1}+\varrho}  J_{\rho_0}(s) \rd s&=L_{1,\rho_0} (x_{0, 1}-\varrho)-L_{1,\rho_0}(x_{0, 1}+\varrho).
 \end{aligned} 
 \end{equation}
Next, we use   a mean value argument to deduce that  there exists some  number  $\varrho_r \in [r, 2r]$,  such that $x_{0,1}-\varrho_r$ and $x_{0,1}+\varrho_r$ are Lebesgue points of $J_{1, \rho_0}$,  $L_{1, \rho_0}$, $J_{\rho_0}$ and the absolutely continuous part of $\bN_{\rho_0}$ and such that 
\begin{equation}
\label{labelette}
\left\{
\begin{aligned}
\vert J_{1,\rho_0} (x_{0, 1}+\varrho_r) \vert + \vert J_{1,\rho_0}(x_{0, 1}-\varrho_r) \vert  &\leq \frac{2}{r}  \int_{x_{0, 1}-2r}^{x_{0, 1}+2r} \vert J_{1,\rho_0} (s) \vert  \rd s  \\
\vert L_{1,\rho_0} (x_{0, 1}+\varrho_r) \vert + \vert L_{1,\rho_0}(x_{0, 1}-\varrho_r) \vert  &\leq \frac{2}{r}  \int_{x_{0, 1}-2r}^{x_{0, 1}+2r} \vert L_{1,\rho_0} (s) \vert  \rd s  
\end{aligned} 
\right.
\end{equation}
Combining \eqref{varant}, \eqref{labelette} with \eqref{conan}, we obtain the desired result.
   \end{proof}


   \subsection{Behavior  near Lebesgue points: Proofs to  Proposition \ref{discrepvec6} and Lemma \ref{starac}}  
Recall that, at this stage we already know that,  if $x_0 \in \mathfrak S_\star \setminus \mathfrak A_\star$,  in view of Propositions \ref{scratchiness} and \ref{scratchinou}. 
$$ \bL_{x_0, \rho_0}=L_{0, \rho_0} \rd x_1  {\rm \ and \ }\bP_\sharp( \tilde  \upmu_{\star, i, 2})=J_{0, \rho_0} \rd x_1, $$
where $L_{0, \rho_0} \in \R$ and $J_{0, \rho_0} \in \R$. We derive here additional properties in the case $x_0 \not \in \mathfrak E_\star$, leading eventually to the proof of Proposition \ref{discrepvec6}.

\subsubsection{ Additional properties of $J_{x_0, \rho_0}$ and $\bN_{x_0, \rho_0}$ at Lebesgue points}  
 Let $x_0 \in \mathfrak S_\star$ and $\rho_0>0$. We impose in this paragraph the additional condition that $x_0  \not \in \mathfrak E_\star$, i.e. $x_0$ is a regular point, which is not on the support of the singular part, and is moreover a Lebesgue point for the densities  of  the absolutely continuous part for all measures of interest.   More precisely, this means that 
  \begin{equation}
 \label{themute}
 \left\{
 \begin{aligned}
& \underset {r\to 0}  \lim   \,  \frac{1} {r}   \int_{\mathfrak S_\star\cap \D^2(x_0, r)} \left \vert  {\Uptheta_\star}(\tau)-{\Uptheta_\star} (x_0) \right \vert    \rd \tau=0 \\
&\underset {r\to 0}  \lim   \,  \frac{1} {r}   \int_{\mathfrak S_\star\cap \D^2(x_0, r)} \left \vert  {\tbe_\star}(\tau)-{\tbe_\star} (x_0) \right \vert    \rd \tau=0,   {\rm \ and \ }\\
&\underset {r\to 0}  \lim   \,  \frac{1} {r}   \int_{\mathfrak S_\star\cap \D^2(x_0, r)} \left \vert  {\bm_{\star, i, j}}(\tau)-{\bm_{\star, i, j} } (x_0) \right \vert    \rd \tau=0
 \end{aligned}
 \right.
  \end{equation}
  As  a first direct consequence, we deduce that, for some constant $K=K(x_0)>0$   depending on $x_0$, we have 
   \begin{equation}
  \label{auxiliary}
  \upnu_\star^{ac} \left(\D^2(x_0, r)\right)   \leq K r  {\rm \ for \ any \ } 0<r<R, 
\end{equation}
  and also that 
   \begin{equation}
 \label{themute2}
 \left\{
 \begin{aligned}
& \underset {r\to 0}  \lim   \,  \frac{1} {r}   \int_{\mathfrak S_\star\cap \D^2(x_0, r)}  {\Uptheta_\star}(\tau)\rd \tau={\Uptheta_\star} (x_0),  \\
&\underset {r\to 0}  \lim   \,  \frac{1} {r}   \int_{\mathfrak S_\star\cap \D^2(x_0, r)}   {\tbe_\star}(\tau)\rd \tau={\tbe_\star} (x_0),   {\rm \ and \ }\\
&\underset {r\to 0}  \lim   \,  \frac{1} {r}   \int_{\mathfrak S_\star\cap \D^2(x_0, r)} \  {\bm}_{\star, i, j}(\tau)\rd \tau={\bm}_{\star, i, j } (x_0).
\end{aligned}
 \right.
  \end{equation}
  At this stage, we already know that $J_{\rho_0}$ is a constant map. Concerning $\bN_{\rho_0}$ we may decompose this measure on $\mathcal I_{\rho_0} (x_{0, 1})$ as a sum of an absolutely continuous part and a singular part
  $$
  \bN_{\rho_0}=\bN_{\rho_0}^{ac} +\bN_{\rho_0}^{s} {\rm \ with \ } \bN_{\rho_0}^{ac}\ll \rd x_1 {\rm \ and \ }  \bN_{\rho_0}^{s} \perp \bN_{\rho_0}^{ac}, 
  $$
so that  there exists a set $F_{\rho_0} \subset \mathcal I_{\rho_0}(x_{0, 1})$ such that 
$\mathcal H^1(F_{\rho_0})=0$ and  $\bN_{\rho_0}^{s} ( \mathcal I_{\rho_0}(x_{0, 1}\setminus F_{\rho_0}))=0$,  and a measurable function $N_{ \rho_0}$ defined on $\mathcal I_{\rho_0} (x_{0, 1})$ such that $\bN_{\rho_0}^{ac}=N_{\rho_0} \rd x_1$.
In this setting,   the functions $L_{\rho_0}, N_{\rho_0}$ and $J_{\rho_0}$ on $\mathcal I_{\rho_0}(x_{0, 1})$ are related to the functions $\Uptheta_\star$ and $\bm_{\star, i, j}$, for $i, j=1, 2$ defined on $\mathfrak S_\star$ by \eqref{gdansk0}  by the following result.
  
  \begin{proposition}
  \label{constantin}
    Let $x_0 \in \mathfrak S_\star\setminus \mathfrak E_\star$  and $\rho_0>0$ be given by Proposition \ref{assos} so that \eqref{michelin} holds for $\rho=\rho_0$. Choose the orthonormal basis so that   $\be_1=\vec e_{x_0}$ is a unit tangent vector  to $\mathfrak S_\star$ at $x_0$.   Then, $x_{0, 1} \not \in F_{\rho_0}$ and is a Lebesgue point for $N_{\rho_0}$ and $J_{\rho_0}(x_0)$.
      We have the identities, at the point $x_0$, 
     \begin{equation}
     \label{lacata}
     \left\{
     \begin{aligned}
   N_{\rho_0}(x_{0, 1})&=2\Uptheta_{\star}(x_0)-\bm_{\star, 2, 2}(x_0)+\bm_{\star, 1, 1}(x_0),   \\ 
   J_{\rho_0}(x_{0, 1})&=\bm_{\star, 1, 2}(x_0)  {\rm \ and \ } \\
   L_{\rho_0}(x_{0, 1})&=2\Uptheta_{\star}(x_0)-\bm_{\star, 1, 1}(x_0)+\bm_{\star, 2, 2}^{ac}(x_0).
 \end{aligned}
  \right.
     \end{equation}
  \end{proposition}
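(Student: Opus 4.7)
The plan is to reduce the computation of each density at $x_{0,1}$ to an integral of the corresponding density function on $\mathfrak{S}_\star$ near $x_0$, using the approximate tangent structure at $x_0$ together with the Lebesgue-point hypothesis $x_0 \notin \mathfrak{E}_\star$. Given any $\uptheta \in (0,\pi/2)$, Proposition \ref{tangentfort} provides a radius beyond which $\mathfrak{S}_\star \cap \D^2(x_0, r) \subset \mathcal{C}_{\rm one}(x_0, \be_1, \uptheta)$, and elementary geometry combined with the fact that $\mathbbmtt{P}^{-1}(\mathcal{I}_r(x_{0,1}))$ is a vertical strip of half-width $r$ yields the sandwich
\begin{equation*}
\mathfrak{S}_\star \cap \D^2(x_0, r\cos\uptheta) \;\subset\; \mathfrak{S}_\star \cap \mathbbmtt{P}^{-1}(\mathcal{I}_r(x_{0,1})) \cap Q_{\rho_0}(x_0) \;\subset\; \mathfrak{S}_\star \cap \D^2(x_0, r/\cos\uptheta).
\end{equation*}
Combining this inclusion with the density property \eqref{densitusone} and the Lebesgue-point property \eqref{themute2}, and sending first $r \to 0$ then $\uptheta \to 0$, one obtains, for any of the functions $g \in \{\Uptheta_\star, \bm_{\star,i,j}\}$,
\begin{equation*}
\frac{1}{2r}\int_{\mathfrak{S}_\star \cap \mathbbmtt{P}^{-1}(\mathcal{I}_r(x_{0,1})) \cap Q_{\rho_0}(x_0)} g\,\rd\mathcal{H}^1 \;\longrightarrow\; g(x_0).
\end{equation*}

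Applied to $\tilde\upzeta_\star^{ac}$ and $\tilde\upmu_{\star,i,j}^{ac}$, this sandwich argument gives $\mathbbmtt{P}_\sharp(\tilde\upzeta_\star^{ac})(\mathcal{I}_r(x_{0,1}))/(2r) \to \Uptheta_\star(x_0)$ and $\mathbbmtt{P}_\sharp(\tilde\upmu_{\star,i,j}^{ac})(\mathcal{I}_r(x_{0,1}))/(2r) \to \bm_{\star,i,j}(x_0)$, and in view of the linear combinations defining $\bL_{\rho_0}$, $\bJ_{\rho_0}$ and $\bN_{\rho_0}$, the three claimed values for $L_{\rho_0}(x_{0,1})$, $J_{\rho_0}(x_{0,1})$ and $N_{\rho_0}(x_{0,1})$. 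Running the same sandwich with $\mathcal{I}_r(x_{0,1})$ replaced by translates around points close to $x_{0,1}$ also yields the Lebesgue-point property of each of these AC densities at $x_{0,1}$. For $\bL_{\rho_0}$ and $\bJ_{\rho_0}$ this already concludes the argument, because Proposition \ref{assos} guarantees that these two measures are absolutely continuous (in fact constant), so their full densities coincide with the AC ones.

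It remains to verify $x_{0,1} \notin F_{\rho_0}$, i.e. $\bN_{\rho_0}^s(\{x_{0,1}\}) = 0$. Adding and subtracting the definitions of $\bN_{\rho_0}$ and $\bL_{\rho_0}$ gives $\bN_{\rho_0} + \bL_{\rho_0} = 4\tilde\upzeta_\star^{x_1}$, and since $\bL_{\rho_0}$ is absolutely continuous we obtain $\bN_{\rho_0}^s = 4(\tilde\upzeta_\star^{x_1})^s$. It therefore suffices to show that $\upzeta_\star^s$ puts no mass on the fibre $(\{x_{0,1}\}\times\R)\cap Q_{\rho_0}(x_0)$. Here the tangent cone property is decisive: for $\rho_0$ sufficiently small, the vertical line $\{x_{0,1}\}\times\R$ meets the cone $\mathcal{C}_{\rm one}(x_0,\be_1,\uptheta)$ only at $x_0$ (for any $\uptheta < \pi/2$), so this fibre meets $\mathfrak{S}_\star \cap Q_{\rho_0}(x_0)$ only at the single point $x_0$; since $\upzeta_\star^s$ is concentrated on $\mathfrak{B}_\star$ and $x_0 \notin \mathfrak{B}_\star$ by the hypothesis $x_0 \notin \mathfrak{E}_\star$, we conclude $\upzeta_\star^s(\{x_0\}) = 0$.

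The main technical subtlety lies in the interplay between the Radon--Nikodym decomposition of the planar measures $\upzeta_\star, \upmu_{\star,i,j}$ and that of their pushforwards under $\mathbbmtt{P}$: because $\mathbbmtt{P}$ is $1$-Lipschitz and $\mathcal{H}^1(\mathfrak{B}_\star) = 0$, the set $\mathbbmtt{P}(\mathfrak{B}_\star)$ is Lebesgue-null on the tangent line, so the pushforwards of the singular parts remain singular with respect to $\rd x_1$ and the AC densities can indeed be read off from the sandwich argument without interference from singular contributions. Once this compatibility is secured, the remainder of the argument is a standard Lebesgue-differentiation computation.
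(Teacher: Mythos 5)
Your overall skeleton is the same as the paper's (the cone sandwich from Proposition \ref{tangentfort} around $x_0$, the Lebesgue--point hypotheses \eqref{themute}, and the fact that $\bJ_{\rho_0},\bL_{\rho_0}$ are constant), but there is a genuine gap in how you dispose of the singular parts. The quantities in \eqref{lacata} are densities at the \emph{specific} point $x_{0,1}$ of the pushforwards of the \emph{full} measures $2\tilde\upzeta_\star\pm(\tilde\upmu_{\star,1,1}-\tilde\upmu_{\star,2,2})$ and $\tilde\upmu_{\star,1,2}$, whereas your sandwich only computes the density at $x_{0,1}$ of $\bP_\sharp$ of the absolutely continuous parts. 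To pass from one to the other you must show that $\bP_\sharp$ of the singular parts has \emph{zero density at $x_{0,1}$}. Your justification --- that these pushforwards are singular with respect to $\rd x_1$ because $\bP(\mathfrak B_\star)$ is Lebesgue--null --- only yields zero density at Lebesgue--a.e.\ point of the line, not at the prescribed point $x_{0,1}$: a purely atomic (hence singular) measure can perfectly well have density $1$ at a chosen point. The missing step is exactly where the hypothesis $x_0\notin\mathfrak B_\star$ must be used quantitatively: as in \eqref{exemplaire} it gives $D_\lambda(\upnu_\star^{s})(x_0)=0$, and the same cone sandwich \eqref{encadrement}, applied to $\upnu_\star^{s}$, gives $\frac{1}{2r}\,\bP_\sharp(\tilde\upnu_\star^{s})\left(\mathcal I_r(x_{0,1})\right)\to 0$; since $\upzeta_\star^{s}\le\upnu_\star^{s}$ and $\vert\upmu_{\star,i,j}^{s}\vert\le 2\upnu_\star^{s}$, this kills all singular contributions at $x_{0,1}$ and identifies the densities of $\bN_{\rho_0}$, $\bJ_{\rho_0}$, $\bL_{\rho_0}$ there with the right-hand sides of \eqref{lacata}. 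This is in substance how the paper argues (it runs the sandwich on the full measures and uses \eqref{exemplaire} to replace them by their $ac$ densities); your proposal has all the ingredients but never makes this pointwise estimate.

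The same defect affects your proof that $x_{0,1}\notin F_{\rho_0}$. The fibre argument (the vertical line meets $\mathfrak S_\star\cap Q_{\rho_0}(x_0)$ only at $x_0$, and $\upzeta_\star(\{x_0\})=0$) shows only that $\bN_{\rho_0}$ has no atom at $x_{0,1}$. What the proposition must deliver --- and what is used in the proof of Proposition \ref{diff} --- is the stronger statement $D(\bN_{\rho_0}^{s})(x_{0,1})=0$, i.e.\ that the Lebesgue--singular part has vanishing \emph{density} at $x_{0,1}$, so that the limit of the averages of $\bN_{\rho_0}$ computed above is realized by $N_{\rho_0}$; ``no atom'' does not imply this. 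Again the cure is the quantitative control of $\bP_\sharp(\tilde\upnu_\star^{s})$ at $x_{0,1}$ described above, combined with your (correct) identity $\bN_{\rho_0}+\bL_{\rho_0}=4\tilde\upzeta_\star^{x_1}$ or with the paper's direct computation. Finally, your claim that running the sandwich ``around translates of $x_{0,1}$'' yields the Lebesgue--point property of $N_{\rho_0}$ is not convincing as stated, since the tangent--cone and Lebesgue--point information is available only at $x_0$ itself, not at the nearby fibres; this point deserves a more careful argument (the paper is admittedly brief there as well).
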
 
  
  \begin{proof}    
 We  go back to the definition \eqref{bordeaux} of $\mathfrak E_\star$. Since $x_0\not \in \mathfrak E_\star$, and hence  $x_0 \not \in \mathfrak B_\star$ (see \eqref{perpendicularite}), we have  by definition of the set $\mathfrak B_\star$
\begin{equation}
\label{exemplaire}
D_\lambda (\upnu_\star)(x_0)= \underset {r \to 0} \lim \frac{\upnu_\star \left(\D^2(x_0, r)\right)}{\lambda \left(\D^2(x_0, r)\right)} <+ \infty {\rm \ and \ } 
D_\lambda (\upnu_\star^{s})(x_0)= \underset {r \to 0} \lim \frac{\upnu_\star \left(\D^2(x_0, r)\right)}{\lambda \left(\D^2(x_0, r)\right)}=0,
\end{equation}
where $\lambda$ represents the one-dimensional Hausdorff measure on $\mathfrak S_\star$.
On the other hand, since $x_0$ is a regular point, we have, in view of \eqref{densitusone}
$$
 \underset {r \to 0} \lim \frac{\lambda \left(\D^2(x_0, r)\right)}{2r}=1, 
$$
 so that 
 \begin{equation}
\label{mevanwi}
D_\lambda (\upnu_\star)(x_0)=D_\lambda (\upnu_\star^{ac})(x_0)=\underset {r \to 0} \lim \frac{\upnu_\star \left(\D^2(x_0, r)\right)}{2r}<+\infty. 
 \end{equation}
 Turning to the measure $\tilde \upnu_\star^{x_1}$, we have 
 $\displaystyle{\upnu_\star^{x_1}(\mathcal I_r(x_{0, 1}))=\tilde \upnu_\star \left( \mathcal I_r(x_{0, 1}) \times  \mathcal I_r(x_{0, 2})\right).}$   In view of Proposition \ref{tangentfort}, given $ \uptheta>0$, we have, for  
 $r\leq R_{\rm cone}(\uptheta, x_0)$, the inclusion 
$\mathfrak S_\star \cap  \D^2\left(x_0, r\right) \subset  \mathcal C_{\rm one}\left(x_0, \vec e_{x_0}, \uptheta  \right)$. On the other hand, we have also  the chain of inclusions
\begin{equation}
\label{cadran}
 \D^2(x_0, r)\subset \left(\mathcal I_r(x_{0, 1}) \times  \mathcal I_r(x_{0, 2})\right) \cap \mathcal C_{\rm one}\left(x_0, \vec e_{x_0}, \uptheta  \right) \subset \D^2(x_0, \frac{r}{\cos \uptheta}),
\end{equation}
so that combining the previous relations, we are led to the bounds
\begin{equation}
\label{encadrement}
 \upnu_\star \left(\D^2(x_0, r)\right) \leq \upnu_\star^{x_1}(\mathcal I_r(x_{0, 1}))\leq \upnu_\star \left(\D^2(x_0, \frac{r}{\cos \uptheta})\right).
\end{equation}
Letting $\uptheta$ and $r$ go to zero, we deduce from \eqref{exemplaire} and \eqref{encadrement}  the identity
$$
\underset {r\to 0}  \lim \, \frac{\upnu_\star^{x_1}(\mathcal I_r(x_{0, 1}))}{2r}=D_\lambda (\upnu_\star)(x_0)=D_\lambda (\upnu_\star^{ac})(x_0)<+\infty,
$$
and similarily, for $i, j=1, 2$
\begin{equation*}
\label{caltech}
\left \{
\begin{aligned}
\underset {r\to 0}  \lim \, \frac{\upzeta_\star^{x_1}(\mathcal I_r(x_{0, 1}))}{2r}&=D_\lambda (\upzeta_\star^{ac})(x_0)
=\Uptheta_\star({x_0})  {\rm \ and \ } \\
\underset {r\to 0}  \lim \, \frac{\upmu_{\star, i, j}^{x_1}(\mathcal I_r(x_{0, 1}))}{2r}&=
D_\lambda (\upmu_{\star, i, j}^{ac})(x_0)=\bm_{\star, i, j}(x_0)<+\infty.
\end{aligned}
\right.
\end{equation*}
It follows that, in view of the  definition \eqref{caminare} of $\bN_{\rho_0}$, we have  
\begin{equation*}
\label{dickens}
\begin{aligned}
\underset {r\to 0}  \lim \, \frac{\bN_{\rho_0}(\mathcal I_r(x_{0, 1}))}{2r}&=2D_\lambda (\upzeta_\star^{ac})(x_0)-D_\lambda(\upmu_{2,2}^{ac})(x_0)+D_\lambda (\upmu_{1, 1}^{ac})(x_0)\in \R \\
&=2\Uptheta_{\star}^{ac}(x_0)-\bm_{\star, 2, 2}^{ac}(x_0)+\bm_{\star, 1, 1}^{ac}(x_0).
\end{aligned}
\end{equation*}
We deduce  that $x_{0, 1} \not \in F_{\rho_0}$ and that we have 
$$
\underset {r\to 0}  \lim \, \frac{\bN_{\rho_0}^{ac}(\mathcal I_r(x_{0, 1}))}{2r}=\underset {r\to 0}  \lim \, \frac{\bN_{\rho_0}(\mathcal I_r(x_{0, 1}))}{2r}=2\Uptheta_{\star}^{ac}(x_0)-\bm_{\star, 2, 2}^{ac}(x_0)+\bm_{\star, 1, 1}^{ac}(x_0)..
$$
 We prove using similar arguments  that $x_{0, 1}$ is a Lebesgue point for the map $N_{\rho_0}$, so that the first identity in \eqref{lacata} is established. Turning to the maps  $J_{\rho_0}$ and $L_{\rho_0}$ we observe that, since  these maps are constant, $x_{0, 1}$ is obviously a Lebesgue point for them. The two last identities in \eqref{lacata} are established using the same  arguments.
\end{proof}

We compute next $J_{\rho_0}(x_0)$ and $N_{\rho_0}(x_0)$ in a different way.
  
   \begin{proposition}
   \label{diff}
   Let $x_0$ and $\rho_0>0$ be as in Proposition \ref{constantin}. We have 
    \begin{equation}
    \label{diff0}
    \left \{
    \begin{aligned}
    &J_{x_0, \rho_0}(s)=0  {\rm \ for \ } s \in  (x_{0, 1}-\rho_0, x_{0, 1}+\rho_0)   {\rm \ and  \  }\\
     &N_{x_0, \rho_0}(x_{0, 1})=0. 
    \end{aligned}
    \right. 
    \end{equation}
\end{proposition}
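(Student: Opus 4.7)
The strategy is to exploit the fact that the conclusions of Lemma \ref{credible}, which bound certain integrals by $\sin\uptheta$ times the absolutely continuous energy on a disc, hold for \emph{every} $\uptheta \in (0, \pi/8]$. Since the quantities we want to show vanish are independent of $\uptheta$, we can send $\uptheta \to 0$ and conclude. The auxiliary bound \eqref{auxiliary}, which gives $\upnu_\star^{ac}(\D^2(x_0, 2r)) \leq 2Kr$ for all small $r$ (with $K = K(x_0)$), is what allows us to absorb the $r$-dependence.

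For the first identity, recall that Proposition \ref{scratchiness} (applied via Proposition \ref{assos}) tells us that $J_{\rho_0}$ is the \emph{constant} function equal to some number $J_{0,\rho_0}$ on $\mathcal I_{\rho_0}(x_{0,1})$. Fix $\uptheta \in (0, \pi/8]$ and take $r \leq \tfrac{1}{2}\inf\{R_{\rm cone}(\uptheta, x_0), \rho_0\}$. Lemma \ref{credible} produces $\varrho_r \in [r, 2r]$ with
\[
2\varrho_r \,|J_{0,\rho_0}| = \left|\int_{x_{0,1}-\varrho_r}^{x_{0,1}+\varrho_r} J_{\rho_0}(s)\, \rd s\right| \leq 16 \sin\uptheta \, \upnu_\star^{ac}(\D^2(x_0, 2r)) \leq 32 K r \sin\uptheta.
\]
Dividing by $2\varrho_r \geq 2r$ yields $|J_{0,\rho_0}| \leq 8K \sin\uptheta$. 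Since $J_{0,\rho_0}$ does not depend on $\uptheta$, letting $\uptheta \to 0$ gives $J_{0,\rho_0} = 0$, which is the first claim.

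For the second identity, I use the second bound in Lemma \ref{credible}: for the same $\varrho_r \in [r,2r]$,
\[
\left|\bN_{\rho_0}\bigl(\mathcal I_{\varrho_r}(x_{0,1})\bigr)\right| = \left|\int_{x_{0,1}-\varrho_r}^{x_{0,1}+\varrho_r} \rd \bN_{\rho_0}\right| \leq 8 \sin\uptheta \, \upnu_\star^{ac}(\D^2(x_0, 2r)) \leq 16 K r \sin\uptheta.
\]
Since $x_0 \notin \mathfrak E_\star$, the arguments in the proof of Proposition \ref{constantin} show that the full limit
$\lim_{r \to 0} \bN_{\rho_0}(\mathcal I_r(x_{0,1}))/(2r)$ exists and equals $N_{\rho_0}(x_{0,1})$; in particular, applying this with $r$ replaced by $\varrho_r \in [r, 2r]$ (so $\varrho_r \to 0$ as $r \to 0$), we obtain
\[
N_{\rho_0}(x_{0,1}) = \underset{r \to 0}{\lim} \frac{\bN_{\rho_0}\bigl(\mathcal I_{\varrho_r}(x_{0,1})\bigr)}{2\varrho_r}.
\]
Combining with the previous bound and using $\varrho_r \geq r$, we find $|N_{\rho_0}(x_{0,1})| \leq 8K \sin\uptheta$, and letting $\uptheta \to 0$ yields $N_{\rho_0}(x_{0,1}) = 0$.

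The only delicate point in this argument is the use of the Lebesgue-type limit for $\bN_{\rho_0}(\mathcal I_{\varrho_r}(x_{0,1}))/(2\varrho_r)$ in the second step, since the radii $\varrho_r$ are chosen by a mean-value argument rather than being $r$ itself; however, because $\varrho_r / r \in [1, 2]$ is bounded and the limit at $x_{0,1}$ exists as a genuine (two-sided) limit from Proposition \ref{constantin}, the value of the limit is unaffected by this choice. All the remaining ingredients—the constancy of $J_{\rho_0}$, the tangent-cone estimate of Lemma \ref{credible}, and the linear bound \eqref{auxiliary}—are by now available, so the proof is essentially a direct assembly.
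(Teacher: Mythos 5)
Your proof is correct and follows essentially the same route as the paper's. The paper packages the $\uptheta\to 0$ argument in an intermediate Lemma (its Lemma \ref{carmelide}, which deduces that the averages $\frac{1}{2\varrho_r}\int\rd\bN_{\rho_0}$ and $\frac{1}{2\varrho_r}\int J_{\rho_0}$ tend to $0$), and for $N_{\rho_0}(x_{0,1})$ it explicitly splits $\bN_{\rho_0}$ into its absolutely continuous and singular parts, using $x_{0,1}\notin F_{\rho_0}$ (vanishing density of the singular part) together with the Lebesgue point property of $N_{\rho_0}$; you instead appeal directly to the fact, established in the proof of Proposition \ref{constantin}, that the full density limit $\lim_{r\to 0}\bN_{\rho_0}(\mathcal I_r(x_{0,1}))/(2r)=N_{\rho_0}(x_{0,1})$ exists, which is logically equivalent. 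Your remark about replacing $r$ by $\varrho_r\in[r,2r]$ in the density limit is the right thing to say and is correct since the limit is a genuine limit as $r\to 0$. The constants you carry ($8K\sin\uptheta$ rather than $16K\sin\uptheta$ after dividing $32Kr\sin\uptheta$ by $2\varrho_r\ge 2r$) are slightly off, but this is immaterial since they all vanish as $\uptheta\to 0$.
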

  In order to proof Proposition \ref{diff}, we rely on an  intermediate result:

  \begin{lemma}
  \label{carmelide}
   Let $x_0 \in \mathfrak S_\star\setminus \mathfrak E_\star$  and $\rho_0>0$ be given by  Proposition \ref{assos}. Choose the orthonormal basis so that   $\be_1=\vec e_{x_0}$ is a unit tangent vector  to $\mathfrak S_\star$ at $x_0$.  For $<r<\rho_0$,  let $\varrho_r>0$ be given by Lemma \ref{credible}. Then,  we have 
 \begin{equation}
    \label{caroual20}
  \underset{r\to 0} \lim   \,  \frac{1}{2\varrho_r} \int_{x_{0,1}-\varrho_r}^{x_{0, 1}+\varrho_r}  \rd \bN_r(s)=0 {\rm \ and \ }  
   \underset{r\to 0}  \lim  \,   \frac{1}{2\varrho_r} \int_{x_{0,1}-\varrho_r}^{x_{0, 1}+\varrho_r}  J_r(s) \rd s=0.
   \end{equation}
\end{lemma}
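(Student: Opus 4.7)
The plan is to reduce the two limits in \eqref{caroual20} to Lemma \ref{credible} by letting the cone aperture $\uptheta$ tend to zero. Throughout I read the integrals in \eqref{caroual20} as integrals of the measure $\bN_{\rho_0}$ and of the density $J_{\rho_0}$ produced by Proposition \ref{assos} over the shrinking window $(x_{0,1}-\varrho_r, x_{0,1}+\varrho_r)$; the subscript $r$ in $\bN_r$ and $J_r$ simply records that window, not a relocalization of the underlying measures.

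The key observation is that the radius $\varrho_r \in [r,2r]$ produced in the proof of Lemma \ref{credible} is selected by a mean-value argument applied to the $\uptheta$-independent bounded functions $J_{1,\rho_0}, L_{1,\rho_0}, J_{\rho_0}$ and to the absolutely continuous part of $\bN_{\rho_0}$. Consequently the same $\varrho_r$ is admissible for every $\uptheta \in (0, \pi/8]$, subject only to the restriction $r \leq r_\uptheta := \frac{1}{2}\inf\{R_{\rm cone}(\uptheta,x_0),\rho_0\}$. Feeding this $\varrho_r$ into the two bounds of Lemma \ref{credible} and invoking the local linear growth estimate \eqref{auxiliary}, $\upnu_\star^{ac}(\D^2(x_0,2r)) \leq 2Kr$, one obtains, using $\varrho_r \geq r$,
\begin{equation*}
\frac{1}{2\varrho_r}\left|\int_{x_{0,1}-\varrho_r}^{x_{0,1}+\varrho_r}\rd\bN_{\rho_0}\right| \leq 8K\sin\uptheta,
\qquad
\frac{1}{2\varrho_r}\left|\int_{x_{0,1}-\varrho_r}^{x_{0,1}+\varrho_r} J_{\rho_0}(s)\,\rd s\right| \leq 16K\sin\uptheta,
\end{equation*}
uniformly for all $0<r\leq r_\uptheta$.

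Taking $\limsup_{r\to 0^+}$ in these inequalities bounds each of the two $\limsup$s in \eqref{caroual20} by a fixed multiple of $\sin\uptheta$. Since $\uptheta \in (0,\pi/8]$ was arbitrary, letting $\uptheta \to 0^+$ delivers the two vanishing limits in \eqref{caroual20}. I expect no real obstacle: the statement is essentially a direct consequence of Lemma \ref{credible}, the only new observation being that the cone angle $\uptheta$ may be driven to zero independently of the choice of $\varrho_r$ made by the mean-value argument.
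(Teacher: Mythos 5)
Your proof is correct and follows the paper's argument exactly: combine the estimates of Lemma \ref{credible} with the linear growth bound \eqref{auxiliary}, take $\limsup_{r\to 0}$ to bound both averages by a fixed multiple of $\sin\uptheta$, and then let $\uptheta\to 0$. Your explicit remark that the mean-value selection of $\varrho_r$ in Lemma \ref{credible} involves only $\uptheta$-independent quantities --- so that the left-hand sides of \eqref{caroual20} do not secretly depend on $\uptheta$ --- is a welcome clarification of a point the paper leaves implicit, but it does not change the substance of the argument.
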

\begin{proof} 
    For any given $\uptheta \in [0, \frac{\pi}{8}] $, and $0<r\leq \inf\{ \rho_0, \frac{1}{2}R_{\rm cone}(\uptheta, x_0)\}$, we deduce,  combining   \eqref{auxiliary} with   \eqref{varant}, that,  \begin{equation}
  \label{caroual} 
  \left  \vert  \int_{x_{0,1}-\varrho_r}^{x_{0, 1}+\varrho_r}  \rd \bN_r \right \vert + 
   \left  \vert  \int_{x_{0,1}-\varrho_r}^{x_{0, 1}+\varrho_r}  J_r(s) \rd s \right \vert\leq 24\sin \uptheta \,  \upnu_\star^{ac} \left(\D^2(x_0, 2r)\right)
   \leq 48 Kr \, \sin \uptheta,  
   \end{equation} 
   so that 
   \begin{equation}
   \label{caroual2}
     \left  \vert \frac{1}{2\varrho_r} \int_{x_{0,1}-\varrho_r}^{x_{0, 1}+\varrho_r}  \rd \bN_r  \right \vert + 
   \left  \vert \frac{1}{2\varrho_r} \int_{x_{0,1}-\varrho_r}^{x_{0, 1}+\varrho_r}  J_r(s) \rd s \right \vert\leq 48K\sin \uptheta.
   \end{equation}
   We first  let $r\to 0$, so that $\varrho_r  \to 0$ as $r \to 0$, and then   
   let  $\uptheta \to 0$ in \eqref{caroual2}, which yields \eqref{caroual}.
   \end{proof}
 
  \noindent
 {\it Proof of Proposition \ref{diff} completed}. We first consider $J_{\rho_0}$. 
  We already know that the function $J_{\rho_0}$ is constant on $\mathcal I_{\rho_0}(x_{0, 1})$, so that 
 $$\displaystyle{ \frac{1}{2\varrho_r} \int_{x_{0,1}-\varrho_r}^{x_{0, 1}+\varrho_r}  J_r(s) \rd s}=J_r(x_{0, 1}),$$
 we deduce therefore  from  the second relation in \eqref{caroual20} that $J_{\rho_0}(x_{0, 1})=0$.  
 
 \smallskip
 We turn to $N_{\rho_0}$.  Since $x_0\not \in F_{\rho_0}$, we have $D_\lambda(\bN_{\rho_0}^s)(x_{0, 1})=0$,  that is 
 $$\underset{r\to 0} \lim   \,  \frac{\bN_{\rho_0}^{s}(\mathcal I_{\varrho_r}(x_{0, 1}))}{2\varrho_r}=0.$$
 Combining with the first identity in \eqref{caroual20}, we are led to  
 \begin{equation}
   \underset{r\to 0} \lim   \,  \frac{1}{2\varrho_r} \int_{x_{0,1}-\varrho_r}^{x_{0, 1}+\varrho_r}  N_{\rho_0}(s)\rd s=  \underset{r\to 0} \lim   \,  \frac{1}{2\varrho_r} \int_{x_{0,1}-\varrho_r}^{x_{0, 1}+\varrho_r}  \rd \bN_r^{ac}(s)=0.  
 \end{equation}  
 Since $x_{0, 1}$ is a Lebesgue point for $N_{\rho_0}$, we derive that $N_{\rho_0}(x_{0, 1})=0$, so that the proof is complete.

 \subsubsection{Proof of Proposition  \ref{discrepvec6} completed}
Since $x_0 \in  \mathfrak S_\star\setminus \mathfrak E_\star$, we are in position to apply Propositions \ref{constantin} and \ref{diff}. 
Combining \eqref{lacata} with \eqref{caroual20}, we obtain \eqref{discrepvec66} and the proof is complete.
\qed
\subsubsection{Change of orthonormal basis for the Hopf differential }
  \label{cdc}
 Recall that we have assumed in Proposition \ref{discrepvec6} that the orthonormal basis is chosen so that $\be_1$ is tangent to $\mathfrak S_\star$ at $x_0$.  However, the  definition of the Hopf differential clearly depends on the choice of coordinates, and we will need to compute it in various basis, for instance a moving frame on $\mathfrak S_\star$ or a frame related to polar coordinates.  For that purpose, and for a given angle $\theta \in \R$, let $(\vec {\bf  e}_1^{\,  \theta} , \be_2^{\, \theta})$  be a new orthonormal basis such that 
 \begin{equation}
 \label{cdc2}
 \left\{
 \begin{aligned}
 \betheta_1&=\cos \theta \, \be_1+\sin\theta \, \be_2\\
 \betheta_2&=-\sin \theta  \, \be_1+\cos \theta \, \be_2. 
 \end{aligned}
 \right.
 \end{equation}
Let $(x_{\theta,1}, x_{\theta, 2})=(\cos \theta \, x_1+\sin \theta \,  x_2, -\sin \theta\, x_1+ \cos \theta\, x_2)$ denote the  coordinates related  to the new basis and $\omega_{\eps, \theta} $ the corresponding Hopf differential. Then, for any map $u: \Omega \to \R^2$, we have the identities
$u_{x_{\theta, 1}} =u_{x_1} \cos \theta +u_{x_2} \sin \theta$ and $u_{x_{\theta, 2}} =-u_{x_1} \sin  \theta +u_{x_2} \cos \theta$, so that 
\begin{equation}
\label{transformer}
\left\{
\begin{aligned}
 \vert u_{x_{\theta, 1} }\vert^2-\vert u_{x_{\theta, 2} }\vert^2&=\cos 2 \theta \left ( \vert u_{x_1} \vert^2-\vert u_{x_2} \vert^2 \right)  +2\sin 2 \theta \,  u_{x_1}\cdot u_{x_2} \\
2 u_{x_{\theta, 1} }\cdot u_{x_{\theta, 2}}&= -\sin 2 \theta \left ( \vert u_{x_1} \vert^2-\vert u_{x_2} \vert^2 \right) +2 \cos  2 \theta u_{x_1}\cdot u_{x_2}.
\end{aligned}
\right.
\end{equation}
We are hence led to the transformation law
\begin{equation}
\label{transformer2}
\left\{
\begin{aligned}
\omega_{\eps, \theta} =(\cos 2 \theta +i \sin 2  \theta) \omega_\eps &=\exp (2 i \theta) \omega_\eps {\rm \ and \ } \\
\omega_{\star, \theta} =(\cos 2 \theta +i \sin 2  \theta) \omega_\star&=\exp (2 i \theta) \omega_\star.
\end{aligned}
\right.
\end{equation}
 It follows in particular from the above relations that, if the limits \eqref{gloubi} and \eqref{boulga} exist for a given orthonormal  basis, then they exist also for any other one. 
\subsubsection{Proof of Lemma \ref{starac} completed}
In view of \eqref{gdansk0}, we may write, in the basis $(\be_1, \be_2)$
\begin{equation}
\label{dantzig}
\omega_\star^{ac}=\left((\bm_{\star, 1, 1}-\bm_{\star, 2, 2})-2i \bm_{1, 2} \right)\rd \lambda.
\end{equation}
Next let $x_0 \in \mathfrak S_\star\setminus \mathfrak E_\star$, $\vec e_{x_0}$ be  a tangent vecteur at $x_0$ to $\mathfrak S_\star$, so that the angle of $\be_1$ with  $\vec e_{x_0}$  is given by $\upgamma_{\star}(x_0)\in [-\pi\slash2, \pi\slash2]$. In view of the notation \eqref{cdc2}, we have 
$\vec e_{x_0}=\be_1^{\, \upgamma_{\star}(x)}$. It follows from \eqref{transformer2} that 
\begin{equation}
\label{laberie}
\omega_{\star, \upgamma_\star(x)}^{ac}=\exp (2 i \upgamma_{\star}) \omega_\star^{ac}=\exp (2 i \upgamma_{\star}) \left((\bm_{\star, 1, 1}-\bm_{\star, 2, 2})-2i \bm_{1, 2} \right)\rd \lambda.
\end{equation}
Appying Proposition \ref{discrepvec6} at $x_0$ in the basis  $\left(\be_1^{\, \upgamma_{\star}(x)}, \be_2^{\, \upgamma_{\star}(x)}\right)$,  we are led to the identity
\begin{equation*}
\exp (-2 i \upgamma_{\star}) \left((\bm_{\star, 1, 1}(x_0)-\bm_{\star, 2, 2}(x_0))-2i \bm_{1, 2}(x_0) \right)
=-2\Uptheta_\star(x_0), 
\end{equation*}
so that, for any $x \in \mathfrak S_\star \setminus \mathfrak E_\star$, we have
\begin{equation*}
(\bm_{\star, 1, 1}(x)-\bm_{\star, 2, 2}(x))-2i \bm_{1, 2}(x)=-2\exp (2 i \upgamma_{\star}(x)) \Uptheta(x).
\end{equation*}
Going back to \eqref{dantzig}, we obtain hence that
$$
\omega_\star^{ac}=-2\exp (-2 i \upgamma_{\star}(x))\Uptheta\rd \lambda=-2\exp (-2 i \upgamma_{\star})\upzeta_\star^{ac}.
$$
The proof is hence complete.

\section{Monotonicity for $\upzeta_\star$ and its consequence}

The purpose of present section is to establish Proposition \ref{psycho}.
\subsection{Proof of Lemma \ref{passing} }
Since $\upzeta_{\eps_n} \rightharpoonup \upzeta_\star$, as  $n \to +\infty$, weakly in the sense of measures,   we have for any  Borel  set $A$ such that $\upzeta_\star (\partial A)=0$, the convergence $\upzeta_{\eps_n} (A)\to \upzeta_\star (A)$, as $n \to +\infty$. Since 
$\upnu_\star (\partial \D^2(x_0,r))=0$ for almost every $r \in (0, \rho)$, we have 
\begin{equation}
\left\{
\begin{aligned}
&\upzeta_{\eps_n} (\D^2(x_0,r_i)) \underset{ n \to + \infty} \to \upzeta_\star (\D^2(x_0,r_i)) {\rm \ for \  almost \ every \ } r_i \in (0, \rho), \
 i=0, 1 {\rm \ and \ } \\
&\int_{D^2(x_0,r_1)\setminus \D^2(x_0,\rho_0)} \frac {1}{r} \rd \mathscr N_\eps  
\underset{ n \to + \infty}\to 
\int_{D^2(x_0,r_1)\setminus \D^2(x_0,\rho_0)} \frac {1}{r} \rd \mathscr N_\star.
\end{aligned}
\right.
\end{equation}
Passing to the limit in \eqref{degrace}, we obtain the identity \eqref{monomaniaque}.
\subsection{First properties of $\mathscr N_\star$} 
Let $\upmu_{\star, \theta, \theta}$ and  $\upmu_{\star, r, r}$ be defined by \eqref{radicaux} on $\D^2(x_0, \rho)$.  We denote by $\upmu_{\star, \theta, \theta}^{ac}$ and  $\upmu_{\star, r, r}^{ac}$  the absolutely continuous parts of these measures with respect to the 
$\mathcal H^1$-Hausdorff  measure $\lambda$  on $\mathfrak S_\star\cap \D^2(x_0,\rho)$. We prove in this subsection:

\begin{lemma}
\label{dingo}
We have  the relations 
\begin{equation}
\label{dingo2}
\mathscr N_\star^{ac} =\left(2\upzeta_\star^{ac}-r^{-2}\upmu_{\star, \theta , \theta }^{ac}+\upmu_{\star, r, r}^{ac}\right)\rest \D^2(x_0,\rho)=
4\sin^2 \left( \upgamma_{\star}-\theta\right)\left(\upzeta_\star^{ac}\rest \D^2(x_0,\rho)\right)\geq 0,
\end{equation}
where $\theta$ denotes the polar angle with respect to the $x_0$.
\end{lemma}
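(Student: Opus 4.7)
My approach is to identify $\mathscr N_\star^{ac}$ with the absolutely continuous part of the Hopf differential written in the radial frame, and then apply Lemma \ref{starac} in that frame.

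First, a direct combination of the two identities in \eqref{radicaux} yields, as measures on $\D^2(x_0,\rho)$,
\begin{equation*}
\upmu_{\star, r, r} - r^{-2}\upmu_{\star, \theta, \theta} = \cos(2\theta)\,(\upmu_{\star, 1, 1}-\upmu_{\star, 2, 2}) + 2\sin(2\theta)\,\upmu_{\star, 1, 2} = \mathrm{Re}\bigl(e^{2i\theta}\omega_\star\bigr).
\end{equation*}
By the transformation law \eqref{transformer2} recalled in Subsection \ref{cdc}, the quantity $e^{2i\theta}\omega_\star$ is exactly $\omega_{\star,\theta}$, the limiting Hopf differential computed in the orthonormal frame $(\be_1^\theta,\be_2^\theta)$ whose first basis vector is the radial direction $\vec e_r$ at $x=x_0+(r\cos\theta, r\sin\theta)$. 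Hence $\mathscr N_\star = 2\upzeta_\star + \mathrm{Re}(\omega_{\star,\theta})$ on $\D^2(x_0,\rho)$, and since multiplication by the bounded measurable function $e^{2i\theta}$ commutes with the Lebesgue decomposition with respect to $\rd\lambda = \mathcal H^1\rest \mathfrak S_\star$, we obtain
\begin{equation*}
\mathscr N_\star^{ac} = 2\upzeta_\star^{ac} + \mathrm{Re}(\omega_{\star,\theta}^{ac})\quad \text{on } \D^2(x_0,\rho).
\end{equation*}

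Second, I apply Lemma \ref{starac} in the rotated frame $(\be_1^\theta,\be_2^\theta)$: for $\rd\lambda$-almost every point $x\in\mathfrak S_\star\setminus \mathfrak E_\star$, the tangent vector $\vec e_x$ makes angle $\upgamma_\star(x)$ with $\be_1$, and hence angle $\upgamma_\star(x)-\theta(x)$ with $\be_1^{\theta(x)}$. The lemma therefore yields
\begin{equation*}
\omega_{\star,\theta}^{ac} = -2\exp\bigl(-2i(\upgamma_\star-\theta)\bigr)\upzeta_\star^{ac}.
\end{equation*}
Substituting and using the elementary identity $1-\cos 2\alpha = 2\sin^2\alpha$ gives
\begin{equation*}
\mathscr N_\star^{ac} = 2\upzeta_\star^{ac}\bigl(1-\cos(2(\upgamma_\star-\theta))\bigr) = 4\sin^2(\upgamma_\star-\theta)\,\upzeta_\star^{ac},
\end{equation*}
which is precisely \eqref{dingo2}; non-negativity follows since $\upzeta_\star^{ac}\geq 0$.

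The main (and rather mild) obstacle is justifying the pointwise use of Lemma \ref{starac} in a frame whose angle depends on the base point. This can be handled either by applying the change-of-basis identity \eqref{transformer2} directly to the absolutely continuous part of $\omega_\star$, or by noting that Lemma \ref{starac}, being a scalar identity valid $\rd\lambda$-a.e., allows the orthonormal basis to vary pointwise. Outside $\mathfrak S_\star$ the absolutely continuous parts of all measures involved vanish, so both sides of \eqref{dingo2} are trivially zero there, and values on the $\rd\lambda$-null set $\mathfrak E_\star$ are irrelevant.
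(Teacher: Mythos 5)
Your proof is correct and follows essentially the same route as the paper: both identify $\upmu_{\star,r,r}^{ac}-r^{-2}\upmu_{\star,\theta,\theta}^{ac}$ with the real part of the Hopf differential in the rotated (radial) frame via \eqref{transformer2}, apply Lemma \ref{starac} pointwise $\rd\lambda$-a.e., and conclude with $1-\cos 2\alpha=2\sin^2\alpha$. The only difference is cosmetic (you argue at the level of measures, the paper at the level of the densities $\bm_{\star,r,r}$, $\bm_{\star,\theta,\theta}$ with respect to $\rd\lambda$), and your handling of the point-dependent rotation angle matches what the paper does implicitly.
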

\begin{remark}
{\rm  
Let $\nabla r$ denote the gradient of the function $r: (x_1, x_2)\mapsto \sqrt{x_1^2+x_2^2}$, so that  
$\nabla r(x)=(x_1\slash r, x_2 \slash r)$. For given 
$x\in (\mathfrak S_\star \setminus \mathfrak E_\star)\cap \D^2(x_0,\rho)$, we denote by
$\nabla^\perp r(x)$, the projection of $\nabla r(x)$ onto the  othogonal line to the tangent to $\mathfrak S_\star$ at the point $x$.  We compute
$$ \vert \nabla^\perp r(x) \vert =\vert \sin\left( \upgamma_{\star}(x)-\theta\right)\vert. $$
Formula \eqref{dingo2} can therefore be rewritten as 
\begin{equation}
\label{dingo3}
\mathscr N_\star^{ac}= \vert \nabla^\perp r \vert ^2 \upzeta^{ac}_\star \geq 0.
\end{equation}
}
\end{remark}
\begin{proof}[Proof of Lemma \ref{dingo}]
We may write
\begin{equation}
\label{moulin}
\upmu_{\star, r, r}^{ac}=\bm_{\star, r, r} \rd \lambda  {\rm \ and \ }
r^2\upmu_{\star, \theta , \theta }^{ac}=r^2\bm_{\star, \theta , \theta } \rd \lambda, 
\end{equation}
where, similar to \eqref{radicaux}, we have set, for $x\in (\mathfrak S_\star \setminus \mathfrak E_\star) \cap \D^2(x_0,\rho)$, 
\begin{equation}
\label{radicaux2}
\left\{
\begin{aligned}
\bm_{\star, r, r}(x)&=\cos^2 \theta (x)  \bm_{\star, 1, 1}(x) + \sin^2 \theta(x) \bm_{\star, 2, 2}(x)+2\sin\theta \cos \theta (x)\bm_{\star, 1, 2} (x) \\
r^{-2} \bm_{\star, \theta, \theta}(x)&=\sin^2 \theta (x)  \bm_{\star, 1, 1}(x)+
 \cos^2 \theta(x) \bm_{\star, 2, 2}(x)-2\sin \theta(x)  \cos \theta(x)\bm_{1, 2}(x).
 \end{aligned}
\right.
 \end{equation}
where $(r, \theta)$ denote   the polar coordinates of $x=(x_1,x_2)$, so that $x_{1}-x_{0,1}=r\cos \theta$ and $x_{2}-x_{0,2}=r \sin \theta$.
We have, in view of Proposition \ref{starac} and relations \eqref{transformer2}
\begin{equation}
\label{molina}
\omega_{\star, \theta}^{ac}=-2\exp (2 i (\upgamma_{\star}(x)-\theta))\upzeta_\star^{ac}.
\end{equation}
Since  $\omega_{\star, \theta_0}^{ac}$ is absolutely continuous with respect to $\rd \lambda$, we may write
 $\omega_{\star, \theta_0}^{ac}=\mathbbm w_{\star, \theta_0}\rd \lambda$, where $\mathbbm w_{\star, \theta}$ is a function on $\mathfrak S_\star\cap \D^2(x_0,\rho)$. Concerning the measure  $\mathscr N_\star^{ac}$, we have
 \begin{equation}
 \label{mordred} 
 \mathscr N_\star^{ac}=\left(2\Uptheta-r^{-2}\bm_{\star, \theta , \theta }  +\bm_{\star, r, r} \right)\rd \lambda.
 \end{equation}
It follows from the definitions \eqref{radicaux2} and \eqref{moulin}, that  we have the identity
\begin{equation}
\label{moulinsart}
 \mathbbm w_{\star, \theta}(x)=\left (\bm_{\star, r, r}(x)-r^{-2} \bm_{\star, \theta, \theta}(x)\right) -
 2i r^{-1} \bm_{\star, r, \theta}(x).
 \end{equation}
Combining \eqref{molina} and \eqref{moulinsart}, we are hence led to 
\begin{equation*}
\bm_{\star, r, r}(x)-r^{-2} \bm_{\star, \theta, \theta}(x)=-2\cos \left( \upgamma_{\star}(x)-\theta\right))\Uptheta_\star(x), 
\end{equation*}
so that 
\begin{equation*}
\begin{aligned}
\left(2\Uptheta(x)-r^2\bm_{\star, \theta , \theta }(x)  +\bm_{\star, r, r}(x) \right)&
=2(1-\cos \left( \upgamma_{\star}(x)-\theta\right) \Uptheta_\star(x) \\
&=4\sin^2 \left( \upgamma_{\star}(x)-\theta\right)\Uptheta_\star(x).
\end{aligned}
\end{equation*}
Going back to  \eqref{mordred}, we deduce that 
\begin{equation*}
\mathscr N_\star^{ac}=4\sin^2 \left( \upgamma_{\star}-\theta\right)\upzeta_\star^{ac},  
\end{equation*} 
so that \eqref{dingo2} is established.
\end{proof}
\subsection{Integrating on growing  disks}
 Let $\rho>\delta>0$ be given. We introduce and study  in this section  the functions $V$, $F$ and $G_\delta$   defined on the interval 
 $[\delta, \rho]$, by 
 \begin{equation}
 \label{vetg}
 \left\{
 \begin{aligned}
 V(r)&=\upzeta_\star (\D^2 (x_0,r)), \, F(r)=\frac{V(r)}{r}  {\rm \ and \ } \\
 G_\delta(r)&=\int_{D^2(x_0,r)\setminus \D^2(x_0,\delta)} \frac 1 r \rd \mathcal N_\star, {\rm \ for \ } 
 x \in [\delta, \rho].
 \end{aligned}
 \right.
 \end{equation}
 The three  functions  defined in \eqref{vetg} are clearly bounded on the interval  $[\delta, r]$, since 
  $0\leq V(r)  \leq V(\rho)$,  $0\leq F(r) \leq F(\rho) \slash \delta$ and  
  $\vert G_\delta (r) \vert \leq 2\upnu_\star(\D^2(x_0,\rho)) \slash \delta.$ Moreover, the function $V$ is clearly non-decreasing. 
  We will show below that these functions have bounded variation. 
  
   In order to relate these functions and their derivatives  to the measures on $\D^2(x_0, \rho)$ introduced so far,  we  have to eliminate the polar angle $\theta$. For that purpose, we  consider the map $\Uppi:\D^2(x_0,\rho)\setminus \{0\} \to (0, \rho)$ defined  by 
$$
\Uppi(x)=r=\sqrt{(x_1-x_{0,1})^2+ (x_2-x_{0,2})^2}, {\rm \ for \ }  x=(x_1, x_2)\in \D^2(x_0,\rho),
$$
 so that  $\Uppi^{-1}(\varrho)=\S^1(x_0,\varrho)$, for any $\varrho \in (0, r]$.  We define the measures 
$\check \upzeta_{\star}$  and $\check {\mathcal N_\star}$ on $[\delta, \rho)$ by
$$ \check \upzeta_{\star}=\Uppi_\sharp (\upzeta_\star) {\rm \ and \ } \check {\mathcal N_\star}= \Uppi_\sharp (\mathcal N_\star).$$
More precisely, for any Borel subset of $(\delta, \rho)$, we have 
  \begin{equation}
  \label{defcheck} 
 \check \upzeta_{\star}(A)= \upzeta_\star\left (\Uppi^{-1}(A)\right) {\rm \ and \ } \check {\mathcal N_\star}(A)=\left (\Uppi^{-1}(A)\right).
\end{equation}

  We first  have:

  \begin{lemma}
   The function $V$ and $G_\delta$ have bounded variation. We  have
   \begin{equation}
   \label{leclate}
   \frac{\rd }{\rd r} V= \check \upzeta_{\star}\geq 0, \,
   {\rm \ and \ }   \frac{\rd }{\rd r} G_\delta=r^{-1} \check {\mathcal N_\star} {\rm \ in  \ the \ sense \ of \ distributions} \
    \mathcal D'(\delta, \rho).
   \end{equation}
  \end{lemma}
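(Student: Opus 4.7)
The strategy is to identify $V$ and $G_\delta$ as cumulative distribution functions of suitable push-forward measures on the interval $[\delta,\rho]$, and then to invoke the standard correspondence between BV functions on $\R$ and signed Borel measures on the line.

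First I will handle $V$. The definition of the push-forward in \eqref{defcheck} gives
$$V(r)=\upzeta_\star(\D^2(x_0,r))=\check\upzeta_\star([0,r)),$$
which exhibits $V$ as the left-continuous CDF of the finite positive Borel measure $\check\upzeta_\star$ on $[0,\rho]$. In particular $V$ is non-decreasing and bounded by $M_0$, hence of bounded variation on $[\delta,\rho]$. To obtain $\frac{\rd}{\rd r}V=\check\upzeta_\star$, I will test against $\varphi\in C_c^\infty((\delta,\rho))$ and apply Fubini on $\Omega\times(\delta,\rho)$:
\begin{equation*}
-\int V(r)\varphi'(r)\,\rd r=-\int_\Omega\int_{|x-x_0|}^{\rho}\varphi'(r)\,\rd r\,\rd\upzeta_\star(x)=\int_\Omega\varphi(|x-x_0|)\,\rd\upzeta_\star=\int\varphi\,\rd\check\upzeta_\star,
\end{equation*}
which is precisely the desired identity in $\mathcal D'(\delta,\rho)$, non-negativity being inherited from the positivity of $\upzeta_\star$.

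For $G_\delta$ the argument is parallel once the appropriate finiteness is verified. On the annulus $\D^2(x_0,\rho)\setminus\D^2(x_0,\delta)$ the weight $r^{-2}$ is bounded by $\delta^{-2}$, and each of $\upzeta_\star$ and the components $\upmu_{\star,r,r}$, $\upmu_{\star,\theta,\theta}$ (expressed through \eqref{radicaux}) is dominated by a multiple of $\upnu_\star$ via \eqref{citrus}. Therefore $\mathcal N_\star$ is a signed Radon measure of finite total variation on that annulus, and so is its push-forward $\check{\mathcal N}_\star$ on $(\delta,\rho)$. Since $\tau^{-1}$ is continuous and bounded on $[\delta,\rho]$, the measure $\tau^{-1}\check{\mathcal N}_\star$ is a finite signed Borel measure, and by construction
$$G_\delta(r)=\int_{(\delta,r)}\tau^{-1}\,\rd\check{\mathcal N}_\star(\tau).$$
Thus $G_\delta$ is the signed CDF of this finite measure, hence of bounded variation, and the same Fubini computation as for $V$ yields $\frac{\rd}{\rd r}G_\delta=r^{-1}\check{\mathcal N}_\star$ in $\mathcal D'(\delta,\rho)$.

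No genuine obstacle is anticipated here: the content of the lemma is essentially bookkeeping, transcribing the planar identity \eqref{monomaniaque} into a statement about functions of a single real variable. The real work lies elsewhere, namely in exploiting the non-negativity of the absolutely continuous part of $\mathcal N_\star$ provided by Lemma \ref{dingo} to upgrade this BV identity into a genuine monotonicity statement for $F(r)=V(r)/r$, which is the content of the subsequent step towards Proposition \ref{psycho}.
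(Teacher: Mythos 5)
Your argument is correct and matches the paper's in substance: both identify $V$ and $G_\delta$ as cumulative distributions of push-forward measures and compute the distributional derivative by Fubini. The only cosmetic difference is the order in which you apply the push-forward: the paper first pushes $\upzeta_\star$ and $\mathscr N_\star$ forward to the interval via $\Uppi_\sharp$ and then runs Fubini on $(0,\rho)\times(0,\rho)$, whereas you apply Fubini directly on $\D^2(x_0,\rho)\times(\delta,\rho)$ and only recognize $\check\upzeta_\star$ in the final step; the two are equivalent. Your explicit verification that $\mathscr N_\star$ has finite total variation on the annulus (via $r^{-2}\le\delta^{-2}$ and domination of the $\upmu_{\star,i,j}$'s by $\upnu_\star$) is slightly more careful than the paper's, which simply asserts that $\check\upzeta_\star$ and $r^{-1}\check{\mathscr N}_\star$ are bounded measures, but this is spelling out the same observation rather than a different route.
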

\begin{proof}  We first observe that, as a consequence of the definition \eqref{vetg} ef \eqref{defcheck}, we have the identities
$$V(r)=\check \upzeta_\star (0, r)=\int_0^r \rd  \check \upzeta_\star=\int_0^\rho {\bf 1}_{(0, r)} \rd \check \upzeta_\star.$$
 The desired result \eqref{leclate} is then a direct consequence of Fubini's Theorem. Indeed, let $\varphi\in C_{\rm c} (\delta, \rho)$. We have
\begin{equation}
\label{leluc}
\begin{aligned}
\int_\delta^\rho \varphi'(r) V(r) \rd r&=\int_0^\rho  \varphi'(r) \left[ \int_0^\rho \rd  \check \upzeta_\star\right] \rd r 
=\iint_{(0, \rho)\times (0, \rho)}\varphi'(r) {\bf 1}_{(0, r)} \rd  \check \upzeta_\star\rd r \\
&=\int_{(0, \rho)}\left[\int_0^\rho \varphi'(r) {\bf 1}_{(0, r)} \rd r\right] \rd  \check \upzeta_\star=
-\int_{(0, \rho)}\varphi(r) \rd  \check \upzeta_\star,
\end{aligned}
\end{equation} 
Which establishes the first identity in \eqref{leluc}. The second in proved using the same argument. Finalement, since $\check \upzeta_\star$ and $r^{-1}\check {\mathscr N}_\star$ are bounded measures, it follows that the functions $V$ and $G_\delta$ have bounded variation. 
\end{proof}

  For the proof of Proposition \ref{psycho}, we will make use of the fact that the derivative of $F$ may be written in two different ways, as stated in the next Lemma. 
\begin{lemma} 
\label{doppio}
 The function $F$ has bounded variation. We have the identities
\begin{equation}
\label{derivef}
 \frac{\rd }{\rd r} F= \frac {1}{r}\check \upzeta_{\star}-\frac{1}{r^2} V=\frac{1}{r}\check {\mathcal N_\star},  {\rm \ in  \ the \ sense \ of \ distributions}
 \   \mathcal D'(\delta, \rho).
\end{equation}
\end{lemma}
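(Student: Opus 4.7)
The bounded variation statement is essentially immediate: $V$ is nondecreasing and bounded on $[\delta,\rho]$ (with $0 \leq V(r) \leq V(\rho)$), hence BV, and $r \mapsto 1/r$ is smooth on the compact interval $[\delta,\rho]$. Their product $F = V/r$ is therefore BV on $[\delta,\rho]$.

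For the first identity, I would apply the distributional Leibniz/quotient rule using the identification $V' = \check{\upzeta}_\star$ provided by the first relation in \eqref{leclate}. Concretely, for $\varphi \in C_c^\infty(\delta,\rho)$, decompose $\varphi'(r)/r = (\varphi(r)/r)' + \varphi(r)/r^2$, so that
\begin{align*}
\langle F',\varphi\rangle
&= -\int_\delta^\rho \frac{V(r)\varphi'(r)}{r}\,{\rm d}r
= -\int_\delta^\rho V(r)\,(\varphi/r)'(r)\,{\rm d}r - \int_\delta^\rho \frac{V(r)\varphi(r)}{r^2}\,{\rm d}r \\
&= \int_\delta^\rho \frac{\varphi(r)}{r}\,{\rm d}\check{\upzeta}_\star(r) - \int_\delta^\rho \frac{V(r)\varphi(r)}{r^2}\,{\rm d}r,
\end{align*}
which is exactly the first identity tested against $\varphi$.

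The substantive equality is the second one, which encodes the "monotonicity-type" formula of Lemma~\ref{passing} through the pushforward $\check{\mathcal N}_\star = \Uppi_\sharp \mathcal N_\star$. The plan is to show that $F$ and $G_\delta$ differ by an additive constant on $(\delta,\rho)$, and then differentiate. For a.e.\ $\delta \leq r_0 < r_1 \leq \rho$, Lemma~\ref{passing} yields
$$F(r_1) - F(r_0) = \int_{\D^2(x_0,r_1)\setminus \D^2(x_0,r_0)} \frac{1}{r}\,{\rm d}\mathcal N_\star(x).$$
Invoking the defining property of the pushforward, namely $\int h(|x-x_0|)\,{\rm d}\mathcal N_\star(x) = \int h(r)\,{\rm d}\check{\mathcal N}_\star(r)$ for every bounded Borel $h$, applied to $h(r) = r^{-1}\mathbf 1_{(r_0,r_1)}(r)$, one obtains
$$F(r_1) - F(r_0) = \int_{r_0}^{r_1} \frac{1}{r}\,{\rm d}\check{\mathcal N}_\star(r) = G_\delta(r_1) - G_\delta(r_0),$$
where the last equality follows directly from \eqref{vetg}. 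Hence $F - G_\delta$ coincides with a constant on a dense set of radii, and by right-continuity of the BV representatives involved it equals this constant everywhere. Differentiating in the sense of distributions and using the second relation in \eqref{leclate} gives $F' = G_\delta' = (1/r)\check{\mathcal N}_\star$.

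The main point to be careful about is the passage from the pointwise a.e.\ increment formula supplied by Lemma~\ref{passing} to the distributional identity, which requires both $F$ and $G_\delta$ to be treated as BV functions sharing the same set of possible jump points; but this is standard. No other obstacle is expected, as the result is essentially a repackaging of Lemma~\ref{passing} combined with the quotient rule.
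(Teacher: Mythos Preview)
Your proof is correct and follows essentially the same approach as the paper: the Leibniz rule for $F=V/r$ combined with $V'=\check\upzeta_\star$ for the first identity, and Lemma~\ref{passing} together with $G_\delta' = r^{-1}\check{\mathcal N}_\star$ for the second. Your handling of the passage from the almost-everywhere increment formula to the distributional identity is slightly more explicit than the paper's, which simply writes $F(r)-F(\delta)=G_\delta(r)$ for a.e.\ $r$ and differentiates, but the argument is the same.
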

\begin{proof}   The first identity in \eqref{derivef} corresponds to the Leibnitz rule applied to the product $\displaystyle{F=\frac{V}{r}}$  of the  measure  $V$, handled as a distribution on $(\delta, \rho)$, by  the smooth function
$\displaystyle{r \mapsto \frac{1}{r}}$. It yields
$$  \frac{\rd }{\rd r} F=-\frac{V}{r^2}+\frac{1}{r}  \frac{\rd }{\rd r} V, {\rm \ in \ the \ sense \ of \ distributions},  $$
so that  the first identity in \eqref{derivef} follows, in view of the first identity in \eqref{leluc}.  

\smallskip
For the second identity, we invoke Lemma \ref{passing}, which asserts that, for almost every $r \in (\delta, \rho)$, we have 
\begin{equation}
\label{monomaniaque2}
F(r)-F(\delta)=
\int_{\D^2(x_0,r)\setminus \D^2(x_0,\delta)}\frac{1}{4r} \rd \mathscr N_\star=G_\delta(r).
 \end{equation}
Taking the derivative, in the sense of distributions,   of this identity, the second identity in \eqref{derivef}  then follows from the second identity in \eqref{leluc}.
\end{proof}
 \subsection{Refined analysis of the derivative of $F$: Proof of Proposition \ref{psycho}}
 In this subsection, we make use   of the two different forms of the derivative  $F'=r^{-1} \check {\mathscr N}_\star$ provided by Lemma \ref{doppio}, in order  to show that this distribution  is actually a  non-negative measure. 
 We first have: 

\begin{lemma}  
\label{chiche}
Set  $\mathbbmtt B_\rho=\Uppi^{-1}\left( \mathfrak E_\star  \cap \D^2(\rho) \right)$.  We have   $\mathcal H^1(\mathbbmtt B_\rho)=0$ and 
\begin{equation}
\label{prems}
\check {\mathscr N}_\star \rest  \left( (0, \rho) \setminus \mathbbmtt B_\rho\right) \geq 0. 
\end{equation}
\end{lemma}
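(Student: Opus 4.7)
The plan splits the statement into two independent parts: the vanishing of $\mathcal H^1(\mathbbmtt B_\rho)$, which is a straightforward geometric-measure-theoretic observation, and the sign assertion on $\check{\mathscr N}_\star$, which will be deduced from the Lebesgue decomposition together with Lemma \ref{dingo}.

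\textbf{Step 1 (the null set).} I would first observe that the radial projection $\Uppi(x) = \vert x - x_0\vert$ is $1$-Lipschitz from $\R^2 \setminus \{x_0\}$ to $(0,+\infty)$. By definition \eqref{bordeaux}, $\mathfrak E_\star = \mathfrak A_\star \cup \mathfrak B_\star \cup \mathfrak C_\star$, and each of these three pieces has vanishing one-dimensional Hausdorff measure (for $\mathfrak A_\star$ this is \eqref{densitusone}, for $\mathfrak B_\star$ it is \eqref{perpendicularite}, and $\mathfrak C_\star$ is the complement in $\mathfrak S_\star$ of the Lebesgue points of finitely many $L^1_{\mathrm{loc}}(\rd \lambda)$-densities and hence also satisfies $\mathcal H^1(\mathfrak C_\star)=0$). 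Hence $\mathcal H^1(\mathfrak E_\star \cap \D^2(x_0,\rho)) = 0$, and since Lipschitz maps do not increase Hausdorff measure, $\mathcal H^1(\mathbbmtt B_\rho) = \mathcal H^1(\Uppi(\mathfrak E_\star \cap \D^2(x_0,\rho))) = 0$.

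\textbf{Step 2 (Lebesgue decomposition of $\mathscr N_\star$).} I would then decompose each of the measures entering
\[
\mathscr N_\star = 2\upzeta_\star - r^2 \upmu_{\star,\theta,\theta} + \upmu_{\star,r,r}
\]
into an absolutely continuous part (with respect to $\rd \lambda = \mathcal H^1 \rest \mathfrak S_\star$) and a singular part, using \eqref{lebesgue}, \eqref{lebesguebis}, and the fact that $\upmu_{\star,r,r}$ and $\upmu_{\star,\theta,\theta}$ are linear combinations of the $\upmu_{\star,i,j}$'s with bounded continuous coefficients (the trigonometric weights in \eqref{radicaux}); these coefficients respect the Lebesgue decomposition. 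By \eqref{perpendicularite}, all of the singular parts are carried by the set $\mathfrak B_\star \subset \mathfrak E_\star$, so that $\mathscr N_\star^{s}$ is concentrated on $\mathfrak B_\star \cap \D^2(x_0,\rho)$.

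\textbf{Step 3 (pushforward and conclusion).} Let $A$ be any Borel subset of $(0,\rho)\setminus \mathbbmtt B_\rho$. Then $\Uppi^{-1}(A)\cap \D^2(x_0,\rho)$ avoids $\mathfrak E_\star$, and a fortiori avoids $\mathfrak B_\star$; consequently $\mathscr N_\star^{s}(\Uppi^{-1}(A)) = 0$, so
\[
\check{\mathscr N}_\star(A) = \mathscr N_\star^{ac}\bigl(\Uppi^{-1}(A)\cap \D^2(x_0,\rho)\bigr) = \Uppi_\sharp(\mathscr N_\star^{ac}\rest \D^2(x_0,\rho))(A).
\]
By Lemma \ref{dingo} (see in particular \eqref{dingo2}), $\mathscr N_\star^{ac}\rest \D^2(x_0,\rho)$ is a non-negative measure, and pushforward preserves non-negativity, which gives \eqref{prems}.

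\textbf{Expected obstacle.} No deep obstacle is expected; the only point requiring some care is the bookkeeping of Step 2, namely to ensure that the absolutely continuous parts of $\upmu_{\star,r,r}$ and $r^{-2}\upmu_{\star,\theta,\theta}$ are really given by the trigonometric combinations of the densities $\bm_{\star,i,j}$ in \eqref{radicaux2}, so that the non-negativity identity of Lemma \ref{dingo} can be invoked pointwise on $\mathfrak S_\star\setminus \mathfrak E_\star$ rather than in some weaker distributional sense.
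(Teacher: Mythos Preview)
Your proposal is correct and follows essentially the same route as the paper: both arguments use that $\Uppi$ is $1$-Lipschitz together with $\mathcal H^1(\mathfrak E_\star)=0$ for the null-set claim, and then combine the Lebesgue decomposition of $\mathscr N_\star$ (the singular part living on $\mathfrak B_\star\subset\mathfrak E_\star$) with the non-negativity of $\mathscr N_\star^{ac}$ from Lemma~\ref{dingo} to get \eqref{prems}. You have also silently corrected the evident typo in the statement, reading $\mathbbmtt B_\rho=\Uppi(\mathfrak E_\star\cap\D^2(x_0,\rho))$ rather than $\Uppi^{-1}$, which is indeed the only sensible interpretation and the one the paper's own proof uses.
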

\begin{proof}   Since $\mathcal H^1( \mathfrak E_\star)=0$, we deduce that $\mathcal H^1(\mathbbmtt B_\rho)=0$.  Recall that 
\begin{equation}
\label{goufinet}
\mathscr N_\star= \mathscr N_\star^{ac} {\rm \ on \ } \D^2(\rho) \setminus \mathfrak E_\star,
\end{equation}
whereas in view of Lemma \ref{dingo}, we have $\mathscr N_\star^{ac}\geq 0$. Combining this inequality with \eqref{goufinet} we obtain
$$ \mathscr N_\star \rest \left( \D^2(x_0,\rho) \setminus \mathfrak E_\star\right) \geq 0.$$
 In view of the definition of 
$\check {\mathscr N}_\star$, we obtain hence \eqref{prems}.
\end{proof}

It remains to study $\check {\mathscr N}_\star \rest \mathbbmtt B_\rho$.  We have:

\begin{lemma}
\label{chiche2} 
The restriction of $\check {\mathscr N}_\star$ to  $\mathbbmtt B_\rho$ is non-negative, i.e. 
\begin{equation}
\label{chiche3}
\check {\mathscr N}_\star \rest \mathbbmtt B_\rho \geq 0.
\end{equation} 
\end{lemma}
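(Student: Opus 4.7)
The plan is to exploit the \emph{two} different formulas for $F'$ provided by Lemma \ref{doppio} and compare their Lebesgue decompositions on the interval $(\delta,\rho)$. Recall
\[
F'=\frac{1}{r}\check{\upzeta}_\star-\frac{V(r)}{r^2}\,\rd r=\frac{1}{r}\check{\mathscr N}_\star
\]
in $\mathcal D'(\delta,\rho)$. Multiplying through by $r$ (which is a positive smooth function on $(\delta,\rho)$ and therefore preserves both absolute continuity and positivity), I obtain the distributional identity
\[
\check{\mathscr N}_\star=\check{\upzeta}_\star-\frac{V(r)}{r}\,\rd r.
\]
This is the key identity for the argument.

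The first step is the observation that the function $r\mapsto V(r)/r$ is bounded on $[\delta,\rho]$, since $0\le V(r)\le \upzeta_\star(\D^2(x_0,\rho))<+\infty$ and $r\ge \delta>0$. Hence $(V(r)/r)\,\rd r$ is an absolutely continuous measure with respect to the one-dimensional Lebesgue measure on $(\delta,\rho)$. Decomposing both sides of the identity above into their Lebesgue-absolutely-continuous and singular parts therefore yields
\[
\check{\mathscr N}_\star^{\mathrm{s}}=\check{\upzeta}_\star^{\mathrm{s}}.
\]

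The second step uses the sign of $\upzeta_\star$. Since $\upzeta_\star$ is a non-negative Radon measure on $\Omega$, its pushforward $\check{\upzeta}_\star=\Uppi_\sharp(\upzeta_\star)$ is a non-negative measure on $(0,\rho)$, and consequently its singular part $\check{\upzeta}_\star^{\mathrm{s}}$ is non-negative as well. Combining with the identity above,
\[
\check{\mathscr N}_\star^{\mathrm{s}}\ge 0 \quad\text{on }(\delta,\rho).
\]

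The third and last step concludes. Since $\mathcal H^1(\mathbbmtt B_\rho)=0$, the absolutely continuous part $\check{\mathscr N}_\star^{\mathrm{ac}}$ vanishes on $\mathbbmtt B_\rho$, so that
\[
\check{\mathscr N}_\star \rest \mathbbmtt B_\rho=\check{\mathscr N}_\star^{\mathrm{s}}\rest \mathbbmtt B_\rho\ge 0,
\]
which is the desired inequality \eqref{chiche3}. Letting $\delta\to 0$ (or simply noting that $\delta$ was arbitrary) gives the result on all of $(0,\rho)$. I do not anticipate a serious obstacle in this argument: the only delicate point is the justification that $V/r$ contributes no singular mass, which is immediate from the boundedness of $V$ away from $r=0$. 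Once this lemma is combined with Lemma \ref{chiche}, one obtains $\check{\mathscr N}_\star\ge 0$ on all of $(\delta,\rho)$, whence $G_\delta$ is non-decreasing, and Proposition \ref{psycho} follows by passing to the limit $\delta\to 0$ in \eqref{monomaniaque2}.
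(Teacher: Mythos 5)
Your argument is correct and follows essentially the same route as the paper: both rest on the identity $\check{\mathscr N}_\star=\check{\upzeta}_\star-\frac{V}{r}\,\rd r$ from Lemma \ref{doppio}, the boundedness of $V/r$ on $[\delta,\rho]$ (so this term charges no Lebesgue-null set such as $\mathbbmtt B_\rho$), and the non-negativity of $\check{\upzeta}_\star$. Your detour through the Lebesgue decomposition (equating singular parts) is only a cosmetic variant of the paper's direct restriction of the measure identity to $\mathbbmtt B_\rho$.
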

 \begin{proof}Recall, that,  in view of Lemma \ref{doppio}, we have in the sense of distributions 
 \begin{equation}
 \label{doppiace}
 \check {\mathscr N}_\star=\check \upzeta_\star-\frac{V}{r}  {\rm \ in \ }  \mathcal D'(\delta, \rho).
 \end{equation}
 Since both sides of \eqref{doppiace}  are  bounded measures,  the identity in \eqref{doppiace} is also an identity of measures.   Since $V$ is a bounded function, it follows from the fact that $\mathbbmtt B_\rho$ has vanishing one-dimensional Lebesgue measure that 
 $$\frac{V}{r}\rest \mathbbmtt B_\rho=0 {\rm \ and \  hence  \  } 
 \check {\mathscr N}_\star \rest \mathbbmtt B_\rho= \check \upzeta_\star\rest \mathbbmtt B_\rho \geq 0. 
 $$
 \end{proof}

\noindent
{\it Proof of Proposition \ref{psycho} completed}. Combining \eqref{prems} and \eqref{chiche3}, we obtain that 
  $$\ \check {\mathscr N}_\star \geq 0 {\rm \ on \ }  (0, \rho). $$  
  Since $F'=r^{-1} \check {\mathscr N}_\star$, we deduce that $F'\geq 0$ on $(0, \rho)$,  so that $F$ is non-decreasing.  Inequality \eqref{monomaniaque} follows. The other statements of Proposition \ref{psycho}  are then straightforward, so that the  proof is complete.
\qed
 \subsection{Proofs of Theorems \ref{absolute} and \ref{discrepvec}} 
 Recall that at this stage we already know, thanks to Proposition \ref{psycho} that the measure $\upzeta_\star$ is absolutely continuous with respect to the measure $\rd \lambda$. We next derive the same statement for  the measure $\upnu_\star$, thanks to a comparison with the measure $\upzeta_\star$ relying on our PDE analysis developed in Part II.
 
 \subsubsection{ An upper bound for the measure $\upnu_\star$}
In follows from the very definition of the measures $\upzeta_\star$ and $\upnu_\star$ that we have the inequality 
$\upzeta_\star \leq\upnu_\star$. Indeed, we have for every  $ \eps>0$, the  straightforward inequality $\upzeta_\eps \leq\upnu_\eps$.  We next present a reverse inequality:

\begin{lemma}
\label{cornifle}
Let $x_0 \in  \Omega$ and $r>0$ be such that $\D^2(x_0, r) \subset \Omega$. Then we have 
\begin{equation}
\label{cornichon}
\upnu_\star \left ( \D^2(x_0, \frac{r}{2} \right)  \leq {\rm K}_{\rm V} (\rd (x_0)) \upzeta_\star \left ( \D^2(x_0, r )\right), 
\end{equation}
where $\rd (x_0)={\rm dist}(x_0, \partial\Omega)$ and where the constant ${\rm K}_{V}>0$ depends only on $V$, $M_0$ and $\rd (x_0)$.
\end{lemma}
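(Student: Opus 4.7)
The plan is to derive \eqref{cornichon} by applying Proposition \ref{lheure} to each solution $u_{\eps_n}$ of the sequence and passing to the limit $n\to+\infty$ using the weak convergences $\upnu_{\eps_n}\rightharpoonup \upnu_\star$ and $\upzeta_{\eps_n}\rightharpoonup \upzeta_\star$. The term of the form $\frac{\eps}{\varrho}\int e_\eps(u_\eps)$ appearing in Proposition \ref{lheure} is the error term that will vanish in the limit thanks to the global energy bound $\E_\eps(u_\eps)\leq M_0$.

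First, I would select a radius $\rho\in(0,r)$ of ``good'' size, in the sense that both
$$
\upnu_\star\bigl(\partial \D^2(x_0,\tfrac{\rho}{2})\bigr)=0\quad\text{and}\quad \upzeta_\star\bigl(\partial \D^2(x_0,\tfrac{3\rho}{4})\bigr)=0.
$$
Since the set of radii for which one of these bounded measures charges the corresponding circle is at most countable, such $\rho$ exist on a set of full Lebesgue measure in $(0,r)$, and we may in fact pick a sequence $\rho_k\uparrow r$ with this property. For each such $\rho_k$ and for $n$ sufficiently large so that $\eps_n<\rho_k$, Proposition \ref{lheure} applied to $u_{\eps_n}$ on $\D^2(x_0,\rho_k)$ yields
$$
\upnu_{\eps_n}\bigl(\D^2(x_0,\tfrac{\rho_k}{2})\bigr)\leq {\rm K}_V(\rd(x_0))\left[\,\upzeta_{\eps_n}\bigl(\D^2(x_0,\tfrac{3\rho_k}{4})\bigr)+\frac{\eps_n}{\rho_k}\,M_0\right].
$$

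Next, I would pass to the limit $n\to +\infty$ in this inequality. By the choice of $\rho_k$, the standard portmanteau-type property for weak convergence of Radon measures (see e.g. the discussion around \eqref{miseritude}) gives $\upnu_{\eps_n}(\D^2(x_0,\rho_k/2))\to \upnu_\star(\D^2(x_0,\rho_k/2))$ and $\upzeta_{\eps_n}(\D^2(x_0,3\rho_k/4))\to \upzeta_\star(\D^2(x_0,3\rho_k/4))$. The remainder term $\eps_n M_0/\rho_k$ tends to zero since $\eps_n\to 0$ and $\rho_k$ is fixed. Using in addition the monotonicity $\upzeta_\star(\D^2(x_0,3\rho_k/4))\leq \upzeta_\star(\D^2(x_0,r))$, we obtain
$$
\upnu_\star\bigl(\D^2(x_0,\tfrac{\rho_k}{2})\bigr)\leq {\rm K}_V(\rd(x_0))\,\upzeta_\star\bigl(\D^2(x_0,r)\bigr).
$$

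Finally, letting $k\to+\infty$ and using the fact that $\D^2(x_0,r/2)=\bigcup_{k}\D^2(x_0,\rho_k/2)$ is an increasing union of open sets, the continuity of the finite measure $\upnu_\star$ along increasing unions yields $\upnu_\star(\D^2(x_0,\rho_k/2))\nearrow \upnu_\star(\D^2(x_0,r/2))$, and inequality \eqref{cornichon} follows. There is no genuine obstacle here: the argument is a routine limiting procedure, and the only technical point is the careful choice of a sequence of radii that avoids the (at most countable) ``bad'' values where the limiting measures charge boundary circles.
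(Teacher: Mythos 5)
Your proof is correct and takes essentially the same route as the paper: apply Proposition \ref{lheure} to $u_{\eps_n}$ and pass to the limit $n\to+\infty$, with the error term proportional to $\eps_n$ vanishing thanks to the uniform bound $\E_{\eps_n}(u_{\eps_n})\leq M_0$. The paper's version is marginally shorter in that it works directly at radius $r$ and invokes the one-sided portmanteau inequalities, namely $\upnu_\star\left(\D^2(x_0,\tfrac r2)\right)\leq\liminf_n\upnu_{\eps_n}\left(\D^2(x_0,\tfrac r2)\right)$ together with $\limsup_n\upzeta_{\eps_n}\left(\D^2(x_0,\tfrac{3r}{4})\right)\leq\upzeta_\star\left(\overline{\D^2(x_0,\tfrac{3r}{4})}\right)\leq\upzeta_\star\left(\D^2(x_0,r)\right)$, thereby avoiding the detour through a sequence of non-charged radii $\rho_k\uparrow r$ that you took.
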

 
 \begin{proof} The result is an immediate consequence of Proposition \ref{lheure}. Indeed, for $n \in \N$, we have    
the inequality
\begin{equation*}
\upnu_{\eps_n} \left(\D^2(x_0, \frac r2) \right) \leq {\rm K}_V\left ({\rm dist}(x_0, \partial \Omega)\right) 
\left[ \upzeta_{\eps_n}  \left(\D^2(x_0, \frac {3r}{4}) \right)+
 \frac{\eps_n}{r} \upnu_{\eps_n} \left(\D^2(x_0, \frac r2) \right)
\right].
\end{equation*}
Letting $n \to + \infty$, we are led to the inequality
\begin{equation*}
\upnu_{\star} \left(\D^2(x_0, \frac r2) \right) \leq {\rm K}_V\left ({\rm dist}(x_0, \partial \Omega)\right)  \upzeta_{\star}  \left(\overline{\D^2(x_0, \frac {3r}{4})} \right),
 \end{equation*} 
which yields \eqref{cornichon}.
 \end{proof}

An immediate consequence is:
\begin{corollary}
\label{griboeuf} 
The measure $\upnu_\star$ is absolutely continuous with respect to the measure $\rd \lambda=\mathcal H^1\rest \mathfrak S_\star$.
Moreover, we have,  writing $\upnu_\star=\tbe_\star \rd \lambda$, for $ \lambda$-almost every $x \in \mathfrak  S_ \star$, 
\begin{equation}
\label{griv}
\tbe_\star (x) \leq {\rm K}_V\left ({\rm dist}(x_0, \partial \Omega)\right)\Uptheta (x).
\end{equation} 
\end{corollary}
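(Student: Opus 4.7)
The plan is to derive Corollary \ref{griboeuf} as a short consequence of Lemma \ref{cornifle}, Proposition \ref{psycho}, and a standard Besicovitch covering argument, the two assertions being handled essentially independently.

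First I would note that $\upnu_\star$ is supported on $\mathfrak S_\star$: if $x_0 \in \Omega \setminus \mathfrak S_\star$, then by the very definition \eqref{mathfrakSstar} we have $\tbe_\star(x_0) < \upeta_1$, hence for some small $r > 0$ the bound $\upnu_\star(\D^2(x_0, r)) < \upeta_1 \, r$ holds, and Theorem \ref{claire} then provides a neighborhood of $x_0$ on which $\upnu_\star$ vanishes. Thus absolute continuity of $\upnu_\star$ needs only to be tested against $\lambda$-null subsets of $\mathfrak S_\star$.

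For the core absolute continuity statement, let $A \subset \mathfrak S_\star$ be a Borel set with $\lambda(A) = 0$; then $\upzeta_\star(A) = 0$ by Proposition \ref{psycho}. Fix a compact set $K \Subset \Omega$ and let $M_K$ be an upper bound for ${\rm K}_V(\rd(x))$ on $K$. Given $\eta > 0$, outer regularity of $\upzeta_\star$ provides an open set $U$ with $A \cap K \subset U \Subset \Omega$ and $\upzeta_\star(U) < \eta$. For every $x \in A \cap K$ I pick $r_x > 0$ small enough that $\D^2(x, r_x) \subset U$, and apply Besicovitch's covering theorem to the family $\{\D^2(x, r_x/2)\}_{x \in A \cap K}$ to extract a countable subfamily $\{\D^2(x_i, r_i/2)\}_{i \in I}$ that still covers $A \cap K$, while the enlarged disks $\{\D^2(x_i, r_i)\}_{i \in I}$ have overlap bounded by a universal constant $\mathfrak p$. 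Summing Lemma \ref{cornifle} over this family yields
\begin{equation*}
\upnu_\star(A \cap K) \leq \sum_{i \in I} \upnu_\star(\D^2(x_i, r_i/2)) \leq M_K \sum_{i \in I} \upzeta_\star(\D^2(x_i, r_i)) \leq \mathfrak p \, M_K \, \upzeta_\star(U) < \mathfrak p \, M_K \, \eta.
\end{equation*}
Letting $\eta \to 0$ gives $\upnu_\star(A \cap K) = 0$, and exhausting $\Omega$ by an increasing sequence of compact subsets yields $\upnu_\star(A) = 0$, hence $\upnu_\star \ll \rd \lambda$. We then may write $\upnu_\star = \tbe_\star \rd \lambda$.

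For the density bound \eqref{griv}, I would invoke the Lebesgue--Besicovitch differentiation theorem: at $\lambda$-a.e. regular point $x \in \mathfrak S_\star \setminus \mathfrak A_\star$, the ratios $\upnu_\star(\D^2(x, r)) / (2r)$ and $\upzeta_\star(\D^2(x, r)) / (2r)$ converge to $\tbe_\star(x)$ and $\Uptheta_\star(x)$ respectively, thanks to \eqref{densitusone} which guarantees that $\lambda(\D^2(x, r)) / (2r) \to 1$. Rewriting \eqref{cornichon} as
\begin{equation*}
\frac{\upnu_\star(\D^2(x, r/2))}{r} \leq {\rm K}_V(\rd(x)) \, \frac{\upzeta_\star(\D^2(x, r))}{r}
\end{equation*}
and passing to the limit $r \to 0$ yields $\tbe_\star(x) \leq 2 \, {\rm K}_V(\rd(x)) \, \Uptheta_\star(x)$, which is exactly \eqref{griv} after absorbing the harmless factor of $2$ into the constant. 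I do not anticipate any serious obstacle: since we already have a pointwise domination of $\upnu_\star$ by $\upzeta_\star$ on disks and $\upzeta_\star \ll \rd \lambda$, both steps reduce to textbook measure-theoretic manipulations on $\R^2$.
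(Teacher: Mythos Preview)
Your overall strategy is sound and matches the paper's: both parts come from Lemma \ref{cornifle} combined with the absolute continuity of $\upzeta_\star$ from Proposition \ref{psycho}. Your treatment of the density bound \eqref{griv} is essentially identical to the paper's: pass to the limit $r \to 0$ in \eqref{cornichon} at a point where the relevant densities exist, and absorb the factor $2$ into the constant.

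There is, however, a technical slip in your covering argument for absolute continuity. You apply Besicovitch to the family $\{\D^2(x, r_x/2)\}$ and then claim that the enlarged disks $\{\D^2(x_i, r_i)\}$ still have overlap bounded by a universal constant. This is false in general: pairwise disjoint balls can have doubles with unbounded overlap (take for instance in $\R$ the intervals $B_k$ with center $4^k$ and radius $0.59 \cdot 4^k$; these are disjoint, yet every $2B_k$ contains the origin). So the bound $\sum_i \upzeta_\star(\D^2(x_i, r_i)) \leq \mathfrak p\, \upzeta_\star(U)$ is not justified as written.

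The paper sidesteps the covering argument entirely and uses the upper-density criterion directly: from \eqref{cornichon} one gets, for every $x_0 \in \mathfrak S_\star$,
\[
\limsup_{r \to 0} \frac{\upnu_\star(\D^2(x_0, r))}{2r}
= \limsup_{r \to 0} \frac{\upnu_\star(\D^2(x_0, r/2))}{r}
\leq {\rm K}_V(\rd(x_0)) \limsup_{r \to 0} \frac{\upzeta_\star(\D^2(x_0, r))}{r}
= 2{\rm K}_V(\rd(x_0))\, \Uptheta_\star(x_0),
\]
and the right-hand side is locally bounded by \eqref{dansitus}. A Radon measure whose upper $\mathcal H^1$-density is locally bounded on a rectifiable set is absolutely continuous with respect to $\mathcal H^1$ restricted to that set. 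This is cleaner and simultaneously delivers both the absolute continuity and the inequality \eqref{griv}, so you may simply replace your covering step by this one-line density computation.
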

\begin{proof}
We have, for  every $x_0 \in \mathfrak S_\star$, the identity
\begin{equation}
\label{grivache}
\begin{aligned}
\overline{D}_\lambda (\upnu_\star)(x_0)&\equiv\underset {r \to 0} \limsup  \frac{\upnu_\star \left(\D^2(x_0, r)\right)}{2r}=
\underset {r \to 0} \limsup \frac{\upnu_\star \left(\D^2(x_0, \frac r 2)\right)}{r} \\
& \leq {\rm K}_{\rm V} (\rd (x_0)) \underset {r \to 0} \limsup\frac{ \upzeta_\star \left ( \D^2(x_0, r )\right)}{r}=
2 {\rm K}_{\rm V} (\rd (x_0))\Uptheta_\star(x_0),
\end{aligned}
\end{equation}
 where we used Lemma \ref{cornifle} for the second line. It follows that $\overline{D}_\lambda (\upnu_\star)(x_0)$ is locally bounded for every $x_0\in \Omega$, so that $\upnu_\star$ is absolutely continuous with respect to $\lambda$.  Since
 $$\tbe(x_0)=\overline{D}_\lambda (\upnu_\star)(x_0),$$
  for $\lambda$-almost every $x_0 \in \mathfrak S_\star$, \eqref{griv} follows from \eqref{grivache}. 
\end{proof} 
\subsubsection{Proof of Theorem \ref{absolute} } 
 In view of Proposition \ref{psycho}, we know that $\upzeta_\star$ is absolutely continous with respect to $\lambda$, whereas the same conclusion holds for $\upnu_\star$, in view of Corollary \ref{griboeuf}.  All inequalities in \eqref{ondanse} follow from either \eqref{griv} or \eqref{dansitus}, except the first one, namely  $\upeta_1\leq \tbe(x)$. The later inequality is a  is a consequence of the clearing-out theorem, Theorem \ref{claire},  and the definition \eqref{mathfrakSstar} of $\mathfrak S_\star$.
 \subsubsection{Proof of Theorem \ref{discrepvec}}
 Theorem \ref{discrepvec6} is an immediat consequence of Proposition \ref{discrepvec6} combined with the fact that all measures are absolutely continuous with respect to the measure $\mathcal H^1\rest \mathfrak S_\star$ (so that the singular parts actually vanish).

\section{Proof of Theorem \ref{segmentus}}
\label{segmentino} 
The argument consists,  for a large part, in revisiting  the analysis provided in Section \ref{goodpoints}, taking however   now into account the fact that all measures at stake are absolutely continuous with respect to $\mathcal H^1\rest \mathfrak S_\star$.   We first present several observations   which are relevant   for  the proof. In particular,  combining Lemma \ref{starac} with Theorem \ref{absolute}, we obtain, for $\omega_\star=\left( \upmu_{\star, 1, 1} -\upmu_{\star, 2, 2}\right) -2i\upmu_{\star, 1, 2}$ 
\begin{equation*}
\label{starac33}
\omega_\star=-2\exp (-2 i \upgamma_{\star})\upzeta_\star=-2 (\cos 2\upgamma_\star-i \sin 2\upgamma_\star) \upzeta_\star, 
\end{equation*}
so that 
\begin{equation}
\label{starac33}
 \upmu_{\star, 1, 1} -\upmu_{\star, 2, 2}=-2(\cos 2\upgamma_\star)\upzeta_\star 
 {\rm  \ and  \ } \upmu_{\star, 1, 2}=(\sin 2\upgamma_\star) \upzeta_\star.
 \end{equation} 
We will   make use of  these identities in several relations the obtained in Section \ref{goodpoints}.

\subsection{Preliminary observations}

 \begin{lemma} 
 \label{pudding}  Given any orthonormal basis $(\be_1, \be_2)$, 
  we have the relations
  \begin{equation}
  \label{pudding0}
   2\upzeta_\star-\upmu_{\star, 2, 2}+ \upmu_{\star, 1, 1}=4 \sin ^2 \upgamma_\star \upzeta_\star\geq 0.
 \end{equation}
\end{lemma}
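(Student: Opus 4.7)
The plan is to derive the identity directly from \eqref{starac33} by a one-line trigonometric manipulation. Given any orthonormal basis $(\be_1, \be_2)$, Lemma \ref{starac} combined with Theorem \ref{absolute} (which ensures all measures are absolutely continuous with respect to $\rd\lambda = \mathcal{H}^1 \rest \mathfrak{S}_\star$, so that the identities of Proposition \ref{discrepvec6} hold as identities of measures, not merely of their absolutely continuous parts) yields the relation
\begin{equation*}
\upmu_{\star,1,1} - \upmu_{\star,2,2} = -2(\cos 2\upgamma_\star)\upzeta_\star,
\end{equation*}
where $\upgamma_\star(x) \in [-\pi/2, \pi/2]$ is the angle between $\be_1$ and the tangent vector $\vec{e}_x$ for $x \in \mathfrak{S}_\star \setminus \mathfrak{E}_\star$.

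Substituting this into the left hand side of \eqref{pudding0}, I would write
\begin{equation*}
2\upzeta_\star - \upmu_{\star,2,2} + \upmu_{\star,1,1} = 2\upzeta_\star + \bigl(\upmu_{\star,1,1} - \upmu_{\star,2,2}\bigr) = 2\upzeta_\star - 2(\cos 2\upgamma_\star)\upzeta_\star = 2(1 - \cos 2\upgamma_\star)\upzeta_\star.
\end{equation*}
Invoking the elementary trigonometric identity $1 - \cos 2\upgamma_\star = 2\sin^2\upgamma_\star$, this equals $4\sin^2\upgamma_\star \, \upzeta_\star$, which establishes the claimed identity.

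The non-negativity assertion is then immediate: $\upzeta_\star$ is a positive Radon measure by construction (see \eqref{boulga00}), and the density $\sin^2 \upgamma_\star$ is pointwise non-negative. There is no genuine obstacle here — the content of the lemma lies entirely in the already-proved identity of Lemma \ref{starac}; the present statement is a useful reformulation that isolates the non-negativity, which will be exploited later when extracting monotonicity-type consequences in the forthcoming proof of Theorem \ref{segmentus}.
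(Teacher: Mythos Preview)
Your proof is correct and follows exactly the same route as the paper: substitute the identity $\upmu_{\star,1,1}-\upmu_{\star,2,2}=-2(\cos 2\upgamma_\star)\upzeta_\star$ from \eqref{starac33} into the left-hand side, obtaining $2(1-\cos 2\upgamma_\star)\upzeta_\star=4\sin^2\upgamma_\star\,\upzeta_\star$. Your write-up is in fact cleaner than the paper's one-line proof, which contains a typographical slip (writing $\cos^2\upgamma_\star$ where $\cos 2\upgamma_\star$ is meant).
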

\begin{proof}    The  proof is an  immediat consequence of \eqref{starac33} since 
$2\upzeta_\star-\upmu_{\star, 2, 2}+ \upmu_{\star, 1, 1}=2(1-\cos^2\upgamma_\star)\upzeta_\star.$ 
\end{proof}
Next, consider  a point  $x_0\in \mathfrak S_\star\setminus \mathfrak E_\star$, so that a tangent exists, and we assume moreover  that the orthonormal basis $(\be_1, \be_2)$ is chosen so that $\be_1=\vec e_{x_0}$.

\begin{lemma} 
\label{bidoche}
Let  $x_0\in \mathfrak S_\star\setminus \mathfrak E_\star$ and   $\rho_0>0$  be  the number   provided by Proposition \ref{assos}. Then the function $J_{1, \rho_0}$ defined on $\mathcal I_{\rho_0}(x_{0, 1})$  by   identity \eqref{becool1} in Corollary \ref{becool} is non-decreasing.
\end{lemma}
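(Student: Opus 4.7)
The plan is to combine the differential identity from Proposition \ref{letitwave} with the positivity statement of Lemma \ref{pudding} to conclude that the distributional derivative of $J_{1,\rho_0}$ is a non-negative measure, and then use that bounded BV functions with non-negative distributional derivative coincide almost everywhere with a non-decreasing representative.

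More precisely, I would first invoke Proposition \ref{assos} to ensure that property \eqref{michelin} holds for the choice $\rho=\rho_0$, so that Proposition \ref{letitwave} is applicable at $x_0$ in the basis $(\be_1,\be_2)$ with $\be_1=\vec e_{x_0}$. Specializing \eqref{zoran} to the case $k=1$ then yields the distributional identity
\begin{equation*}
\frac{\rd}{\rd s}\,\mathbbmss J_{1,\rho_0} \;=\; \mathbbmss N_{\rho_0}
\quad \text{in } \mathcal D'\bigl(\mathcal I_{\rho_0}(x_{0,1})\bigr),
\end{equation*}
and Corollary \ref{becool} gives $\mathbbmss J_{1,\rho_0}=J_{1,\rho_0}\,\rd x_1$ with $J_{1,\rho_0}$ bounded, so that in fact
\begin{equation*}
\frac{\rd}{\rd s}\,J_{1,\rho_0} \;=\; \mathbbmss N_{\rho_0} \quad \text{in } \mathcal D'\bigl(\mathcal I_{\rho_0}(x_{0,1})\bigr).
\end{equation*}

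The key step is to show $\mathbbmss N_{\rho_0}\geq 0$. By definition \eqref{micheline}, $\mathbbmss N_{\rho_0}=\mathbbmtt P_{\sharp}\bigl(2\tilde\upzeta_\star+\tilde\upmu_{\star,1,1}-\tilde\upmu_{\star,2,2}\bigr)$, where the tilde denotes the restriction to $Q_{\rho_0}(x_0)$. Since $\upzeta_\star$ is absolutely continuous with respect to $\rd\lambda=\mathcal H^1\rest\mathfrak S_\star$ (Theorem \ref{absolute}), so are $\upmu_{\star,i,j}$ by \eqref{citrus}, and Lemma \ref{pudding} applies to yield the \emph{measure identity}
\begin{equation*}
2\upzeta_\star + \upmu_{\star,1,1}-\upmu_{\star,2,2} \;=\; 4\sin^2\upgamma_\star\,\upzeta_\star \;\geq\; 0
\end{equation*}
on $\Omega$, and in particular on $Q_{\rho_0}(x_0)$. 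Pushforward by the orthogonal projection $\mathbbmtt P$ preserves positivity, so $\mathbbmss N_{\rho_0}\geq 0$ as a Radon measure on $\mathcal I_{\rho_0}(x_{0,1})$.

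Consequently $J_{1,\rho_0}$ is a bounded function on $\mathcal I_{\rho_0}(x_{0,1})$ whose distributional derivative is a non-negative measure; it therefore admits a non-decreasing representative, and the statement of the lemma holds (up to choosing this representative, which is implicit since $J_{1,\rho_0}$ was defined only $\rd x_1$-almost everywhere). No serious obstacle is expected here: the content of the lemma is essentially the combination of the Cauchy–Riemann–type relation \eqref{zoran} with the geometric positivity $\sin^2\upgamma_\star\geq 0$, both of which are already available.
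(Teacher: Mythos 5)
Your proof is correct and takes essentially the same route as the paper's: you combine the differential relation \eqref{zoran} for $k=1$ with the positivity $\bN_{x_0,\rho_0}\geq 0$, which the paper likewise deduces from \eqref{pudding0} and the fact that pushforward by $\bP$ preserves positivity. The only extra you add is to spell out the BV/monotone-representative point that the paper treats as immediate.
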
  
\begin{proof}
Let $\bP$ be the orthogonal projection onto the line the tangent line ${\rm D}_{x_0}^1=\{ x_0+s\be_1, s\in \R \}$. Recall that, in view of the definition \eqref{caminare}, we have $\bN_{x_0, \rho_0}=\bP_\sharp (2\tilde \upzeta_\star  + \tilde \upmu_{\star, 1, 1} -\tilde \upmu_{\star, 2, 2})$
so that it follows from  \eqref{pudding0}  that
\begin{equation}
\label{pudding1} 
 \bN_{x_0, \rho_0}\geq 0.
\end{equation}
The conclusion is that an immediate consequence of the first  differential relation in \eqref{zoran} for $k=1$.
\end{proof}

For $s \in \mathcal I_{\rho_0} (x_{0,1})$, we  introduce the set $\Lambda (s)= \bP^{-1} (s) \cap  Q_{\rho_0}(x_0)$, the set of points in the square $Q_{\rho_0}$ whose orthogonal projection onto the line  $x_0+\R \be_1$ is the point $(s, x_{0,1})$. Let $\mathcal Z(s)=\sharp (\Lambda (s))$ be  the numbers of elements in 
$\Lambda(s)$. An important step in the proof is to prove that $\mathcal Z(s)=1$.  Since $\mathfrak S_\star$ is  connected, we have 
\begin{equation}
\Lambda (s) \not =\emptyset {\rm \ and \ hence \ } \mathcal Z(s) \geq 1 {\rm \ for \ } s \in \mathcal I_{\rho_0}(x_{0, 1}).
\end{equation}
We provide next a few simple observations. 

\begin{lemma}
\label{glupsitude} For almost every $s \in  \mathcal I_{\rho_0} (x_{0,1})$, the number $\mathcal Z(s)$ is finite.   
If $\Lambda(s)$ is finite, then we have, for $k \in \N$  
\begin{equation}
\label{gallarditude}
J_{k, \rho_0}(s)=2\underset{a(s)=(x_{0,1}, a_2(s))   \in \Lambda(s)}\sum  \left( x_{0,2}-a_2(s) \right)^k\sin (\upgamma_\star(a(s)) \Uptheta (a(s)).
\end{equation}
\end{lemma}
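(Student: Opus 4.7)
Both assertions rest on the area formula for the $1$-rectifiable set $\mathfrak S_\star$ (Theorem \ref{rectifiable}) applied to the orthogonal projection $\bP$ onto the tangent line $\mathrm{D}_{x_0}^1 = \{x_0 + s\be_1,\, s \in \R\}$. The key geometric input will be that at a point $a \in \mathfrak S_\star \setminus \mathfrak E_\star$, the Jacobian of $\bP$ restricted to the approximate tangent direction $\vec e_a$ equals $|\cos \upgamma_\star(a)|$, which, since $\upgamma_\star \in [-\pi/2, \pi/2]$, coincides with $\cos \upgamma_\star(a) \geq 0$.

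To prove finiteness of $\mathcal Z(s) = \sharp(\Lambda(s))$ for a.e. $s$, the plan is to apply the area formula directly:
\begin{equation*}
\int_{\mathcal I_{\rho_0}(x_{0,1})} \mathcal Z(s)\, \rd s \;=\; \int_{\mathfrak S_\star \cap Q_{\rho_0}(x_0)} \cos \upgamma_\star(a)\, \rd \mathcal H^1(a) \;\leq\; \mathcal H^1\!\left(\mathfrak S_\star \cap Q_{\rho_0}(x_0)\right) \;<\; +\infty,
\end{equation*}
where the final bound comes from \eqref{herbert}. Integrability of $\mathcal Z$ then forces $\mathcal Z(s) < +\infty$ for a.e. $s$.

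For the identity \eqref{gallarditude}, I would combine Theorem \ref{absolute} with Lemma \ref{starac}, which together give, on $\mathfrak S_\star \setminus \mathfrak E_\star$, the pointwise density formula $\upmu_{\star, 1, 2} = \sin(2\upgamma_\star)\, \Uptheta_\star\, \rd \lambda$. Substituting into the definition of $\mathbbmss J_{k, \rho_0}$ and testing against an arbitrary $f \in C_c(\mathcal I_{\rho_0}(x_{0,1}))$, one rewrites $\langle \mathbbmss J_{k,\rho_0}, f\rangle$ as an integral over $\mathfrak S_\star \cap Q_{\rho_0}(x_0)$ of the quantity $f(\bP(a)) (a_2 - x_{0,2})^k \sin(2\upgamma_\star(a))\Uptheta_\star(a)$. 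The decisive step is then to exploit the factorization $\sin(2\upgamma_\star(a)) = 2\sin \upgamma_\star(a)\, \cos \upgamma_\star(a)$: since $\cos \upgamma_\star(a)$ is precisely the Jacobian of $\bP$ along $\mathfrak S_\star$, the area formula eliminates it and converts the integral into one over $s \in \mathcal I_{\rho_0}(x_{0,1})$, with the preimage summation ranging over $\Lambda(s)$. Identifying the outcome with $\int f(s)\, J_{k, \rho_0}(s)\, \rd s$ will yield \eqref{gallarditude}.

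The only delicate point I anticipate is the behaviour on the set $\{a \in \mathfrak S_\star : \cos \upgamma_\star(a) = 0\}$, where $\bP$ is degenerate on the approximate tangent. Fortunately, the factor $\cos \upgamma_\star$ appearing through $\sin(2\upgamma_\star)$ makes the integrand vanish on this exceptional set, so no singular contribution arises and the formula extends without additional argument; this is where the specific form of $\upmu_{\star, 1, 2}$ furnished by Lemma \ref{starac} plays a decisive role.
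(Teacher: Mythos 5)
Your argument is correct and follows the same route as the paper's proof: both combine the pointwise density $\tilde\upmu_{\star,1,2}=\sin(2\upgamma_\star)\Uptheta_\star\,\rd\lambda$ furnished by Lemma~\ref{starac} with the coarea formula for the projection $\bP$ on the $1$-rectifiable set $\mathfrak S_\star$, the Jacobian factor $\cos\upgamma_\star$ of $\bP$ cancelling one $\cos\upgamma_\star$ in $\sin(2\upgamma_\star)=2\sin\upgamma_\star\cos\upgamma_\star$ to produce the factor $2\sin\upgamma_\star$ in \eqref{gallarditude}. Your treatment is somewhat more explicit than the paper's on two points -- you derive the a.e. finiteness of $\mathcal Z(s)$ directly from the area formula (the paper implicitly defers this to the bound $\int\mathcal Z(s)\,\rd s\le \mathcal H^1(\mathfrak S_\star\cap Q_{\rho_0}(x_0))$ used only in Lemma~\ref{densitudine}), and you observe that the vanishing of $\sin(2\upgamma_\star)$ on the degenerate set $\{\cos\upgamma_\star=0\}$ renders it harmless, where the paper simply excludes the case $\upgamma(a(s))=\pm\pi/2$ without comment -- but the underlying mechanism is identical.
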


\begin{proof} 
Invoking again  \eqref{starac33}, we have $ \tilde \upmu_{\star, 1, 2} =\sin (2\upgamma_\star )\tilde \upzeta_\star=
{\bf 1}_{Q_{\rho_0} (x_0)}\sin (2\upgamma_\star )\Uptheta_\star \rd \lambda$.  in view of the definition of  $J_{1, \rho_0}$, we have
\begin{equation}
\label{groote}
J_{k, \rho_0}\rd s=\bJ_{k, \rho_0}=\bP_\sharp\left(  \left(x_{0,2}-x_2\right)^k \tilde \upmu_{\star, 1, 2}\right)=
\bP_\sharp\left(\left(x_{0,2}-x_2\right)^k{\bf 1}_{Q_{\rho_0} (x_0)}\sin (2\upgamma_\star )\Uptheta_\star \rd \lambda \right).
\end{equation}
If $\Lambda(s)$ is finite, and given any point $a(s)\in \Lambda(s)$,  we may find some arbitrary small number $\delta>0$ such find 
$(\mathfrak S_\star \cap \D^2(x_0, \delta))\cap \Lambda (s)=\{a(s)\}$. If $a(s)  \not \in \mathfrak E_\star$, then the angle of the tangent to $\mathfrak S_\star$ at the point $a(s)$  with the vector $\be_1$ is $\upgamma(a(s))$ so that, if $\upgamma(a(s))\not =\pm \pi \slash 2$, then we have   
\begin{equation}
\label{jesappelle}
\frac{\rd\bP_\sharp\left({\bf 1}_{\D^2 (a(s))} \rd \lambda\right)}
{\rd s} =\frac{1}{\cos(\upgamma(a(s))}.
\end{equation} 
  Since $\sin (2\upgamma(a(s)))=\sin (\upgamma(a(s))). \cos (\upgamma(a(s)))$, the conclusion follows combining \eqref{groote} and \eqref{jesappelle}.
\end{proof}

\begin{lemma}
\label{unicitude}
Let  $s \in  \mathcal I_{\rho_0} (x_{0,1})$ be such that $\mathcal Z(s)=1$. Then, we have $J_{k, \rho_0}(s)=0$, for any $k \in \N$.
\end{lemma}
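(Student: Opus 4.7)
The plan is to exploit the assumption $\mathcal Z(s)=1$ via the explicit single-term expression furnished by Lemma \ref{glupsitude}, combined with the vanishing $J_{0,\rho_0}\equiv 0$ already established in Proposition \ref{diff}. Since $\mathcal Z(s)=1$, the set $\Lambda(s)=\{a(s)\}$ is in particular finite, so we are in the scope of Lemma \ref{glupsitude}, and the sum in \eqref{gallarditude} reduces to a single term:
\begin{equation*}
J_{k,\rho_0}(s)=2\bigl(x_{0,2}-a_2(s)\bigr)^k\,\sin\bigl(\upgamma_\star(a(s))\bigr)\,\Uptheta_\star(a(s)), \qquad k\in\N.
\end{equation*}

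The case $k=0$ reads $J_{0,\rho_0}(s)=2\sin(\upgamma_\star(a(s)))\,\Uptheta_\star(a(s))$. On the other hand, Proposition \ref{diff} asserts that $J_{0,\rho_0}(s)=J_{\rho_0}(s)=0$ on the whole interval $\mathcal I_{\rho_0}(x_{0,1})$. Moreover, by the first and second inequalities in \eqref{ondanse} of Theorem \ref{absolute}, we have the positivity bound
\begin{equation*}
\Uptheta_\star(a(s))\geq \frac{\upeta_1}{K_{\rm dens}(\rd(a(s)))}>0.
\end{equation*}
Dividing by $2\Uptheta_\star(a(s))$, we conclude that
\begin{equation*}
\sin\bigl(\upgamma_\star(a(s))\bigr)=0.
\end{equation*}

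Inserting this vanishing into the single-term formula for arbitrary $k\in\N^\star$ yields immediately $J_{k,\rho_0}(s)=0$. Together with the already known case $k=0$, this establishes the Lemma. There is no serious obstacle here: the statement is essentially a tautology once one notes that, when only one sheet of $\mathfrak S_\star$ sits above $s$, the constraint $J_{0,\rho_0}(s)=0$ (obtained from the analysis of dilation test vector fields) \emph{already} forces the local tangent to be horizontal, and all higher moments $J_{k,\rho_0}$ then inherit the vanishing factor $\sin\upgamma_\star$ from the single surviving term in \eqref{gallarditude}.
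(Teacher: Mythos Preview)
Your proof is correct and follows essentially the same approach as the paper: apply Lemma \ref{glupsitude} with a single term and use $J_{0,\rho_0}(s)=0$ from Proposition \ref{diff}. The paper is slightly more direct, observing that with a single summand one has the factorization $J_{k,\rho_0}(s)=(x_{0,2}-a_2(s))^k\,J_{0,\rho_0}(s)$, so the conclusion follows immediately without invoking the positivity of $\Uptheta_\star$; your detour through $\sin\upgamma_\star(a(s))=0$ yields a bit more geometric information but is not needed for the lemma.
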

\begin{proof} In view of the assumption of Lemma \ref{unicitude},  $\Lambda(s)$ contains a unique element $a(s)=(x_{0, 1}+s, a_2(s))$, so that 
$$
J_{k, \rho_0}(s)= \left( x_{0,2}-a_2(s) \right)^k\sin (\upgamma_\star(a(s)) \Uptheta (a(s)=\left( x_{0,2}-a_2(s)\right)^kJ_{0, \rho_0}(s).
$$
In view of Proposition \ref{diff} we have $J_{0, \rho_0}(s)=0$, for any $s\in [x_{0,1}-\rho_0, x_{0,1}+ \rho_0]$  (see the first identity in \eqref{diff0}), so that  the conclusion follows.

\end{proof}
 
 Next consider  for $0<\rho \leq \rho_0$, the set  $\mathcal G(\rho)=\{ s \in [x_{0, 1}-\rho, x_{0, 1}+\rho], {\rm \ such  \ that  \ }   \mathcal Z(s)=1\}$. We have: 
 
 \begin{lemma} 
 \label{densitudine}
 There exists $0<\rho_1\leq \rho_0$, such that we have the upper bound
 $$
 \vert \mathcal G(\rho) \vert \geq \frac{5\rho}{3}, {\rm \ for \ any \ } 0<\rho \leq \rho_1.
 $$
 \end{lemma}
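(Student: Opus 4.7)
The plan is to use the local Lipschitz graph structure of $\mathfrak{S}_\star$ near the regular point $x_0$, combined with the cone inclusion from Proposition~\ref{tangentfort}, which reduces the statement to an essentially elementary counting argument.

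\textbf{Step 1 (Lipschitz graph near $x_0$).} Since $\mathfrak{S}_\star$ is one-rectifiable (Theorem~\ref{maintheo}) with $\mathcal{H}^1$-density $1$ at $x_0 \in \mathfrak{S}_\star \setminus \mathfrak{E}_\star$ (property \eqref{densitusone}) and approximate tangent $\vec{e}_{x_0} = \be_1$, the classical structure theorem for $1$-rectifiable sets with density $1$ (see e.g.\ \cite{falconer}) provides, for any prescribed angle $\uptheta > 0$, a radius $\tilde\rho > 0$ and a Lipschitz function $f: \mathcal{I}_{\tilde\rho}(x_{0,1}) \to \R$ with $f(x_{0,1}) = x_{0,2}$ and $\mathrm{Lip}(f) \leq \tan\uptheta$, such that
\begin{equation*}
\mathcal{H}^1(\mathfrak{S}_\star \cap \D^2(x_0, \tilde\rho) \setminus G) = 0, \qquad G := \{(s, f(s)) : s \in \mathcal{I}_{\tilde\rho}(x_{0,1})\}.
\end{equation*}
Shrinking $\tilde\rho$ if needed, Proposition~\ref{tangentfort} further guarantees $\mathfrak{S}_\star \cap \D^2(x_0, \tilde\rho) \subset \mathcal{C}_{\rm one}(x_0, \be_1, \uptheta)$.

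\textbf{Step 2 (Reduction and counting).} Choose $\rho_1 := \min(\tilde\rho\cos\uptheta,\, \rho_0/2)$. For $\rho \leq \rho_1$ and $s \in \mathcal{I}_\rho(x_{0,1})$, the cone inclusion forces
\begin{equation*}
\bP^{-1}(s) \cap \mathfrak{S}_\star \cap \D^2(x_0, \rho_0) \subset \D^2(x_0,\, \rho/\cos\uptheta) \subset \D^2(x_0, \tilde\rho).
\end{equation*}
By Theorem~\ref{absolute}, the density $\tbe_\star$ of $\upnu_\star$ is bounded below by $\upeta_1 > 0$ on $\mathfrak{S}_\star$; combined with hypothesis \eqref{michelin}, this forces $\mathcal{H}^1(\mathfrak{S}_\star \cap (Q_{\rho_0} \setminus \mathcal{R}_{\rho_0})) = 0$. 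A direct geometric check shows $\bP^{-1}(\mathcal{I}_\rho) \cap \mathcal{R}_{\rho_0} \subset \D^2(x_0, \rho_0)$ whenever $\rho \leq \rho_0/2$. Since $\bP$ is $1$-Lipschitz, it sends $\mathcal{H}^1$-null subsets of $\mathfrak{S}_\star$ to Lebesgue-null subsets of $\R$. Combining these facts, for Lebesgue-a.e.\ $s \in \mathcal{I}_\rho(x_{0,1})$,
\begin{equation*}
\bP^{-1}(s) \cap \mathfrak{S}_\star \cap Q_{\rho_0} \;=\; \bP^{-1}(s) \cap G \;=\; \{(s, f(s))\},
\end{equation*}
so $\mathcal{Z}(s) = 1$ for almost every such $s$, giving $|\mathcal{G}(\rho)| = 2\rho \geq 5\rho/3$.

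\textbf{Main obstacle.} The crucial technical input is the Lipschitz graph approximation invoked in Step~1. Standard formulations require both upper and lower $\mathcal{H}^1$-densities to equal $1$ at $x_0$; in our setting this is guaranteed by combining \eqref{densitusone} with the rectifiability of $\mathfrak{S}_\star$ from Theorem~\ref{maintheo}. The hardest part will be to verify carefully that the appeal to this structural theorem is legitimate in the present framework, and to confirm that the various $\mathcal{H}^1$-null excess sets (from both Step~1 and from hypothesis \eqref{michelin}) indeed project to Lebesgue-null subsets of the tangent line; once this is accomplished, Step~2 is essentially bookkeeping.
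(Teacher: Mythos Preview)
Your Step~1 invokes a ``classical structure theorem'' asserting that a $1$-rectifiable set with $\mathcal H^1$-density~$1$ and approximate tangent $\be_1$ at $x_0$ coincides, up to an $\mathcal H^1$-null set, with a single Lipschitz graph over $\be_1$ in some fixed neighbourhood $\D^2(x_0,\tilde\rho)$. No such theorem is available, and the statement is false in general. Rectifiability provides a covering by \emph{countably many} Lipschitz graphs up to a null set, and at $\mathcal H^1$-a.e.\ point there is a $C^1$ curve $\Gamma$ through $x_0$ such that the density of $\mathfrak S_\star\setminus\Gamma$ at $x_0$ vanishes; neither statement yields $\mathcal H^1\bigl((\mathfrak S_\star\setminus G)\cap\D^2(x_0,\tilde\rho)\bigr)=0$ for any fixed $\tilde\rho>0$. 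A concrete obstruction: take the segment $[-1,1]\times\{0\}$ and, for each $n\ge 1$, attach near the point $(2^{-n},0)$ a triangular arc of base and height $2^{-2n}$ starting and ending on the axis. The resulting set is closed, connected, $1$-rectifiable, has $\mathcal H^1$-density~$1$ at the origin, and lies in arbitrarily narrow cones $\mathcal C_{\rm one}(0,\be_1,\uptheta)$ near~$0$; yet over each base interval one has $\mathcal Z(s)=2$, so $\mathcal Z=1$ fails on a set of positive Lebesgue measure in every $\mathcal I_\rho(0)$, and no single Lipschitz graph captures both the axis and the arcs up to an $\mathcal H^1$-null set. The properties of $\mathfrak S_\star$ available at this stage (connectedness, Proposition~\ref{tangentfort}) do not exclude this behaviour, and Theorem~\ref{segmentus} cannot be invoked since the present lemma is a step in its proof. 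Your own hedge in the ``main obstacle'' paragraph is well placed: the appeal is not legitimate.

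The paper's argument bypasses graph structure entirely and is much shorter. Since $\bP$ is $1$-Lipschitz, $\int_{x_{0,1}-\rho}^{x_{0,1}+\rho}\mathcal Z(s)\,\rd s\le\mathcal H^1(\mathfrak S_\star\cap Q_\rho(x_0))$; the cone inclusion from Proposition~\ref{tangentfort} forces $\mathfrak S_\star\cap Q_\rho(x_0)\subset\D^2(x_0,\tfrac{10\rho}{9})$, and \eqref{densitusone} then gives $\int\mathcal Z\le\tfrac{7\rho}{3}$ for $\rho$ small. Writing $\mathcal K(\rho)=\{s:\mathcal Z(s)\ge 2\}$ and using $\mathcal Z\ge 1$ on all of $\mathcal I_\rho(x_{0,1})$, one obtains $2\rho+|\mathcal K(\rho)|\le|\mathcal G(\rho)|+2|\mathcal K(\rho)|\le\int\mathcal Z\le\tfrac{7\rho}{3}$, hence $|\mathcal K(\rho)|\le\tfrac{\rho}{3}$ and $|\mathcal G(\rho)|\ge\tfrac{5\rho}{3}$.
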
 
 \begin{proof} We first notice that, since $\bP_\star$ is a contraction,  that  for any $\rho\leq \rho_0$, we have 
 \begin{equation}
 \label{extrait} 
 \int_{x_{0,1}-\rho}^{x_{0,1}+\rho} \mathcal Z(s) \rd s \leq \mathcal H^1(\mathfrak S_\star \cap Q_{\rho} (x_0)) 
 \leq  \mathcal H^1(\mathfrak S_\star \cap \D^2 (x_0, \frac{r}{\cos\frac{\pi}{8}})\leq  
  \mathcal H^1(\mathfrak S_\star \cap \D^2 (x_0, \frac{10r}{9}),
 \end{equation}
 where we used \eqref{cadran}.
 On the other hand,  in view of  \eqref{densitusone}, there exists some $0< \varrho_1\leq \rho_0$, such that, for $\rho \leq \varrho_1$, we have
 \begin{equation}
  \label{densitusone2}
  \mathcal H^1 (\mathfrak S_\star(\D^2(x_0, \rho))\leq \frac{21\rho}{10}, 
  \end{equation}
  Combining \eqref{extrait} and \eqref{densitusone2}, we obtain hence,  for $\displaystyle{\rho \leq \rho_1\equiv\frac{9}{10} \varrho_1}$,
  \begin{equation}
  \label{graet}
   \int_{x_{0,1}-\rho}^{x_{0,1}+\rho} \mathcal Z(s) \rd s \leq \frac{21\rho} {9}=\frac{7\rho} {3}.
   \end{equation}
   We introduce the set $\mathcal K(\rho)=\{ s \in [x_{0,1}-\rho, x_{0,1}+\rho], {\rm \ such \ that \  } \mathcal Z(s) \geq 2\}$.  We have 
  \begin{equation}
  \label{niche}
  \int_{x_{0, 1}-\rho}^{x_{0, 1}+\rho} \mathcal Z(s) \rd s=
  \int_{\mathcal G(\rho) } \mathcal Z(s) \rd s+ \int_{\mathcal K(\rho) } \mathcal Z(s) \rd s
   \geq \vert \mathcal G(\rho) \vert + 2 \vert \mathcal K (\rho)\vert=  2\rho +\vert \mathcal K (\rho)\vert.
   \end{equation}
   Combining \eqref{graet} and \eqref{niche}, we deduce that $\displaystyle {\vert \mathcal K (\rho)\vert\leq \frac{\rho}{3}}$  and the conclusion follows.
     \end{proof}
 \subsection{Proof of Theorem \ref{segmentus} completed} 
For the sake of simplicity, we assume that the origin has been chosen so that $x_0=0$. We  first apply Lemma \ref{densitudine}, so that there exists $0<\rho_1\leq \rho_0$ such that $\vert \mathcal G(\rho_1) \vert\geq 5\slash3\rho_1$. Hence there exists two numbers 
$\rho_2^+>0$ and $\rho_2^->0$ such that
$$-\rho_1\leq -\rho_2^-\leq -\frac{2\rho_1}{3} <0<\frac{2\rho_1}{3} \leq \rho_2^+ \leq \rho_1  {\rm \ and  \  such  \ that \ }  \{\rho_2^+, -\rho_2^-\} \subset \mathcal G(\rho).$$
Since $\mathcal Z(-\rho_2^-)=\mathcal Z(\rho_2^+)=1$, we may  apply Lemma \ref{unicitude} to $-\rho_2^-$ et $\rho_2^+$ to assert that
$$\displaystyle{J_{1, \rho_0}(-\rho_2^-)=J_{1, \rho_0}(\rho_2^+)=0.}$$
Since, in view of Lemma \ref{bidoche}, the function $J_{1, \rho_0}$ is \emph{monotone}  on $\mathcal I_{\rho_0}$, we deduce that 
\begin{equation}
\label{viandox}
 J_{1, \rho_0}(s)=0  {\rm \ on \ } [-\rho_2^-, \rho_2^+]  {\rm \ and  \ hence  \ }  \bN_{\rho_0}=\frac{\rd }{\rd s} J_{1, \rho_0}=0 {\rm \ in \ } 
\mathcal D'( [-\rho_2^-, \rho_2^+]).
 \end{equation}
 It follows from the second identity  in \eqref{viandox}, the definition \eqref{caminare} of $\bN_{\rho_0}$ and \eqref{pudding0},  that  the restriction of the measure $(2\tilde \upzeta_\star  + \tilde \upmu_{\star, 1, 1} -\tilde \upmu_{\star, 2, 2})$  to $\mathcal I_{\rho_0}(0) \times [-\rho_2^-, \rho_2^+]$   vanishes.   This implies, that, for any $k \in \N$, we have 
 \begin{equation}
 \label {bkn}
 \bN_{k, \rho_0}\rest   ]-\rho_2^-, \rho_2^+[=0, 
 \end{equation}
 where $\bN_{k, \rho_0}$ is defined in \eqref{micheline}.  In view of the first differential equation in \eqref{zoran}, we have hence
 $$\displaystyle{\frac{\rd  }{\rd s }{\bJ}_{k-1, \rho_0}=0} {\rm \ on \ }
 ]-\rho_2^-, \rho_2^+[.$$
   Since 
 $J_{k, \rho_0}(0)=0$,  for $k \geq 1$, it follows that
 \begin{equation}
 \label{jk}
 J_{k, \rho_0}(s)=0  {\rm \ for \ every \ } s \in ]-\rho_2^-, \rho_2^+[.
 \end{equation}
 Similarly, invoking the second  relation  in \eqref{zoran}, that is $-\frac{\rd }{\rd s} \bL_{k,r}=k\bJ_{k-1,r}$, \eqref{jk} and the fact that $L_{k, \rho_0}(0)=0$,
 we deduce that
 \begin{equation}
 \label{bkn2}
 L_{k, \rho_0}=0  {\rm \ for \ every \ } s \in ]-\rho_2^-, \rho_2^+[, {\rm \ for \ } k \in \N^*.
 \end{equation}
 Combining \eqref{bkn} and \eqref{bkn2} with \eqref{worldwide} we deduce that (since $x_0=0$)
 \begin{equation}
\label{worldwide2}
\left\{
\begin{aligned}
\frac 14 L_{0} \rd s\rest   ]-\rho_2^-, \rho_2^+[&=\bP_{\sharp} \left( \tilde \upzeta_\star \right)\rest   ]-\rho_2^-, \rho_2^+[  {\rm \ for \  } k=0  {\rm \ and \ }  \\
\frac{1}{4}(\bN_{k,r}+\bL_{k, r})\rest   ]-\rho_2^-, \rho_2^+[&=\bP_{\sharp} \left( x_2^k\, \tilde \upzeta_\star \right)\rest   ]-\rho_2^-, \rho_2^+[=0, 
{\rm \ for \ }  k \in \N^*.
\end{aligned}
\right.
\end{equation}
The first identity in \eqref{worldwide2} shows hence that $\bP_{\sharp} \left( \tilde \upzeta_\star \right)$ is constant on $]-\rho_2^-, \rho_2^+[$.  Next, we choose $k=2$. The second  identity in \eqref{worldwide2} implies that $x_2^k\,  \upzeta_\star=0$ on $]-\rho_2^-, \rho_2^+[\times \mathcal I_{\rho_0} (0)$. these relations yield 
\begin{equation}
\label{cooper} 
\upzeta_\star\rest  \left (\left( ]-\rho_2^-, \rho_2^+[\setminus \{0\}\right)\times \mathcal I_{\rho_0} (0)\right)=0.  
\end{equation} 
We choose  next $r_0=\inf \{ \rho_2^-,\rho_2^+\} >0$. Combining \eqref{cooper} with the first relation in \eqref{worldwide2} we are led to
\begin{equation}
\label{droitaubut}
\upzeta_\star \rest \D^2(r_0)=L_0 \rd \ell {\rm \ where \ } \rd \ell {\rm \ is \ the \ Lebesgue \ measure \ on  } (-r_0, r_0).
\end{equation}
  Invoking Theorem \ref{absolute}, we complete the proof of Theorem \ref{segmentus}.
 \section{Proof of Theorem \ref{varifoltitude}}
  Inserting   identities  \eqref{starac33}  into the system \eqref{cauchise}, we are led to  the system of first-order equations 
 \begin{equation}
\label{riemannise}
\left\{
\begin{aligned}
-\frac{\partial}{\partial x_2} \left[(\sin 2\upgamma_\star)\, \upzeta_\star\right]&=\frac{\partial}{\partial x_1} 
\left[ \left(1 +\cos 2\upgamma_\star\right)\upzeta_\star\right] {\rm \ and \  } \\
-\frac{\partial}{\partial x_1} \left[(\sin 2\upgamma_\star) \upzeta_\star \right] &=\frac{\partial}{\partial x_2}
\left[\left(1-\cos 2\upgamma_\star\right)\upzeta_\star\right]. 
 \end{aligned}
\right.
\end{equation}
We are going to show next that these relations are equivalent, in the sense of  distributions, to \eqref{nary}. For that purpose, let 
$\vec X=(X_1, X_2)$ be a vector-field in $C_c^\infty (\Omega, \R^2)$.  We have, for any $x \in \mathfrak S\setminus \mathfrak E_\star$, 
since   by definition $\vec e_{x_0}=\cos \upgamma(x_0)\be_1+\sin  \upgamma(x_0)\be_2$   
\begin{equation*}
\begin{aligned}
{\rm div}_{_{T_x\mathfrak S_\star}}\vec X(x)&=
\left (\vec e_x\cdot \vec {\nabla}\vec X(x)\right) \cdot\vec e_{x} \\
&=\left(  \cos \upgamma_\star(x)\frac{\partial \vec X}{\partial x_1}(x)+ 
\sin \upgamma_\star(x)\frac{\partial \vec X}{\partial x_2}(x)\right)\cdot 
\left (\cos \upgamma_\star(x)\be_1+\sin \upgamma_\star(x)\be_2 \right) \\
&=\cos^2 \upgamma_\star (x)\frac{\partial  X_1}{\partial x_1}(x)+\sin^2 \upgamma_\star(x))\frac{\partial  X_2}{\partial x_2}(x)\\
&+\sin \upgamma_\star(x) \cos \upgamma_\star(x) \left[ \frac{\partial  X_2}{\partial x_1}(x)+
+\frac{\partial  X_1}{\partial x_2}(x) \right].
\end{aligned}
\end{equation*}
 Using this computation, we may expand relation \eqref{nary} as
 \begin{equation}
 \left\langle  \upzeta_\star,\cos^2\upgamma_\star   \frac{\partial  X_1}{\partial x_1}+\sin^2\upgamma_\star   \frac{\partial  X_2}{\partial x_2} 
 + \sin \upgamma_\star \cos \upgamma_\star\left[ \frac{\partial  X_2}{\partial x_1}+
\frac{\partial  X_1}{\partial x_2} \right]\right \rangle =0
 \end{equation}
Integrating by parts in the sense of distributions, we obtain hence, for every $X_1 \in C_c^\infty (\Omega, \R)$ and any 
$X_2\in C_c(\Omega, \R)$, the relation 
\begin{equation*}
\left \langle \frac{\partial}{\partial x_1} (\cos^2 \upgamma_\star \upzeta_\star)+ \frac{\partial}{\partial x_2} (\sin \upgamma_\star \cos \upgamma_\star\,  \upzeta_\star), X_1  \right\rangle+ 
\left \langle \frac{\partial}{\partial x_2} (\sin^2 \upgamma_\star \upzeta_\star)+ \frac{\partial}{\partial x_1} (\sin \upgamma_\star \cos \upgamma \upzeta_\star), X_2  \right\rangle=0.
\end{equation*} 
 Since $X_1$ and $X_2$ can  be chosen independently, we are led to the system, in the sense of distributions,
  \begin{equation}
\label{fuchsise}
\left\{
\begin{aligned}
-\frac{\partial}{\partial x_2} \left[(\sin\upgamma_\star \cos \upgamma_\star, \upzeta_\star\right]&=\frac{\partial}{\partial x_1} 
\left[ \left(\cos^2\upgamma_\star\right)\upzeta_\star\right]  {\rm \ and \  } \\
-\frac{\partial}{\partial x_1} \left[(\sin \upgamma_\star \cos \upgamma_\star) \upzeta_\star \right] &=\frac{\partial}{\partial x_2}
\left[\left(\sin^2\upgamma_\star\right)\upzeta_\star\right]. 
 \end{aligned}
\right.
\end{equation}
Since $2\sin\upgamma_\star \cos \upgamma_\star=\sin 2\upgamma_\star$, $1+\cos2\upgamma_\star=2\cos^2\upgamma_\star$  and 
$1-\cos^2\upgamma_\star=2\sin^2\upgamma_\star$, we verify that \eqref{fuchsise} is equivalent to \eqref{riemannise}, so that the system
\eqref{cauchise} is equivalent to \eqref{nary}. The varifold ${\bf V}(\mathfrak S_\star, \Uptheta_\star)$ is hence stationary. The proof of Theorem \ref{varifoltitude} is complete.


\end{document}